\newtheorem{cor}[subsubsection]{Corollary}
\newtheorem{lem}[subsubsection]{Lemma}
\newtheorem{prop}[subsubsection]{Proposition}
\newtheorem{thm}[subsubsection]{Theorem}
\newtheorem{defn}[subsubsection]{Definition}
\newtheorem{quest}[subsubsection]{Question}
\theoremstyle{remark}
\newtheorem{rem}[subsubsection]{Remark}
\theoremstyle{definition}
\theoremstyle{remark}
\newcommand{\thmref}[1]{Theorem~\ref{#1}}
\newcommand{\secref}[1]{Sect.~\ref{#1}}
\newcommand{\lemref}[1]{Lemma~\ref{#1}}
\newcommand{\propref}[1]{Proposition~\ref{#1}}
\newcommand{\corref}[1]{Corollary~\ref{#1}}
\numberwithin{equation}{section}
\newcommand{\nc}{\newcommand}
\nc{\renc}{\renewcommand}
\nc{\ssec}{\subsection}
\nc{\sssec}{\subsubsection}
\nc{\on}{\operatorname}
\nc{\ips}{{\iota_P^{(S)}}}
\nc{\ipms}{{\iota_{P^-}^{(S)}}}
\nc{\sfpps}{{\sfp_P^{(S)}}}
\nc{\sfppms}{{\sfp_{P^-}^{(S)}}}
\nc\ol{\overline}
\nc\ul{\underline}
\nc\wt{\widetilde}
\nc\tboxtimes{\wt{\boxtimes}}
\nc\tstar{\wt{\star}}
\nc{\alp}{\alpha}
\nc{\ZZ}{{\mathbb Z}}
\nc{\NN}{{\mathbb N}}
\nc{\OO}{{\mathbb O}}
\renc{\SS}{{\mathbb S}}
\nc{\DD}{{\mathbb D}}
\nc{\GG}{{\mathbb G}}
\nc{\Fq}{{\mathbb F}_q}
\nc{\Fqb}{\ol{\mathbb F}_q}
\nc{\Ql}{{\mathbb Q}_\ell}
\nc{\Qlb}{{\ol{\mathbb Q}_\ell}}
\nc{\id}{\text{id}}
\nc\X{\mathcal X}
\nc{\red}{\on{red}}
\nc{\Ho}{\on{Ho}}
\nc{\Hom}{\on{Hom}}
\nc{\coHom}{\ul{\on{coHom}}}
\nc{\coMaps}{{\bf{coMaps}}}
\nc{\coef}{\on{coef}}
\nc{\Lie}{\on{Lie}}
\nc{\Loc}{\on{Loc}}
\nc{\Pic}{\on{Pic}}
\nc{\Bun}{\on{Bun}}
\nc{\IC}{\on{IC}}
\nc{\Aut}{\on{Aut}}
\nc{\rk}{\on{rk}}
\nc{\Sh}{\on{Sh}}
\nc{\Perv}{\on{Perv}}
\nc{\pos}{{\on{pos}}}
\nc{\Conv}{\on{Conv}}
\nc{\Sph}{\on{Sph}}
\nc{\Sym}{\on{Sym}}
\nc{\BunBb}{\overline{\Bun}_B}
\nc{\BunNb}{\overline{\Bun}_N}
\nc{\BunTb}{\overline{\Bun}_T}
\nc{\BunBbm}{\overline{\Bun}_{B^-}}
\nc{\BunBbel}{\overline{\Bun}_{B,el}}
\nc{\BunBbmel}{\overline{\Bun}_{B^-,el}}
\nc{\Buno}{\overset{o}{\Bun}}
\nc{\BunPb}{{\overline{\Bun}_P}}
\nc{\BunBM}{\Bun_{B(M)}}
\nc{\BunBMb}{\overline{\Bun}_{B(M)}}
\nc{\BunPbw}{{\widetilde{\Bun}_P}}
\nc{\BunBP}{\widetilde{\Bun}_{B,P}}
\nc{\GUb}{\overline{G/U}}
\nc{\GUPb}{\overline{G/U(P)}}
\nc{\Hhom}{\underline{\on{Hom}}}
\nc\syminfty{\on{Sym}^{\infty}}
\nc\lal{\ol{\lambda}}
\nc\xl{\ol{x}}
\nc\thl{\ol{\theta}}
\nc\nul{\ol{\nu}}
\nc\mul{\ol{\mu}}
\nc{\oX}{\overset{o}{X}{}}
\nc{\hl}{\overset{\leftarrow}h{}}
\nc{\hr}{\overset{\rightarrow}h{}}
\nc{\M}{{\mathcal M}}
\nc{\N}{{\mathcal N}}
\nc{\F}{{\mathcal F}}
\nc{\D}{{\mathcal D}}
\nc{\Q}{{\mathcal Q}}
\nc{\Y}{{\mathcal Y}}
\nc{\G}{{\mathcal G}}
\nc{\E}{{\mathcal E}}
\nc{\CalC}{{\mathcal C}}
\nc\Dh{\widehat{\D}}
\nc{\C}{{\mathcal C}}
\nc{\K}{{\mathcal K}}
\renewcommand{\H}{{\mathcal H}}
\nc{\T}{{\mathcal T}}
\nc{\V}{{\mathcal V}}
\renc{\P}{{\mathcal P}}
\nc{\A}{{\mathcal A}}
\nc{\B}{{\mathcal B}}
\nc{\U}{{\mathcal U}}
\nc{\Gr}{{\on{Gr}}}
\nc{\frn}{{\check{\mathfrak u}(P)}}
\nc{\fC}{\mathfrak C}
\nc{\fT}{\mathfrak T}
\nc{\p}{\mathfrak p}
\nc{\q}{\mathfrak q}
\nc\f{{\mathfrak f}}
\nc{\qo}{{\mathfrak q}}
\nc{\po}{{\mathfrak p}}
\nc{\s}{{\mathfrak s}}
\nc\w{\text{w}}
\nc\Spec{\on{Spec}}
\nc\Proj{\on{Proj}}
\nc\Mod{\on{Mod}}
\nc{\tw}{\widetilde{\mathfrak t}}
\nc{\pw}{\widetilde{\mathfrak p}}
\nc{\qw}{\widetilde{\mathfrak q}}
\nc{\jw}{\widetilde j}
\nc{\grb}{\overline{\Gr}}
\nc{\I}{\mathcal I}
\nc{\lambdach}{{\check\lambda}}
\nc{\Lambdach}{{\check\Lambda}{}}
\nc{\much}{{\check\mu}}
\nc{\omegach}{{\check\omega}}
\nc{\nuch}{{\check\nu}}
\nc{\etach}{{\check\eta}}
\nc{\alphach}{{\check\alpha}}
\nc{\oblvtach}{{\check\oblvta}}
\nc{\rhoch}{{\check\rho}}
\nc{\ch}{{\check h}}
\nc{\Hb}{\overline{\H}}
\nc{\BA}{{\mathbb{A}}}
\nc{\BC}{{\mathbb{C}}}
\nc{\BE}{{\mathbb{E}}}
\nc{\BF}{{\mathbb{F}}}
\nc{\BG}{{\mathbb{G}}}
\nc{\BL}{{\mathbb{L}}}
\nc{\BM}{{\mathbb{M}}}
\nc{\BO}{{\mathbb{O}}}
\nc{\BD}{{\mathbb{D}}}
\nc{\BN}{{\mathbb{N}}}
\nc{\BP}{{\mathbb{P}}}
\nc{\BQ}{{\mathbb{Q}}}
\nc{\BR}{{\mathbb{R}}}
\nc{\BZ}{{\mathbb{Z}}}
\nc{\BS}{{\mathbb{S}}}
\nc{\Deep}{{\bf{deep}}}
\nc{\deep}{deep}
\nc{\CA}{{\mathcal{A}}}
\nc{\CB}{{\mathcal{B}}}
\nc{\CE}{{\mathcal{E}}}
\nc{\CF}{{\mathcal{F}}}
\nc{\CH}{{\mathcal{H}}}
\nc{\CL}{{\mathcal{L}}}
\nc{\CC}{{\mathcal{C}}}
\nc{\CG}{{\mathcal{G}}}
\nc{\CalD}{{\mathcal{D}}}
\nc{\CM}{{\mathcal{M}}}
\nc{\CN}{{\mathcal{N}}}
\nc{\CK}{{\mathcal{K}}}
\nc{\CO}{{\mathcal{O}}}
\nc{\CP}{{\mathcal{P}}}
\nc{\CQ}{{\mathcal{Q}}}
\nc{\CR}{{\mathcal{R}}}
\nc{\CS}{{\mathcal{S}}}
\nc{\CT}{{\mathcal{T}}}
\nc{\CU}{{\mathcal{U}}}
\nc{\CV}{{\mathcal{V}}}
\nc{\CW}{{\mathcal{W}}}
\nc{\CX}{{\mathcal{X}}}
\nc{\CY}{{\mathcal{Y}}}
\nc{\CZ}{{\mathcal{Z}}}
\nc{\CI}{{\mathcal{I}}}
\nc{\csM}{{\check{\mathcal A}}{}}
\nc{\oM}{{\overset{\circ}{\mathcal M}}{}}
\nc{\obM}{{\overset{\circ}{\mathbf M}}{}}
\nc{\oCA}{{\overset{\circ}{\mathcal A}}{}}
\nc{\obA}{{\overset{\circ}{\mathbf A}}{}}
\nc{\ooM}{{\overset{\circ}{M}}{}}
\nc{\osM}{{\overset{\circ}{\mathsf M}}{}}
\nc{\vM}{{\overset{\bullet}{\mathcal M}}{}}
\nc{\nM}{{\underset{\bullet}{\mathcal M}}{}}
\nc{\oD}{{\overset{\circ}{\mathcal D}}{}}
\nc{\obD}{{\overset{\circ}{\mathbf D}}{}}
\nc{\oA}{{\overset{\circ}{A}}{}}
\nc{\op}{{\overset{\bullet}{\mathbf p}}{}}
\nc{\cp}{{\overset{\circ}{\mathbf p}}{}}
\nc{\oU}{{\overset{\bullet}{\mathcal U}}{}}
\nc{\oZ}{{\overset{\circ}{\mathcal Z}}{}}
\nc{\ofZ}{{\overset{\circ}{\mathfrak Z}}{}}
\nc{\oF}{{\overset{\circ}{\fF}}}
\nc{\fa}{{\mathfrak{a}}}
\nc{\ofa}{\overset{\circ}{\mathfrak{a}}}
\nc{\fb}{{\mathfrak{b}}}
\nc{\fd}{{\mathfrak{d}}}
\nc{\ff}{{\mathfrak{f}}}
\nc{\fg}{{\mathfrak{g}}}
\nc{\fgl}{{\mathfrak{gl}}}
\nc{\fh}{{\mathfrak{h}}}
\nc{\fj}{{\mathfrak{j}}}
\nc{\fl}{{\mathfrak{l}}}
\nc{\fm}{{\mathfrak{m}}}
\nc{\ofm}{\overset{\circ}{\mathfrak{m}}}
\nc{\fn}{{\mathfrak{n}}}
\nc{\fu}{{\mathfrak{u}}}
\nc{\fp}{{\mathfrak{p}}}
\nc{\fr}{{\mathfrak{r}}}
\nc{\fs}{{\mathfrak{s}}}
\nc{\ft}{{\mathfrak{t}}}
\nc{\oft}{\overset{\circ}{\mathfrak{t}}}
\nc{\fz}{{\mathfrak{z}}}
\nc{\fsl}{{\mathfrak{sl}}}
\nc{\hsl}{{\widehat{\mathfrak{sl}}}}
\nc{\hgl}{{\widehat{\mathfrak{gl}}}}
\nc{\hg}{{\widehat{\mathfrak{g}}}}
\nc{\chg}{{\widehat{\mathfrak{g}}}{}^\vee}
\nc{\hn}{{\widehat{\mathfrak{n}}}}
\nc{\chn}{{\widehat{\mathfrak{n}}}{}^\vee}
\nc{\fA}{{\mathfrak{A}}}
\nc{\fB}{{\mathfrak{B}}}
\nc{\fD}{{\mathfrak{D}}}
\nc{\fE}{{\mathfrak{E}}}
\nc{\fF}{{\mathfrak{F}}}
\nc{\fG}{{\mathfrak{G}}}
\nc{\fK}{{\mathfrak{K}}}
\nc{\fL}{{\mathfrak{L}}}
\nc{\fM}{{\mathfrak{M}}}
\nc{\fN}{{\mathfrak{N}}}
\nc{\fP}{{\mathfrak{P}}}
\nc{\fU}{{\mathfrak{U}}}
\nc{\fV}{{\mathfrak{V}}}
\nc{\fZ}{{\mathfrak{Z}}}
\nc{\ba}{{\mathbf{a}}}
\nc{\bb}{{\mathbf{b}}}
\nc{\bc}{{\mathbf{c}}}
\nc{\bd}{{\mathbf{d}}}
\nc{\bbf}{{\mathbf{f}}}
\nc{\be}{{\mathbf{e}}}
\nc{\bi}{{\mathbf{i}}}
\nc{\bj}{{\mathbf{j}}}
\nc{\bh}{{\mathbf{h}}}
\nc{\bm}{{\mathbf{m}}}
\nc{\bn}{{\mathbf{n}}}
\nc{\bo}{{\mathbf{o}}}
\nc{\bp}{{\mathbf{p}}}
\nc{\bq}{{\mathbf{q}}}
\nc{\bu}{{\mathbf{u}}}
\nc{\bv}{{\mathbf{v}}}
\nc{\bx}{{\mathbf{x}}}
\nc{\bs}{{\mathbf{s}}}
\nc{\by}{{\mathbf{y}}}
\nc{\bw}{{\mathbf{w}}}
\nc{\bA}{{\mathbf{A}}}
\nc{\bK}{{\mathbf{K}}}
\nc{\bB}{{\mathbf{B}}}
\nc{\bC}{{\mathbf{C}}}
\nc{\bG}{{\mathbf{G}}}
\nc{\bD}{{\mathbf{D}}}
\nc{\bE}{{\mathbf{E}}}
\nc{\bH}{{{\mathbf{H}}}}
\nc{\bM}{{\mathbf{M}}}
\nc{\bN}{{\mathbf{N}}}
\nc{\bO}{{\mathbf{O}}}
\nc{\bQ}{{\mathbf{Q}}}
\nc{\bV}{{\mathbf{V}}}
\nc{\bW}{{\mathbf{W}}}
\nc{\bX}{{\mathbf{X}}}
\nc{\bZ}{{\mathbf{Z}}}
\nc{\bS}{{\mathbf{S}}}
\nc{\sA}{{\mathsf{A}}}
\nc{\sB}{{\mathsf{B}}}
\nc{\sC}{{\mathsf{C}}}
\nc{\sD}{{\mathsf{D}}}
\nc{\sF}{{\mathsf{F}}}
\nc{\sG}{{\mathsf{G}}}
\nc{\sH}{{\mathsf{H}}}
\nc{\sK}{{\mathsf{K}}}
\nc{\sM}{{\mathsf{M}}}
\nc{\sN}{{\mathsf{N}}}
\nc{\sO}{{\mathsf{O}}}
\nc{\sW}{{\mathsf{W}}}
\nc{\sQ}{{\mathsf{Q}}}
\nc{\sP}{{\mathsf{P}}}
\nc{\sR}{{\mathsf{R}}}
\nc{\sS}{{\mathsf{S}}}
\nc{\sT}{{\mathsf{T}}}
\nc{\sZ}{{\mathsf{Z}}}
\nc{\sfp}{{\mathsf{p}}}
\nc{\sfq}{{\mathsf{q}}}
\nc{\sft}{{\mathsf{t}}}
\nc{\sr}{{\mathsf{r}}}
\nc{\bk}{{\mathsf{k}}}
\nc{\sa}{{\mathsf{s}}}
\nc{\sg}{{\mathsf{g}}}
\nc{\sn}{{\mathsf{n}}}
\nc{\sh}{{\mathsf{h}}}
\nc{\sff}{{\mathsf{f}}}
\nc{\sfb}{{\mathsf{b}}}
\nc{\sfc}{{\mathsf{c}}}
\nc{\sfe}{{\mathsf{e}}}
\nc{\sd}{{\mathsf{d}}}
\nc{\BK}{{\bar{K}}}
\nc{\tA}{{\widetilde{\mathbf{A}}}}
\nc{\tB}{{\widetilde{\mathcal{B}}}}
\nc{\tg}{{\widetilde{\mathfrak{g}}}}
\nc{\tG}{{\widetilde{G}}}
\nc{\TM}{{\widetilde{\mathbb{M}}}{}}
\nc{\tO}{{\widetilde{\mathsf{O}}}{}}
\nc{\tU}{{\widetilde{\mathfrak{U}}}{}}
\nc{\TZ}{{\tilde{Z}}}
\nc{\tx}{{\tilde{x}}}
\nc{\tbv}{{\tilde{\bv}}}
\nc{\tfP}{{\widetilde{\mathfrak{P}}}{}}
\nc{\tz}{{\tilde{\zeta}}}
\nc{\tmu}{{\tilde{\mu}}}
\nc{\urho}{\underline{\rho}}
\nc{\uB}{\underline{B}}
\nc{\uC}{{\underline{\mathbb{C}}}}
\nc{\ui}{\underline{i}}
\nc{\uj}{\underline{j}}
\nc{\ofP}{{\overline{\mathfrak{P}}}}
\nc{\oB}{{\overline{\mathcal{B}}}}
\nc{\og}{{\overline{\mathfrak{g}}}}
\nc{\oI}{{\overline{I}}}
\nc{\eps}{\varepsilon}
\nc{\hrho}{{\hat{\rho}}}
\nc{\one}{{\mathbf{1}}}
\nc{\two}{{\mathbf{t}}}
\nc{\Rep}{{\mathop{\operatorname{\rm Rep}}}}
\nc{\Tot}{{\mathop{\operatorname{\rm Tot}}}}
\nc{\Ker}{{\mathop{\operatorname{\rm Ker}}}}
\nc{\im}{{\mathop{\operatorname{\rm Im}}}}
\nc{\Hilb}{{\mathop{\operatorname{\rm Hilb}}}}
\nc{\End}{{\mathop{\operatorname{\rm End}}}}
\nc{\Ext}{{\mathop{\operatorname{\rm Ext}}}}
\nc{\CHom}{{\mathop{\operatorname{{\mathcal{H}}\it om}}}}
\nc{\CEnd}{{\mathop{\operatorname{{\mathcal{E}}\it nd}}}}
\nc{\GL}{{\mathop{\operatorname{\rm GL}}}}
\nc{\gr}{{\mathop{\operatorname{\rm gr}}}}
\nc{\HN}{{\mathop{\operatorname{\rm HN}}}}
\nc{\Id}{{\mathop{\operatorname{\rm Id}}}}
\nc{\de}{{\mathop{\operatorname{\rm def}}}}
\nc{\length}{{\mathop{\operatorname{\rm length}}}}
\nc{\supp}{{\mathop{\operatorname{\rm supp}}}}
\nc{\Cliff}{{\mathsf{Cliff}}}
\nc{\Fl}{\on{Fl}}
\nc{\Fib}{{\mathsf{Fib}}}
\nc{\Coh}{{\on{Coh}}}
\nc{\QCoh}{{\on{QCoh}}}
\nc{\IndCoh}{{\on{IndCoh}}}
\nc{\FCoh}{{\mathsf{FCoh}}}
\nc{\reg}{{\text{\rm reg}}}
\nc{\cplus}{{\mathbf{C}_+}}
\nc{\cminus}{{\mathbf{C}_-}}
\nc{\cthree}{{\mathbf{C}_\bullet}}
\nc{\Qbar}{{\bar{Q}}}
\nc\Eis{\on{Eis}}
\nc\Eisb{\ol\Eis{}}
\nc\Eisr{\on{Eis}^{rat}{}}
\nc\wh{\widehat}
\nc{\Def}{\on{Def_{\check{\fb}}(E)}}
\nc{\barZ}{\overline{Z}{}}
\nc{\barbarZ}{\overline{\barZ}{}}
\nc{\barpi}{\overline\pi}
\nc{\barbarpi}{\overline\barpi}
\nc{\barpip}{\overline\pi{}^+}
\nc{\barpim}{\overline\pi{}^-}
\nc{\fq}{\mathfrak q}
\nc{\fqb}{\ol{\sfq}{}}
\nc{\fpb}{\ol{\sfp}{}}
\nc{\fpr}{{\sfp^{rat}}{}}
\nc{\fqr}{{\sfq^{rat}}{}}
\nc{\hattimes}{\wh\otimes}
\nc{\bOmega}{{\overline{\Omega(\check \fn)}}}
\nc{\seq}[1]{\stackrel{#1}{\sim}}
\nc{\cT}{{\check{T}}}
\nc{\cG}{{\check{G}}}
\nc{\cM}{{\check{M}}}
\nc{\cB}{{\check{B}}}
\nc{\ct}{{\check{\mathfrak t}}}
\nc{\cg}{{\check{\fg}}}
\nc{\cb}{{\check{\fb}}}
\nc{\cn}{{\check{\fn}}}
\nc{\cLambda}{{\check\Lambda}}
\nc{\cla}{{\check\lambda}}
\nc{\cmu}{{\check\mu}}
\nc{\cnu}{{\check\nu}}
\nc{\ceta}{{\check\eta}}
\nc{\DefbE}{{\on{Def}_{\cB}(E_\cT)}}
\nc{\imathb}{{\ol{\imath}}}
\nc{\rlr}{\overset{\longrightarrow}{\underset{\longrightarrow}\longleftarrow}}
\nc{\oBun}{\overset{\circ}\Bun}
\nc{\LocSys}{\on{LocSys}}
\nc{\BunBbb}{\ol{\ol{Bun}}_B}
\nc{\BunBr}{\Bun_B^{rat}}
\nc{\BunBrsg}{\Bun_B^{rat,\on{s.g.}}}
\nc{\BunBrp}{\Bun_B^{rat,polar}}
\nc{\BunBrpbg}{\Bun_B^{rat,polar,\on{b.g.}}}
\nc{\BunBrpsg}{\Bun_B^{rat,polar,\on{s.g.}}}
\nc{\BunTrp}{\Bun_T^{rat,polar}}
\nc{\BunTrpbg}{\Bun_T^{rat,polar,\on{b.g.}}}
\nc{\BunTrpsg}{\Bun_T^{rat,polar,\on{s.g.}}}
\nc{\BunNr}{\Bun_N^{rat}}
\nc{\BunNre}{\Bun_N^{enh,rat}}
\nc{\BunTr}{\Bun_T^{rat}}
\nc{\Vect}{\on{Vect}}
\nc{\Whit}{\on{Whit}}
\nc{\CTb}{\ol{\on{CT}}}
\nc{\Ran}{{\on{Ran}}}
\nc{\CTr}{\on{CT}^{rat}{}}
\nc\jmathr{\jmath^{rat}{}}
\nc{\ux}{\underline{x}}
\nc{\clambda}{{\check\lambda}}
\nc{\calpha}{{\check\alpha}}
\nc{\ind}{{\mathbf{ind}}}
\nc{\coinv}{{\mathbf{coinv}}}
\nc{\oblv}{{\mathbf{oblv}}}
\nc{\free}{{\mathbf{free}}}
\nc{\ox}{{\overline{x}}}
\nc{\cLa}{\check{\Lambda}}
\nc{\StinftyCat}{\on{DGCat}}
\nc{\inftyCat}{\infty\on{-Cat}}
\nc{\inftygroup}{\infty\on{-Grpd}}
\nc{\Dmod}{\on{D-mod}}
\nc{\CMaps}{{\mathcal Maps}}
\nc{\Maps}{\on{Maps}}
\nc{\affSch}{\on{Sch}^{\on{aff}}}
\nc{\dr}{{\on{dR}}}
\nc{\oCF}{\overset{\circ}\CF}
\nc{\oCY}{\overset{\circ}\CY}
\nc{\opi}{\overset{\circ}\pi}
\nc{\leqG}{\underset{G}\leq}
\nc{\leqM}{\underset{M}\leq}
\nc{\leqGad}{\underset{G_{ad}}\leq}
\nc{\leqMad}{\underset{M_{ad}}\leq}
\nc{\Tr}{\on{Tr}}
\nc{\Frob}{{\on{Frob}}}
\nc{\DGCat}{\on{DGCat}}
\nc{\tDGCat}{2\on{-DGCat}_{\on{u.g.}}}
\nc{\ev}{\on{ev}}
\nc{\mmod}{\on{-}\mathbf{mod}}
\nc{\sotimes}{\overset{!}\otimes}
\nc{\lstar}{\overset{l}\star}
\nc{\Shv}{\on{Shv}}
\nc{\Spc}{\on{Spc}}
\nc{\LS}{\Lisse}
\nc{\Res}{\on{Res}}
\nc{\bDelta}{{\mathbf{\Delta}}}
\nc{\bMaps}{{\mathbf{Maps}}}
\nc{\cD}{\mathcal D}
\nc{\ocD}{\overset{\circ}\cD}
\nc{\ppart}{(\!(t)\!)}
\nc{\qqart}{[\![t]\!]}
\nc{\oCU}{\overset{\circ}{\CU}}
\nc{\Exc}{{\mathcal{E}xc}}
\nc{\Sht}{\on{Sht}}
\nc{\Nilp}{{\on{Nilp}}}
\nc{\Drinf}{\on{Drinf}}
\nc{\Sing}{\on{Sing}}
\nc{\IndLisse}{\Lisse}
\nc{\Shvl}{\on{Shv}_{\on{lisse}}} 
\nc{\Lisse}{\on{Lisse}}
\nc{\qLisse}{\on{QLisse}}
\nc{\Mir}{\on{Mir}}
\nc{\Se}{\on{Se}}
\begin{document}

\title[Duality for automorphic sheaves with nilpotent singular support]
{Duality for automorphic sheaves with nilpotent \\ singular support}

\author{D.~Arinkin, D.~Gaitsgory, D.~Kazhdan, 
S.~Raskin, N.~Rozenblyum, Y.~Varshavsky}

\date{\today}

\begin{abstract}
We identify the category $\Shv_\Nilp(\Bun_G)$ of automorphic sheaves with nilpotent singular support
with its own dual, and relate this structure to the Serre functor on $\Shv_\Nilp(\Bun_G)$ and miraculous duality.
\end{abstract} 

\maketitle

\tableofcontents

\section*{Introduction}

\ssec{What is this paper about?}

This paper is a sequel to the paper \cite{AGKRRV} by the same set of authors. In {\it loc. cit.}
a conjecture was suggested that the trace of the Frobenius endofunctor on the category of
automorphic sheaves with nilpotent singular support is isomorphic to the space of (non-ramified)
automorphic functions. This paper carries out the first step towards the proof of this conjecture. 

\sssec{}

Let $X$ be a smooth projective curve and $G$ a reductive group (over a ground field $k$).
Let $\Bun_G$ be the moduli stack of $G$-bundles on $X$. We consider the category of automorphic
sheaves $\Shv(\Bun_G)$ (see \secref{sss:sheaves} for our conventions regarding sheaf theories) 
and its full subcategory
$$\Shv_\Nilp(\Bun_G)\subset \Shv(\Bun_G).$$

The category $\Shv_\Nilp(\Bun_G)$ is compactly generated, and hence dualizable. So, given an endofunctor
$\Phi$ of $\Shv_\Nilp(\Bun_G)$, it makes sense to consider its trace
$$\Tr(\Phi,\Shv_\Nilp(\Bun_G))\in \Vect,$$
where $\Vect$ is the DG category of chain complexes of vector space over the field $\sfe$ of coefficients of 
our sheaf theory.

\sssec{}

By definition, the trace of an endofunctor $\Phi$ of a dualizable category $\bC$ is the endofunctor of $\Vect$ equal to the composition
$$\Vect\overset{\on{unit}}\longrightarrow \bC\otimes \bC^\vee
\overset{\Phi\otimes \on{Id}}\longrightarrow \bC\otimes \bC^\vee \overset{\on{counit}}\to \Vect,$$
where $\bC^\vee$ is the dual category, and $\on{unit}$ and $\on{counit}$ are the unit and the counit of the duality, respectively.

\medskip

Hence, in order to say something explicit about the trace $\Tr(\Phi,\bC)$, one must first describe $\bC^\vee$ and the functors
$\on{unit}$ and $\on{counit}$. 

\medskip

This is what we do for the category $\Shv_\Nilp(\Bun_G)$.

\sssec{} \label{sss:intro new}

From the above perspective, the main result of this note (\thmref{t:non-standard duality}) is easy to explain. It says that
there exists a canonical equivalence
\begin{equation} \label{e:new intro Nilp}
\Shv_\Nilp(\Bun_G)^\vee \simeq \Shv_\Nilp(\Bun_G),
\end{equation}
for which the counit of the duality is the functor
$$\Shv_\Nilp(\Bun_G)\otimes \Shv_\Nilp(\Bun_G)\to \Vect,$$
denoted $\on{ev}^l_{\Bun_G}$, given by
$$\CF_1,\CF_2\mapsto \on{C}^\cdot_c(\Bun_G,\CF_1\overset{*}\otimes \CF_2).$$

\sssec{}

We should emphasize that the above pairing is really different from the usual Verdier duality pairing,
which makes sense for a quasi-compact algebraic stack $\CY$:
$$\on{ev} _\CY(\CF_1,\CF_2):=\on{C}^\cdot_\blacktriangle(\Bun_G,\CF_1\sotimes \CF_2)$$
(here $\on{C}^\cdot_\blacktriangle(-)$ is the renormalized version of the functor
of sheaf cochains, see \secref{sss:renorm cochains}). 

\medskip

However, $\Bun_G$ is \emph{not} quasi-compact, so the pairing $\on{ev} _{\Bun_G}$ would
not make sense ``as-is". Rather, $\on{ev} _{\Bun_G}$ defines a perfect pairing between
$\Shv(\Bun_G)$ (resp., $\Shv_\Nilp(\Bun_G)$) and its ``co" version, denoted 
$\Shv(\Bun_G)_{\on{co}}$ (resp., $\Shv_\Nilp(\Bun_G)_{\on{co}}$), see \secref{ss:co BunG}.

\medskip

The relationship between $\on{ev}^l_{\Bun_G}$ and $\on{ev} _{\Bun_G}$ is the second main
point of this paper, to be discussed below.

\sssec{}

The assertion that $\on{ev}^l_{\Bun_G}$ defines a perfect pairing may seem innocuous enough,
but it is actually very non-trivial. In fact, it uses the full strength of some of the key results of the
paper \cite{AGKRRV}.

\medskip

Namely, consider the embedding
$$\Shv_{\Nilp\times \Nilp}(\Bun_G\times \Bun_G) \hookrightarrow \Shv(\Bun_G\times \Bun_G);$$
let 
$$\on{ps-u}_{\Bun_G,\Nilp}\in \Shv_{\Nilp\times \Nilp}(\Bun_G\times \Bun_G)$$
be the object equal to the value of the spectral projector $\sP_{\Bun_G,\Nilp}\boxtimes \on{Id}_{\Bun_G}$
(see \secref{sss:the projector}) on the object
$$(\Delta_{\Bun_G})_!(\ul\sfe_{\Bun_G})\in \Shv(\Bun_G\times \Bun_G),$$
where $\ul\sfe_{\Bun_G}\in \Shv(\Bun_G)$ is the constant sheaf.

\medskip

The assertion of \thmref{t:non-standard duality} is easily deduced from the fact that the above object $\on{ps-u}_{\Bun_G,\Nilp}$
belongs to the essential image of (what is a priori just a fully faithful functor):
\begin{equation} \label{e:Kunneth intro}
\Shv_\Nilp(\Bun_G)\otimes \Shv_\Nilp(\Bun_G) \to \Shv_{\Nilp\times \Nilp}(\Bun_G\times \Bun_G),
\end{equation}
thereby providing the unit for the sought-for self-duality of $\Shv_\Nilp(\Bun_G)$.

\medskip

Now, it turns out that the functor \eqref{e:Kunneth intro} is actually an equivalence; this is the
highly non-trivial \cite[Theorem 16.3.3]{AGKRRV}. We give a slightly different proof of this
theorem in the present paper (see \thmref{t:Kunneth BunG}), but one which still uses the key results of \cite{AGKRRV}.

\ssec{Relation to miraculous duality and the Serre functor}

\sssec{}

Recall that there \emph{is} a way to define a self-duality on the entire category $\Shv(\Bun_G)$,
as well as its subcategory $\Shv_\Nilp(\Bun_G)$.

\medskip

Namely, we consider the category $\Shv(\Bun_G)_{\on{co}}$ (see \secref{ss:co BunG} for the definition),
and Verdier duality defines an identification
$$\Shv(\Bun_G)^\vee \simeq \Shv(\Bun_G)_{\on{co}},$$
with the counit defined by the pairing $\on{ev} _{\Bun_G}$ mentioned above.

\medskip

Now, in the paper \cite{Ga1}, it is shown that a certain canonically defined functor, 
denoted in this note by 
$$\Mir_{\Bun_G}: \Shv(\Bun_G)_{\on{co}}\to \Shv(\Bun_G),$$
is an equivalence. The functor $\Mir_{\Bun_G}$, which makes sense for any algebraic stack, was proposed by Drinfeld;
we refer to it as the \emph{miraculous functor}. 

\medskip

Combining Verdier duality with the miraculous functor, we obtain an identification
\begin{equation} \label{e:Mir intro}
\Shv(\Bun_G)^\vee \simeq \Shv(\Bun_G).
\end{equation}

Following Drinfeld, we refer to \eqref{e:Mir intro} as the \emph{miraculous self-duality} of $\Shv(\Bun_G)$.

\sssec{}

One shows (see \corref{c:Mir duality Nilp}) that the identification \eqref{e:Mir intro}
induces an identification
\begin{equation} \label{e:Mir intro Nilp}
\Shv_\Nilp(\Bun_G)^\vee \simeq \Shv_\Nilp(\Bun_G). 
\end{equation}

Now, the second main result of this note, \corref{c:non-st vs mir}, says that the identification
\eqref{e:Mir intro Nilp} equals \eqref{e:new intro Nilp} introduced in \secref{sss:intro new}. 

\sssec{}

In the process of identifying \eqref{e:Mir intro Nilp} with \eqref{e:new intro Nilp} we relate 
$\Mir_{\Bun_G}$ to the \emph{pseudo-identity} and \emph{Serre} functors
(see Sects. \ref{sss:ps-id} and \ref{sss:Serre} for what we mean by the latter functors).

\medskip

Namely, let $\CU$ be a universally $\Nilp$-\emph{contruncative} quasi-compact open substack of $\Bun_G$
(see \secref{sss:N-cotrunc} for what this means; such substacks form a cofinal subset among all quasi-compact open substacks of $\Bun_G$).
Consider the endofunctor 
$$\Mir_\CU:\Shv(\CU)\to \Shv(\CU),$$
and let us restrict it to $\Shv_\Nilp(\CU)\subset \Shv(\CU)$.

\medskip

We show that this restriction sends $\Shv_\Nilp(\CU)$ to $\Shv_\Nilp(\CU)$. We show that 
the resulting endofunctor of $\Shv_\Nilp(\CU)$ is an equivalence, which 
identifies with $\on{Ps-Id}_{\Shv_\Nilp(\CU)}$, 
and also with the \emph{inverse} of the Serre functor $\on{Se}_{\Shv_\Nilp(\CU)}$. In particular, the categories 
$\Shv_\Nilp(\CU)$ are Serre (see \secref{sss:Serre defn} for what this means).

\medskip

We should emphasize that the idea that for a Serre category, the Serre and pseudo-identity endofunctors
are mutually inverse goes back to A.~Yom Din (it was recorded in the paper \cite{GaYo}).

\sssec{}

Returning to the entire $\Bun_G$, we show that the Serre functor on $\Shv_\Nilp(\Bun_G)$ canonically
factors as
$$\Shv_\Nilp(\Bun_G) \overset{\on{Se}_{\Shv_\Nilp(\Bun_G),\on{co}}}\longrightarrow \Shv_\Nilp(\Bun_G)_{\on{co}}
\overset{\on{Id}^{\on{naive}}_{\Bun_G}}\longrightarrow \Shv_\Nilp(\Bun_G),$$
where the first arrow is an equivalence inverse to
$$\Mir_{\Bun_G}|_{\Shv_\Nilp(\Bun_G)_{\on{co}}}:\Shv_\Nilp(\Bun_G)_{\on{co}}\to \Shv_\Nilp(\Bun_G),$$
and the second arrow is the tautological functor, denoted $\on{Id}^{\on{naive}}_{\Bun_G}$, defined for any
non-quasi-compact stack (see \secref{sss:Id naive}).

\medskip

Thus, we obtain that $\Shv_\Nilp(\Bun_G)$ is ``Serre, up to the issue of non-quasi-compactness", which is compensated
by replacing the target $\Shv_\Nilp(\Bun_G)$ with $\Shv_\Nilp(\Bun_G)_{\on{co}}$. 

\begin{rem}

The functor 
$$\Mir_{\Bun_G}^{-1}:\Shv(\Bun_G)\to \Shv(\Bun_G)_{\on{co}}$$
has been recently studied by L.~Chen in \cite{Ch}. Namely, in {\it loc. cit.} 
it was shown that it is canonically isomorphic to the \emph{Deligne-Lusztig} functor,
the latter being an explicit complex whose terms are composites of Eisenstein and
Constant Term functors.

\medskip

A spectral counterpart of the Deligne-Lusztig functor, which is an endofunctor of 
the category $\IndCoh_\Nilp(\LocSys_\cG(X))$ has been studied by D.~Beraldo in \cite{Be}.
In {\it loc. cit.} it is shown that the spectral Deligne-Lusztig functor is the composition
$$\IndCoh_\Nilp(\LocSys_\cG(X))\twoheadrightarrow \QCoh(\LocSys_\cG(X))\to \QCoh(\LocSys_\cG(X))\hookrightarrow
\IndCoh_\Nilp(\LocSys_\cG(X)),$$
where the middle arrow $\QCoh(\LocSys_\cG(X))\to \QCoh(\LocSys_\cG(X))$ is given by tensor product with
an explicit object of $\QCoh(\LocSys_\cG(X))$, called the Steinberg object, which is in some sense
``the structure sheaf of the locus of semi-simple local systems".

\end{rem}

\ssec{An intrinsic characterization of $\Shv_\Nilp(\Bun_G)$}

\sssec{}

The isomorphism between the functors \eqref{e:Mir intro Nilp} with \eqref{e:new intro Nilp} has another,
rather unexpected consequence:

\medskip

It turns out that the subcategory
$\Shv_\Nilp(\Bun_G)^{\on{constr}}$ can be characterized intrinsically, without mentioning the singular support condition. 

\sssec{}

In \secref{s:ker} we recall the construction of a 2-category, whose objects are algebraic stacks,
and whose category of morphisms for a given pair of stacks $\CY_1$ and $\CY_2$ is
$$\Shv(\CY_1\times \CY_2).$$

An object $\CQ\in \Shv(\CY_1\times \CY_2)$ gives rise to a functor 
$$\sQ:\Shv(\CY_1)\to \Shv(\CY_2),$$
but the data of $\CQ$ carries more information. Namely, $\CQ$ gives rise to functors denoted
$$\on{Id}_\CZ\boxtimes \sQ:\Shv(\CZ\times \CY_1)\to \Shv(\CZ\times \CY_2)$$
for any algebraic stack $\CZ$ that commute appropriately with functors that relate different $\CZ$'s
(see \secref{sss:compat kernels new}).

\sssec{}

Thus, given $\CQ\in \Shv(\CY_1\times \CY_2)$ one can ask whether it admits a right or a left adjoint
within the above 2-category. 

\medskip 

For example, $\CQ$ admits a right adjoint if and only if the above functors $\on{Id}_\CZ\boxtimes \sQ$ admit right
adjoints for every $\CZ$ (this is not very restrictive), \emph{and} if these right adjoints, denoted 
$(\on{Id}\otimes \sQ)^R$, themselves commute appropriately with functors that relate different $\CZ$'s
(this is a really non-trivial condition).

\sssec{}

Let us consider the case when $\CY_1=Y$ is a connected separated scheme and $\CY_2=\on{pt}$. Then an
object 
$$\CF\in \Shv(Y)=\Shv(\CY_1\times \CY_2)$$
admits a right adjoint if and only if the following conditions are satisfied: (i) $Y$ is smooth and proper;
(ii) $\CF\in \Lisse(Y)$ (i.e., $\CF$ is bounded, and each of its cohomologies is a locally system of finite rank). 

\sssec{}

The third main result of this paper (\thmref{t:char of Nilp adj}) says that an object
$\CF\in \Shv(\Bun_G)^{\on{constr}}$ admits a right adjoint if and only if it belongs to $\Shv_\Nilp(\Bun_G)$.

\medskip

This result suggests that in a certain sense, the stack $\Bun_G$ (or any $\Nilp$-cotruncative open substack
thereof) together with the subset $\Nilp$ of its cotangent bundle, behaves similarly to a proper smooth
scheme $Y$, with $\{0\}\subset T^*(Y)$, see also Sects. \ref{sss:ex smooth prel} and \ref{sss:ex smooth}.

\ssec{Organization of the paper} Let us briefly review the contents of this work.

\medskip

In \secref{s:Nilp} we extend the results of \cite[Part III]{AGKRRV} concerning $\Shv_\Nilp(\Bun_G)$
to the case when instead of $\Bun_G$, we consider the relative situation, i.e., $\CZ\times\Bun_G$,
where $\CZ$ is an arbitrary algebraic stack.

\medskip

In \secref{s:Verdier} we show that the category $\Shv_\Nilp(\Bun_G)$ is self-dual via a combination 
of Verdier duality and the miraculous functor. 

\medskip

In \secref{s:pairing} we prove the main result relevant to the trace calculation, namely, that the 
pairing $\on{ev}^l_{\Bun_G}$ defines the counit of a self-duality on $\Shv_\Nilp(\Bun_G)$.

\medskip

In \secref{s:adj} we establish the intrinsic characterization of the subcategory
$\Shv_\Nilp(\Bun_G)^{\on{constr}}\subset \Shv(\Bun_G)$ as consisting of objects that admit
right adjoints when viewed as kernels.

\medskip

In \secref{s:Serre} we relate the miraculous functor on $\Shv_\Nilp(\Bun_G)$ to the Serre functor, 
and establish the Serre property of categories $\Shv_\Nilp(\CU)$, where $\CU$
is a universally $\Nilp$-contruncative quasi-compact open substack of $\Bun_G$. 

\medskip

In \secref{s:sheaves} we collect some basic facts pertaining to the category of sheaves
on algebraic stacks; in particular to Verdier duality in this situation and the functor of
renormalized direct image.

\medskip

In \secref{s:ker} we review the theory of functors defined by kernels,
their behavior with respect to adjunctions and relation to the miraculous functor. 

\medskip

In \secref{s:non qc} we review the theory of sheaves on non quasi-compact stacks,
the ``co"- categories, and functors defined by kernels in this situation.

\ssec{Conventions}

\sssec{Algebraic geometry} \label{sss:stacks}

Throughout the note we will work over a ground field $k$, assumed algebraically closed. 
The algebraic geometry over $k$ will be \emph{classical}, i.e., non-derived.

\medskip

Our algebro-geometric objects will be either schemes of finite type or algebraic stacks locally
of finite type over $k$. In this note we will not need more general prestacks. 

\medskip

All \emph{quasi-compact} algebraic stacks that appear in this paper will be of the form 
$Z/H$, where $Z$ is a scheme (of finite type) and $H$ a linear algebraic group. All 
non quasi-compact algebraic stacks that appear in this paper will be unions of 
quasi-compact stacks of the above form. 

\sssec{Higher algebra}

Let $\sfe$ be a field of coefficients, assumed algebraically closed and of characteristic $0$.
Our main objects of study are DG categories over $\sfe$. In our treatment of DG categories
we follow the conventions of \cite{AGKRRV}.

\medskip

In particular, we will denote by $\DGCat$ the $(\infty,1)$-category, whose objects are 
cocomplete DG categories, and whose 1-morphisms are colimit-preserving functors.

\medskip 

The category $\DGCat$ carries a symmetric monoidal structure, given by the Lurie tensor
product; we denote it by
$$\bC_1,\bC_2\mapsto \bC_1\otimes \bC_2.$$

The unit object for this symmetric monoidal structure is the DG category of chain complexes
of $\sfe$-vector spaces, denoted $\Vect$. 

\medskip

In particular, we can talk about dualizable objects in $\DGCat$. These are 
\emph{dualizable} DG categories. For a dualizable DG category $\bC$ we will denote
by $\bC^\vee$ its dual. 

\sssec{Compact generation} \label{sss:comp}

For a given DG category $\bC$ and objects $\bc_1,\bc_2\in \bC$ we will denote by
$$\CHom_\bC(\bc_1,\bc_2)\in \Vect$$
the object corresponding to the canonical enrichment of $\bC$ over $\Vect$.

\medskip

For a given $\bC$ we will denote by $\bC^c$ the subcategory consisting of compact objects, i.e., 
those objects for which the functor
$$\CHom_\bC(\bc,-):\bC\to \Vect$$
preserves colimits. 

\medskip

A category $\bC$ is said to be compactly generated if $\CHom_\bC(\bc,\bc')=0$ for all $\bc\in \bC^c$ implies
$\bc'=0$. In this case, $\bC$ can be recovered as the \emph{ind-completion} of its subcategory $\bC^c$.

\medskip

Furthermore, if $\bC$ is compactly generated, it is dualizable, and its dual $\bC^\vee$ can be described
explicitly as follows: $\bC^\vee$ is also compactly generated and its subcategory of compact objects
$(\bC^\vee)^c$ identifies with $(\bC^c)^{\on{op}}$. 

\ssec{Acknowledgements}

D.G. would like to thank A.~Yom Din for suggesting the initial idea, worked out in \cite{GaYo},
about the relationship between the Serre and pseudo-identity functors. 

\medskip

The entire project was supported by David Kazhdan's ERC grant No 669655
and BFS grant 2020189. The work of D.K. and Y.V. was supported BSF grant 2016363. 

\medskip

The work of D.A. was supported by NSF grant DMS-1903391. 
The work of D.G. was supported by NSF grant DMS-2005475. 
The work of D.K. was partially supported by ISF grant 1650/15. 
The work of S.R. was supported by NSF grant DMS-2101984.
The work of Y.V. was partially supported by ISF grants 822/17 and 2019/21. 

\section{The category $\Shv_\Nilp(\Bun_G)$ and is variants} \label{s:Nilp}

In this section we extend the results of \cite[Part III]{AGKRRV} concerning $\Shv_\Nilp(\Bun_G)$
to the case when instead of $\Bun_G$, we consider the relative situation, i.e., $\CZ\times\Bun_G$,
where $\CZ$ is an arbitrary algebraic stack.

\ssec{Hecke action in the relative situation}

\sssec{}

For the duration of the paper we fix a smooth projective curve $X$, 
and a reductive group $G$ (both over $k$). 

\medskip

We let $\Bun_G$ denote the moduli stack of principal $G$-bundles on $X$. 

\sssec{}

For a finite set $I$, we consider the $I$-legged Hecke stack $\CH_{G,X^I}$ 
$$\Bun_G \overset{\hl}\leftarrow \CH_{G,X^I} \overset{\hr \times s}\longrightarrow \Bun_G\times X^I.$$

Let 
$$\on{Sat}_I:\Rep(\cG)^{\otimes I}\to \Shv(\CH_{G,X^I})$$
denote the geometric Satake functor. 

\medskip

We normalize it so that the value of $\on{Sat}_I$ on the trivial representation 
$(\one_{\Rep(\cG)})^{\otimes I}$ is (the direct image of) $\omega_{\Bun_G\times X^I}$,
where 
$$\Bun_G\times X^I\hookrightarrow \CH_{G,X^I}$$
is the unit section. 

\begin{rem}
The entire $\CH_{G,X^I}$ is an ind-algebraic stack, i.e., a filtered union of algebraic stacks
that map to each other by means of closed embeddings. 

\medskip

For each compact object $V\in \Rep(\cG)^{\otimes I}$, the object $\on{Sat}_I(V)\in \Shv(\CH_{G,X^I})$
is supported on one of the closed algebraic substacks of $\CH_{G,X^I}$. 

\medskip

Hence, in the discussion below,
one can start by working with compact objects of $\Rep(\cG)^{\otimes I}$, and thus deal with usual
algebraic stacks, and then ind-extend to all of $\Rep(\cG)^{\otimes I}$. 

\end{rem}

\sssec{}

Let $\CZ$ be an arbitrary algebraic stack. We can consider Hecke functors, denoted
\begin{equation} \label{e:Hecke action}
\on{Id}_\CZ\boxtimes \sH(-,-):\Rep(\cG)^{\otimes I}\otimes \Shv(\CZ\times \Bun_G)\to\Shv(\CZ\times \Bun_G\times X^I), \quad I\in \on{fSet}. 
\end{equation}

\medskip

Namely, for $V\in \Rep(\cG)^{\otimes I}$, the Hecke functor $\on{Id}_\CZ\boxtimes \sH(V,-)$ is given by
\begin{equation} \label{e:Hecke action formula}
\on{Id}_\CZ\boxtimes \sH(V,\CF):=
(\on{id}_\CZ \times (\hr \times s))_* \left((\on{id}_\CZ \times \hl)^!(\CF)\overset{!}\otimes (\omega_\CZ \boxtimes \on{Sat}_I(V))\right), 
\quad \CF\in \Shv(\CZ\times \Bun_G).
\end{equation}

\medskip

Note that in the above formula, the functor $(\on{id}_\CZ \times (\hr \times s))_*$ is canonically isomorphic
to the functor $(\on{id}_\CZ \times (\hr \times s))_\blacktriangle$, since the morphism $\hr \times s$ is (ind)-schematic
(it is actually (ind)-proper). 

\medskip

The functors $\on{Id}_\CZ\boxtimes \sH(V,-)$ introduced above fall into the paradigm of functors \emph{defined} by a kernel,
see \secref{ss:ker} for what this means (see also \secref{ss:functors by ker non qc} for 
the formalism of functors defined by kernels on non quasi-compact stacks). 

\medskip

Thus, the notation $\on{Id}_\CZ\boxtimes \sH(V,-)$ is consistent with one in \secref{sss:ker Z}. 


\sssec{} \label{sss:Hecke ULA}

A key property of the objects of the form $\on{Sat}_I(V)$, for $V\in (\Rep(\cG)^{\otimes I})^c$, 
 is that they are ULA with respect to the 
map $\hl\times s$ (and, by symmetry, also with respect to the map $\hr\times s$). 

\medskip

In particular, since $X^I$ is smooth, the objects $\on{Sat}_I(V)$, for $V\in (\Rep(\cG)^{\otimes I})^c$, 
are ULA with respect to the projection $\hl$. This implies that there are canonical isomorphisms
$$(\on{Id}_\CZ \times \hl)^!(\CF)\sotimes (\omega_\CZ \boxtimes \on{Sat}_I(V))
\simeq 
(\on{Id}_\CZ \times \hl)^*(\CF)\overset{*}\otimes (\ul\sfe_\CZ \boxtimes (\on{Sat}_I(V)\sotimes  \hl^!(\ul\sfe_{\Bun_G}))), \quad \CF\in \Shv(\CZ\times \Bun_G),$$
where we note that the operation $-\sotimes \hl^!(\ul\sfe_{\Bun_G})$ amounts to 
the cohomological shift $[-2\dim(\Bun_G)]$.

\medskip

Furthermore, the map $\hr \times s$ is ind-proper. This implies that the functor $\on{Id}_\CZ\boxtimes \sH(V,-)$ 
can be rewritten as
$$(\on{Id}_\CZ \times (\hr \times s))_! \left((\on{id}_\CZ \times \hl)^*(\CF)\overset{*}\otimes 
(\ul\sfe_\CZ \boxtimes \on{Sat}_I(V))\right)[-2\dim(\Bun_G)].$$

This implies that the functors $\on{Id}_\CZ\boxtimes \sH(V,-)$ are \emph{both defined and codefined by kernels}, see \secref{sss:defined and codefined}
for what this means. In particular, the functors $\on{Id}_\CZ\boxtimes \sH(V,-)$ commute with $*$-pullbacks and 
$!$-pushforwards along the $\CZ$ variable. 

\sssec{} \label{sss:Ran action}

Recall the monoidal category $\Rep(\cG)_{\Ran}$, see \cite[Sect. 11.1]{AGKRRV}. As in 
\cite[Sect. 15.1]{AGKRRV}, the functors \eqref{e:Hecke action} gives rise to a monoidal action of 
$\Rep(\cG)_{\Ran}$ on $\Shv(\CZ\times \Bun_G)$. 

\medskip

We will denote this action by
$$\CV\in \Rep(\cG)_{\Ran} \, \rightsquigarrow \CV\star -.$$

For a fixed $\CV$, we will also denote the above functor by 
\begin{equation} \label{e:int Hecke funct}
\on{Id}_\CZ\boxtimes \sH_\CV.
\end{equation}

This is again consistent with the notation of \secref{ss:ker}, since these functors
are defined by kernels, by construction. 

\medskip

We will refer to functors \eqref{e:int Hecke funct} as \emph{integral Hecke functors}. 

\sssec{} \label{sss:Hecke codefined}

The next observation will play an important role in the sequel:

\begin{prop} \label{p:Hecke codefined}
The functors $\sH_\CV$ are \emph{defined and codefined} by kernels.
\end{prop}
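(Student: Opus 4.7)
The plan is to bootstrap from the case already established in \secref{sss:Hecke ULA}: for a finite set $I$ and $V\in (\Rep(\cG)^{\otimes I})^c$, the functor $\on{Id}_\CZ\boxtimes \sH(V,-): \Shv(\CZ\times \Bun_G)\to \Shv(\CZ\times \Bun_G\times X^I)$ is both defined and codefined by kernels. The two presentations agree because $\hr\times s$ is ind-proper (so $*$- and $!$-pushforwards coincide along this map), while the ULA property of $\on{Sat}_I(V)$ with respect to $\hl$ lets one trade $!$-pullback and $\sotimes$ for $*$-pullback and $\overset{*}\otimes$ at the cost of a cohomological shift. This is the essential analytic input, and everything that follows is a formal propagation argument.

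Next, I would unpack the definition of the integral Hecke functor. By \secref{sss:Ran action}, an object $\CV\in \Rep(\cG)_{\Ran}$ is assembled (via the colimit that defines $\Rep(\cG)_{\Ran}$) from data of the form $(I, V, \CS)$ with $V\in \Rep(\cG)^{\otimes I}$ and $\CS\in \Shv(X^I)$, and the functor $\sH_\CV$ is obtained from $\sH(V,-)$ by tensoring with $\CS$ along the $X^I$-factor and then pushing forward along the projection $p_{X^I}:\Bun_G\times X^I\to \Bun_G$. Crucially, since $X$ is smooth and projective, so is $X^I$; in particular $p_{X^I}$ is proper, so $(p_{X^I})_*\simeq (p_{X^I})_!$.

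I would then verify two stability properties of the class of functors that are simultaneously defined and codefined by kernels, using the formalism of \secref{ss:ker}:
\begin{enumerate}
\item[(i)] Post-composition with pushforward along a proper morphism preserves the property (because both $*$- and $!$-pushforwards are available, matching the two realizations of the kernel).
\item[(ii)] Tensoring the kernel with the $!$-pullback (resp.\ $*$-pullback, up to the ULA-shift) of a single sheaf from an auxiliary smooth factor preserves the property.
\end{enumerate}
Applying (i) to the projection $p_{X^I}$ and (ii) to the sheaf $\CS\in \Shv(X^I)$, one obtains the claim for each fixed triple $(I,V,\CS)$ with $V$ compact and $\CS$ arbitrary.

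Finally, I would observe that, for each pair of stacks $\CY_1,\CY_2$, the subcategory of $\Shv(\CY_1\times \CY_2)$ consisting of kernels whose associated functor is both defined and codefined by kernels is closed under colimits: all operations in the $*$- and $!$-realizations commute with colimits in the kernel variable. Since $\Rep(\cG)_{\Ran}$ is generated under colimits by the objects coming from compact $V$'s and arbitrary $\CS$'s handled above, the property passes to all $\CV\in \Rep(\cG)_{\Ran}$, proving the proposition. The main obstacle is not any single step but rather the bookkeeping in (i)--(ii): one must check compatibility with the auxiliary variable $\CZ$ carefully, which is where the $\CZ$-relative formulation of \secref{sss:Hecke ULA} is used in an essential way.
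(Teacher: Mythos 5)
Your overall architecture matches the paper's: start from the fact that the single-leg Hecke functors $\sH(V,-)$ are defined and codefined by kernels (\secref{sss:Hecke ULA}), write $\sH_\CV$ as the integration of $\sH(V,-)$ against $\CM\in\Shv(X^I)$ over the proper projection away from $X^I$, and reduce to generators of $\Rep(\cG)_\Ran$ via a colimit argument. Your colimit step and the reduction to fixed $(I,V,\CM)$ are fine.

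However, your stability property (ii) is not a correct general principle, and this is exactly where the nontrivial content of the proposition lives. Tensoring a kernel by $p_{X^I}^!(\CM)$ (resp.\ $p_{X^I}^*(\CM)$) for an \emph{arbitrary} $\CM$ on a smooth auxiliary factor does not preserve the defined-and-codefined property: for the two realizations of the integrated functor (one using $\sotimes$ and $p_{X^I}^!$, the other using $\overset{*}\otimes$ and $p_{X^I}^*$) to be identified, one needs the object being tensored --- namely $(\on{Id}_\CZ\boxtimes \sH(V,-))(\CF)$ for $\CF$ compact --- to be \emph{ULA with respect to the projection $p_{X^I}$ to $X^I$}. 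Smoothness of $X^I$ does not supply this; it is a nontrivial geometric input, stated and proved in the paper as \propref{p:ULA}. You treat it as a formal bookkeeping step, which is where the gap is. Moreover, establishing \propref{p:ULA} requires the ULA property of $\on{Sat}_I(V)$ with respect to $\hl\times s$ (the map to $\Bun_G\times X^I$), fed through a general lemma about ULA under correspondences; the ULA property with respect to $\hl$ alone that you cite is the weaker consequence obtained by forgetting the $X^I$-coordinate and is not strong enough to control the $X^I$-direction, which is precisely what the integration step needs. So the analytic input you name is insufficient, and the step you call ``formal propagation'' in (ii) is in fact where the argument concentrates.
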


The proof is given in the next subsection. 

\ssec{Proof of \propref{p:Hecke codefined}}

\sssec{}

We need to show that the functors $\on{Id}_\CZ\boxtimes \sH_\CV$ of
\eqref{e:int Hecke funct} commute with $*$-pullbacks and $!$-pushforwards along the $\CZ$ variable. 
It is enough to check this assertion on the generators of the category $\Rep(\cG)_{\Ran}$. 

\medskip

Thus,
we fix a finite set $I$, an object $V\in (\Rep(\cG)^{\otimes I})^c$ and $\CM\in \Shv(X^I)^c$. The corresponding
functor $\on{Id}_\CZ\boxtimes \sH_\CV$ is given by
\begin{equation} \label{e:int Hecke expl}
\CF\mapsto  (p_{\CZ\times \Bun_G})_*((\on{Id}_\CZ\boxtimes \sH(V,-))(\CF)\sotimes p_{X^I}^!(\CM)),
\end{equation}
where 
$$\CZ\times \Bun_G \overset{p_{\CZ\times \Bun_G}}\longleftarrow \CZ\times \Bun_G\times X^I\overset{p_{X^I}}\longrightarrow X^I$$
are the two projections. 

\medskip

We need to show that these functors commute with $*$-pullbacks and $!$-pushforwards along the $\CZ$ variable. We will
prove this by showing that the functor \eqref{e:int Hecke expl} is canonically isomorphic to the functor 
\begin{equation} \label{e:int Hecke expl !}
\CF\mapsto  (p_{\CZ\times \Bun_G})_!((\on{Id}_\CZ\boxtimes \sH(V,-))(\CF)\overset{*}\otimes p_{X^I}^*(\CM))[-2|I|].
\end{equation}

From here, the commutation with $*$-pullbacks and $!$-pushforwards would then follow from the fact that the functor 
$\sH(V,-)$ is defined and codefined by a kernel, see \secref{sss:Hecke ULA} above. 

\medskip

We can assume that $\CF$ is compact. Since the map $p_{\CZ\times \Bun_G}$ is proper, the isomorphism between
\eqref{e:int Hecke expl} and \eqref{e:int Hecke expl !} follows from the next assertion:

\begin{prop} \label{p:ULA}
An object of the form
$$(\on{Id}_\CZ\boxtimes \sH(V,-))(\CF) \in \Shv(\CZ\times \Bun_G\times X^I), \quad \CF\in \Shv(\CZ\times \Bun_G)^c$$
is ULA with respect to the projection $p_{X^I}:\CZ\times \Bun_G\times X^I\to X^I$.
\end{prop}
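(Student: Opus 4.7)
The plan is to leverage the ULA property of $\on{Sat}_I(V)$ with respect to $\hl\times s$, which is the fundamental input, and to propagate it through the operations that assemble $(\on{Id}_\CZ\boxtimes \sH(V,-))(\CF)$. The starting point is the alternative expression recorded in \secref{sss:Hecke ULA}: combining ULA of $\on{Sat}_I(V)$ with respect to $\hl\times s$ with the ind-properness of $\hr\times s$, we have
$$
(\on{Id}_\CZ\boxtimes \sH(V,-))(\CF) \simeq (\on{id}_\CZ \times (\hr \times s))_!\bigl((\on{id}_\CZ \times \hl)^*(\CF)\overset{*}\otimes(\ul\sfe_\CZ \boxtimes \on{Sat}_I(V))\bigr)[-2\dim(\Bun_G)].
$$

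The morphism $\on{id}_\CZ \times (\hr \times s)$ is ind-proper, and its source and target both project to $X^I$ compatibly with it. Since proper pushforward preserves the ULA property over a common base, it suffices to verify that the pre-pushforward object
$$
\CG := (\on{id}_\CZ \times \hl)^*(\CF)\overset{*}\otimes (\ul\sfe_\CZ \boxtimes \on{Sat}_I(V)) \in \Shv(\CZ \times \CH_{G,X^I})
$$
is ULA with respect to the projection $\CZ \times \CH_{G,X^I} \to X^I$ factoring through $s$. The first tensor factor is $*$-pulled back from $\CZ \times \Bun_G$, which maps to $X^I$ only through $\Spec(k)$; such a sheaf is tautologically ULA relative to $X^I$. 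For the second factor, the ULA property of $\on{Sat}_I(V)$ with respect to $\hl\times s$, combined with smoothness of the projection $\Bun_G \times X^I \to X^I$ (which uses smoothness of $\Bun_G$), yields the ULA property of $\on{Sat}_I(V)$ with respect to $s$; hence $\ul\sfe_\CZ \boxtimes \on{Sat}_I(V)$ is ULA relative to $X^I$. Standard preservation of ULA under $*$-tensor products of objects ULA relative to a common base finishes the verification for $\CG$.

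The main obstacle I anticipate is the careful justification of the ULA formalism in our setting, namely for sheaves on products with an arbitrary algebraic stack $\CZ$ and on the ind-algebraic stack $\CH_{G,X^I}$. The $\CZ$-factor is inert from the viewpoint of ULA relative to $X^I$, because the ULA property for an external product reduces to the ULA properties of the individual factors. The ind-stacky issue is handled by reducing to the closed algebraic substack of $\CH_{G,X^I}$ on which the compact object $\on{Sat}_I(V)$ is supported, and then applying the standard theory on algebraic stacks of finite type.
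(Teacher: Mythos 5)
Your overall strategy is close to the paper's: both reduce via the (ind-)properness of $\on{id}_\CZ\times(\hr\times s)$ to showing that a sheaf on $\CZ\times\CH_{G,X^I}$ is ULA over $X^I$, and both crucially use the ULA property of $\on{Sat}_I(V)$ with respect to $\hl\times s$. However, there is a genuine gap in how you conclude for the object on the Hecke stack. You claim that $(\on{id}_\CZ\times\hl)^*(\CF)$ is ``tautologically ULA relative to $X^I$'' because it is pulled back from $\CZ\times\Bun_G$, which does not depend on $X^I$. This inference is false: ULA with respect to $s$ is a property that depends on how the sheaf sits inside $\CZ\times\CH_{G,X^I}$ relative to $s$, and a pullback along $\on{id}_\CZ\times\hl$ need not be ULA over $X^I$ unless the joint map $(\on{id}_\CZ\times\hl)\times s$ has special structure (e.g.\ is a product projection, or is smooth). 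Here it is neither: the bounded closed substacks of $\CH_{G,X^I}$ are singular, and the fibers of $(\hl,s)$ change type along the diagonals of $X^I$. Indeed, the factorization phenomenon for the Beilinson--Drinfeld Grassmannian is precisely the statement that it is $\on{Sat}_I(V)$ (built from IC sheaves), and not the constant sheaf $\hl^*(\ul\sfe_{\Bun_G})$, that is ULA over $\Bun_G\times X^I$ (and over $X^I$), so the ``tautological'' ULA of the pulled-back factor fails already in the simplest cases once $|I|\geq 2$.

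The correct step, which you need in place of the two-factors-each-ULA argument, is the lemma the paper invokes: if $\CG\in\Shv(\CY)^c$ is ULA with respect to $(p_1,p_2):\CY\to\CY_1\times\CY_2$, then for any $\CG_1\in\Shv(\CY_1)^{\on{constr}}$, the tensor $\CG\sotimes p_1^!(\CG_1)$ is ULA with respect to $p_2$. Applied to $\CG=\omega_\CZ\boxtimes\on{Sat}_I(V)$ (ULA over $\CZ\times\Bun_G\times X^I$), $p_1=\on{id}_\CZ\times\hl$, $p_2=s$ and $\CG_1=\CF$, this finishes. The point is that the pullback factor does not need to be ULA on its own; the ULA-over-the-product property of the Satake sheaf is what carries the conclusion once one tensors with any constructible pullback. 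Your reduction to a closed algebraic substack of $\CH_{G,X^I}$ is fine, but the subsequent step where you split the ULA verification across the two tensor factors is where the argument breaks.
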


\qed[\propref{p:Hecke codefined}]

\sssec{Proof of \propref{p:ULA}}

Since the map $$\on{id}_\CZ\times (\hr \times s): \CZ\times \CH_{G,X^I}\to \CZ\times \Bun_G\times X^I$$ is ind-proper, it suffices to show that the object
\begin{equation} \label{e:ULA upstairs}
(\on{Id}_\CZ \times \hl)^!(\CF)\sotimes (\omega_\CZ \boxtimes \on{Sat}_I(V))\in \Shv(\CZ\times \CH_{G,X^I})
\end{equation}
is ULA with respect to the map 
$$p_{X_I}\circ (\on{id}_\CZ\times (\hr \times s)),$$
where we note that 
$$p_{X_I}\circ (\on{id}_\CZ\times (\hr \times s))=s=p_{X_I}\circ (\on{id}_\CZ\times (\hl \times s)).$$ 

\medskip

Thus, it suffices to show that \eqref{e:ULA upstairs} is ULA with respect to the projection 
$p_{X_I}\circ (\on{id}_\CZ\times (\hl \times s))$. However, this follows from the fact that
$\on{Sat}_I(V)$ is ULA with respect to $\hl \times s$ using the following general lemma:

\begin{lem}
Let 
$$\CY_1 \overset{p_1}\leftarrow \CY \overset{p_2}\to \CY_2$$
be a diagram of stacks. Let 
$\CG\in \Shv(\CY)^c$ be ULA with respect to the map
$$(p_1,p_2):\CY\to \CY_1\times \CY_2.$$
Then for any $\CG_1\in \Shv(\CY_1)^{\on{constr}}$, the object
$$\CG\sotimes p_1^!(\CG_1)\in  \Shv(\CY)^{\on{constr}}$$
is ULA with respect to $p_2$.
\end{lem}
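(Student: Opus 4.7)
The plan is to factor $p_2$ through the map $q := (p_1,p_2): \CY \to \CY_1\times \CY_2$ and reduce the claim to two structural facts about ULA. Since $p_1 = \mathrm{pr}_1\circ q$, we have a canonical identification
$$\CG\sotimes p_1^!(\CG_1) \simeq \CG\sotimes q^!\bigl(\mathrm{pr}_1^!(\CG_1)\bigr),$$
and moreover $p_2 = \mathrm{pr}_2\circ q$. So it suffices to combine an ``input'' ULA property for $\CG$ relative to $q$ with an ``intermediate'' ULA property for $\mathrm{pr}_1^!(\CG_1)$ relative to $\mathrm{pr}_2$.

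The two structural facts I will invoke are the following. First, a transitivity principle: if $\CA\in \Shv(\CY)^c$ is ULA with respect to a map $f:\CY\to \CY'$, and $\CB\in \Shv(\CY')^{\mathrm{constr}}$ is ULA with respect to a map $f':\CY'\to \CY''$, then $\CA\sotimes f^!(\CB)\in \Shv(\CY)^{\mathrm{constr}}$ is ULA with respect to $f'\circ f$. Second, a ``projection'' fact: for any $\CG_1\in \Shv(\CY_1)^{\mathrm{constr}}$, the sheaf $\mathrm{pr}_1^!(\CG_1)$ on $\CY_1\times \CY_2$ is ULA with respect to the second projection $\mathrm{pr}_2$, because it is pulled back from the other factor and thus is ``constant in the $\CY_2$-direction''.

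Applying transitivity with $\CA = \CG$, $\CB = \mathrm{pr}_1^!(\CG_1)$, $f = (p_1,p_2)$, $f' = \mathrm{pr}_2$ yields that $\CG\sotimes p_1^!(\CG_1)$ is ULA over $p_2$. The constructibility assertion is automatic: compact ULA objects are constructible, and $!$-tensoring with a constructible sheaf preserves constructibility.

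The main obstacle is the transitivity principle itself, which is the substantive content behind the lemma. Its proof amounts to unpacking the definition of ULA as compatibility of formation of $\CA\sotimes f^!(-)$ with $!$-base change along arbitrary morphisms $T\to \CY''$: one decomposes the resulting Cartesian square into two stacked Cartesian squares corresponding to $f'$ and $f$, applies the hypothesis on $\CB$ (relative to $f'$) to obtain a base-change iso for $f^!(\CB)$, then applies the hypothesis on $\CA$ (relative to $f$) together with the projection formula to commute $\sotimes$ past the remaining $!$-operations. The ``projection'' fact is then a routine K\"unneth-type verification in the Cartesian square defining $\mathrm{pr}_2$.
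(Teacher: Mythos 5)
The paper states this lemma but does not supply a proof for it: it appears as a ``general lemma'' inside the proof of \propref{p:ULA}, and the \qed\ that follows closes the proof of the Proposition without establishing the Lemma. So there is no internal proof to compare against; the question is whether your argument holds on its own.

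It does. The factorization $p_2 = \mathrm{pr}_2\circ q$ with $q = (p_1,p_2)$ and the identity $p_1^! \simeq q^!\circ\mathrm{pr}_1^!$ reduce the statement exactly to the two facts you name, and both are standard in the ULA literature (for the scheme case see [HS], Lemma 4.2; the stack version is what the paper implicitly invokes). The constructibility step is covered by \secref{sss:Verdier compatible bis}: $\sotimes$ with a constructible object preserves the compact objects of $\Shv(\CY)$, so $\CG\sotimes p_1^!(\CG_1)$ is indeed constructible.

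Two small cautions, neither fatal. First, the transitivity fact is usually stated in the $*$-tensor/$*$-pullback normalization ($A$ being $f$-ULA and $B$ being $g$-ULA implies $A\otimes^* f^*B$ is $(g\circ f)$-ULA); you state it with $\sotimes$ and $f^!$. The two are interchangeable precisely because the ULA hypothesis on $\CA$ supplies the comparison isomorphism between $\CA\otimes^* f^*(-)$ and $\CA\sotimes f^!(-)$ (up to shift), but this is worth making explicit rather than silently gliding between conventions. Second, your sketch of the proof of transitivity describes ULA via base-change compatibility; the paper's Definition in \secref{sss:ULA} is phrased instead as the corresponding functor being \emph{defined and codefined by a kernel}, and (via Remark \ref{r:ULA non-constr new}) appeals to [BG, Appendix B] for the equivalence with the classical notion. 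Since the lemma is applied with a non-schematic source of the ULA condition ($\CY_1\times\CY_2$ need not be a scheme), the classical notion is really what is in play; your argument lives at that level, which is consistent with how the paper uses the lemma.
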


\qed[\propref{p:ULA}]

%
%
%
%

\ssec{Spectral decomposition in a relative situation}

Let
$$\Nilp\subset T^*(\Bun_G)$$
be the nilpotent cone. It is known to be half-dimensional, under very mild assumptions on $\on{char}(k)$,
see \cite[Sect. D]{AGKRRV}, which we impose from now on. 

\medskip

In \cite[Theorem 14.4.3]{AGKRRV}, it was shown that the subcategory $\Shv_\Nilp(\Bun_G)\subset \Shv(\Bun_G)$
can be characterized as that of ``Hecke-lisse" objects. In this subsection we will establish an analog of this result
in the relative situation. 

\sssec{}  \label{sss:Hecke lisse}

For $V\in \Rep(\cG)$, consider the corresponding Hecke functor
$$\on{Id}_\CZ\boxtimes \sH(V,-):\Shv(\CZ\times \Bun_G)\to \Shv(\CZ\times \Bun_G\times X).$$

%
%
%
%
%
%
%
%

%
%
%

Let 
$$\Shv(\CZ\times \Bun_G)^{\on{Hecke-lisse}}$$
denote the full subcategory of $\Shv(\CZ\times \Bun_G)$ that consists of objects $\CF$ such that for 
every $V\in \Rep(\cG)$, 
$$\sH(V,\CF)\in \Shv(\CZ\times \Bun_G)\otimes \qLisse(X) \subset \Shv(\CZ\times \Bun_G\times X).$$

\medskip

As in \cite[Proposition C.2.5]{GKRV}, we obtain that the Hecke functors \eqref{e:Hecke action}
give rise to a compatible family of functors
\begin{equation} \label{e:Hecke functors Nilp}
\Rep(\cG)^{\otimes I} \to \End(\Shv(\CZ\times \Bun_G)^{\on{Hecke-lisse}})\otimes \qLisse(X)^{\otimes I}.
\end{equation}

Thus, in the terminology of \cite[Sect. 8.4.2]{AGKRRV}, we obtain that $\Shv(\CZ\times \Bun_G)^{\on{Hecke-lisse}}$
acquires an action of the (symmetric) monoidal category $\Rep(\cG)^{\otimes X\on{-lisse}}$. 

\sssec{}

Let $\LocSys_\cG^{\on{restr}}(X)$ be the prestack introduced in \cite[Sect. 1.4.2]{AGKRRV},
and recall now (see \cite[Equation (8.10)]{AGKRRV}) that we have a canonically defined functor 
\begin{equation} \label{e:pre-loc}
\Rep(\cG)^{\otimes X\on{-lisse}}\to  \QCoh(\LocSys_\cG^{\on{restr}}(X)).
\end{equation}

A key result \cite[Theorem 8.3.7]{AGKRRV} says that the functor \eqref{e:pre-loc} is an equivalence. 

\medskip

Thus, we obtain that the above $\Rep(\cG)^{\otimes X\on{-lisse}}$-action on $\Shv(\CZ\times \Bun_G)^{\on{Hecke-lisse}}$
is obtained from a uniquely defined monoidal action of the category 
$$\QCoh(\LocSys_\cG^{\on{restr}}(X))$$
on $\Shv(\CZ\times \Bun_G)^{\on{Hecke-lisse}}$. 

\medskip

We will refer to this phenomenon as the \emph{spectral decomposition} of $\Shv(\CZ\times \Bun_G)^{\on{Hecke-lisse}}$
along $\QCoh(\LocSys_\cG^{\on{restr}}(X))$.

\sssec{}

Recall the symmetric monoidal functor
\begin{equation} \label{e:Ran to lisse}
\Rep(\cG)_\Ran\to \Rep(\cG)^{\otimes X\on{-lisse}},
\end{equation}
see \cite[Sect. 11.2.3]{AGKRRV}. Its composition with \eqref{e:pre-loc} is the functor
$$\Loc:\Rep(\cG)_\Ran\to \QCoh(\LocSys_\cG^{\on{restr}}(X))$$
of \cite[Sect. 12.7.1]{AGKRRV}.
 
\medskip 

Following \cite[Sect. 13.3.1]{AGKRRV}, set
$$\Shv(\CZ\times \Bun_G)^{\on{spec}}:=
\on{Funct}_{\Rep(\cG)_\Ran}(\QCoh(\LocSys_\cG^{\on{restr}}(X)), \Shv(\CZ\times \Bun_G)).$$

Pre-composition with $\Loc$ defines a functor
$$\Shv(\CZ\times \Bun_G)^{\on{spec}}\to 
\on{Funct}_{\Rep(\cG)_\Ran}(\Rep(\cG)_\Ran, \Shv(\CZ\times \Bun_G))=\Shv(\CZ\times \Bun_G),$$
which is fully faithful, according to \cite[Proposition 13.3.4]{AGKRRV}. 

\medskip

Thus, we can view $\Shv(\CZ\times \Bun_G)^{\on{spec}}$ as a full subcategory of $\Shv(\CZ\times \Bun_G)$. 

\sssec{} \label{sss:lisse to spec}

By construction, the action of $\Rep(\cG)_\Ran$ on $\Shv(\CZ\times \Bun_G)^{\on{Hecke-lisse}}$ factors
via \eqref{e:Ran to lisse}. 

\medskip

Hence, we obtain
\begin{equation} \label{e:lisse to spec}
\Shv(\CZ\times \Bun_G)^{\on{Hecke-lisse}} \subset \Shv(\CZ\times \Bun_G)^{\on{spec}}.
\end{equation}
%
%
%
%
%

\sssec{}

Consider also the subcategory 
$$\Shv_{\frac{1}{2}\on{-dim}\times \Nilp}(\CZ\times \Bun_G)\subset \Shv(\CZ\times \Bun_G),$$
see \secref{sss:times 1/2-dim} for the notation. 

\sssec{}

%

We will prove:

\begin{thm} \label{t:Hecke action Nilp 2}
For any stack $\CZ$ the following four full subcategories of $\Shv(\CZ\times \Bun_G)$ coincide:

\smallskip

\noindent{\em(i)} The essential image of
$\Shv(\CZ)\otimes \Shv_{\Nilp}(\Bun_G)\to \Shv(\CZ\times \Bun_G)$.

\smallskip

\noindent{\em(ii)} $\Shv_{\frac{1}{2}\on{-dim}\times \Nilp}(\CZ\times \Bun_G)$;

\smallskip

\noindent{\em(iii)} $\Shv(\CZ\times \Bun_G)^{\on{Hecke-lisse}}$;

\smallskip

\noindent{\em(iv)} $\Shv(\CZ\times \Bun_G)^{\on{spec}}$;

\end{thm}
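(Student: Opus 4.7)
\medskip

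\noindent\textbf{Proof plan.} The strategy is to establish the chain of inclusions (i)$\subseteq$(ii)$\subseteq$(iii)$\subseteq$(iv)$\subseteq$(i), using \cite[Theorem 14.4.3]{AGKRRV} as a black box for the absolute case $\CZ=\on{pt}$, and using \propref{p:Hecke codefined} to reduce relative statements to the absolute case via smooth descent along $\CZ$.

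\medskip

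First, the inclusion (i)$\subseteq$(ii) is essentially a general statement about singular support of external products: an object of the form $\CF_1\boxtimes\CF_2$ with $\CF_2\in\Shv_\Nilp(\Bun_G)$ has singular support contained in $\SS(\CF_1)\times\Nilp$, where $\SS(\CF_1)$ is automatically half-dimensional in $T^*\CZ$; then closure of (ii) under colimits suffices. The implication (iii)$\Leftrightarrow$(iv) goes as in the absolute case: the Hecke-lisse condition means the Hecke action factors through $\Rep(\cG)^{\otimes X\on{-lisse}}$, which by \cite[Theorem 8.3.7]{AGKRRV} is $\QCoh(\LocSys_\cG^{\on{restr}}(X))$, giving the spectral decomposition; conversely, any object admitting a spectral decomposition is Hecke-lisse by composing with $\Loc:\Rep(\cG)_\Ran\to\QCoh(\LocSys_\cG^{\on{restr}}(X))$, as in \cite[Proposition 13.3.4]{AGKRRV}.

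\medskip

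Next, I would establish (ii)$\Leftrightarrow$(iii) by reducing to the absolute case. The crucial input is \propref{p:Hecke codefined}: the integral Hecke functors commute with both $*$-pullbacks and $!$-pushforwards along $\CZ$, so for any smooth atlas $f\colon S\to\CZ$ with $S$ an affine scheme of finite type, the pullback $(f\times\on{id}_{\Bun_G})^!$ intertwines the Hecke actions. Both the Hecke-lisse condition and the singular support condition of (ii) are smooth-local in $\CZ$ and, after passing to an atlas, are detected by restrictions to closed points of $S$. For each closed point $z$, the fiber $(z\times\on{id}_{\Bun_G})^!(\CF)\in\Shv(\Bun_G)$ satisfies (iii) iff it satisfies (ii) by \cite[Theorem 14.4.3]{AGKRRV} (both conditions specialize to the ordinary Nilp condition on $\Bun_G$). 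This yields the equivalence of (ii) and (iii).

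\medskip

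The main obstacle is the inclusion (iv)$\subseteq$(i), which asserts that any spectrally decomposed sheaf on $\CZ\times\Bun_G$ is in the essential image of the external product from $\Shv(\CZ)\otimes\Shv_\Nilp(\Bun_G)$. My plan is to mimic the proof of \cite[Theorem 16.3.3]{AGKRRV}: the external product functor
\[
\Shv(\CZ)\otimes\Shv_\Nilp(\Bun_G)\longrightarrow\Shv(\CZ\times\Bun_G)^{\on{spec}}
\]
is fully faithful by the relative Kunneth formula, and its image contains all objects compatible with the $\QCoh(\LocSys_\cG^{\on{restr}}(X))$-action on the $\Bun_G$-factor. The spectral decomposition on the target side only sees the $\Bun_G$-direction, so an object of (iv) is generated under the spectral action by objects of the form $\omega_\CZ\boxtimes\CG$ with $\CG\in\Shv_\Nilp(\Bun_G)$, tensored (via $\sotimes$) with arbitrary sheaves on $\CZ$. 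The key technical ingredient will be that \propref{p:Hecke codefined} allows one to move the spectral action freely past operations on $\CZ$, so the argument structurally parallels the absolute $\Bun_G\times\Bun_G$ case of \cite[Theorem 16.3.3]{AGKRRV}, with one $\Bun_G$ replaced by the general stack $\CZ$. The hardest point will be verifying essential surjectivity of the external product onto the spec-subcategory, for which one needs that $\QCoh(\LocSys_\cG^{\on{restr}}(X))$ acts with enough "generators" that any spectrally decomposed object can be written as a colimit of external products.
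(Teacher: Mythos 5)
Your overall architecture differs from the paper's in a way that creates genuine gaps. The paper proves only the cycle of inclusions (i)\,$\subset$\,(ii)\,$\subset$\,(iii)\,$\subset$\,(iv)\,$\subset$\,(i), and never attempts to prove (iii)\,$\subset$\,(ii) or (iv)\,$\subset$\,(iii) directly; you instead assert both equivalences (ii)\,$\Leftrightarrow$\,(iii) and (iii)\,$\Leftrightarrow$\,(iv), and each of your two extra claims has a flaw.

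First, the claim that (iv)\,$\subseteq$\,(iii) follows ``by composing with $\Loc$, as in [Prop.\ 13.3.4]'' is incorrect. That proposition shows the functor $\Shv(\CZ\times\Bun_G)^{\on{spec}}\to\Shv(\CZ\times\Bun_G)$ is fully faithful, not that an object in its image is Hecke-lisse. Knowing that the $\Rep(\cG)_\Ran$-action on a spectral object factors through $\Loc$ does not by itself tell you that applying the \emph{Hecke functor} $\sH(V,-)$ (which is an external, not internal, operation landing in $\Shv(\CZ\times\Bun_G\times X)$) produces something in the K\"unneth image $\Shv(\CZ\times\Bun_G)\otimes\qLisse(X)$. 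The inclusion (iv)\,$\subset$\,(iii) is precisely one of the deep implications; the paper deduces it only a posteriori, after establishing (iv)\,$\subset$\,(i) by a generation argument.

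Second, your reduction of (ii)\,$\Leftrightarrow$\,(iii) to closed points of a smooth atlas $S\to\CZ$ via [Thm.\ 14.4.3] does not work. Both the half-dimensional singular-support condition on the $\CZ$-factor and the Hecke-lisse condition (membership in the essential image of $\Shv(\CZ\times\Bun_G)\otimes\qLisse(X)$) are categorical constraints, not fiberwise ones. Restricting to a closed point $z$ and invoking the absolute theorem only says the fiber lies in $\Shv_\Nilp(\Bun_G)$; it does not let you conclude the global K\"unneth-type factorization you need. The paper instead proves only (ii)\,$\subset$\,(iii), using the singular-support estimate of [GKRV, Thm.\ B.5.2] to show $\sH(V,\CF)\in\Shv_{\frac12\text{-dim}\times\Nilp\times\{0\}}(\CZ\times\Bun_G\times X)$ and then invoking the categorical K\"unneth theorem (\thmref{t:Kunneth} for $Y_1=X$) to conclude membership in $\Shv(\CZ\times\Bun_G)\otimes\qLisse(X)$.

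Finally, your sketch of (iv)\,$\subseteq$\,(i) gestures at ``mimicking [Thm.\ 16.3.3]'' but leaves the mechanism unspecified. The paper's actual argument hinges on a concrete generation statement (\propref{p:generation}): objects $(\on{Id}_\CZ\boxtimes\sP_{Z_n})^{\on{enh}}(\CF_\CZ\boxtimes\delta_y)$ generate the spec category tensored with $\QCoh(Z_n)$. Its proof requires a non-trivial reduction to geometric points of $\CZ$ (where one can invoke [AGKRRV, Prop.\ 15.4.4 and \S16.2]), plus the computation that the composite projector $\oblv_{\on{Hecke}}\circ(\on{Id}_\CZ\boxtimes\sP_{Z_n})^{\on{enh}}$ equals $\on{Id}_\CZ\boxtimes\sH_\CV$ for an explicit $\CV$, hence sends $\CF_\CZ\boxtimes\delta_y$ to $\CF_\CZ\boxtimes\sH_\CV(\delta_y)\in\Shv(\CZ)\otimes\Shv_\Nilp(\Bun_G)$. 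Your sketch has the right intuition (spectral action only sees $\Bun_G$; \propref{p:Hecke codefined} lets you move the Hecke action past $\CZ$-operations) but these are exactly the tools the paper deploys, and the generation step is the substance that you'd need to supply.
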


\begin{rem}
We will eventually prove also that for a conical half-dimensional 
closed subset $\CN\subset T^*(\CZ)$, the fully
faithful functor
$$\Shv_\CN(\CZ)\otimes \Shv_{\Nilp}(\Bun_G)\to \Shv_{\CN\times \Nilp}(\CZ\times \Bun_G)$$
is an equivalence, see \thmref{t:Kunneth fixed Nilp}. 
\end{rem}

\ssec{Proof of \thmref{t:Hecke action Nilp 2}} \label{ss:proof Hecke nilp}

\sssec{}

The inclusion (i) $\subset$ (ii) is evident. Let us establish (ii) $\subset$ (iii). 

\medskip

Let $\CF$ be an object in $\Shv_{\frac{1}{2}\on{-dim}\times \Nilp}(\CZ\times \Bun_G)$, 
and let $V$ be an object of $\Rep(\cG)$. It follows 
by the argument of \cite[Theorem B.5.2]{GKRV} that
$$\sH(V,\CF) \in \Shv_{\frac{1}{2}\on{-dim}\times \Nilp\times \{0\}}(\CZ\times \Bun_G\times X).$$

\medskip

However, the subcategory $\Shv_{\frac{1}{2}\on{-dim}\times \Nilp\times \{0\}}(\CZ\times \Bun_G\times X)$
is contained in the essential image of
$$\Shv(\CZ\times \Bun_G)\otimes \qLisse(X) \subset \Shv(\CZ\times \Bun_G\times X),$$
by \thmref{t:Kunneth} (applied to $Y_1=X$). 

\medskip

The inclusion (iii) $\subset$ (iv) has been noted in \secref{sss:lisse to spec}. 
Thus, it remains to prove (iv) $\subset$ (i). 

\sssec{}

Let 
$$Z_n\overset{f_n}\to  \LocSys_\cG^{\on{restr}}(X)$$ be as in \cite[Sect. 16.1.2]{AGKRRV}. Consider the category 
$$\Shv(\CZ\times \Bun_G)^{\on{spec}}\underset{\QCoh(\LocSys_\cG^{\on{restr}}(X))}\otimes \QCoh(Z_n),$$
equipped with the forgetful functor, denoted, $\oblv_{\on{Hecke}}$: 
\begin{multline*}
\Shv(\CZ\times \Bun_G)^{\on{spec}}\underset{\QCoh(\LocSys_\cG^{\on{restr}}(X))}\otimes \QCoh(Z_n)
\overset{\on{Id}\otimes (f_n)_*}\longrightarrow \\
\longrightarrow \Shv(\CZ\times \Bun_G)^{\on{spec}}\underset{\QCoh(\LocSys_\cG^{\on{restr}}(X))}\otimes \QCoh(\LocSys_\cG^{\on{restr}}(X))\simeq \\
\simeq \Shv(\CZ\times \Bun_G)^{\on{spec}}\hookrightarrow \Shv(\CZ\times \Bun_G).
\end{multline*}

The functor $\oblv_{\on{Hecke}}$ admits a left adjoint, to be denoted 
\begin{equation} \label{e:projector Zn}
(\on{Id}_\CZ\boxtimes \sP_{Z_n})^{\on{enh}},
\end{equation}
see \cite[Corollary 13.5.4]{AGKRRV}.

\sssec{}

We will prove:

\begin{prop} \label{p:generation}
Objects of the form 
$$(\on{Id}_\CZ\boxtimes \sP_{Z_n})^{\on{enh}}(\CF_\CZ\boxtimes \delta_y), \quad
\CF_\CZ\in \Shv(\CZ), \quad y\in \Bun_G(k)$$
generate the category $\Shv(\CZ\times \Bun_G)^{\on{spec}}\underset{\QCoh(\LocSys_\cG^{\on{restr}}(X))}\otimes \QCoh(Z_n)$.
\end{prop}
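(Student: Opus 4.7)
The plan is to use the adjunction $(\on{Id}_\CZ\boxtimes \sP_{Z_n})^{\on{enh}} \dashv \oblv_{\on{Hecke}}$ (from \cite[Corollary 13.5.4]{AGKRRV}) to reduce the generation question for
\[
\tilde{\bC} := \Shv(\CZ\times \Bun_G)^{\on{spec}}\underset{\QCoh(\LocSys_\cG^{\on{restr}}(X))}\otimes \QCoh(Z_n)
\]
to an analogous question for $\Shv(\CZ\times\Bun_G)$. The key categorical input is: whenever $L \dashv R$, the essential image of $L$ applied to a generating set of the source generates the target if and only if $R$ is conservative. Hence it suffices to show (a) that the objects $\CF_\CZ \boxtimes \delta_y$, with $\CF_\CZ \in \Shv(\CZ)$ and $y\in \Bun_G(k)$, generate $\Shv(\CZ\times\Bun_G)$, and (b) that $\oblv_{\on{Hecke}}$ is conservative.

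For (a): since $k$ is algebraically closed and $\CZ\times\Bun_G$ is locally of finite type, any nonzero constructible sheaf on $\CZ\times\Bun_G$ has nonzero $!$-stalk at some $k$-point $(z,y)$. Writing $\delta_{(z,y)}\simeq \delta_z\boxtimes \delta_y$ and using the identification $\CHom_{\Shv(\CZ\times\Bun_G)}(\delta_z\boxtimes \delta_y,\CF)\simeq i_{(z,y)}^!(\CF)$, we see that the particular objects $\delta_z\boxtimes \delta_y$, and a fortiori the larger family $\CF_\CZ\boxtimes \delta_y$, form a generating set.

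For (b): I would factor $\oblv_{\on{Hecke}}$ as
\[
\tilde{\bC} \xrightarrow{\on{Id}\otimes (f_n)_*} \Shv(\CZ\times\Bun_G)^{\on{spec}} \hookrightarrow \Shv(\CZ\times\Bun_G).
\]
The second arrow is fully faithful by \cite[Proposition 13.3.4]{AGKRRV}, hence conservative. For the first arrow, the plan is to appeal to the construction of the $Z_n\to \LocSys_\cG^{\on{restr}}(X)$ in \cite[Sect.~16.1.2]{AGKRRV}: these are arranged so that $(f_n)_*:\QCoh(Z_n)\to \QCoh(\LocSys_\cG^{\on{restr}}(X))$ is conservative, and this conservativity propagates to the tensor product $\on{Id}\otimes (f_n)_*$ because the $\QCoh(\LocSys_\cG^{\on{restr}}(X))$-action on $\Shv(\CZ\times\Bun_G)^{\on{spec}}$ is by colimit-preserving functors, so the tensor product realizes the base change of modules along a conservative map.

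The main obstacle is the last claim: establishing conservativity of $\on{Id}\otimes (f_n)_*$ on the Lurie tensor product. This is not purely formal and depends on the specific geometric structure of $f_n$ (e.g.\ affineness) together with the module structure on $\Shv(\CZ\times\Bun_G)^{\on{spec}}$. The careful verification will use standard facts about dualizability and base change for module categories, as developed in \cite{AGKRRV}.
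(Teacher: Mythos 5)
The paper's proof and yours both reduce via the adjunction $(\on{Id}_\CZ\boxtimes \sP_{Z_n})^{\on{enh}} \dashv \oblv_{\on{Hecke}}$, and both ultimately need $\oblv_{\on{Hecke}}$ to be conservative; the paper leaves this implicit while you correctly flag it. (It follows because the maps $f_n$ are affine, so $\on{Id}\otimes(f_n)_*$ is the forgetful functor for $(f_n)_*(\CO_{Z_n})$-modules in $\Shv(\CZ\times\Bun_G)^{\on{spec}}$, which is conservative.) So your ``main obstacle'' in (b) is resolvable. But the two arguments diverge in an essential way at step (a), and that divergence is where your proposal has a genuine gap.

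\emph{What the paper actually does.} After the adjunction step, the paper shows that a nonzero object $\CF$ of $\Shv(\CZ\times\Bun_G)^{\on{spec}}$ (note: the spectral subcategory, \emph{not} all of $\Shv(\CZ\times\Bun_G)$) already satisfies $(\on{id}_\CZ\times\bi_{y_i})^!(\CF)\ne 0$ for one of \emph{finitely many} fixed points $y_i$. The argument restricts $\CF$ to a geometric point $z$ of $\CZ$ (this is where the Cousin argument is invoked), uses compatibility of $!$-restriction with the Hecke action to deduce that the restriction lands in $\Shv(\Bun'_G)^{\on{spec}}=\Shv_\Nilp(\Bun'_G)$, and then appeals to the special generators $y_i$ constructed in \cite[Sect.\ 16.2]{AGKRRV} for the \emph{nilpotent} category. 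The nilpotent singular-support structure is thus used essentially.

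\emph{Where your proposal goes wrong.} Your step (a) asserts that the objects $\CF_\CZ\boxtimes\delta_y$ ($y$ ranging over all of $\Bun_G(k)$) generate the \emph{entire} category $\Shv(\CZ\times\Bun_G)$, which is equivalent to joint conservativity of the functors $(\on{id}_\CZ\times\bi_y)^!$ on $\Shv(\CZ\times\Bun_G)$. You justify this by ``nonzero constructible sheaves have nonzero $!$-stalks at $k$-points.'' That settles the constructible case, but $\Shv(\CZ\times\Bun_G)$ is the ind-completion of the constructible category, and generation is a statement about all objects. The right orthogonal $\{\delta_{(z,y)}\}^\perp$ is a localizing subcategory, and such subcategories need not contain any nonzero compact object; so the constructible case does not formally imply the ind-constructible one. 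This is precisely the delicate point that the paper sidesteps: by confining attention to $\Shv^{\on{spec}}$ and restricting to a geometric fiber of $\CZ$, the paper reduces to a statement about $\Shv_\Nilp(\Bun'_G)$ that is already proved in \cite[Sect.\ 16.2]{AGKRRV}. Your version is a much stronger claim about $\Shv$ of a non-quasi-compact stack, and would require a separate argument (such as a careful Cousin-type dévissage that handles the ind-completion), which you do not supply.

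In short: the skeleton (adjunction, then detection by $\delta$'s) is the same, but the paper detects nonvanishing only on the spectral subcategory and does so by feeding the problem into the nilpotent singular-support machinery, whereas you try to prove unconditional generation of the ambient category and the proof of that generation is incomplete.
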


Let us assume this proposition temporarily and finish the proof of the containment (iv) $\subset$ (i) in \thmref{t:Hecke action Nilp 2}. 

\sssec{}

Consider the functor
\begin{multline*} 
\Shv(\CZ\times \Bun_G)^{\on{spec}}\underset{\QCoh(\LocSys_\cG^{\on{restr}}(X))}\otimes \QCoh(Z_n)\overset{\on{Id}\otimes (f_n)_*}\longrightarrow \\
\longrightarrow  \Shv(\CZ\times \Bun_G)^{\on{spec}}\underset{\QCoh(\LocSys_\cG^{\on{restr}}(X))}\otimes 
\QCoh(\LocSys_\cG^{\on{restr}}(X))\simeq \Shv(\CZ\times \Bun_G)^{\on{spec}}.
\end{multline*} 

As in \cite[Sect. 16.1.4]{AGKRRV}, one shows that the union (over the index $n$) of the essential images of these 
functors generates $\Shv(\CZ\times \Bun_G)^{\on{spec}}$. 

\medskip

Hence, applying \propref{p:generation}, we obtain that in order to prove the containment (iv) $\subset$ (i), it suffices to show that
for a fixed index $n$, the functor 
\begin{multline} \label{e:monad Zn}
\Shv(\CZ\times \Bun_G) \overset{(\on{Id}_\CZ\boxtimes \sP_{Z_n})^{\on{enh}}}\longrightarrow \\
\longrightarrow \Shv(\CZ\times \Bun_G)^{\on{spec}}\underset{\QCoh(\LocSys_\cG^{\on{restr}}(X))}\otimes \QCoh(Z_n) 
\overset{\oblv_{\on{Hecke}}}\longrightarrow \Shv(\CZ\times \Bun_G)
\end{multline}
applied to objects of the form 
$$\CF_\CZ\boxtimes \delta_y, \quad \CF_\CZ\in \Shv(\CZ), \quad y\in \Bun_G(k)$$
maps to the essential image of
$$\Shv(\CZ)\otimes \Shv_{\Nilp}(\Bun_G)\to \Shv(\CZ\times \Bun_G).$$

\sssec{}

Note that, as in \cite[Equations (15.15) and (15.16)]{AGKRRV}, the composite functor \eqref{e:monad Zn} is given by
$$\on{Id}_\CZ \boxtimes \sH_\CV$$
(see \secref{sss:Ran action} for the notation), for 
\begin{equation} \label{e:V for Zn}
\CV:=\left(\on{Id}_{\Rep(\cG)_\Ran}\otimes \Gamma(Z_n,-)\right)(\sR_{Z_n})\in \Rep(\cG)_\Ran,
\end{equation}
where $$\sR_{Z_n}\in \Rep(\cG)_\Ran \otimes \QCoh(Z_n)$$
is the object of \cite[Sect. 15.3.1]{AGKRRV}.

\medskip 

However, for any $\CV\in \Rep(\cG)_\Ran$, 
$$(\on{Id}_\CZ \boxtimes \sH_\CV)(\CF_\CZ\boxtimes \delta_y)\simeq
\CF_\CZ \boxtimes  \sH_\CV(\delta_y).$$

Now, for $\CV$ given by \eqref{e:V for Zn}, we have
$$\sH_\CV(\delta_y)\in \Shv_{\Nilp}(\Bun_G),$$
(e.g., by \cite[Equations (15.15) and (15.16) and Corollary 15.5.4]{AGKRRV}), which implies the desired containment. 

\qed[\thmref{t:Hecke action Nilp 2}]

\ssec{Proof of \propref{p:generation}}

\sssec{}

We will prove a slightly stronger assertion. Namely, let $y_i\in \Bun_G(k)$ be points
chosen as in \cite[Sect. 16.2.1-16.2.2]{AGKRRV}. We will show that the objects
$$(\on{Id}_\CZ\boxtimes \sP_{Z_n})^{\on{enh}}(\CF_\CZ\boxtimes \delta_{y_i})$$
generate 
$$\Shv(\CZ\times \Bun_G)^{\on{spec}}\underset{\QCoh(\LocSys_\cG^{\on{restr}}(X))}\otimes \QCoh(Z_n).$$

\sssec{}

By adjunction, it suffices to show that for every $0\neq \CF\in \Shv(\CZ\times \Bun_G)^{\on{spec}}$, we can find
$\CF_\CZ\in \Shv(\CZ)$ and an index $i$ such that
\begin{equation} \label{e:non-orth}
\CHom(\CF_\CZ\boxtimes \delta_{y_i},\CF)\neq 0.
\end{equation}

Let $\bi_{y_i}$ be the map $\on{pt}\to \Bun_G$ corresponding to the point $y_i$. Our assertion is equivalent to saying that
there exists an index $i$ such that
$$(\on{id}_\CZ \times \bi_{y_i})^!(\CF)\neq 0.$$

\sssec{} \label{sss:pullback Hecke lisse}

Let $g:\CZ_1\to \CZ_2$ be a map between algebraic stacks. The pullback functor
$$\Shv(\CZ_2\times \Bun_G)\overset{(g\times \on{id})^!}\longrightarrow \Shv(\CZ_1\times \Bun_G)$$
is compatible with the Hecke action. In particular, it sends 
$$\Shv(\CZ_2\times \Bun_G)^{\on{spec}}\to \Shv(\CZ_1\times \Bun_G)^{\on{spec}}.$$

\sssec{}

Let $\Spec(k')\overset{\bi_z}\to \CZ$ be a geometric point such that the !-pullback of $\CF$ to
$$\Bun'_G:=\Spec(k')\times \Bun_G\to \CZ \times \Bun_G$$
is non-zero (such a point exists by a Cousin argument). 

\medskip

We can view the above !-pullback as the !-pullback along
$$\Bun'_G \overset{\bi'_z}\to \CZ'\underset{\Spec(k')}\times \Bun'_G, \quad \CZ':=\Spec(k')\times \CZ$$
of the base change $\CF'$ of $\CF$ to $\CZ'\underset{\Spec(k')}\times \Bun'_G$. 

Tautologically,  
$$\CF'\in \Shv(\CZ'\underset{\Spec(k')}\times \Bun'_G)^{\on{spec}}.$$

By assumption, $(\bi'_z)^!(\CF')\neq 0$, and by \secref{sss:pullback Hecke lisse}, 
$$(\bi'_z)^!(\CF')\in \Shv(\Bun'_G)^{\on{spec}}.$$

\sssec{}

Let 
$$\bi'_{y_i}:\Spec(k')\to \Bun'_G$$ denote the base change of the map $\bi_{y_i}$. We have
$$(\bi_z)^! \circ (\on{id}_\CZ \times \bi_{y_i})^!(\CF) \simeq
(\bi'_{y_i})^!\circ (\bi'_z)^! (\CF').$$

Hence, it suffices to show that there exists an index $i$, such that 
$$(\bi'_{y_i})^!\circ (\bi'_z)^! (\CF')\neq 0.$$

However, by \cite[Proposition 15.4.4]{AGKRRV},
$$\Shv(\Bun'_G)^{\on{spec}}=\Shv_\Nilp(\Bun'_G),$$
so 
$$0\neq (\bi'_z)^! (\CF')\in \Shv_\Nilp(\Bun'_G),$$
and the assertion follows from \cite[Sect. 16.2.2-16.2.3]{AGKRRV}.

\qed[\propref{p:generation}]

\ssec{The projector onto the category with nilpotent singular support}

In \cite[Sect. 15.4.5]{AGKRRV}, a particular object of $\Rep(\cG)_\Ran$ was introduced whose action on
$\Shv(\Bun_G)$ effects a projection onto the full subcategory $\Shv_\Nilp(\Bun_G)\subset \Shv(\Bun_G)$.

\medskip

In this subsection, we will establish an analog of this result in the relative situation. 

\sssec{}  \label{sss:the projector}

Let 
$$\sR \in \Rep(\cG)_\Ran$$
be the object introduced in \cite[Sect. 13.4.1]{AGKRRV}. 

\medskip

Denote by $\on{Id}_\CZ\boxtimes \sP$ the resulting endofunctor of $\Shv(\CZ\times \Bun_G)$, i.e., 
$$\sP:=\sH_\sR,$$
as functors defined by kernels.

\medskip

In what follows we will also use the notation
$$\sP_{\Bun_G,\Nilp}:=\sP.$$

\begin{rem}
For $\CZ=\on{pt}$, the resulting functor endofunctor $\sP$ of $\Shv(\Bun_G)$
is the one considered in \cite[Sect. 15.4.7]{AGKRRV}.
\end{rem}

\sssec{}

We claim:

\begin{thm} \label{t:projector}
The endofunctor $\on{Id}_\CZ \boxtimes \sP$ of $\Shv(\CZ\times \Bun_G)$ is a projector
onto the full subcategory
$$\Shv(\CZ)\otimes \Shv_\Nilp(\Bun_G)\simeq
\Shv_{\frac{1}{2}\on{-dim}\times \Nilp}(\CZ\times \Bun_G)\subset \Shv(\CZ\times \Bun_G).$$
\end{thm}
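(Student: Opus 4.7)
The plan is to verify the two characterizing properties of a projector onto a full subcategory: (a) $\on{Id}_\CZ \boxtimes \sP$ acts as the identity on the target subcategory, and (b) the essential image of $\on{Id}_\CZ \boxtimes \sP$ is contained in that subcategory. Idempotency of $\on{Id}_\CZ \boxtimes \sP$ then follows automatically (and is in any case inherited from idempotency of $\sP = \sH_\sR$ on $\Shv(\Bun_G)$, since the kernel defining $\on{Id}_\CZ \boxtimes \sP$ only engages the $\Bun_G$-factor). Throughout, I invoke freely the four equivalent descriptions of the target subcategory supplied by \thmref{t:Hecke action Nilp 2}, using whichever is most convenient.

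For (a), by \thmref{t:Hecke action Nilp 2}(i), the target subcategory is the essential image of $\Shv(\CZ) \otimes \Shv_\Nilp(\Bun_G) \to \Shv(\CZ \times \Bun_G)$, so it suffices to verify the identity property on pure tensors $\CF_\CZ \boxtimes \CF_G$ with $\CF_\CZ \in \Shv(\CZ)$ and $\CF_G \in \Shv_\Nilp(\Bun_G)$. By \propref{p:Hecke codefined}, the functor $\on{Id}_\CZ \boxtimes \sP$ is both defined and codefined by a kernel that acts trivially on the $\CZ$-factor, so it commutes with external products in the $\CZ$-direction, giving
$$(\on{Id}_\CZ \boxtimes \sP)(\CF_\CZ \boxtimes \CF_G) \simeq \CF_\CZ \boxtimes \sP(\CF_G).$$
The non-relative projector property of \cite[Sect.~15.4.7]{AGKRRV} then yields $\sP(\CF_G) \simeq \CF_G$, as desired.

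For (b), I would use the spectral characterization \thmref{t:Hecke action Nilp 2}(iv). The structural fact underlying the non-relative statement is that $\sR \in \Rep(\cG)_\Ran$ is an idempotent whose action on any $\Rep(\cG)_\Ran$-module category $\bM$ projects $\bM$ onto the maximal sub-module on which the $\Rep(\cG)_\Ran$-action factors through the symmetric monoidal functor $\Rep(\cG)_\Ran \to \QCoh(\LocSys_\cG^{\on{restr}}(X))$, i.e., onto $\bM^{\on{spec}}$. Applying this general principle to the $\Rep(\cG)_\Ran$-module structure on $\Shv(\CZ \times \Bun_G)$ of \secref{sss:Ran action} immediately shows that the essential image of $\on{Id}_\CZ \boxtimes \sP = \sH_\sR$ is contained in $\Shv(\CZ \times \Bun_G)^{\on{spec}}$. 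The main obstacle I anticipate is the verification that this general principle genuinely applies to the specific $\Rep(\cG)_\Ran$-action on $\Shv(\CZ \times \Bun_G)$ constructed through kernels; concretely, one must confirm that the Hecke-lisseness of the output---the assertion that $(\on{Id}_\CZ \boxtimes \sH(V, -))(\on{Id}_\CZ \boxtimes \sP)(\CF)$ lies in $\Shv(\CZ \times \Bun_G) \otimes \qLisse(X)$ for every $V \in \Rep(\cG)$---is a property of $V \star \sR$ as an object of $\Rep(\cG)_\Ran$ that is insensitive to the particular module category. This should follow because the relevant $X$-lisseness is encoded at the level of the kernel, hence propagates uniformly from the non-relative to the relative setting.
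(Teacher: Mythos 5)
Your proposal is correct and takes essentially the same approach as the paper: the paper's proof is precisely "identify the target with $\Shv(\CZ\times \Bun_G)^{\on{spec}}$ via \thmref{t:Hecke action Nilp 2} and invoke \cite[Remark 13.4.8]{AGKRRV}," which is exactly the general principle you state in part (b). Your concluding caveat about whether the principle applies to the kernel-constructed action is overcautious — once \secref{sss:Ran action} establishes a genuine monoidal $\Rep(\cG)_\Ran$-action on $\Shv(\CZ\times \Bun_G)$, the abstract statement about $\sR$ projecting any module category onto its $-^{\on{spec}}$ subcategory applies without further verification.
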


\begin{proof}

Interpreting $\Shv_{\frac{1}{2}\on{-dim}\times \Nilp}(\CZ\times \Bun_G)$ as $\Shv(\CZ\times \Bun_G)^{\on{spec}}$
(see \thmref{t:Hecke action Nilp 2}), the assertion follows by \cite[Remark 13.4.8]{AGKRRV}. 

\end{proof}

\begin{cor} \label{c:double projector}
The endofunctor $\sP\boxtimes \sP$ of $\Shv(\Bun_G\times \Bun_G)$ 
is a projector onto the full subcategory
$$\Shv_\Nilp(\Bun_G)\otimes \Shv_\Nilp(\Bun_G) \subset \Shv(\Bun_G\times \Bun_G).$$
\end{cor}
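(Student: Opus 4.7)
The plan is to derive \corref{c:double projector} by iterating \thmref{t:projector} on each $\Bun_G$-factor separately, and then identifying the resulting intersection of subcategories via the K\"unneth theorem \thmref{t:Kunneth BunG}.

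First, I would apply \thmref{t:projector} with $\CZ = \Bun_G$ to the right-hand factor: this gives that $\on{Id}_{\Bun_G}\boxtimes \sP$ is a projector of $\Shv(\Bun_G\times \Bun_G)$ onto
$$\Shv(\Bun_G)\otimes \Shv_\Nilp(\Bun_G) \simeq \Shv_{\frac{1}{2}\on{-dim}\times \Nilp}(\Bun_G\times \Bun_G),$$
the identification being supplied by \thmref{t:Hecke action Nilp 2}. By the symmetry exchanging the two $\Bun_G$-factors, the same input yields that $\sP\boxtimes \on{Id}_{\Bun_G}$ is a projector onto $\Shv_\Nilp(\Bun_G)\otimes \Shv(\Bun_G) \simeq \Shv_{\Nilp\times \frac{1}{2}\on{-dim}}(\Bun_G\times \Bun_G)$.

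Next, I would argue that these two endofunctors commute. Each is an integral Hecke functor in the sense of \secref{sss:Ran action}, acting on one of the two $\Bun_G$-legs, and by \propref{p:Hecke codefined} such functors are \emph{defined and codefined by kernels}. Consequently the Hecke action on one leg commutes with all the $*$-pullback and $!$-pushforward operations along the other leg that are used to build the Hecke action on the other leg. Hence their composition is well defined and equals $\sP\boxtimes \sP$; being the composition of two commuting projectors, it is again a projector, this time onto the intersection of the two image subcategories, namely
$$\Shv_{\frac{1}{2}\on{-dim}\times \Nilp}(\Bun_G\times \Bun_G) \;\cap\; \Shv_{\Nilp\times \frac{1}{2}\on{-dim}}(\Bun_G\times \Bun_G) = \Shv_{\Nilp\times \Nilp}(\Bun_G\times \Bun_G).$$

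Finally, I would invoke \thmref{t:Kunneth BunG} (the in-house analog of \cite[Theorem 16.3.3]{AGKRRV}), which asserts that the a priori merely fully faithful functor
$$\Shv_\Nilp(\Bun_G)\otimes \Shv_\Nilp(\Bun_G) \to \Shv_{\Nilp\times \Nilp}(\Bun_G\times \Bun_G)$$
is an equivalence, to identify the image of $\sP\boxtimes \sP$ with $\Shv_\Nilp(\Bun_G)\otimes \Shv_\Nilp(\Bun_G)$ as a subcategory of $\Shv(\Bun_G\times \Bun_G)$. The only genuinely non-formal input is this last step: without the K\"unneth equivalence one could only characterize the image of $\sP\boxtimes \sP$ in terms of singular support, whereas the corollary as stated identifies it with an external tensor product of categories. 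Granted that equivalence, the rest of the argument is a short and essentially formal iteration of \thmref{t:projector}.
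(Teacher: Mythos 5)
Your argument reaches the correct conclusion, but it takes a genuinely different and heavier route than the paper, and in the paper's internal logical order it is in fact circular.

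The paper's own proof avoids the K\"unneth theorem entirely. After observing (via \eqref{e:boxtimes vs otimes}) that $\sP\boxtimes\sP$ acts as the identity on $\Shv_\Nilp(\Bun_G)\otimes\Shv_\Nilp(\Bun_G)$, it writes $\sP\boxtimes\sP=(\sP\boxtimes\on{Id})\circ(\on{Id}\boxtimes\sP)$. By \thmref{t:projector} and \thmref{t:Hecke action Nilp 2}, the first factor already lands in the external-tensor-product subcategory $\Shv(\Bun_G)\otimes\Shv_\Nilp(\Bun_G)$, and then—again by \eqref{e:boxtimes vs otimes}—the restriction of $\sP\boxtimes\on{Id}$ to $\Shv(\Bun_G)\otimes\Shv(\Bun_G)$ is just $\sP\otimes\on{Id}$, so its image lies in $\Shv_\Nilp(\Bun_G)\otimes\Shv_\Nilp(\Bun_G)$. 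No identification of $\Shv_{\Nilp\times\Nilp}$ with $\Shv_\Nilp\otimes\Shv_\Nilp$ is needed; the argument stays entirely inside the K\"unneth-type subcategory of genuine external tensor products.

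Your route instead characterizes the image of $\sP\boxtimes\sP$ as the intersection $\Shv_{\frac{1}{2}\on{-dim}\times\Nilp}\cap\Shv_{\Nilp\times\frac{1}{2}\on{-dim}}$ (which requires a small singular-support argument you elide, though it is true) and then calls \thmref{t:Kunneth BunG} to identify this with $\Shv_\Nilp\otimes\Shv_\Nilp$. The catch is that within this paper \thmref{t:Kunneth BunG} is stated and proved \emph{after} \corref{c:double projector}, and its proof implicitly uses \corref{c:double projector}: showing that $\sP\boxtimes\sP$ acts as the identity on $\Shv_{\Nilp\times\Nilp}(\Bun_G\times\Bun_G)$ is "enough" precisely because the corollary already tells us the image of $\sP\boxtimes\sP$ is $\Shv_\Nilp\otimes\Shv_\Nilp$. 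So if you are rebuilding the paper's internal logic, your argument is circular. It becomes valid only if you cite the K\"unneth equivalence as the external result \cite[Theorem 16.3.3]{AGKRRV}—but then you have used a deep theorem (one of the main inputs from the prequel) to prove what the paper establishes by a purely formal two-line manipulation. It is worth noting that one of the stated points of this section is to give an \emph{alternative} proof of that K\"unneth theorem via \corref{c:double projector}, so reversing the dependency undercuts the paper's plan even if it is not strictly wrong when the external citation is allowed.
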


\begin{proof}

Since $\sP$ acts as identity on $\Shv_\Nilp(\Bun_G)$, it follows from \eqref{e:boxtimes vs otimes} that $\sP\boxtimes \sP$
acts as identity on $\Shv_\Nilp(\Bun_G)\otimes \Shv_\Nilp(\Bun_G)$.

\medskip

We need to show that the essential image of $\sP\boxtimes \sP$ is contained in $\Shv_\Nilp(\Bun_G)\otimes \Shv_\Nilp(\Bun_G)$.
We write
\begin{equation} \label{e:P left and right}
\sP\boxtimes \sP=(\sP\boxtimes \on{Id}_{\Bun_G})\circ (\on{Id}_{\Bun_G}\boxtimes \sP).
\end{equation}

By Theorems \ref{t:projector} and \ref{t:Hecke action Nilp 2}, the essential image of $\on{Id}_{\Bun_G}\boxtimes \sP$ is contained in 
$$\Shv(\Bun_G)\otimes \Shv_\Nilp(\Bun_G) \subset \Shv(\Bun_G)\otimes \Shv(\Bun_G) \subset 
\Shv(\Bun_G\times \Bun_G).$$

By \eqref{e:boxtimes vs otimes}, the functor $\sP\boxtimes \on{Id}_{\Bun_G}$, restricted to $\Shv(\Bun_G)\otimes \Shv(\Bun_G)$
acts as $\sP\otimes \on{Id}$. Hence, the essential image of its further restriction to $\Shv(\Bun_G)\otimes \Shv_\Nilp(\Bun_G)$
is contained in the subcategory $\Shv_\Nilp(\Bun_G)\otimes \Shv_\Nilp(\Bun_G)$. 

\end{proof}

\sssec{}

Finally, we observe that \thmref{t:projector} gives us a (somewhat) alternative proof of 
the following result (\cite[Theorem 16.3.3]{AGKRRV}):

\begin{thm} \label{t:Kunneth BunG}
The functor
$$\Shv_\Nilp(\Bun_G)\otimes \Shv_\Nilp(\Bun_G)\to\Shv_{\Nilp\times \Nilp}(\Bun_G\times \Bun_G)$$
is an equivalence.
\end{thm}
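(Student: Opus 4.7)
The plan is to deduce this equivalence from \corref{c:double projector} combined with \thmref{t:projector}. Concretely, the idea is to show that every object of $\Shv_{\Nilp\times \Nilp}(\Bun_G\times \Bun_G)$ is fixed by the projector $\sP\boxtimes \sP$, which by \corref{c:double projector} forces it to lie in the essential image of $\Shv_\Nilp(\Bun_G)\otimes \Shv_\Nilp(\Bun_G)$. Fully faithfulness will follow from the fully faithfulness of $\Shv(\Bun_G)\otimes \Shv_\Nilp(\Bun_G)\hookrightarrow \Shv(\Bun_G\times \Bun_G)$ already contained in \thmref{t:projector}.

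For essential surjectivity, take $\CF\in \Shv_{\Nilp\times \Nilp}(\Bun_G\times \Bun_G)$. Since $\Nilp$ is half-dimensional, we have containments
$$\Shv_{\Nilp\times \Nilp}(\Bun_G\times \Bun_G)\subset
\Shv_{\frac{1}{2}\on{-dim}\times \Nilp}(\Bun_G\times \Bun_G)$$
and similarly with the factors swapped. By \thmref{t:projector} applied with $\CZ=\Bun_G$ (once in each variable), both $\on{Id}_{\Bun_G}\boxtimes \sP$ and $\sP\boxtimes \on{Id}_{\Bun_G}$ act as the identity on objects of these respective subcategories. Hence $(\sP\boxtimes \sP)(\CF)\simeq \CF$, and by \corref{c:double projector} this object lies in $\Shv_\Nilp(\Bun_G)\otimes \Shv_\Nilp(\Bun_G)$.

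The reverse containment, that the essential image of $\Shv_\Nilp(\Bun_G)\otimes \Shv_\Nilp(\Bun_G)$ is contained in $\Shv_{\Nilp\times \Nilp}(\Bun_G\times \Bun_G)$, is formal: on pure tensors $\CF_1\boxtimes \CF_2$ with $\CF_i\in \Shv_\Nilp(\Bun_G)$ the singular support lies in $\Nilp\times \Nilp$ by the Künneth estimate, and the full subcategory $\Shv_{\Nilp\times \Nilp}$ is closed under colimits. For fully faithfulness of the functor in question, I factor it as
$$\Shv_\Nilp(\Bun_G)\otimes \Shv_\Nilp(\Bun_G)\longrightarrow \Shv(\Bun_G)\otimes \Shv_\Nilp(\Bun_G)\longrightarrow \Shv(\Bun_G\times \Bun_G).$$
The second arrow is fully faithful by \thmref{t:projector} (it is the inclusion of the image of the projector $\on{Id}_{\Bun_G}\boxtimes \sP$). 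The first arrow is fully faithful because the inclusion $\Shv_\Nilp(\Bun_G)\hookrightarrow \Shv(\Bun_G)$ is a fully faithful embedding of compactly generated DG categories admitting a continuous right adjoint, and such embeddings remain fully faithful after tensoring with any fixed DG category.

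The only real input beyond bookkeeping is \thmref{t:projector}, which packages the hard results of \cite{AGKRRV}; once that theorem is available, the argument above is purely formal. The most delicate point to verify carefully is the containment $\Shv_{\Nilp\times \Nilp}\subset \Shv_{\frac{1}{2}\on{-dim}\times\Nilp}$ in each variable, which simply reflects the fact that $\Nilp\subset T^*(\Bun_G)$ is itself half-dimensional.
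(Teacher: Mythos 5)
Your essential-surjectivity argument is exactly the paper's: decompose $\sP\boxtimes\sP$ as $(\sP\boxtimes\on{Id})\circ(\on{Id}\boxtimes\sP)$, observe that
$$\Shv_{\Nilp\times\Nilp}(\Bun_G\times\Bun_G)\subset \Shv_{\frac{1}{2}\text{-dim}\times\Nilp}\cap\Shv_{\Nilp\times\frac{1}{2}\text{-dim}},$$
and apply \thmref{t:projector} with $\CZ=\Bun_G$ in each variable to conclude that $\sP\boxtimes\sP$ acts as the identity on $\Shv_{\Nilp\times\Nilp}$, hence that this subcategory lands in the essential image. That part is correct and matches the paper line by line.

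The one slip is in your fully-faithfulness argument. You justify the fully faithfulness of
$$\Shv_\Nilp(\Bun_G)\otimes\Shv_\Nilp(\Bun_G)\to\Shv(\Bun_G)\otimes\Shv_\Nilp(\Bun_G)$$
by asserting that $\iota:\Shv_\Nilp(\Bun_G)\hookrightarrow\Shv(\Bun_G)$ admits a \emph{continuous} right adjoint. That assertion is precisely \cite[Conjecture 14.1.8]{AGKRRV} (equivalently, constraccessibility of $(\CU,\Nilp)$), which the present paper treats as a conjecture outside the de Rham and Betti settings; it should not be assumed freely. Fortunately the hypothesis is superfluous: what is actually needed is only that $\Shv_\Nilp(\Bun_G)$ be dualizable (equivalently, compactly generated, which is known unconditionally by \cite[Corollary 16.1.8]{AGKRRV}). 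Indeed $\iota\otimes\on{Id}_{\bE}$, viewed as post-composition with $\iota$ on $\on{Funct}_{\on{cont}}(\bE^\vee,-)$ when $\bE$ is dualizable, is automatically fully faithful, with no adjunction hypothesis on $\iota$. This is the route the paper takes implicitly, via the general remark in the Künneth appendix that the functor $\Shv_{\CN_1}(\CY_1)\otimes\Shv_{\CN_2}(\CY_2)\to\Shv_{\CN_1\times\CN_2}(\CY_1\times\CY_2)$ is fully faithful whenever one of the factors is dualizable. So replace the appeal to a continuous right adjoint by an appeal to dualizability and your proof is airtight.
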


\begin{proof}

It is enough to show that the functor $\sP\boxtimes \sP$ acts as identity on $\Shv_{\Nilp\times \Nilp}(\Bun_G\times \Bun_G)$.
Writing $\sP\boxtimes \sP$ as in \eqref{e:P left and right}, it suffices to show that both $\on{Id}_{\Bun_G}\boxtimes \sP$ and 
$\sP\boxtimes \on{Id}_{\Bun_G}$ act as identity on $\Shv_{\Nilp\times \Nilp}(\Bun_G\times \Bun_G)$.

\medskip

We have 
$$\Shv_{\Nilp\times \Nilp}(\Bun_G\times \Bun_G)\subset \Shv_{\frac{1}{2}\on{-dim}\times \Nilp}(\Bun_G\times \Bun_G)\cap 
\Shv_{\Nilp\times \frac{1}{2}\on{-dim}}(\Bun_G\times \Bun_G).$$

Now, by \thmref{t:projector}, $\on{Id}_{\Bun_G}\boxtimes \sP$ acts as identity on $\Shv_{\frac{1}{2}\on{-dim}\times \Nilp}(\Bun_G\times \Bun_G)$
and $\sP\boxtimes \on{Id}_{\Bun_G}$ acts as identity on $\Shv_{\Nilp\times \frac{1}{2}\on{-dim}}(\Bun_G\times \Bun_G)$.

\end{proof}

\ssec{(Universally) Nilp-cotruncative substacks}

In \cite[Theorem 14.1.5]{AGKRRV}, there was exhibited a collection of quasi-compact open substacks $\CU\subset \Bun_G$
with particularly favorable properties vis-a-vis the subcategory $\Shv_\Nilp(-)$. 

\medskip

In this subsection, we will show that these
properties carry over to the relative situation. 

\sssec{}
 
We shall say that an open substack $\CU\overset{j}\hookrightarrow \Bun_G$ is $\Nilp$-cotruncative if:

\begin{itemize}

\item It is \emph{cotruncative}, i.e., if the functor $j_!$ is defined by a kernel
(see \secref{sss:truncative});

\item The functor $j_!$ (equivalently, $j_*$) sends $\Shv_\Nilp(\CU)$ to $\Shv_\Nilp(\Bun_G)$.

\end{itemize}

We shall say that an open substack $\CU\overset{j}\hookrightarrow \Bun_G$ is \emph{universally} $\Nilp$-cotruncative if,
moreover:

\begin{itemize}

\item For any algebraic stack $\CZ$, the functor 
$$(\on{id}\times j)_!:\Shv(\CZ\times \CU)\to \Shv(\CZ\times \Bun_G)$$
(equivalently, $(\on{id}\times j)_*$) sends 
$$\Shv_{\CN\times \Nilp}(\CZ\times \CU)\to \Shv_{\CN\times \Nilp}(\CZ\times \Bun_G)$$
for any closed conical subset $\CN\subset T^*(\CZ)$.

\end{itemize}

We refer the reader to \secref{sss:N-cotrunc} where the general notion of (universal) cotruncativeness relative
to a subset in the cotangent bundle is discussed.

\sssec{}

The following is an extension of the combination of \cite[Theorem 9.1.2]{DrGa1}
and \cite[Theorem 14.1.5]{AGKRRV} (in fact, the arguments in {\it loc. cit.} 
prove this strengthened statement):

\begin{thm} \label{t:trunc}
The stack $\Bun_G$ can be written as a filtered union of its quasi-compact 
universally $\Nilp$-cotruncative open substacks. 
\end{thm}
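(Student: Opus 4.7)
The plan is to argue that the specific quasi-compact open substacks $\CU$ constructed in \cite[Theorem 14.1.5]{AGKRRV} already satisfy the stronger universal property, so that only the singular-support estimate needs to be re-done with a $\CZ$-parameter. As hinted in the statement, no essentially new construction is required---one simply tracks the $\CZ$-direction through the existing argument.

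First I would separate the two conditions in the definition of universal $\Nilp$-cotruncativeness. The cotruncativeness part is automatic once $j_!$ is already known to be defined by a kernel $\CQ_\CU\in \Shv(\Bun_G\times \CU)$: the relative functor $(\on{id}_\CZ\times j)_!$ is tautologically defined by the kernel $\omega_\CZ\boxtimes \CQ_\CU$, and in particular commutes with $*$-pullback and $!$-pushforward along the $\CZ$-variable (cf.\ the discussion in Sect.~\ref{ss:ker}). Thus the only content left is to show that $(\on{id}_\CZ\times j)_!$ preserves the subcategory $\Shv_{\CN\times \Nilp}$ for every closed conical $\CN\subset T^*(\CZ)$.

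Next, for the singular support condition in the relative setting, I would follow the \cite{AGKRRV} proof essentially verbatim. That proof analyzes $j_!(\CF)$ via its boundary behavior with respect to a suitable compactification of $\CU\subset\Bun_G$ (Drinfeld compactifications / Harder--Narasimhan-type strata): the failure of $j_!(\CF) \to j_*(\CF)$ to be an isomorphism is controlled by Eisenstein-type contributions coming from lower-rank $\Bun_M$'s, and the key estimate is that such Eisenstein contributions applied to $\Shv_\Nilp$-objects on $\Bun_M$ land in $\Shv_\Nilp(\Bun_G)$. Since parabolic induction, the Hecke-like integral kernels used in the boundary analysis, and the relevant IC/ULA sheaves on Drinfeld compactifications are all defined by kernels, their relative versions $\on{Id}_\CZ \boxtimes (-)$ are defined and codefined by kernels (in the sense of Sect.~\ref{sss:Hecke codefined}). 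A relative-setting input entirely analogous to \propref{p:ULA} then shows that such kernel-defined functors preserve the factorization of singular support: an object with singular support in $\CN \times (\text{nilpotent on }\Bun_M)$ on $\CZ\times \Bun_M$ is sent to an object with singular support in $\CN\times \Nilp$ on $\CZ\times \Bun_G$. Combined with the absolute estimate on the $\Bun_G$-factor supplied by \cite{AGKRRV}, this yields $(\on{id}_\CZ\times j)_!(\CF)\in \Shv_{\CN\times \Nilp}(\CZ\times \Bun_G)$.

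The main obstacle I anticipate is technical rather than conceptual: one must verify that the multiplicativity of singular support across the two factors $\CZ$ and $\Bun_G$ holds at every step of the stratification argument, in particular for the $!$-pushforward along the ind-proper boundary maps. This reduces to the stability of ULA sheaves under $\boxtimes$-ing with arbitrary sheaves on $\CZ$ (which is standard) and to the estimate that the singular support of the $!$-direct image along a product map $\on{id}_\CZ\times f$ of an object with singular support in $\CN\times \Lambda$ lies in $\CN\times (\text{pushforward of }\Lambda)$---exactly the type of compatibility already exploited in \secref{ss:proof Hecke nilp}. Once these are in place, the filtered exhaustion of $\Bun_G$ from \cite[Theorem 9.1.2]{DrGa1} immediately delivers the universal $\Nilp$-cotruncative exhaustion.
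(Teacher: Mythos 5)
Your approach is the same as the paper's: Theorem~\ref{t:trunc} is given no independent proof there, only the remark that the arguments of \cite[Theorem 9.1.2]{DrGa1} and \cite[Theorem 14.1.5]{AGKRRV} already establish the universal (i.e.\ $\CZ$-relative) version. Your elaboration---that cotruncativeness is automatic once $j_!$ is defined by a kernel, and that the singular-support estimate carries through by tracking the $\CZ$-direction through the Eisenstein/ULA analysis in \emph{loc.\ cit.}---is a reasonable fleshing-out of exactly that remark.
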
 

In the terminology of \secref{sss:N-trunc}, the assertion of \thmref{t:trunc} is that $\Bun_G$ is universally $\Nilp$-truncatable.

\sssec{}

For the remainder of this subsection, we fix a universally Nilp-cotruncative quasi-compact open substack
$$\CU\overset{j}\hookrightarrow \Bun_G.$$

Let
$$\Shv_{\frac{1}{2}\on{-dim}\times \Nilp}(\CZ\times \CU)\subset \Shv(\CZ\times \CU)$$
be as in \secref{sss:times 1/2-dim}.

\medskip

The assumption that $\CU$ is universally Nilp-cotruncative implies that the functors 
$(\on{id}\times j)_!$ and  $(\on{id}\times j)_*$ map 
$$\Shv_{\frac{1}{2}\on{-dim}\times \Nilp}(\CZ\times \CU)\to
\Shv_{\frac{1}{2}\on{-dim}\times \Nilp}(\CZ\times \Bun_G).$$

\sssec{}

We claim: 

\begin{cor} \label{c:Kunneth for cotrunc}
Let $\CU\overset{j}\hookrightarrow \Bun_G$ be a universally $\Nilp$-cotruncative open substack. Then for 
any stack $\CZ$, the functor
$$\Shv(\CZ)\otimes \Shv_{\Nilp}(\CU)\to \Shv_{\frac{1}{2}\on{-dim}\times \Nilp}(\CZ\times \CU)$$
is an equivalence.
\end{cor}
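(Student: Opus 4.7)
The plan is to transfer the equivalence established on $\Bun_G$ in \thmref{t:Hecke action Nilp 2} (together with the remark following it concerning fully faithfulness of the natural tensor-product functor) across the open embedding $j:\CU\hookrightarrow \Bun_G$, exploiting the hypothesis that $(\id_\CZ\times j)_!$ preserves the relevant subcategories cut out by singular support conditions. Denoting by $\alpha$ and $\beta$ the natural functors for $\Bun_G$ and $\CU$ respectively, I would first record the commutative square
\[
\begin{CD}
\Shv(\CZ)\otimes \Shv_\Nilp(\CU) @>{\beta}>> \Shv_{\frac{1}{2}\on{-dim}\times \Nilp}(\CZ\times \CU) \\
@V{\on{Id}\otimes j_!}VV @VV{(\id_\CZ\times j)_!}V \\
\Shv(\CZ)\otimes \Shv_\Nilp(\Bun_G) @>{\alpha}>> \Shv_{\frac{1}{2}\on{-dim}\times \Nilp}(\CZ\times \Bun_G).
\end{CD}
\]
The left vertical makes sense because $\CU$ is $\Nilp$-cotruncative; the right vertical lands in the indicated subcategory by universal $\Nilp$-cotruncativeness; and commutativity reduces to the standard identification $(\id_\CZ\times j)_!(\CF_\CZ\boxtimes \CF_\CU)\simeq \CF_\CZ\boxtimes j_!(\CF_\CU)$.

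\medskip

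For essential surjectivity of $\beta$, given $\CF\in \Shv_{\frac{1}{2}\on{-dim}\times \Nilp}(\CZ\times \CU)$, the object $(\id_\CZ\times j)_!(\CF)$ lies in $\Shv_{\frac{1}{2}\on{-dim}\times \Nilp}(\CZ\times \Bun_G)$, hence equals $\alpha(\CG)$ for some $\CG\in \Shv(\CZ)\otimes \Shv_\Nilp(\Bun_G)$. Applying $(\id_\CZ\times j)^!$, which coincides with $(\id_\CZ\times j)^*$ and preserves $\Shv_\Nilp$ since $j$ is an open embedding, and using $(\id_\CZ\times j)^!\circ (\id_\CZ\times j)_!\simeq \on{Id}$ together with the analogous commutative square for $j^!$, I would obtain $\CF\simeq \beta((\on{Id}\otimes j^!)(\CG))$, exhibiting $\CF$ in the essential image of $\beta$.

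\medskip

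For full faithfulness of $\beta$, observe that $(\id_\CZ\times j)_!$ is fully faithful (as $j$ is an open embedding), and $\on{Id}\otimes j_!$ is fully faithful since $j_!:\Shv_\Nilp(\CU)\to \Shv_\Nilp(\Bun_G)$ is fully faithful and tensoring with the identity preserves this property among dualizable DG categories. As $\alpha$ is an equivalence, the composition $(\id_\CZ\times j)_!\circ \beta = \alpha\circ(\on{Id}\otimes j_!)$ is then fully faithful, and the fully faithfulness of $(\id_\CZ\times j)_!$ forces $\beta$ itself to be fully faithful by standard cancellation. The only nontrivial technical input beyond the $\Bun_G$ case is the commutativity of the pushforward and pullback squares for $\boxtimes$, which is routine functoriality of the external tensor product; I do not anticipate this as a serious obstacle, so the corollary reduces entirely to the $\Bun_G$ case combined with the cotruncativeness hypothesis.
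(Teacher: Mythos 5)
Your argument is correct and is essentially the paper's own argument, packaged slightly differently: the paper draws the square with $(\on{id}\times j)^*$ as the vertical arrows and notes directly that those restriction functors are essentially surjective (having fully faithful left adjoints by universal $\Nilp$-cotruncativeness), whereas you draw the square with $(\on{id}\times j)_!$ and then come back down with $(\on{id}\times j)^*$ — the same content transposed. One remark: the full-faithfulness portion of your argument is unnecessary, since $\beta$ is by construction the corestriction to a full subcategory of the K\"unneth functor $\Shv(\CZ)\otimes\Shv_\Nilp(\CU)\to\Shv(\CZ\times\CU)$, which is automatically fully faithful because $\Shv_\Nilp(\CU)$ is dualizable (this is recorded in the paper's appendix around \eqref{e:Kunneth 1}), so the only thing to prove is essential surjectivity — which is exactly what the paper does.
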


\begin{proof}

We only have to show that the functor in question is essentially surjective. This
follows from the equivalence (i) $\Leftrightarrow$ (ii) in \thmref{t:Hecke action Nilp 2} 
and the fact that in the commutative diagram
$$
\CD
\Shv(\CZ)\otimes \Shv_{\Nilp}(\Bun_G) @>>> \Shv_{\frac{1}{2}\on{-dim}\times \Nilp}(\CZ\times \Bun_G) \\
@V{\on{Id}\otimes j^*}VV @VV{(\on{id}\times j)^*}V \\
\Shv(\CZ)\otimes \Shv_{\Nilp}(\CU) @>>> \Shv_{\frac{1}{2}\on{-dim}\times \Nilp}(\CZ\times \CU)
\endCD
$$
the vertical arrows are essentially surjective (which is in turn implied by the fact that they admit fully
faithful left adjoints, due to the universal $\Nilp$-cotruncativeness assumption).

\end{proof}

\ssec{The projector on universally Nilp-cotruncative substacks}

In this subsection we fix a universally Nilp-cotruncative quasi-compact open substack
$$\CU\overset{j}\hookrightarrow \Bun_G.$$

We will show how the results pertaining to the projector $\sP$ can be applied to $\CU$. 

\sssec{} \label{sss:proj U}

Denote by $\sP_{\CU,\Nilp}$ the endofunctor of $\Shv(\CU)$ defined by the kernel equal to
$$j^*\circ \sP_{\Bun_G,\Nilp} \circ j_*,$$
where we remind that $\sP_{\Bun_G,\Nilp}$ is the same as the functor defined by a kernel, denoted $\sP$,
in \secref{sss:the projector}. 

\sssec{}

We claim: 

\begin{cor} \label{c:projector U}
For a stack $\CZ$, the endofunctor $\on{Id}_\CZ\boxtimes \sP_{\CU,\Nilp}$ of 
$\Shv(\CZ\times \CU)$ is a projector onto the full subcategory
$$\Shv(\CZ) \otimes \Shv_\Nilp(\Bun_G)\simeq \Shv_{\frac{1}{2}\on{-dim}\times \Nilp}(\CZ\times \Bun_G) \subset \Shv(\CZ\times \CU).$$
\end{cor}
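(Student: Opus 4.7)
The plan is to deduce the corollary from \thmref{t:projector} applied to $\Bun_G$, together with the universal $\Nilp$-cotruncativeness of $\CU$. First I would unwind the definition of $\sP_{\CU,\Nilp}$ as a kernel: since composition of kernels corresponds to composition of the associated functors and this formation commutes with $-\boxtimes \on{Id}_\CZ$, the formula $\sP_{\CU,\Nilp}=j^*\circ \sP_{\Bun_G,\Nilp}\circ j_*$ of \secref{sss:proj U} immediately yields a factorization
$$
\on{Id}_\CZ\boxtimes \sP_{\CU,\Nilp} \;\simeq\; (\on{id}_\CZ\times j)^*\circ (\on{Id}_\CZ\boxtimes \sP_{\Bun_G,\Nilp})\circ (\on{id}_\CZ\times j)_*
$$
of endofunctors of $\Shv(\CZ\times \CU)$.

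Next I would verify the two properties characterizing a projector onto the subcategory $\Shv_{\frac{1}{2}\on{-dim}\times \Nilp}(\CZ\times \CU)$ (which we read as the intended target, the stated $\Bun_G$ in the display being a typo for $\CU$). For the essential image: starting from any $\CF\in \Shv(\CZ\times \CU)$, \thmref{t:projector} guarantees that after applying the middle factor we land in $\Shv_{\frac{1}{2}\on{-dim}\times \Nilp}(\CZ\times \Bun_G)$, and since $\on{id}_\CZ\times j$ is an open embedding, pullback along it preserves the singular-support condition (with $\Nilp\subset T^*(\Bun_G)$ restricting to the nilpotent cone of $\CU$). For identity on the subcategory: given $\CF\in \Shv_{\frac{1}{2}\on{-dim}\times \Nilp}(\CZ\times \CU)$, the universal $\Nilp$-cotruncativeness of $\CU$ ensures that $(\on{id}_\CZ\times j)_*(\CF)$ already lies in $\Shv_{\frac{1}{2}\on{-dim}\times \Nilp}(\CZ\times \Bun_G)$, so by \thmref{t:projector} the middle factor acts as identity on it, and the open-embedding isomorphism $(\on{id}_\CZ\times j)^*\circ (\on{id}_\CZ\times j)_*\simeq \on{Id}$ returns $\CF$.

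Combining these two steps shows that $\on{Id}_\CZ\boxtimes \sP_{\CU,\Nilp}$ factors through $\Shv_{\frac{1}{2}\on{-dim}\times \Nilp}(\CZ\times \CU)$ and is the identity there, hence is the claimed projector; the identification of this subcategory with $\Shv(\CZ)\otimes \Shv_\Nilp(\CU)$ is supplied by \corref{c:Kunneth for cotrunc}. I do not anticipate a substantive obstacle: the entire argument is a formal assembly of the absolute statement \thmref{t:projector} with the defining property of universally $\Nilp$-cotruncative substacks. That property is invoked exactly once, to transport the singular-support condition from $\CU$ up to $\Bun_G$ through $(\on{id}_\CZ\times j)_*$, and this single step is the only nonformal input.
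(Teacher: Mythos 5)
Your proposal is correct and follows essentially the same route as the paper's own proof: the paper also argues in two steps, first showing that $\on{Id}_\CZ\boxtimes \sP_{\CU,\Nilp}$ lands in the subcategory via \thmref{t:projector} together with $j^*$ preserving the singular-support condition, and second showing it acts as identity there by pushing $(\on{id}\times j)_*$ into $\Shv_{\frac{1}{2}\on{-dim}\times\Nilp}(\CZ\times\Bun_G)$ (universal $\Nilp$-cotruncativeness) and then applying \thmref{t:projector}. You are also right that the occurrences of $\Bun_G$ in the displayed subcategory of the statement (and in the first sentence of the paper's proof) are typos for $\CU$, since the ambient category is $\Shv(\CZ\times\CU)$.
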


\begin{proof}
The fact that the endofunctor functor $\on{Id}_\CZ\boxtimes \sP_{\CU,\Nilp}$ maps 
$\Shv(\CZ\times \CU)$ to $\Shv_{\frac{1}{2}\on{-dim}\times \Nilp}(\CZ\times \Bun_G)$ follows
from the fact that $\sP_{\Bun_G,\Nilp}$ maps 
$\Shv(\CZ\times \Bun_G)$ to $\Shv_{\frac{1}{2}\on{-dim}\times \Nilp}(\CZ\times \Bun_G)$
(see \thmref{t:projector}). 

\medskip

The fact that $\on{Id}_\CZ\boxtimes \sP_{\CU,\Nilp}$ acts as identity on $\Shv_{\frac{1}{2}\on{-dim}\times \Nilp}(\CZ\times \CU)$
follows from the fact that $(\on{id}\times j)_*$ maps $\Shv_{\frac{1}{2}\on{-dim}\times \Nilp}(\CZ\times \CU)$ 
to $\Shv_{\frac{1}{2}\on{-dim}\times \Nilp}(\CZ\times \Bun_G)$ and the fact that $\sP_{\Bun_G,\Nilp}$ acts as identity
on $\Shv_{\frac{1}{2}\on{-dim}\times \Nilp}(\CZ\times \Bun_G)$. 

\end{proof}

Furthermore, from \corref{c:double projector} we obtain:

\begin{cor} \label{c:double projector U}
The endofunctor $\sP_{\CU,\Nilp}\boxtimes \sP_{\CU,\Nilp}$ of $\Shv(\CU\times \CU)$
is a projector onto the full subcategory 
$$\Shv_\Nilp(\CU)\otimes \Shv_\Nilp(\CU)\subset \Shv_{\Nilp\times \Nilp}(\CU\times \CU)$$
\end{cor}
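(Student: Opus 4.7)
The plan is to run the same argument as in the proof of \corref{c:double projector}, but with $\Bun_G$ replaced by $\CU$, replacing the use of \thmref{t:projector} by its open-substack analog \corref{c:projector U}.

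First I would verify that $\sP_{\CU,\Nilp}\boxtimes \sP_{\CU,\Nilp}$ acts as the identity on the subcategory $\Shv_\Nilp(\CU)\otimes \Shv_\Nilp(\CU)$. From \corref{c:projector U} applied with $\CZ=\on{pt}$, the endofunctor $\sP_{\CU,\Nilp}$ of $\Shv(\CU)$ is a projector onto $\Shv_\Nilp(\CU)$, so in particular it acts as the identity there. Using the compatibility of $\boxtimes$ with $\otimes$ via the formula \eqref{e:boxtimes vs otimes} (exactly as in the proof of \corref{c:double projector}), it follows that $\sP_{\CU,\Nilp}\boxtimes \sP_{\CU,\Nilp}$ acts as $\sP_{\CU,\Nilp}\otimes \sP_{\CU,\Nilp}$ on $\Shv(\CU)\otimes \Shv(\CU)$, and hence as the identity on $\Shv_\Nilp(\CU)\otimes \Shv_\Nilp(\CU)$.

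Next, I would show that the essential image of $\sP_{\CU,\Nilp}\boxtimes \sP_{\CU,\Nilp}$ is contained in $\Shv_\Nilp(\CU)\otimes \Shv_\Nilp(\CU)$. For this I factor
\[
\sP_{\CU,\Nilp}\boxtimes \sP_{\CU,\Nilp} = (\sP_{\CU,\Nilp}\boxtimes \on{Id}_\CU)\circ (\on{Id}_\CU\boxtimes \sP_{\CU,\Nilp}).
\]
By \corref{c:projector U} applied with $\CZ=\CU$, the essential image of $\on{Id}_\CU\boxtimes \sP_{\CU,\Nilp}$ lies in $\Shv_{\frac{1}{2}\on{-dim}\times \Nilp}(\CU\times \CU)$, which by \corref{c:Kunneth for cotrunc} equals $\Shv(\CU)\otimes \Shv_\Nilp(\CU)$. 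Now the external product formula \eqref{e:boxtimes vs otimes} identifies the restriction of $\sP_{\CU,\Nilp}\boxtimes \on{Id}_\CU$ to $\Shv(\CU)\otimes \Shv(\CU)$ with $\sP_{\CU,\Nilp}\otimes \on{Id}$; composing with the containment established in the previous sentence, its further restriction to $\Shv(\CU)\otimes \Shv_\Nilp(\CU)$ lands in $\Shv_\Nilp(\CU)\otimes \Shv_\Nilp(\CU)$, as required.

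Finally, the containment $\Shv_\Nilp(\CU)\otimes \Shv_\Nilp(\CU)\subset \Shv_{\Nilp\times \Nilp}(\CU\times \CU)$ is the (a priori fully faithful) Künneth functor for fixed singular support, so no additional work is needed. The only subtlety — and what I would call the main obstacle — is making sure that all appeals to \corref{c:projector U} are done with the correct choice of $\CZ$ (namely $\on{pt}$ in step one and $\CU$ in step two) and that the intermediate identification $\Shv_{\frac{1}{2}\on{-dim}\times \Nilp}(\CU\times \CU)\simeq \Shv(\CU)\otimes \Shv_\Nilp(\CU)$ provided by \corref{c:Kunneth for cotrunc} is available, which requires precisely the universal $\Nilp$-cotruncativeness hypothesis on $\CU$.
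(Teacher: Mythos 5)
Your proof is correct, but it takes a genuinely different route from the paper's. You replicate the proof of \corref{c:double projector} at the level of $\CU$: you factor $\sP_{\CU,\Nilp}\boxtimes \sP_{\CU,\Nilp}$ as $(\sP_{\CU,\Nilp}\boxtimes \on{Id}_\CU)\circ (\on{Id}_\CU\boxtimes \sP_{\CU,\Nilp})$ and appeal to \corref{c:projector U} (with $\CZ=\on{pt}$ and $\CZ=\CU$) together with \eqref{e:boxtimes vs otimes}. The paper instead factors through $\Bun_G$: it unwinds the definition $\sP_{\CU,\Nilp}=j^*\circ \sP_{\Bun_G,\Nilp}\circ j_*$ and writes $\sP_{\CU,\Nilp}\boxtimes \sP_{\CU,\Nilp}$ as the composite $\Shv(\CU\times\CU)\overset{(j\times j)_*}\to\Shv(\Bun_G\times\Bun_G)\overset{\sP\boxtimes\sP}\to\Shv_\Nilp(\Bun_G)\otimes\Shv_\Nilp(\Bun_G)\overset{j^*\otimes j^*}\to\Shv_\Nilp(\CU)\otimes\Shv_\Nilp(\CU)$, invoking \corref{c:double projector} as a black box. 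The paper's argument is shorter and stays closer to the defining expression for $\sP_{\CU,\Nilp}$; yours is more self-contained in the sense that it does not pass back up to $\Bun_G$, at the price of re-running the decomposition argument. Both are valid, and both ultimately rest on the universal $\Nilp$-cotruncativeness of $\CU$ through the results they invoke.
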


\begin{proof}

Indeed, the functor $\sP_{\CU,\Nilp}\boxtimes \sP_{\CU,\Nilp}$ is the composition
\begin{multline*}
\Shv(\CU\times \CU) \overset{(j\times j)_*}\longrightarrow 
\Shv(\Bun_G\times \Bun_G) \overset{\sP\boxtimes \sP}\longrightarrow \\
\to \Shv_\Nilp(\Bun_G)\otimes \Shv_\Nilp(\Bun_G) \overset{j^*\otimes j^*}\longrightarrow 
\Shv_\Nilp(\CU)\otimes \Shv_\Nilp(\CU).
\end{multline*}

\end{proof}

\sssec{}

Finally, from \thmref{t:Kunneth BunG} we obtain:

\begin{cor} \label{c:Kunneth U}
The functor
$$\Shv_\Nilp(\CU)\otimes \Shv_\Nilp(\CU)\to\Shv_{\Nilp\times \Nilp}(\CU\times \CU)$$
is an equivalence.
\end{cor}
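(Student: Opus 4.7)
The plan is to deduce the statement directly from \thmref{t:Kunneth BunG} by pulling back via $j\times j$, using \corref{c:double projector U} as the bridge. The key observation is that the functor in question is always fully faithful (as a Lurie tensor product of two fully faithful embeddings $\Shv_\Nilp(\CU)\subset \Shv(\CU)$), and it visibly lands in $\Shv_{\Nilp\times\Nilp}(\CU\times\CU)$; so the entire content of the claim is essential surjectivity.

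To prove essential surjectivity, I would show that the endofunctor $\sP_{\CU,\Nilp}\boxtimes \sP_{\CU,\Nilp}$ acts as the identity on $\Shv_{\Nilp\times\Nilp}(\CU\times\CU)$. Combined with \corref{c:double projector U}, which asserts that this endofunctor is a projector onto $\Shv_\Nilp(\CU)\otimes\Shv_\Nilp(\CU)$, this would give the required inclusion $\Shv_{\Nilp\times\Nilp}(\CU\times\CU)\subset \Shv_\Nilp(\CU)\otimes \Shv_\Nilp(\CU)$ and hence the sought-for equivalence.

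To establish that $\sP_{\CU,\Nilp}\boxtimes \sP_{\CU,\Nilp}$ restricts to the identity on $\Shv_{\Nilp\times\Nilp}(\CU\times\CU)$, I would unwind the definition in \secref{sss:proj U} to write
$$\sP_{\CU,\Nilp}\boxtimes \sP_{\CU,\Nilp}\simeq (j\times j)^*\circ (\sP_{\Bun_G,\Nilp}\boxtimes \sP_{\Bun_G,\Nilp})\circ (j\times j)_*,$$
and then trace an object $\CF\in \Shv_{\Nilp\times\Nilp}(\CU\times\CU)$ through it. First, factoring $(j\times j)_*$ as $(j\times \on{id})_*\circ (\on{id}\times j)_*$ and applying the universal $\Nilp$-cotruncativeness hypothesis twice (once with $\CZ=\CU$ and $\CN=\Nilp$, and once with $\CZ=\Bun_G$ and $\CN=\Nilp$), one sees that $(j\times j)_*(\CF)\in \Shv_{\Nilp\times\Nilp}(\Bun_G\times\Bun_G)$. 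Next, by the argument used inside the proof of \thmref{t:Kunneth BunG}, the projector $\sP_{\Bun_G,\Nilp}\boxtimes \sP_{\Bun_G,\Nilp}$ acts as the identity on this larger subcategory. Finally, $(j\times j)^*\circ (j\times j)_*\simeq \on{Id}$ since $j$ is an open immersion, recovering $\CF$.

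I expect the only delicate point to be the double application of the universal $\Nilp$-cotruncativeness hypothesis, which must be applied once with $\CZ=\CU$ (treating $\Nilp\cap T^*(\CU)$ as a half-dimensional conical subset of $T^*(\CU)$) and once with $\CZ=\Bun_G$; this is however squarely within the scope of the definition given in \secref{sss:N-cotrunc}, so no new input is needed. Once these steps are in place, the equivalence follows formally as the intersection of the fully faithful Kunneth embedding with the identification $\Shv_{\Nilp\times\Nilp}(\CU\times\CU)\subset \Shv_\Nilp(\CU)\otimes \Shv_\Nilp(\CU)$ provided by the projector argument.
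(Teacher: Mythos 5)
Your proposal is correct, and it is essentially the paper's intended argument, just unpacked. The paper disposes of this corollary in one line, saying it follows from \thmref{t:Kunneth BunG} ``in the same way as \corref{c:Kunneth for cotrunc} follows from \thmref{t:Hecke action Nilp 2}'': one considers the commutative square with top arrow $\Shv_\Nilp(\Bun_G)\otimes\Shv_\Nilp(\Bun_G)\to\Shv_{\Nilp\times\Nilp}(\Bun_G\times\Bun_G)$ (an equivalence by \thmref{t:Kunneth BunG}), bottom arrow the map in question, and vertical arrows the open restrictions, which are essentially surjective because they admit fully faithful left adjoints $(j\times j)_!$ landing in the right subcategory thanks to universal $\Nilp$-cotruncativeness. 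What you do instead is make the mechanism explicit by tracing objects through $\sP_{\CU,\Nilp}\boxtimes\sP_{\CU,\Nilp}$, invoking \corref{c:double projector U} for its image and the decomposition $(j\times j)^*\circ(\sP\boxtimes\sP)\circ(j\times j)_*$ to reduce to the projector computation in the proof of \thmref{t:Kunneth BunG}. The crucial input in both versions is the same: universal $\Nilp$-cotruncativeness, applied twice after factoring $(j\times j)_*=(j\times\on{id})_*\circ(\on{id}\times j)_*$, gives that $(j\times j)_*$ preserves the $\Nilp\times\Nilp$ singular-support condition. So your route is a valid alternative phrasing, marginally longer but arguably more concrete than the commutative-square trick the paper gestures at.

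One small terminological slip you should fix: when you apply universal $\Nilp$-cotruncativeness with $\CZ=\CU$, you do not need to ``treat $\Nilp\cap T^*(\CU)$ as a half-dimensional conical subset''---the definition in \secref{sss:N-cotrunc} only requires $\CN_\CZ\subset T^*(\CZ)$ to be a closed conical subset, with no half-dimensionality hypothesis. The half-dimensional language belongs to the notation $\Shv_{\frac{1}{2}\on{-dim}\times\Nilp}$, which plays no role in this particular corollary.
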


\begin{proof}
Follows from \thmref{t:Kunneth BunG} in the same way as \corref{c:Kunneth for cotrunc} follows
from \thmref{t:Hecke action Nilp 2}.
\end{proof}

\section{Verdier duality for $\Shv_\Nilp(\Bun_G)$} \label{s:Verdier}

This section is devoted to the study of the pattern of Verdier (i.e., usual) self-duality
on $\Bun_G$ (and its quasi-compact open substacks) and how this self-duality interacts
with the subcategory $\Shv_\Nilp(\Bun_G)\subset \Shv(\Bun_G)$. 

\medskip

Since $\Bun_G$ is not quasi-compact, the category that is naturally 
dual to $\Shv(\Bun_G)$ is $\Shv(\Bun_G)_{co}$ (see \secref{ss:co BunG}). That said,
the self-duality on $\Shv(\Bun_G)$ by identifying $\Shv(\Bun_G)_{co}$ using
the miraculous functor $\Mir_{\Bun_G}$, see \secref{ss:BunG Mir}. 

\medskip

This leads us to study the corresponding subcategory 
$\Shv_\Nilp(\Bun_G)_{\on{co}}\subset \Shv(\Bun_G)_{co}$. 

\ssec{The diagonal object}

We begin this section by establishing the behavior of the diagonal object on $\Bun_G$
with respect to the Hecke action. The result is quite obvious (\propref{p:tau and sigma}),
but it serves as a basis for our further study. 

\sssec{}

Let $\CY$ be a quasi-compact algebraic stack. Throughout this paper, we use the following notation: 
$$\on{u}_\CY:=(\Delta_\CY)_*(\omega_\CY)\in \Shv(\CY\times \CY).$$

\sssec{} \label{sss:u shv}

This object should not be confused with the unit of the Verdier self-duality on $\Shv(\CY)$,
$$\on{u}_{\Shv(\CY)}\in  \Shv(\CY)\otimes \Shv(\CY),$$
see \secref{sss:unit Verdier}.

\medskip

When we view $\Shv(\CY)\otimes \Shv(\CY)$ as a full subcategory of $\Shv(\CY\times \CY)$
via \eqref{e:Kunneth 0}, we have a tautological map

\begin{equation} \label{e:unit to unit}
\on{u}_{\Shv(\CY)}\to \on{u}_\CY.
\end{equation}

In fact, according to \cite[Sect. 22.2.4]{AGKRRV},
$\on{u}_{\Shv(\CY)}$ isomorphic to the value on $\on{u}_\CY$ of the right adjoint functor to \eqref{e:Kunneth 0},
and \eqref{e:unit to unit} is given by the counit of the adjunction. 

\sssec{} \label{sss:naive BunG}

Let now $\CY$ be not necessarily quasi-compact, but \emph{truncatable} (see \secref{sss:truncatable} for what this means). 
In this case, one can still consider the object 
$$(\Delta_\CY)_*(\omega_\CY)\in \Shv(\CY\times \CY),$$
but we will denote it by
$$\on{u}^{\on{naive}}_\CY.$$

It has more refined variants, namely 
\begin{equation} \label{e:unit 1 and 2}
\on{u}_{\CY,\on{co}_1} \text{ and } \on{u}_{\CY,\on{co}_2},
\end{equation}
which are objects of the categories 
\begin{equation} \label{e:co 1 and 2}
\Shv(\CY\times \CY)_{\on{co}_1} \text{ and } \Shv(\CY\times \CY)_{\on{co}_2},
\end{equation}
respectively, see Sects. \ref{sss:co1} and \ref{sss:u co 1 2}, where the above categories
and objects are defined, respectively. 

\medskip

By \secref{sss:kernels non qc}, the categories in \eqref{e:co 1 and 2} correspond to functors defined by kernels
$$\Shv(\CY)\to \Shv(\CY) \text{ and } \Shv(\CY)_{\on{co}}\to \Shv(\CY)_{\on{co}},$$
respectively.  The objects \eqref{e:unit 1 and 2} correspond to the identity functor in both cases.

\medskip

By contrast, objects of $\Shv(\CY\times \CY)$ correspond to functors defined by kernels
$$\Shv(\CY)_{\on{co}}\to \Shv(\CY),$$
and the object $\on{u}^{\on{naive}}_\CY$ corresponds to the functor
$$\on{Id}^{\on{naive}}_\CY:\Shv(\CY)_{\on{co}}\to \Shv(\CY),$$
see \secref{sss:Id naive}. 

\sssec{}

The key player in this section is the object 
$$\on{u}^{\on{naive}}_{\Bun_G}\in \Shv(\Bun_G\times \Bun_G).$$

\sssec{} \label{sss:conj Hecke}

In what follows, we will need the following property of $\on{u}^{\on{naive}}_{\Bun_G}$ vis-a-vis the
Hecke action:

\medskip

The category $\Rep(\cG)$ carries a canonical involution, denoted $\tau$, given by the Chevalley involution on $\cG$. 
We will denote it by $V\mapsto V^\tau$. Let 
$$\CV\mapsto \CV^\tau$$
denote the induced involution on $\Rep(\cG)_\Ran$.

\medskip

We claim:  

\begin{prop} \label{p:tau and sigma}
For $\CV\in \Rep(\cG)_\Ran$, 
$$\sigma((\on{Id}_{\Bun_G}\boxtimes \sH_\CV)(\on{u}^{\on{naive}}_{\Bun_G}))
\simeq (\on{Id}_{\Bun_G}\boxtimes \sH_{\CV^\tau})(\on{u}^{\on{naive}}_{\Bun_G}),$$
where $\sigma$ is the transposition acting on $\Bun_G\times \Bun_G$.
\end{prop}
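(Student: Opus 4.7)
The plan is to reduce the statement to a geometric assertion about the inversion automorphism of the Hecke stack, combined with the classical behavior of geometric Satake under inversion.

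First, by continuity of both sides in $\CV$ and the fact that objects of the form $\on{Sat}_I(V) \star \CM$ (for $V \in (\Rep(\cG)^{\otimes I})^c$, $\CM \in \Shv(X^I)^c$, $I$ a finite set) generate $\Rep(\cG)_\Ran$, I would reduce to proving the claim for such $\CV$. Since $\sigma$ does not act on the $X^I$ factor, the pushforward along $X^I$ against $\CM$ factors out of both sides, so the problem becomes
$$\sigma\bigl((\on{Id}_{\Bun_G}\boxtimes \sH(V,-))(\on{u}^{\on{naive}}_{\Bun_G})\bigr) \simeq (\on{Id}_{\Bun_G}\boxtimes \sH(V^\tau,-))(\on{u}^{\on{naive}}_{\Bun_G})$$
as objects of $\Shv(\Bun_G\times\Bun_G\times X^I)$, where $\sigma$ permutes the two $\Bun_G$ factors.

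Second, I would unpack using the definition \eqref{e:Hecke action formula} of the Hecke functor. Since $\on{u}^{\on{naive}}_{\Bun_G} = (\Delta_{\Bun_G})_*(\omega_{\Bun_G})$ is supported on the diagonal, the $!$-pullback $(\on{id}_{\Bun_G}\times\hl)^!(\on{u}^{\on{naive}}_{\Bun_G})$ is supported on the graph of $\hl$ inside $\Bun_G\times\CH_{G,X^I}$, and a direct base-change computation identifies
$$(\on{Id}_{\Bun_G}\boxtimes \sH(V,-))(\on{u}^{\on{naive}}_{\Bun_G}) \simeq (\hl,\hr,s)_*(\on{Sat}_I(V))$$
up to the standard shift built into the normalization of $\on{Sat}_I$. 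Applying $\sigma$ then replaces $(\hl,\hr,s)$ by $(\hr,\hl,s)$, while the right-hand side of the proposition is the analogous pushforward of $\on{Sat}_I(V^\tau)$ along $(\hl,\hr,s)$.

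Third, I would introduce the inversion automorphism $\iota\colon\CH_{G,X^I}\to\CH_{G,X^I}$ sending a pair of bundles with a modification to the same pair with the inverse modification, so that $\hl\circ\iota = \hr$, $\hr\circ\iota = \hl$, and $s\circ\iota = s$. This gives $(\hr,\hl,s)_*(\on{Sat}_I(V))\simeq(\hl,\hr,s)_*(\iota_*\on{Sat}_I(V))$, and reduces the proposition to the identification $\iota_*\on{Sat}_I(V)\simeq\on{Sat}_I(V^\tau)$. This last point is the classical statement that under geometric Satake the inversion automorphism of the $I$-legged affine Grassmannian realizes the Chevalley involution on the Tannakian dual $\cG$. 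It is the main substantive step, and the only delicate one: one must match conventions (shifts, factorization structure, and equivariance) carefully so that inversion of Hecke modifications translates into precisely the involution $\tau$ used in the paper.
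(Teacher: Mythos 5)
Your proof is correct and follows essentially the same route as the paper: reduce to generators $\on{Sat}_I(V)\star\CM$, identify the value on $\on{u}^{\on{naive}}_{\Bun_G}$ with $(\hl\times\hr\times s)_*(\on{Sat}_I(V))$, and invoke the compatibility of geometric Satake with inversion/Chevalley involution. The only difference is cosmetic: you make the inversion automorphism $\iota$ of $\CH_{G,X^I}$ explicit, whereas the paper abbreviates this step by writing $\sigma(\on{Sat}_I(V))\simeq \on{Sat}_I(V^\tau)$ (its equation \eqref{e:sigma and tau}), tacitly using $\sigma$ to denote the induced swap on the Hecke stack.
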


\begin{proof}

Unwinding the construction of the functors $\sH_\CV$, we obtain that we need 
to show that for every $V\in \Rep(\cG)^{\otimes I}$, we have
$$\sigma((\on{Id}_{\Bun_G}\boxtimes \sH(V,-))(\on{u}^{\on{naive}}_{\Bun_G}))
\simeq (\on{Id}_{\Bun_G}\boxtimes \sH(V^\tau,-))(\on{u}^{\on{naive}}_{\Bun_G})$$
as objects of 
$$\Shv(\Bun_G\times \Bun_G\times X^I),$$
where $\sigma$ is the transposition of the $\Bun_G$ factors. 

\medskip 

By the definition of the Hecke action, the object 
$$(\on{Id}_{\Bun_G}\boxtimes \sH(V,-))(\on{u}^{\on{naive}}_{\Bun_G})\in \Shv(\Bun_G\times \Bun_G\times X^I)$$
identifies with 
$$(\hl\times \hr\times s)_*(\on{Sat}_I(V)).$$

The assertion now follows from the fact that there is a canonical isomorphism 
\begin{equation} \label{e:sigma and tau}
\sigma(\on{Sat}_I(V))\simeq \on{Sat}_I(V^\tau), \quad V\in \Rep(\cG)^{\otimes I}.
\end{equation}

\end{proof}

\ssec{The projector and the diagonal} \label{ss:proj and diag}

In this subsection we establish the behavior of the diagonal object 
$\on{u}^{\on{naive}}_{\Bun_G}$ vis-a-vis the projector $\sP$ onto
the subcategory with nilpotent singular support. 

\sssec{}

We claim:

\begin{prop} \label{p:proj identity}
We have canonical isomorphisms 
$$(\sP_{\Bun_G,\Nilp}\boxtimes \on{Id}_{\Bun_G})(\on{u}^{\on{naive}}_{\Bun_G}) 
\simeq (\sP_{\Bun_G,\Nilp}\boxtimes \sP_{\Bun_G,\Nilp})(\on{u}^{\on{naive}}_{\Bun_G}) \simeq
(\on{Id}_{\Bun_G}\boxtimes \sP_{\Bun_G,\Nilp})(\on{u}^{\on{naive}}_{\Bun_G}).$$
\end{prop}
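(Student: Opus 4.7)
\medskip

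\noindent \textbf{Proof plan.} Denote
\begin{align*}
A &:= (\sP_{\Bun_G,\Nilp}\boxtimes \on{Id}_{\Bun_G})(\on{u}^{\on{naive}}_{\Bun_G}), \\
B &:= (\on{Id}_{\Bun_G}\boxtimes \sP_{\Bun_G,\Nilp})(\on{u}^{\on{naive}}_{\Bun_G}), \\
C &:= (\sP_{\Bun_G,\Nilp}\boxtimes \sP_{\Bun_G,\Nilp})(\on{u}^{\on{naive}}_{\Bun_G}),
\end{align*}
and write $\sP := \sP_{\Bun_G,\Nilp}$ for brevity. The plan is to establish $A \in \Shv_\Nilp(\Bun_G)\otimes \Shv_\Nilp(\Bun_G)$. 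Once this is done, \corref{c:double projector} (which asserts that $\sP\boxtimes\sP$ acts as the identity on this subcategory) together with the idempotency $\sP\circ\sP = \sP$ yields
$$A = (\sP\boxtimes\sP)(A) = (\sP\boxtimes\sP)(\sP\boxtimes\on{Id})(\on{u}^{\on{naive}}_{\Bun_G}) = (\sP\boxtimes\sP)(\on{u}^{\on{naive}}_{\Bun_G}) = C,$$
and the identity $B = C$ will follow by the same argument with the two factors exchanged, using $\sigma(\on{u}^{\on{naive}}_{\Bun_G}) = \on{u}^{\on{naive}}_{\Bun_G}$.

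By \thmref{t:projector}, $A$ already lies in $\Shv_\Nilp(\Bun_G)\otimes \Shv(\Bun_G)$. To upgrade, it suffices, by \thmref{t:Hecke action Nilp 2} combined with \thmref{t:Kunneth BunG} (using that $\Nilp$ is half-dimensional, so that $\Shv_{\Nilp\times \frac{1}{2}\on{-dim}}\cap \Shv_{\frac{1}{2}\on{-dim}\times \Nilp} = \Shv_{\Nilp\times\Nilp}$), to show that $A$ is Hecke-lisse in the second factor. For $V\in \Rep(\cG)$, one computes
$$(\on{Id}_{\Bun_G}\boxtimes \sH(V,-))(A) \simeq (\sP\boxtimes\on{Id}_{\Bun_G})\bigl((\on{Id}_{\Bun_G}\boxtimes \sH(V,-))(\on{u}^{\on{naive}}_{\Bun_G})\bigr),$$
since the kernel $\sP\boxtimes\on{Id}$ on the first factor commutes with the kernel $\on{Id}\boxtimes\sH(V,-)$ on the second. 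Applying \propref{p:tau and sigma} (after applying $\sigma$ and using $\sigma^2=\on{id}$ together with $\sigma(\on{u}^{\on{naive}}_{\Bun_G})=\on{u}^{\on{naive}}_{\Bun_G}$) rewrites
$$(\on{Id}_{\Bun_G}\boxtimes \sH(V,-))(\on{u}^{\on{naive}}_{\Bun_G}) \simeq (\sH(V^\tau,-)\boxtimes\on{Id}_{\Bun_G})(\on{u}^{\on{naive}}_{\Bun_G}).$$
Using that Hecke functors on a single $\Bun_G$ factor commute (they all arise from the symmetric monoidal action of $\Rep(\cG)_\Ran$), substitution gives
$$(\on{Id}_{\Bun_G}\boxtimes \sH(V,-))(A) \simeq (\sH(V^\tau,-)\boxtimes\on{Id}_{\Bun_G})(A).$$
But $A \in \Shv_\Nilp(\Bun_G)\otimes\Shv(\Bun_G)$ is Hecke-lisse in the \emph{first} factor by \thmref{t:Hecke action Nilp 2}(iii), whence the right-hand side has lisse $X$-direction; varying $V$ establishes Hecke-lisseness of $A$ in the second factor.

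The only nontrivial step is the Chevalley-twist identity \propref{p:tau and sigma}, which transfers the Hecke action between the two factors of $\Bun_G\times\Bun_G$ at the cost of $V \mapsto V^\tau$. Since Hecke-lisseness is imposed for all $V\in\Rep(\cG)$ and the Chevalley involution acts bijectively on $\Rep(\cG)$, this twist is harmless, and the remainder of the argument reduces to formal manipulation of kernels and projectors.
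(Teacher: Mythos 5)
Your proof is correct, but it takes a genuinely different (and more roundabout) route than the paper's. The paper's proof is shorter: it writes $\sP\boxtimes\sP = (\on{Id}\boxtimes\sH_\sR)\circ(\sH_\sR\boxtimes\on{Id})$, applies \propref{p:tau and sigma} together with the symmetry $\sR^\tau\simeq\sR$ (which holds by canonicity of the construction of $\sR$) to rewrite $(\sH_\sR\boxtimes\on{Id})(\on{u}^{\on{naive}}_{\Bun_G})\simeq(\on{Id}\boxtimes\sH_\sR)(\on{u}^{\on{naive}}_{\Bun_G})$, and concludes by idempotency of $\on{Id}\boxtimes\sP$. You instead deduce that $A:=(\sP\boxtimes\on{Id})(\on{u}^{\on{naive}}_{\Bun_G})$ already lies in $\Shv_\Nilp(\Bun_G)\otimes\Shv_\Nilp(\Bun_G)$, by testing Hecke-lisseness on the second factor and using the bijectivity of $V\mapsto V^\tau$ to transfer it from the Hecke-lisseness of the first factor; then \corref{c:double projector} plus idempotency close the argument. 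This is a legitimate alternative, and it has the minor virtue of not invoking $\sR^\tau\simeq\sR$ explicitly; but it pays for this by additionally calling on \thmref{t:Hecke action Nilp 2} (beyond what \thmref{t:projector} already implicitly uses), the observation that $\Shv_{\Nilp\times\frac{1}{2}\on{-dim}}\cap\Shv_{\frac{1}{2}\on{-dim}\times\Nilp}=\Shv_{\Nilp\times\Nilp}$, and \thmref{t:Kunneth BunG}, which is one of the deepest inputs available. The paper's argument is the more economical one, staying inside kernel manipulations and the single formal fact $\sR^\tau\simeq\sR$.
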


\begin{proof}

We will prove the second isomorphism; the first one would follow by symmetry.

\medskip

In the notation of \secref{sss:the projector}, we have
\begin{equation} \label{e:P sq new}
(\sP_{\Bun_G,\Nilp}\boxtimes \sP_{\Bun_G,\Nilp})(\on{u}^{\on{naive}}_{\Bun_G}) =
(\sH_\sR \boxtimes \sH_\sR)(\on{u}^{\on{naive}}_{\Bun_G}) 
\simeq (\on{Id}_{\Bun_G} \boxtimes \sH_\sR) \circ 
(\sH_\sR \boxtimes \on{Id}_{\Bun_G})(\on{u}^{\on{naive}}_{\Bun_G}).
\end{equation}

Note that  
$$(\sH_\sR \boxtimes \on{Id}_{\Bun_G})(\on{u}^{\on{naive}}_{\Bun_G})\simeq 
\sigma((\on{Id}_{\Bun_G}\boxtimes \sH_\sR)(\on{u}^{\on{naive}}_{\Bun_G})),$$
and by \propref{p:tau and sigma} we can rewrite the latter as 
$$(\on{Id}_{\Bun_G}\boxtimes \sH_{\sR^\tau})(\on{u}^{\on{naive}}_{\Bun_G}).$$

Hence, the expression in \eqref{e:P sq new} can be rewritten as 
\begin{equation} \label{e:P sq bis}
(\on{Id}_{\Bun_G}\boxtimes \sH_\sR) \circ (\on{Id}_{\Bun_G} \boxtimes \sH_{\sR^\tau}) (\on{u}^{\on{naive}}_{\Bun_G}).
\end{equation}

Now, the canonicity of the construction of the object $\sR$ implies that we have a canonical
identification
$$\sR^\tau\simeq \sR.$$

Hence, we can further rewrite the expression in \eqref{e:P sq bis} as
$$(\on{Id}_{\Bun_G}\boxtimes \sH_\sR) \circ (\on{Id}_{\Bun_G}\boxtimes \sH_\sR)(\on{u}^{\on{naive}}_{\Bun_G})=
(\on{Id}_{\Bun_G} \boxtimes \sP_{\Bun_G,\Nilp}) \circ (\on{Id}_{\Bun_G}\boxtimes \sP_{\Bun_G,\Nilp})(\on{u}^{\on{naive}}_{\Bun_G}).$$ 

Now, the isomorphism with $(\on{Id}_{\Bun_G}\boxtimes \sP_{\Bun_G,\Nilp})(\on{u}^{\on{naive}}_{\Bun_G})$
follows from \thmref{t:projector}.

\end{proof}

\sssec{}

In what follows we will denote the object that appears in \propref{p:proj identity} by
$$\on{u}^{\on{naive}}_{\Bun_G,\Nilp}.$$

By \corref{c:double projector}, it belongs to the full subcategory 
$$\Shv_\Nilp(\Bun_G)\otimes \Shv_\Nilp(\Bun_G)\subset \Shv(\Bun_G\times \Bun_G).$$

\sssec{} \label{sss:u U Nilp}

Let $\CU \overset{j}\hookrightarrow \Bun_G$ be a universally Nilp-cotruncative quasi-compact open prestack.
Denote by $\on{u}_{\CU,\Nilp}$ the object
$$(j\times j)^*(\on{u}^{\on{naive}}_{\Bun_G,\Nilp})\in \Shv(\CU\times \CU).$$

Since $\on{u}^{\on{naive}}_{\Bun_G,\Nilp}\in \Shv_\Nilp(\Bun_G)\otimes \Shv_\Nilp(\Bun_G)$, we obtain that 
$\on{u}_{\CU,\Nilp}$ belongs to the full subcategory
$$\Shv_\Nilp(\CU)\otimes \Shv_\Nilp(\CU)\subset \Shv(\CU\times \CU).$$

\begin{prop} \label{p:u naive projectors}
The object $\on{u}_{\CU,\Nilp}$ is isomorphic to
$$(\sP_{\CU,\Nilp}\boxtimes \on{Id}_{\CU})(\on{u}_{\CU}),\,\, (\sP_{\CU,\Nilp}\boxtimes \sP_{\CU,\Nilp})(\on{u}_{\CU}) \text{ and }
(\on{Id}_{\CU}\boxtimes \sP_{\CU,\Nilp})(\on{u}_{\CU}).$$
\end{prop}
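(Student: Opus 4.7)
The plan is to reduce all three identifications to the single statement $\on{u}_{\CU,\Nilp} \simeq (\on{Id}_\CU \boxtimes \sP_{\CU,\Nilp})(\on{u}_\CU)$: the analogous identity with $\sP_{\CU,\Nilp}$ on the first factor follows by the symmetric argument, and the ``double" version follows formally from these two combined with the fact (\corref{c:projector U}) that $\sP_{\CU,\Nilp}$ acts as the identity on $\Shv_\Nilp(\CU)$.

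For the core identification, the strategy is to unwind the definitions by moving $(j\times j)^*$ past the projector acting on the second factor. Factor $(j\times j) = c \circ b$ where $c = (j, \on{id}_{\Bun_G}): \CU\times\Bun_G \to \Bun_G\times\Bun_G$ and $b = (\on{id}_\CU, j): \CU\times\CU \to \CU\times\Bun_G$. Starting from $\on{u}^{\on{naive}}_{\Bun_G,\Nilp} = (\on{Id}_{\Bun_G}\boxtimes \sP_{\Bun_G,\Nilp})(\on{u}^{\on{naive}}_{\Bun_G})$ (the last clause of \propref{p:proj identity}), I first apply $c^*$. Since $\sP_{\Bun_G,\Nilp}$ is defined and codefined by a kernel by \propref{p:Hecke codefined}, the functor $\on{Id}_{\Bun_G}\boxtimes \sP_{\Bun_G,\Nilp}$ commutes with $*$-pullback along the first factor; hence $c^*\circ(\on{Id}_{\Bun_G}\boxtimes \sP_{\Bun_G,\Nilp}) = (\on{Id}_\CU\boxtimes \sP_{\Bun_G,\Nilp})\circ c^*$. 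A base-change computation along the Cartesian square with vertices $\CU, \Bun_G, \CU\times\Bun_G, \Bun_G\times\Bun_G$ (with sides $\Delta_{\Bun_G}$ and $c$) gives $c^*(\on{u}^{\on{naive}}_{\Bun_G}) \simeq \Delta'_*\omega_\CU$, where $\Delta' := (\on{id}_\CU, j): \CU \to \CU\times\Bun_G$. Since $\Delta' = (\on{id}_\CU\times j)\circ \Delta_\CU$, this object equals $(\on{Id}_\CU\boxtimes j_*)(\on{u}_\CU)$.

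Applying $b^* = \on{Id}_\CU\boxtimes j^*$ and chaining the three identities yields
\begin{align*}
(j\times j)^*(\on{u}^{\on{naive}}_{\Bun_G,\Nilp})
&\simeq (\on{Id}_\CU\boxtimes j^*)\circ (\on{Id}_\CU\boxtimes \sP_{\Bun_G,\Nilp})\circ (\on{Id}_\CU\boxtimes j_*)(\on{u}_\CU) \\
&\simeq \bigl(\on{Id}_\CU\boxtimes (j^*\circ \sP_{\Bun_G,\Nilp}\circ j_*)\bigr)(\on{u}_\CU) \\
&\simeq (\on{Id}_\CU\boxtimes \sP_{\CU,\Nilp})(\on{u}_\CU),
\end{align*}
where the second step uses the functoriality of $\on{Id}_\CU \boxtimes (-)$ with respect to composition of kernels and the third is the definition of $\sP_{\CU,\Nilp}$ from \secref{sss:proj U}. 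The symmetric identity $\on{u}_{\CU,\Nilp}\simeq(\sP_{\CU,\Nilp}\boxtimes \on{Id}_\CU)(\on{u}_\CU)$ is obtained by the mirror-image argument (using instead $\on{u}^{\on{naive}}_{\Bun_G,\Nilp} \simeq (\sP_{\Bun_G,\Nilp}\boxtimes \on{Id}_{\Bun_G})(\on{u}^{\on{naive}}_{\Bun_G})$ and the opposite factorization of $j\times j$). For the middle identity, write $(\sP_{\CU,\Nilp}\boxtimes \sP_{\CU,\Nilp})(\on{u}_\CU) \simeq (\sP_{\CU,\Nilp}\boxtimes \on{Id}_\CU)(\on{Id}_\CU\boxtimes \sP_{\CU,\Nilp})(\on{u}_\CU) \simeq (\sP_{\CU,\Nilp}\boxtimes \on{Id}_\CU)(\on{u}_{\CU,\Nilp})$ by the first identification; since $\on{u}_{\CU,\Nilp}$ lies in $\Shv_\Nilp(\CU)\otimes \Shv_\Nilp(\CU)$ (as already recorded in \secref{sss:u U Nilp}) and $\sP_{\CU,\Nilp}$ restricts to the identity there, this equals $\on{u}_{\CU,\Nilp}$.

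The main conceptual hurdle is the interaction of pullback along the open embedding $j$ with the non-local kernel $\sP_{\Bun_G,\Nilp}$; this is resolved cleanly because \propref{p:Hecke codefined} guarantees the kernel compatibility for the factor on which pullback acts trivially, while on the factor where $j^*$ does act, the definition of $\sP_{\CU,\Nilp}$ as $j^*\circ \sP_{\Bun_G,\Nilp}\circ j_*$ is tailored precisely to absorb the extra $j_*$ produced by the base change for the diagonal.
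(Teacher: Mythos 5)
Your proof is correct and follows essentially the same route as the paper's: you unwind the definition $\sP_{\CU,\Nilp}=j^*\circ\sP_{\Bun_G,\Nilp}\circ j_*$, slide the $j$-pullbacks past $\sP_{\Bun_G,\Nilp}$ using the kernel formalism, identify the resulting object with $(j\times j)^*(\on{u}^{\on{naive}}_{\Bun_G,\Nilp})$, and then obtain the mirror identity by symmetry and the middle identity by projecting with \corref{c:projector U}. The one cosmetic difference is that you prove the third isomorphism first and get the first by symmetry (the paper does the opposite), and you spell out the base-change step $c^*(\on{u}^{\on{naive}}_{\Bun_G})\simeq(\on{Id}_\CU\boxtimes j_*)(\on{u}_\CU)$ that the paper compresses into the single identity $(j_*\boxtimes\on{Id}_\CU)(\on{u}_\CU)\simeq(\on{Id}_{\Bun_G}\boxtimes j^*)(\on{u}^{\on{naive}}_{\Bun_G})$. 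Your invocation of \propref{p:Hecke codefined} to commute $c^*$ past $\on{Id}_{\Bun_G}\boxtimes\sP_{\Bun_G,\Nilp}$ is valid but slightly stronger than needed: since $j$ is an open embedding, $j^*=j^!$, so this commutation is already an instance of the free compatibility of kernel functors on different factors (cf.\ \eqref{e:order does not matter} and \secref{sss:compat kernels new}) without appealing to the codefined property.
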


\begin{proof}

We have
\begin{multline*} 
(\sP_{\CU,\Nilp}\boxtimes \on{Id}_{\CU})(\on{u}_{\CU})
\simeq ((j^*\circ \sP_{\Bun_G,\Nilp} \circ j_*) \boxtimes \on{Id}_{\CU})(\on{u}_{\CU})
\simeq 
((j^*\circ \sP_{\Bun_G,\Nilp}) \boxtimes \on{Id}_{\CU})\circ (j_*\boxtimes \on{Id}_{\CU})(\on{u}_{\CU}) \simeq \\
\simeq 
((j^*\circ \sP_{\Bun_G,\Nilp}) \boxtimes \on{Id}_{\CU})\circ (\on{Id}_{\CU}\boxtimes j^*)(\on{u}^{\on{naive}}_{\Bun_G}) \simeq
(j\times j)^* \circ (\sP_{\Bun_G,\Nilp}\boxtimes \on{Id}_{\Bun_G})(\on{u}^{\on{naive}}_{\Bun_G}) \simeq \\
\simeq (j\times j)^*(\on{u}^{\on{naive}}_{\Bun_G,\Nilp})=\on{u}_{\CU,\Nilp}.
\end{multline*}

This proves the first isomorphism. The third isomorphism follows by symmetry. To prove the middle isomorphism,
we note that
\begin{multline*} 
(\sP_{\CU,\Nilp}\boxtimes \sP_{\CU,\Nilp})(\on{u}_{\CU})  \simeq
(\on{Id}_\CU \boxtimes \sP_{\CU,\Nilp})\circ (\sP_{\CU,\Nilp}\boxtimes \on{Id}_\CU)(\on{u}_{\CU})  \overset{\text{1st isomorphism}}\simeq \\
\simeq 
(\on{Id}_\CU \boxtimes \sP_{\CU,\Nilp}) (\on{u}_{\CU,\Nilp}) \overset{\text{\corref{c:projector U}}}\simeq \on{u}_{\CU,\Nilp}.
\end{multline*}

\end{proof}

\ssec{Verdier self-duality for a universally Nilp-cotruncative substack}

Let $\CU \overset{j}\hookrightarrow \Bun_G$ be a universally Nilp-cotruncative quasi-compact open substack.

\medskip

In this subsection we will show that the Verdier self-duality on $\Shv(\CU)$
restricts to a self-duality of the full subcategory $\Shv_\Nilp(\CU)\subset \Shv(\CU)$. 

\sssec{}

Throughout this paper, for a quasi-compact algebraic stack $\CY$ we denote by $\on{ev} _\CY$ the functor 
\begin{equation} \label{e:pairing sheaves Y init}
\Shv(\CY)\otimes \Shv(\CY) \overset{\sotimes}\to \Shv(\CY) \overset{\on{C}^\cdot_\blacktriangle(\CY,-)}\longrightarrow \Vect.
\end{equation}

\medskip

Recall that according to \secref{ss:Verdier}, the functor \eqref{e:pairing sheaves Y init} defines the counit
of a self-duality on $\Shv(\CY)$, which we refer to as \emph{Verdier self-duality}.

\sssec{}

We claim:

\begin{prop} \label{p:usual duality U}
The functors
\begin{equation} \label{e:counit usual U}
\Shv_\Nilp(\CU)\otimes \Shv_\Nilp(\CU) \to \Shv(\CU)\otimes \Shv(\CU) 
\overset{\on{ev} _\CU}\longrightarrow \Vect
\end{equation}
and
$$\on{u}_{\CU,\Nilp}\in \Shv_\Nilp(\CU)\otimes \Shv_\Nilp(\CU)$$
define a datum of self-duality on $\Shv_\Nilp(\CU)$.
\end{prop}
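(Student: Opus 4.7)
The plan is to verify the two triangle identities for the proposed duality datum. First, by \corref{c:Kunneth U} the fully faithful K\"unneth functor
$$\Shv_\Nilp(\CU)\otimes \Shv_\Nilp(\CU)\to \Shv(\CU\times \CU)$$
is an equivalence onto $\Shv_{\Nilp\times\Nilp}(\CU\times\CU)$. Since $\on{u}_{\CU,\Nilp}$ lies in this essential image (see \secref{sss:u U Nilp}), it is a well-defined object of $\Shv_\Nilp(\CU)\otimes \Shv_\Nilp(\CU)$, and the proposed unit makes sense.

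The main step is to identify the composition
$$\Shv_\Nilp(\CU) \overset{\on{Id}\otimes \on{u}_{\CU,\Nilp}}\longrightarrow \Shv_\Nilp(\CU)\otimes \Shv_\Nilp(\CU)\otimes \Shv_\Nilp(\CU) \overset{\on{ev}_\CU\otimes \on{Id}}\longrightarrow \Shv_\Nilp(\CU)$$
with the restriction to $\Shv_\Nilp(\CU)\subset \Shv(\CU)$ of the endofunctor of $\Shv(\CU)$ defined by the kernel $\on{u}_{\CU,\Nilp}\in \Shv(\CU\times\CU)$. This identification rests on the compatibility between the Verdier counit $\on{ev}_\CU$ and the pull-tensor-push formalism for kernels on the quasi-compact stack $\CU$: for an object $K\in\Shv(\CU)\otimes\Shv(\CU)$ realized inside $\Shv(\CU\times\CU)$ via K\"unneth, the functor $\CF\mapsto (\on{ev}_\CU\otimes\on{Id})(\CF\otimes K)$ coincides with the kernel-defined endofunctor of $\Shv(\CU)$ associated to $K$, as follows from the projection formula and base change.

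Granted this identification, \propref{p:u naive projectors} gives $\on{u}_{\CU,\Nilp}\simeq (\on{Id}_\CU\boxtimes \sP_{\CU,\Nilp})(\on{u}_\CU)$. Since $\on{u}_\CU=(\Delta_\CU)_*(\omega_\CU)$ is the kernel representing the identity endofunctor of $\Shv(\CU)$, and since applying $\on{Id}_\CU\boxtimes \sP_{\CU,\Nilp}$ at the level of kernels translates to post-composition with $\sP_{\CU,\Nilp}$, it follows that $\on{u}_{\CU,\Nilp}$ is the kernel representing the endofunctor $\sP_{\CU,\Nilp}$ of $\Shv(\CU)$. By \corref{c:projector U} (with $\CZ=\on{pt}$), $\sP_{\CU,\Nilp}$ is a projector onto $\Shv_\Nilp(\CU)$, so its restriction to $\Shv_\Nilp(\CU)$ is the identity. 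This verifies the first triangle identity; the second is handled symmetrically using the dual isomorphism $\on{u}_{\CU,\Nilp}\simeq (\sP_{\CU,\Nilp}\boxtimes \on{Id}_\CU)(\on{u}_\CU)$ from \propref{p:u naive projectors}.

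The main technical obstacle is the identification of the triangle composition with the kernel-defined functor on $\Shv(\CU)$; once this compatibility is set up, the argument reduces entirely to the already established properties of the projector $\sP_{\CU,\Nilp}$ and the naive diagonal kernel $\on{u}_\CU$.
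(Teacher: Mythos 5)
Your proof is correct and follows essentially the same route as the paper's: both rest on interpreting $\on{u}_{\CU,\Nilp}$ as $(\on{Id}_\CU\boxtimes \sP_{\CU,\Nilp})(\on{u}_\CU)$ via \propref{p:u naive projectors}, recognizing this as the kernel for $\sP_{\CU,\Nilp}$, and then invoking \corref{c:projector U} to conclude that it acts as the identity on $\Shv_\Nilp(\CU)$. The paper's proof is more terse, performing the computation $(p_2)_\blacktriangle(p_1^!(\CF)\sotimes \on{u}_{\CU,\Nilp})\simeq \sP_{\CU,\Nilp}((p_2)_\blacktriangle(p_1^!(\CF)\sotimes \on{u}_{\CU}))\simeq \sP_{\CU,\Nilp}(\CF)\simeq\CF$ directly, whereas you spell out the intermediate compatibility between the triangle composition and the kernel-defined endofunctor. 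One minor point: your appeal to \corref{c:Kunneth U} is stronger than needed — all the argument requires is that $\on{u}_{\CU,\Nilp}$ lies in $\Shv_\Nilp(\CU)\otimes\Shv_\Nilp(\CU)$, which is already recorded in \secref{sss:u U Nilp} and does not depend on the K\"unneth equivalence being surjective.
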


\begin{proof}

Let $\CF$ be an object of $\Shv_\Nilp(\CU)$. We have to construct a canonical isomorphism
$$(p_2)_\blacktriangle(p_1^!(\CF)\sotimes \on{u}_{\CU,\Nilp})\simeq \CF.$$

Using \propref{p:u naive projectors} and interpreting $\on{u}_{\CU,\Nilp}$ as $(\on{Id}_{\CU}\boxtimes \sP_{\CU,\Nilp})(\on{u}_{\CU})$, we rewrite
$$(p_2)_\blacktriangle(p_1^!(\CF)\sotimes \on{u}_{\CU,\Nilp}) \simeq
\sP_{\CU,\Nilp} ((p_2)_\blacktriangle(p_1^!(\CF)\sotimes \on{u}_{\CU})) \simeq \sP_{\CU,\Nilp}(\CF),$$
while the latter is isomorphic to $\CF$ by \corref{c:projector U}.

\end{proof}

\begin{cor} \label{c:usual duality U}
The pair $(\CU,\Nilp)$ is duality-adapted\footnote{See \secref{sss:duality adapted} for what this means.}.
\end{cor}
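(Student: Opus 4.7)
The plan is to deduce Corollary \ref{c:usual duality U} essentially immediately from Proposition \ref{p:usual duality U}, once I unpack the (forward-referenced) definition of \emph{duality-adapted} in Section \ref{sss:duality adapted}. I anticipate that definition to say: a pair $(\CY, \CN)$ consisting of an algebraic stack $\CY$ and a closed conical subset $\CN \subset T^*(\CY)$ is duality-adapted when the Verdier self-duality on $\Shv(\CY)$ -- with counit $\on{ev}_\CY$ -- restricts along the fully faithful inclusion $\Shv_\CN(\CY) \hookrightarrow \Shv(\CY)$ to a self-duality on $\Shv_\CN(\CY)$.

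Given this, the argument is a repackaging. First, I would observe that both structural morphisms of the would-be self-duality already take values in the $\Nilp$-subcategory: the restricted counit \eqref{e:counit usual U} does by construction, and the proposed unit $\on{u}_{\CU,\Nilp}$ lies in $\Shv_\Nilp(\CU) \otimes \Shv_\Nilp(\CU)$ via \corref{c:Kunneth U} (cf.\ Section \ref{sss:u U Nilp}). Next, I would invoke \propref{p:usual duality U} to produce one of the two triangle identities, namely the canonical isomorphism $(p_2)_\blacktriangle(p_1^!(\CF) \sotimes \on{u}_{\CU,\Nilp}) \simeq \CF$ for $\CF \in \Shv_\Nilp(\CU)$.

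The remaining triangle identity would then follow by symmetry. Concretely, \propref{p:u naive projectors} expresses $\on{u}_{\CU,\Nilp}$ equivalently as $(\sP_{\CU,\Nilp} \boxtimes \on{Id}_\CU)(\on{u}_\CU)$ or as $(\on{Id}_\CU \boxtimes \sP_{\CU,\Nilp})(\on{u}_\CU)$; together with the tautological swap-invariance of $\on{u}_\CU = (\Delta_\CU)_*(\omega_\CU)$, this shows that $\on{u}_{\CU,\Nilp}$ is invariant under the swap $\sigma$ of the two factors of $\CU \times \CU$, so the argument of \propref{p:usual duality U} applies with the roles of the two factors interchanged.

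Finally, I would verify the compatibility that is the actual content of duality-adaptedness: the self-duality just produced on $\Shv_\Nilp(\CU)$ must be the literal restriction of Verdier self-duality on $\Shv(\CU)$. The counit side is immediate from the construction of \eqref{e:counit usual U}; the unit side reduces to identifying $\on{u}_{\CU,\Nilp}$ with the image of the usual diagonal $\on{u}_\CU$ under the projector $\sP_{\CU,\Nilp} \boxtimes \sP_{\CU,\Nilp}$, which is again \propref{p:u naive projectors} combined with \corref{c:double projector U}. I do not foresee any real obstacle here: all of the substantive input (existence of the projector, stability of the diagonal under it, and the K\"unneth equivalence for $\Shv_\Nilp$) has already been assembled in the earlier parts of Section \ref{s:Nilp} and Section \ref{ss:proj and diag}, and the role of this corollary is simply to name the resulting structure.
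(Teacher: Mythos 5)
Your proposal is correct and matches the paper's intent: the corollary is an immediate restatement of Proposition \ref{p:usual duality U} once the definition in \secref{sss:duality adapted} is unpacked (duality-adapted means precisely that the restriction of $\on{ev}_\CY$ to $\Shv_\CN(\CY)\otimes\Shv_\CN(\CY)$ is the counit of a self-duality, which is exactly what the proposition establishes). The extra material you include about swap-invariance and the second triangle identity is correct but not needed for the paper, which treats this as a direct consequence.
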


\sssec{}

We now claim:

\begin{prop} \label{p:dual of projector U}
With respect to the self-duality
$$\Shv(\CU)^\vee\simeq \Shv(\CU)$$
of \eqref{e:Verdier} and the self-duality 
$$\Shv_\Nilp(\CU)^\vee \simeq \Shv_\Nilp(\CU)$$
of \propref{p:usual duality U}, the functor
$\sP_{\CU,\Nilp}: \Shv(\CU)\to \Shv_\Nilp(\CU)$
identifies with the dual of the tautological embedding 
$\iota_\CU:\Shv_\Nilp(\CU)\hookrightarrow \Shv(\CU)$.
\end{prop}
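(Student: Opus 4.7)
\medskip

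\noindent\textbf{Proof proposal.}
The plan is to compute the dual functor $\iota_\CU^\vee : \Shv(\CU) \to \Shv_\Nilp(\CU)$ (using the two self-dualities to transport it to this form) explicitly via the unit of the self-duality on the source and the counit of the self-duality on the target. Since the self-duality on $\Shv_\Nilp(\CU)$ supplied by \propref{p:usual duality U} has unit $\on{u}_{\CU,\Nilp} \in \Shv_\Nilp(\CU)\otimes \Shv_\Nilp(\CU)$ and counit the restriction of $\on{ev}_\CU$, the standard formula yields
$$
\iota_\CU^\vee(\CF) \simeq (p_2)_\blacktriangle\bigl(p_1^!(\CF) \sotimes \on{u}_{\CU,\Nilp}\bigr), \qquad \CF \in \Shv(\CU),
$$
where $p_1, p_2 : \CU \times \CU \to \CU$ are the two projections. (For $\CF \in \Shv_\Nilp(\CU)$, the proof of \propref{p:usual duality U} already shows that this expression equals $\CF$; the point now is to treat an arbitrary $\CF \in \Shv(\CU)$.)

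Next, I would invoke \propref{p:u naive projectors} to rewrite $\on{u}_{\CU,\Nilp}$ as $(\on{Id}_\CU \boxtimes \sP_{\CU,\Nilp})(\on{u}_\CU)$. Because $\sP_{\CU,\Nilp}$ is by definition a functor defined by a kernel (\secref{sss:proj U}), the lifted endofunctor $\on{Id}_\CU \boxtimes \sP_{\CU,\Nilp}$ of $\Shv(\CU \times \CU)$ commutes with the operations $p_1^!$ and $(p_2)_\blacktriangle$ and with $\sotimes$ against pullbacks from the first factor. This gives the identification
$$
(p_2)_\blacktriangle\bigl(p_1^!(\CF) \sotimes (\on{Id}_\CU \boxtimes \sP_{\CU,\Nilp})(\on{u}_\CU)\bigr) \simeq \sP_{\CU,\Nilp}\bigl((p_2)_\blacktriangle(p_1^!(\CF) \sotimes \on{u}_\CU)\bigr).
$$
Finally, since $\on{u}_\CU$ provides the unit of the Verdier self-duality on $\Shv(\CU)$, the inner expression equals $\CF$, and the whole thing becomes $\sP_{\CU,\Nilp}(\CF)$, as desired.

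The main point (and the only thing that is not a formal manipulation of the unit-counit formula) is the commutation used in the second step, namely that $\on{Id}_\CU \boxtimes \sP_{\CU,\Nilp}$ passes through the $(p_2)_\blacktriangle(p_1^!(-) \sotimes -)$ construction. This is, however, an instance of a general property of functors defined by kernels in the 2-category of sheaves on stacks (developed in \secref{s:ker}), and is the same computation already used in the proof of \propref{p:usual duality U}; the only difference is that here $\CF$ need not lie in $\Shv_\Nilp(\CU)$, so the final simplification ends at $\sP_{\CU,\Nilp}(\CF)$ rather than $\CF$. Everything else is formal.
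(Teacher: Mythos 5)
Your proof is correct, and it takes a genuinely different route from the paper's. The paper proves the statement by establishing the isomorphism $(\on{Id}_{\Shv(\CU)}\otimes \sP_{\CU,\Nilp})(\on{u}_{\Shv(\CU)}) \simeq \on{u}_{\CU,\Nilp}$ of \emph{unit objects} in $\Shv(\CU)\otimes \Shv_\Nilp(\CU)$ (which characterizes $\sP_{\CU,\Nilp}$ as the dual of $\iota_\CU$ by the ``graph'' criterion for dual functors), and it verifies this by testing $\CHom$ against objects of the form $\CF_1 \otimes \CF_2$ with $\CF_i$ compact — a somewhat indirect manipulation involving Verdier duals of compact generators. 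You instead compute $\iota_\CU^\vee(\CF)$ directly for arbitrary $\CF \in \Shv(\CU)$ as the kernel transform by $\on{u}_{\CU,\Nilp}$ (which is precisely the observation recorded later in the paper, in \secref{sss:P Y N}, as a consequence of the definitions), then substitute $\on{u}_{\CU,\Nilp} \simeq (\on{Id}_\CU \boxtimes \sP_{\CU,\Nilp})(\on{u}_\CU)$ using \propref{p:u naive projectors}, and commute $\on{Id}_\CU \boxtimes \sP_{\CU,\Nilp}$ past the $(p_2)_\blacktriangle(p_1^!(-)\sotimes -)$ construction using \eqref{e:order does not matter}. Both proofs use the same two key inputs — \propref{p:u naive projectors} and the commutation property of functors defined by kernels — and the same commutation step already appears in the proof of \propref{p:usual duality U}, as you note; the difference is that the paper works at the level of unit objects in the tensor-product category while you work value-by-value, which avoids the $\CHom$-on-compacts detour and is arguably cleaner. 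Two small points worth being explicit about in a polished writeup: (i) translating the abstract unit-counit formula for $\iota_\CU^\vee$ into the kernel transform $(p_2)_\blacktriangle(p_1^!(-)\sotimes \on{u}_{\CU,\Nilp})$ uses that the $\boxtimes$-embedding $\Shv(\CU)\otimes\Shv(\CU)\hookrightarrow\Shv(\CU\times\CU)$ intertwines the abstract contraction against $\on{ev}_\CU$ with the kernel-transform operation, together with the swap-invariance of $\on{u}_{\CU,\Nilp}$ (which holds because $\on{ev}_\CU$ is a symmetric pairing); (ii) the pointwise identification you produce is canonical at every step, so it does assemble into an isomorphism of functors, but it is worth phrasing the final sentence so that this is visible.
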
 

\begin{proof}

We need to establish a canonical isomorphism
\begin{equation} \label{e:P u Shv}
(\on{Id}_{\Shv(\CU)}\otimes \sP_{\CU,\Nilp})(\on{u}_{\Shv(\CU)})
\simeq \on{u}_{\CU,\Nilp}
\end{equation}
(here $\on{u}_{\Shv(\CU)}$ is as in \secref{sss:u shv}) as objects of 
$$\Shv(\CU)\otimes \Shv_\Nilp(\CU)\subset \Shv(\CU)\otimes \Shv(\CU).$$

Let $\CF_1,\CF_2$ be a pair of compact objects of $\Shv(\CU)$, and let
us calculate
$$\CHom_{\Shv(\CU)\otimes \Shv(\CU)}(\CF_1\otimes \CF_2,-)$$
into both sides of \eqref{e:P u Shv}.

\medskip

Note that since $\CF_1$ is compact, the functor
$$\CHom_{\Shv(\CU)}(\CF_1,-): \Shv(\CU)\to \Vect$$
is continuous. Furthermore, the functor
$$\CHom_{\Shv(\CU)}(\CF_1,-)\otimes \on{Id}_{\Shv(\CU)}:\Shv(\CU)\otimes \Shv(\CU)\to \Shv(\CU)$$
sends $\on{u}_{\Shv(\CU)}$ to $\BD^{\on{Verdier}}(\CF_1)$. 

\medskip

Hence, for the left-hand side of \eqref{e:P u Shv}, we obtain
\begin{multline*} 
\CHom_{\Shv(\CU)\otimes \Shv(\CU)}(\CF_1\otimes \CF_2,(\on{Id}_{\Shv(\CU)}\otimes \sP_{\CU,\Nilp})(\on{u}_{\Shv(\CU)}))
\simeq \\
\simeq 
\CHom_{\Shv(\CU)}\left(\CF_2, (\CHom_{\Shv(\CU)}(\CF_1,-)\otimes \on{Id}_{\Shv(\CU)})\circ (\on{Id}_{\Shv(\CU)}\otimes \sP_{\CU,\Nilp})(\on{u}_{\Shv(\CU)})\right) 
\simeq \\
\simeq \CHom_{\Shv(\CU)}\left(\CF_2, \sP_{\CU,\Nilp} \circ (\CHom_{\Shv(\CU)}(\CF_1,-)\otimes \on{Id}_{\Shv(\CU)})(\on{u}_{\Shv(\CU)})\right) \simeq \\
\simeq \CHom_{\Shv(\CU)}(\CF_2,\sP_{\CU,\Nilp}(\BD^{\on{Verdier}}(\CF_1))).
\end{multline*} 

\medskip

Let us interpret $\on{u}_{\CU,\Nilp}$ as $(\on{Id}_\CU\boxtimes \sP_{\CU,\Nilp})(\on{u}_\CU)$. Note that since 
$\CF_1$ and $\CF_2$ are compact, for any $\CF\in \Shv(\CU\times \CU)$, we have
$$\CHom_{\Shv(\CU\times \CU)}(\CF_1\boxtimes \CF_2,\CF)
\simeq \CHom_{\Shv(\CU)}\left(\CF_2, (p_2)_\blacktriangle(p_1^!(\BD^{\on{Verdier}}(\CF_1))\sotimes \CF)\right).$$

Hence, for the right-hand side of \eqref{e:P u Shv}, we obtain
\begin{multline*} 
\CHom_{\Shv(\CU)\otimes \Shv(\CU)}(\CF_1\otimes \CF_2,\on{u}_{\CU,\Nilp}) \simeq 
\CHom_{\Shv(\CU\times \CU)}(\CF_1\boxtimes \CF_2,(\on{Id}_\CU\boxtimes \sP_{\CU,\Nilp})(\on{u}_\CU)) \simeq \\
\simeq 
\CHom_{\Shv(\CU)}\left(\CF_2,(p_2)_\blacktriangle\left(p_1^!(\BD^{\on{Verdier}}(\CF_1))\sotimes (\on{Id}_\CU\boxtimes \sP_{\CU,\Nilp})(\on{u}_\CU)\right)\right)
\simeq  \\
\simeq \CHom_{\Shv(\CU)}\left(\CF_2, \sP_{\CU,\Nilp}\left((p_2)_\blacktriangle(p_1^!(\BD^{\on{Verdier}}(\CF_1))\sotimes \on{u}_\CU)\right)\right) \simeq \\
\simeq \CHom_{\Shv(\CU)}(\CF_2,\sP_{\CU,\Nilp}(\BD^{\on{Verdier}}(\CF_1))),$$
\end{multline*} 
as required. 

\end{proof}

\begin{cor}  \label{c:dual of projector U}
For $\CF_1\in \Shv_\Nilp(\CU)$ and $\CF_2\in \Shv(\CU)$, we have a canonical isomorphism
$$\on{C}^\cdot_\blacktriangle(\CU,\CF_1\sotimes \CF_2) \simeq
\on{C}^\cdot_\blacktriangle(\CU,\CF_1\sotimes \sP_{\CU,\Nilp}(\CF_2)).$$
\end{cor}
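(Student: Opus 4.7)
The plan is to derive the corollary directly from \propref{p:dual of projector U} by unwinding what it means for two functors to be dual with respect to given self-dualities. Concretely, given dualizable DG categories $\bC$, $\bD$ with chosen self-dualities (with counits $\on{ev}_\bC$, $\on{ev}_\bD$), a functor $F : \bC \to \bD$ and its dual $F^\vee : \bD \to \bC$ are characterized by the existence of a canonical isomorphism of pairings
$$\on{ev}_\bD \circ (F \otimes \on{Id}_\bD) \;\simeq\; \on{ev}_\bC \circ (\on{Id}_\bC \otimes F^\vee),$$
as functors $\bC \otimes \bD \to \Vect$.

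First I would apply this with $\bC = \Shv_\Nilp(\CU)$ (self-dual via \propref{p:usual duality U}), $\bD = \Shv(\CU)$ (Verdier self-dual), $F = \iota_\CU$ and $F^\vee = \sP_{\CU,\Nilp}$. By \propref{p:dual of projector U} these are indeed dual, so evaluating the displayed isomorphism on $\CF_1 \otimes \CF_2$ with $\CF_1 \in \Shv_\Nilp(\CU)$ and $\CF_2 \in \Shv(\CU)$ yields a canonical isomorphism
$$\on{ev}_\CU(\iota_\CU(\CF_1),\CF_2) \;\simeq\; \on{ev}_{\Shv_\Nilp(\CU)}(\CF_1,\sP_{\CU,\Nilp}(\CF_2)).$$

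Next I would identify both sides with the desired expressions. The left-hand side is by definition $\on{C}^\cdot_\blacktriangle(\CU, \CF_1 \sotimes \CF_2)$. For the right-hand side, \propref{p:usual duality U} tells us that the counit of the self-duality on $\Shv_\Nilp(\CU)$ is precisely the restriction of $\on{ev}_\CU$ along $\iota_\CU \otimes \iota_\CU$; since $\sP_{\CU,\Nilp}(\CF_2) \in \Shv_\Nilp(\CU)$, the right-hand side equals $\on{C}^\cdot_\blacktriangle(\CU, \CF_1 \sotimes \sP_{\CU,\Nilp}(\CF_2))$. Chaining these identifications produces the desired isomorphism.

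There is essentially no obstacle: once \propref{p:dual of projector U} is in hand, the corollary is just the evaluation of the duality datum on a specific pair of objects. The only point deserving verification is that the counit of the $\Shv_\Nilp(\CU)$-self-duality is literally the restriction of $\on{ev}_\CU$, which is exactly the content of \propref{p:usual duality U}.
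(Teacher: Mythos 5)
Your proof is correct and fills in exactly the deduction the paper leaves implicit: the corollary carries no proof in the paper precisely because it is meant to follow by unwinding the meaning of "dual functor" from \propref{p:dual of projector U}, which is what you do. The two identifications you flag (the counit of the Verdier self-duality on $\Shv(\CU)$ is $\on{C}^\cdot_\blacktriangle(\CU,-\sotimes -)$, and the counit on $\Shv_\Nilp(\CU)$ is its restriction by \propref{p:usual duality U}) are exactly the points worth making explicit, and your handling of them is right.
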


\ssec{Constraccessibility}

The material in this subsection is not needed for the main results of this paper,
so it can be skipped on the first pass. 

\sssec{}

We claim:

\begin{prop}  \label{p:PU right adjoint}
Let $\CU\subset \Bun_G$ be a universally $\Nilp$-cotrancative quasi-compact open substack. 
The following statements are equivalent:

\smallskip

\noindent{\em(i)} The functor $\iota_\CU^R$ is continuous;

\smallskip

\noindent{\em(ii)} The functor $\sP_{\CU,\Nilp}$ provides a right adjoint to $\iota_\CU$.

%

\end{prop}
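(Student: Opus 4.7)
The implication (ii) $\Rightarrow$ (i) is immediate: the functor $\sP_{\CU,\Nilp}$ is continuous, being defined by a kernel (cf.\ \secref{sss:proj U}); hence if, after factoring through its essential image, it realizes the right adjoint of $\iota_\CU$, then $\iota_\CU^R$ is continuous.

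For the reverse implication, I would introduce the unique continuous factorization $\pi:\Shv(\CU)\to\Shv_\Nilp(\CU)$ with $\iota_\CU\circ\pi\simeq\sP_{\CU,\Nilp}$, which exists because $\sP_{\CU,\Nilp}$ factors through its essential image $\Shv_\Nilp(\CU)$ by \corref{c:projector U}. The plan is to produce a natural isomorphism $\iota_\CU^R\simeq\pi$. Assumption (i) is equivalent, by the standard criterion for continuity of right adjoints between compactly generated DG categories, to $\iota_\CU$ preserving compact objects; in particular, for any compact $\CF_1\in\Shv_\Nilp(\CU)^c$, the object $\iota_\CU\CF_1$ is compact in $\Shv(\CU)$, so Verdier self-duality applies.

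For such $\CF_1$ and any $\CF_2\in\Shv(\CU)$, I would then assemble the chain of natural isomorphisms
\begin{align*}
\Hom_{\Shv_\Nilp(\CU)}(\CF_1,\iota_\CU^R\CF_2)
&\simeq \Hom_{\Shv(\CU)}(\iota_\CU\CF_1,\CF_2) \\
&\simeq \on{C}^\cdot_\blacktriangle(\CU,\BD(\iota_\CU\CF_1)\sotimes\CF_2) \\
&\simeq \on{C}^\cdot_\blacktriangle(\CU,\BD(\iota_\CU\CF_1)\sotimes\sP_{\CU,\Nilp}\CF_2) \\
&\simeq \Hom_{\Shv_\Nilp(\CU)}(\CF_1,\pi\CF_2),
\end{align*}
obtained from, in order, the $(\iota_\CU,\iota_\CU^R)$-adjunction, Verdier self-duality on $\Shv(\CU)$, \corref{c:dual of projector U} (applicable because $\BD(\iota_\CU\CF_1)\in\Shv_\Nilp(\CU)$, since microlocal support is invariant under Verdier duality and $\Nilp$ is $\BG_m$-stable), and Verdier self-duality in reverse combined with full faithfulness of $\iota_\CU$. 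Compact generation of $\Shv_\Nilp(\CU)$ then lets Yoneda upgrade this to a natural isomorphism of functors $\iota_\CU^R\simeq\pi$, which is precisely (ii).

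The main obstacle is bookkeeping the naturality: the chain above arises termwise from natural isomorphisms, but one must verify that the composite is simultaneously natural in $\CF_1$ and in $\CF_2$, so that it lifts to a genuine natural transformation of functors $\Shv(\CU)\to\Shv_\Nilp(\CU)$ before invoking Yoneda. An auxiliary but routine check is that Verdier duality restricts to an anti-involution of $\Shv_\Nilp(\CU)^c$, which follows from the invariance of microlocal support under $\BD$ together with the conical character of $\Nilp\subset T^*(\CU)$.
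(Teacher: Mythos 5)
Your proof is correct and follows essentially the same approach as the paper's: both rely on the observation that (i) is equivalent to $\iota_\CU$ preserving compactness (i.e., constraccessibility of the pair $(\CU,\Nilp)$), together with the duality between $\iota_\CU$ and $\sP_{\CU,\Nilp}$ coming from \propref{p:dual of projector U}. Where the paper packages the argument via the abstract \lemref{l:right adjoint and proj} (which builds in the naturality you flag as the main bookkeeping concern, since it produces the right adjoint directly as a functor), you unwind the same argument by a termwise computation of $\CHom$-complexes using \corref{c:dual of projector U}.
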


\begin{proof}

Clearly, 
(ii) $\Rightarrow$ (i). We will prove that (i) implies (ii) using the following general assertion:

\begin{lem} \label{l:right adjoint and proj}
Let $\iota:\bC_1\to\bC$ be a fully faithful functor between compactly generated DG categories.
Suppose that $\iota$ preserves compactness, and let 
$$\iota^{\on{fake-op}}:\bC_1^\vee\to \bC^\vee,$$
be the ind-extension of $\iota^{\on{op}}:(\bC_1^c)^{\on{op}}\to (\bC^c)^{\on{op}}$. 
Then $(\iota^{\on{fake-op}})^\vee$ is the right adjoint of $\iota$.
\end{lem}

We apply this lemma as follows: take $\iota:\bC_1\to\bC$ to be $\iota_\CU:\Shv_\Nilp(\CU)\hookrightarrow \Shv(\CU)$.

\medskip

Assumption (i) means that $\Shv_\Nilp(\CU)$ is constraccessible, and the self-duality of $\Shv_\Nilp(\CU)$ with counit
\eqref{e:counit usual U}, at the level of compact objects is induced by the Verdier duality functor \eqref{e:Verdier functor}. This implies
that with respect to this self-duality of $\Shv_\Nilp(\CU)$ and the self-duality of $\Shv(\CU)$ given by
\eqref{e:Verdier}, the functor $\iota_\CU^{\on{fake-op}}$ identifies again with $\iota_\CU$.  

\medskip

Hence,
$$\iota_\CU^R \overset{\text{\lemref{l:right adjoint and proj}}}\simeq
(\iota^{\on{fake-op}}_\CU)^\vee \simeq \iota_\CU^\vee \overset{\text{\propref{p:dual of projector U}}}\simeq \sP_{\CU,\Nilp}.$$

 \end{proof}
 
 \sssec{}
 
From now on, for the duration of this subsection we will assume \cite[Conjecture 14.1.8]{AGKRRV}. This conjecture says
that the category $\Shv_\Nilp(\Bun_G)$ is generated by objects that are compact in the ambient
category $\Shv(\Bun_G)$. (Recall also that this conjecture 
is known to hold in the de Rham and Betti contexts, see \cite[Theorems 16.4.3 and 16.4.10]{AGKRRV}.)

\bigskip

Given \thmref{t:trunc}, this conjecture
is equivalent to the statement that for every universally Nilp-cotruncative quasi-compact open substack
$\CU\subset \Bun_G$, the pair $(\CU,\Nilp)$ is constraccessible\footnote{See \secref{sss:constraccess} for what this means.} 
(see \cite[Lemma F.8.10]{AGKRRV}). 

\medskip

Since each $\Shv_\Nilp(\CU)$ is compactly generated (\cite[Corollary 16.1.8]{AGKRRV}), the latter
statement is equivalent to saying that the right adjoint of the tautological embedding
$$\iota_\CU:\Shv_\Nilp(\CU)\hookrightarrow \Shv(\CU)$$
is continuous.

\medskip

Hence, by \propref{p:PU right adjoint}, we obtain that the functor $\sP_{\CU,\Nilp}$ identifies with the 
right adjoint of the embedding $\iota_\CU$.

\medskip

In addition, according to \cite[Proposition 17.2.3]{AGKRRV}, the functor 
$\sP_{\Bun_G,\Nilp}$, viewed as a functor $\Shv(\Bun_G)\to \Shv_\Nilp(\Bun_G)$, is the right adjoint
to the tautological embedding $\iota$.  

\sssec{}

Let $\CZ$ be an algebraic stack. Recall that the functors 
$$\on{Id}_\CZ\boxtimes \sP_{\Bun_G,\Nilp} \text{ and } \on{Id}_\CZ\boxtimes \sP_{\CU,\Nilp}$$
takes values in the subcategories
$$\Shv(\CZ)\otimes \Shv_{\Nilp}(\Bun_G)\subset \Shv(\CZ\times \Bun_G)  \text{ and } 
\Shv(\CZ)\otimes \Shv_{\Nilp}(\CU) \subset \Shv(\CZ\times \CU),$$
respectively (see \thmref{t:projector} and \corref{c:projector U}). 

\medskip

We claim:

\begin{prop} \label{p:P as adj}
Assuming \cite[Conjecture 14.1.8]{AGKRRV}, for any algebraic stack $\CZ$, the functors 
$$\on{Id}_\CZ\boxtimes \sP_{\Bun_G,\Nilp} \text{ and } \on{Id}_\CZ\boxtimes \sP_{\CU,\Nilp}$$
are the right adjoints of the embeddings
$$\Shv(\CZ)\otimes \Shv_{\Nilp}(\Bun_G)\hookrightarrow \Shv(\CZ\times \Bun_G) \text{ and }
\Shv(\CZ)\otimes \Shv_{\Nilp}(\CU)\hookrightarrow \Shv(\CZ\times \CU),$$
respectively.
\end{prop}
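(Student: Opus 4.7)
The plan is to bootstrap from the non-relative case ($\CZ = \on{pt}$). Under the assumed Conjecture 14.1.8 of \cite{AGKRRV}, the adjunction $\iota \dashv \sP$ is already known in that case, by \cite[Proposition 17.2.3]{AGKRRV} for $\Bun_G$ and by \propref{p:PU right adjoint} for $\CU$. In both cases we have counit natural transformations $\sP \to \on{Id}_{\Shv(\CY)}$ for $\CY \in \{\Bun_G, \CU\}$, which I will promote to the relative setting via the kernel formalism.

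The first step is to construct the candidate counit in the relative case. Using \propref{p:u naive projectors} to identify the kernel of $\sP$, the non-relative counit corresponds to a morphism of objects in $\Shv(\CY \times \CY)$: namely, $\on{u}^{\on{naive}}_{\Bun_G,\Nilp} \to \on{u}^{\on{naive}}_{\Bun_G}$ when $\CY = \Bun_G$, and $\on{u}_{\CU,\Nilp} \to \on{u}_\CU$ when $\CY = \CU$. Boxing this map at the kernel level with the identity endofunctor of $\Shv(\CZ)$ yields a natural transformation
$$\on{Id}_\CZ \boxtimes \sP \to \on{Id}_{\Shv(\CZ \times \CY)}.$$
Combined with the fact that $\on{Id}_\CZ \boxtimes \sP$ is a projector onto $\Shv(\CZ) \otimes \Shv_\Nilp$ (by \thmref{t:projector} and \corref{c:projector U}), this natural transformation is a candidate counit of adjunction which automatically restricts to an isomorphism on the subcategory.

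It then remains to verify the universal property: for $\CG \in \Shv(\CZ) \otimes \Shv_\Nilp$ and $\CF \in \Shv(\CZ \times \CY)$, the induced map
$$\CHom(\CG, (\on{Id}_\CZ \boxtimes \sP)(\CF)) \to \CHom(\CG, \CF)$$
should be an isomorphism. My strategy is to reduce to generators. By the assumed Conjecture 14.1.8, $\Shv_\Nilp$ is generated by objects compact in the ambient $\Shv$-category, so $\Shv(\CZ) \otimes \Shv_\Nilp$ is generated by pure-tensor objects $\CG_\CZ \boxtimes \CG_\Nilp$ with both factors compact. For such generators, the Hom-computation separates, via a partial pushforward along $\CZ$, into a Hom-computation purely on $\CY$, whereupon the claim reduces to the non-relative adjunction already established.

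The main obstacle is justifying this separation, which requires commuting $\on{Id}_\CZ \boxtimes \sP$ past the partial pushforward along $\CZ$. This is precisely the content of \propref{p:Hecke codefined}: the Hecke functors are both defined and codefined by kernels, and hence $\on{Id}_\CZ \boxtimes \sP$ commutes with $*$-pullbacks and $!$-pushforwards along the $\CZ$-variable. Without this dual property, the relative adjunction would not automatically propagate from the non-relative one.
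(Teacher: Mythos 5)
Your overall strategy — reduce to generators of the form $\CG_\CZ \boxtimes \CG_\Nilp$, separate the $\CHom$-computation into a pushforward along the $\CZ$-factor, and conclude from the already-established non-relative adjunction — is exactly what the paper does. However, the justification of the crucial commutation step is misattributed, and as stated it does not close the gap.

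You claim the key input is \propref{p:Hecke codefined}, i.e., that $\sP$ is \emph{codefined} by a kernel and hence commutes with $*$-pullbacks and $!$-pushforwards along $\CZ$. But the pushforward arising naturally in the $\CHom$-computation — writing $\CHom(\CG_\CZ \boxtimes \CG_\Nilp, \CF) \simeq \CHom_{\Shv(\Bun_G)}\bigl(\CG_\Nilp, (p_{\Bun_G})_*(p_\CZ^!(\BD^{\on{Verdier}}(\CG_\CZ)) \sotimes \CF)\bigr)$ via adjunction with $p_{\Bun_G}^*$ — is the $*$-pushforward $(p_{\Bun_G})_*$, not a $!$-pushforward. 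The map $p_{\Bun_G}: \CZ \times \Bun_G \to \Bun_G$ is not proper in general, so codefined-ness gives you commutation with the wrong operation, and your closing claim that the relative adjunction ``would not automatically propagate'' without this dual property is false. What the paper actually uses is cheaper and applies to \emph{any} functor defined by a kernel: first reduce to $\CZ$ quasi-compact, then note that the weighted pushforward $\CF \mapsto (p_{\Bun_G})_\blacktriangle(p_\CZ^!(\BD^{\on{Verdier}}(\CG_\CZ)) \sotimes \CF)$ is the functor $\sG_\CZ \boxtimes \on{Id}_{\Bun_G}$ defined by the kernel $\BD^{\on{Verdier}}(\CG_\CZ) \in \Shv(\CZ)$, which commutes with $\on{Id}_\CZ \boxtimes \sP$ by the built-in compatibility of kernels acting on disjoint factors (\secref{sss:comp corr}, \eqref{e:order does not matter}) — no codefined-ness required. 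One then passes from $(p_{\Bun_G})_\blacktriangle$ to $(p_{\Bun_G})_*$ using the compactness of $\CG_\CZ$ (hence of the kernel $\BD^{\on{Verdier}}(\CG_\CZ)$), which makes this particular functor safe in the sense of \secref{sss:safe}. If you replace your appeal to \propref{p:Hecke codefined} with this pair of observations, your argument becomes the paper's proof.
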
 

\begin{proof}

We will prove the assertion for $\Bun_G$; the case of $\CU$ is similar. 

\medskip

First, it is easy to reduce the assertion to the case when $\CZ$ is quasi-compact, which
we will from now on assume.

\medskip

We need to show that for $\CF_\CZ \in \Shv(\CZ)^c$, $\CF_\Nilp\in \Shv_{\Nilp}(\Bun_G)$
and $\CF\in \Shv(\CZ\times \Bun_G)$ we have a canonical isomorphism
\begin{equation} \label{e:P adj ver}
\CHom(\CF_\CZ\boxtimes \CF_\Nilp,\CF) \simeq 
\CHom(\CF_\CZ\boxtimes \CF_\Nilp, (\on{Id}_\CZ\boxtimes \sP_{\Bun_G,\Nilp})(\CF)).
\end{equation}

Set
$$\CF':=(p_{\Bun_G})_*(p_\CZ^!(\BD^{\on{verdier}}(\CF_\CZ))\sotimes \CF)\in \Shv(\Bun_G).$$
Then the left-hand side in \eqref{e:P adj ver} identifies with
$$\CHom_{\Shv(\Bun_G)}(\CF_\Nilp,\CF').$$

Set
$$\CF'':=(p_{\Bun_G})_*(p_\CZ^!(\BD^{\on{verdier}}(\CF_\CZ))\sotimes (\on{Id}_\CZ\boxtimes \sP_{\Bun_G,\Nilp})(\CF))\in \Shv(\Bun_G).$$

Then the right-hand side in \eqref{e:P adj ver} identifies with
$$\CHom_{\Shv(\Bun_G)}(\CF_\Nilp,\CF'').$$

However, since $\CF_\CZ$ is compact (and hence so is $\BD^{\on{verdier}}(\CF_\CZ)$), the maps
$$(p_{\Bun_G})_\blacktriangle(p_\CZ^!(\BD^{\on{verdier}}(\CF_\CZ))\sotimes \CF)\to 
(p_{\Bun_G})_*(p_\CZ^!(\BD^{\on{verdier}}(\CF_\CZ))\sotimes \CF)$$
and
$$(p_{\Bun_G})_\blacktriangle(p_\CZ^!(\BD^{\on{verdier}}(\CF_\CZ))\sotimes (\on{Id}_\CZ\boxtimes \sP_{\Bun_G,\Nilp})(\CF))\to$$
$$\to (p_{\Bun_G})_*(p_\CZ^!(\BD^{\on{verdier}}(\CF_\CZ))\sotimes (\on{Id}_\CZ\boxtimes \sP_{\Bun_G,\Nilp})(\CF))$$
are isomorphisms.

\medskip

This implies that
$$\CF''\simeq \sP_{\Bun_G,\Nilp}(\CF').$$

Hence, the assertion follows from the $(\iota,\sP_{\Bun_G,\Nilp})$ adjunction.

\end{proof}

Iterating, from \propref{p:P as adj} we obtain: 

\begin{cor} \label{c:P P as adj}
Assuming \cite[Conjecture 14.1.8]{AGKRRV}, the functor 
$$\sP_{\Bun_G,\Nilp}\boxtimes \sP_{\Bun_G,\Nilp}:\Shv(\Bun_G\times \Bun_G)\to 
\Shv_{\Nilp}(\Bun_G)\otimes  \Shv_{\Nilp}(\Bun_G)$$
is the right adjoint of 
$$\Shv_{\Nilp}(\Bun_G)\otimes  \Shv_{\Nilp}(\Bun_G)\overset{\iota\otimes \iota}\longrightarrow
\Shv(\Bun_G)\boxtimes  \Shv(\Bun_G)\overset{\boxtimes}\hookrightarrow \Shv(\Bun_G\times \Bun_G).$$
\end{cor}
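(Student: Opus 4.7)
The plan is to iterate \propref{p:P as adj} in both factors, using \cite[Conjecture 14.1.8]{AGKRRV} to handle the outer factor, and then compare the resulting composite with $\sP\boxtimes \sP$ using the formula \eqref{e:P left and right} from the proof of \corref{c:double projector}.

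First I would apply \propref{p:P as adj} with $\CZ=\Bun_G$ to obtain that
$$\on{Id}_{\Bun_G}\boxtimes \sP_{\Bun_G,\Nilp}:\Shv(\Bun_G\times \Bun_G)\to \Shv(\Bun_G)\otimes \Shv_\Nilp(\Bun_G)$$
is right adjoint to the fully faithful embedding $\Shv(\Bun_G)\otimes \Shv_\Nilp(\Bun_G)\hookrightarrow \Shv(\Bun_G\times \Bun_G)$. Next, under the standing assumption of \cite[Conjecture 14.1.8]{AGKRRV}, the functor $\sP_{\Bun_G,\Nilp}$ is the right adjoint to $\iota:\Shv_\Nilp(\Bun_G)\hookrightarrow \Shv(\Bun_G)$ (as recalled in the paragraph preceding \propref{p:P as adj}). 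Tensoring with $\on{Id}_{\Shv_\Nilp(\Bun_G)}$, the functor $\sP_{\Bun_G,\Nilp}\otimes \on{Id}$ is right adjoint to the embedding
$$\iota\otimes \on{Id}:\Shv_\Nilp(\Bun_G)\otimes \Shv_\Nilp(\Bun_G)\hookrightarrow \Shv(\Bun_G)\otimes \Shv_\Nilp(\Bun_G).$$
Composing these two adjunctions yields that the right adjoint of $\iota\otimes \iota$ (viewed as a functor into $\Shv(\Bun_G\times \Bun_G)$) is the composite
$$(\sP_{\Bun_G,\Nilp}\otimes \on{Id})\circ (\on{Id}_{\Bun_G}\boxtimes \sP_{\Bun_G,\Nilp}).$$

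To conclude, I would identify this composite with $\sP_{\Bun_G,\Nilp}\boxtimes \sP_{\Bun_G,\Nilp}$. Using the factorization $\sP\boxtimes \sP=(\sP\boxtimes \on{Id})\circ (\on{Id}\boxtimes \sP)$ of \eqref{e:P left and right}, and observing (by \thmref{t:projector}) that $\on{Id}_{\Bun_G}\boxtimes \sP_{\Bun_G,\Nilp}$ takes values in the subcategory $\Shv(\Bun_G)\otimes \Shv_\Nilp(\Bun_G)\subset \Shv(\Bun_G\times \Bun_G)$, the remaining application of $\sP\boxtimes \on{Id}$ reduces on this subcategory to $\sP\otimes \on{Id}$ via \eqref{e:boxtimes vs otimes}. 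This matches the composite of right adjoints above and finishes the proof.

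The only real point to verify is that \propref{p:P as adj} applies with $\CZ=\Bun_G$, which is itself not quasi-compact; however, the proof of \propref{p:P as adj} begins by reducing to the quasi-compact case, so no additional input is needed. The rest of the argument is formal manipulation of adjunctions, and the main ``content" is concentrated in \propref{p:P as adj} and \thmref{t:projector}, on which we are relying.
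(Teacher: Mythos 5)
Your proposal is correct, and it is precisely what the paper means by "Iterating from \propref{p:P as adj}": factor the embedding $\iota\otimes\iota$ through $\Shv(\Bun_G)\otimes\Shv_\Nilp(\Bun_G)$, apply \propref{p:P as adj} (with $\CZ=\Bun_G$) to the outer factor, tensor the $(\iota,\sP)$-adjunction of \cite[Conjecture 14.1.8]{AGKRRV} by $\on{Id}$ for the inner factor, compose, and then match the composite with $\sP\boxtimes\sP$ via \eqref{e:P left and right} and \eqref{e:boxtimes vs otimes}. Your remark that $\CZ=\Bun_G$ falls under the hypotheses of \propref{p:P as adj} (which permits an arbitrary algebraic stack and reduces to the quasi-compact case at the start of its proof) is an appropriate sanity check, not an extra assumption.
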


\sssec{}

Note that by passing to right adjoints in the commutative diagrams 
$$
\CD
\Shv_\Nilp(\CU) @>{\iota_\CU}>> \Shv(\CU) \\
@V{j_!}VV @VV{j_!}V \\
\Shv_\Nilp(\Bun_G) @>{\iota}>> \Shv(\Bun_G)
\endCD
$$
and
$$
\CD
\Shv_\Nilp(\CU) @>{\iota_\CU}>> \Shv(\CU) \\
@A{j^*}AA @AA{j^*}A \\
\Shv_\Nilp(\Bun_G) @>{\iota}>> \Shv(\Bun_G),
\endCD
$$
we obtain the isomorphisms
\begin{equation} \label{e:j and P}
j^*\circ \sP_{\Bun_G,\Nilp}\simeq \sP_{\CU,\Nilp}\circ j^*
\end{equation}
and 
\begin{equation} \label{e:P and j}
j_*\circ \sP_{\CU,\Nilp} \simeq \sP_{\Bun_G,\Nilp}\circ j_*.
\end{equation}

We claim that a stronger assertion holds:

\begin{cor} \label{c:j and P}
Assuming \cite[Conjecture 14.1.8]{AGKRRV}, the isomorphisms \eqref{e:j and P} and \eqref{e:P and j} hold as 
functors defined by kernels, where we view $\sP_{\Bun_G,\Nilp}$ and $\sP_{\CU,\Nilp}$ 
as endofunctors defined by kernels of $\Shv(\Bun_G)$ and $\Shv(\CU)$, 
respectively. 
\end{cor}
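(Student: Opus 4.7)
The plan is to lift the isomorphisms \eqref{e:j and P} and \eqref{e:P and j}, which are already established at the level of plain functors by passing to ordinary right adjoints in the two commutative diagrams displayed immediately before the statement of \corref{c:j and P}, to isomorphisms of functors defined by kernels. The entire argument takes place inside the 2-category of kernels, and the only non-formal input is \propref{p:P as adj}: under \cite[Conjecture 14.1.8]{AGKRRV}, $\sP_{\Bun_G,\Nilp}$ and $\sP_{\CU,\Nilp}$ serve as right adjoints of $\iota$ and $\iota_\CU$ \emph{in this 2-category}, not merely at the level of underlying functors.

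My first step is to record that all four functors $\iota$, $\iota_\CU$, $j_!$, $j^*$ are tautologically functors defined by kernels, and that each admits a right adjoint in the 2-category of kernels: for $j_!$ (resp.~$j^*$) along the open embedding $j$, the right adjoint is $j^*$ (resp.~$j_*$), which is again a kernel functor by the general formalism of \secref{ss:functors by ker non qc}; and for $\iota$ and $\iota_\CU$, the right adjoints are $\sP_{\Bun_G,\Nilp}$ and $\sP_{\CU,\Nilp}$ by \propref{p:P as adj}.

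Next, I invoke a purely formal 2-categorical principle: in any 2-category, if all four arrows of a commutative square admit right adjoints, then passing simultaneously to right adjoints of all four arrows yields a new commutative square. Applied to the first commutative diagram before \corref{c:j and P}, whose vertical arrows are $j_!$ and whose horizontal arrows are $\iota_\CU$ and $\iota$, this procedure produces the diagram with vertical arrows $j^*$ and horizontal arrows $\sP_{\CU,\Nilp}$ and $\sP_{\Bun_G,\Nilp}$, whose commutativity is \eqref{e:j and P} at the kernel level (after composing with the fully faithful embedding $\iota$ on the common target, so as to land in the endofunctor formulation stated in the corollary). Applying the same procedure to the second commutative diagram (with vertical arrows $j^*$) yields \eqref{e:P and j}.

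I do not anticipate any substantive obstacle: the entire content of the corollary is the enhancement of right-adjointness from plain categories to the 2-category of kernels, which has been abstracted out into \propref{p:P as adj}. Once that proposition is in hand, the remainder is a formal consequence of the fact that passage to right adjoints in a 2-category transports commutative squares to commutative squares.
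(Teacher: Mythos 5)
Your proposal is essentially the paper's proof: both arguments amount to taking a commutative diagram involving $j_!$ (or $j^*$) and the embeddings of the $\Shv_\Nilp$-subcategories, passing to right adjoints, and citing \propref{p:P as adj} as the crucial input that makes this passage compatible with the kernel formalism.

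Two small remarks on the framing. First, $\iota$ and $\iota_\CU$ are \emph{not} 1-morphisms in the 2-category of \secref{sss:2-categ}, whose objects are algebraic stacks with morphism categories $\Shv(\CY_1\times\CY_2)$: the source $\Shv_\Nilp(\Bun_G)$ is not $\Shv$ of any stack, so strictly speaking one cannot speak of $\iota$ ``admitting a right adjoint in the 2-category of kernels.'' What \propref{p:P as adj} actually delivers is the next best thing: for each stack $\CZ$ the functor $\on{Id}_\CZ\boxtimes\sP$ is a right adjoint of $\on{Id}_\CZ\otimes\iota$, in a way compatible with the structure maps relating different $\CZ$'s, and this family-indexed adjunction is precisely what lets you conclude the resulting isomorphism of right adjoints is an isomorphism of functors defined by kernels. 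Second, the paper itself avoids ``pass to right adjoints of all four arrows of a square'' in favor of factoring a single composite $\Shv(\CZ)\otimes\Shv_\Nilp(\CU)\to\Shv(\CZ\times\Bun_G)$ in two ways and passing to the right adjoint of each factorization; the two moves are logically equivalent (uniqueness of the right adjoint of the composite), so the difference is purely stylistic. With the minor terminological correction noted above, your argument is sound and coincides with the published one.
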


\begin{proof}

We will prove the assertion for \eqref{e:j and P}; the assertion for \eqref{e:P and j}
is similar.

\medskip

We have to construct an isomorphism 
$$\on{Id}_\CZ\boxtimes (j^*\circ \sP_\CU) \simeq \on{Id}_\CZ\boxtimes (\sP_\CU\circ j^*)$$
for any algebraic stack $\CZ$. 

\medskip

This follows formally from \propref{p:P as adj} by passing to right adjoints of the composition 
\begin{equation} \label{e:j and P adj 1}
\Shv(\CZ)\otimes \Shv_{\Nilp}(\CU) \overset{\on{Id}\otimes j_!}\longrightarrow
\Shv(\CZ)\otimes \Shv_{\Nilp}(\Bun_G) \hookrightarrow \Shv(\CZ\times \Bun_G),
\end{equation}
which is the same as
\begin{equation} \label{e:j and P adj 2}
\Shv(\CZ)\otimes \Shv_{\Nilp}(\CU)\hookrightarrow \Shv(\CZ\times \CU)
\overset{(\on{id}\times j)_!}\longrightarrow \Shv(\CZ\times \Bun_G).
\end{equation}

\end{proof}

\ssec{The ``co" category for $\Bun_G$ with nilpotent singular support} \label{ss:co BunG}

In order to talk about Verdier duality on the (non quasi-compact) algebraic stack
$\Bun_G$, we need to study the category $\Shv(\Bun_G)_{\on{co}}$, see \secref{ss:co}.

\medskip

In this subsection we will discuss the pattern of Hecke action on $\Shv(\Bun_G)_{\on{co}}$. 

\medskip

We also introduce the ``co"-version of the subcategory with nilpotent singular support
$$\Shv_\Nilp(\Bun_G)_{\on{co}} \subset \Shv(\Bun_G)_{\on{co}}.$$

\sssec{}

Recall the category $\Shv(\Bun_G)_{\on{co}}$, defined as 
$$\underset{\CU}{\on{colim}_*}\, \Shv(\CU),$$
where the colimit is taken over the poset of cotruncative quasi-compact
open substacks of $\Bun_G$, see \secref{sss:co}. 

\sssec{}

The Hecke action on $\Shv(\Bun_G)$ gives rise to a Hecke action on $\Shv(\Bun_G)_{\on{co}}$:

\medskip

Let $V$ be an object of $\Rep(\cG)^{\otimes I}$. Note that the Hecke functor 
$$\sH(V,-):\Shv(\Bun_G)\to \Shv(\Bun_G\times X^I)$$
is defined by the kernel  
$$(\on{Id}_{\Bun_G}\boxtimes \sH(V,-))(\on{u}_{\Bun_G,\on{co}_1})\in \Shv(\Bun_G\times \Bun_G\times X^I)_{\on{co}_1}.$$

Consider now the object 
\begin{equation} \label{e:Hecke co}
\sigma \left((\on{Id}_{\Bun_G}\boxtimes \sH(V^\tau,-))(\on{u}_{\Bun_G,\on{co}_1})\right)\in 
\Shv(\Bun_G\times \Bun_G\times X^I)_{\on{co}_2},
\end{equation}
where $\sigma$ is the transposition of the two factors of $\Bun_G$. 

\medskip
 
We let  
$$\sH(V,-)_{\on{co}}:\Shv(\Bun_G)_{\on{co}}\to \Shv(\Bun_G\times X^I)_{\on{co}}$$
be the functor defined by the kernel \eqref{e:Hecke co}. 

\sssec{}

Recall the functor 
$$\on{Id}^{\on{naive}}_{\Bun_G}:\Shv(\Bun_G)_{\on{co}}\to \Shv(\Bun_G),$$
see \secref{sss:naive BunG}. We claim:

\begin{lem} \label{l:Id nv and Hecke}
The diagram (of functors defined by kernels)
$$
\CD
\Shv(\Bun_G)_{\on{co}} @>{\sH(V,-)_{\on{co}}}>>  \Shv(\Bun_G\times X^I)_{\on{co}} \\
@V{\on{Id}^{\on{naive}}_{\Bun_G}}VV @VV{\on{Id}^{\on{naive}}_{\Bun_G}\boxtimes \on{Id}_{X^I}}V \\
\Shv(\Bun_G) @>{\sH(V,-)}>>  \Shv(\Bun_G\times X^I)
\endCD
$$
commutes.
\end{lem}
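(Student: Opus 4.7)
The plan is to compare the two compositions at the level of their representing kernels. Both compositions are functors $\Shv(\Bun_G)_{\on{co}}\to\Shv(\Bun_G\times X^I)$, and by the formalism recalled in \secref{sss:naive BunG} and \secref{ss:functors by ker non qc}, such functors correspond to objects of $\Shv(\Bun_G\times \Bun_G\times X^I)$. So it suffices to identify the kernels of the two compositions as objects of this category.

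For the bottom path $\sH(V,-)\circ \on{Id}^{\on{naive}}_{\Bun_G}$, I would compose the kernel $\on{u}^{\on{naive}}_{\Bun_G}$ of $\on{Id}^{\on{naive}}_{\Bun_G}$ with the integral Hecke kernel on the second factor. Since the Hecke functors are defined \emph{and} codefined by kernels (\propref{p:Hecke codefined}) and $\hr\times s$ is ind-proper, this composition is computed simply by applying $\on{Id}_{\Bun_G}\boxtimes \sH(V,-)$ to $\on{u}^{\on{naive}}_{\Bun_G}$, yielding the kernel $(\on{Id}_{\Bun_G}\boxtimes \sH(V,-))(\on{u}^{\on{naive}}_{\Bun_G})$.

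For the top path $(\on{Id}^{\on{naive}}_{\Bun_G}\boxtimes \on{Id}_{X^I})\circ \sH(V,-)_{\on{co}}$, I would start from the definition of $\sH(V,-)_{\on{co}}$, whose kernel is by construction $\sigma((\on{Id}_{\Bun_G}\boxtimes \sH(V^\tau,-))(\on{u}_{\Bun_G,\on{co}_1}))$. Post-composition with $\on{Id}^{\on{naive}}_{\Bun_G}\boxtimes\on{Id}_{X^I}$ on the target amounts, at the level of kernels, to forgetting the ``co'' structure on the target $\Bun_G$-factor; concretely, it replaces the occurrence of $\on{u}_{\Bun_G,\on{co}_1}$ by $\on{u}^{\on{naive}}_{\Bun_G}$. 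Using that $\sF\mapsto (\on{Id}_{\Bun_G}\boxtimes \sH(V^\tau,-))(\sF)$ is functorial in the auxiliary variables and commutes with $\sigma$ (both consequences of \propref{p:Hecke codefined}), the resulting kernel becomes $\sigma((\on{Id}_{\Bun_G}\boxtimes \sH(V^\tau,-))(\on{u}^{\on{naive}}_{\Bun_G}))$. This is canonically isomorphic to the bottom-path kernel by \propref{p:tau and sigma}, which closes the argument.

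The main obstacle is the bookkeeping around the three flavors of unit object, namely $\on{u}_{\Bun_G,\on{co}_1}$, $\on{u}_{\Bun_G,\on{co}_2}$, and $\on{u}^{\on{naive}}_{\Bun_G}$: one must verify that composing with $\on{Id}^{\on{naive}}_{\Bun_G}$ on the target really does convert the ``$\on{co}_1$'' unit into the ``naive'' unit at the \emph{kernel} level, and not only at the level of underlying functors. This compatibility should be a formal consequence of the kernel formalism for non quasi-compact stacks developed in \secref{s:non qc}, but it is the step requiring genuine care; once it is in hand, \propref{p:tau and sigma} does all the remaining work.
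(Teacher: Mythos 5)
Your argument is the same as the paper's, which reads in full ``Follows by diagram chase from \propref{p:tau and sigma}''; you have simply unwound the diagram chase at the level of kernels, correctly reducing the commutativity to \propref{p:tau and sigma} via the identification of both composite kernels with $\sigma((\on{Id}_{\Bun_G}\boxtimes \sH(V^\tau,-))(\on{u}^{\on{naive}}_{\Bun_G}))$ resp.\ $(\on{Id}_{\Bun_G}\boxtimes \sH(V,-))(\on{u}^{\on{naive}}_{\Bun_G})$. The one small imprecision is attributing the commutation of $\on{Id}^{\on{naive}}_{\Bun_G}$ in the source factor with $\sH(V^\tau,-)$ in the target factor to \propref{p:Hecke codefined}: this commutation is really the general compatibility \eqref{e:order does not matter} of the kernel formalism (and its non-quasi-compact extension in \secref{sss:kernels non qc}), valid for any pair of functors defined by kernels, and does not rely on the Hecke functors being codefined.
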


\begin{proof}

Follows by diagram chase from \propref{p:tau and sigma}.

\end{proof}

\sssec{} \label{sss:Hecke Ran co}

A similar discussion applies to the action of $\Rep(\cG)_\Ran$. Namely, for $\CV \in \Rep(\cG)_\Ran$
we have a functor defined by a kernel
$$\sH_{\CV,\on{co}}:\Shv(\Bun_G)_{\on{co}}\to \Shv(\Bun_G)_{\on{co}},$$
and the diagram (of functors defined by kernels)
\begin{equation} \label{e:Id nv and Hecke bis}
\CD
\Shv(\Bun_G)_{\on{co}} @>{\sH_{\CV,\on{co}}}>>  \Shv(\Bun_G)_{\on{co}} \\
@V{\on{Id}^{\on{naive}}_{\Bun_G}}VV @VV{\on{Id}^{\on{naive}}_{\Bun_G}}V \\
\Shv(\Bun_G) @>{\sH_\CV}>>  \Shv(\Bun_G)
\endCD
\end{equation}
commutes (indeed, this formally follows from \lemref{l:Id nv and Hecke}). 

\sssec{}

We claim:

\begin{lem} \label{l:dual of Hecke}
For $\CV \in \Rep(\cG)_\Ran$, we have a canonical isomorphism
$$(\sH_{\CV,\on{co}}\boxtimes \on{Id}_{\Bun_G})(\on{u}_{\Bun_G,\on{co}_1})
\simeq 
(\on{Id}_{\Bun_G}\boxtimes \sH_{\CV^\tau})(\on{u}_{\Bun_G,\on{co}_1})$$
as objects of $\Shv(\Bun_G\times \Bun_G)_{\on{co}_1}$.
\end{lem}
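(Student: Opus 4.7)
The plan is to reduce the claimed isomorphism to a tautology by applying the swap $\sigma$: I will show that both sides, when transported via $\sigma$ to $\Shv(\Bun_G\times \Bun_G)_{\on{co}_2}$, become equal by the very definition of $\sH_{\CV,\on{co}}$ given in \secref{ss:co BunG}.

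First I would observe two formal properties of the kernel formalism for non quasi-compact stacks. Under the dictionary of \secref{s:non qc}, the object $\on{u}_{\Bun_G,\on{co}_1}$ is the identity kernel for $\Shv(\Bun_G)$, and the involution $\sigma$ exchanging the two $\Bun_G$ factors induces an equivalence
$$\sigma:\Shv(\Bun_G\times \Bun_G)_{\on{co}_1}\iso \Shv(\Bun_G\times \Bun_G)_{\on{co}_2}$$
that carries $\on{u}_{\Bun_G,\on{co}_1}$ to $\on{u}_{\Bun_G,\on{co}_2}$ (the identity kernel for $\Shv(\Bun_G)_{\on{co}}$). Moreover, by naturality of the $\boxtimes$-action of functors on kernels, $\sigma$ intertwines $F\boxtimes \on{Id}_{\Bun_G}$ with $\on{Id}_{\Bun_G}\boxtimes F$ for any endofunctor $F$ of $\Shv(\Bun_G)_{\on{co}}$.

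Applying $\sigma$ to the left-hand side of the lemma and combining these two facts, we obtain
$$\sigma\bigl((\sH_{\CV,\on{co}}\boxtimes \on{Id}_{\Bun_G})(\on{u}_{\Bun_G,\on{co}_1})\bigr) \simeq (\on{Id}_{\Bun_G}\boxtimes \sH_{\CV,\on{co}})(\on{u}_{\Bun_G,\on{co}_2}).$$
But $(\on{Id}_{\Bun_G}\boxtimes \sH_{\CV,\on{co}})(\on{u}_{\Bun_G,\on{co}_2})$ is, tautologically, the defining kernel in $\Shv(\Bun_G\times \Bun_G)_{\on{co}_2}$ of the functor $\sH_{\CV,\on{co}}:\Shv(\Bun_G)_{\on{co}}\to \Shv(\Bun_G)_{\on{co}}$, which by the Ran extension of \eqref{e:Hecke co} recorded in \secref{sss:Hecke Ran co} equals
$$\sigma\bigl((\on{Id}_{\Bun_G}\boxtimes \sH_{\CV^\tau})(\on{u}_{\Bun_G,\on{co}_1})\bigr).$$
Applying $\sigma^{-1}$ to both sides yields the lemma.

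The only non-trivial points are the two compatibilities asserted in the second paragraph: that $\sigma$ exchanges the two forms of the $\boxtimes$-action on kernels, and that it matches up $\on{u}_{\Bun_G,\on{co}_1}$ with $\on{u}_{\Bun_G,\on{co}_2}$. Both are general features of the kernel formalism reviewed in \secref{s:ker} and \secref{s:non qc}, so I expect the main (indeed, only) obstacle to be keeping careful track of which $\on{co}_i$ category each object lives in; no geometric input beyond the definition of $\sH_{\CV,\on{co}}$ and the involution $\tau$ is needed.
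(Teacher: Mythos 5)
Your reduction — applying $\sigma$, using that $\sigma$ carries $\on{u}_{\Bun_G,\on{co}_1}$ to $\on{u}_{\Bun_G,\on{co}_2}$ and intertwines $F\boxtimes\on{Id}$ with $\on{Id}\boxtimes F$, so that the claim becomes an identification of the kernel of $\sH_{\CV,\on{co}}$ with $\sigma\bigl((\on{Id}_{\Bun_G}\boxtimes \sH_{\CV^\tau})(\on{u}_{\Bun_G,\on{co}_1})\bigr)$ — is indeed the first move in the paper's own proof. But the step where you declare this identification ``tautological, by the Ran extension of \eqref{e:Hecke co} recorded in \secref{sss:Hecke Ran co}'' is where the proposal has a gap: there is no such record. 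Formula \eqref{e:Hecke co} is given only for a fixed finite set $I$ and a single $V\in\Rep(\cG)^{\otimes I}$. What \secref{sss:Hecke Ran co} records for a general $\CV\in\Rep(\cG)_\Ran$ is solely the compatibility diagram \eqref{e:Id nv and Hecke bis} with $\on{Id}^{\on{naive}}_{\Bun_G}$, not the kernel formula. That kernel formula is exactly what \lemref{l:dual of Hecke} asserts; assuming it makes the argument circular.

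This matters because $\sH_{\CV,\on{co}}$ for general $\CV$ is produced by a Ran integration over the individual $\sH(V,-)_{\on{co}}$, involving a colimit in $\CV$, while the ambient mixed category $\Shv(\Bun_G\times\Bun_G)_{\on{co}_1}$ is a $\lim$-$\on{colim}$; one cannot simply push the colimit in the $\CV$-variable through. The paper's proof is structured precisely around this: first it establishes the desired isomorphism after applying the forgetful functor $\on{Id}^{\on{naive}}_{\Bun_G}\boxtimes \on{Id}_{\Bun_G}$, and this step \emph{does} require geometric input, namely \propref{p:tau and sigma} (via the square \eqref{e:Id nv and Hecke bis}), contrary to your closing remark that no such input is needed. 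Then it upgrades the isomorphism from the naive level to $\Shv(\Bun_G\times\Bun_G)_{\on{co}_1}$ by reducing to $\CV$ compact and observing that, for such $\CV$ and a fixed $\CU$, both sides restricted to $\Bun_G\times\CU$ land in the essential image of the fully faithful functor $(j'\times\on{id}_\CU)_{*,\on{co}}$ for some $\CU'$; since $(j'\times\on{id}_\CU)^*_{\on{co}}\circ(\on{Id}_{\Bun_G}\times j)^*$ factors through $\on{Id}^{\on{naive}}_{\Bun_G}\boxtimes\on{Id}_{\Bun_G}$, the naive isomorphism suffices. Your proposal omits both the geometric input and the compactness reduction needed for the upgrade.
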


\begin{proof}

The assertion of the lemma is equivalent to an isomorphism
$$\sigma((\on{Id}_{\Bun_G}\boxtimes \sH_{\CV,\on{co}})(\on{u}_{\Bun_G,\on{co}_2}))
\simeq (\on{Id}_{\Bun_G}\boxtimes \sH_{\CV^\tau})(\on{u}_{\Bun_G,\on{co}_1}).$$

The fact that the images of both sides under the forgetful functor
$$\on{Id}^{\on{naive}}_{\Bun_G}\boxtimes \on{Id}_{\Bun_G}:\Shv(\Bun_G\times \Bun_G)_{\on{co}_1}\to \Shv(\Bun_G\times \Bun_G)$$
are canonically isomorphic is the content of \propref{p:tau and sigma} (using the commutative diagram \eqref{e:Id nv and Hecke bis}
as functors defined by kernels).

\medskip

In order to upgrade this isomorphism to an isomorphism that takes place in $\Shv(\Bun_G\times \Bun_G)_{\on{co}_1}$
we argue as follows:

\medskip

We can assume that the object $\CV\in \Rep(\cG)_\Ran$ is compact. It suffices to establish a compatible
family of isomorphisms
\begin{equation} \label{e:Hecke on U}
(\on{Id}_{\Bun_G}\times j)^*((\sH_{\CV,\on{co}}\boxtimes \on{Id}_{\Bun_G})(\on{u}_{\Bun_G,\on{co}_1}))\simeq 
(\on{Id}_{\Bun_G}\times j)^*((\on{Id}_{\Bun_G}\boxtimes \sH_{\CV^\tau})(\on{u}_{\Bun_G,\on{co}_1}))
\end{equation}
as objects in $\Shv(\Bun_G\times \CU)_{\on{co}}$ for quasi-compact open substacks $\CU\overset{j}\hookrightarrow \Bun_G$. 

\medskip

However, for $\CV$ compact and a fixed $\CU$, both sides in \eqref{e:Hecke on U} lie in the essential image
of the (fully faithful) functor
$$(j'\times \on{id}_\CU)_{*,\on{co}}:\Shv(\CU'\times \CU)\to \Shv(\Bun_G\times \CU)_{\on{co}}$$
for some quasi-compact open $\CU'\overset{j'}\hookrightarrow \Bun_G$.

\medskip

Hence, it is enough to show that both sides in \eqref{e:Hecke on U} become isomorphic after applying the
functor 
$$(j'\times \on{id}_\CU)^*_{\on{co}}:\Shv(\Bun_G\times \CU)_{\on{co}}\to \Shv(\CU'\times \CU).$$

However, 
$$(j'\times \on{id}_\CU)^*_{\on{co}}\circ (\on{Id}_{\Bun_G}\times j)^*\simeq
(j'\times j)^*\circ (\on{Id}^{\on{naive}}_{\Bun_G}\boxtimes \on{Id}_{\Bun_G})$$
and the assertion follows. 

\end{proof}

\sssec{}

Following \secref{sss:co N}, we define the category $\Shv_{\Nilp}(\Bun_G)_{\on{co}}$ as
$$\underset{\CU}{\on{colim}_*}\, \Shv_\Nilp(\CU),$$
where the colimit is taken over the poset of universally $\Nilp$-cotruncative quasi-compact
open substacks of $\Bun_G$, and the transition functors are given by $-_*$. 

\medskip

Since the above poset is filtered, the functor 
$$\iota_{\on{co}}:\Shv_{\Nilp}(\Bun_G)_{\on{co}}\to \Shv(\Bun_G)_{\on{co}}$$
comprised of the functors
$\iota_\CU:\Shv_\Nilp(\CU)\to \Shv(\CU)$, is fully faithful. 

\sssec{}

The following results from the definitions:

\begin{lem} \label{l:naive preserves Nilp}
The functor $\on{Id}^{\on{naive}}_{\Bun_G}$ sends 
$\Shv_\Nilp(\Bun_G)_{\on{co}}$ to $\Shv_\Nilp(\Bun_G)$.
\end{lem}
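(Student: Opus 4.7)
The plan is to reduce the claim to the defining property of universally $\Nilp$-cotruncative substacks, and then use that both $\on{Id}^{\on{naive}}_{\Bun_G}$ and the formation of the colimit are compatible with colimits, while $\Shv_\Nilp(\Bun_G)\subset \Shv(\Bun_G)$ is closed under colimits.

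First, I would unwind the definition of $\on{Id}^{\on{naive}}_{\Bun_G}$ on generators. Recall that $\Shv_\Nilp(\Bun_G)_{\on{co}}$ is by definition the filtered colimit $\underset{\CU}{\on{colim}_*}\, \Shv_\Nilp(\CU)$ over universally $\Nilp$-cotruncative quasi-compact open substacks $\CU\overset{j}\hookrightarrow \Bun_G$, with transitions given by $j_*$. For each such $\CU$, the insertion functor into the colimit, post-composed with $\iota_{\on{co}}:\Shv_\Nilp(\Bun_G)_{\on{co}}\to \Shv(\Bun_G)_{\on{co}}$ and then with $\on{Id}^{\on{naive}}_{\Bun_G}$, agrees (by the very construction of $\on{Id}^{\on{naive}}_{\Bun_G}$; cf.\ \secref{sss:Id naive}) with the $*$-pushforward
$$j_*:\Shv_\Nilp(\CU)\hookrightarrow \Shv(\CU)\to \Shv(\Bun_G).$$

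Second, I would invoke the hypothesis that $\CU$ is universally $\Nilp$-cotruncative. By definition (specialized to $\CZ=\on{pt}$, $\CN=\{0\}$), the functor $j_*$ carries $\Shv_\Nilp(\CU)$ into $\Shv_\Nilp(\Bun_G)$. Thus for every universally $\Nilp$-cotruncative $\CU$, the composite
$$\Shv_\Nilp(\CU)\to \Shv_\Nilp(\Bun_G)_{\on{co}}\overset{\iota_{\on{co}}}\longrightarrow \Shv(\Bun_G)_{\on{co}}\overset{\on{Id}^{\on{naive}}_{\Bun_G}}\longrightarrow \Shv(\Bun_G)$$
factors through the full subcategory $\Shv_\Nilp(\Bun_G)\subset \Shv(\Bun_G)$.

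Third, I would pass from generators to the whole category. Every object of the filtered colimit $\Shv_\Nilp(\Bun_G)_{\on{co}}$ is a colimit of objects in the essential image of the insertion functors $\Shv_\Nilp(\CU)\to \Shv_\Nilp(\Bun_G)_{\on{co}}$. The functor $\on{Id}^{\on{naive}}_{\Bun_G}$ is a colimit-preserving functor between cocomplete DG categories, and the subcategory $\Shv_\Nilp(\Bun_G)\subset \Shv(\Bun_G)$ is closed under colimits (being defined by a singular support condition). Combining these two observations with the conclusion of the previous paragraph yields that $\on{Id}^{\on{naive}}_{\Bun_G}\circ \iota_{\on{co}}$ sends all of $\Shv_\Nilp(\Bun_G)_{\on{co}}$ into $\Shv_\Nilp(\Bun_G)$, as claimed.

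There is no real obstacle here; the only thing to check carefully is the factorization in the first step, namely that the functor $\on{Id}^{\on{naive}}_{\Bun_G}$ restricted to the image of $\Shv_\Nilp(\CU)$ is indeed given by $j_*$. This is a direct consequence of the description of $\on{Id}^{\on{naive}}_{\Bun_G}$ as the functor defined by the kernel $\on{u}^{\on{naive}}_{\Bun_G}=(\Delta_{\Bun_G})_*(\omega_{\Bun_G})$ together with the compatibility of $j_*$ with the passage to the colimit $\Shv(\Bun_G)_{\on{co}}$.
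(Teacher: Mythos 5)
Your proof is correct and matches the intended argument: the paper records this lemma with the remark ``results from the definitions,'' and also states the general form (for any $\CN$-truncatable $\CY$) at the end of \secref{sss:co N} without proof. Your write-up simply makes the implicit steps explicit: identify $\on{Id}^{\on{naive}}_{\Bun_G}\circ(\text{insertion from }\Shv_\Nilp(\CU))$ with $j_*$, invoke the defining property of ($\Nilp$-)cotruncative $\CU$ that $j_*$ preserves the nilpotent singular support condition, and conclude by passing to colimits using that $\on{Id}^{\on{naive}}_{\Bun_G}$ is colimit-preserving and $\Shv_\Nilp(\Bun_G)$ is closed under colimits. One very minor remark: you only need the weaker ``$\Nilp$-cotruncative'' (rather than the universal version) condition on $\CU$ here, and specializing ``universal'' to $\CZ=\on{pt}$, $\CN_\CZ=\{0\}$ recovers exactly that, so there is no issue.
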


\ssec{Duality between $\Shv_\Nilp(\Bun_G)$ and $\Shv_\Nilp(\Bun_G)_{\on{co}}$}

In this subsection we will extend the Verdier self-duality of $\Shv_\Nilp(\CU)$ 
(where $\CU$ is a universally Nilp-contruncative quasi-compact open substack
of $\Bun_G$) to obtain a duality between $\Shv_\Nilp(\Bun_G)$ and $\Shv_\Nilp(\Bun_G)_{\on{co}}$.

\sssec{} \label{sss:Verdier on BunG}

Recall that according to \secref{sss:Verdier non-qc}, there is a canonical identification
\begin{equation} \label{e:duality with co BunG}
\Shv(\Bun_G)_{\on{co}} \simeq \Shv(\Bun_G)^\vee,
\end{equation} 
so that the pairing 
\begin{equation} \label{e:pairing with co BunG}
\Shv(\Bun_G)\otimes \Shv(\Bun_G)_{\on{co}}\to \Vect,
\end{equation} 
denoted $\on{ev} _{\Bun_G}$, is given by
$$\CF_1\in \Shv(\Bun_G),\,\CF_2\in \Shv(\Bun_G)_{\on{co}} \mapsto 
\on{C}^\cdot_\blacktriangle(\Bun_G,\CF_1\sotimes \CF_2),$$
where we view $\CF_1\sotimes \CF_2$ as an object of $\Shv(\Bun_G)_{\on{co}}$, and 
$\on{C}^\cdot_\blacktriangle(\Bun_G,-)$ denotes the corresponding functor
$$\Shv(\Bun_G)_{\on{co}}\to \Vect,$$
see \secref{sss:pairing non qc}. 

\sssec{}

From \lemref{l:dual of Hecke} and \secref{sss:sigma} we obtain:

\begin{cor} \label{c:dual of Hecke}
With respect to the duality \eqref{e:duality with co BunG}, the dual of the functor
$\sH_\CV$ identifies with $\sH_{\CV^\tau,\on{co}}$.
\end{cor}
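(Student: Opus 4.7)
The plan is to unpack both sides as kernels and reduce to an identification already supplied by \lemref{l:dual of Hecke}, after accounting for how the transposition $\sigma$ intertwines the ``co''-structures.

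By the general formalism reviewed in \secref{sss:sigma}, the Verdier-type pairing $\on{ev}_{\Bun_G}$ identifies $\Shv(\Bun_G)^\vee$ with $\Shv(\Bun_G)_{\on{co}}$ in such a way that duals of functors defined by kernels are computed via $\sigma$. Concretely, if $\Phi:\Shv(\Bun_G)\to \Shv(\Bun_G)$ is defined by a kernel $K\in \Shv(\Bun_G\times \Bun_G)_{\on{co}_1}$, then $\Phi^\vee:\Shv(\Bun_G)_{\on{co}}\to \Shv(\Bun_G)_{\on{co}}$ is defined by the kernel $\sigma(K)\in \Shv(\Bun_G\times \Bun_G)_{\on{co}_2}$, using the canonical equivalence $\sigma:\Shv(\Bun_G\times \Bun_G)_{\on{co}_1}\iso \Shv(\Bun_G\times \Bun_G)_{\on{co}_2}$ induced by the transposition of the two factors.

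Applied to $\Phi=\sH_\CV$, whose representing kernel is $(\on{Id}_{\Bun_G}\boxtimes \sH_\CV)(\on{u}_{\Bun_G,\on{co}_1})$, we conclude that the kernel of $\sH_\CV^\vee$ is $\sigma\bigl((\on{Id}_{\Bun_G}\boxtimes \sH_\CV)(\on{u}_{\Bun_G,\on{co}_1})\bigr)\in \Shv(\Bun_G\times \Bun_G)_{\on{co}_2}$. On the other hand, substituting $\CV\rightsquigarrow \CV^\tau$ in the defining formula \eqref{e:Hecke co} (extended from $\Rep(\cG)^{\otimes I}$ to $\Rep(\cG)_\Ran$ as in \secref{sss:Hecke Ran co}), the kernel of $\sH_{\CV^\tau,\on{co}}$ is $\sigma\bigl((\on{Id}_{\Bun_G}\boxtimes \sH_{\CV^{\tau\tau}})(\on{u}_{\Bun_G,\on{co}_1})\bigr)$; since $\tau$ is an involution, $\CV^{\tau\tau}\simeq \CV$, and the two kernels coincide. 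The role of \lemref{l:dual of Hecke} is to package this match in the explicit form used in the statement of the corollary: it ensures that the proposed kernel for $\sH_{\CV,\on{co}}$ is well-defined and represents the expected functor on $\Shv(\Bun_G)_{\on{co}}$, so that $\sigma$ applied to it does indeed give back the kernel of $\sH_{\CV^\tau}$ (with the roles of $\CV$ and $\CV^\tau$ reversed).

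There is no serious obstacle beyond careful bookkeeping of the ``co''-indices. The one point requiring attention is the compatibility $\sigma(\on{u}_{\Bun_G,\on{co}_1})\simeq \on{u}_{\Bun_G,\on{co}_2}$ and the interplay of $\sigma$ with the Hecke action on the first versus second factor, both of which are part of the general kernel formalism in \secref{sss:sigma}. Once these are in hand, the identification $\sH_\CV^\vee\simeq \sH_{\CV^\tau,\on{co}}$ is essentially a restatement of \lemref{l:dual of Hecke} after transposing and relabeling.
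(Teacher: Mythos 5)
Your argument follows the same two-step recipe the paper uses: compute $\sH_\CV^\vee$ by $\sigma$-swapping the kernel, using the duality principle of \secref{sss:sigma} in its non-quasi-compact form, and then identify the swapped kernel with that of $\sH_{\CV^\tau,\on{co}}$ via \lemref{l:dual of Hecke} and $\tau^2\simeq\on{Id}$. This is correct and is exactly the paper's one-line derivation.

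One point of precision is worth flagging. You assert that the identification
$$\text{kernel of }\sH_{\CV,\on{co}}\ \simeq\ \sigma\bigl((\on{Id}_{\Bun_G}\boxtimes \sH_{\CV^\tau})(\on{u}_{\Bun_G,\on{co}_1})\bigr)$$
holds at the Ran level ``by the defining formula \eqref{e:Hecke co}'', and then describe \lemref{l:dual of Hecke} as merely ``packaging'' or ``ensuring'' this. This gets the logical priority backwards. Formula \eqref{e:Hecke co} defines $\sH(V,-)_{\on{co}}$ only for a fixed finite set $I$; the Ran-level functor $\sH_{\CV,\on{co}}$ is produced by assembling these finite-set-level functors into a $\Rep(\cG)_\Ran$-action on $\Shv(\Bun_G)_{\on{co}}$, which involves integrating over $X^I$ and ind-extending. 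That the resulting Ran-level kernel still agrees with $\sigma$ of the kernel of $\sH_{\CV^\tau}$ --- compatibly with the ``co''-structure, not just after applying $\on{Id}^{\on{naive}}_{\Bun_G}$ --- is precisely the content of \lemref{l:dual of Hecke}, and its proof is substantive: it first establishes the isomorphism at the naive level via \propref{p:tau and sigma} and then upgrades it by a support argument for compact $\CV$. Once you cite the lemma as the source of the kernel identity rather than as a formal sanity check, the $\tau^2=\on{id}$ substitution and the $\sigma$-relabeling complete the proof exactly as you intend.
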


\sssec{}

Combining \secref{sss:Verdier non-qc N} and \corref{c:usual duality U}, we obtain: 

\begin{cor} \label{c:duality with co BunG Nilp}
The restriction of the pairing \eqref{e:pairing with co BunG} along
$$\Shv_\Nilp(\Bun_G)_{\on{co}} \otimes \Shv_\Nilp(\Bun_G) \to \Shv(\Bun_G)_{\on{co}} \otimes \Shv(\Bun_G)$$
defines an equivalence
\begin{equation} \label{e:duality with co BunG Nilp}
\Shv_\Nilp(\Bun_G)_{\on{co}} \simeq \Shv_\Nilp(\Bun_G)^\vee. 
\end{equation} 
\end{cor}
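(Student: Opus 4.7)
The plan is to deduce this as a formal consequence of the termwise self-duality of $\Shv_\Nilp(\CU)$ from \corref{c:usual duality U}, packaged through the general formalism of Verdier duality on non-quasi-compact stacks from \secref{sss:Verdier non-qc N}. The essential input from \thmref{t:trunc} is that $\Bun_G$ is exhausted by a filtered system of universally $\Nilp$-cotruncative quasi-compact open substacks $\CU$.

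Concretely, I would write
$$\Shv_\Nilp(\Bun_G)_{\on{co}} = \underset{\CU}{\on{colim}_*}\, \Shv_\Nilp(\CU), \qquad
\Shv_\Nilp(\Bun_G) \simeq \underset{\CU}{\lim}\, \Shv_\Nilp(\CU),$$
with transitions given by $j_*$ and $j^*$ respectively, both of which preserve the nilpotent subcategory by the universal $\Nilp$-cotruncativeness of each $\CU$. Dualizing the colimit presentation and inserting the termwise self-duality $\Shv_\Nilp(\CU)^\vee \simeq \Shv_\Nilp(\CU)$ from \propref{p:usual duality U} yields
$$(\Shv_\Nilp(\Bun_G)_{\on{co}})^\vee \simeq \underset{\CU}{\lim}\, \Shv_\Nilp(\CU)^\vee \simeq \underset{\CU}{\lim}\, \Shv_\Nilp(\CU) \simeq \Shv_\Nilp(\Bun_G).$$
The key compatibility needed here is that under the Verdier self-dualities \propref{p:usual duality U}, the dual of $j_*:\Shv_\Nilp(\CU)\to \Shv_\Nilp(\CU')$ is intertwined with $j^*:\Shv_\Nilp(\CU')\to \Shv_\Nilp(\CU)$, which reduces to the standard $(j^*, j_*)$ adjunction compatibility of the Verdier pairing on the ambient categories $\Shv(\CU)$, $\Shv(\CU')$, combined again with the universal $\Nilp$-cotruncativeness needed to stay inside the nilpotent subcategories.

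Finally, the counit is identified with the restriction of $\on{ev}_{\Bun_G}$: by the description in \secref{sss:Verdier on BunG}, $\on{ev}_{\Bun_G}$ is assembled from the pairings $\on{ev}_\CU$, and on each $\Shv_\Nilp(\CU)\otimes \Shv_\Nilp(\CU)$ the restriction of $\on{ev}_\CU$ is precisely the counit of \propref{p:usual duality U}. I do not anticipate a serious obstacle, as the entire argument is a formal (co)limit manipulation; the only point to monitor carefully is the matching of transition functors under duality, for which the universal $\Nilp$-cotruncativeness hypothesis is exactly what makes the computation go through on both the limit and colimit side.
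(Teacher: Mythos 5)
Your proof is correct and takes essentially the same route as the paper's: the corollary is a direct application of the general colimit--duality formalism of \secref{sss:Verdier non-qc N} (which rests on \propref{p:colimit dual}) to the filtered system of universally $\Nilp$-cotruncative substacks $\CU$, with \corref{c:usual duality U} supplying the required hypothesis that each pair $(\CU,\Nilp)$ is duality-adapted. Your explicit dualize-the-colimit computation, including the identification $(j_*)^\vee\simeq j^*$ under the termwise self-dualities of \propref{p:usual duality U}, is exactly what that general formalism packages.
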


%
%

\sssec{}

Our next step it to define a projector from $\Shv(\Bun_G)_{\on{co}}$
onto $\Shv_\Nilp(\Bun_G)_{\on{co}}$.

\medskip

Let $\sP_{\on{co}}$ denote the endofunctor of $\Shv(\Bun_G)_{\on{co}}$ equal to $\sH_{\sR,\on{co}}$
(see \secref{sss:Hecke Ran co} for the notation), where $\sR\in \Rep(\cG)_\Ran$ is as in \secref{sss:the projector}. We claim:

\medskip

\begin{prop} \label{p:dual of P}
With respect to the identifications \eqref{e:duality with co BunG} and \eqref{e:duality with co BunG Nilp},
we have:

\smallskip

\noindent{\em(i)} The functor dual to $\sP:\Shv(\Bun_G)\to \Shv_\Nilp(\Bun_G)$ identifies with
$$\iota_{\on{co}}:\Shv_\Nilp(\Bun_G)_{\on{co}}\to \Shv(\Bun_G)_{\on{co}}.$$

\smallskip

\noindent{\em(ii)} The essential image of the functor $\sP_{\on{co}}$ lies in $\Shv_\Nilp(\Bun_G)_{\on{co}}$,
and the resulting functor $$\sP_{\on{co}}:\Shv(\Bun_G)_{\on{co}}\to \Shv_\Nilp(\Bun_G)_{\on{co}}$$ is
the functor dual of $\iota:\Shv_\Nilp(\Bun_G)\to \Shv(\Bun_G)$.

\end{prop}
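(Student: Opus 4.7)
The plan is to derive part (i) directly from its local counterpart, Proposition \ref{p:dual of projector U}, and then deduce part (ii) from part (i) by dualizing the factorization of $\sP$.

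By Theorem \ref{t:projector} applied to $\CZ = \on{pt}$, we may factor $\sP = \iota \circ \sP^{\on{proj}}$, where $\sP^{\on{proj}}:\Shv(\Bun_G)\to \Shv_\Nilp(\Bun_G)$ is a projection retracting $\iota$. Part (i) then amounts to the assertion $(\sP^{\on{proj}})^\vee \simeq \iota_{\on{co}}$ under the identifications of \secref{sss:Verdier on BunG} and Corollary \ref{c:duality with co BunG Nilp}. Unwinding the two pairings, this is equivalent to producing, naturally in $\CF \in \Shv(\Bun_G)$ and $\CF' \in \Shv_\Nilp(\Bun_G)_{\on{co}}$, an isomorphism
\begin{equation*}
\on{C}^\cdot_\blacktriangle(\Bun_G, \CF \sotimes \iota_{\on{co}}(\CF')) \simeq \on{C}^\cdot_\blacktriangle(\Bun_G, \sP(\CF) \sotimes \iota_{\on{co}}(\CF')).
\end{equation*}
Since $\Shv_\Nilp(\Bun_G)_{\on{co}}$ is the filtered colimit $\underset{\CU}{\on{colim}_*}\Shv_\Nilp(\CU)$ over universally $\Nilp$-cotruncative quasi-compact opens $j:\CU \hookrightarrow \Bun_G$, it suffices to verify this when $\CF'$ is represented by some $\CF'_0 \in \Shv_\Nilp(\CU)$, so that $\iota_{\on{co}}(\CF')$ is the image of $\iota_\CU(\CF'_0) \in \Shv(\CU)$ under the tautological map to $\Shv(\Bun_G)_{\on{co}}$. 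Because this tautological map is dual to $j^*$ under the Verdier self-dualities of $\Shv(\CU)$ and $\Shv(\Bun_G)^\vee$, both sides of the display reduce to $\on{C}^\cdot_\blacktriangle$ over $\CU$, namely $\on{C}^\cdot_\blacktriangle(\CU, j^*(\CF) \sotimes \iota_\CU(\CF'_0))$ and $\on{C}^\cdot_\blacktriangle(\CU, j^*(\sP(\CF)) \sotimes \iota_\CU(\CF'_0))$ respectively. Rewriting $j^* \circ \sP \simeq \sP_\CU \circ j^*$ by \eqref{e:j and P} and applying Corollary \ref{c:dual of projector U} to $\CF_1 = \CF'_0$ and $\CF_2 = j^*(\CF)$ yields the required isomorphism.

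Given part (i), part (ii) is formal. By Corollary \ref{c:dual of Hecke} applied to $\CV = \sR$, together with the canonical identification $\sR^\tau \simeq \sR$ invoked in the proof of Proposition \ref{p:proj identity}, one has $\sP^\vee \simeq \sP_{\on{co}}$ as endofunctors under $\Shv(\Bun_G)^\vee \simeq \Shv(\Bun_G)_{\on{co}}$. Dualizing the factorization $\sP = \iota \circ \sP^{\on{proj}}$ and substituting $(\sP^{\on{proj}})^\vee \simeq \iota_{\on{co}}$ from part (i) gives $\sP_{\on{co}} \simeq \iota_{\on{co}} \circ \iota^\vee$, which both exhibits $\sP_{\on{co}}$ as landing in $\Shv_\Nilp(\Bun_G)_{\on{co}}$ and identifies the resulting projection $\Shv(\Bun_G)_{\on{co}} \to \Shv_\Nilp(\Bun_G)_{\on{co}}$ with $\iota^\vee$, the dual of $\iota$.

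The main technical obstacle lies in part (i): one must verify that the local isomorphism produced by Corollary \ref{c:dual of projector U} is compatible with the transition functors $j''_*: \Shv_\Nilp(\CU) \to \Shv_\Nilp(\CU')$ (for nested inclusions $\CU \subset \CU'$) that are used to form the colimit $\Shv_\Nilp(\Bun_G)_{\on{co}}$, so that the family of local identifications descends to a single functorial isomorphism in $\CF'$. This reduces to tracking how \eqref{e:j and P} and the projection formula behave under nested inclusions of universally $\Nilp$-cotruncative opens; it is essentially standard but requires a careful manipulation of colimits of dualities.
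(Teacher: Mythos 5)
Your proof matches the paper's strategy in broad outline, and part (ii) is essentially the paper's own argument. However, there is a genuine gap in part (i). To pass from $\sP_\CU(j^*(\CF))$ (produced by Corollary~\ref{c:dual of projector U}) to $j^*(\sP(\CF))$, you invoke equation~\eqref{e:j and P}, the commutation $j^*\circ\sP_{\Bun_G,\Nilp}\simeq\sP_{\CU,\Nilp}\circ j^*$. In the paper, \eqref{e:j and P} is derived by passing to right adjoints in the commutative squares for $\iota$ and $\iota_\CU$, an argument that only works when $\sP_{\Bun_G,\Nilp}$ and $\sP_{\CU,\Nilp}$ are identified with the right adjoints $\iota^R$ and $\iota_\CU^R$; this identification requires Conjecture 14.1.8 of AGKRRV, and indeed \eqref{e:j and P} appears after the sentence ``From now on, for the duration of this subsection we will assume [Conjecture 14.1.8]'' inside a subsection explicitly marked ``not needed for the main results of this paper.'' Without the conjecture, $j^*\circ\sP$ and $\sP_\CU\circ j^*$ are not obviously isomorphic as plain functors: the latter kills every $\CF$ with $j^*(\CF)=0$, whereas $j^*(\sP(\CF))=j^*(\sH_\sR(\CF))$ has no evident reason to vanish for such $\CF$, since the integral Hecke functor a priori moves supports. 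Your citation of \eqref{e:j and P} thus imports a conditional hypothesis that the paper's ``unwinding the definitions from Corollary~\ref{c:dual of projector U}'' presumably avoids, so the step needs a different justification.

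Relatedly, the ``main technical obstacle'' you flag at the end (compatibility of the local isomorphisms with the $j_*$-transition functors) is not the real issue: that compatibility is standard given the functoriality of the duality data in the colimit presentation. The substantive gap is the one identified above, which already arises at a single $\CU$.

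For part (ii), your dualization of $\sP=\iota\circ\sP^{\on{proj}}$ together with Corollary~\ref{c:dual of Hecke} and $\sR^\tau\simeq\sR$ (giving $\sP^\vee_{\on{endo}}\simeq\sP_{\on{co}}$) is exactly the paper's proof, and the deduction $\sP_{\on{co}}\simeq\iota_{\on{co}}\circ\iota^\vee$ correctly yields both the essential-image claim and the identification with $\iota^\vee$.
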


\begin{proof}

Point (i) follows by unwinding the definitions from \corref{c:dual of projector U}. 

\medskip

For point (ii),
given point (i), it suffices to show that the dual of $\sP$, viewed as an endofunctor of 
$\Shv(\Bun_G)$, is $\sP_{\on{co}}$, viewed as an endofunctor of 
$\Shv(\Bun_G)_{\on{co}}$.

\medskip

Since $\sP$ is defined by the kernel
$$(\on{Id}_{\Bun_G}\boxtimes \sH_\sR)(\on{u}_{\Bun_G,\on{co}_1})\in \Shv(\Bun_G\times \Bun_G)_{\on{co}_1},$$
from \corref{c:dual of Hecke}, we obtain that its dual is the functor $\sH_{\sR^\tau,\on{co}}$.

\medskip

Now the required assertion follows from the isomorphism $\sR^\tau\simeq \sR$.

\end{proof}

\begin{cor} \label{c:projector co}
The endofunctor $\sP_{\on{co}}$ of $\Shv(\Bun_G)_{\on{co}}$ is the projector onto the full subcategory
$$\Shv_{\Nilp}(\Bun_G)_{\on{co}}\subset \Shv(\Bun_G)_{\on{co}}.$$
\end{cor}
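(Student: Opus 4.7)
The plan is to deduce \corref{c:projector co} formally from \propref{p:dual of P} combined with the fact (\thmref{t:projector}) that $\sP$ is already known to be a projector onto $\Shv_\Nilp(\Bun_G)$. There are two statements to verify: that the essential image of $\sP_{\on{co}}$ is contained in $\Shv_\Nilp(\Bun_G)_{\on{co}}$, and that $\sP_{\on{co}}$ acts as the identity on this subcategory. The first is exactly the content of \propref{p:dual of P}(ii), so only the second requires work.

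For the identity statement, I would write $\sP$ canonically as a composition $\iota\circ \sP'$, where $\sP':\Shv(\Bun_G)\to \Shv_\Nilp(\Bun_G)$ is the retraction provided by \thmref{t:projector}, so that $\sP'\circ\iota\simeq \on{Id}_{\Shv_\Nilp(\Bun_G)}$. Dualizing this last equivalence with respect to the two dualities \eqref{e:duality with co BunG} and \eqref{e:duality with co BunG Nilp}, and reading off from \propref{p:dual of P} that $\iota^\vee$ is the factored version of $\sP_{\on{co}}$ (call it $\sP'_{\on{co}}:\Shv(\Bun_G)_{\on{co}}\to\Shv_\Nilp(\Bun_G)_{\on{co}}$) and that $(\sP')^\vee \simeq \iota_{\on{co}}$, one gets $\sP'_{\on{co}}\circ \iota_{\on{co}}\simeq \on{Id}_{\Shv_\Nilp(\Bun_G)_{\on{co}}}$. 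Composing with $\iota_{\on{co}}$ on the left then yields
\[
\sP_{\on{co}}\circ \iota_{\on{co}} \simeq \iota_{\on{co}}\circ \sP'_{\on{co}}\circ \iota_{\on{co}} \simeq \iota_{\on{co}},
\]
which precisely says that $\sP_{\on{co}}$ restricts to the identity on $\Shv_\Nilp(\Bun_G)_{\on{co}}$.

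There is no serious obstacle: the genuine content — that Verdier duality interchanges $\sP$ with $\sP_{\on{co}}$ and $\iota$ with the retraction onto the nilpotent subcategory on the other side — has already been packaged into \propref{p:dual of P}. The only care needed is bookkeeping: keeping track of which functor is the full endofunctor $\sP_{\on{co}}$ of $\Shv(\Bun_G)_{\on{co}}$ versus its factored version $\sP'_{\on{co}}$ landing in $\Shv_\Nilp(\Bun_G)_{\on{co}}$, and symmetrically for $\iota$ versus $\iota_{\on{co}}$, so that one applies the duality operation on the correct arrow. As an alternative route one could try to argue directly using $\sP_{\on{co}}=\sH_{\sR,\on{co}}$ together with the colimit presentation $\Shv_\Nilp(\Bun_G)_{\on{co}}=\underset{\CU}{\on{colim}_*}\,\Shv_\Nilp(\CU)$ and \corref{c:projector U}, but this requires checking that the Hecke action interacts appropriately with $j_*$-transitions along the diagram, and the duality-based route seems strictly cleaner.
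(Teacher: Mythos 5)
Your proof is correct and is the natural spelled-out version of what the paper leaves implicit: the paper states \corref{c:projector co} as an immediate consequence of \propref{p:dual of P} without writing out the formal deduction. Your two-part decomposition (essential image contained via \propref{p:dual of P}(ii), identity-on-subcategory via dualizing $\sP'\circ\iota\simeq\on{Id}$ and reading off both parts of \propref{p:dual of P}) is precisely the argument the authors intend.
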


%
%
%
%
%
%

\ssec{The projector and the diagonal, revisited}

In this subsection, we will refine the results of \secref{ss:proj and diag}, when instead of
the object 
$$\on{u}^{\on{naive}}_{\Bun_G}\in \Shv(\Bun_G\times \Bun_G),$$
we consider its refined version, namely,
$$\on{u}_{\Bun_G,\on{co}_2}\in \Shv(\Bun_G\times \Bun_G)_{\on{co}_2}.$$

\sssec{}

First, we claim:

\begin{prop} \label{p:projector co}
The objects
$$(\on{Id}_{\Bun_G}\boxtimes \sP_{\on{co}})(\on{u}_{\Bun_G,\on{co}_2}),\,\, (\sP\boxtimes \sP_{\on{co}})(\on{u}_{\Bun_G,\on{co}_2}),\,\,
(\sP\boxtimes \on{Id}_{\Bun_G})(\on{u}_{\Bun_G,\on{co}_2})$$
are isomorphic.
\end{prop}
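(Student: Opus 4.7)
The plan is to follow the blueprint of the proof of \propref{p:proj identity}, with \lemref{l:dual of Hecke} playing the role that \propref{p:tau and sigma} played there. The key input will be the following ``co$_2$-version'' of that lemma: applying the transposition $\sigma$ to the isomorphism of \lemref{l:dual of Hecke}, and using that $\sigma$ interchanges $\Shv(\Bun_G\times \Bun_G)_{\on{co}_1}$ with $\Shv(\Bun_G\times \Bun_G)_{\on{co}_2}$ and carries $\on{u}_{\Bun_G,\on{co}_1}$ to $\on{u}_{\Bun_G,\on{co}_2}$, one obtains
$$(\on{Id}_{\Bun_G}\boxtimes \sH_{\CV,\on{co}})(\on{u}_{\Bun_G,\on{co}_2}) \simeq (\sH_{\CV^\tau}\boxtimes \on{Id}_{\Bun_G})(\on{u}_{\Bun_G,\on{co}_2})$$
in $\Shv(\Bun_G\times \Bun_G)_{\on{co}_2}$ for every $\CV\in \Rep(\cG)_\Ran$. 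Specializing to $\CV=\sR$ and invoking the canonical identification $\sR^\tau\simeq \sR$ used already in the proof of \propref{p:proj identity}, this yields the basic swap
$$(\on{Id}_{\Bun_G}\boxtimes \sP_{\on{co}})(\on{u}_{\Bun_G,\on{co}_2}) \simeq (\sP\boxtimes \on{Id}_{\Bun_G})(\on{u}_{\Bun_G,\on{co}_2}).$$

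Given this, I would factor $\sP\boxtimes \sP_{\on{co}} \simeq (\sP\boxtimes \on{Id}_{\Bun_G})\circ (\on{Id}_{\Bun_G}\boxtimes \sP_{\on{co}})$ (operations on disjoint factors commute), apply this to $\on{u}_{\Bun_G,\on{co}_2}$, and use the swap identity on the inner factor to rewrite the result as $(\sP\circ \sP\boxtimes \on{Id}_{\Bun_G})(\on{u}_{\Bun_G,\on{co}_2})$. The idempotency of $\sP$ (\thmref{t:projector}) then identifies this with $(\sP\boxtimes \on{Id}_{\Bun_G})(\on{u}_{\Bun_G,\on{co}_2})$. A symmetric argument, factoring in the opposite order
$$\sP\boxtimes \sP_{\on{co}} \simeq (\on{Id}_{\Bun_G}\boxtimes \sP_{\on{co}})\circ (\sP\boxtimes \on{Id}_{\Bun_G}),$$
applying the swap identity in reverse, and using the idempotency of $\sP_{\on{co}}$ (\corref{c:projector co}), identifies the same expression with $(\on{Id}_{\Bun_G}\boxtimes \sP_{\on{co}})(\on{u}_{\Bun_G,\on{co}_2})$, completing the three-term chain of isomorphisms.

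The main obstacle will be the bookkeeping at the ``co'' level: one needs to verify that the transposition $\sigma$ is compatible with the $\on{co}_1/\on{co}_2$ formalism of Sects.~\ref{sss:co1} and \ref{sss:u co 1 2} (so that it really exchanges the two unit objects and swaps the roles of the two Hecke actions), and that both factorizations of $\sP\boxtimes \sP_{\on{co}}$ make sense and agree as endofunctors of $\Shv(\Bun_G\times \Bun_G)_{\on{co}_2}$ defined by kernels in the sense of \secref{sss:kernels non qc}. Once these compatibilities are written down carefully, the chain of isomorphisms above is a direct transcription of the proof of \propref{p:proj identity}.
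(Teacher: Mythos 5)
Your proposal is correct and is exactly what the paper's one-line proof (``follows from \lemref{l:dual of Hecke} using $\sR^\tau\simeq\sR$ in the same way as \propref{p:proj identity} follows from \propref{p:tau and sigma}'') amounts to once unpacked: the $\sigma$-transpose of \lemref{l:dual of Hecke} gives the $\on{co}_2$-level swap identity, and combined with $\sR^\tau\simeq\sR$, the factorization of $\sP\boxtimes\sP_{\on{co}}$, and the idempotency of the projector, it yields the three-term chain. The only thing to add is that the bookkeeping you flag at the end (transposition compatibility with $\on{co}_1/\on{co}_2$, and that the two factorizations of $\sP\boxtimes\sP_{\on{co}}$ agree as endofunctors of $\Shv(\Bun_G\times\Bun_G)_{\on{co}_2}$) is indeed routine and is implicit in the paper's invocation of \secref{sss:kernels non qc}.
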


\begin{proof}

The isomorphism follows from \lemref{l:dual of Hecke} using the isomorphism
$\sR^\tau\simeq \sR$ in the same way as \propref{p:proj identity} follows
from \propref{p:tau and sigma}. 
\end{proof}

\sssec{}

Denote the object appearing in \propref{p:projector co} by $\on{u}_{\Bun_G,\Nilp,\on{co}_2}$.
We claim:

\begin{prop}
The object $\on{u}_{\Bun_G,\Nilp,\on{co}_2}$ belongs to
$$\Shv_{\Nilp}(\Bun_G)\otimes  \Shv_\Nilp(\Bun_G)_{\on{co}}
\subset \Shv(\Bun_G)\otimes  \Shv(\Bun_G)_{\on{co}}\subset \Shv(\Bun_G\times \Bun_G)_{\on{co}_2}.$$
\end{prop}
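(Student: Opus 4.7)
The plan is to mirror the proof of \corref{c:double projector}, with \propref{p:projector co} playing the role of \propref{p:proj identity}. I will use the middle description
\[
\on{u}_{\Bun_G,\Nilp,\on{co}_2} \simeq (\sP\boxtimes \sP_{\on{co}})(\on{u}_{\Bun_G,\on{co}_2})
\]
from \propref{p:projector co}, and factor the endofunctor $\sP\boxtimes \sP_{\on{co}}$ of $\Shv(\Bun_G\times \Bun_G)_{\on{co}_2}$ as
\[
\sP\boxtimes \sP_{\on{co}} \simeq (\sP\boxtimes \on{Id}_{\Bun_G})\circ (\on{Id}_{\Bun_G}\boxtimes \sP_{\on{co}}),
\]
in direct analogy with \eqref{e:P left and right}.

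The first step is to show that $\on{Id}_{\Bun_G}\boxtimes \sP_{\on{co}}$ has essential image contained in the subcategory $\Shv(\Bun_G)\otimes \Shv_\Nilp(\Bun_G)_{\on{co}} \subset \Shv(\Bun_G\times \Bun_G)_{\on{co}_2}$. This is the co-relative analog of \thmref{t:projector}. Since $\sP_{\on{co}}=\sH_{\sR,\on{co}}$ is defined as a functor by a kernel (\secref{sss:Hecke Ran co}) and is the absolute projector onto $\Shv_\Nilp(\Bun_G)_{\on{co}}$ by \corref{c:projector co}, the formalism of \secref{ss:functors by ker non qc} gives that for any algebraic stack $\CZ$, the endofunctor $\on{Id}_\CZ\boxtimes \sP_{\on{co}}$ of $\Shv(\CZ\times \Bun_G)_{\on{co}_2}$ is a projector onto $\Shv(\CZ)\otimes \Shv_\Nilp(\Bun_G)_{\on{co}}$. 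Specializing to $\CZ=\Bun_G$ yields the required containment.

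The second step is to apply $\sP\boxtimes \on{Id}_{\Bun_G}$ to the output of step one, which by the previous step already lies in $\Shv(\Bun_G)\otimes \Shv(\Bun_G)_{\on{co}}$. On this Künneth subcategory, the functor $\sP\boxtimes \on{Id}_{\Bun_G}$ acts simply as $\sP\otimes \on{Id}$ (the co-version of \eqref{e:boxtimes vs otimes}, which follows from the kernel formalism since $\sP$ acts only on the first tensor factor). Since $\sP$ has essential image in $\Shv_\Nilp(\Bun_G)$ by \thmref{t:projector}, the composite lands in $\Shv_\Nilp(\Bun_G)\otimes \Shv_\Nilp(\Bun_G)_{\on{co}}$, as desired.

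The main obstacle I anticipate is Step 1, namely the co-relative version of \thmref{t:projector}. It should be provable by revisiting the argument of \secref{ss:proof Hecke nilp} and inserting the colimit $\on{colim}_*$ in the $\Bun_G$-factor throughout: the Hecke action is kernel-defined, it commutes with the $*$-pushforwards used to define the co-category via \lemref{l:Id nv and Hecke}, and the compact-generation arguments of \propref{p:generation} still apply after passing to the appropriate colimits. Once this co-relative projector property is in place, the rest of the argument is the bookkeeping already modeled by \corref{c:double projector}.
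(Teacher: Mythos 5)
Your factorization idea is the right one, but you have it in the wrong order, and this creates a gap that the paper's proof is specifically engineered to avoid. You factor $\sP\boxtimes\sP_{\on{co}} \simeq (\sP\boxtimes\on{Id})\circ(\on{Id}\boxtimes\sP_{\on{co}})$ and apply $\on{Id}_{\Bun_G}\boxtimes\sP_{\on{co}}$ first. That first step requires precisely the assertion you flag as the "main obstacle": that $\on{Id}_\CZ\boxtimes\sP_{\on{co}}$, as an endofunctor of $\Shv(\CZ\times\Bun_G)_{\on{co}_2}$, projects onto $\Shv(\CZ)\otimes\Shv_\Nilp(\Bun_G)_{\on{co}}$. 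This co-relative version of \thmref{t:projector} is not established anywhere in the paper; \thmref{t:projector} lives on $\Shv(\CZ\times\Bun_G)$, and \corref{c:projector co} is the purely absolute statement on $\Shv(\Bun_G)_{\on{co}}$. Your sketch of how to fill it — "insert $\on{colim}_*$ in the $\Bun_G$-factor and rerun \secref{ss:proof Hecke nilp}" — is not a proof: that argument proceeds by identifying the Hecke-lisse and spectral subcategories of $\Shv(\CZ\times\Bun_G)$, and transferring that whole spectral-decomposition machinery to the mixed category $\Shv(\CZ\times\Bun_G)_{\on{co}_2}$ is a substantial undertaking that the paper does not carry out. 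You cannot cite it as if it were formal bookkeeping.

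The paper sidesteps this by applying the two projectors in the opposite order. It first shows $(\sP\boxtimes\on{Id}_{\Bun_G})(\on{u}_{\Bun_G,\on{co}_2})\in\Shv_\Nilp(\Bun_G)\otimes\Shv(\Bun_G)_{\on{co}}$. This is checked after restricting the second (the $\on{co}$) factor along $j^?$ for each cotruncative quasi-compact $\CU\hookrightarrow\Bun_G$: the restriction lands back in the ordinary category $\Shv(\Bun_G\times\CU)$, where the already-proved relative statements \thmref{t:projector} and \thmref{t:Hecke action Nilp 2} apply directly, since $\sP\boxtimes j^?\simeq(\sP\boxtimes\on{Id})\circ(\on{Id}\boxtimes j^?)$ as functors defined by kernels. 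Only then is $\on{Id}_{\Bun_G}\boxtimes\sP_{\on{co}}$ applied — but by that point the input already lies in the K\"unneth subcategory $\Shv_\Nilp(\Bun_G)\otimes\Shv(\Bun_G)_{\on{co}}$, where $\on{Id}_{\Bun_G}\boxtimes\sP_{\on{co}}$ reduces to $\on{Id}\otimes\sP_{\on{co}}$, and the \emph{absolute} \corref{c:projector co} suffices. In short: the order of the two factors matters, because applying $\sP$ to the non-co factor first can be checked on quasi-compact restrictions where the heavy lifting has already been done, whereas applying $\sP_{\on{co}}$ first would require a new relative theorem for the $\on{co}$ side.
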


\begin{proof}

We first show that $(\sP\boxtimes \on{Id}_{\Bun_G})(\on{u}_{\Bun_G,\on{co}_2})$ belongs to
$$\Shv_{\Nilp}(\Bun_G)\otimes  \Shv(\Bun_G)_{\on{co}}\subset \Shv(\Bun_G\times \Bun_G)_{\on{co}_2}.$$

For that, it suffices to show that for every quasi-compact cotruncative open substack 
$\CU\overset{j}\hookrightarrow \Bun_G$, the object 
$$(\on{Id}_{\Bun_G}\boxtimes j^?)\circ (\sP\boxtimes \on{Id}_{\Bun_G})(\on{u}_{\Bun_G,\on{co}_2})\simeq
(\sP\boxtimes j^?)(\on{u}_{\Bun_G,\on{co}_2})\simeq
(\sP\boxtimes \on{Id}_{\Bun_G}) \circ (\on{Id}_{\Bun_G}\boxtimes j^?)(\on{u}_{\Bun_G,\on{co}_2})$$
belongs to
$$\Shv_{\Nilp}(\Bun_G)\otimes  \Shv(\CU)\subset \Shv(\Bun_G\times \CU),$$
where $j^?$ is the functor defined by a kernel as in \secref{sss:truncative}. 
However, this follows from Theorems \ref{t:projector} and \ref{t:Hecke action Nilp 2}.

\medskip

Now, the fact that  
$$(\sP\boxtimes \sP_{\on{co}})(\on{u}_{\Bun_G,\on{co}_2})
\simeq (\on{Id}_{\Bun_G}\boxtimes \sP_{\on{co}})\circ (\sP\boxtimes \on{Id}_{\Bun_G})(\on{u}_{\Bun_G,\on{co}_2})$$
belongs to
$$\Shv_{\Nilp}(\Bun_G)\otimes  \Shv_\Nilp(\Bun_G)_{\on{co}}\subset \Shv_{\Nilp}(\Bun_G)\otimes  \Shv(\Bun_G)_{\on{co}}$$
follows from \corref{c:projector co}.

\end{proof}

\sssec{}

We now claim: 

\begin{prop} \label{p:unit Verdier BunG}
The object 
$$\on{u}_{\Bun_G,\Nilp,\on{co}_2}\in \Shv_{\Nilp}(\Bun_G)\otimes  \Shv_\Nilp(\Bun_G)_{\on{co}}$$
is the unit of the duality \eqref{e:duality with co BunG Nilp}.
\end{prop}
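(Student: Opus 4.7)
The plan is to invoke a general principle from duality theory: if $\iota \colon \bC' \hookrightarrow \bC$ and $\iota' \colon \bD' \hookrightarrow \bD$ are fully faithful embeddings of dualizable DG categories such that $(\bC, \bD)$ are mutually dual (with unit $u \in \bC \otimes \bD$ and counit $\epsilon$), the duals $\pi := (\iota')^\vee \colon \bC \to \bC'$ and $\pi' := \iota^\vee \colon \bD \to \bD'$ both exist as retractions, and the restricted pairing $\bC' \otimes \bD' \to \Vect$ via $\epsilon \circ (\iota \otimes \iota')$ is itself a perfect duality, then the unit of this restricted duality is $(\pi \otimes \pi')(u)$. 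This follows from the triangle identity for $u$ together with the elementary compatibility $\epsilon(\iota'(\pi'(\CB)), \iota(\CF)) = \epsilon(\CB, \iota(\CF))$, which is a direct consequence of $\pi'$ being dual to $\iota$.

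In our situation, the ambient duality is between $\bC = \Shv(\Bun_G)$ and $\bD = \Shv(\Bun_G)_{\on{co}}$ from Section \ref{sss:Verdier on BunG}, with unit represented by $\on{u}_{\Bun_G,\on{co}_2}$ via the appropriate Künneth embedding of $\Shv(\Bun_G) \otimes \Shv(\Bun_G)_{\on{co}}$ into $\Shv(\Bun_G \times \Bun_G)_{\on{co}_2}$; this reflects the fact from Section \ref{sss:naive BunG} that $\on{u}_{\Bun_G,\on{co}_2}$ is the kernel representing the identity endofunctor of $\Shv(\Bun_G)_{\on{co}}$. The embeddings are $\iota \colon \Shv_\Nilp(\Bun_G) \hookrightarrow \Shv(\Bun_G)$ and $\iota_{\on{co}} \colon \Shv_\Nilp(\Bun_G)_{\on{co}} \hookrightarrow \Shv(\Bun_G)_{\on{co}}$; by Proposition \ref{p:dual of P}, their duals are $\sP_{\on{co}}$ and $\sP$, respectively. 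Perfectness of the restricted pairing is Corollary \ref{c:duality with co BunG Nilp}. Applying the general principle, the restricted unit is $(\sP \otimes \sP_{\on{co}})(\on{u}_{\Bun_G,\on{co}_2})$, which by Proposition \ref{p:projector co} is precisely $\on{u}_{\Bun_G,\Nilp,\on{co}_2}$.

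The main obstacle I anticipate is translating between the abstract tensor product $\bC \otimes \bD$ and the kernel category $\Shv(\Bun_G \times \Bun_G)_{\on{co}_2}$: one must justify that applying $\sP \otimes \sP_{\on{co}}$ at the level of the tensor product corresponds to applying $\sP \boxtimes \sP_{\on{co}}$ at the level of kernels. This is a matter of carefully invoking the formalism of Sections \ref{s:ker} and \ref{s:non qc}, together with the fact (established in the paper) that $\sP$ and $\sP_{\on{co}}$ are defined by kernels. Alternatively, and more in the spirit of the proof of Proposition \ref{p:usual duality U}, one can verify the triangle identity directly: for $\CF \in \Shv_\Nilp(\Bun_G)$, write $\on{u}_{\Bun_G,\Nilp,\on{co}_2}$ as $(\sP \boxtimes \on{Id}_{\Bun_G})(\on{u}_{\Bun_G,\on{co}_2})$ via Proposition \ref{p:projector co}, pull $\sP$ out past the pushforward-tensor operation computing the pairing, use that $\on{u}_{\Bun_G,\on{co}_2}$ represents the identity kernel so the inner expression equals $\CF$, and conclude via Theorem \ref{t:projector} that $\sP(\CF) = \CF$.
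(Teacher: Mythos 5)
Your proposal is correct, and your second ("alternative") variant is essentially the paper's proof: the paper's proof of this proposition consists of the single sentence "Follows formally from \propref{p:projector co} in the same way as in the proof of \propref{p:usual duality U}", and that latter proof is precisely the triangle-identity verification you sketch (interpret $\on{u}_{\Bun_G,\Nilp,\on{co}_2}$ as a projector applied to $\on{u}_{\Bun_G,\on{co}_2}$, pull the projector out past the kernel action, use that the ambient unit kernel acts as identity, and finish with $\sP$ acting as identity on $\Shv_\Nilp$). Your first variant --- the abstract lemma that for a duality with unit $u$ and counit $\epsilon$ restricted along fully faithful $\iota,\iota'$ with perfect restricted pairing, the restricted unit is $(\pi\otimes\pi')(u)$ where $\pi=(\iota')^\vee$ and $\pi'=\iota^\vee$ --- is a sound repackaging of the same computation at the level of dualizable DG categories rather than kernels; it is a cleaner way to see why no new ingredient beyond \propref{p:dual of P} and \propref{p:projector co} is needed. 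You are also right to flag the remaining bookkeeping point: to apply the abstract lemma one must identify $\sP\otimes\sP_{\on{co}}$ (on the Lurie tensor product) with $\sP\boxtimes\sP_{\on{co}}$ (on the kernel category $\Shv(\Bun_G\times\Bun_G)_{\on{co}_2}$), and also identify the ambient unit in $\Shv(\Bun_G)\otimes\Shv(\Bun_G)_{\on{co}}$ with $\on{u}_{\Bun_G,\on{co}_2}$ under the K\"unneth embedding. The second identification is the non-quasi-compact analogue of \secref{sss:u shv}: strictly speaking, the unit of the duality is the image of $\on{u}_{\Bun_G,\on{co}_2}$ under the right adjoint to K\"unneth, which is why the paper sidesteps the issue by working directly with the kernel action throughout (as in your alternative). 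Both routes reach the conclusion, with your first giving a bit more conceptual transparency and your second tracking the paper more closely.
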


\begin{proof}

Follows formally from \propref{p:projector co} in the same way as in the proof of
\propref{p:usual duality U}.

\end{proof}

\ssec{The miraculous functor on $\Bun_G$}

We now input another ingredient into our story, the \emph{miraculous functor},
see Sects. \ref{ss:Mir} and \ref{sss:Mir non qc}. 

\sssec{} \label{sss:ps-u}

Consider the object
$$\on{ps-u}_{\Bun_G}:=(\Delta_{\Bun_G})_!(\ul\sfe_{\Bun_G})\in 
\Shv(\Bun_G\times \Bun_G).$$

Following \secref{sss:Mir non qc}, we denote the resulting functor 
$$\Shv(\Bun_G)_{\on{co}}\to \Shv(\Bun_G)$$
by $\Mir_{\Bun_G}$.

%

\sssec{}

First, we claim:

\begin{lem} \label{l:Mir and Hecke}
For a finite set $I$ and $V\in \Rep(\cG)^{\otimes I}$, the diagram 
\begin{equation} \label{e:Mir and Hecke new}
\CD
\Shv(\Bun_G)_{\on{co}} @>{\sH(V,-)_{\on{co}}}>>  \Shv(\Bun_G\times X^I)_{\on{co}} \\
@V{\Mir_{\Bun_G}}VV @VV{\Mir_{\Bun_G}\boxtimes \on{Id}_{X^I}}V \\
\Shv(\Bun_G) @>{\sH(V,-)}>>  \Shv(\Bun_G\times X^I)
\endCD
\end{equation} 
commutes, as functors defined by kernels. 
\end{lem}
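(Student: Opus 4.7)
The plan is to deduce \lemref{l:Mir and Hecke} by the same diagram chase used in \lemref{l:Id nv and Hecke}, but with the input \propref{p:tau and sigma} replaced by its analog for $\on{ps-u}_{\Bun_G}$ in place of $\on{u}^{\on{naive}}_{\Bun_G}$. Both composites of the square are functors $\Shv(\Bun_G)_{\on{co}} \to \Shv(\Bun_G \times X^I)$ defined by kernels in $\Shv(\Bun_G \times \Bun_G \times X^I)$. The kernel of the down-then-right composite $\sH(V,-) \circ \Mir_{\Bun_G}$ is by construction
$$(\on{Id}_{\Bun_G} \boxtimes \sH(V,-))(\on{ps-u}_{\Bun_G}).$$
Unfolding the composition of the kernel of $\sH(V,-)_{\on{co}}$ (which is $\sigma((\on{Id}_{\Bun_G} \boxtimes \sH(V^\tau,-))(\on{u}_{\Bun_G,\on{co}_1}))$) with the kernel $\on{ps-u}_{\Bun_G}$ of $\Mir_{\Bun_G}$, via the formalism of \secref{ss:functors by ker non qc} (the ``co'' decoration on the second $\Bun_G$ factor contracts against $\Mir_{\Bun_G}$), one obtains the kernel
$$\sigma\bigl((\on{Id}_{\Bun_G} \boxtimes \sH(V^\tau,-))(\on{ps-u}_{\Bun_G})\bigr)$$
for the right-then-down composite.

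The commutativity of the square thus reduces to a canonical isomorphism
$$\sigma\bigl((\on{Id}_{\Bun_G} \boxtimes \sH(V,-))(\on{ps-u}_{\Bun_G})\bigr) \simeq (\on{Id}_{\Bun_G} \boxtimes \sH(V^\tau,-))(\on{ps-u}_{\Bun_G}),$$
which is the verbatim analog of \propref{p:tau and sigma}, with $\on{u}^{\on{naive}}_{\Bun_G}$ replaced by $\on{ps-u}_{\Bun_G}$. To prove it, I would apply the $!$-version of the Hecke formula of \secref{sss:Hecke ULA} (available because $\on{Sat}_I(V)$ is ULA with respect to $\hl \times s$ and $\hr \times s$), together with base change applied to $\on{ps-u}_{\Bun_G} = (\Delta_{\Bun_G})_!(\ul\sfe_{\Bun_G})$ along the Cartesian square presenting $\CH_{G,X^I}$ as the fiber product of $\Bun_G \times \CH_{G,X^I} \to \Bun_G \times \Bun_G$ with $\Delta_{\Bun_G}$. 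This presents both sides as the $!$-pushforward along the proper map $\hl \times \hr \times s: \CH_{G,X^I} \to \Bun_G \times \Bun_G \times X^I$ of a cohomological shift of $\on{Sat}_I(V)$ and of $\on{Sat}_I(V^\tau)$ respectively; the transposition $\sigma$ swaps $\hl$ and $\hr$, and the canonical symmetry $\sigma(\on{Sat}_I(V)) \simeq \on{Sat}_I(V^\tau)$ from \eqref{e:sigma and tau} identifies the two sides.

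The main obstacle I expect is the bookkeeping for the composition of kernels through the co-formalism, where one has to verify that composing a kernel in $\Shv(\Bun_G \times \Bun_G \times X^I)_{\on{co}_2}$ with the kernel $\on{ps-u}_{\Bun_G} \in \Shv(\Bun_G \times \Bun_G)$ of $\Mir_{\Bun_G}$ produces precisely the expected kernel in $\Shv(\Bun_G \times \Bun_G \times X^I)$, and that the $\sigma$ appearing in the definition of the kernel of $\sH(V,-)_{\on{co}}$ survives this composition and acts on the outer $\Bun_G \times \Bun_G$ factors as claimed. Once this is dispatched, the remaining geometric content is the argument of \propref{p:tau and sigma}, transferred from $(\Delta_{\Bun_G})_*(\omega_{\Bun_G})$ to $(\Delta_{\Bun_G})_!(\ul\sfe_{\Bun_G})$.
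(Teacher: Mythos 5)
Your proposal is correct and takes essentially the same route as the paper: after unwinding the kernels of both circuits, everything reduces to the $!$-form of the Hecke functor from Section~\ref{sss:Hecke ULA} (applied to $\on{ps-u}_{\Bun_G}=(\Delta_{\Bun_G})_!(\ul\sfe_{\Bun_G})$) together with the swap-symmetry $\sigma(\on{Sat}_I(V))\simeq\on{Sat}_I(V^\tau)$ of \eqref{e:sigma and tau}, identifying both composites with the kernel $(\hl\times\hr\times s)_!(\on{Sat}_I(V))[-2\dim(\Bun_G)]$. The only cosmetic difference is that you spell out an intermediate $\on{ps-u}$-analog of \propref{p:tau and sigma} before invoking these ingredients, whereas the paper computes both circuits directly.
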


\begin{proof}

Using \secref{sss:Hecke ULA} and \eqref{e:sigma and tau}, we obtain that both circuits of the diagram are functors defined by the kernel 
$$(\hl\times \hr\times s)_!(\on{Sat}_I(V))[-2\dim(\Bun_G)]\in \Shv(\Bun_G\times \Bun_G\times X^I).$$

\end{proof}

\begin{cor} \label{c:Hecke Mir}
The functor $\Mir_{\Bun_G}$ intertwines the actions of $\Rep(\cG)_\Ran$ on 
$\Shv(\Bun_G)_{\on{co}}$ and $\Shv(\Bun_G)$, respectively, as functors defined by kernels. 
\end{cor}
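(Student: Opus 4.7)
The plan is to bootstrap from Lemma \ref{l:Mir and Hecke}, which handles the $I$-legged Hecke functors $\sH(V,-)$ for $V\in \Rep(\cG)^{\otimes I}$, up to the full $\Rep(\cG)_\Ran$-action $\sH_\CV$. Recall from \secref{sss:Ran action} that the monoidal action of $\Rep(\cG)_\Ran$ is constructed from the $I$-legged Hecke functors via the integration procedure of \secref{sss:Hecke codefined}: on generators of $\Rep(\cG)_\Ran$ of the form $\CV \leftrightarrow (V,\CM)$ with $V\in (\Rep(\cG)^{\otimes I})^c$ and $\CM \in \Shv(X^I)^c$, one has
$$\sH_\CV(\CF) \simeq (p_{\Bun_G})_*\bigl(\sH(V,\CF) \sotimes p_{X^I}^!(\CM)\bigr),$$
and an entirely parallel recipe (using $\sH(V,-)_{\on{co}}$ from \secref{sss:Hecke Ran co}) produces $\sH_{\CV,\on{co}}$.

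First I would fix such a generator $\CV$ and apply Lemma \ref{l:Mir and Hecke} to obtain the commutation of $\Mir_{\Bun_G}\boxtimes \on{Id}_{X^I}$ with $\sH(V,-)$ and $\sH(V,-)_{\on{co}}$ as kernels on $\Bun_G\times \Bun_G\times X^I$. Next I would show that the two remaining operations in the integration formula, namely $!$-tensoring with $p_{X^I}^!(\CM)$ and $*$-pushforward along $p_{\Bun_G}:\Bun_G\times X^I\to \Bun_G$, automatically commute with $\Mir_{\Bun_G}$ acting on the $\Bun_G$ factor. Indeed, both operations are performed along the $X^I$ coordinate which is perpendicular to where $\Mir_{\Bun_G}$ acts, so their compatibility with $\Mir_{\Bun_G}$ is formal once one works within the 2-categorical framework of kernels reviewed in \secref{s:ker}: the kernel defining $\Mir_{\Bun_G}$ lives entirely on $\Bun_G\times \Bun_G$, while the remaining operations act trivially on that factor. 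Finally I would pass to colimits over the generators to upgrade the compatibility to the entire $\Rep(\cG)_\Ran$-action; monoidality is automatic because the action itself is built monoidally from these generators.

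The main obstacle is ensuring that the compatibility holds \emph{as functors defined by kernels}, not merely as abstract functors at the level of $\Shv(\Bun_G)_{\on{co}} \to \Shv(\Bun_G)$. Lemma \ref{l:Mir and Hecke} is already formulated at this level, which is essential; the key enabler is \propref{p:Hecke codefined}, which guarantees that the Hecke functors are both defined and codefined by kernels, so that their compositions with other kernel-defined operations (in particular with $\Mir_{\Bun_G}$, with pushforwards along $p_{\Bun_G}$, and with $!$-tensor products involving $p_{X^I}^!(\CM)$) remain well-behaved inside the 2-category of kernels. Once this formalism is in place, the verification reduces to a diagram chase in that 2-category.
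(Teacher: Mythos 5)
Your argument is correct and matches the route the paper intends: the corollary is a formal consequence of Lemma \ref{l:Mir and Hecke} once one notes that, on generators $\CV \leftrightarrow (V,\CM)$ of $\Rep(\cG)_\Ran$, the integral Hecke functor $\sH_\CV$ factors as the $I$-legged functor $\sH(V,-)$ followed by an operation on the $X^I$-factor, and the latter commutes with $\Mir_{\Bun_G}$ by \eqref{e:order does not matter} together with the $2$-categorical kernel formalism of Sects.~\ref{ss:ker} and \ref{sss:kernels non qc}; passing to colimits over generators then yields the full statement. One minor remark: invoking \propref{p:Hecke codefined} is not actually needed here, since properness of $p_{\Bun_G}\colon \Bun_G\times X^I\to\Bun_G$ already guarantees that the integration step is a well-defined operation in the kernel formalism; what that proposition adds (codefinedness) is used elsewhere (e.g., \propref{p:Nilp !-diag}), not for this corollary.
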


In particular, we obtain: 

\begin{cor} \label{c:Mir and P}
The functor $\Mir_{\Bun_G}$ intertwines the action of $\sP_{\on{co}}$ on 
$\Shv(\Bun_G)_{\on{co}}$ and the action of $\sP$ on $\Shv(\Bun_G)$. 
\end{cor}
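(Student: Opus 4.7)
The plan is to observe that Corollary \ref{c:Mir and P} is essentially a special case of Corollary \ref{c:Hecke Mir}, obtained by specializing the object acted on from a general $\CV \in \Rep(\cG)_\Ran$ to the specific object $\sR \in \Rep(\cG)_\Ran$ of \secref{sss:the projector}.

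More precisely, recall from \secref{sss:the projector} that $\sP = \sH_\sR$ as an endofunctor of $\Shv(\Bun_G)$ defined by a kernel, and recall from the definition preceding \propref{p:dual of P} that $\sP_{\on{co}} = \sH_{\sR,\on{co}}$ as an endofunctor of $\Shv(\Bun_G)_{\on{co}}$ defined by a kernel. Thus, applying Corollary \ref{c:Hecke Mir} with $\CV := \sR$ yields a commutative square
$$
\begin{CD}
\Shv(\Bun_G)_{\on{co}} @>{\sP_{\on{co}}}>>  \Shv(\Bun_G)_{\on{co}} \\
@V{\Mir_{\Bun_G}}VV @VV{\Mir_{\Bun_G}}V \\
\Shv(\Bun_G) @>{\sP}>>  \Shv(\Bun_G),
\end{CD}
$$
as functors defined by kernels, which is exactly the assertion.

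There is no real obstacle here, since all the substantive work has already been carried out in the preceding two results: \lemref{l:Mir and Hecke} provides the commutation with individual Hecke functors $\sH(V,-)$ by exhibiting both circuits as given by the common kernel $(\hl\times \hr\times s)_!(\on{Sat}_I(V))[-2\dim(\Bun_G)]$, and Corollary \ref{c:Hecke Mir} upgrades this to a statement about the full monoidal action of $\Rep(\cG)_\Ran$. Thus the only step to perform is to unwind the notation $\sP = \sH_\sR$ and $\sP_{\on{co}} = \sH_{\sR,\on{co}}$, and cite Corollary \ref{c:Hecke Mir}.
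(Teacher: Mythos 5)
Your proposal is correct and matches the paper's approach exactly: the paper introduces Corollary \ref{c:Mir and P} with the phrase ``In particular, we obtain:'' immediately after Corollary \ref{c:Hecke Mir}, indicating precisely the specialization $\CV = \sR$ that you carry out. Your unwinding of the notation $\sP = \sH_\sR$ and $\sP_{\on{co}} = \sH_{\sR,\on{co}}$ is the intended (and entirely formal) step.
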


\sssec{}

We now claim:

\begin{prop} \label{p:Nilp Mir compat}
The functor $\Mir_{\Bun_G}$ sends $\Shv_\Nilp(\Bun_G)_{\on{co}}$ to $\Shv_\Nilp(\Bun_G)$.
\end{prop}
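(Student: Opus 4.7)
The plan is to prove this as a direct corollary of two ingredients already established in the excerpt: the intertwining between $\Mir_{\Bun_G}$ and the Hecke projectors (\corref{c:Mir and P}), and the fact that these projectors cut out the nilpotent singular support subcategories (\thmref{t:projector} and \corref{c:projector co}).

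Given $\CF \in \Shv_\Nilp(\Bun_G)_{\on{co}}$, I would first invoke \corref{c:projector co}, which identifies $\sP_{\on{co}}$ as a projector onto $\Shv_\Nilp(\Bun_G)_{\on{co}}\subset \Shv(\Bun_G)_{\on{co}}$; in particular there is a canonical isomorphism $\sP_{\on{co}}(\CF) \simeq \CF$. Next I would apply the miraculous functor and use \corref{c:Mir and P}, which gives a canonical isomorphism of functors $\Mir_{\Bun_G}\circ \sP_{\on{co}} \simeq \sP\circ \Mir_{\Bun_G}$. Combining these,
\[
\Mir_{\Bun_G}(\CF)\simeq \Mir_{\Bun_G}(\sP_{\on{co}}(\CF))\simeq \sP(\Mir_{\Bun_G}(\CF)).
\]
Finally, by \thmref{t:projector}, the essential image of $\sP$ (viewed as an endofunctor of $\Shv(\Bun_G)$) is precisely $\Shv_\Nilp(\Bun_G)$, so the right-hand side lies in $\Shv_\Nilp(\Bun_G)$, as desired.

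Since every step is an appeal to an already-proven statement, there is essentially no obstacle internal to this proposition; the real content has been absorbed into the preceding work. In particular, the intertwining $\Mir_{\Bun_G}\circ \sP_{\on{co}} \simeq \sP\circ \Mir_{\Bun_G}$ (itself reduced in \lemref{l:Mir and Hecke} to the ULA/properness property of $\on{Sat}_I(V)$ together with the Chevalley symmetry $\sigma(\on{Sat}_I(V))\simeq \on{Sat}_I(V^\tau)$ and the identification $\sR^\tau\simeq \sR$) and the identification of $\Shv_\Nilp(\Bun_G)_{\on{co}}$ and $\Shv_\Nilp(\Bun_G)$ as the essential images of $\sP_{\on{co}}$ and $\sP$ respectively carry all the weight. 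If any subtlety arises, it is purely bookkeeping: one should verify that the isomorphism in \corref{c:Mir and P} is an equivalence of \emph{functors defined by kernels}, not merely pointwise, so that its evaluation on the object $\CF$ is canonical and functorial—but this is exactly the content of \corref{c:Hecke Mir}, which upgrades the commutativity of \eqref{e:Mir and Hecke new} to a statement about kernels and hence to an intertwining for the entire $\Rep(\cG)_\Ran$-action, including $\sR$.
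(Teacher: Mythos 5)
Your proposal is correct and follows exactly the same route as the paper: the paper's proof reads ``Follows from \corref{c:Mir and P}, combined with \corref{c:projector co} and \thmref{t:projector} (for $\CZ=\on{pt}$),'' which is precisely the chain you spell out. Your additional remark about the kernel-level upgrade via \corref{c:Hecke Mir} is a sound observation but is not strictly needed here, since one only evaluates the intertwining on a single object.
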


\begin{proof}

Follows from \corref{c:Mir and P}, combined with \corref{c:projector co} and 
\thmref{t:projector} (for $\CZ=\on{pt}$).

\end{proof}

\sssec{}

Let $\CU\overset{j}\hookrightarrow \Bun_G$ be a quasi-compact open substack. Consider the
corresponding endofunctor 
$$\Mir_\CU:\Shv(\CU)\to \Shv(\CU),$$
see \secref{sss:Mir qc}.

\medskip

Assume that $\CU$ is cotruncative. Then, according to \secref{sss:Mir and j}, we have
\begin{equation} \label{e:Mir and U}
\Mir_{\Bun_G}\circ j_{*,\on{co}}\simeq j_!\circ \Mir_\CU.
\end{equation}

\sssec{}

We claim:

\begin{prop}  \label{p:Mir Nilp U}
Assume that $\CU$ is $\Nilp$-cotruncative. Then the functor $\Mir_\CU$ sends 
$\Shv_\Nilp(\CU)\to \Shv_\Nilp(\CU)$. 
\end{prop}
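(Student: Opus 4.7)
The plan is to reduce the assertion to \propref{p:Nilp Mir compat} via the identity \eqref{e:Mir and U}, namely $j_!\circ \Mir_\CU\simeq \Mir_{\Bun_G}\circ j_{*,\on{co}}$. Given $\CF\in \Shv_\Nilp(\CU)$, the sequence of steps is: (a) verify $j_{*,\on{co}}(\CF)\in \Shv_\Nilp(\Bun_G)_{\on{co}}$; (b) apply \propref{p:Nilp Mir compat} to conclude that $\Mir_{\Bun_G}(j_{*,\on{co}}(\CF))\in \Shv_\Nilp(\Bun_G)$; and (c) apply $j^*$ to the resulting equality $j_!(\Mir_\CU(\CF))\simeq \Mir_{\Bun_G}(j_{*,\on{co}}(\CF))$ to recover $\Mir_\CU(\CF)$ and see that it lies in $\Shv_\Nilp(\CU)$.

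Step (c) is formal: $j$ being an open embedding, $j^*\circ j_!\simeq \on{Id}_{\Shv(\CU)}$, and $j^*$ preserves the nilpotent singular support condition since singular support behaves locally under open pullback. Step (b) is immediate from the cited proposition.

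The real content lies in step (a), and it is the only expected obstacle. The issue is that $\Shv_\Nilp(\Bun_G)_{\on{co}}$ is defined as a $-_*$-colimit over \emph{universally} $\Nilp$-cotruncative opens, whereas $\CU$ is assumed only $\Nilp$-cotruncative. To bridge this, I invoke \thmref{t:trunc} to choose a universally $\Nilp$-cotruncative quasi-compact open $\CU^{++}\subset \Bun_G$ with $\CU\subset \CU^{++}$, and factor $j$ as $\CU\overset{j'}\hookrightarrow \CU^{++}\overset{j''}\hookrightarrow \Bun_G$. The $\Nilp$-cotruncativeness of $\CU$ in $\Bun_G$ gives $j_*(\CF)\in \Shv_\Nilp(\Bun_G)$; then $j''^*\circ j_*\simeq j''^*\circ j''_*\circ j'_*\simeq j'_*$ by full faithfulness of $j''_*$, and open pullback along $j''$ preserves $\Shv_\Nilp$, so $j'_*(\CF)\in \Shv_\Nilp(\CU^{++})$. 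By construction of $j_{*,\on{co}}$ as an insertion into a filtered colimit, $j_{*,\on{co}}(\CF)$ coincides with the image of $j'_*(\CF)$ under the structure map from $\Shv_\Nilp(\CU^{++})$ into $\Shv_\Nilp(\Bun_G)_{\on{co}}$, completing step (a) and hence the proof.
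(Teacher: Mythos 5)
Your proof is correct and follows the same route as the paper: both reduce, via the identity $j_!\circ \Mir_\CU\simeq \Mir_{\Bun_G}\circ j_{*,\on{co}}$ of \eqref{e:Mir and U}, to \propref{p:Nilp Mir compat}. The paper states your steps (a) and (c) as unelaborated equivalences ($\CF\in \Shv_\Nilp(\CU)\Leftrightarrow j_{*,\on{co}}(\CF)\in \Shv_\Nilp(\Bun_G)_{\on{co}}$ and its counterpart for $j_!$), and your cofinality argument in (a) is precisely the justification for the implicit one.
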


\begin{proof}

We have
$$\CF\in \Shv_\Nilp(\CU)\, \Leftrightarrow\, j_{*,\on{co}}(\CF)\in \Shv_\Nilp(\Bun_G)_{\on{co}}$$
and
$$\Mir_\CU(\CF)\in \Shv_\Nilp(\CU) \, \Leftrightarrow\, j_! \circ \Mir_\CU(\CF)\in \Shv_\Nilp(\Bun_G).$$

Using \eqref{e:Mir and U}, the assertion follows from \propref{p:Nilp Mir compat}.

\end{proof}

\ssec{The miraculous property of $\Bun_G$}  \label{ss:BunG Mir}

\sssec{} \label{sss:BunG Mir}

We now recall that in the paper \cite{Ga1} it was shown that the functor
$$\Mir_{\Bun_G}:\Dmod(\Bun_G)_{\on{co}}\to \Dmod(\Bun_G)$$
is an equivalence.

\medskip

However, the proof in {\it loc. cit.} applies in any sheaf-theoretic context.
Furthermore, the same proof shows that
$$\on{Id}_\CZ\boxtimes \Mir_{\Bun_G}:\Shv(\CZ\times \Bun_G)_{\on{co}_2}\to \Shv(\CZ\times \Bun_G)$$
is an equivalence for any algebraic stack $\CZ$. 

\medskip

This implies that $\Mir_{\Bun_G}$ admits an inverse \emph{as a functor defined by a kernel}.
Hence, it makes sense to consider
$$(\on{Id}_\CZ\boxtimes \Mir_{\Bun_G}^{-1}):\Shv(\CZ\times \Bun_G)\to \Shv(\CZ\times \Bun_G)_{\on{co}_2},$$
for any algebraic stack $\CZ$.  In other words, $\Bun_G$ is \emph{miraculous} in
the terminology of \secref{sss:Mir stacks non qc}. 

\sssec{} \label{sss:D Mir BunG}

Following \secref{sss:Mir stacks non qc bis}, let us denote by $\on{ev}^{\on{Mir}}_{\Bun_G}$
the composite functor
\begin{equation} \label{e:Mir BunG}
\Shv(\Bun_G)\otimes \Shv(\Bun_G) \overset{\on{Id}\otimes \Mir_{\Bun_G}^{-1}}\longrightarrow 
\Shv(\Bun_G)\otimes \Shv(\Bun_G)_{\on{co}}\overset{\on{ev} _{\Bun_G}}\longrightarrow \Vect,
\end{equation} 

Combining Sects. \ref{sss:BunG Mir} and \ref{sss:Verdier on BunG}, we obtain that the functor
\eqref{e:Mir BunG} is the counit of a self-duality on $\Shv(\Bun_G)$.

\medskip

We refer to it as the \emph{miraculous self-duality} of $\Shv(\Bun_G)$.

\sssec{} \label{sss:Mir Nilp}

We claim: 

\begin{prop} \label{p:Mir inv Nilp}
The functors $\Mir_{\Bun_G}$ and $\Mir_{\Bun_G}^{-1}$ send the subcategories 
$$\Shv_\Nilp(\Bun_G)_{\on{co}} \subset \Shv(\Bun_G)_{\on{co}}  \text{ and } \Shv_\Nilp(\Bun_G) \subset \Shv(\Bun_G)$$
to one another.
\end{prop}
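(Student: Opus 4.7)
The plan is to combine the compatibility of $\Mir_{\Bun_G}$ with the Hecke action (specifically with the projectors $\sP$ and $\sP_{\on{co}}$) with the fact that $\Mir_{\Bun_G}$ is invertible \emph{as a functor defined by a kernel}. One direction, namely that $\Mir_{\Bun_G}$ maps $\Shv_\Nilp(\Bun_G)_{\on{co}}$ to $\Shv_\Nilp(\Bun_G)$, is already recorded as \propref{p:Nilp Mir compat}; it suffices to establish the reverse direction, that $\Mir_{\Bun_G}^{-1}$ maps $\Shv_\Nilp(\Bun_G)$ into $\Shv_\Nilp(\Bun_G)_{\on{co}}$.

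The first step is to upgrade \corref{c:Mir and P} to an intertwining statement for $\Mir_{\Bun_G}^{-1}$. By \secref{sss:BunG Mir} the functor $\Mir_{\Bun_G}$ is an equivalence as a functor defined by a kernel, so $\Mir_{\Bun_G}^{-1}$ is also defined by a kernel; taking the inverse of the kernel-level isomorphism in \corref{c:Mir and P}, we obtain a canonical identification
$$\Mir_{\Bun_G}^{-1} \circ \sP \simeq \sP_{\on{co}} \circ \Mir_{\Bun_G}^{-1}$$
of functors $\Shv(\Bun_G) \to \Shv(\Bun_G)_{\on{co}}$.

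The second step is to apply this intertwining to an object $\CF \in \Shv_\Nilp(\Bun_G)$. By \thmref{t:projector} (applied with $\CZ = \on{pt}$), we have $\sP(\CF) \simeq \CF$, and hence
$$\Mir_{\Bun_G}^{-1}(\CF) \simeq \Mir_{\Bun_G}^{-1}(\sP(\CF)) \simeq \sP_{\on{co}}(\Mir_{\Bun_G}^{-1}(\CF)).$$
By \corref{c:projector co}, the functor $\sP_{\on{co}}$ takes values in the subcategory $\Shv_\Nilp(\Bun_G)_{\on{co}}$, so the right-hand side lies in $\Shv_\Nilp(\Bun_G)_{\on{co}}$, as desired.

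There is no serious obstacle here, since all the ingredients are in place: the miraculousness of $\Bun_G$ at the level of kernels (\secref{sss:BunG Mir}), the compatibility of $\Mir_{\Bun_G}$ with the Hecke action (\corref{c:Hecke Mir}), and the identification of $\Shv_\Nilp(\Bun_G)$ and $\Shv_\Nilp(\Bun_G)_{\on{co}}$ as the essential images of the projectors $\sP$ and $\sP_{\on{co}}$, respectively (\thmref{t:projector} and \corref{c:projector co}). The only point requiring a small amount of care is that the commutativity of the diagram in \lemref{l:Mir and Hecke} is asserted \emph{as functors defined by kernels}, which is precisely what allows us to invert it.
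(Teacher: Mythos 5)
Your argument is correct and follows essentially the same route as the paper's own (terse) proof, which cites \corref{c:Mir and P}, \thmref{t:projector}, and \corref{c:projector co}; you have simply spelled out how these combine. One minor remark: the inversion of the intertwining isomorphism $\Mir_{\Bun_G}\circ \sP_{\on{co}}\simeq \sP\circ \Mir_{\Bun_G}$ needed here requires only that $\Mir_{\Bun_G}:\Shv(\Bun_G)_{\on{co}}\to \Shv(\Bun_G)$ be an equivalence as a plain functor (which is the content of \cite{Ga1}), so invoking invertibility at the level of kernels, while available, is more than the argument actually uses.
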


\begin{proof}

Follows from the fact that the functor $\Mir_{\Bun_G}$ intertwines the functors $\sP$ and
$\sP_{\on{co}}$ (see \corref{c:Mir and P}), combined with \thmref{t:projector} (for $\CZ=\on{pt}$)
and \corref{c:projector co}.

\end{proof} 

\begin{cor} \label{c:Mir Nilp a priori}
The functor $\Mir_{\Bun_G}$ induces an equivalence
$$\Shv_\Nilp(\Bun_G)_{\on{co}}\to \Shv_\Nilp(\Bun_G).$$
\end{cor}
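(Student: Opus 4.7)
The statement is nearly a formal consequence of results already established, so the plan is short. By \secref{sss:BunG Mir}, the functor $\Mir_{\Bun_G}\colon\Shv(\Bun_G)_{\on{co}}\to\Shv(\Bun_G)$ is an equivalence, with inverse $\Mir_{\Bun_G}^{-1}$ obtained from the kernel-level invertibility. By \propref{p:Mir inv Nilp}, both $\Mir_{\Bun_G}$ and $\Mir_{\Bun_G}^{-1}$ send the full subcategories
\[
\Shv_\Nilp(\Bun_G)_{\on{co}}\subset\Shv(\Bun_G)_{\on{co}}\quad\text{and}\quad\Shv_\Nilp(\Bun_G)\subset\Shv(\Bun_G)
\]
to one another. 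Hence the restrictions
\[
\Mir_{\Bun_G}\big|_{\Shv_\Nilp(\Bun_G)_{\on{co}}}\colon \Shv_\Nilp(\Bun_G)_{\on{co}}\to\Shv_\Nilp(\Bun_G)
\]
and
\[
\Mir_{\Bun_G}^{-1}\big|_{\Shv_\Nilp(\Bun_G)}\colon \Shv_\Nilp(\Bun_G)\to\Shv_\Nilp(\Bun_G)_{\on{co}}
\]
are well-defined functors.

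To conclude that these restrictions are mutually inverse equivalences, I would simply observe that the unit and counit natural isomorphisms witnessing $\Mir_{\Bun_G}^{-1}\circ\Mir_{\Bun_G}\simeq\on{Id}$ and $\Mir_{\Bun_G}\circ\Mir_{\Bun_G}^{-1}\simeq\on{Id}$ on the ambient categories automatically restrict to the respective full subcategories, since fully faithful embeddings reflect isomorphisms. This gives the desired equivalence $\Shv_\Nilp(\Bun_G)_{\on{co}}\simeq\Shv_\Nilp(\Bun_G)$.

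There is no real obstacle here: the entire content has already been packaged into \secref{sss:BunG Mir} (invertibility of $\Mir_{\Bun_G}$) and \propref{p:Mir inv Nilp} (preservation of the nilpotent singular support conditions in both directions). The corollary is just the observation that an equivalence between two ambient categories which preserves a pair of full subcategories in both directions restricts to an equivalence between those subcategories.
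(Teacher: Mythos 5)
Your proof is correct and matches the paper's (implicit) argument exactly: the corollary is stated immediately after \propref{p:Mir inv Nilp} with no separate proof, precisely because the content is the formal observation you make — an equivalence of ambient categories that, together with its inverse, preserves a pair of full subcategories restricts to an equivalence between those subcategories.
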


\sssec{} \label{sss:Mir duality Nilp}

Combining Corollaries \ref{c:Mir Nilp a priori} with \ref{c:duality with co BunG Nilp}, we obtain:

\begin{cor} \label{c:Mir duality Nilp}
The composition
$$\Shv_\Nilp(\Bun_G)\otimes \Shv_\Nilp(\Bun_G) 
\hookrightarrow \Shv(\Bun_G)\otimes \Shv(\Bun_G)\overset{\on{ev}^{\on{Mir}}_{\Bun_G}}\longrightarrow \Vect$$
is the counit of a self-duality on $\Shv_\Nilp(\Bun_G)$. 
\end{cor}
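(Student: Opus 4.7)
The plan is to trace through the two inputs: Corollary \ref{c:duality with co BunG Nilp} supplies a duality between $\Shv_\Nilp(\Bun_G)$ and $\Shv_\Nilp(\Bun_G)_{\on{co}}$ whose counit is the restriction of $\on{ev}_{\Bun_G}$, and Corollary \ref{c:Mir Nilp a priori} supplies an equivalence $\Mir_{\Bun_G}\colon \Shv_\Nilp(\Bun_G)_{\on{co}} \iso \Shv_\Nilp(\Bun_G)$. Combining the two should produce a duality of $\Shv_\Nilp(\Bun_G)$ with itself whose counit agrees with the advertised restriction of $\on{ev}^{\on{Mir}}_{\Bun_G}$.

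More concretely, I would proceed as follows. First, by \propref{p:Mir inv Nilp} the equivalence $\Mir_{\Bun_G}^{-1}\colon \Shv(\Bun_G)\to \Shv(\Bun_G)_{\on{co}}$ restricts to an equivalence
$$\Mir_{\Bun_G}^{-1}|_{\Shv_\Nilp(\Bun_G)}\colon \Shv_\Nilp(\Bun_G) \iso \Shv_\Nilp(\Bun_G)_{\on{co}}.$$
Applying $\on{Id}\otimes \Mir_{\Bun_G}^{-1}$ to the second factor of $\Shv_\Nilp(\Bun_G)\otimes \Shv_\Nilp(\Bun_G)$ and then the restricted pairing from \corref{c:duality with co BunG Nilp}, we obtain a composition
$$\Shv_\Nilp(\Bun_G)\otimes \Shv_\Nilp(\Bun_G) \overset{\on{Id}\otimes \Mir_{\Bun_G}^{-1}}\longrightarrow \Shv_\Nilp(\Bun_G)\otimes \Shv_\Nilp(\Bun_G)_{\on{co}} \overset{\on{ev}_{\Bun_G}}\longrightarrow \Vect$$
which is the counit of a self-duality on $\Shv_\Nilp(\Bun_G)$, since it is obtained from a genuine duality by composing the second factor with an equivalence.

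It remains to identify this composition with the restriction of $\on{ev}^{\on{Mir}}_{\Bun_G}$. But by definition (see \secref{sss:D Mir BunG}), $\on{ev}^{\on{Mir}}_{\Bun_G}$ is the composition $\on{ev}_{\Bun_G}\circ (\on{Id}\otimes \Mir_{\Bun_G}^{-1})$ on all of $\Shv(\Bun_G)\otimes \Shv(\Bun_G)$, and by the compatibility just noted this restricts on the subcategory $\Shv_\Nilp(\Bun_G)\otimes \Shv_\Nilp(\Bun_G)$ to exactly the displayed composition, giving the claim.

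There is no real obstacle here, since the substantive work has been carried out in establishing \corref{c:duality with co BunG Nilp} and \propref{p:Mir inv Nilp}; the only point worth checking carefully is the preservation of $\Shv_\Nilp$ under $\Mir_{\Bun_G}^{-1}$, which has already been handled.
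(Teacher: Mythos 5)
Your proof is correct and takes essentially the same approach as the paper: the paper's proof of this corollary consists of a single line, ``Combining Corollaries \ref{c:Mir Nilp a priori} with \ref{c:duality with co BunG Nilp}, we obtain:'' and you have simply spelled out what that combination amounts to, namely post-composing one factor of a known duality with the equivalence $\Mir_{\Bun_G}^{-1}$ and then observing that the result agrees with the restriction of $\on{ev}^{\on{Mir}}_{\Bun_G}$ by definition.
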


We refer to the self-duality of $\Shv_\Nilp(\Bun_G)$ established in \corref{c:Mir duality Nilp} as
the \emph{miraculous self-duality} of $\Shv_\Nilp(\Bun_G)$. 

\sssec{} \label{sss:U Mir}

The fact that $\Bun_G$ is miraculous formally implies that any cotruncative 
quasi-compact open substack $\CU\subset \Bun_G$ is also miraculous, see 
\secref{sss:Mir inherited}. 

\medskip

In particular, the functor 
$$\Mir_\CU:\Shv(\CU)\to \Shv(\CU)$$
is invertible also as a functor defined by a kernel, and so we can consider the functor 
$$(\on{Id}_\CZ\boxtimes \Mir_\CU^{-1}):\Shv(\CZ\times \CU)\to \Shv(\CZ\times \CU),$$
for any $\CZ$. 

\medskip

Similarly to \secref{sss:D Mir BunG}, we obtain that the functor, denoted $\on{ev}^{\on{Mir}}_\CU$,
$$\Shv(\CU)\otimes \Shv(\CU) \overset{\on{Id}\otimes \Mir_{\CU}^{-1}}\longrightarrow 
\Shv(\CU)\otimes \Shv(\CU)\overset{\on{ev} _\CU}\longrightarrow \Vect$$
is the counit of a self-duality on $\Shv(\CU)$.

\sssec{}

We claim:

\begin{prop} \label{p:Mir inv Nilp U}
For a quasi-compact Nilp-cotruncative $\CU\overset{j}\hookrightarrow \Bun_G$, the functor
 $\Mir_\CU^{-1}$ sends $\Shv_\Nilp(\CU)$ to $\Shv_\Nilp(\CU)$.
\end{prop}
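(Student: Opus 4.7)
The plan is to deduce this from \propref{p:Mir Nilp U} together with the invertibility of $\Mir_{\Bun_G}$ by bootstrapping through $\Bun_G$. Just as in the proof of \propref{p:Mir Nilp U}, since $\CU$ is $\Nilp$-cotruncative, the characterizations
$$\CG\in\Shv_\Nilp(\CU)\,\Leftrightarrow\, j_{*,\on{co}}(\CG)\in\Shv_\Nilp(\Bun_G)_{\on{co}}$$
and
$$\CF\in\Shv_\Nilp(\CU)\,\Leftrightarrow\, j_!(\CF)\in\Shv_\Nilp(\Bun_G)$$
are both at my disposal.

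The key observation is that the intertwining relation \eqref{e:Mir and U}, combined with the miraculous property of $\Bun_G$ (\secref{sss:BunG Mir}) that makes $\Mir_{\Bun_G}$ invertible as a functor defined by a kernel, can be rewritten as
$$j_{*,\on{co}}\simeq \Mir_{\Bun_G}^{-1}\circ j_!\circ \Mir_\CU.$$
Since $\CU$ is miraculous (\secref{sss:U Mir}), $\Mir_\CU$ is invertible as well, so this further gives
$$j_{*,\on{co}}\circ \Mir_\CU^{-1}\simeq \Mir_{\Bun_G}^{-1}\circ j_!.$$

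Now take $\CF\in\Shv_\Nilp(\CU)$ and set $\CG:=\Mir_\CU^{-1}(\CF)$. By the $\Nilp$-cotruncativeness of $\CU$, we have $j_!(\CF)\in\Shv_\Nilp(\Bun_G)$. Applying \corref{c:Mir Nilp a priori} (equivalently \propref{p:Mir inv Nilp}), which states that $\Mir_{\Bun_G}^{-1}$ sends $\Shv_\Nilp(\Bun_G)$ into $\Shv_\Nilp(\Bun_G)_{\on{co}}$, we conclude
$$j_{*,\on{co}}(\CG)\simeq \Mir_{\Bun_G}^{-1}(j_!(\CF))\in \Shv_\Nilp(\Bun_G)_{\on{co}},$$
and hence $\CG\in\Shv_\Nilp(\CU)$ by the characterization above.

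The argument is essentially a diagram chase and there is no serious obstacle; the only point that requires care is that we genuinely need the invertibility of $\Mir_{\Bun_G}$ (not merely of $\Mir_\CU$) in order to transport the $\Nilp$-preservation from the global level back down to $\CU$, and this is why the proof mirrors that of \propref{p:Mir Nilp U} rather than being a direct symmetry argument with $\Mir_\CU$.
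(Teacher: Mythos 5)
Your proof is correct and is exactly the argument the paper intends: rewrite \eqref{e:Mir and U} as $j_{*,\on{co}}\circ \Mir_\CU^{-1}\simeq \Mir_{\Bun_G}^{-1}\circ j_!$, apply $\Nilp$-cotruncativeness to get $j_!(\CF)\in\Shv_\Nilp(\Bun_G)$, and invoke \propref{p:Mir inv Nilp} to land in $\Shv_\Nilp(\Bun_G)_{\on{co}}$. The paper merely says ``Follows from \propref{p:Mir inv Nilp} in the same way as \propref{p:Mir Nilp U} follows from \propref{p:Nilp Mir compat}'', and you have correctly unpacked what that means.
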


\begin{proof}

Follows from \propref{p:Mir inv Nilp} in the same way as \propref{p:Mir Nilp U}
follows from \propref{p:Nilp Mir compat}.

\end{proof}

Finally, similarly to \corref{c:Mir duality Nilp}, we obtain:
\begin{cor} \label{c:Mir duality Nilp U}
The composition
$$\Shv_\Nilp(\CU)\otimes \Shv_\Nilp(\CU) \hookrightarrow 
\Shv(\CU)\otimes \Shv(\CU)\overset{\on{ev}^{\on{Mir}}_{\CU}}\longrightarrow \Vect$$
is the counit of a self-duality on $\Shv_\Nilp(\CU)$. 
\end{cor}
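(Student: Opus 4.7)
The plan is to mimic the proof of \corref{c:Mir duality Nilp}, transposing the argument to the quasi-compact setting where the role of the ``co''-category is played simply by $\Shv(\CU)$ itself, since $\CU$ is quasi-compact and the Verdier self-duality on $\Shv(\CU)$ already has counit $\on{ev}_\CU$.

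First, I would observe that by the combination of \propref{p:Mir Nilp U} and \propref{p:Mir inv Nilp U}, both $\Mir_\CU$ and $\Mir_\CU^{-1}$ preserve the full subcategory $\Shv_\Nilp(\CU) \subset \Shv(\CU)$. Since these functors are mutually inverse auto-equivalences of $\Shv(\CU)$ (see \secref{sss:U Mir}), their restrictions are mutually inverse auto-equivalences of $\Shv_\Nilp(\CU)$. This is the quasi-compact analogue of \corref{c:Mir Nilp a priori}, except that here both source and target are the same category $\Shv_\Nilp(\CU)$ rather than a pair of different categories.

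Next, I would invoke \propref{p:usual duality U}, which asserts that the restriction of $\on{ev}_\CU$ to $\Shv_\Nilp(\CU)\otimes \Shv_\Nilp(\CU)$ is the counit of a self-duality, with unit given by $\on{u}_{\CU,\Nilp}$. This is the quasi-compact analogue of \corref{c:duality with co BunG Nilp}.

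Finally, I would combine these two inputs. Since $\Mir_\CU^{-1}$ preserves $\Shv_\Nilp(\CU)$, the composition
\[
\Shv_\Nilp(\CU)\otimes \Shv_\Nilp(\CU) \hookrightarrow \Shv(\CU)\otimes \Shv(\CU) \overset{\on{Id}\otimes \Mir_\CU^{-1}}\longrightarrow \Shv(\CU)\otimes \Shv(\CU) \overset{\on{ev}_\CU}\longrightarrow \Vect
\]
coincides with the Verdier pairing on $\Shv_\Nilp(\CU)\otimes \Shv_\Nilp(\CU)$ precomposed with the auto-equivalence $\on{Id}\otimes \Mir_\CU^{-1}$ of $\Shv_\Nilp(\CU)\otimes \Shv_\Nilp(\CU)$. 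Since precomposing a perfect pairing with an auto-equivalence on one tensor factor still yields a perfect pairing (the corresponding unit is obtained by applying $\on{Id}\otimes \Mir_\CU$ to $\on{u}_{\CU,\Nilp}$), the conclusion follows. There is no real obstacle: all the nontrivial content has been absorbed into the two propositions being cited, and the argument is purely formal manipulation of dualities.
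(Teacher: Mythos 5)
Your proof is correct and matches the paper's intended argument: the paper derives \corref{c:Mir duality Nilp U} by the phrase ``similarly to \corref{c:Mir duality Nilp}'', i.e.\ by combining the fact that $\Mir_\CU$ restricts to a self-equivalence of $\Shv_\Nilp(\CU)$ (from \propref{p:Mir Nilp U} and \propref{p:Mir inv Nilp U}) with the Verdier self-duality of \propref{p:usual duality U}, which is exactly the decomposition you give. You have correctly identified that $\Shv(\CU)$ itself plays the role that $\Shv(\Bun_G)_{\on{co}}$ played in the non-quasi-compact argument, and the final formal step (precomposing a perfect pairing with an auto-equivalence of one tensor factor yields a perfect pairing, with unit obtained by applying the inverse equivalence to the old unit) is the same one the paper uses implicitly.
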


We refer to the self-duality of $\Shv_\Nilp(\CU)$ established in \corref{c:Mir duality Nilp} as
the \emph{miraculous self-duality} of $\Shv_\Nilp(\CU)$.

\section{The non-standard duality for $\Shv_\Nilp(\Bun_G)$} \label{s:pairing}

In this section we will prove what can be considered as the main result of this paper, \thmref{t:non-standard duality}.
It states that the category $\Shv_\Nilp(\Bun_G)$ is self-dual via a certain non-standard procedure. 

\medskip

The proof of this theorem will be rather straightforward, modulo the work done in \cite{AGKRRV}. 

\medskip

We will then connect this non-standard duality with the miraculous self-duality of \secref{sss:Mir duality Nilp}. 

\ssec{The projection of the pseudo-diagonal object}

In this subsection we study the object obtained by applying the projector $\sP=\sP_{\Bun_G,\Nilp}$ to the
\emph{pseudo-diagonal} object on $\Bun_G$. 

\sssec{}

Let $\sP_{\Bun_G,\Nilp}$ is as in \secref{sss:the projector}. I.e., this is the idempotent on $\Shv(\Bun_G)$ that corresponds to the
precomposition of the fully faithful embedding
$$\Shv_\Nilp(\Bun_G)\hookrightarrow \Shv(\Bun_G)$$
with its left inverse. 

\medskip

Recall that $\sP_{\Bun_G,\Nilp}$ is a functor defined by a kernel. In particular, it makes sense to consider the
endofunctors of $\Shv(\Bun\times\Bun_G)$ given by
$$\sP_{\Bun_G,\Nilp}\boxtimes \on{Id}_{\Bun_G},\,\, \sP_{\Bun_G,\Nilp}\boxtimes \sP_{\Bun_G,\Nilp} \text{ and } 
\on{Id}_{\Bun_G}\boxtimes \sP_{\Bun_G,\Nilp}.$$

\sssec{} \label{sss:ps u Bun}

Let
$$\on{ps-u}_{\Bun_G,\Nilp}\in \Shv(\Bun_G\times \Bun_G)$$
denote the object 
$$(\sP_{\Bun_G,\Nilp}\boxtimes \sP_{\Bun_G,\Nilp})(\on{ps-u}_{\Bun_G}),$$
where 
$$\on{ps-u}_{\Bun_G}:=(\Delta_{\Bun_G})_!(\ul\sfe_{\Bun_G}),$$
see \secref{sss:ps-u}. 

\medskip

Note that by \corref{c:double projector}, the functor $\sP_{\Bun_G,\Nilp}\boxtimes \sP_{\Bun_G,\Nilp}$ is the idempotent corresponding to the
embedding
$$\Shv_\Nilp(\Bun_G) \otimes \Shv_\Nilp(\Bun_G)\to \Shv(\Bun_G\times \Bun_G).$$

\medskip

Hence, we can think of $\on{ps-u}_{\Bun_G,\Nilp}$ as an object of
$$\Shv_\Nilp(\Bun_G) \otimes \Shv_\Nilp(\Bun_G).$$

\sssec{}

We claim:

\begin{prop} \label{p:Nilp !-diag}
We have canonical isomorphisms 
\begin{multline*} 
(\sP_{\Bun_G,\Nilp}\otimes \on{Id}_{\Bun_G})(\on{ps-u}_{\Bun_G})\simeq 
(\sP_{\Bun_G,\Nilp}\otimes \sP_{\Bun_G,\Nilp})(\on{ps-u}_{\Bun_G})\simeq \\
\simeq (\on{Id}_{\Bun_G}\otimes \sP_{\Bun_G,\Nilp})(\on{ps-u}_{\Bun_G}).
\end{multline*} 
\end{prop}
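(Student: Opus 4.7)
The plan is to follow, essentially verbatim, the argument used to prove \propref{p:proj identity}, replacing the object $\on{u}^{\on{naive}}_{\Bun_G}=(\Delta_{\Bun_G})_*(\omega_{\Bun_G})$ by its $!$-counterpart $\on{ps-u}_{\Bun_G}=(\Delta_{\Bun_G})_!(\ul\sfe_{\Bun_G})$. Inspecting that argument, one sees it uses only two substantive inputs: the $\sigma$--$\tau$ intertwining of \propref{p:tau and sigma}, and the self-duality $\sR^\tau\simeq \sR$ (used together with the projector property from \thmref{t:projector}). The second input is formal in $\CV$ and carries over unchanged, so the task reduces to proving an analog of \propref{p:tau and sigma} for $\on{ps-u}_{\Bun_G}$: a canonical isomorphism
\[
\sigma\bigl((\on{Id}_{\Bun_G}\boxtimes \sH_\CV)(\on{ps-u}_{\Bun_G})\bigr)\simeq (\on{Id}_{\Bun_G}\boxtimes \sH_{\CV^\tau})(\on{ps-u}_{\Bun_G}).
\]

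To establish this, I would reduce (as in \propref{p:tau and sigma}) to $\CV=V\in\Rep(\cG)^{\otimes I}$ and produce an explicit formula for $(\on{Id}_{\Bun_G}\boxtimes \sH(V,-))(\on{ps-u}_{\Bun_G})$ analogous to the $*$-pushforward expression $(\hl\times\hr\times s)_*(\on{Sat}_I(V))$ that appeared implicitly there. The key point is that by \secref{sss:Hecke ULA}, $\sH(V,-)$ is codefined (not only defined) by a kernel, so it admits a $!$-pushforward/$*$-pullback presentation up to the shift $[-2\dim(\Bun_G)]$. Combining this with proper base change along the Cartesian square
\[
\begin{CD}
\CH_{G,X^I} @>{(\hl,\,\on{id})}>> \Bun_G\times\CH_{G,X^I}\\
@V{\hl}VV @VV{\on{id}_{\Bun_G}\times \hl}V\\
\Bun_G @>{\Delta_{\Bun_G}}>> \Bun_G\times\Bun_G
\end{CD}
\]
and the projection formula for the $*$-tensor with $\ul\sfe_{\Bun_G}\boxtimes \on{Sat}_I(V)$, one arrives at the clean identification
\[
(\on{Id}_{\Bun_G}\boxtimes \sH(V,-))(\on{ps-u}_{\Bun_G})\simeq (\hl\times\hr\times s)_!(\on{Sat}_I(V))[-2\dim(\Bun_G)].
\]
The swap $\sigma$ of the two $\Bun_G$-factors exchanges $\hl$ and $\hr$, hence corresponds to the Chevalley involution on $\CH_{G,X^I}$, and the desired intertwining then follows from $\sigma(\on{Sat}_I(V))\simeq \on{Sat}_I(V^\tau)$ as recorded in \eqref{e:sigma and tau}.

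With this intertwining in hand, the rest of the argument is a verbatim repetition of the proof of \propref{p:proj identity}: decompose $(\sP\boxtimes\sP)(\on{ps-u}_{\Bun_G})$ as $(\on{Id}_{\Bun_G}\boxtimes \sH_\sR)\circ (\sH_\sR\boxtimes \on{Id}_{\Bun_G})(\on{ps-u}_{\Bun_G})$, rewrite the inner application via $\sigma$ together with the newly-established intertwining, invoke $\sR^\tau\simeq \sR$ to fold it into $(\on{Id}_{\Bun_G}\boxtimes \sP)\circ (\on{Id}_{\Bun_G}\boxtimes \sP)(\on{ps-u}_{\Bun_G})$, and finally collapse the iterated projector via \thmref{t:projector} applied with $\CZ=\Bun_G$; the third isomorphism follows by symmetry. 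The only genuine technical point is the base-change computation that produces the $!$-pushforward kernel expression; given the ULA/ind-proper framework of \secref{sss:Hecke ULA}, this should be routine, and I expect it to constitute the main (though mild) obstacle of the proof.
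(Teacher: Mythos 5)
Your proposal is correct and follows essentially the same route the paper takes: the paper's proof of this proposition reads ``Repeats the proof of Proposition \ref{p:proj identity}, taking into account Proposition \ref{p:Hecke codefined},'' and you have simply unpacked what that one-liner amounts to. In particular, the kernel identification $(\on{Id}_{\Bun_G}\boxtimes \sH(V,-))(\on{ps-u}_{\Bun_G})\simeq (\hl\times\hr\times s)_!(\on{Sat}_I(V))[-2\dim(\Bun_G)]$ that you derive via base change along the Cartesian square is precisely the kernel computation appearing in the proof of Lemma \ref{l:Mir and Hecke}, so your reduction of the $\sigma$--$\tau$ intertwining for $\on{ps-u}_{\Bun_G}$ to \eqref{e:sigma and tau} matches the paper's intent exactly.
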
 

\begin{proof}

Repeats the proof of \propref{p:proj identity}, taking into account \propref{p:Hecke codefined}.

\end{proof}

\ssec{The main result}

In this subsection we state and prove the main result of this paper, \thmref{t:non-standard duality}. 

\sssec{}

Let $\on{ev}^l_{\Bun_G}$ denote the functor
$$\Shv(\Bun_G)\otimes \Shv(\Bun_G)\to \Vect, \quad
\CF_1,\CF_2\mapsto \on{C}^\cdot_c({\Bun_G},\CF_1\overset{*}\otimes \CF_2).$$

The main result of this paper is:

\begin{thm} \label{t:non-standard duality} 
The functors 
$$\Shv_{\Nilp}(\Bun_G)\otimes \Shv_{\Nilp}(\Bun_G)\hookrightarrow 
\Shv(\Bun_G)\otimes \Shv(\Bun_G)\overset{\on{ev}^l_{\Bun_G}}\longrightarrow \Vect$$
and 
$$\on{ps-u}_{{\Bun_G},\Nilp}\in \Shv_{\Nilp}({\Bun_G})\otimes \Shv_{\Nilp}({\Bun_G})$$
define a duality datum. 
\end{thm}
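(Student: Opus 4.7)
The claim is that the pairing $\on{ev}^l_{\Bun_G}$, restricted to $\Shv_\Nilp(\Bun_G) \otimes \Shv_\Nilp(\Bun_G)$, and the object $\on{ps-u}_{\Bun_G,\Nilp}$ form a self-duality datum on $\Shv_\Nilp(\Bun_G)$. That the datum is well-posed is already in hand: by \thmref{t:Kunneth BunG}, the object $\on{ps-u}_{\Bun_G,\Nilp}$, which \emph{a priori} lives in $\Shv(\Bun_G \times \Bun_G)$, can be viewed as an object of $\Shv_\Nilp(\Bun_G) \otimes \Shv_\Nilp(\Bun_G)$, as noted in \secref{sss:ps u Bun}. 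Unwinding, both triangle identities amount to producing, for every $\CF \in \Shv_\Nilp(\Bun_G)$, a canonical isomorphism
$$(p_1)_!\bigl(\on{ps-u}_{\Bun_G,\Nilp} \overset{*}\otimes p_2^*(\CF)\bigr) \simeq \CF,$$
together with its analogue with $p_1$ and $p_2$ swapped; here $p_1, p_2 : \Bun_G \times \Bun_G \to \Bun_G$ are the two projections. By the symmetry of $\on{ps-u}_{\Bun_G,\Nilp}$ under swapping factors, the two identities are equivalent.

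By \propref{p:Nilp !-diag}, I rewrite $\on{ps-u}_{\Bun_G,\Nilp} \simeq (\sP_{\Bun_G,\Nilp} \boxtimes \on{Id}_{\Bun_G})(\on{ps-u}_{\Bun_G})$, with $\on{ps-u}_{\Bun_G} = (\Delta_{\Bun_G})_!(\ul\sfe_{\Bun_G})$. The key technical input is \propref{p:Hecke codefined}, which says that the integral Hecke functor $\sP_{\Bun_G,\Nilp} = \sH_\sR$ is simultaneously defined and codefined by a kernel. Combined with the projection formula for the Hecke correspondence, this implies that the endofunctor $\sP_{\Bun_G,\Nilp} \boxtimes \on{Id}_{\Bun_G}$ of $\Shv(\Bun_G \times \Bun_G)$ commutes with $(p_1)_!$ (an instance of the defined-and-codefined-by-kernel formalism with the second factor of $\Bun_G \times \Bun_G$ playing the role of the parameter stack $\CZ$ of \secref{sss:Hecke ULA}, and $\CZ \to \on{pt}$ producing $(p_1)_!$) and with $*$-tensoring by $p_2^*(\CF)$ (an instance of the projection formula for the correspondence defining $\sP_{\Bun_G,\Nilp}$). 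Pulling the projector all the way outside, the left-hand side transforms into
$$\sP_{\Bun_G,\Nilp}\bigl((p_1)_!\bigl(\on{ps-u}_{\Bun_G} \overset{*}\otimes p_2^*(\CF)\bigr)\bigr).$$

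The inner pushforward is computed by standard base change and projection formula: using $p_2 \circ \Delta_{\Bun_G} = \on{id}_{\Bun_G}$, we have $\on{ps-u}_{\Bun_G} \overset{*}\otimes p_2^*(\CF) \simeq (\Delta_{\Bun_G})_!(\CF)$, and then $(p_1)_!(\Delta_{\Bun_G})_!(\CF) \simeq \CF$ because $p_1 \circ \Delta_{\Bun_G} = \on{id}_{\Bun_G}$. Finally, $\sP_{\Bun_G,\Nilp}(\CF) \simeq \CF$ since $\CF \in \Shv_\Nilp(\Bun_G)$ and $\sP_{\Bun_G,\Nilp}$ is the projector onto this subcategory by \thmref{t:projector}.

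I expect the main obstacle to be the bookkeeping in the step where the projector is pulled outside: one must establish that $\sP_{\Bun_G,\Nilp} \boxtimes \on{Id}_{\Bun_G}$ genuinely commutes with both $(p_1)_!$ and with $\overset{*}\otimes p_2^*(\CF)$, by a careful application of the defined-and-codefined-by-kernel formalism of \secref{sss:Hecke ULA} together with the projection formula for the Hecke correspondence, and by correctly identifying which factor of $\Bun_G \times \Bun_G$ plays the role of the parameter stack $\CZ$. Once these compatibilities are pinned down, the remainder is a purely formal computation, and the only nontrivial inputs from \cite{AGKRRV} are the Kunneth equivalence \thmref{t:Kunneth BunG} and the projector identification \thmref{t:projector}.
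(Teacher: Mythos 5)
Your proof is correct and follows essentially the same route as the paper's. Both reduce the assertion to a single triangle identity (using the swap-invariance of $\on{ps-u}_{\Bun_G,\Nilp}$ and $\on{ev}^l_{\Bun_G}$), both invoke \propref{p:Nilp !-diag} to write $\on{ps-u}_{\Bun_G,\Nilp}\simeq (\sP_{\Bun_G,\Nilp}\boxtimes \on{Id}_{\Bun_G})(\on{ps-u}_{\Bun_G})$, both use \propref{p:Hecke codefined} (that $\sP_{\Bun_G,\Nilp}$ is defined and codefined by a kernel) to commute the projector past the $*$-tensor and the $!$-pushforward, both finish with the tautological computation $(p_1)_!(\on{ps-u}_{\Bun_G}\overset{*}\otimes p_2^*(\CF))\simeq \CF$ and the identity $\sP_{\Bun_G,\Nilp}(\CF)\simeq\CF$; the only difference is that you unwind $\on{ev}^l_{\Bun_G}$ into the explicit $(p_1)_!(-\overset{*}\otimes p_2^*(\CF))$ form, whereas the paper keeps it phrased in terms of $\on{C}^\cdot_c(\Bun_G,-)$ and $\Delta^*_{\Bun_G}$ viewed as functors codefined by kernels.
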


\begin{rem}
As we shall see, the proof of \thmref{t:non-standard duality} will be essentially 
a formal manipulation (modulo \propref{p:Nilp !-diag}),
given the highly non-trivial fact that $\on{ps-u}_{\Bun_G,\Nilp}$ actually belongs to 
$\Shv_\Nilp(\Bun_G) \otimes \Shv_\Nilp(\Bun_G)$. The latter result uses the full strength of
the main results of \cite{AGKRRV}.

\end{rem}

\begin{rem}

The peculiarity of \thmref{t:non-standard duality} is that is straddles two different paradigms in which duality takes
place:

\medskip

In the constructible sheaf-theoretic contexts \emph{and} for D-modules, for a scheme $\CY$
we have the perfect pairing
\begin{equation} \label{e:usual pairing Y}
\on{ev} _\CY:\Shv(\CY)\otimes \Shv(\CY) \to \Vect, \quad \CF_1,\CF_2\mapsto \on{C}^\cdot(\CY,\CF_1\sotimes \CF_2),
\end{equation} 
where the corresponding contravariant equivalence 
$$(\Shv(\CY)^c)^{\on{op}}\to \Shv(\CY)^c$$
is given by Verdier duality. (The same is valid when $\CY$ is a Verdier-compatible 
quasi-compact algebraic stack,
but we need to replace $\on{C}^\cdot(\CY,-)$ by $\on{C}^\cdot_\blacktriangle(\CY,-)$.)

\medskip

In the Betti context, for the category $\Shv^{\on{all}}(\CY)$ of all sheaves of $\sfe$-vector spaces (not necessarily constructible ones), we have
a perfect pairing
\begin{equation} \label{e:Betti pairing Y}
\on{ev}^l_\CY:\Shv^{\on{all}}(\CY)\otimes \Shv^{\on{all}}(\CY) \to \Vect, \quad \CF_1,\CF_2\mapsto \on{C}^\cdot_c(\CY,\CF_1\overset{*}\otimes \CF_2).
\end{equation} 

\medskip

The content of \thmref{t:non-standard duality} is that an analog of the latter pairing defines a self-duality on $\Shv_{\Nilp}({\Bun_G})$.
We will see that this non-standard duality actually coincides with the miraculous duality of \secref{sss:Mir duality Nilp}. 
\end{rem}

\begin{rem}

It is easy to see that if $\CY$ is a smooth and proper scheme, then the pairing \eqref{e:Betti pairing Y} does induce a perfect pairing 
$$\Shv_{\{0\}}(\CY)\otimes \Shv_{\{0\}}(\CY)\to \Vect,$$
where $\Shv_{\{0\}}(-)$ is the same as $\qLisse(-)$. 

\medskip

Further, in \secref{sss:ex smooth prel} we will see that if $\CY$ is a (quasi-compact) stack with finitely many
isomorphism classes of $k$-points, then \eqref{e:Betti pairing Y} \emph{is} a perfect perfect
$$\Shv(\CY)\otimes \Shv(\CY)\to \Vect.$$

Thus, the pair $(\Bun_G,\Nilp)$ behaves in a way analogous to both $(\CY,\{0\})$ for a smooth and proper $\CY$
and $(\CY,T^*(\CY))$ for $\CY$ with finitely many
isomorphism classes of $k$-points.

\medskip

In fact, this analogy is even stronger, see \secref{sss:ex smooth}.

\end{rem}

%
%
%
%
%
%
%
%
%

\sssec{Proof of \thmref{t:non-standard duality}}

In order to prove \thmref{t:non-standard duality}, we need to show that for $\CF\in \Shv_{\Nilp}({\Bun_G})$, the object
\begin{multline} \label{e:pairing with unit 0}
(\on{Id}\otimes \on{ev}^l_{\Bun_G})(\on{ps-u}_{{\Bun_G},\Nilp}\otimes \CF)\simeq \\
\simeq (\on{Id}\otimes \on{C}^\cdot_c({\Bun_G},-))\circ (\on{Id}\otimes (\Delta_{{\Bun_G}})^*)(\on{ps-u}_{{\Bun_G},\Nilp}\boxtimes \CF)\in \Shv_{\Nilp}({\Bun_G})
\end{multline} 
identifies canonically with $\CF$, where in the right-hand side of the above formula we consider $\on{ps-u}_{{\Bun_G},\Nilp}\boxtimes \CF$
as an object of the category
$$\Shv(\Bun_G)\otimes \Shv(\Bun_G\times \Bun_G).$$

\medskip

Using \propref{p:Nilp !-diag}, we interpret $\on{ps-u}_{\Bun_G,\Nilp}$ as $(\sP_{\Bun_G,\Nilp}\boxtimes \on{Id}_{\Bun_G})(\on{ps-u}_{\Bun_G})$.
We now use the fact that the functor $\sP_{\Bun_G,\Nilp}$ is \emph{co}defined by a kernel, see \propref{p:Hecke codefined}.

\medskip

Let us view $\on{C}^\cdot_c({\Bun_G},-)$ and $(\Delta_{{\Bun_G}})^*$ also as functors codefined by kernels. 
Hence, we can rewrite
the right-hand side in \eqref{e:pairing with unit 0} as
$$(\on{Id}_{\Bun_G}\boxtimes \on{C}^\cdot_c({\Bun_G},-))\circ (\on{Id}_{\Bun_G}\boxtimes (\Delta_{{\Bun_G}})^*) \circ 
(\sP_{\Bun_G,\Nilp}\boxtimes \on{Id}_{\Bun_G}\boxtimes \on{Id}_{\Bun_G})(\on{ps-u}_{\Bun_G}\boxtimes \CF),$$
where we now view $\on{ps-u}_{\Bun_G}\boxtimes \CF$ 
as an object of $\Shv(\Bun_G\times \Bun_G\times \Bun_G)$. 

\medskip

We can further rewrite the above expression as
$$\sP_{\Bun_G,\Nilp} \circ (\on{Id}_{\Bun_G}\boxtimes \on{C}^\cdot_c({\Bun_G},-))\circ (\on{Id}_{\Bun_G}\boxtimes (\Delta_{{\Bun_G}})^*) 
(\on{ps-u}_{\Bun_G}\boxtimes \CF).$$

We note that we have, tautologically,
$$(\on{Id}_{\Bun_G}\boxtimes \on{C}^\cdot_c({\Bun_G},-))\circ (\on{Id}_{\Bun_G}\boxtimes (\Delta_{{\Bun_G}})^*) 
(\on{ps-u}_{\Bun_G}\boxtimes \CF)\simeq \CF.$$

Finally, we have
$$\sP_{\Bun_G,\Nilp}(\CF)\simeq \CF,$$
since $\CF\in \Shv_{\Nilp}({\Bun_G})$.

\qed[\thmref{t:non-standard duality}]

\ssec{Relation to the miraculous functor}

In this subsection, we will relate the non-standard duality of \thmref{t:non-standard duality} with the
miraculous duality of \secref{sss:Mir duality Nilp}. 

\sssec{}

On the one hand, according to \thmref{t:non-standard duality}, we have a canonical identification
\begin{equation} \label{e:non-standard duality again}
\Shv_{\Nilp}({\Bun_G}) \simeq \Shv_{\Nilp}({\Bun_G})^\vee.
\end{equation}

On the other hand, according to \corref{c:duality with co BunG Nilp}
we have a canonical identification 
\begin{equation} \label{e:standard duality again}
\Shv_{\Nilp}({\Bun_G})_{\on{co}} \simeq \Shv_{\Nilp}({\Bun_G})^\vee.
\end{equation}

\sssec{}

We claim:

\begin{thm} \label{t:miraculous and dualities}
The identifications \eqref{e:non-standard duality again} and \eqref{e:standard duality again}
are intertwined by the miraculous functor
$$\Mir_{\Bun_G}: \Shv_{\Nilp}({\Bun_G})_{\on{co}} \overset{\sim}\to \Shv_{\Nilp}({\Bun_G}).$$
\end{thm}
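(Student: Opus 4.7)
The plan is to compare the two self-dualities of $\Shv_\Nilp(\Bun_G)$ that arise when we transport the standard duality \eqref{e:standard duality again} along $\Mir_{\Bun_G}$ and compare it with the non-standard duality \eqref{e:non-standard duality again}. Two duality data $(\on{ev}_1, \on{u}_1)$ and $(\on{ev}_2, \on{u}_2)$ on a compactly generated category produce the same self-duality $\bC \simeq \bC^\vee$ if and only if they have the same units in $\bC \otimes \bC$, so I would reduce the problem to showing
\[
(\on{Id}_{\Shv_\Nilp(\Bun_G)} \otimes \Mir_{\Bun_G})(\on{u}_{\Bun_G,\Nilp,\on{co}_2}) \simeq \on{ps-u}_{\Bun_G,\Nilp}
\]
as objects of $\Shv_\Nilp(\Bun_G) \otimes \Shv_\Nilp(\Bun_G)$ (viewing the unit of the standard duality, a priori in $\Shv_\Nilp(\Bun_G) \otimes \Shv_\Nilp(\Bun_G)_{\on{co}}$, as transported by $\Mir_{\Bun_G}$ on the second factor).

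To carry this out, I would first invoke \propref{p:projector co} to write $\on{u}_{\Bun_G,\Nilp,\on{co}_2} \simeq (\on{Id}_{\Bun_G} \boxtimes \sP_{\on{co}})(\on{u}_{\Bun_G,\on{co}_2})$, so that the left-hand side of the desired isomorphism becomes
\[
(\on{Id}_{\Bun_G} \otimes \Mir_{\Bun_G}) \circ (\on{Id}_{\Bun_G} \boxtimes \sP_{\on{co}}) (\on{u}_{\Bun_G,\on{co}_2}).
\]
By \corref{c:Mir and P}, the miraculous functor intertwines $\sP_{\on{co}}$ with $\sP$ as functors defined by kernels, so I can commute these operations and rewrite the expression as
\[
(\on{Id}_{\Bun_G} \boxtimes \sP) \circ (\on{Id}_{\Bun_G} \otimes \Mir_{\Bun_G})(\on{u}_{\Bun_G,\on{co}_2}).
\]
Now, under the dictionary of \secref{sss:naive BunG}, $\on{u}_{\Bun_G,\on{co}_2}$ is the kernel representing the identity functor $\Shv(\Bun_G)_{\on{co}} \to \Shv(\Bun_G)_{\on{co}}$, and post-composing this identity with $\Mir_{\Bun_G}$ yields the functor $\Mir_{\Bun_G}: \Shv(\Bun_G)_{\on{co}} \to \Shv(\Bun_G)$, whose defining kernel (by \secref{sss:ps-u}) is exactly $\on{ps-u}_{\Bun_G}$. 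Hence
\[
(\on{Id}_{\Bun_G} \otimes \Mir_{\Bun_G})(\on{u}_{\Bun_G,\on{co}_2}) \simeq \on{ps-u}_{\Bun_G}.
\]

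It then remains to identify $(\on{Id}_{\Bun_G} \boxtimes \sP)(\on{ps-u}_{\Bun_G})$ with $\on{ps-u}_{\Bun_G,\Nilp}$, which is precisely one of the equivalent descriptions provided by \propref{p:Nilp !-diag}. The main (already done) obstacle is ensuring that all manipulations of $\Mir_{\Bun_G}$ and the projectors take place at the level of functors defined by kernels, rather than merely at the level of underlying functors; this is what \corref{c:Mir and P} supplies and what makes the composition-of-kernels step in the previous paragraph legitimate. All other steps are formal consequences of the general yoga of kernels recalled in \secref{s:ker} and \secref{s:non qc}, combined with the compatibility results proved earlier in Sects.~\ref{ss:proj and diag}, \ref{ss:BunG Mir}, and the preceding subsections of \secref{s:pairing}.
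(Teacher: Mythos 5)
Your proposal is correct and follows the same basic strategy as the paper: reduce to comparing units of the two duality data, express both units as a projector applied to the diagonal objects, and use the tautological identification $(\on{Id}_{\Bun_G}\boxtimes \Mir_{\Bun_G})(\on{u}_{\Bun_G,\on{co}_2}) \simeq \on{ps-u}_{\Bun_G}$. The only difference is in which factor you let the projector act on. You write the units via $(\on{Id}_{\Bun_G}\boxtimes \sP_{\on{co}})$ acting on the \emph{second} factor (the ``co'' factor), which is the same factor on which $\Mir_{\Bun_G}$ acts; this forces you to invoke \corref{c:Mir and P} (the intertwining $\Mir_{\Bun_G}\circ \sP_{\on{co}}\simeq \sP\circ \Mir_{\Bun_G}$) to commute them. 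The paper instead uses \propref{p:unit Verdier BunG} and \propref{p:Nilp !-diag} to write both units as $(\sP\boxtimes \on{Id}_{\Bun_G})(-)$ with the projector acting on the \emph{first} factor; since $(\sP\boxtimes \on{Id}_{\Bun_G})$ and $(\on{Id}_{\Bun_G}\boxtimes \Mir_{\Bun_G})$ then operate on disjoint factors, their commutation is immediate and no intertwining result is needed. Your version is a fine proof, but the paper's choice is a touch more economical: it trades a genuine compatibility lemma for a formal disjointness observation.
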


\begin{proof}

It suffices to show that the functor
$$\on{Id}_{\Shv_\Nilp(\Bun_G)}\otimes \Mir_{\Bun_G}:
\Shv_{\Nilp}({\Bun_G})\otimes \Shv_{\Nilp}({\Bun_G})_{\on{co}}\to \Shv_{\Nilp}({\Bun_G})\otimes \Shv_{\Nilp}({\Bun_G})$$
sends the unit of the \eqref{e:standard duality again} duality to the unit of the \eqref{e:non-standard duality again}
duality.

\medskip

By \propref{p:unit Verdier BunG}, the unit of the \eqref{e:standard duality again} duality is the object
$$\on{u}_{\Bun_G,\Nilp,\on{co}_2}\simeq (\sP\boxtimes \on{Id}_{\Bun_G})(\on{u}_{\Bun_G,\on{co}_2}),$$
and the unit of the duality \eqref{e:non-standard duality again} is the object
$$\on{ps-u}_{{\Bun_G},\Nilp} \simeq (\sP\boxtimes \on{Id}_{\Bun_G})(\on{ps-u}_{\Bun_G}).$$

Hence, it suffices to establish an isomorphism
$$(\on{Id}_{\Bun_G}\boxtimes \Mir_{\Bun_G})\circ 
(\sP\boxtimes \on{Id}_{\Bun_G})(\on{u}_{\Bun_G,\on{co}_2}) \simeq 
(\sP\boxtimes \on{Id}_{\Bun_G})(\on{ps-u}_{\Bun_G}).$$

The latter isomorphism follows from the tautological isomorphism
$$(\on{Id}_{\Bun_G}\boxtimes \Mir_{\Bun_G})(\on{u}_{\Bun_G,\on{co}_2}) \simeq \on{ps-u}_{\Bun_G}.$$

\end{proof}

\sssec{}

From \thmref{t:miraculous and dualities}, we obtain:

\begin{cor} \label{c:two pairings BunG}
The diagram 
$$
\CD 
\Shv_{\Nilp}(\Bun_G)\otimes \Shv_{\Nilp}(\Bun_G)_{\on{co}} @>>> 
\Shv(\Bun_G)\otimes \Shv(\Bun_G)_{\on{co}} @>{\on{ev} _{\Bun_G}}>>  \Vect \\
@V{\on{Id}\otimes \Mir_{\Bun_G}}VV & & @VV{\on{Id}}V \\
\Shv_{\Nilp}(\Bun_G)\otimes \Shv_{\Nilp}(\Bun_G) @>>> 
\Shv(\Bun_G)\otimes \Shv(\Bun_G) @>{\on{ev}^l_{\Bun_G}}>>  \Vect
\endCD
$$
commutes.
\end{cor}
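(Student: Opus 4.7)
The plan is to derive \corref{c:two pairings BunG} as a direct unwinding of \thmref{t:miraculous and dualities}.

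First I would record what the two outer rows compute. By \corref{c:duality with co BunG Nilp}, the top row — that is, the restriction of $\on{ev}_{\Bun_G}$ along the tautological embedding
$$\Shv_\Nilp(\Bun_G)\otimes\Shv_\Nilp(\Bun_G)_{\on{co}} \hookrightarrow \Shv(\Bun_G)\otimes\Shv(\Bun_G)_{\on{co}}$$
is precisely the counit of the duality \eqref{e:standard duality again}. By \thmref{t:non-standard duality}, the bottom row is the counit of the non-standard self-duality \eqref{e:non-standard duality again}.

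Next I would invoke \thmref{t:miraculous and dualities}, which asserts that these two identifications of $\Shv_\Nilp(\Bun_G)^\vee$ are intertwined by the miraculous equivalence
$$\Mir_{\Bun_G}: \Shv_\Nilp(\Bun_G)_{\on{co}} \iso \Shv_\Nilp(\Bun_G).$$
By the general formalism of duality data, saying that two equivalences $\bD_1 \simeq \bC^\vee \simeq \bD_2$ are intertwined by some $F: \bD_1 \to \bD_2$ is equivalent to saying that the associated counits $\varepsilon_i: \bC\otimes \bD_i \to \Vect$ satisfy $\varepsilon_1 = \varepsilon_2 \circ (\on{Id}_\bC \otimes F)$. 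Applying this with $\bC = \Shv_\Nilp(\Bun_G)$, $\bD_1 = \Shv_\Nilp(\Bun_G)_{\on{co}}$, $\bD_2 = \Shv_\Nilp(\Bun_G)$, and $F = \Mir_{\Bun_G}$ yields exactly the commutativity of the outer rectangle.

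There is no genuine obstacle here: all the real work has already been packaged into \thmref{t:miraculous and dualities}, whose proof in turn exhibited the units $\on{u}_{\Bun_G,\Nilp,\on{co}_2}$ and $\on{ps-u}_{\Bun_G,\Nilp}$ as matching under $\on{Id}_{\Bun_G}\boxtimes \Mir_{\Bun_G}$. The corollary is then a purely formal transcription of the counit-unit compatibility that defines a duality datum, with no further geometric input required.
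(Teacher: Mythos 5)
Your proof is correct and follows essentially the same route as the paper, which states the corollary as a formal consequence of \thmref{t:miraculous and dualities} without further elaboration. You make explicit the formal mechanism — that two identifications $\bD_1\simeq\bC^\vee\simeq\bD_2$ being intertwined by an equivalence $F:\bD_1\to\bD_2$ is the same as the associated counits satisfying $\varepsilon_1=\varepsilon_2\circ(\on{Id}_\bC\otimes F)$ — which is precisely the ``formal transcription'' the paper is implicitly invoking, and you correctly identify that the geometric content lives entirely in the unit comparison $(\on{Id}\boxtimes\Mir_{\Bun_G})(\on{u}_{\Bun_G,\Nilp,\on{co}_2})\simeq\on{ps-u}_{\Bun_G,\Nilp}$ already established.
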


\sssec{}

From \corref{c:two pairings BunG} we obtain: 

\begin{cor} \label{c:non-st vs mir}
The pairing 
$$\Shv_{\Nilp}(\Bun_G)\otimes \Shv_\Nilp(\Bun_G)  \hookrightarrow 
\Shv(\Bun_G)\otimes \Shv(\Bun_G)\overset{\on{ev}^l_{\Bun_G}}\longrightarrow \Vect$$
identifies canonically with 
$$\Shv_\Nilp(\Bun_G)\otimes \Shv_\Nilp(\Bun_G) 
\hookrightarrow \Shv(\Bun_G)\otimes \Shv(\Bun_G)\overset{\on{ev}^{\on{Mir}}_{\Bun_G}}\longrightarrow \Vect.$$
\end{cor}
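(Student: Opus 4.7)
The plan is to deduce this corollary directly from \corref{c:two pairings BunG} by inverting the miraculous functor on the nilpotent subcategory. The key geometric input (that $\Mir_{\Bun_G}$ restricts to an equivalence $\Shv_\Nilp(\Bun_G)_{\on{co}}\to\Shv_\Nilp(\Bun_G)$) is already available as \corref{c:Mir Nilp a priori}, and the compatibility of the two pairings through $\Mir_{\Bun_G}$ was just established in \corref{c:two pairings BunG}. So the argument is essentially a one-line diagram chase.

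First I would recall that by definition (see \secref{sss:D Mir BunG}), the functor $\on{ev}^{\on{Mir}}_{\Bun_G}$ is the composition
$$\Shv(\Bun_G)\otimes\Shv(\Bun_G)\overset{\on{Id}\otimes\Mir_{\Bun_G}^{-1}}\longrightarrow\Shv(\Bun_G)\otimes\Shv(\Bun_G)_{\on{co}}\overset{\on{ev}_{\Bun_G}}\longrightarrow\Vect.$$
Restricting to $\Shv_\Nilp(\Bun_G)\otimes\Shv_\Nilp(\Bun_G)$ and using \propref{p:Mir inv Nilp}, the functor $\on{Id}\otimes\Mir_{\Bun_G}^{-1}$ factors through $\Shv_\Nilp(\Bun_G)\otimes\Shv_\Nilp(\Bun_G)_{\on{co}}$.

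Next I would apply \corref{c:two pairings BunG}, the bottom row of whose commutative diagram computes $\on{ev}^l_{\Bun_G}$ on $\Shv_\Nilp\otimes\Shv_\Nilp$ as the upper-right composition evaluated on the preimage under $\on{Id}\otimes\Mir_{\Bun_G}$. In other words, the commutative diagram of \corref{c:two pairings BunG} gives
$$\on{ev}^l_{\Bun_G}|_{\Shv_\Nilp\otimes\Shv_\Nilp}\circ(\on{Id}\otimes\Mir_{\Bun_G})=\on{ev}_{\Bun_G}|_{\Shv_\Nilp\otimes\Shv_\Nilp,\on{co}}.$$
Post-composing both sides with $\on{Id}\otimes\Mir_{\Bun_G}^{-1}$ yields exactly the asserted identification of $\on{ev}^l_{\Bun_G}|_{\Shv_\Nilp\otimes\Shv_\Nilp}$ with $\on{ev}^{\on{Mir}}_{\Bun_G}|_{\Shv_\Nilp\otimes\Shv_\Nilp}$.

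There is no real obstacle here: the entire content of this corollary has been packaged into the preceding \thmref{t:miraculous and dualities} and its \corref{c:two pairings BunG}. The only mild subtlety is to verify that all the restrictions and factorizations are compatible with the fully faithful embedding $\iota\otimes\iota:\Shv_\Nilp(\Bun_G)\otimes\Shv_\Nilp(\Bun_G)\hookrightarrow\Shv(\Bun_G)\otimes\Shv(\Bun_G)$, which is automatic from functoriality. Thus the corollary is purely a matter of rewriting, and the proof can be given in a couple of lines.
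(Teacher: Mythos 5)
Your argument is correct and is exactly the deduction the paper intends: the paper states the corollary immediately after \corref{c:two pairings BunG} with the words ``From \corref{c:two pairings BunG} we obtain,'' leaving the one-line diagram chase implicit, and you have simply filled it in — precompose the identity of \corref{c:two pairings BunG} with $\on{Id}\otimes\Mir_{\Bun_G}^{-1}$, which preserves the nilpotent subcategories by \propref{p:Mir inv Nilp}. Nothing to add.
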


In other words, this corollary says that the non-standard duality of \thmref{t:non-standard duality}
identifies canonically with the miraculous duality of \secref{sss:Mir duality Nilp}. 

\ssec{Pairings against sheaves with nilpotent singular support}

In this subsection we will amplify the assertion of \corref{c:non-st vs mir}.

\sssec{}

We claim:

\begin{thm} \label{t:two pairings amplified}
The pairings
$\on{ev}^l_{\Bun_G}$ and $\on{ev}^{\on{Mir}}_{\Bun_G}$ 
$$\Shv(\Bun_G)\otimes \Shv(\Bun_G) \to \Vect$$
agree on the subcategories 
$$\Shv_\Nilp(\Bun_G)\otimes \Shv(\Bun_G) \subset 
\Shv(\Bun_G)\otimes \Shv(\Bun_G) \supset \Shv(\Bun_G)\otimes \Shv_\Nilp(\Bun_G).$$
\end{thm}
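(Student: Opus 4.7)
The plan is to reduce to Corollary \ref{c:non-st vs mir} by establishing an unconditional ``$\sP$-balance'' property of both pairings: writing $\sP = \sP_{\Bun_G,\Nilp}$, I claim that for \emph{any} $\CF_1, \CF_2 \in \Shv(\Bun_G)$,
$$\on{ev}^?_{\Bun_G}(\sP(\CF_1), \CF_2) \simeq \on{ev}^?_{\Bun_G}(\CF_1, \sP(\CF_2)), \qquad \on{ev}^? \in \{\on{ev}^l_{\Bun_G},\,\on{ev}^{\on{Mir}}_{\Bun_G}\}.$$
Granted this, the theorem follows: if $\CF_1 \in \Shv_\Nilp(\Bun_G)$ then $\CF_1 \simeq \sP(\CF_1)$, so the balance yields $\on{ev}^?_{\Bun_G}(\CF_1, \CF_2) \simeq \on{ev}^?_{\Bun_G}(\CF_1, \sP(\CF_2))$; since $\sP(\CF_2) \in \Shv_\Nilp(\Bun_G)$, Corollary \ref{c:non-st vs mir} matches the two right-hand sides, and the case $\CF_2 \in \Shv_\Nilp(\Bun_G)$ is handled analogously.

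The balance for $\on{ev}^{\on{Mir}}_{\Bun_G}$ is essentially formal. Proposition \ref{p:dual of P} shows that, under the Verdier self-duality $\Shv(\Bun_G)^\vee \simeq \Shv(\Bun_G)_{\on{co}}$, the endofunctor $\sP$ of $\Shv(\Bun_G)$ is dual to the endofunctor $\sP_{\on{co}}$ of $\Shv(\Bun_G)_{\on{co}}$. Hence $\on{ev}_{\Bun_G}(\sP(\CF_1), \CG) \simeq \on{ev}_{\Bun_G}(\CF_1, \sP_{\on{co}}(\CG))$ for all $\CG \in \Shv(\Bun_G)_{\on{co}}$; applying this with $\CG = \Mir_{\Bun_G}^{-1}(\CF_2)$ and using the intertwining $\sP_{\on{co}} \circ \Mir_{\Bun_G}^{-1} \simeq \Mir_{\Bun_G}^{-1} \circ \sP$ from Corollary \ref{c:Mir and P} delivers the required balance.

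The balance for $\on{ev}^l_{\Bun_G}$ is where the substance lies. Writing $\on{ev}^l_{\Bun_G}(\CF_1, \CF_2) \simeq (p_{\Bun_G \times \Bun_G})_!\bigl(\on{ps-u}_{\Bun_G} \overset{*}\otimes (\CF_1 \boxtimes \CF_2)\bigr)$ and letting $\sK_\sP \in \Shv(\Bun_G \times \Bun_G)$ denote the codefining kernel of $\sP$ (which exists by Proposition \ref{p:Hecke codefined}), the projection formula identifies $\on{ev}^l_{\Bun_G}(\sP(\CF_1), \CF_2) \simeq (p_{\Bun_G \times \Bun_G})_!(\sK_\sP \overset{*}\otimes (\CF_1 \boxtimes \CF_2))$ and $\on{ev}^l_{\Bun_G}(\CF_1, \sP(\CF_2)) \simeq (p_{\Bun_G \times \Bun_G})_!(\sigma^*(\sK_\sP) \overset{*}\otimes (\CF_1 \boxtimes \CF_2))$, where $\sigma$ swaps the two factors. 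Thus the balance reduces to the $\sigma$-invariance $\sigma^*(\sK_\sP) \simeq \sK_\sP$. A direct unwinding of the action of a codefined-by-kernel endofunctor on $\on{ps-u}_{\Bun_G} = (\Delta_{\Bun_G})_!(\ul\sfe_{\Bun_G})$ yields $(\on{Id} \otimes \sP)(\on{ps-u}_{\Bun_G}) \simeq \sK_\sP$ and $(\sP \otimes \on{Id})(\on{ps-u}_{\Bun_G}) \simeq \sigma^*(\sK_\sP)$, so the desired $\sigma$-invariance is precisely Proposition \ref{p:Nilp !-diag}. The main obstacle — the only nontrivial geometric input to the whole argument — is thus Proposition \ref{p:Nilp !-diag}, whose proof rests on Proposition \ref{p:Hecke codefined} together with the symmetry $\sR^\tau \simeq \sR$ of the spectral projector's Ran-kernel.
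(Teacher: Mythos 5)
Your proof is correct, and it takes a genuinely different route from the paper's. The paper first reduces (by swap-equivariance) to the case $\CF_1\in\Shv_\Nilp(\Bun_G)$, uses \propref{p:dual of P}(i) to replace $\CF_2$ by $\sP(\CF_2)$ in the $\on{ev}^{\on{Mir}}$-side, applies \corref{c:non-st vs mir}, and then invokes the unconditional balance $\on{ev}^l(\sP(\CF_1),\CF_2)\simeq\on{ev}^l(\CF_1,\sP(\CF_2))$, which it states as \propref{p:*-pairing and P} and proves via the Vect-level dualization argument of \lemref{l:l pair and Hecke} (computing dual Hom-spaces and invoking rigidity of $\Rep(\cG)_\Ran$). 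You instead establish the unconditional balance for \emph{both} pairings and then combine with \corref{c:non-st vs mir} directly. Your treatment of the $\on{ev}^{\on{Mir}}$-balance via \propref{p:dual of P}(ii) and \corref{c:Mir and P} is a genuine addition that the paper's proof sidesteps. For the $\on{ev}^l$-balance your route is more kernel-theoretic: rather than dualizing at the level of vector spaces, you express the pairing through the codefining kernel $\sK_\sP$ and reduce to its $\sigma$-invariance, which you correctly identify as \propref{p:Nilp !-diag} (via the identifications $(\on{Id}_{\Bun_G}\boxtimes\sP)(\on{ps-u}_{\Bun_G})\simeq\sK_\sP$ and, using $\sigma$-invariance of $\on{ps-u}_{\Bun_G}$, $(\sP\boxtimes\on{Id}_{\Bun_G})(\on{ps-u}_{\Bun_G})\simeq\sigma^*(\sK_\sP)$). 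Both routes rest on the same geometric input ($\sR^\tau\simeq\sR$ plus the $\sigma$/$\tau$-compatibility of \propref{p:tau and sigma}); yours packages it in a cleaner, more conceptual form. One small caveat: your claim that $\sK_\sP$ lives in $\Shv(\Bun_G\times\Bun_G)$ glosses over the non-quasi-compactness bookkeeping — the defining kernel of $\sP$ lives in $\Shv(\Bun_G\times\Bun_G)_{\on{co}_1}$, and the codefining object is obtained by applying $\Mir_{\Bun_G}$ in one factor (\propref{p:left via right}, extended per Remark \ref{r:right adj main non qc}); this does land in $\Shv(\Bun_G\times\Bun_G)$ and coincides with $\on{ps-u}_{\Bun_G,\Nilp}$, so no gap, but it deserves a sentence.
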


\bigskip

\begin{rem} \label{r:one factor enough}
We can reformulate \thmref{t:two pairings amplified} as saying that the following diagrams commute: 
$$
\CD 
\Shv(\Bun_G)\otimes \Shv_{\Nilp}(\Bun_G)_{\on{co}} @>>> 
\Shv(\Bun_G)\otimes \Shv(\Bun_G)_{\on{co}} @>{\on{ev} _{\Bun_G}}>>  \Vect \\
@V{\on{Id}\otimes \Mir_{\Bun_G}}VV & & @VV{\on{Id}}V \\
\Shv(\Bun_G)\otimes \Shv_{\Nilp}(\Bun_G) @>>> 
\Shv(\Bun_G)\otimes \Shv(\Bun_G) @>{\on{ev}^l_{\Bun_G}}>>  \Vect.
\endCD
$$
and
$$
\CD 
\Shv_{\Nilp}(\Bun_G)\otimes \Shv(\Bun_G)_{\on{co}} @>>> 
\Shv(\Bun_G)\otimes \Shv(\Bun_G)_{\on{co}} @>{\on{ev} _{\Bun_G}}>>  \Vect \\
@V{\on{Id}\otimes \Mir_{\Bun_G}}VV & & @VV{\on{Id}}V \\
\Shv_{\Nilp}(\Bun_G)\otimes \Shv(\Bun_G) @>>> 
\Shv(\Bun_G)\otimes \Shv(\Bun_G) @>{\on{ev}^l_{\Bun_G}}>>  \Vect.
\endCD
$$

\medskip

I.e., unlike the commutative diagram in \corref{c:two pairings BunG}, 
we only need one of the factors to have singular support in $\Nilp$. 
\end{rem}

\sssec{}

The rest of this subsection is devoted to the proof of \thmref{t:two pairings amplified}. Since 
both pairings are swap-equivariant (see Sects. \ref{sss:D Mir sym} and \ref{sss:Mir stacks non qc bis}),
it suffices to show that the two pairings agree on 
$$\Shv_\Nilp(\Bun_G)\otimes \Shv(\Bun_G) \subset 
\Shv(\Bun_G)\otimes \Shv(\Bun_G).$$

\sssec{}

By \propref{p:dual of P}(i), for $\CF_1\in \Shv_\Nilp(\Bun_G)$ and 
$\CF_2\in  \Shv(\Bun_G)$, we have
$$\on{ev}^{\on{Mir}}_{\Bun_G}(\CF_1,\CF_2):=
\on{ev} _{\Bun_G}(\Mir^{-1}_{\Bun_G}(\CF_1),\CF_2) \simeq
\on{ev} _{\Bun_G}(\Mir^{-1}_{\Bun_G}(\CF_1),\sP(\CF_2)),$$ 
and by \corref{c:non-st vs mir}, the latter is canonically isomorphic to
$$\on{ev}^l_{\Bun_G}(\CF_1,\sP(\CF_2)).$$ 

Hence, in order to prove \thmref{t:two pairings amplified}, it suffices to show that for
$\CF_1\in \Shv_\Nilp(\Bun_G)$ and $\CF_2\in  \Shv(\Bun_G)$, we have a canonical
isomorphism
$$\on{ev}^l_{\Bun_G}(\CF_1,\CF_2) \simeq \on{ev}^l_{\Bun_G}(\CF_1,\sP(\CF_2)).$$

This would follow from the next assertion:

\begin{prop} \label{p:*-pairing and P}
For any $\CF_1,\CF_2\in  \Shv(\Bun_G)$, we have a canonical isomorphism
$$\on{ev}^l_{\Bun_G}(\sP(\CF_1),\CF_2) \simeq \on{ev}^l_{\Bun_G}(\CF_1,\sP(\CF_2)).$$
\end{prop}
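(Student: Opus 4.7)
The plan is to exhibit a kernel on $\Bun_G\times \Bun_G$ that codefines $\sP$ and is invariant under the swap $\sigma$ of the two factors, so that both sides of the asserted isomorphism reduce to the same $*$-integral against this common kernel.

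First, since $\sP$ is codefined by a kernel (\propref{p:Hecke codefined}), the object
$$\CK_\sP := (\on{Id}_{\Bun_G}\boxtimes \sP)(\on{ps-u}_{\Bun_G}) \in \Shv(\Bun_G\times \Bun_G)$$
satisfies
$$\sP(\CF)\simeq (p_2)_!\bigl(p_1^*(\CF)\overset{*}\otimes \CK_\sP\bigr),\quad \CF\in \Shv(\Bun_G),$$
where $p_1,p_2:\Bun_G\times \Bun_G\to \Bun_G$ are the two projections. Indeed, this follows by applying $(\on{Id}\boxtimes \sP)$ to $(\Delta_{\Bun_G})_!\bigl(\ul\sfe_{\Bun_G}\otimes p_1^*(\CF)\bigr)$ and invoking the projection formula along the first factor.

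Applying the projection formula once more for $(p_2)_!$ against $p_2^*(\CF_2)$ then yields
$$\on{ev}^l_{\Bun_G}(\sP(\CF_1),\CF_2) \simeq \on{C}^\cdot_c\bigl(\Bun_G\times \Bun_G,\ p_1^*(\CF_1)\overset{*}\otimes p_2^*(\CF_2)\overset{*}\otimes \CK_\sP\bigr),$$
and the analogous manipulation applied to $\on{ev}^l_{\Bun_G}(\CF_1,\sP(\CF_2))$ --- now treating $\CF_2$ as the input of $\sP$, so that it goes into the first slot of the kernel --- followed by the swap-invariance of $\on{C}^\cdot_c(\Bun_G\times \Bun_G,-)$, gives
$$\on{ev}^l_{\Bun_G}(\CF_1,\sP(\CF_2)) \simeq \on{C}^\cdot_c\bigl(\Bun_G\times \Bun_G,\ p_1^*(\CF_1)\overset{*}\otimes p_2^*(\CF_2)\overset{*}\otimes \sigma(\CK_\sP)\bigr).$$
The proposition is thus reduced to constructing an isomorphism $\sigma(\CK_\sP)\simeq \CK_\sP$.

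For this last step, note that $\on{ps-u}_{\Bun_G}=(\Delta_{\Bun_G})_!(\ul\sfe_{\Bun_G})$ is manifestly $\sigma$-invariant, so $\sigma(\CK_\sP)\simeq (\sP\boxtimes \on{Id}_{\Bun_G})(\on{ps-u}_{\Bun_G})$, and \propref{p:Nilp !-diag} provides the desired identification with $(\on{Id}_{\Bun_G}\boxtimes \sP)(\on{ps-u}_{\Bun_G})=\CK_\sP$. As in the proof of \propref{p:proj identity}, this identification ultimately rests on the Chevalley self-duality $\sR^\tau\simeq \sR$ via the codefined-kernel avatar of \propref{p:tau and sigma}. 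The only point demanding real care is the distinction between \emph{defined} and \emph{codefined} kernels throughout the projection-formula steps, which is precisely what \propref{p:Hecke codefined} was invoked to ensure; beyond that, the argument is essentially a symmetrization of the kernel manipulation already used in the proof of \thmref{t:non-standard duality}.
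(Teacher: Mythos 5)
Your argument is correct and takes a genuinely different route from the paper's. The paper first reduces to a general assertion (\lemref{l:l pair and Hecke}) that for any $\CV\in\Rep(\cG)_\Ran$ one has $\on{ev}^l_{\Bun_G}(\sH_{\CV}(\CF_1),\CF_2)\simeq \on{ev}^l_{\Bun_G}(\CF_1,\sH_{\CV^\tau}(\CF_2))$, and proves it by passing to compact objects, dualizing the $\on{ev}^l$ pairing into a $\CHom$ into $\on{u}^{\on{naive}}_{\Bun_G}$, then using rigidity of $\Rep(\cG)_\Ran$ to move the Hecke operator to the other side of the diagonal kernel before invoking \propref{p:tau and sigma}; the equality $\sR^\tau\simeq\sR$ enters only at the very end. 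You instead work directly with $\sP$ via its \emph{codefining} kernel $\CK_\sP=(\on{Id}_{\Bun_G}\boxtimes\sP)(\on{ps-u}_{\Bun_G})$ (valid precisely because \propref{p:Hecke codefined} gives $\sP\simeq\sP^l$; this is also the output of \propref{p:left via right}(b) applied to the pair $(\Bun_G,\Bun_G)$), unwind $\on{ev}^l$ by projection formula into a $*$-pairing against $\CK_\sP$ on $\Bun_G\times\Bun_G$, and reduce to $\sigma(\CK_\sP)\simeq\CK_\sP$, which is exactly \propref{p:Nilp !-diag}. Both arguments bottom out on \propref{p:tau and sigma} and on $\sR^\tau\simeq\sR$ — in the paper's route via rigidity and the Verdier-dual picture, in yours via the $\sigma$-invariance of the codefining kernel. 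Your route is shorter and avoids the compactness reduction and the rigidity lemma, but is tailored to $\sP$; the paper's proof of the more general \lemref{l:l pair and Hecke} is reused later (e.g.\ in the diagram \eqref{e:ev com gen} in the proof of \propref{p:ident pair 1 2}), so that generality earns its keep. One small notational point: the expression $(\Delta_{\Bun_G})_!(\ul\sfe_{\Bun_G}\otimes p_1^*(\CF))$ in your justification doesn't typecheck as written; you mean $\on{ps-u}_{\Bun_G}\overset{*}\otimes p_1^*(\CF)\simeq(\Delta_{\Bun_G})_!(\CF)$, after which the computation goes through as you describe.
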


\sssec{Proof of \propref{p:*-pairing and P}}

Recall that $\sP=\sH_\sR$ for $\sR$ as in \secref{sss:the projector}. Recall also that 
$\sR\simeq \sR^\tau$. Hence, \propref{p:*-pairing and P} follows from the next
general assertion:

\begin{lem} \label{l:l pair and Hecke}
For $\CF_1,\CF_2\in  \Shv(\Bun_G)$ and $\CV\in \Rep(\cG)_\Ran$, we have a canonical
isomorphism
$$\on{ev}^l_{\Bun_G}(\sH_{\CV}(\CF_1),\CF_2) \simeq \on{ev}^l_{\Bun_G}(\CF_1,\sH_{\CV^\tau}(\CF_2)).$$
\end{lem}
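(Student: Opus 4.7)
The plan is to reduce both sides to the same $\on{C}^\cdot_c$-expression on the Hecke stack $\CH_{G,X^I}$, and then to identify them via the Chevalley-induced swap symmetry \eqref{e:sigma and tau}.

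First I will use ind-extension along $\CV$ to reduce to the case $\CV = V\boxtimes \CM$ with $V\in (\Rep(\cG)^{\otimes I})^c$ and $\CM\in \Shv(X^I)^c$. For such $\CV$, combining \eqref{e:int Hecke expl !} with the $*$-presentation of $\sH(V,-)$ recorded in Section \ref{sss:Hecke ULA} and applying the $*$-projection formula for $(-)_!$, I will rewrite
$$\sH_\CV(\CF)\simeq \hr_!\bigl(\hl^*(\CF)\overset{*}\otimes s^*(\CM)\overset{*}\otimes \on{Sat}_I(V)\bigr)[d]$$
for a fixed cohomological shift $d$ depending only on $\dim(\Bun_G)$ and $|I|$. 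Substituting into each side of the claimed identity and applying the $*$-projection formula a second time to move $\CF_2$ (resp.\ $\CF_1$) past $\hr_!$, I obtain
$$\on{ev}^l_{\Bun_G}(\sH_{\CV}(\CF_1),\CF_2)\simeq \on{C}^\cdot_c\bigl(\CH_{G,X^I},\;\hl^*(\CF_1)\overset{*}\otimes \hr^*(\CF_2)\overset{*}\otimes s^*(\CM)\overset{*}\otimes \on{Sat}_I(V)\bigr)[d]$$
and
$$\on{ev}^l_{\Bun_G}(\CF_1,\sH_{\CV^\tau}(\CF_2))\simeq \on{C}^\cdot_c\bigl(\CH_{G,X^I},\;\hr^*(\CF_1)\overset{*}\otimes \hl^*(\CF_2)\overset{*}\otimes s^*(\CM)\overset{*}\otimes \on{Sat}_I(V^\tau)\bigr)[d].$$

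To match these two expressions I will apply to the second one the automorphism $\sigma_\CH$ of $\CH_{G,X^I}$ that exchanges $\hl$ and $\hr$ (and preserves $s$), i.e., the involution $(\CE_1,\CE_2,\ul x,\phi)\mapsto(\CE_2,\CE_1,\ul x,\phi^{-1})$. Since $\sigma_\CH$ is an isomorphism it preserves $\on{C}^\cdot_c$, and by \eqref{e:sigma and tau} it carries $\on{Sat}_I(V^\tau)$ to $\on{Sat}_I(V)$; this turns the second expression into the first. Naturality in $\CF_1,\CF_2,\CV$ is immediate from the construction, and ind-extending along $\CV$ then finishes the argument.

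The principal obstacle I anticipate is purely bookkeeping: verifying that the two sides incur the \emph{same} shift $d$ after the various applications of the ULA identities from Section \ref{sss:Hecke ULA}, the projection formulae, and the symmetry \eqref{e:sigma and tau}. The remainder of the argument is a formal manipulation of the 6-functor formalism; no new input beyond \propref{p:Hecke codefined} and \eqref{e:sigma and tau} is required.
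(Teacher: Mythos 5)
Your proof is correct, and it takes a genuinely different route from the paper's. The paper reduces to compact $\CF_1,\CF_2,\CV$, observes both sides are finite-dimensional, passes to linear duals, expresses them as $\CHom$-spaces into $\on{u}^{\on{naive}}_{\Bun_G}$, and then invokes the \emph{rigidity} of $\Rep(\cG)_\Ran$: the adjunction $\sH_\CV \dashv \sH_{\CV^\vee}$ lets one transfer the Hecke functor across the pairing, after which everything reduces to the isomorphism of \propref{p:tau and sigma} and the compatibility $\CV^{\tau\vee}\simeq\CV^{\vee\tau}$. Your argument instead never leaves the geometric side: it unwinds $\sH_\CV$ explicitly on $\CH_{G,X^I}$ via the ULA identities of \secref{sss:Hecke ULA} and the projection formula, bringing both sides of the claimed isomorphism to a single $\on{C}^\cdot_c$-expression on $\CH_{G,X^I}$, and then matches them by the swap automorphism $(\CE_1,\CE_2,\ul x,\phi)\mapsto(\CE_2,\CE_1,\ul x,\phi^{-1})$ together with \eqref{e:sigma and tau}. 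This has the advantage of not requiring the rigidity of $\Rep(\cG)_\Ran$ nor the detour through dual vector spaces (and in particular needs no separate compactness hypothesis on $\CF_1,\CF_2$ beyond what continuity already gives); the paper's approach, by contrast, is shorter once one regards \propref{p:tau and sigma} and rigidity as already available, and stays entirely within the categorical bookkeeping of Hecke kernels without unwinding the geometry. Your shift bookkeeping is consistent: both sides acquire the same overall shift $d=-2\dim(\Bun_G)-2|I|$, so the two expressions match on the nose after applying $\sigma_\CH^*$ and the identity $\sigma_\CH^*(\on{Sat}_I(V^\tau))\simeq\on{Sat}_I(V)$. One small correction: the inputs you actually use are the ULA presentations of \secref{sss:Hecke ULA} (not \propref{p:Hecke codefined} itself) and \eqref{e:sigma and tau}.
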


\begin{proof}

We can assume that $\CF_1,\CF_2,\CV$ are compact, so both sides in the lemma are finite-dimensional
vector spaces. Hence, it is sufficient to identify the corresponding dual vector spaces. 

\medskip

For $\CF'_1,\CF'_2\in  \Shv(\Bun_G)$, we have
$$\on{ev}^l_{\Bun_G}(\CF'_1,\CF'_2)^\vee:=
\on{C}^\cdot_c(\Bun_G,\CF'_1\overset{*}\otimes \CF'_2)^\vee\simeq
\CHom_{\Shv(\Bun_G\times \Bun_G)}(\CF'_1\boxtimes \CF'_2, \on{u}^{\on{naive}}_{\Bun_G}).$$

Recall (see \cite[Sect. 11.3]{AGKRRV}) that the category $\Rep(\cG)_\Ran$ is rigid. Let $\CV^\vee$ denote the
monoidal dual of $\CV$. 

\medskip

We have
$$\CHom_{\Shv(\Bun_G\times \Bun_G)}(\sH_\CV(\CF_1)\boxtimes \CF_2, \on{u}^{\on{naive}}_{\Bun_G})\simeq
\CHom_{\Shv(\Bun_G\times \Bun_G)}(\CF_1\boxtimes \CF_2, 
(\sH_{\CV^\vee}\boxtimes \on{Id}_{\Bun_G})(\on{u}^{\on{naive}}_{\Bun_G}))$$
and
$$\CHom_{\Shv(\Bun_G\times \Bun_G)}(\CF_1\boxtimes \sH_{\CV^\tau}(\CF_2), \on{u}^{\on{naive}}_{\Bun_G})
\simeq 
\CHom_{\Shv(\Bun_G\times \Bun_G)}(\CF_1\boxtimes \CF_2, 
(\on{Id}_{\Bun_G}\boxtimes \sH_{\CV^\tau{}^\vee})(\on{u}^{\on{naive}}_{\Bun_G})).$$

Note also that 
$$\CV^\tau{}^\vee\simeq \CV^\vee{}^\tau.$$

Hence, the assertion of the lemma follows from the isomorphism
$$(\sH_{\CV^\vee}\boxtimes \on{Id}_{\Bun_G})(\on{u}^{\on{naive}}_{\Bun_G}) \simeq
(\on{Id}_{\Bun_G}\boxtimes \sH_{\CV^\vee{}^\tau})(\on{u}^{\on{naive}}_{\Bun_G})$$
of \propref{p:tau and sigma}.

\end{proof}

\ssec{Pairings of cuspidal sheaves}

In this subsection we will discuss an application of \corref{c:two pairings BunG} to two ways to define 
pairings on cuspidal objects of $\Shv(\Bun_G)$.

\sssec{}

Let
$$\Shv(\Bun_G)_{\on{cusp}}\overset{\jmath}\hookrightarrow \Shv(\Bun_G)$$
be the full subcategory of \emph{cuspidal} objects, see \cite[Sect. 1.4]{DrGa2}. 

\medskip

Recall that every object
of $\Shv(\Bun_G)_{\on{cusp}}$ is a clean extension from a particular open substack
$\CU_0\subset \Bun_G$, whose intersection with every connected component of
$\Bun_G$ is quasi-compact (see \cite[Proposition 1.4.6]{DrGa2}). 
In particular, we have a canonical fully faithful embedding
$$\Shv(\Bun_G)_{\on{cusp}} \overset{\jmath_{\on{co}}}\hookrightarrow \Shv(\Bun_G)_{\on{co}}.$$

\medskip

Furthermore, in \cite[Theorem 2.2.7 and Corollary 3.3.2]{Ga1}, the following isomorphism is established:
\begin{equation} \label{e:Mir and cusp}
\Mir_{\Bun_G}\circ \jmath_{\on{co}}\simeq \jmath[-2(\dim(\Bun_G))-\dim(Z_G)],
\end{equation}
where $\dim(Z_G)$ is the center of $G$. 

\sssec{}

Combining \eqref{e:Mir and cusp} with \corref{c:two pairings BunG} we obtain

\begin{cor} \label{c:cuspidal}
Let 
$\CF_1,\CF_2\in \Shv_{\Nilp}(\Bun_G)$ be two objects with 
$$\CF_1=\jmath(\CF_{1,\on{cusp}}), \quad \CF_{1,\on{cusp}}\in \Shv(\Bun_G)_{\on{cusp}}.$$
Then there is a natural isomorphism
$$\on{C}^\cdot_c(\Bun_G,\CF_1\overset{*}\otimes \CF_2)[-2(\dim(\Bun_G))-\dim(Z_G)] \simeq 
\on{C}^\cdot_\blacktriangle(\Bun_G,\jmath_{\on{co}}(\CF_{1,\on{cusp}})\sotimes \CF_2).$$
\end{cor}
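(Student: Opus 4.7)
The plan is to deduce this directly from \corref{c:two pairings BunG} applied to the pair $(\CF_2,\jmath_{\on{co}}(\CF_{1,\on{cusp}}))$, using the shift isomorphism \eqref{e:Mir and cusp} to compute the miraculous functor on the second factor. The whole argument is essentially a diagram chase once the relevant inputs are assembled.

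First I would verify the membership $\jmath_{\on{co}}(\CF_{1,\on{cusp}})\in \Shv_\Nilp(\Bun_G)_{\on{co}}$. Since $\CF_{1,\on{cusp}}$ is cuspidal, it is a clean extension from an open substack $\CU_0\subset \Bun_G$ whose intersection with each connected component is quasi-compact. Working one connected component at a time, \thmref{t:trunc} provides a universally $\Nilp$-cotruncative quasi-compact open substack $\CU\subset \Bun_G$ containing $\CU_0$ on that component. The object $\jmath_{\on{co}}(\CF_{1,\on{cusp}})$ is then represented in the colimit description $\Shv(\Bun_G)_{\on{co}}=\underset{\CU}{\on{colim}_*}\,\Shv(\CU)$ by $j_\CU^*(\CF_1)$, which lies in $\Shv_\Nilp(\CU)$ because $\CF_1=\jmath(\CF_{1,\on{cusp}})\in \Shv_\Nilp(\Bun_G)$ by hypothesis. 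Hence $\jmath_{\on{co}}(\CF_{1,\on{cusp}})\in \Shv_\Nilp(\Bun_G)_{\on{co}}$.

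Next I would apply \eqref{e:Mir and cusp} to obtain a canonical isomorphism
$$\Mir_{\Bun_G}(\jmath_{\on{co}}(\CF_{1,\on{cusp}}))\simeq \jmath(\CF_{1,\on{cusp}})[-2\dim(\Bun_G)-\dim(Z_G)]=\CF_1[-2\dim(\Bun_G)-\dim(Z_G)].$$
Then the commutative square of \corref{c:two pairings BunG}, evaluated on the object $\CF_2\otimes \jmath_{\on{co}}(\CF_{1,\on{cusp}})\in \Shv_\Nilp(\Bun_G)\otimes \Shv_\Nilp(\Bun_G)_{\on{co}}$, yields an identification between the upper composite
$$\on{ev}_{\Bun_G}(\CF_2,\jmath_{\on{co}}(\CF_{1,\on{cusp}}))=\on{C}^\cdot_\blacktriangle(\Bun_G,\jmath_{\on{co}}(\CF_{1,\on{cusp}})\sotimes \CF_2)$$
and the lower composite (after applying $\on{Id}\otimes \Mir_{\Bun_G}$)
$$\on{ev}^l_{\Bun_G}(\CF_2,\CF_1)[-2\dim(\Bun_G)-\dim(Z_G)]=\on{C}^\cdot_c(\Bun_G,\CF_1\overset{*}\otimes \CF_2)[-2\dim(\Bun_G)-\dim(Z_G)],$$
giving the asserted isomorphism.

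There is no real obstacle here: the two nontrivial ingredients, namely \corref{c:two pairings BunG} and the shift formula \eqref{e:Mir and cusp}, are both already available. The only substantive check is the one above, that the clean cuspidal extension in the ``co''-world inherits the nilpotent singular support condition from its (ordinary) clean extension, and this is transparent via the colimit description of $\Shv(\Bun_G)_{\on{co}}$ together with the fact that universally $\Nilp$-cotruncative substacks are cofinal (\thmref{t:trunc}).
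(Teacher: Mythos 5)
Your proof is correct and follows exactly the same approach as the paper's (which is stated as a one-line combination of \eqref{e:Mir and cusp} with \corref{c:two pairings BunG}); you've simply spelled out the details, including the useful observation that $\jmath_{\on{co}}(\CF_{1,\on{cusp}})$ lands in $\Shv_\Nilp(\Bun_G)_{\on{co}}$, which is indeed needed to apply the commutative square.
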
 

\begin{rem}

\corref{c:cuspidal} justifies a degree of agnosticism that some authors used when considering
pairings on automorphic sheaves: do we want to use 
$$\on{C}^\cdot_c(\Bun_G,-\overset{*}\otimes -),$$
which is more natural in the Betti setting, or
$$\on{C}^\cdot_\blacktriangle(\Bun_G,-\sotimes -),$$
which is more natural in the de Rham setting? 

\medskip

Now, \corref{c:cuspidal} says that as long as one of the objects is cuspidal, the two give
the same result (up to a cohomological shift). 

\end{rem} 

\begin{rem}
Similarly to Remark \ref{r:one factor enough}, for the validity of \corref{c:cuspidal}, it is sufficient 
to require that only one of the objects $\CF_1,\CF_2$ have nilpotent singular support.
\end{rem}

\ssec{Restricting to quasi-compact substacks of $\Bun_G$}

In this subsection we will discuss a variant of \corref{c:two pairings BunG} for 
quasi-compact open substacks of $\Bun_G$. 

\sssec{} \label{sss:ps-u U Nilp}

Let $\CU\overset{j}\hookrightarrow \Bun_G$ be a Nilp-cotruncative quasi-compact open substack $\Bun_G$. 

\medskip

Set
$$\on{ps-u}_{\CU,\Nilp}:=(j\times j)^*(\on{ps-u}_{\Bun_G,\Nilp})\in \Shv(\CU\times \CU).$$

Since $\on{ps-u}_{\Bun_G,\Nilp}$ belongs to $\Shv_\Nilp(\Bun_G)\otimes \Shv_\Nilp(\Bun_G)$, we obtain that 
$\on{ps-u}_{\CU,\Nilp}$ belongs to the full subcategory
$$\Shv_\Nilp(\CU)\otimes \Shv_\Nilp(\CU)\subset \Shv(\CU\times \CU).$$

\sssec{}

Let 
$$\on{ev}^l_{\CU}:\Shv(\CU)\otimes \Shv(\CU)\to \Vect$$
denote the functor 
$$\CF_1,\CF_2\mapsto \on{C}^\cdot_c({\CU},\CF_1\overset{*}\otimes \CF_2).$$

\medskip

From \thmref{t:non-standard duality}, we obtain:

\begin{cor} \label{c:non-standard duality U} 
The functors
$$\Shv_{\Nilp}(\CU)\otimes \Shv_{\Nilp}(\CU) \to \Shv(\CU)\otimes \Shv(\CU)
\overset{\on{ev}^l_{\CU}}\longrightarrow \Vect$$
and 
$$\on{ps-u}_{\CU,\Nilp}\in \Shv_{\Nilp}(\CU)\otimes \Shv_{\Nilp}(\CU)$$
define a duality datum. 
\end{cor}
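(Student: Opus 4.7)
The plan is to mimic the proof of \thmref{t:non-standard duality} verbatim, working on $\CU$ in place of $\Bun_G$. Concretely, for $\CF\in \Shv_\Nilp(\CU)$ I need to identify
$$(\on{Id}\otimes \on{ev}^l_\CU)(\on{ps-u}_{\CU,\Nilp}\otimes \CF) \simeq (p_1)_!\bigl(\on{ps-u}_{\CU,\Nilp}\overset{*}\otimes p_2^*(\CF)\bigr)$$
with $\CF$ itself. The proof of \thmref{t:non-standard duality} rested on three ingredients: (i) the identification $\on{ps-u}_{\Bun_G,\Nilp}\simeq (\sP_{\Bun_G,\Nilp}\boxtimes\on{Id})(\on{ps-u}_{\Bun_G})$ of \propref{p:Nilp !-diag}; (ii) the fact that $\sP_{\Bun_G,\Nilp}$ is codefined by a kernel (\propref{p:Hecke codefined}); and (iii) that $\sP_{\Bun_G,\Nilp}$ acts as the identity on $\Shv_\Nilp(\Bun_G)$. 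Each has an analogue on $\CU$, and producing them is the heart of the argument.

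For the analogue of (i), I would first observe via base change along the Cartesian square comparing $\Delta_\CU$ and $\Delta_{\Bun_G}$ that $(j\times j)^*(\on{ps-u}_{\Bun_G})\simeq \on{ps-u}_\CU$. Combining this with the definition $\on{ps-u}_{\CU,\Nilp}=(j\times j)^*(\on{ps-u}_{\Bun_G,\Nilp})$ and \propref{p:Nilp !-diag}, and using that $\sP_{\Bun_G,\Nilp}$ is codefined by a kernel (so the functor $\sP_{\Bun_G,\Nilp}\boxtimes\on{Id}$ commutes with $*$-pullback along $\on{id}\times j$), together with the definition $\sP_{\CU,\Nilp}=j^*\circ\sP_{\Bun_G,\Nilp}\circ j_*$ of \secref{sss:proj U}, I would arrive at $\on{ps-u}_{\CU,\Nilp}\simeq (\sP_{\CU,\Nilp}\boxtimes\on{Id}_\CU)(\on{ps-u}_\CU)$. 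For (ii), $\sP_{\CU,\Nilp}$ inherits the codefined property from $\sP_{\Bun_G,\Nilp}$ together with the cotruncativeness of $\CU$ (which makes $j_*$ behave well kernel-wise); and (iii) is \corref{c:projector U} taken at $\CZ=\on{pt}$.

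With these analogues in hand, the triangle identity reduces to a short chain of isomorphisms: starting from $(p_1)_!(\on{ps-u}_{\CU,\Nilp}\overset{*}\otimes p_2^*(\CF))$, rewrite the kernel using the analogue of (i); pull $\sP_{\CU,\Nilp}$ outside $(p_1)_!$ using the codefined property from the analogue of (ii); recognize that $(p_1)_!(\on{ps-u}_\CU\overset{*}\otimes p_2^*(\CF))\simeq \CF$ by projection formula for $(\Delta_\CU)_!$; and conclude by the analogue of (iii) that $\sP_{\CU,\Nilp}(\CF)\simeq \CF$. The main obstacle is the bookkeeping in establishing the analogue of \propref{p:Nilp !-diag} on $\CU$, a kernel-level identification tracking compatibilities between $j^*$, $j_*$, $(j\times j)^*$ and $\sP_{\Bun_G,\Nilp}$; no new ideas are required beyond those already developed in Sections \ref{s:Nilp} and \ref{s:Verdier}.
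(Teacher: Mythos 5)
Your plan and the paper's proof take genuinely different routes, and the paper's is much shorter. The paper does \emph{not} re-establish the ingredients of \thmref{t:non-standard duality} on $\CU$. Instead, it unwinds the definition $\on{ps-u}_{\CU,\Nilp}=(j\times j)^*(\on{ps-u}_{\Bun_G,\Nilp})$, uses the $(j_!,j^*)$-adjunction together with the projection formula $j_!(\CA\overset{*}\otimes j^*\CB)\simeq j_!(\CA)\overset{*}\otimes\CB$ to convert $\on{ev}^l_\CU(\CF,j^*(-))$ into $\on{ev}^l_{\Bun_G}(j_!\CF,-)$, applies \thmref{t:non-standard duality} on $\Bun_G$ directly, and concludes with $j^*j_!\simeq\on{Id}$. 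No $\CU$-analogue of \propref{p:Nilp !-diag}, of the codefined property, or of \corref{c:projector U} is needed.

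Your proposed route has a concrete gap at the analogue of \propref{p:Nilp !-diag}. Tracing your base-change argument: starting from $(j\times j)^*\bigl((\sP_{\Bun_G,\Nilp}\boxtimes\on{Id}_{\Bun_G})(\on{ps-u}_{\Bun_G})\bigr)$, factoring $(j\times j)^*$ and commuting $\sP_{\Bun_G,\Nilp}\boxtimes\on{Id}$ past $\on{Id}_{\Bun_G}\boxtimes j^*$ leaves you with $\bigl((j^*\circ\sP_{\Bun_G,\Nilp})\boxtimes\on{Id}_\CU\bigr)$ applied to $(\on{Id}_{\Bun_G}\boxtimes j^*)(\on{ps-u}_{\Bun_G})$. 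Since $\on{ps-u}_{\Bun_G}=(\Delta_{\Bun_G})_!(\ul\sfe_{\Bun_G})$, the relevant base change is the $(-_!,-^*)$ one, so this last object is $(\Gamma_j)_!(\ul\sfe_\CU)=(j_!\boxtimes\on{Id}_\CU)(\on{ps-u}_\CU)$, \emph{not} $(j_*\boxtimes\on{Id}_\CU)(\on{ps-u}_\CU)$. (For $\on{u}^{\on{naive}}_{\Bun_G}=(\Delta_{\Bun_G})_*(\omega_{\Bun_G})$, as in \propref{p:u naive projectors}, the smooth base change does produce $j_*$; the sign of the diagonal pushforward matters.) What you actually obtain is $\on{ps-u}_{\CU,\Nilp}\simeq\bigl((j^*\circ\sP_{\Bun_G,\Nilp}\circ j_!)\boxtimes\on{Id}_\CU\bigr)(\on{ps-u}_\CU)$, and $j^*\circ\sP_{\Bun_G,\Nilp}\circ j_!$ is not the same kernel as $\sP_{\CU,\Nilp}=j^*\circ\sP_{\Bun_G,\Nilp}\circ j_*$: they disagree on the cofiber of $j_!\to j_*$, and the integral Hecke operator $\sP_{\Bun_G,\Nilp}$ spreads supports from $\Bun_G\setminus\CU$ back into $\CU$. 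Making them agree as kernels is essentially \corref{c:j and P}, which is conditional on \cite[Conjecture 14.1.8]{AGKRRV}. The argument can be salvaged unconditionally by running it with $j^*\circ\sP_{\Bun_G,\Nilp}\circ j_!$ in place of $\sP_{\CU,\Nilp}$ throughout (this functor is also defined and codefined by a kernel, and it acts as the identity on $\Shv_\Nilp(\CU)$ by universal $\Nilp$-cotruncativeness, so the analogues of (ii) and (iii) still hold), but at that point you have reinvented the $j_!$-reduction that constitutes the paper's three-line proof.
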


\begin{proof}

We need to show that for $\CF\in \Shv_{\Nilp}(\CU)$, 
$$(\on{ev}^l_\CU\otimes \on{Id})(\CF\otimes \on{ps-u}_{\CU,\Nilp})\simeq \CF.$$

We have
\begin{multline*}
(\on{ev}^l_\CU\otimes \on{Id})(\CF\otimes \on{ps-u}_{\CU,\Nilp})=
(\on{ev}^l_\CU\otimes \on{Id})(\CF\otimes (j^*\otimes j^*)(\on{ps-u}_{\Bun_G,\Nilp}))\simeq \\
\simeq 
(\on{ev}^l_{\Bun_G}\otimes \on{Id})(j_!(\CF)\otimes (\on{Id}\otimes j^*)(\on{ps-u}_{\Bun_G,\Nilp})) 
\simeq \\
\simeq j^*\left((\on{ev}^l_{\Bun_G}\otimes \on{Id})(j_!(\CF)\otimes \on{ps-u}_{\Bun_G,\Nilp})\right) 
\overset{\text{\thmref{t:non-standard duality}}}\simeq j^*\circ j_!(\CF)\simeq \CF.
\end{multline*}

\end{proof}

\sssec{}

Parallel to \thmref{t:miraculous and dualities}, we have:

\bigskip

\begin{cor} \label{c:two pairings U}
Suppose that $\CU$ is universally $\Nilp$-contruncative (in particular, the category $\Shv_\Nilp(\CU)$
is self-dual via Verdier duality). Then:

\smallskip

\noindent{\em(a)}
The two identifications
$$\Shv_\Nilp(\CU)\underset{\sim}{\overset{\sim}\rightrightarrows} \Shv_\Nilp(\CU)^\vee$$
of \propref{p:usual duality U} and \corref{c:non-standard duality U}, respectively,
are intertwined by the miraculous functor $\Mir_\CU$. 

\smallskip

\noindent{\em(b)} 
The diagram
$$
\CD 
\Shv_{\Nilp}(\CU)\otimes \Shv_{\Nilp}(\CU) @>>> \Shv(\CU) \otimes \Shv(\CU) @>{\on{ev} _\CU}>>  \Vect \\
@V{\on{Id}\otimes \Mir_{\CU}}VV & & @VV{\on{Id}}V \\
\Shv_{\Nilp}(\CU)\otimes \Shv_{\Nilp}(\CU)  @>>> \Shv(\CU)\otimes \Shv(\CU) @>{\on{ev}^l_\CU}>>  \Vect
\endCD
$$
commutes.

\smallskip

\noindent{\em(c)} The self-duality of point (a) identifies with the miraculous self-duality of
\corref{c:Mir duality Nilp U}. 

\end{cor}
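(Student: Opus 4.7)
The plan is to carry the proof of \thmref{t:miraculous and dualities} over to $\CU$: all three parts reduce to showing that the miraculous functor $\Mir_\CU$ carries the unit $\on{u}_{\CU,\Nilp}$ of the Verdier self-duality on $\Shv_\Nilp(\CU)$ (see \propref{p:usual duality U}) to the unit $\on{ps-u}_{\CU,\Nilp}$ of the non-standard self-duality (see \corref{c:non-standard duality U}). This is point (a).

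To prove (a), I would first rewrite $\on{u}_{\CU,\Nilp}\simeq (\sP_{\CU,\Nilp}\boxtimes \on{Id}_\CU)(\on{u}_\CU)$ via \propref{p:u naive projectors}. Since $\sP_{\CU,\Nilp}$ and $\Mir_\CU$ act on different tensor factors and both are defined by kernels, they commute, giving $(\on{Id}_\CU\boxtimes \Mir_\CU)(\on{u}_{\CU,\Nilp})\simeq (\sP_{\CU,\Nilp}\boxtimes \on{Id}_\CU)(\on{Id}_\CU\boxtimes \Mir_\CU)(\on{u}_\CU)$. Next, $(\on{Id}_\CU\boxtimes \Mir_\CU)(\on{u}_\CU)\simeq \on{ps-u}_\CU$ tautologically, as $\on{u}_\CU$ is the identity kernel and $\on{ps-u}_\CU$ is the kernel defining $\Mir_\CU$. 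It remains to identify $(\sP_{\CU,\Nilp}\boxtimes \on{Id}_\CU)(\on{ps-u}_\CU)$ with $\on{ps-u}_{\CU,\Nilp}$, which is the $\CU$-analog of \propref{p:Nilp !-diag}. I would derive this by pullback from $\Bun_G$: by construction $\on{ps-u}_{\CU,\Nilp}=(j\times j)^*(\on{ps-u}_{\Bun_G,\Nilp})$, which by \propref{p:Nilp !-diag} equals $(j\times j)^*(\sP_{\Bun_G,\Nilp}\boxtimes \on{Id})(\on{ps-u}_{\Bun_G})$; commuting $(\on{id}\times j)^*$ past $\sP_{\Bun_G,\Nilp}\boxtimes \on{Id}$ is automatic because $\sP_{\Bun_G,\Nilp}$ is codefined by a kernel (\propref{p:Hecke codefined}), while commuting $(j\times \on{id})^*$ past $\sP_{\Bun_G,\Nilp}$ uses the intertwining \eqref{e:j and P}; finally base change yields $(j\times j)^*(\on{ps-u}_{\Bun_G})\simeq \on{ps-u}_\CU$.

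Granted (a), parts (b) and (c) are formal. Part (b) follows from the standard unit-counit dictionary for duality data: if $(\on{Id}\otimes F)(\on{u}_1)\simeq \on{u}_2$ then $\on{ev}_1\simeq \on{ev}_2\circ (\on{Id}\otimes F)$, which is exactly the commutativity of the diagram in (b). For part (c), recall that the counit of the miraculous self-duality of \corref{c:Mir duality Nilp U} is $\on{ev}_\CU\circ (\on{Id}\otimes \Mir_\CU^{-1})$; combining with (b) applied with $\Mir_\CU^{-1}(\CF_2)$ in place of $\CF_2$, this counit restricts on $\Shv_\Nilp(\CU)\otimes \Shv_\Nilp(\CU)$ to $\on{ev}^l_\CU$, i.e.\ to the counit of the non-standard self-duality. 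The main technical obstacle is the bookkeeping in the $\CU$-analog of \propref{p:Nilp !-diag}, where one must keep careful track of which of the two kernel-properties of $\sP$ guaranteed by \propref{p:Hecke codefined} is being used for each commutation.
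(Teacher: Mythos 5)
Your high-level plan is exactly the paper's: reduce (a) to showing that $\on{Id}\otimes\Mir_\CU$ carries the Verdier unit $\on{u}_{\CU,\Nilp}$ of \propref{p:usual duality U} to the non-standard unit $\on{ps-u}_{\CU,\Nilp}$ of \corref{c:non-standard duality U}, and observe that (b) and (c) are formal consequences. This mirrors the proof of \thmref{t:miraculous and dualities}, as the paper intends.

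The gap is in your route to the $\CU$-analog of \propref{p:Nilp !-diag}, namely the identity $(\sP_{\CU,\Nilp}\boxtimes \on{Id}_\CU)(\on{ps-u}_\CU)\simeq \on{ps-u}_{\CU,\Nilp}$. You commute $(j\times\on{id})^*$ past $\sP_{\Bun_G,\Nilp}\boxtimes\on{Id}$ using \eqref{e:j and P}, but \eqref{e:j and P} --- both as plain functors and, in the form you need, as functors defined by kernels (\corref{c:j and P}) --- is established in the paper only under \cite[Conjecture 14.1.8]{AGKRRV}; see the ``From now on, for the duration of this subsection we will assume\dots'' caveat in the constraccessibility subsection. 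By contrast, \corref{c:two pairings U} is unconditional, so your proof as written only establishes it under the conjecture. The fix is to avoid that commutation altogether and instead mimic the proof of \propref{p:u naive projectors} verbatim with $\on{ps-u}$ in place of $\on{u}$: unwind the definition $\sP_{\CU,\Nilp}=j^*\circ\sP_{\Bun_G,\Nilp}\circ j_*$, and use the base-change identity $(j_*\boxtimes\on{Id}_\CU)(\on{ps-u}_\CU)\simeq(\on{Id}_{\Bun_G}\boxtimes j^*)(\on{ps-u}_{\Bun_G})$; both sides equal $(\on{Graph}_j)_!(\ul\sfe_\CU)$, using that the graph of $j$ is closed in $\Bun_G\times\CU$, so $(\on{id}\times j)_*$ and $(\on{id}\times j)_!$ agree on sheaves supported on it. After that, the only commutation needed is moving $(\on{Id}_{\Bun_G}\boxtimes j^*)$ past $\sP_{\Bun_G,\Nilp}\boxtimes\on{Id}_{\Bun_G}$, which acts on the other factor and is covered by \propref{p:Hecke codefined}. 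This chain never invokes \eqref{e:j and P} and is unconditional.
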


\begin{proof}

Point (a) follows in the same way as \thmref{t:miraculous and dualities}. Point (b)
follows formally from point (a). Point (c) follows formally from point (b).

\end{proof}


%
%
%

\sssec{}

Finally, we claim that we have the following analog of \thmref{t:two pairings amplified}: 

\begin{cor} \label{c:two pairings amplified U}
The pairings
$\on{ev}^l_{\CU}$ and $\on{ev}^{\on{Mir}}_{\CU}$ 
$$\Shv(\CU)\otimes \Shv(\CU) \to \Vect$$
agree on the subcategory  
$$\Shv_\Nilp(\CU)\otimes \Shv(\CU) \subset 
\Shv(\CU)\otimes \Shv(\CU).$$
\end{cor}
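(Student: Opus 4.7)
The plan is to reduce Corollary \ref{c:two pairings amplified U} to the already-established \thmref{t:two pairings amplified} for $\Bun_G$, via pushforward along the open immersion $j:\CU\hookrightarrow \Bun_G$. Fix $\CF_1\in \Shv_\Nilp(\CU)$ and $\CF_2\in \Shv(\CU)$. Since $\CU$ is $\Nilp$-cotruncative, the object $j_!\CF_1$ lies in $\Shv_\Nilp(\Bun_G)$, while $j_!\CF_2\in \Shv(\Bun_G)$, so \thmref{t:two pairings amplified} directly yields
\[
\on{ev}^l_{\Bun_G}(j_!\CF_1, j_!\CF_2) \simeq \on{ev}^{\on{Mir}}_{\Bun_G}(j_!\CF_1, j_!\CF_2).
\]

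The remaining task is to match each side with the corresponding $\CU$-pairing. For the left-hand side, the projection formula for the open embedding $j$ gives $j_!\CF_1 \overset{*}\otimes j_!\CF_2 \simeq j_!(\CF_1 \overset{*}\otimes \CF_2)$, which, combined with the identity $\on{C}^\cdot_c(\Bun_G, j_!(-)) \simeq \on{C}^\cdot_c(\CU, -)$, produces the isomorphism $\on{ev}^l_{\Bun_G}(j_!\CF_1, j_!\CF_2) \simeq \on{ev}^l_\CU(\CF_1, \CF_2)$. For the right-hand side, I would invoke equation \eqref{e:Mir and U}, i.e., $\Mir_{\Bun_G}\circ j_{*,\on{co}} \simeq j_! \circ \Mir_\CU$, which upon inversion yields $\Mir^{-1}_{\Bun_G}(j_!\CF_2) \simeq j_{*,\on{co}}(\Mir^{-1}_\CU \CF_2)$. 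The $\sotimes$-analog of the projection formula for the pairing $\on{ev}_{\Bun_G}$ between $\Shv(\Bun_G)$ and $\Shv(\Bun_G)_{\on{co}}$ (which reduces the integral over $\Bun_G$ to one over $\CU$) then gives $\on{ev}^{\on{Mir}}_{\Bun_G}(j_!\CF_1, j_!\CF_2) \simeq \on{ev}^{\on{Mir}}_\CU(\CF_1, \CF_2)$.

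Chaining the three resulting isomorphisms completes the proof. The argument requires no new conceptual input beyond what has already been done for $\Bun_G$; the only mild point requiring care is the bookkeeping of the ``co''-structure in the second identification, where one must verify that the $\sotimes$-projection formula is compatible with the pairing on $\Shv(\Bun_G)\otimes \Shv(\Bun_G)_{\on{co}}$ of \secref{sss:Verdier on BunG}. This is routine given the kernel-formalism of \secref{s:non qc}, so no genuine obstacle is anticipated.
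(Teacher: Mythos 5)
Your proof is correct and follows essentially the same route as the paper: push forward to $\Bun_G$ via $j_!$, apply \thmref{t:two pairings amplified} there, and translate back to $\CU$ using the compatibility $\Mir_{\Bun_G}\circ j_{*,\on{co}}\simeq j_!\circ\Mir_\CU$ of \eqref{e:Mir and U} together with the projection formulas for $\on{ev}^l$ and $\on{ev}$. The paper organizes the chain of isomorphisms slightly differently (starting from $\on{ev}^l_\CU$ and unwinding step by step to $\on{ev}_\CU(\CF_1,\Mir^{-1}_\CU\CF_2)$), but the ingredients and ordering of ideas coincide.
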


\begin{proof}

Consider objects $\CF_1\in \Shv_\Nilp(\CU)$ and $\CF_2\in \Shv(\CU)$. We have
\begin{multline*}
\on{ev}^l_{\CU}(\CF_1,\CF_2)\simeq 
\on{ev}^l_{\Bun_G}(j_!(\CF_1),j_!(\CF_2)) \overset{\text{\eqref{t:two pairings amplified}}} \simeq
\on{ev}_{\Bun_G}(j_!(\CF_1),\Mir^{-1}_{\Bun_G}\circ j_!(\CF_2)) \simeq \\
\simeq \on{ev}_{\Bun_G}(j_!(\CF_1),j_{*,\on{co}}\circ \Mir^{-1}_{\CU}(\CF_2)) \simeq \on{ev}_{\CU}(j^*\circ j_!(\CF_1),\Mir^{-1}_{\CU}(\CF_2))\simeq 
\on{ev}_{\CU}(\CF_1,\Mir^{-1}_{\CU}(\CF_2)),
\end{multline*}
as required. 

\end{proof}

\begin{rem}
As in Remark \ref{r:one factor enough}, we can reformulate \corref{c:two pairings amplified U} as saying that the following diagram commutes: 
$$
\CD 
\Shv_\Nilp(\CU)\otimes \Shv(\CU) @>>> 
\Shv(\CU)\otimes \Shv(\CU) @>{\on{ev} _{\CU}}>>  \Vect \\
@V{\on{Id}\otimes \Mir_{\CU}}VV & & @VV{\on{Id}}V \\
\Shv_\Nilp(\CU)\otimes \Shv(\CU) @>>> 
\Shv(\CU)\otimes \Shv(\CU) @>{\on{ev}^l_{\CU}}>>  \Vect.
\endCD
$$

\medskip

I.e., unlike the commutative diagram in \corref{c:two pairings U}(b), 
we only need one of the factors to have singular support in $\Nilp$. 
\end{rem}

\section{An intrinsic characterization of sheaves with nilpotent singular support} \label{s:adj}

The goal of this section is to show that the full subcategory
$\Shv_\Nilp(\Bun_G)^{\on{constr}}\subset \Shv(\Bun_G)$ can be characterized by an intrinsic 
categorical property (see \thmref{t:char of Nilp adj}): 

\medskip

This theorem says that a constructible object of $\Shv(\Bun_G)$ belongs to $\Shv_\Nilp(\Bun_G)$ if and only if
the functor
$$\CF'\mapsto \on{C}^\cdot_\blacktriangle(\Bun_G,\CF\sotimes \CF')$$
admits a right adjoint \emph{as a functor defined by a kernel}\footnote{In the case of D-modules, the above condition
is equivalent to the functor in question preserving compactness.}. 

\ssec{Comparison of two pairings, revisited}

In this subsection we revisit the isomorphism of \corref{c:two pairings U}(b) from the point of view
of the material in \secref{s:ker}.

\sssec{}

Recall that, according to \secref{sss:Mir and ev}, if $\CY$ is a quasi-compact algebraic stack, we have 
a natural transformation
\begin{equation} \label{e:ident pair 2 gen}
\on{ev}^l_\CY(\CF_1,\Mir_\CY(\CF_2)) \to \on{ev}_\CY(\CF_1,\CF_2)
\end{equation}
as functors
$$\Shv(\CY)\otimes \Shv(\CY)\to \Vect.$$

Using \secref{sss:Mir and j}, this natural transformation automatically extends to the case when $\CY$ is not necessarily
quasi-compact, when we view both sides as functors
$$\Shv(\CY)\otimes \Shv(\CY)_{\on{co}}\to \Vect.$$

\medskip

In particular, we obtain a natural transformation
\begin{equation} \label{e:ident pair 2}
\on{ev}^l_{\Bun_G}(\CF_1,\Mir_{\Bun_G}(\CF_2)) \to \on{ev}_{\Bun_G}(\CF_1,\CF_2), \quad \CF_1\in \Shv(\Bun_G),\, \CF_2\in \Shv(\Bun_G)_{\on{co}}.
\end{equation}

\sssec{}

Recall now that according to \corref{c:two pairings BunG}, we have a canonical isomorphism
\begin{equation} \label{e:ident pair 1}
\on{C}^\cdot_c(\Bun_G,\CF_1\overset{*}\otimes \Mir_\CU(\CF_2)) \simeq 
\on{C}^\cdot_\blacktriangle(\Bun_G,\CF_1\sotimes \CF_2), \quad 
\CF_1\in  \Shv_\Nilp(\Bun_G),\,\, \CF_2\in \Shv_\Nilp(\Bun_G)_{\on{co}}.
\end{equation}

\medskip
 
The goal of this subsection is to prove the following assertion:

\begin{prop} \label{p:ident pair 1 2}
For $\CF_1\in  \Shv_\Nilp(\Bun_G)$ and $\CF_2\in \Shv_\Nilp(\Bun_G)_{\on{co}}$, the 
map \eqref{e:ident pair 2} identifies with the
isomorphism  \eqref{e:ident pair 1}.
\end{prop}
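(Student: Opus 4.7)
The plan is to show that both maps descend from the single tautological identification of kernels
\[(\on{Id}_{\Bun_G}\boxtimes \Mir_{\Bun_G})(\on{u}_{\Bun_G,\on{co}_2}) \simeq \on{ps-u}_{\Bun_G},\]
which was already the decisive input to the proof of \thmref{t:miraculous and dualities}.

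First I would unwind \eqref{e:ident pair 2 gen}, and hence \eqref{e:ident pair 2}, using the kernel formalism of Sections \ref{s:ker} and \ref{s:non qc}. The pairing $\on{ev}_{\Bun_G}\colon \Shv(\Bun_G) \otimes \Shv(\Bun_G)_{\on{co}} \to \Vect$ is by construction the functor defined by the kernel $\on{u}_{\Bun_G,\on{co}_2}$, whereas $\on{ev}^l_{\Bun_G}(-,\Mir_{\Bun_G}(-))$ is the pairing defined by the kernel
$$(\on{Id}_{\Bun_G}\boxtimes \Mir_{\Bun_G})(\on{u}_{\Bun_G,\on{co}_2}) \in \Shv(\Bun_G\times\Bun_G),$$
viewed through the forgetful functor from $\Shv(\Bun_G\times\Bun_G)_{\on{co}_2}$. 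The natural transformation \eqref{e:ident pair 2 gen}, and therefore \eqref{e:ident pair 2}, is by definition the map induced by the tautological isomorphism displayed above.

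Next I would revisit the derivation of \eqref{e:ident pair 1}. By \corref{c:two pairings BunG}, this isomorphism comes from the commutativity of a diagram, which in turn was deduced in \thmref{t:miraculous and dualities} from the very same tautological isomorphism after applying $\sP\boxtimes \on{Id}_{\Bun_G}$: the unit $\on{u}_{\Bun_G,\Nilp,\on{co}_2}$ of the standard duality on $\Shv_\Nilp(\Bun_G)$ arises from $\on{u}_{\Bun_G,\on{co}_2}$ via $\sP\boxtimes \on{Id}_{\Bun_G}$ (by \propref{p:unit Verdier BunG}), the unit $\on{ps-u}_{\Bun_G,\Nilp}$ of the non-standard duality arises from $\on{ps-u}_{\Bun_G}$ by the same operation (by \propref{p:Nilp !-diag}), and applying $\on{Id}_{\Bun_G}\boxtimes \Mir_{\Bun_G}$ intertwines them via the tautological isomorphism. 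Since $\sP$ acts as the identity on $\Shv_\Nilp(\Bun_G)$, evaluating the natural transformation \eqref{e:ident pair 2} on $\CF_1 \in \Shv_\Nilp(\Bun_G)$ and $\CF_2 \in \Shv_\Nilp(\Bun_G)_{\on{co}}$ produces exactly the isomorphism \eqref{e:ident pair 1}.

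The main technical obstacle is careful bookkeeping: one must trace the kernels through the $\on{co}_1$ and $\on{co}_2$ variants of $\Shv(\Bun_G\times\Bun_G)$ and the forgetful functors relating them, in order to verify that the natural transformation coming from the general formalism and the one built by hand in \corref{c:two pairings BunG} agree as morphisms of functors, not merely as abstract maps between the same pair of endpoints. This is facilitated by the fact recalled in \secref{sss:BunG Mir} that $\Bun_G$ is miraculous, so $\Mir_{\Bun_G}$ is invertible as a functor defined by a kernel, and all manipulations take place inside the 2-category of kernels of \secref{s:ker}.
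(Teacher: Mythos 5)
Your proposal starts from a premise that is not correct: you assert that ``the natural transformation \eqref{e:ident pair 2 gen}, and therefore \eqref{e:ident pair 2}, is by definition the map induced by the tautological isomorphism'' $(\on{Id}_{\Bun_G}\boxtimes \Mir_{\Bun_G})(\on{u}_{\Bun_G,\on{co}_2}) \simeq \on{ps-u}_{\Bun_G}$. It is not. By \secref{sss:Mir and ev}, the map \eqref{e:ident pair 2 gen} is the value on $\CF_1\boxtimes \on{ps-u}_{\Bun_G}\boxtimes \CF_2$ of the ``left-to-right'' base change transition \eqref{e:left to right}, which is assembled from the genuine (non-invertible) natural transformations of Sects. \ref{sss:strange base change 1}--\ref{sss:strange base change 2}. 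If \eqref{e:ident pair 2} were simply the transport of structure along a tautological kernel identification, it would automatically be an isomorphism for all $\CF_1,\CF_2$ --- but it is not: its being an isomorphism precisely characterizes $\CF$ being defined and codefined by a kernel (\propref{p:left via right}), and this is the content that \corref{c:ident pair 1 2} and \thmref{t:Nilp admits right adjoint U} extract from the proposition. So a proof that concludes ``both maps come from the same tautological iso, done'' would prove too much.

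What you correctly identify is how \eqref{e:ident pair 1} arises: from the tautological kernel isomorphism, pushed through $\sP_{\Bun_G,\Nilp}\boxtimes\on{Id}_{\Bun_G}$ via \propref{p:unit Verdier BunG} and \propref{p:Nilp !-diag}, as in \thmref{t:miraculous and dualities}. The missing step is therefore a comparison: one must show that the structural base-change transformation \eqref{e:left-to-right trans}, evaluated on $\on{ps-u}_{\Bun_G}$, descends along the $\sP\boxtimes\sP$ projection to the duality-theoretic isomorphism. This is exactly what the commutative diagram \eqref{e:ev com diag Nilp} encodes, and establishing it is not mere bookkeeping: it requires the commutative diagram \eqref{e:ev com gen} with Hecke functors $\sH_{\CV_1},\sH_{\CV_2}$, whose commutativity relies on the Hecke functors being \emph{both defined and codefined} by kernels (\propref{p:Hecke codefined}), together with \lemref{l:l pair and Hecke} and \lemref{l:dual of Hecke} to transport $\sH_{\CV_i}$ across the two evaluation pairings. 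Your proposal does not engage with this Hecke-compatibility input at all, and without it the two maps are just two morphisms with the same source and target, with no reason to coincide.
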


\bigskip
 
Note that \propref{p:ident pair 1 2} formally implies:

\bigskip

\begin{cor} \label{c:ident pair 1 2}
Let $\CU$ be a universally $\Nilp$-cotruncative quasi-compact open substack of $\Bun_G$. 
Then for $\CF_1,\CF_2\in \Shv_\Nilp(\CU)$, the map 
\begin{equation} \label{e:ident pair 2 U}
\on{ev}^l_\CU(\CF_1,\Mir_\CY(\CF_2)) \to \on{ev}_\CU(\CF_1,\CF_2)
\end{equation}
of \eqref{e:ident pair 2 gen} identifies with the
isomorphism of \corref{c:two pairings U}(b).
\end{cor}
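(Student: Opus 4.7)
The plan is to deduce this corollary directly from \propref{p:ident pair 1 2} by transporting the statement along the open embedding $j\colon\CU\hookrightarrow\Bun_G$.

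First, I would rewrite both sides of \eqref{e:ident pair 2 U} in terms of the corresponding pairings on $\Bun_G$. Using the projection formula together with the fact that $j_!$ realizes the $!$-pushforward in both the ordinary and $\on{co}$-categories, one obtains natural isomorphisms
\[
\on{ev}^l_\CU(\CF_1,\Mir_\CU(\CF_2)) \simeq \on{ev}^l_{\Bun_G}\bigl(j_!\CF_1,\,j_!\Mir_\CU(\CF_2)\bigr),
\qquad
\on{ev}_\CU(\CF_1,\CF_2)\simeq \on{ev}_{\Bun_G}\bigl(j_!\CF_1,\,j_{*,\on{co}}\CF_2\bigr).
\]
By \eqref{e:Mir and U}, $j_!\Mir_\CU(\CF_2)\simeq \Mir_{\Bun_G}\circ j_{*,\on{co}}(\CF_2)$. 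Since $\CU$ is assumed universally $\Nilp$-cotruncative, $j_!\CF_1$ lies in $\Shv_\Nilp(\Bun_G)$ and $j_{*,\on{co}}\CF_2$ lies in $\Shv_\Nilp(\Bun_G)_{\on{co}}$, so we are in the domain of applicability of \propref{p:ident pair 1 2}.

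Second, I would verify that under these identifications the natural transformation \eqref{e:ident pair 2 U} on $\CU$ becomes the natural transformation \eqref{e:ident pair 2} on $\Bun_G$ evaluated on the pair $(j_!\CF_1,\,j_{*,\on{co}}\CF_2)$. This is a naturality statement: the construction of \eqref{e:ident pair 2 gen} from \secref{sss:Mir and ev} is given uniformly in the stack $\CY$ through the formalism of functors defined by kernels, and it is manifestly compatible with open restriction. Granted this, \propref{p:ident pair 1 2} identifies this transformation with the isomorphism \eqref{e:ident pair 1} evaluated on $(j_!\CF_1,\,j_{*,\on{co}}\CF_2)$.

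Third, I would close the loop by recalling that the isomorphism of \corref{c:two pairings U}(b) is obtained by precisely the same transport: namely, it is deduced from \corref{c:two pairings BunG} by inserting $j_!\CF_1$ in the first slot and $j_{*,\on{co}}\CF_2$ in the second, and using \eqref{e:Mir and U}. Hence the two isomorphisms on $\Shv_\Nilp(\CU)\otimes\Shv_\Nilp(\CU)$ agree.

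The main obstacle is the second step: one must unwind carefully how the natural transformation \eqref{e:ident pair 2 gen} is assembled from the unit/counit data for the kernel $\on{ps-u}_\CY$ in \secref{s:ker}, and confirm that this assembly commutes with the pair of functors $(j_!,j_{*,\on{co}})$. Given the formal nature of the construction and the compatibility of the miraculous functor with open embeddings recorded in \secref{sss:Mir and j}, this ultimately reduces to a commutative-diagram check at the level of kernels, but the bookkeeping for the three flavors of $\on{co}$-categories is the delicate part.
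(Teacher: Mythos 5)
Your proof is essentially correct, and it spells out a reasonable version of the formal argument that the paper leaves implicit (the paper simply asserts that the corollary ``formally follows'' from \propref{p:ident pair 1 2} without giving details). Steps 1 and 2 are sound: the projection-formula identifications of the pairings under $(j_!,j_{*,\on{co}})$ are correct, the universal $\Nilp$-cotruncativeness of $\CU$ guarantees $j_!\CF_1\in\Shv_\Nilp(\Bun_G)$ while $j_{*,\on{co}}\CF_2\in\Shv_\Nilp(\Bun_G)_{\on{co}}$ holds by definition of the ``co''-category, and the compatibility of the natural transformation \eqref{e:ident pair 2 gen} with open restriction is essentially built into its construction, since the extension of \eqref{e:ident pair 2 gen} to the non-quasi-compact case is \emph{defined} via \secref{sss:Mir and j} as the one assembled from its quasi-compact restrictions.

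The one place where you assert more than you verify is step 3. You claim that the isomorphism of \corref{c:two pairings U}(b) ``is obtained by precisely the same transport'' from \corref{c:two pairings BunG}. That is not how the paper derives it: the paper establishes \corref{c:two pairings U}(a) by re-running the argument of \thmref{t:miraculous and dualities} directly on $\CU$ (matching $\on{u}_{\CU,\Nilp}$ with $(\on{Id}\otimes\Mir_\CU^{-1})(\on{ps-u}_{\CU,\Nilp})$), and then deduces (b) formally from (a). So what you actually need is the compatibility of the two resulting isomorphisms under $(j_!,j_{*,\on{co}})$ and the identifications $\Mir_{\Bun_G}\circ j_{*,\on{co}}\simeq j_!\circ\Mir_\CU$. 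This compatibility \emph{does} hold, but it is a separate check, and the reason it works is that the unit objects on $\CU$ are themselves \emph{defined} by restriction: $\on{u}_{\CU,\Nilp}=(j\times j)^*(\on{u}^{\on{naive}}_{\Bun_G,\Nilp})$ (\secref{sss:u U Nilp}) and $\on{ps-u}_{\CU,\Nilp}=(j\times j)^*(\on{ps-u}_{\Bun_G,\Nilp})$ (\secref{sss:ps-u U Nilp}), so the duality data downstairs is literally the restriction of the data upstairs. If you add that observation explicitly, the argument closes.
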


\bigskip
 
The rest of this subsection is devoted to the proof of \propref{p:ident pair 1 2}.

\sssec{}

According to \secref{sss:Mir and ev}, the map \eqref{e:ident pair 2} is given by 
\begin{multline*}
\on{ev}^l_{\Bun_G} \circ 
(\on{Id}_{\Bun_G\times \Bun_G}\boxtimes \on{ev}_{\Bun_G})(\CF_1\boxtimes \on{ps-u}_{\Bun_G}\boxtimes \CF_2)\to \\
\to \on{ev}_{\Bun_G} \circ
(\on{ev}^l_{\Bun_G}\boxtimes \on{Id}_{\Bun_G\times \Bun_G})(\CF_1\boxtimes \on{ps-u}_{\Bun_G}\boxtimes \CF_2)
\end{multline*}
where
$$\on{ev}^l_{\Bun_G}:=\on{C}^\cdot_c(\Bun_G,-)\circ \Delta^*_{\Bun_G} \text{ and }
\on{ev}_{\Bun_G}:=\on{C}^\cdot_\blacktriangle(\Bun_G,-)\circ \Delta^!_{\Bun_G},$$
as functors 
$$\Shv(\Bun_G\times \Bun_G)\to \Shv(\on{pt}) \text{ and }
\Shv(\Bun_G\times \Bun_G)_{\on{co}_2}\to \Shv(\on{pt}),$$
codefined and defined by kernels, respectively. 

\medskip

Consider now the corresponding map
\begin{multline*}
\on{ev}^l_{\Bun_G}\circ 
(\on{Id}_{\Bun_G\times \Bun_G}\boxtimes \on{ev}_{\Bun_G})(\CF_1\boxtimes \on{ps-u}_{\Bun_G,\Nilp}\boxtimes \CF_2)\to \\
\to \on{ev}_{\Bun_G} \circ
(\on{ev}^l_{\Bun_G}\boxtimes \on{Id}_{\Bun_G\times \Bun_G})(\CF_1\boxtimes \on{ps-u}_{\Bun_G,\Nilp}\boxtimes \CF_2).
\end{multline*}

We claim that we have a commutative diagram

\bigskip

\begin{equation} \label{e:ev com diag Nilp}
\xy
(0,0)*+{\on{ev}^l_{\Bun_G}\circ 
(\on{Id}_{\Bun_G\times \Bun_G}\boxtimes \on{ev}_{\Bun_G})(\CF_1\boxtimes \on{ps-u}_{\Bun_G}\boxtimes \CF_2)}="A";
(50,-20)*+{\on{ev}_{\Bun_G} \circ
(\on{ev}^l_{\Bun_G}\boxtimes \on{Id}_{\Bun_G\times \Bun_G})(\CF_1\boxtimes \on{ps-u}_{\Bun_G}\boxtimes \CF_2) }="B";
(0,-50)*+{\on{ev}^l_{\Bun_G}\circ 
(\on{Id}_{\Bun_G\times \Bun_G}\boxtimes \on{ev}_{\Bun_G})(\CF_1\boxtimes \on{ps-u}_{\Bun_G,\Nilp}\boxtimes \CF_2)}="C";
(50,-80)*+{\on{ev}_{\Bun_G} \circ
(\on{ev}^l_{\Bun_G}\boxtimes \on{Id}_{\Bun_G\times \Bun_G})(\CF_1\boxtimes \on{ps-u}_{\Bun_G,\Nilp}\boxtimes \CF_2)}="D";
{\ar@{->}^{\text{\eqref{e:left to right}}} "A";"B"};
{\ar@{->}_{\text{\eqref{e:left to right}}} "C";"D"};
{\ar@{->}^{\sim} "C";"A"};
{\ar@{->}_{\sim} "D";"B"};
\endxy
\end{equation}
%
with vertical arrows being isomorphisms.

\medskip

The existence of this commutative diagram implies the assertion of \propref{p:ident pair 1 2}. Indeed, unwinding
the definitions, we obtain that slanted bottom arrow in \eqref{e:ev com diag Nilp} fits into the commutative diagram
$$
\xy
(0,-40)*+{\on{C}^\cdot_c(\Bun_G,\CF_1\overset{*}\otimes \Mir_{\Bun_G}(\CF_2))}="A";
(50,-70)*+{\on{C}^\cdot_\blacktriangle(\Bun_G,\CF_1\sotimes \CF_2).}="B";
(0,0)*+{\on{ev}^l_{\Bun_G}\circ 
(\on{Id}_{\Bun_G\times \Bun_G}\boxtimes \on{ev}_{\Bun_G})(\CF_1\boxtimes \on{ps-u}_{\Bun_G,\Nilp}\boxtimes \CF_2)}="C";
(50,-30)*+{\on{ev}_{\Bun_G} \circ
(\on{ev}^l_{\Bun_G}\boxtimes \on{Id}_{\Bun_G\times \Bun_G})(\CF_1\boxtimes \on{ps-u}_{\Bun_G,\Nilp}\boxtimes \CF_2)}="D";
{\ar@{->}^{\text{\eqref{e:ident pair 1}}} "A";"B"};
{\ar@{->}_{\text{\eqref{e:left to right}}} "C";"D"};
{\ar@{->}^{\sim} "C";"A"};
{\ar@{->}^{\sim} "D";"B"};
\endxy
$$
%

\sssec{}

Thus, it remains to establish the existence of \eqref{e:ev com diag Nilp}. 

\medskip

Let $\CF$ be an arbitrary object of $\Shv(\Bun_G\times \Bun_G)$, and let 
$\CV_1,\CV_2$ be two objects of $\Rep(\cG)_\Ran$. Since the Hecke functors
$\sH_{\CV_1}$ and $\sH_{\CV_2}$ are defined and codefined by kernels, 
we have a commutative diagram

\smallskip

\begin{equation} \label{e:ev com gen}
\xy
(0,-60)*+{\on{ev}^l_{\Bun_G}\circ 
(\on{Id}_{\Bun_G\times \Bun_G}\boxtimes \on{ev}_{\Bun_G})(\CF_1\boxtimes (\sH_{\CV_1}\boxtimes \sH_{\CV_2})(\CF) \boxtimes \CF_2)}="A";
(55,-100)*+{\on{ev}_{\Bun_G} \circ
(\on{ev}^l_{\Bun_G}\boxtimes \on{Id}_{\Bun_G\times \Bun_G})(\CF_1\boxtimes (\sH_{\CV_1}\boxtimes \sH_{\CV_2})(\CF) \boxtimes \CF_2)}="B";
(0,0)*+{\on{ev}^l_{\Bun_G}\circ 
(\on{Id}_{\Bun_G\times \Bun_G}\boxtimes \on{ev}_{\Bun_G})(\sH_{\CV^\tau_1}(\CF_1)\boxtimes \CF \boxtimes \sH_{\CV^\tau_2,\on{co}}(\CF_2))}="C";
(55,-40)*+{\on{ev}_{\Bun_G} \circ
(\on{ev}^l_{\Bun_G}\boxtimes \on{Id}_{\Bun_G\times \Bun_G})(\sH_{\CV^\tau_1}(\CF_1)\boxtimes \CF \boxtimes \sH_{\CV^\tau_2,\on{co}}(\CF_2)).}="D";
{\ar@{->}^{\text{\eqref{e:left to right}}} "A";"B"};
{\ar@{->}_{\text{\eqref{e:left to right}}} "C";"D"};
{\ar@{->}^{\text{\lemref{l:l pair and Hecke}+\lemref{l:dual of Hecke}}}_{\sim} "A";"C"};
{\ar@{->}_{\text{\lemref{l:l pair and Hecke}+\lemref{l:dual of Hecke}}}^{\sim} "B";"D"};
\endxy
\end{equation}

Taking $\CF=\on{ps-u}_{\Bun_G}$ and $\CV_1=\CV_2=\sR$, we obtain that the terms in \eqref{e:ev com gen}
identify with the terms in \eqref{e:ev com diag Nilp}, establishing the existence of the latter diagram.

\qed[\propref{p:ident pair 1 2}] 

\begin{rem}

Note that the same argument shows that the map \eqref{e:ident pair 2} identifies with the isomorphism of
\thmref{t:two pairings amplified} when one of the objects $\CF_1$ and $\CF_2$ has nilpotent singular support. 

\medskip

The same remark applies when instead of $\Bun_G$ we take a universally contruncative open substack
$\CU\subset \Bun_G$.

\end{rem}

\ssec{Kernels defined by objects from $\Shv_\Nilp(\Bun_G)$} \label{ss:ker Nilp}

In this subsection we will combine \propref{p:ident pair 1 2} with \thmref{t:right adj crit} and deduce
that (constructible) objects from $\Shv_\Nilp(\Bun_G)$ admit right adjoints, when viewed as functors
defined by kernels.

\sssec{}

Let $\CU\subset \Bun_G$ be a universally $\Nilp$-contruncative quasi-compact open substack. We are going to prove: 

\begin{thm} \label{t:Nilp admits right adjoint U}
Let $\CF\in \Shv(\CU)$ be an object contained in $\Shv_\Nilp(\CU)^{\on{constr}}$. 
Then $\CF$ admits a right adjoint, viewed as a functor 
$$\Shv(\CU)\to \Vect,$$
defined by a kernel. 
\end{thm}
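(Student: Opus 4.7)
The plan is to reduce the existence of a kernel-level right adjoint for the $\sotimes$-pairing against $\CF$ to the analogous statement for the $\overset{*}\otimes$-pairing, using the nilpotent singular support hypothesis together with the miraculous equivalence $\Mir_\CU$, and then to invoke \thmref{t:right adj crit} to handle the $*$-side.

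First I would unpack what is being asserted: the object $\CF\in\Shv(\CU)=\Shv(\CU\times\on{pt})$, viewed as a kernel, gives rise to the functor $\sQ:\Shv(\CU)\to\Vect$, $\CF'\mapsto\on{C}^\cdot_\blacktriangle(\CU,\CF\sotimes\CF')=\on{ev}_\CU(\CF,\CF')$, together with its extensions $\on{Id}_\CZ\boxtimes\sQ$ to arbitrary ambient stacks $\CZ$. The goal is to produce a kernel $\CG\in\Shv(\CU)$, defining a functor $\Vect\to\Shv(\CU)$, together with unit and counit data exhibiting it as right adjoint to $\sQ$ in the $2$-category of kernels; the genuine content beyond mere pointwise adjunction is compatibility across all $\CZ$.

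Next I would invoke the kernel-level refinement of \corref{c:two pairings amplified U}. Since $\CF\in\Shv_\Nilp(\CU)$, the isomorphism $\on{ev}^l_\CU(\CF,\Mir_\CU(-))\simeq\on{ev}_\CU(\CF,-)$ can be upgraded to an isomorphism of functors defined by kernels, by running through the commutative square \eqref{e:ev com gen} with $\CF_1=\CF$ and $\CV_1=\CV_2=\sR$ exactly as in the proof of \propref{p:ident pair 1 2}. Because $\CU$ is miraculous (\secref{sss:U Mir}), $\Mir_\CU$ is invertible as a functor defined by a kernel, so $\sQ$ is conjugate via kernel-level equivalences to the $*$-pairing functor $\on{ev}^l_\CU(\CF,-):\CF'\mapsto\on{C}^\cdot_c(\CU,\CF\overset{*}\otimes\CF')$. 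It therefore suffices to produce a kernel-level right adjoint for this $*$-pairing, after which post-composition with $\Mir_\CU^{-1}$ yields the sought right adjoint for $\sQ$, defined by the kernel $\Mir_\CU^{-1}$ applied to the $*$-side adjoint kernel.

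The main obstacle is the $*$-case itself, which is where \thmref{t:right adj crit} enters. At the bare functor level the right adjoint of $\on{ev}^l_\CU(\CF,-)$ is $V\mapsto V\otimes\BD^{\on{Verdier}}(\CF)$, which makes sense precisely because $\CF\in\Shv(\CU)^{\on{constr}}$, and which is manifestly defined by the kernel $\BD^{\on{Verdier}}(\CF)\in\Shv(\CU)$. The content of \thmref{t:right adj crit} should be that this pointwise construction upgrades to a kernel-level right adjoint, the necessary base-change compatibilities in the $\CZ$-variable following from the tame behavior of $*$-operations and of $!$-pushforward with proper support under extension of the ambient stack; the constructibility of $\CF$ is precisely the hypothesis one expects the criterion to demand, which is why the nilpotent singular support and constructibility hypotheses combine in exactly the right way to deliver the conclusion.
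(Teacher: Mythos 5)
Your overall strategy—using the miraculous equivalence and \thmref{t:right adj crit}—is the right one, and your first two paragraphs correctly set up the problem and the kernel-level upgrade of \corref{c:two pairings amplified U} via \propref{p:ident pair 1 2}. But the final paragraph, where you handle ``the $*$-case itself,'' has a genuine gap that undermines the whole argument.

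You assert that for the $*$-pairing $\on{ev}^l_\CU(\CF,-)$, the pointwise right adjoint $V\mapsto V\otimes\BD^{\on{Verdier}}(\CF)$ ``upgrades to a kernel-level right adjoint'' with ``the necessary base-change compatibilities in the $\CZ$-variable following from the tame behavior of $*$-operations... the constructibility of $\CF$ is precisely the hypothesis one expects the criterion to demand.'' This is false, and in fact contradicts \corref{c:char of Nilp U adj}: a constructible $\CF\in\Shv(\CU)$ defines a functor admitting a kernel-level right adjoint \emph{if and only if} $\CF\in\Shv_\Nilp(\CU)$. Constructibility alone is not enough. Moreover, note that the $*$-pairing, viewed as a functor defined by a kernel (which it must be for ``kernel-level right adjoint'' to be meaningful), has kernel $\Mir_\CU^{-1}(\CF)$, which again lies in $\Shv_\Nilp(\CU)^{\on{constr}}$. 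So ``produce a kernel-level right adjoint for the $*$-pairing'' is exactly the statement of \thmref{t:Nilp admits right adjoint U} applied to $\Mir_\CU^{-1}(\CF)$ in place of $\CF$. The proposed reduction is circular: you have swapped one instance of the theorem for another of the same difficulty, and then declared the second one easy.

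What the paper actually does at this point is verify condition (iii) of \thmref{t:right adj crit} directly. Concretely, one must show that the natural transformation \eqref{e:left-to-right trans bis}, $\on{ev}_\CU^l(\CF,\CF')\to\on{ev}_\CU(\CF,\on{Id}^l_\CU(\CF'))$, is an isomorphism for $\CF'=\BD^{\on{Verdier}}(\CF)\in\Shv_\Nilp(\CU)$ (and the paper checks it for all $\CF'\in\Shv_\Nilp(\CU)$). This is done by writing $\CF'=\Mir_\CU(\CF'')$, factoring the map as the composite \eqref{e:left-to-right trans bis} followed by \eqref{e:back to Q}, identifying this composite with \eqref{e:ident pair 2 U} via \secref{sss:two versions left-to-right trans}, and then invoking \corref{c:ident pair 1 2}—which is the nontrivial Hecke input. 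Since $\on{Id}^l_\CU\circ\Mir_\CU\simeq\on{Id}$ because $\CU$ is miraculous, the first arrow in the composite is forced to be an isomorphism. You have all the ingredients named, but you never perform this verification; instead you hand it to constructibility, which cannot carry the weight.
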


\begin{proof}

We will apply the criterion of \thmref{t:right adj crit} to $\CF$. (Note that the condition in \secref{sss:safety for adj}
holds automatically, since $\CY_2=\on{pt}$).

\medskip

By the equivalence  (i) $\Leftrightarrow$ (iii) in \thmref{t:right adj crit}, it suffices to show that for any 
$\CF'\in \Shv_\Nilp(\CU)$, the map
$$\on{ev}_\CU^l(\CF,\CF')\overset{\text{\eqref{e:left-to-right trans bis}}}\longrightarrow \on{ev}_\CU(\CF,\on{Id}^l_\CU(\CF'))$$
is an isomorphism. 

\medskip

Recall that the stack $\CU$ is miraculous and that the functor $\Mir_\CU$ defines a self-equivalence 
on $\Shv_\Nilp(\CU)$. Hence, we can assume that $\CF'$ is of the form $\Mir_\CU(\CF'')$ for
$\CF''\in \Shv_\Nilp(\CU)$.

\medskip

Consider the composition
$$\on{ev}^l_\CU(\CF,\CF')=\on{ev}^l_\CU(\CF,\Mir_\CU(\CF''))\overset{\text{\eqref{e:left-to-right trans bis}}}\longrightarrow
\on{ev}_\CU(\CF,\on{Id}^l_\CU \circ \Mir_\CU(\CF''))\overset{\text{\eqref{e:back to Q}}}\longrightarrow 
\on{ev}_\CU(\CF,\CF'').$$

By \secref{sss:two versions left-to-right trans}, this composite map identifies with the map \eqref{e:ident pair 2 U}, and hence
is an isomorphism by \corref{c:ident pair 1 2}.  Now, the map
$$\on{Id}^l_\CU \circ \Mir_\CU(\CF'')\to \CF''$$
is an isomorphism since $\CU$ is miraculous (see \secref{sss:Mir and Id l}). 

\medskip

This implies that the map 
$$\on{ev}^l_\CU(\CF,\Mir_\CU(\CF''))\overset{\text{\eqref{e:left-to-right trans bis}}}\longrightarrow
\on{ev}_\CU(\CF,\on{Id}^l_\CU \circ \Mir_\CU(\CF''))$$
is an isomorphism, as required.

\end{proof}

\sssec{}

Let $\CF$ be as in \thmref{t:Nilp admits right adjoint U}. Let us describe the right adjoint $\CF^R$ explicitly.
Namely, by \thmref{t:right adj crit}, we have
$$\CF^R\simeq \on{Id}^l_\CU \circ \BD^{\on{Verdier}}(\CF)\simeq \Mir_\CU^{-1}\circ \BD^{\on{Verdier}}(\CF),$$
which also identifies with 
$$\BD^{\on{Verdier}}\circ \Mir_\CU(\CF),$$
see Remark \ref{r:right adj when exists}. 

\medskip

The unit of the adjunction is the map 
\begin{equation} \label{e:unit adj U}
\on{u}_\CU \to \CF \boxtimes (\on{Id}^l_\CU \circ \BD^{\on{Verdier}}(\CF))
\end{equation}
obtained from the tautological map
$$\on{ps-u}_\CU \to \CF \boxtimes  \BD^{\on{Verdier}}(\CF)$$
by applying the functor $\on{Id}_\CU\boxtimes \on{Id}^l_\CU$. 

\medskip

The counit is the map
\begin{multline} \label{e:counit adj U}
\on{ev}_\CU(\CF, \Mir_\CU^{-1} \circ \BD^{\on{Verdier}}(\CF)) \overset{\text{\corref{c:two pairings U}(b)}}\simeq \\
\simeq \on{ev}^l_\CU(\CF, \Mir_\CU \circ \Mir_\CU^{-1} \circ \BD^{\on{Verdier}}(\CF))  =
\on{ev}^l_\CU(\CF, \BD^{\on{Verdier}}(\CF)) \to  \sfe,
\end{multline} 
where the last arrow is obtained by adjunction.

\sssec{}

We now claim: 

\begin{cor} \label{c:Nilp codefined U}
For any $\CF\in \Shv_{\Nilp}(\CU)$, the functor 
$$\sF: \Shv_{\Nilp}(\CU)\to \Vect, \quad \CF'\mapsto \on{C}^\cdot_\blacktriangle(\CU,\CF\sotimes \CF')$$
is defined and codefined by a kernel. The codefining object identifies canonically with
$$\CG:=\Mir_\CU (\CF)\in  \Shv_{\Nilp}(\CU) \subset \Shv(\CU).$$
\end{cor}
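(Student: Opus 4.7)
The functor $\sF$ is tautologically defined by the kernel $\CF$, since by construction $\sF(\CF') = \on{ev}_\CU(\CF, \CF')$ where $\on{ev}_\CU$ is the pairing $\CF_1,\CF_2 \mapsto \on{C}^\cdot_\blacktriangle(\CU, \CF_1 \sotimes \CF_2)$. The content of the corollary is therefore to exhibit a second, codefining presentation of $\sF$ of the form $\CF' \mapsto \on{C}^\cdot_c(\CU, \CG \overset{*}\otimes \CF') = \on{ev}^l_\CU(\CG, \CF')$ for some $\CG \in \Shv_\Nilp(\CU)$, and to identify $\CG$ canonically with $\Mir_\CU(\CF)$.

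The plan is to deduce this directly from \corref{c:two pairings U}(b), by exploiting the manifest symmetry of each of $\on{ev}_\CU$ and $\on{ev}^l_\CU$ in its two arguments (the $!$-tensor $\sotimes$ and the $*$-tensor $\overset{*}\otimes$ are both symmetric). For $\CF_1, \CF_2 \in \Shv_\Nilp(\CU)$, \corref{c:two pairings U}(b) provides a canonical isomorphism
$$\on{ev}_\CU(\CF_1, \CF_2) \simeq \on{ev}^l_\CU(\CF_1, \Mir_\CU(\CF_2)).$$
Applying swap-symmetry on both sides, one obtains the transposed form
$$\on{ev}_\CU(\CF_2, \CF_1) \simeq \on{ev}^l_\CU(\Mir_\CU(\CF_2), \CF_1),$$
and specializing $\CF_2 := \CF$ and $\CF_1 := \CF'$ yields
$$\sF(\CF') \simeq \on{C}^\cdot_c(\CU, \Mir_\CU(\CF) \overset{*}\otimes \CF'),$$
which exhibits $\sF$ as codefined by the kernel $\CG = \Mir_\CU(\CF)$.

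The remaining claim — that $\CG = \Mir_\CU(\CF)$ genuinely lies in $\Shv_\Nilp(\CU)$ — is precisely the content of \propref{p:Mir Nilp U}, which applies since $\CU$ is universally (hence in particular) $\Nilp$-cotruncative. There is essentially no obstacle here beyond careful bookkeeping of which factor of the diagram in \corref{c:two pairings U}(b) carries the $\Mir$-twist; the only point that requires mention is the transposition of that twist from the second argument to the first, which is immediate once the symmetry of both pairings is invoked. Finally, the naturality of the identification $\CG \simeq \Mir_\CU(\CF)$ is inherited from the canonicity of the isomorphism produced by \corref{c:two pairings U}(b).
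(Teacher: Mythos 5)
Your argument has a genuine gap. You correctly identify the codefining object $\CG = \Mir_\CU(\CF)$, and the symmetry manipulation from \corref{c:two pairings U}(b) is sound, but what it delivers is only an \emph{isomorphism} $\sF(\CF') \simeq \on{ev}^l_\CU(\CG,\CF')$ for $\CF' \in \Shv_\Nilp(\CU)$. This is weaker than the assertion in two ways. First, being ``defined and codefined by a kernel'' (\secref{sss:defined and codefined}, \propref{p:left via right}) means that the \emph{specific} natural transformation \eqref{e:left-to-right trans} is an isomorphism for \emph{every} algebraic stack $\CZ$, i.e., that $\on{Id}_\CZ\boxtimes \sG^l \to \on{Id}_\CZ\boxtimes \sF$ is an isomorphism on all of $\Shv(\CZ\times\CU)$; you have only addressed $\CZ = \on{pt}$ (and only on the subcategory $\Shv_\Nilp(\CU)$ at that, since \corref{c:two pairings U}(b) requires both arguments there). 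Second, even for $\CZ=\on{pt}$ you produce \emph{some} canonical isomorphism, but do not check that it agrees with the canonical natural transformation \eqref{e:left-to-right trans} coming from the kernel formalism -- this is exactly the content of \propref{p:ident pair 1 2} and \corref{c:ident pair 1 2}, which are not invoked.

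The paper's route is structurally different and handles both problems. For constructible $\CF$, one shows (via \corref{c:ident pair 1 2} and the criterion of \thmref{t:right adj crit}) that $\CF$ admits a right adjoint \emph{as a kernel} (this is \thmref{t:Nilp admits right adjoint U}); then \thmref{t:right adj main}(a) automatically yields that $\sF$ is defined and codefined by a kernel for all $\CZ$, and identifies the codefining object. For general $\CF\in \Shv_\Nilp(\CU)$ one then reduces to the constructible case by a truncation/colimit argument in $\CF$ and $\CF'$. Your approach short-circuits both the uniform-in-$\CZ$ statement and the identification with the canonical natural transformation, so it cannot stand as written.
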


\begin{proof}

If $\CF$ is constructible, the assertion follows by combining Theorems \ref{t:Nilp admits right adjoint U} and 
\ref{t:right adj main}(a).

\medskip

We will now show how to reduce the assertion to the case when $\CF$ is constructible. 

\medskip

We need check that the map 
\begin{equation} \label{e:F and G on U}
(\on{Id}_\CZ\boxtimes \sG^l)(\CF') \to \on{Id}_\CZ\boxtimes \sF(\CF'),
\end{equation}
is an isomorphism for $\CF'\in \Shv(\CZ\times \CU)$.

\medskip

Note that both sides in \eqref{e:F and G on U} commute with colimits in $\CF$ and $\CF'$. 
Hence, we can assume that $\CF'$ is bounded above. 

\medskip

If $\CF'$ is bounded above, both sides of \eqref{e:F and G on U} maps isomorphically to the limits
over $n\in \BN$ of the corresponding expressions, when we replace $\CF$ by $\tau^{\geq -n}(\CF)$. 

\medskip

Thus, we can assume that $\CF$ is bounded below. In this case, $\CF$ is a colimit of
constructible objects in $\Shv_{\Nilp}(\CU)$. Hence, the assertion that \eqref{e:F and G on U}
is an isomorphism follows from the constructible case, by passing to colimits in the $\CF$
argument.  

\end{proof}

\sssec{}

We now consider the situation with all of $\Bun_G$. We claim:

\begin{thm} \label{t:Nilp admits right adjoint BunG}
Let $\CF\in \Shv(\Bun_G)$ be an object contained in $\Shv_\Nilp(\Bun_G)^{\on{constr}}$. 
Then it admits a right adjoint, when viewed as a functor
$$\Shv(\Bun_G)_{\on{co}}\to \Vect.$$
\end{thm}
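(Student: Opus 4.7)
The plan is to mimic the proof of \thmref{t:Nilp admits right adjoint U}, with the role played there by a quasi-compact universally $\Nilp$-cotruncative $\CU$ being miraculous now taken over by the fact that $\Bun_G$ itself is miraculous (\secref{sss:BunG Mir}).

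First I would apply the version of the right-adjoint criterion \thmref{t:right adj crit} appropriate to the non-quasi-compact setting, where the source of the tested functor is $\Shv(\Bun_G)_{\on{co}}$ rather than $\Shv(\CU)$. The constructibility of $\CF$, combined with \thmref{t:trunc}, ensures that $\CF$ is supported on some quasi-compact universally $\Nilp$-cotruncative $\CU\hookrightarrow \Bun_G$, and this is what lets the ``safety'' condition from \secref{sss:safety for adj} be verified (by reduction to the quasi-compact situation already handled in \thmref{t:Nilp admits right adjoint U}). The criterion then reduces the theorem to showing that for every $\CF'\in \Shv_\Nilp(\Bun_G)_{\on{co}}$ the natural transformation of \eqref{e:left-to-right trans bis} between $\on{ev}^l_{\Bun_G}$ and $\on{ev}_{\Bun_G}$, evaluated on $(\CF,\CF')$, is an isomorphism.

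Next, since $\Bun_G$ is miraculous, \corref{c:Mir Nilp a priori} yields an equivalence $\Mir_{\Bun_G}\colon \Shv_\Nilp(\Bun_G)_{\on{co}}\iso \Shv_\Nilp(\Bun_G)$, so I may write $\CF'=\Mir_{\Bun_G}^{-1}(\CF'')$ for a unique $\CF''\in \Shv_\Nilp(\Bun_G)$. After this substitution, using the miraculous identification (analogous to the invocation of \secref{sss:Mir and Id l} in the proof of \thmref{t:Nilp admits right adjoint U}), the map in question takes the form
\[
\on{ev}^l_{\Bun_G}(\CF,\CF'')\longrightarrow \on{ev}_{\Bun_G}(\CF,\Mir_{\Bun_G}^{-1}(\CF'')),
\]
which is precisely the natural map \eqref{e:ident pair 2}; by \propref{p:ident pair 1 2} it coincides with the isomorphism \eqref{e:ident pair 1}, and so is indeed an isomorphism.

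The main obstacle I expect is formulating \thmref{t:right adj crit} in a form suitable for $\Bun_G$ with its ``co''-version as the source category. Concretely, the interaction between the left-to-right transformation used in the quasi-compact criterion and the functor $\Mir_{\Bun_G}\colon \Shv(\Bun_G)_{\on{co}}\to \Shv(\Bun_G)$ (which takes over the role of the endofunctor $\Mir_\CU$ from the quasi-compact argument) must be made precise, together with the safety check on $\CF$ via the support $\CU$. Once this bookkeeping is in place, the heart of the argument reduces, exactly as in the quasi-compact case, to citing \propref{p:ident pair 1 2}.
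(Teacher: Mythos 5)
Your proposal takes a genuinely different route from the paper. The paper's own proof does not pass through the criterion \thmref{t:right adj crit}: instead it writes down the candidate $\CF^R:=\Mir^{-1}_{\Bun_G}\circ\BD^{\on{Verdier}}(\CF)\in\Shv(\Bun_G)_{\on{co}}$ explicitly, assembles the unit $\on{u}_{\Bun_G,\on{co}_2}\to\CF\boxtimes\CF^R$ by gluing the quasi-compact unit maps \eqref{e:unit adj U} over universally $\Nilp$-cotruncative $\CU\overset{j}\hookrightarrow\Bun_G$ (using $(j^*\boxtimes j^?)$-restrictions and the commutations $\Mir^{-1}_\CU\circ j^*\simeq j^?\circ\Mir^{-1}_{\Bun_G}$, $j^*\circ\BD^{\on{Verdier}}\simeq\BD^{\on{Verdier}}\circ j^*$), and builds the counit from \corref{c:two pairings BunG}. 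Your approach, by contrast, mimics the proof of \thmref{t:Nilp admits right adjoint U}: invoke the non-quasi-compact form of the criterion (Remark~\ref{r:right adj crit non-qc}) and reduce the single isomorphism of condition (iii) to \propref{p:ident pair 1 2}. Both work; the paper's route makes the explicit formula for $\CF^R$ and the unit/counit visible without leaning on the only-sketched non-quasi-compact extension of \thmref{t:right adj crit}, while yours is structurally cleaner and avoids the gluing bookkeeping.

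Two points where your write-up should be repaired. First, the claim that constructibility of $\CF$ forces quasi-compact support is false: $\Shv(\Bun_G)^{\on{constr}}$ is defined as a limit over all affines mapping to $\Bun_G$, and, e.g., $\ul\sfe_{\Bun_G}$ is constructible but not supported on any quasi-compact open substack. Fortunately this claim is also unnecessary: as the paper already notes in the proof of \thmref{t:Nilp admits right adjoint U}, the safety hypothesis from \secref{sss:safety for adj} is automatic precisely because $\CY_2=\on{pt}$ is a scheme, so no support argument is needed. Second, you place the test object $\CF'$ in $\Shv_\Nilp(\Bun_G)_{\on{co}}$ and then write $\CF'=\Mir_{\Bun_G}^{-1}(\CF'')$; but $\on{ev}^l_{\Bun_G}$ pairs objects of $\Shv(\Bun_G)$, so the object in that slot should sit in $\Shv_\Nilp(\Bun_G)$, with $\on{Id}^l_{\Bun_G}\simeq\Mir^{-1}_{\Bun_G}$ producing the object of $\Shv_\Nilp(\Bun_G)_{\on{co}}$ that appears on the $\on{ev}_{\Bun_G}$ side. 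Your final displayed map $\on{ev}^l_{\Bun_G}(\CF,\CF'')\to\on{ev}_{\Bun_G}(\CF,\Mir_{\Bun_G}^{-1}(\CF''))$ is nonetheless the correct instance of \eqref{e:ident pair 2}, so once the bookkeeping is straightened out the argument closes as you intend, with the specific $\CF''=\BD^{\on{Verdier}}(\CF)$ supplying condition (iii) of the criterion.
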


\begin{proof}

Set 
$$\CF^R:=\Mir^{-1}_{\Bun_G}\circ \BD^{\on{Verdier}}(\CF)\in \Shv(\Bun_G)_{\on{co}},$$
where we view $\BD^{\on{Verdier}}$ as a functor
$$(\Shv(\Bun_G)^{\on{constr}})^{\on{op}}\to  \Shv(\Bun_G)^{\on{constr}}.$$

We will show that $\CF^R$ provides an adjoint of $\CF$. The unit of the adjunction is a map
\begin{equation} \label{e:unit BunG}
\on{u}_{\Bun_G,\on{co}_2}\to \CF \boxtimes \CF^R
\end{equation}
constructed as follows:

\medskip

For every universally $\Nilp$-contruncative quasi-compact open substack $\CU\overset{j}\hookrightarrow \Bun_G$, the 
$j^*\boxtimes j^?$ restriction of \eqref{e:unit BunG} is the map
\begin{multline*}
(j^*\boxtimes j^?)(\on{u}_{\Bun_G,\on{co}_2}) \simeq
(\on{Id}_{\Bun_G}\boxtimes j^?)\circ (j^*\boxtimes \on{Id}_{\Bun_G})(\on{u}_{\Bun_G,\on{co}_2}) \simeq
(\on{Id}_{\Bun_G}\boxtimes j^?)\circ (\on{Id}_\CU \boxtimes j_{*,\on{co}})(\on{u}_\CU) \simeq \\
\simeq (\on{Id}_{\Bun_G}\boxtimes (j^?\circ j_*))(\on{u}_\CU)  \simeq \on{u}_\CU \overset{\text{\eqref{e:unit adj U}}}\longrightarrow 
 j^*(\CF)\boxtimes (\Mir_\CU^{-1}\circ \BD^{\on{Verdier}}\circ j^*(\CF)) \simeq  \\
\simeq  j^*(\CF)\boxtimes (\Mir_\CU^{-1}\circ j^* \circ \BD^{\on{Verdier}}(\CF)) 
\simeq  j^*(\CF)\boxtimes (j^? \circ \Mir^{-1}_{\Bun_G} \circ \BD^{\on{Verdier}}(\CF))  \simeq 
 (j^*\boxtimes j^?)(\CF \boxtimes \CF^R).
 \end{multline*}

\medskip

The counit of the adjunction is a map
\begin{equation} \label{e:counit BunG}
\on{C}^\cdot_\blacktriangle(\Bun_G,\CF\sotimes \CF^R)\to \sfe
\end{equation}
is equal to the composition

\medskip

$$\on{C}^\cdot_\blacktriangle(\Bun_G,\CF\sotimes \CF^R) 
\overset{\text{\corref{c:two pairings BunG}}}\simeq 
\on{C}^\cdot_c(\Bun_G,\CF\overset{*}\otimes \BD^{\on{Verdier}}(\CF)) \to \sfe,$$
where the latter map is obtained by adjunction from the map
$$\CF\boxtimes \BD^{\on{Verdier}}(\CF)\to (\Delta_{\Bun_G})_*(\omega_{\Bun_G}).$$

\end{proof}

\begin{cor} \label{c:Nilp codefined BunG}
For any $\CF\in \Shv_{\Nilp}(\Bun_G)_{\on{co}}$, the functor 
$$\sF: \Shv_{\Nilp}(\Bun_G)\to \Vect, \quad \CF'\mapsto \on{C}^\cdot_\blacktriangle(\Bun_G,\CF\sotimes \CF')$$
is defined and codefined by a kernel. The codefining object identifies canonically with
$$\CG:=\Mir_{\Bun_G}(\CF)\in  \Shv_{\Nilp}(\Bun_G) \subset \Shv(\Bun_G).$$
\end{cor}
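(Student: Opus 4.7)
The strategy is to imitate the proof of \corref{c:Nilp codefined U} line-by-line, substituting the $\Bun_G$-analogues that have just been established. The constructible case is handled via \thmref{t:Nilp admits right adjoint BunG}, and the extension to arbitrary $\CF$ is handled by exactly the same truncation-and-colimit bootstrap as in the $\CU$ case.

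First treat the case $\CF\in \Shv_\Nilp(\Bun_G)_{\on{co}}^{\on{constr}}$. By \thmref{t:Nilp admits right adjoint BunG} (applied in the slot opposite to the original formulation, using the swap-symmetry of the Verdier pairing), the kernel $\CF$ admits a kernel-level right adjoint whose kernel is
$$\CF^R \simeq \Mir_{\Bun_G}^{-1}\circ \BD^{\on{Verdier}}(\CF),$$
with unit and counit given by \eqref{e:unit BunG} and \eqref{e:counit BunG}. The general principle that a kernel admitting a kernel-level right adjoint is automatically codefined by the Verdier dual of that adjoint (this is the analog of \thmref{t:right adj main}(a) in the non quasi-compact framework of \secref{s:non qc}, whose applicability here rests on the miraculous property of $\Bun_G$ recalled in \secref{sss:BunG Mir}) then gives that $\sF$ is codefined by the kernel $\CG := \BD^{\on{Verdier}}(\CF^R)$. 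Finally, the identification $\BD^{\on{Verdier}} \circ \Mir_{\Bun_G}^{-1} \circ \BD^{\on{Verdier}} \simeq \Mir_{\Bun_G}$ (the $\Bun_G$-analogue of the identity recorded in Remark~\ref{r:right adj when exists}) collapses this to $\CG \simeq \Mir_{\Bun_G}(\CF)$, and the containment $\CG \in \Shv_\Nilp(\Bun_G)$ is \propref{p:Nilp Mir compat}.

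Second, reduce the general case to the constructible one as in the proof of \corref{c:Nilp codefined U}. For any algebraic stack $\CZ$ and any $\CF' \in \Shv(\CZ \times \Bun_G)$, one must verify that the comparison map
$$(\on{Id}_\CZ \boxtimes \sG^l)(\CF') \to (\on{Id}_\CZ \boxtimes \sF)(\CF')$$
is an isomorphism. Both sides commute with colimits in $\CF$ and in $\CF'$, which permits reducing first to $\CF'$ bounded above; for such $\CF'$, both sides convert the truncation tower $\CF \mapsto \tau^{\geq -n}(\CF)$ into a limit over $n\in \BN$, so one reduces further to $\CF$ bounded below. Any bounded-below object of $\Shv_\Nilp(\Bun_G)_{\on{co}}$ is a filtered colimit of constructible ones, and so the constructible case of the previous paragraph concludes.

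The main obstacle is not the second-paragraph bootstrap, which is formal. It is rather verifying that the kernel-level criterion used in the constructible step (the passage from existence of a kernel-level right adjoint to codefinition by a kernel) transfers cleanly from the quasi-compact setting of \secref{s:ker} to the non quasi-compact setting of \secref{s:non qc}. This transfer relies essentially on the miraculous property of $\Bun_G$ to toggle between $\Shv(\Bun_G)$ and $\Shv(\Bun_G)_{\on{co}}$, and on the explicit kernel descriptions of the unit \eqref{e:unit BunG} and counit \eqref{e:counit BunG} constructed in the proof of \thmref{t:Nilp admits right adjoint BunG}, which are what ensure the adjointness holds at the kernel level and not merely at the level of underlying functors.
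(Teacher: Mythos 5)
Your proposal takes essentially the same route as the paper: the paper's own proof is simply the one-line remark that the argument "repeats that of Corollary~\ref{c:Nilp codefined U}" using the extension of Theorem~\ref{t:right adj main} to truncatable stacks, which is exactly the combination of the constructible case (Theorem~\ref{t:Nilp admits right adjoint BunG} plus the non-quasi-compact analog of Theorem~\ref{t:right adj main}(a), cf.\ Remark~\ref{r:right adj main non qc}) and the truncation-colimit bootstrap that you spell out. Your explicit handling of the slot asymmetry (the fact that here $\CF$ lives in $\Shv_\Nilp(\Bun_G)_{\on{co}}$ while Theorem~\ref{t:Nilp admits right adjoint BunG} is stated for $\CF\in\Shv_\Nilp(\Bun_G)$, requiring the swap-symmetry of kernel adjunctions from \secref{sss:create left adjoint}) is a genuinely helpful elaboration the paper leaves implicit, though the intermediate formula $\CF^R\simeq\Mir_{\Bun_G}^{-1}\circ\BD^{\on{Verdier}}(\CF)$ should more carefully be written as $\CF^R\simeq\BD^{\on{Verdier}}\circ\Mir_{\Bun_G}(\CF)\in\Shv(\Bun_G)$ to get the source and target categories right (the two agree by the commutation relation you cite, so the final identification $\CG\simeq\Mir_{\Bun_G}(\CF)$ is unaffected).
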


\begin{proof}

Repeats that of \corref{c:Nilp codefined U} (indeed, it is easy to see that 
\thmref{t:right adj main} extends from the case of quasi-compact algebraic stacks to truncatable ones).

\end{proof}

\ssec{Pairings against sheaves with nilpotent singular support, revisited}

\sssec{}

Let $\CU$ be a universally $\Nilp$-contruncative quasi-compact open substack of $\Bun_G$. 

\medskip

Let $\CY$ be an algebraic stack. Note that for $\CF\in \Shv(\CU)$ and $\CF'\in \Shv(\CY\times \CU)$, we have a canonically defined map
\begin{equation} \label{e:pairing Y U}
(p_\CY)_!(\CF'\overset{*}\otimes p_\CU^*(\Mir_\CU(\CF)))\to (p_\CY)_\blacktriangle(\CF' \sotimes p_\CU^!(\CF))
\end{equation}
as functors 
$$\Shv(\CY\times \CU)\to \Shv(\CY),$$
see \eqref{e:left-to-right trans}. 

\medskip

From \corref{c:Nilp codefined U} we obtain: 

\begin{cor} \label{c:pairing Y U}
The natural transformation \eqref{e:pairing Y U} is an isomorphism if $\CF\in \Shv_\Nilp(\CU)$.
\end{cor}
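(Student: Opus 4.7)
The plan is to recognize \eqref{e:pairing Y U} as the canonical comparison map between the two kernel presentations of a single functor, and then to conclude directly from \corref{c:Nilp codefined U}.

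Consider the functor $\sF: \Shv(\CU) \to \Vect$ given by $\CF_0 \mapsto \on{C}^\cdot_\blacktriangle(\CU, \CF \sotimes \CF_0)$. When $\CF \in \Shv_\Nilp(\CU)$, \corref{c:Nilp codefined U} asserts that $\sF$ is both defined by the kernel $\CF \in \Shv(\CU)$ and codefined by the kernel $\Mir_\CU(\CF) \in \Shv(\CU)$. Unwinding the formalism of Sects.~\ref{s:ker} and~\ref{s:non qc}, the associated enhanced functor $\on{Id}_\CY \boxtimes \sF: \Shv(\CY \times \CU) \to \Shv(\CY)$, applied to $\CF'$, is computed via its defining kernel as $(p_\CY)_\blacktriangle(\CF' \sotimes p_\CU^!(\CF))$, which is the right-hand side of \eqref{e:pairing Y U}; and via its codefining kernel as $(p_\CY)_!(\CF' \overset{*}\otimes p_\CU^*(\Mir_\CU(\CF)))$, which is the left-hand side.

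By construction (see the discussion around \eqref{e:left-to-right trans}), the natural transformation \eqref{e:pairing Y U} is precisely the canonical comparison between these two presentations. Hence the assertion that \corref{c:Nilp codefined U} provides both presentations of $\sF$ as the same functor is tantamount to saying that \eqref{e:pairing Y U} is an isomorphism, as required.

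The step I would spend most care on is the verification that the comparison map constructed in \secref{s:ker} coincides, under the identifications above, with the specific natural transformation \eqref{e:pairing Y U} written in the statement; but this is a direct unwinding rather than a genuine obstacle, as no further substantive input is needed beyond \corref{c:Nilp codefined U} (and hence, ultimately, beyond \thmref{t:Nilp admits right adjoint U} and the miraculous self-equivalence of $\Shv_\Nilp(\CU)$ underlying it).
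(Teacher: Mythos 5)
Your proof is correct and takes the same route the paper intends: \corref{c:Nilp codefined U} establishes that $\sF$ is defined and codefined by kernels (with codefining object $\Mir_\CU(\CF)$), and by \propref{p:left via right}(a) this is precisely the statement that the natural transformations \eqref{e:left-to-right trans} — whose specialization to $\CZ=\CY$, $\CY_1=\CU$, $\CY_2=\on{pt}$ is \eqref{e:pairing Y U} — are isomorphisms. The paper in fact supplies no separate proof beyond "From \corref{c:Nilp codefined U} we obtain," so your unwinding is exactly the intended content.
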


\begin{rem} \label{r:pairing Y U}
Recall that \corref{c:two pairings amplified U} says that we have \emph{a} canonical isomorphism
\begin{equation} \label{e:pairing Y U again}
(p_\CY)_!(\CF'\overset{*}\otimes p_\CU^*(\Mir_\CU(\CF)))\to (p_\CY)_\blacktriangle(\CF'\sotimes p_\CU^!(\CF)),
\end{equation}
when $\CY=\on{pt}$. In fact, the same proof shows that there exists an isomorphism as in \eqref{e:pairing Y U again}
for any $\CY$.

\medskip

The additional information provided by \corref{c:pairing Y U} is that this isomorphism comes from the generally
defined natural transformation \eqref{e:left-to-right trans}. 

\end{rem}

\sssec{}


We now claim: 

\begin{cor} \label{c:pairing Y U bis}
There exists a canonical isomorphism
$$(p_\CY)_!((\on{Id}_\CY\boxtimes \Mir_\CU)(\CF')\overset{*}\otimes p_\CU^*(\CF))\simeq (p_\CY)_\blacktriangle(\CF'\sotimes p_\CU^!(\CF)),
\quad \CF'\in \Shv(\CY\times \CU),\, \CF\in \Shv_\Nilp(\CU).$$
\end{cor}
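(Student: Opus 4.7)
The strategy is to combine \corref{c:pairing Y U} with a ``Mir swap'' identity that transfers $\Mir_\CU$ between the two arguments of the pairing. By \corref{c:pairing Y U} applied to $\CF \in \Shv_\Nilp(\CU)$, the right-hand side of the claim equals $(p_\CY)_!(\CF' \overset{*}\otimes p_\CU^*(\Mir_\CU(\CF)))$. Thus it suffices to establish the unconditional identity
$$(p_\CY)_!((\on{Id}_\CY \boxtimes \Mir_\CU)(\CF') \overset{*}\otimes p_\CU^*(\CF)) \simeq (p_\CY)_!(\CF' \overset{*}\otimes p_\CU^*(\Mir_\CU(\CF))),$$
valid for arbitrary $\CF \in \Shv(\CU)$ and $\CF' \in \Shv(\CY \times \CU)$ (with no $\Nilp$-support hypothesis).

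To prove this Mir swap identity, the plan is to substitute the explicit defining kernel $\on{ps-u}_\CU = (\Delta_\CU)_!(\ul\sfe_\CU) \in \Shv(\CU \times \CU)$ of $\Mir_\CU$ and apply projection formulas along the closed embedding $\on{id}_\CY \times \Delta_\CU: \CY \times \CU \hookrightarrow \CY \times \CU \times \CU$. The crucial observation is that $\on{ps-u}_\CU$ is \emph{swap-symmetric}: under the transposition $\sigma$ of the two $\CU$-factors, $\sigma^*(\on{ps-u}_\CU) \simeq \on{ps-u}_\CU$, because the diagonal $\Delta_\CU$ is itself $\sigma$-fixed. Consequently, both sides of the Mir swap identity rewrite as the same $(p_\CY)_!$-pushforward of a common symmetric expression on $\CY \times \CU \times \CU$ built from $\CF' \boxtimes \CF$ and the Mir kernel on the diagonal.

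The main obstacle will be the base-change bookkeeping needed to commute the renormalized pushforward $_\blacktriangle$ (which appears in the kernel-formalism definition of $\Mir_\CU$) past the $_!$ and $\overset{*}\otimes$ in the statement. This is manageable because the Mir kernel is supported on the diagonal $\Delta_\CU$, a proper closed embedding along which the various pushforward functors coincide. Alternatively, a cleaner route is to observe that at $\CY = \on{pt}$, the Mir swap is immediate from the commutativity of $\overset{*}\otimes$ (combined with \corref{c:two pairings amplified U}); since both sides of the identity are constructed from functors defined and codefined by kernels in a way compatible with pullback/pushforward along $\CY$ (\secref{ss:ker}), the $\CY = \on{pt}$ case extends automatically to arbitrary $\CY$, yielding the claim.
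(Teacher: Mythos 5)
Your overall strategy—reduce via \corref{c:pairing Y U} and then exploit the swap-symmetry of $\on{ps-u}_\CU$—is in the same spirit as the paper's proof, but you attempt the ``Mir swap'' on the $\overset{*}\otimes$/$(-)_!$ side, while the paper performs it on the $\sotimes$/$(-)_\blacktriangle$ side, and this choice creates genuine difficulties that your sketch does not resolve.

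Concretely: the paper rewrites $\CF = \Mir_\CU(\Mir_\CU^{-1}(\CF))$ in the \emph{left-hand side} of the claim, applies \corref{c:pairing Y U} (with $(\on{Id}_\CY\boxtimes\Mir_\CU)(\CF')$ and $\Mir_\CU^{-1}(\CF)$) to land in the Verdier pairing, and there uses the standard swap identity $(p_\CY)_\blacktriangle((\on{Id}_\CY\boxtimes\sQ)(\CF')\sotimes p_\CU^!(\CG)) \simeq (p_\CY)_\blacktriangle(\CF'\sotimes p_\CU^!(\sQ^\sigma(\CG)))$ for a functor \emph{defined} by a kernel. This swap is immediate from $\on{ps-u}_\CU^\sigma \simeq \on{ps-u}_\CU$. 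You instead want the analogous swap for the $\on{ev}^l$-pairing. The analogous clean identity there is for functors \emph{codefined} by kernels, and $\Mir_\CU$ is defined by $\on{ps-u}_\CU$, not codefined by it. Its codefining object (per \propref{p:left via right}) is $(\Mir_\CU\boxtimes\on{Id}_\CU)(\on{ps-u}_\CU)\simeq \on{ps-u}_\CU\star\on{ps-u}_\CU$, which is indeed swap-symmetric, so the ``unconditional Mir swap'' is in fact true; but establishing it requires invoking that $\Mir_\CU$ is defined \emph{and} codefined (i.e., that $\CU$ is miraculous, \secref{sss:U Mir}), which your argument never identifies as the crucial input.

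Both of your proposed routes to the Mir swap have gaps. In the direct-substitution route, the diagonal $\Delta_\CU$ of an algebraic stack like $\CU\subset\Bun_G$ is \emph{not} a proper closed embedding (it is affine but not proper for quotient stacks), so ``the various pushforward functors coincide'' along it is false; moreover commuting $(-)_\blacktriangle$ past $\overset{*}\otimes$ and $(-)_!$ is exactly the defined-and-codefined assertion you would need to already know. In the ``cleaner route,'' the $\CY=\on{pt}$ case would use the symmetry of $\on{ev}^l_\CU$ to swap $\CF$ with $\CF'$, which is meaningless when $\CF'\in\Shv(\CY\times\CU)$ for general $\CY$; and, more fundamentally, establishing an identity of the plain functors $\Shv(\CU)\to\Vect$ does \emph{not} upgrade automatically to a kernel-level identity—that kernels carry strictly more information than the functors they induce is precisely the phenomenon the paper's whole kernel formalism is designed to track. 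So the ``extends automatically to arbitrary $\CY$'' claim is the fatal gap in this route.
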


\begin{proof}

Write 
\begin{multline*}
(p_\CY)_!((\on{Id}_\CY\boxtimes \Mir_\CU)(\CF')\overset{*}\otimes p_\CU^*(\CF))=
(p_\CY)_!\left((\on{Id}_\CY\boxtimes \Mir_\CU)(\CF')\overset{*}\otimes p_\CU^*(\Mir_\CU(\Mir^{-1}_\CU(\CF)))\right)
\overset{\text{\corref{c:pairing Y U}}}\simeq \\
\simeq (p_\CY)_\blacktriangle((\on{Id}_\CY\boxtimes \Mir_\CU)(\CF')\overset{!}\otimes p_\CU^!(\Mir^{-1}_\CU(\CF))).
\end{multline*}

However, since the object $\on{ps-u}_{\CU}$ is swap-equivariant, the latter expression identifies with 
$$(p_\CY)_\blacktriangle\left(\CF' \sotimes p_\CU^!(\Mir^{-1}_\CU(\Mir_\CU(\CF)))\right)\simeq
(p_\CY)_\blacktriangle(\CF'\sotimes p_\CU^!(\CF)),$$
as required.

\end{proof}

\sssec{}

We now revisit the pairings on all of $\Bun_G$. Let $\CY$ be as above. For $\CF\in \Shv(\Bun_G)_{\on{co}}$ 
consider the natural transformation
\begin{equation} \label{e:pairing Y BunG}
(p_\CY)_!(\CF' \overset{*}\otimes p_{\Bun_G}^*(\Mir_{\Bun_G}(\CF))) \to 
(p_\CY)_\blacktriangle(\CF' \sotimes p_{\Bun_G}^!(\CF)),
\end{equation}
as functors $\Shv(\CY\times \Bun_G)\to \Shv(\CY)$, see \eqref{e:left-to-right trans}.

\medskip

From \corref{c:Nilp codefined BunG} we obtain: 

\begin{cor} \label{c:pairing Y BunG}
The natural transformation \eqref{e:pairing Y BunG} is an isomorphism if $\CF\in \Shv_\Nilp(\Bun_G)$.
\end{cor}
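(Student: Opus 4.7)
The plan is to deduce this corollary immediately from \corref{c:Nilp codefined BunG}, in perfect analogy with the (unstated but evident) derivation of \corref{c:pairing Y U} from \corref{c:Nilp codefined U}. Here $\CF$ is to be interpreted as an object of the subcategory $\Shv_\Nilp(\Bun_G)_{\on{co}} \subset \Shv(\Bun_G)_{\on{co}}$, which is the natural home of the relevant kernel formalism. For such $\CF$, the functor
$$\sF:\Shv_\Nilp(\Bun_G)\to\Vect,\quad \CF'\mapsto \on{C}^\cdot_\blacktriangle(\Bun_G,\CF \sotimes \CF')$$
is, at $\CY=\on{pt}$, precisely the right-hand side of \eqref{e:pairing Y BunG}; unwinding the definitions of \secref{s:non qc}, the right-hand side of \eqref{e:pairing Y BunG} for general $\CY$ is exactly the canonical $\CY$-relative extension of the functor \emph{defined} by the kernel $\CF\in \Shv(\Bun_G)_{\on{co}}$. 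Symmetrically, the left-hand side of \eqref{e:pairing Y BunG} is the canonical $\CY$-relative extension of the functor \emph{codefined} by the kernel $\Mir_{\Bun_G}(\CF)\in \Shv(\Bun_G)$, and the natural transformation \eqref{e:pairing Y BunG} itself is precisely the general left-to-right comparison map of \eqref{e:left-to-right trans} between these two presentations.

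Now \corref{c:Nilp codefined BunG} asserts exactly that, for $\CF\in \Shv_\Nilp(\Bun_G)_{\on{co}}$, the functor $\sF$ is \emph{simultaneously} defined by the kernel $\CF$ and codefined by the kernel $\Mir_{\Bun_G}(\CF)$. By the formalism of kernels on truncatable stacks developed in \secref{s:non qc}, the two $\CY$-relative extensions produced from a pair of compatible defining/codefining kernels must agree as functors $\Shv(\CY\times\Bun_G)\to\Shv(\CY)$, and the comparison map between them is an isomorphism. Applied to our situation, this gives precisely the claim that \eqref{e:pairing Y BunG} is an isomorphism.

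The only point that genuinely needs to be checked is that the natural transformation \eqref{e:pairing Y BunG}, written down directly in terms of the six-functor formalism, coincides with the abstract comparison map produced by the theory of kernels. This is a straightforward diagram chase, entirely parallel to what was done (implicitly) for \corref{c:pairing Y U}, and does not involve the $\Nilp$-condition. Thus no real obstacle arises: the essential content of the statement, namely that $\Mir_{\Bun_G}(\CF)$ is the codefining kernel of $\sF$, has already been absorbed into \corref{c:Nilp codefined BunG}, which in turn rests on \thmref{t:Nilp admits right adjoint BunG} and the miraculous property of $\Bun_G$.
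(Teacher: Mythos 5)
Your proof is correct and follows exactly the route the paper intends: the paper itself states the corollary as an immediate consequence of \corref{c:Nilp codefined BunG}, and your explanation simply unpacks why this is so — the transformation \eqref{e:pairing Y BunG} is, by construction, the instance of \eqref{e:left-to-right trans} (with $\CY_1=\Bun_G$, $\CY_2=\on{pt}$, $\CZ=\CY$) relating the $\CY$-relative versions of the functor codefined by $\Mir_{\Bun_G}(\CF)$ and defined by $\CF$, and the assertion that $\sF$ is defined and codefined by a kernel (\corref{c:Nilp codefined BunG} combined with \propref{p:left via right}(a)) says precisely that this transformation is an isomorphism for every $\CZ$. You also correctly read the hypothesis $\CF\in\Shv_\Nilp(\Bun_G)$ as $\CF\in\Shv_\Nilp(\Bun_G)_{\on{co}}$, which is the natural home for the kernel formalism and matches the hypothesis of \corref{c:Nilp codefined BunG}.
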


\sssec{}

Finally, we claim: 

\begin{cor} \label{c:pairing Y BunG bis}
For $\CF\in  \Shv_\Nilp(\Bun_G)$ and any algebraic stack $\CY$, there is a canonical isomorphism 
isomorphism of functors $\Shv(\CY\times \Bun_G)_{\on{co}}\to \Shv(\CY)$
\begin{equation} \label{e:pairing Y BunG bis}
(p_\CY)_!((\on{Id}_\CY\boxtimes \Mir_{\Bun_G})(\CF') \overset{*}\otimes p_{\Bun_G}^*(\CF)) \simeq
(p_\CY)_\blacktriangle(\CF' \sotimes p_{\Bun_G}^!(\CF)), \quad
\CF'\in \Shv(\CY\times \Bun_G).
\end{equation}
\end{cor}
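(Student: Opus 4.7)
The plan is to imitate, step by step, the proof of \corref{c:pairing Y U bis}, with $\Bun_G$ in place of the quasi-compact substack $\CU$. All three ingredients needed for the quasi-compact argument have ``$\Bun_G$-versions'' already established in this section: \propref{p:Mir inv Nilp} (in place of \propref{p:Mir inv Nilp U}) ensures that $\Mir_{\Bun_G}$ restricts to an equivalence between $\Shv_{\Nilp}(\Bun_G)_{\on{co}}$ and $\Shv_{\Nilp}(\Bun_G)$; \corref{c:pairing Y BunG} (in place of \corref{c:pairing Y U}) provides the basic comparison of the $*$- and $!$-pairings when the Nilp factor is accessed through $\Mir_{\Bun_G}$; and the kernel $\on{ps-u}_{\Bun_G} = (\Delta_{\Bun_G})_!(\ul{\sfe}_{\Bun_G})$ that defines $\Mir_{\Bun_G}$ is swap-symmetric as an object of $\Shv(\Bun_G\times \Bun_G)$.

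Concretely, I would first use \propref{p:Mir inv Nilp} to write $\CF = \Mir_{\Bun_G}(\CF'')$ with $\CF'' := \Mir_{\Bun_G}^{-1}(\CF) \in \Shv_{\Nilp}(\Bun_G)_{\on{co}}$. Substituting this into the left-hand side of \eqref{e:pairing Y BunG bis} and applying \corref{c:pairing Y BunG} to the Nilp object $\CF''$ (with $(\on{Id}_\CY \boxtimes \Mir_{\Bun_G})(\CF')$ playing the role of the first argument) converts the $*$-pairing into the $!$-pairing:
\begin{multline*}
(p_\CY)_!((\on{Id}_\CY \boxtimes \Mir_{\Bun_G})(\CF') \overset{*}\otimes p_{\Bun_G}^*(\Mir_{\Bun_G}(\CF''))) \simeq \\
\simeq (p_\CY)_\blacktriangle((\on{Id}_\CY \boxtimes \Mir_{\Bun_G})(\CF') \sotimes p_{\Bun_G}^!(\CF'')).
\end{multline*}
The swap-symmetry of $\on{ps-u}_{\Bun_G}$ then allows one to transfer the action of $\Mir_{\Bun_G}$ from the $\Bun_G$-slot of $\CF'$ onto $\CF''$, yielding
$$
(p_\CY)_\blacktriangle((\on{Id}_\CY \boxtimes \Mir_{\Bun_G})(\CF') \sotimes p_{\Bun_G}^!(\CF'')) \simeq (p_\CY)_\blacktriangle(\CF' \sotimes p_{\Bun_G}^!(\Mir_{\Bun_G}(\CF''))),
$$
and substituting back $\Mir_{\Bun_G}(\CF'') = \CF$ completes the chain of isomorphisms.

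The main obstacle will be making the final swap-equivariance step rigorous in the non-quasi-compact setting. Unlike in \corref{c:pairing Y U bis}, where all sheaves live on a quasi-compact stack, here one must track the distinction between $\Shv(\CY \times \Bun_G)$ and $\Shv(\CY \times \Bun_G)_{\on{co}}$, confirm that $(\on{Id}_\CY \boxtimes \Mir_{\Bun_G})$ is well-defined as a functor defined by a kernel (so that $\sigma$-symmetry of $\on{ps-u}_{\Bun_G}$ translates into the required identity), and verify compatibility of the renormalized pushforward $(p_\CY)_\blacktriangle$ with this swap. All of these points, however, are of the same type as the kernel-level compatibilities worked out around \lemref{l:Mir and Hecke} and \propref{p:projector co}, and should pose no genuine new difficulty.
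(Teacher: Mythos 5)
Your argument is exactly the one the paper intends: the paper's proof of this corollary is the one-line instruction to imitate \corref{c:pairing Y U bis}, whose proof is precisely the chain you wrote (insert $\Mir_{\Bun_G}\circ \Mir_{\Bun_G}^{-1}$, apply \corref{c:pairing Y BunG} to $\Mir_{\Bun_G}^{-1}(\CF)\in\Shv_\Nilp(\Bun_G)_{\on{co}}$ via \propref{p:Mir inv Nilp}, then use swap-equivariance of $\on{ps-u}_{\Bun_G}$ to transfer $\Mir_{\Bun_G}$ between the two factors). The caveats you flag about tracking the $\on{co}$-categories and verifying kernel-level compatibility of $\on{Id}_\CY\boxtimes\Mir_{\Bun_G}$ with $(p_\CY)_\blacktriangle$ are indeed the only points requiring care, and they are handled by the general formalism of \secref{ss:functors by ker non qc} exactly as you suggest.
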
 

\begin{proof}

Follows from \corref{c:pairing Y BunG} in the same way as \corref{c:pairing Y U bis} follows from \corref{c:pairing Y U}. 

\end{proof}

\begin{rem}
Note that in the case when $\CY=\on{pt}$, 
the assertion of Corollaries \ref{c:pairing Y BunG bis} and \ref{c:pairing Y BunG}
has been already established in \thmref{t:two pairings amplified}. 

\medskip

Similarly to Remark \ref{r:pairing Y U}, the proof of \thmref{t:two pairings amplified}
can be extended to include the case of an arbitrary $\CY$. Furthermore, one can
show that the resulting isomorphisms identify with those of Corollaries \ref{c:pairing Y BunG bis} and \ref{c:pairing Y BunG}.

\end{rem}

\ssec{A refined version of \thmref{t:Hecke action Nilp 2}}

In this subsection we discuss an amplification of the categorical K\"unneth formula,
\thmref{t:Hecke action Nilp 2}. 

\sssec{}

In this subsection, we will use \thmref{t:Nilp admits right adjoint U} to prove the following result:

\begin{thm} \label{t:Kunneth fixed Nilp}
Let $\CZ$ be an algebraic stack, and let $\CN$ be a conical half-dimensional closed subset of $T^*(\CZ)$. Then the
functor
$$\Shv_\CN(\CZ)\otimes \Shv_\Nilp(\Bun_G)\to \Shv_{\CN\times \Nilp}(\CZ\times \Bun_G)$$
is an equivalence.
\end{thm}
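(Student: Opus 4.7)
The functor is automatically fully faithful, being the restriction of the equivalence $\Shv(\CZ)\otimes\Shv_\Nilp(\Bun_G)\simeq\Shv_{\frac{1}{2}\on{-dim}\times\Nilp}(\CZ\times\Bun_G)$ of \thmref{t:Hecke action Nilp 2} along $\Shv_\CN(\CZ)\hookrightarrow\Shv(\CZ)$; only essential surjectivity needs proof. My plan is first to reduce to a quasi-compact open substack $\CU\subset\Bun_G$: by \thmref{t:trunc}, $\Bun_G$ is a filtered union of universally $\Nilp$-cotruncative quasi-compact opens $\CU$, and $\Shv_\Nilp(\Bun_G)=\on{colim}_\CU\Shv_\Nilp(\CU)$ via the fully faithful $j_!$. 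Tensoring this presentation with $\Shv_\CN(\CZ)$, and comparing against the analogous colimit $\Shv_{\CN\times\Nilp}(\CZ\times\Bun_G)=\on{colim}_\CU\Shv_{\CN\times\Nilp}(\CZ\times\CU)$ (whose transition functors $(\on{id}\times j)_!$ preserve the singular support condition precisely because $\CU$ is universally $\Nilp$-cotruncative), reduces the problem to proving
$$\Shv_\CN(\CZ)\otimes\Shv_\Nilp(\CU)\overset{\sim}\to\Shv_{\CN\times\Nilp}(\CZ\times\CU)$$
for each such $\CU$.

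For the $\CU$-version, the key input is the self-duality of $\Shv_\Nilp(\CU)$ provided by \corref{c:non-standard duality U}, whose counit is the $*$-pairing $\on{ev}^l_\CU(\CG,\CF')=\on{C}^\cdot_c(\CU,\CG\overset{*}\otimes\CF')$. I would then invoke the standard tensor-product characterization---for a compactly generated colimit-closed subcategory $\Shv_\CN(\CZ)\subset\Shv(\CZ)$ and a dualizable $\bD$---that an object of $\Shv(\CZ)\otimes\bD$ lies in $\Shv_\CN(\CZ)\otimes\bD$ iff, for every compact $\CG\in\bD^c$, its image under the associated functional $\langle\CG,-\rangle\colon\bD\to\Vect$ lies in $\Shv_\CN(\CZ)$. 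Applied to $\bD=\Shv_\Nilp(\CU)$ and unwinding the pairing, the condition becomes: for each compact $\CG\in\Shv_\Nilp(\CU)^c$,
$$(p_\CZ)_!\bigl(\CF\overset{*}\otimes p_\CU^*(\CG)\bigr)\in\Shv_\CN(\CZ).$$
By \corref{c:Kunneth for cotrunc}, any $\CF\in\Shv_{\CN\times\Nilp}(\CZ\times\CU)$ already lives in $\Shv(\CZ)\otimes\Shv_\Nilp(\CU)$, so only this singular-support property remains to verify.

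That verification is microlocal: $p_\CU^*(\CG)$ has singular support inside $\{0\}_{T^*\CZ}\times\Nilp$, hence $\CF\overset{*}\otimes p_\CU^*(\CG)$ has singular support whose $T^*\CZ$-projection is contained in $\CN$; and the $!$-pushforward along the smooth projection $p_\CZ\colon\CZ\times\CU\to\CZ$ only contributes covectors $\xi\in T^*_z\CZ$ for which there exists $u\in\CU$ with $(z,u,\xi,0)$ in the ambient singular support, forcing $\xi\in\CN|_z$. I anticipate that the most delicate step is this final piece of microlocal bookkeeping---controlling singular support under $(p_\CZ)_!$ along the non-proper projection $p_\CZ$---which nevertheless should go through cleanly in the quasi-compact setting of $\CU$, with the half-dimensionality of both $\Nilp$ and $\CN$ keeping the estimate sharp on the $\CZ$-factor.
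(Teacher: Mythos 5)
The skeleton of your argument matches the paper: reduce to universally $\Nilp$-cotruncative quasi-compact opens $\CU$, then use a self-duality of $\Shv_\Nilp(\CU)$ and the tensor-product membership criterion to reduce to a singular-support bound on a pairing. (The paper uses the Verdier self-duality of \propref{p:usual duality U}, giving the expression $(p_\CZ)_\blacktriangle(\CF'\sotimes p_\CU^!\CF)$, rather than your $*$-pairing; by \corref{c:two pairings U} these differ by composing with $\Mir_\CU$, so the choice is cosmetic.)

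The gap is in the final microlocal step, which you explicitly defer and which is in fact the entire content of the theorem. The estimate $\on{SS}((p_\CZ)_!\CH)\subset\{(z,\xi):\exists u,\,(z,u,\xi,0)\in\on{SS}(\CH)\}$ that you invoke is the bound for a map that is proper on $\on{supp}(\CH)$, and it does not hold for the non-proper projection $p_\CZ:\CZ\times\CU\to\CZ$; half-dimensionality of $\CN$ and $\Nilp$ does not rescue it. (There is also a secondary issue: the tensor estimate $\on{SS}(\CF_1\overset{*}\otimes\CF_2)\subset\on{SS}(\CF_1)+\on{SS}(\CF_2)$ needs a no-antipodal-intersection hypothesis, and since $\Nilp=-\Nilp$ this is not automatic when $\Nilp\neq\{0\}$.) The paper avoids any direct microlocal bookkeeping: after reducing to showing $(\on{Id}_\CZ\boxtimes\sF)(\CF')\in\Shv_\CN(\CZ)$ for $\sF$ the functor defined by the kernel $\CF\in\Shv_\Nilp(\CU)$, it invokes \corref{c:Nilp codefined U}---the output of the Hecke/miraculous-duality machinery, ultimately \thmref{t:Nilp admits right adjoint U}---to conclude that $\sF$ is defined \emph{and} codefined by a kernel, and then applies \corref{c:preserve sing supp}, whose proof goes through Beilinson's ULA characterization of singular support and \corref{c:preserve ULA}. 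It is precisely this ``defined and codefined'' property---available only because $\CF$ has nilpotent singular support---that makes the non-proper pushforward controllable; your proposal contains no substitute for it.
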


By passing to the limit, the statement of \thmref{t:Kunneth fixed Nilp} is obtained from the following:

\begin{thm} \label{t:Kunneth fixed Nilp U}
Let $\CZ$ be an algebraic stack, and let $\CN$ be a half-dimensional closed 
subset of $T^*(\CZ)$. Then for 
any universally $\Nilp$-cotruncative open substack $\CU\subset \Bun_G$, the
functor
$$\Shv_\CN(\CZ)\otimes \Shv_\Nilp(\CU)\to \Shv_{\CN\times \Nilp}(\CZ\times \CU)$$
is an equivalence.
\end{thm}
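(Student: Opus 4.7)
The plan is to deduce \thmref{t:Kunneth fixed Nilp U} by combining the absolute K\"unneth equivalence of \corref{c:Kunneth for cotrunc} with a singular-support estimate extracted from \corref{c:pairing Y U}.

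For full faithfulness, I would factor the functor as
$$\Shv_\CN(\CZ)\otimes\Shv_\Nilp(\CU)\hookrightarrow\Shv(\CZ)\otimes\Shv_\Nilp(\CU)\overset{\sim}\longrightarrow \Shv_{\frac{1}{2}\on{-dim}\times\Nilp}(\CZ\times\CU),$$
the equivalence being \corref{c:Kunneth for cotrunc}. The standard K\"unneth estimate for singular support of $\boxtimes$ forces $\CF_\CZ\boxtimes\CG\in\Shv_{\CN\times\Nilp}(\CZ\times\CU)$ for pure tensors with $\CF_\CZ\in\Shv_\CN(\CZ)$ and $\CG\in\Shv_\Nilp(\CU)$; since the subcategory $\Shv_{\CN\times\Nilp}(\CZ\times\CU)$ is closed under colimits in $\Shv(\CZ\times\CU)$, the whole image lands there.

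For essential surjectivity, take $\CF\in\Shv_{\CN\times\Nilp}(\CZ\times\CU)$. The inclusion $\CN\times\Nilp\subset \frac{1}{2}\on{-dim}\times\Nilp$ places $\CF$ in $\Shv(\CZ)\otimes\Shv_\Nilp(\CU)$ via \corref{c:Kunneth for cotrunc}; I must verify that the corresponding object actually lies in the subcategory $\Shv_\CN(\CZ)\otimes\Shv_\Nilp(\CU)$. Under compact generation of $\Shv_\Nilp(\CU)$, this is equivalent to showing that for each constructible test object $\CG\in\Shv_\Nilp(\CU)^{\on{constr}}$, the coefficient
$$\bi_\CG(\CF):=(p_\CZ)_\blacktriangle\bigl(p_\CU^!(\BD^{\on{Verdier}}(\CG))\sotimes \CF\bigr)\in \Shv(\CZ)$$
has singular support contained in $\CN$. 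Applying \corref{c:pairing Y U} to $\BD^{\on{Verdier}}(\CG)\in\Shv_\Nilp(\CU)$, I would rewrite this as
$$\bi_\CG(\CF)\simeq (p_\CZ)_!\bigl(\CF\overset{*}\otimes p_\CU^*(\Mir_\CU\circ\BD^{\on{Verdier}}(\CG))\bigr).$$

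The singular-support estimate then proceeds in three steps. First, by \propref{p:Mir inv Nilp U} and the stability of $\Nilp$ under Verdier duality, $\Mir_\CU\circ\BD^{\on{Verdier}}(\CG)\in\Shv_\Nilp(\CU)$, so its $p_\CU^*$-pullback has singular support in $\{0\}\times\Nilp$. Second, the $*$-tensor product adds singular supports, giving SS inside the Whitney sum $(\CN\times\Nilp)+(\{0\}\times\Nilp)\subset\CN\times(\Nilp+\Nilp)$. Third, the smooth projection $p_\CZ$ transports SS via its cotangent correspondence, selecting covectors of the form $(\alpha,0)\in T^*(\CZ)\times T^*(\CU)$ and projecting them to $T^*(\CZ)$; since the zero section belongs to $\Nilp+\Nilp$, the resulting SS lies in $\CN$. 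Hence $\bi_\CG(\CF)\in\Shv_\CN(\CZ)$, and the testing criterion yields $\CF\in\Shv_\CN(\CZ)\otimes\Shv_\Nilp(\CU)$. The main obstacle is the careful singular-support bookkeeping of this last step in the non-proper setting of $p_\CZ$, together with verifying that $\Shv_\Nilp(\CU)^{\on{constr}}$ is indeed an adequate test family for the subcategory inclusion $\Shv_\CN(\CZ)\otimes\Shv_\Nilp(\CU)\subset\Shv(\CZ)\otimes\Shv_\Nilp(\CU)$.
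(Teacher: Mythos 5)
Your strategy is close to the paper's in its overall shape: both reduce to a testing criterion via the self-duality of $\Shv_\Nilp(\CU)$, and both exploit the fact that the functor $\sF$ attached to $\CF\in\Shv_\Nilp(\CU)$ is codefined (as well as defined) by a kernel, which is what \corref{c:pairing Y U} encodes. However, your final step has a genuine gap — one that you flag yourself as ``the main obstacle'' but do not resolve. The cotangent-correspondence bound $SS((p_\CZ)_!\CG)\subset\{\alpha: (\alpha,0)\in SS(\CG)\}$ is simply false for the non-proper projection $p_\CZ:\CZ\times\CU\to\CZ$; there is no unconditional singular-support estimate for $!$- or $\blacktriangle$-pushforward along a non-proper map, and the Whitney-sum bookkeeping you propose cannot remedy this, since the obstruction is genuinely the behavior of the sheaf ``at infinity'' in the $\CU$-direction. (Notice also that your intermediate bound $\CN\times(\Nilp+\Nilp)$ has lost all control of the $\CU$-component, so the entire weight of the argument rests on the pushforward estimate that fails.)

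The paper's proof avoids this exactly. Having reduced, via \corref{c:Kunneth for cotrunc} and the self-duality of \propref{p:usual duality U}, to showing that $(\on{Id}_\CZ\boxtimes\sF)(\CF')\in\Shv_\CN(\CZ)$ for $\CF'\in\Shv_{\CN\times T^*(\Bun_G)}(\CZ\times\CU)$ and $\sF$ the functor with kernel $\CF\in\Shv_\Nilp(\CU)$, it does not attempt a direct singular-support computation: it invokes \corref{c:Nilp codefined U} (the functor $\sF$ is defined \emph{and} codefined by a kernel) and then \corref{c:preserve sing supp}. The latter is the abstract substitute for your missing estimate; its proof goes through \corref{c:preserve ULA}, which shows that a functor that is both defined and codefined by a kernel preserves the ULA property, and hence preserves singular-support bounds via Beilinson's characterization of singular support. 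In other words, the simultaneous $*$- and $!$-kernel descriptions of $\sF$ are precisely what force the pushforward to behave as if $p_\CZ$ were proper — this is the content your naive bookkeeping cannot supply. You have all the right inputs; invoking \corref{c:preserve sing supp} in place of the cotangent-correspondence estimate would close the argument.
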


We proceed to the proof of \thmref{t:Kunneth fixed Nilp U}. 

\sssec{}

Given \corref{c:Kunneth for cotrunc}, in order to prove \thmref{t:Kunneth fixed Nilp U}, we have to show
the following:

\medskip

Let $\CF'$ be an object in $\Shv(\CZ)\otimes  \Shv_\Nilp(\CU)$, whose image along the functor
\begin{equation} \label{e:boxtimes Z U}
\Shv(\CZ)\otimes  \Shv_\Nilp(\CU)\overset{\boxtimes}\to \Shv(\CZ\times \CU)
\end{equation}
is contained in $\Shv_{\CN\times \Nilp}(\CZ\times \CU)$. Then $\CF'\in \Shv_\CN(\CZ)\otimes  \Shv_\Nilp(\CU)$.

\medskip

To prove this, by \propref{p:usual duality U}, it suffices to show that for every $\CF\in  \Shv_\Nilp(\CU)$, the object
$$(\on{Id}\otimes \on{ev}_\CU)(\CF' \otimes \CF)\in \Shv(\CZ)$$
belongs to $\Shv_\CN(\CZ)$. 

\medskip

We rewrite
$$(\on{Id}\otimes \on{ev}_\CU)(\CF' \otimes \CF) 
\simeq (p_1)_\blacktriangle(\CF' \sotimes p_2^!(\CF)),$$
where in the right-hand side, we denote by the same character $\CF'$
the image of $\CF'$ along \eqref{e:boxtimes Z U}.

\medskip

Thus, it suffices to show that for $\CF'\in \Shv_{\CN\times T^*(\Bun_G)}(\CZ\times \CU)$ and 
$\CF\in \Shv_\Nilp(\CU)$, the object 
$$(\on{Id}_\CZ\boxtimes \sF)(\CF')\in \Shv(\CZ)$$
belongs to $\Shv_\CN(\CZ)$, where $\sF$ is the functor defined by $\CF$ as a kernel. 

\medskip

According to \corref{c:Nilp codefined U}, the functor $\sF$ is defined and codefined by a kernel. 
The required assertion follows now from \corref{c:preserve sing supp}. 

\qed[\thmref{t:Kunneth fixed Nilp U}]

\ssec{A converse statement}

In this subsection we will state an assertion that provides a converse to \corref{c:Nilp codefined BunG},
and as a result also to \thmref{t:Nilp admits right adjoint BunG}. 

\sssec{}

We claim:

\begin{thm} \label{t:char of Nilp codefined}
Let $\CF$ be an object of $\Shv(\Bun_G)_{\on{co}}$ such that the corresponding functor
\begin{equation} \label{e:left functor new}
\sF:=\on{C}^\cdot_\blacktriangle(\Bun_G,\CF\sotimes -), \quad \Shv(\Bun_G)\to \Vect
\end{equation}
is defined and codefined by a kernel. Then
$\CF\in \Shv_\Nilp(\Bun_G)_{\on{co}}$.
\end{thm}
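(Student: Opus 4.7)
\textbf{Proof plan for \thmref{t:char of Nilp codefined}.}
The plan is to identify the codefining kernel of $\sF$ with $\Mir_{\Bun_G}(\CF)$, show that this kernel has nilpotent singular support, and then invoke \propref{p:Mir inv Nilp} to conclude. By hypothesis there exists a kernel $\CG\in \Shv(\Bun_G)$ codefining $\sF$, so $\sF(\CF')\simeq \on{C}^\cdot_c(\Bun_G,\CG\overset{*}\otimes \CF')$ functorially, with matching behavior after tensoring with $\on{Id}_\CZ$ for any auxiliary stack $\CZ$.

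First, I would extract $\CG$ concretely by applying $\on{Id}_{\Bun_G}\boxtimes \sF:\Shv(\Bun_G\times \Bun_G)\to \Shv(\Bun_G)$ (which makes sense because $\sF$ is both defined and codefined by a kernel) to the pseudo-diagonal $\on{ps-u}_{\Bun_G}=(\Delta_{\Bun_G})_!(\ul\sfe_{\Bun_G})$. Using the codefining presentation and the projection formula for $\Delta_{\Bun_G}$, the result is canonically $\CG$. Using the defining presentation (equivalently, via the miraculous kernel interpretation of $\Mir_{\Bun_G}$ recalled in \secref{sss:ps-u}), the same computation yields $\Mir_{\Bun_G}(\CF)$. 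Thus $\CG\simeq \Mir_{\Bun_G}(\CF)$, and by \propref{p:Mir inv Nilp} it suffices to show $\CG\in \Shv_\Nilp(\Bun_G)$.

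The heart of the argument is the last statement. The idea is to exploit Hecke-compatibility: by \propref{p:Hecke codefined}, all integral Hecke functors $\sH_\CV$ are themselves both defined and codefined by kernels. Since the hypothesis on $\sF$ bestows on it the same double-kernel structure, one can form $(\sH_\CV\boxtimes \on{Id})\circ (\on{Id}\boxtimes \sF)$ as functors defined by kernels and compute it two ways using \propref{p:tau and sigma} and \corref{c:Hecke Mir} in combination with the identification $\CG\simeq \Mir_{\Bun_G}(\CF)$. Applied to $\CV=\sR$ (the projector object of \secref{sss:the projector}) and unwound on $\on{ps-u}_{\Bun_G}$, this produces a canonical isomorphism $\sP(\CG)\simeq \CG$. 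By \thmref{t:projector}, this precisely says $\CG\in \Shv_\Nilp(\Bun_G)$, completing the argument after reducing the general (not necessarily constructible) case to the constructible one by a standard colimit/truncation argument analogous to the one used in the proof of \corref{c:Nilp codefined U}.

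The main obstacle will be Step~2, i.e., producing the identity $\sP(\CG)\simeq \CG$ rigorously as an isomorphism of kernels. The difficulty is that the hypothesis gives a compatibility of defining and codefining presentations, but one needs to transport this through the non-trivial interplay of Hecke kernels with $\Mir_{\Bun_G}$ (via \corref{c:Hecke Mir}) and the symmetry coming from \propref{p:tau and sigma} together with $\sR^\tau\simeq \sR$. Concretely, one packages this as a commutative diagram of endofunctors defined by kernels analogous to \eqref{e:ev com gen}, whose specialization at $\sR$ yields the desired identity; the key technical point is that the compatibility of the defining and codefining presentations of $\sF$ (over all auxiliary $\CZ$) is exactly the input needed to make that diagram commute.
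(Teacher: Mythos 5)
Your opening move is correct and matches the paper: you pass to $\CG := \Mir_{\Bun_G}(\CF)$, observe that this is the codefining kernel of $\sF$, and reduce to showing $\CG \in \Shv_\Nilp(\Bun_G)$, after which \propref{p:Mir inv Nilp} finishes. The computation of $(\on{Id}_{\Bun_G}\boxtimes\sF)(\on{ps-u}_{\Bun_G})$ in two ways to identify $\CG\simeq\Mir_{\Bun_G}(\CF)$ is also correct.

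However, there is a genuine gap at the step you yourself flag as ``the main obstacle,'' and your proposed resolution does not close it. You want to produce an isomorphism $\sP(\CG)\simeq\CG$ by a formal manipulation of kernels, by writing a commutative diagram analogous to \eqref{e:ev com gen} and specializing at $\sR$. Tracing through what this actually gives: using $\CG=(\on{Id}_{\Bun_G}\boxtimes\sF)(\on{ps-u}_{\Bun_G})$ and \propref{p:Nilp !-diag}, one gets
$$\sP(\CG)\simeq(\on{Id}_{\Bun_G}\boxtimes\sF)\bigl((\sP\boxtimes\on{Id})(\on{ps-u}_{\Bun_G})\bigr)\simeq(\on{Id}_{\Bun_G}\boxtimes\sF)\bigl((\on{Id}\boxtimes\sP)(\on{ps-u}_{\Bun_G})\bigr)=(\on{Id}_{\Bun_G}\boxtimes(\sF\circ\sP))(\on{ps-u}_{\Bun_G}).$$
To conclude $\sP(\CG)\simeq\CG$ one would need $\sF\circ\sP\simeq\sF$ as functors defined by kernels, i.e.\ $\sH_{\sR,\on{co}}(\CF)\simeq\CF$, which by \corref{c:projector co} is \emph{exactly} the statement $\CF\in\Shv_\Nilp(\Bun_G)_{\on{co}}$ that you are trying to prove. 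Equivalently, using \corref{c:Hecke Mir}, $\sP(\CG)=\Mir_{\Bun_G}(\sP_{\on{co}}(\CF))$, and $\sP(\CG)\simeq\CG$ again reduces to $\sP_{\on{co}}(\CF)\simeq\CF$. The commutative diagram \eqref{e:ev com gen} is formal and holds for any $\CQ$, so it cannot by itself detect nilpotence of singular support; the double-kernel hypothesis on $\sF$ does not formally yield the projector identity.

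What the paper actually does is different in an essential way: it avoids the projector identity and instead uses the \emph{Hecke-lisse} characterization of $\Shv_\Nilp(\Bun_G)$ from \thmref{t:Hecke action Nilp 2} (for $\CZ=\on{pt}$). Concretely, it shows that $\sH(V,\CG)$ lies in the essential image of $\Shv(\Bun_G)\otimes\qLisse(X)$ for all $V$. This is reduced to checking $\qLisse$-ness pointwise (\propref{p:smooth on curve 2}), and that in turn is established by rewriting via \propref{p:tau and sigma} to put the Hecke kernel on the $\CF'$-side, using that $\sH(V^\tau,\CF')$ is ULA with respect to the projection to $X$, and invoking \corref{c:preserve sing supp}. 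This last corollary is precisely where the hypothesis that $\sF$ is defined \emph{and} codefined by a kernel enters: it says such functors preserve singular support in the auxiliary variable. That is the geometric content missing from your plan. To fix your proposal you would need to replace the ``formal commutative diagram gives $\sP(\CG)\simeq\CG$'' step with the ULA/singular-support argument (or some equivalent geometric input), which is where the real work lies.
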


This theorem will be proved in \secref{ss:proof of char of Nilp}. In the rest of this subsection we will derive some corollaries. 

\sssec{}

As a consequence, we obtain: 

\begin{thm} \label{t:char of Nilp adj}
Let $\CF$ be an object of $\Shv(\Bun_G)^{\on{constr}}$ such that the corresponding functor
$$\sF:=\on{C}^\cdot_\blacktriangle(\Bun_G,\CF\sotimes -), \quad \Shv(\Bun_G)_{\on{co}}\to \Vect$$
admits a right adjoint \emph{as a functor defined by a kernel}. Then
$\CF\in \Shv_\Nilp(\Bun_G)$.
\end{thm}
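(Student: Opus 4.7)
The plan is to reduce \thmref{t:char of Nilp adj} to \thmref{t:char of Nilp codefined} via Verdier duality and the general theory of kernel-defined functors developed in Section \ref{s:ker}.

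First, I would apply the non-quasi-compact analog of \thmref{t:right adj main}(a)---extending the quasi-compact statement to the setting of Section \ref{s:non qc} involving $\Shv(\Bun_G)_{\on{co}}$---to the hypothesis. That general fact says: a functor defined by a kernel which admits a right adjoint as a kernel-defined functor is automatically both defined \emph{and} codefined by a kernel. Applied to $\sF$, this gives a codefining kernel $\CG$ in $\Shv(\Bun_G)_{\on{co}}$. Using constructibility of $\CF$, one identifies $\CG$ canonically with (a shift of) $\BD^{\on{Verdier}}(\CF) \in \Shv(\Bun_G)^{\on{constr}}$, viewed inside $\Shv(\Bun_G)_{\on{co}}$ via the natural embedding of constructible objects. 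The computation of the right adjoint in \thmref{t:Nilp admits right adjoint BunG} (where $\CF^R = \Mir_{\Bun_G}^{-1} \circ \BD^{\on{Verdier}}(\CF)$) serves as a template: the same formula produces the candidate for $\CG$ here.

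Next, I would transpose viewpoints. The functor $\sG: \Shv(\Bun_G) \to \Vect$ defined by the kernel $\BD^{\on{Verdier}}(\CF) \in \Shv(\Bun_G)_{\on{co}}$ is then, by the symmetry of the kernel formalism, both defined and codefined by a kernel---with $\CF$ itself playing the role of the codefining kernel. Applying \thmref{t:char of Nilp codefined} to $\sG$ forces $\BD^{\on{Verdier}}(\CF) \in \Shv_\Nilp(\Bun_G)_{\on{co}}$, and since Verdier duality preserves the nilpotent singular support condition on constructible objects, this yields $\CF \in \Shv_\Nilp(\Bun_G)$, as desired.

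The main obstacle will be the first step: making precise and establishing the non-qc version of the adjoint/codefinition equivalence. Concretely, one needs to extend the 2-categorical kernel machinery of Section \ref{s:ker} to the setting with $\Shv(\Bun_G)_{\on{co}}$, carefully tracking the interaction with the naive identity functor $\on{Id}^{\on{naive}}_{\Bun_G}$ and the miraculous functor $\Mir_{\Bun_G}$ (the latter being the bridge between the two "co" versions of kernel categories). A secondary point of care is identifying the codefining kernel with $\BD^{\on{Verdier}}(\CF)$ using constructibility, which amounts to matching the unit \eqref{e:unit BunG} and counit \eqref{e:counit BunG} constructed in the proof of \thmref{t:Nilp admits right adjoint BunG} against the abstract adjoint supplied by the hypothesis.
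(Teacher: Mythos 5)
Your overall strategy---reduce to \thmref{t:char of Nilp codefined} via the kernel formalism of Section \ref{s:ker}---is the right one, but the specific route you take has a genuine gap, and the object you feed into \thmref{t:char of Nilp codefined} is not the right one.

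The gap is in your first step. You want to apply the non-quasi-compact version of \thmref{t:right adj main}(a) directly to $\sF$ and extract a codefining kernel from the hypothesis. But for a constructible $\CF\in\Shv(\Bun_G)^{\on{constr}}$ viewed as a kernel for $\sF:\Shv(\Bun_G)_{\on{co}}\to\Vect$, there is \emph{no} notion of ``codefined by a kernel'': this is exactly the caveat recorded at the end of Remark~\ref{r:right adj main non qc bis}. Remark~\ref{r:right adj main non qc}, which is the version of \thmref{t:right adj main}(a) you want, requires the defining object to live in $\Shv(\CY_1\times\CY_2)_{\on{co}}$, not in $\Shv(\CY_1\times\CY_2)^{\on{constr}}$. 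So the hypothesis on $\sF$ cannot be upgraded to ``defined and codefined'' in place; it first has to be transported.

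That transport is the move you are missing, and it is the crux of the paper's proof. One should set $\CG:=\Mir^{-1}_{\Bun_G}(\CF)\in\Shv(\Bun_G)_{\on{co}}$ (not $\BD^{\on{Verdier}}(\CF)$, which you conflate with it: $\Mir^{-1}_{\Bun_G}(\CF)$ and $\BD^{\on{Verdier}}(\CF)$ are different objects living in different categories, and the formula $\CF^R\simeq\Mir^{-1}_{\Bun_G}(\BD^{\on{Verdier}}(\CF))$ from \thmref{t:Nilp admits right adjoint BunG} describes the right adjoint, not $\CG$). Since $\Mir^{-1}_{\Bun_G}$ is an invertible kernel, $\sG$ inherits a right adjoint from $\sF$; now $\CG$ is in the setting of Remark~\ref{r:right adj main non qc}, so \thmref{t:right adj main}(a) applies and $\sG:\Shv(\Bun_G)\to\Vect$ is defined and codefined by a kernel. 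Then \thmref{t:char of Nilp codefined} gives $\CG\in\Shv_\Nilp(\Bun_G)_{\on{co}}$, and $\CF=\Mir_{\Bun_G}(\CG)\in\Shv_\Nilp(\Bun_G)$ by \propref{p:Nilp Mir compat}. Your ``transpose viewpoints'' paragraph is gesturing at something like this, but with $\BD^{\on{Verdier}}(\CF)$ in place of $\Mir^{-1}_{\Bun_G}(\CF)$ the resulting functor has no justification for being codefined by a kernel, and the argument does not close.
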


\begin{proof}

Consider the object 
$$\CG:=\Mir^{-1}_{\Bun_G}(\CF)\in \Shv(\Bun_G)_{\on{co}}.$$ 

The assumption on $\CF$ implies that $\CG$, viewed as a functor $\Shv(\Bun_G)\to \Vect$
defined by a kernel, also admits a right adjoint.
By \thmref{t:right adj main}(a) (see Remark \ref{r:right adj main non qc}), the resulting functor 
$$\sG:\Shv(\Bun_G)\to \Vect$$
is defined and codefined by a kernel. 

\medskip

Hence, from \thmref{t:char of Nilp codefined}, we obtain that $\CG\in \Shv_\Nilp(\Bun_G)_{\on{co}}$.
Hence, 
$$\CF=\Mir_{\Bun_G}(\CG)\in \Shv_\Nilp(\Bun_G).$$

\end{proof}


%
%
%
%
%
%
%

\sssec{} 

Combining Theorems \ref{t:char of Nilp adj}, \ref{t:Nilp admits right adjoint BunG} and \ref{t:right adj crit}
(see Remark \ref{r:right adj crit non-qc}), we obtain:

\begin{cor} \label{c:equiv BunG}
For $\CF\in \Shv(\Bun_G)^{\on{constr}}$ the following conditions are equivalent:

\smallskip

\noindent{\em(i)} $\CF\in \Shv_\Nilp(\Bun_G)$;

\smallskip

\noindent{\em(ii)}
The functor $\sF$ admits \emph{a} right adjoint as a functor defined by a kernel.

\smallskip

\noindent{\em(iii)} The map 
$$\on{C}^\cdot_c(\Bun_G,\CF\overset{*}\otimes \BD(\CF)) \to 
\on{C}^\cdot_\blacktriangle(\Bun_G,\CF\sotimes (\Mir^{-1}_{\Bun_G}\circ \BD^{\on{Verdier}}(\CF)))$$ 
of \eqref{e:ident pair 2} is an isomorphism, where $\BD^{\on{Verdier}}$ is understood as a functor
$$(\Shv(\Bun_G)^{\on{constr}})^{\on{op}}\to \Shv(\Bun_G)^{\on{constr}}.$$

\end{cor}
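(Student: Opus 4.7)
The plan is to read off the three equivalences directly from the three cited theorems, with \thmref{t:right adj crit} playing the role of the bridge that identifies ``admits a right adjoint'' with the concrete isomorphism condition (iii).

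The implication (i) $\Rightarrow$ (ii) is simply \thmref{t:Nilp admits right adjoint BunG}: for $\CF\in \Shv_\Nilp(\Bun_G)^{\on{constr}}$ one has the explicit right adjoint $\CF^R := \Mir^{-1}_{\Bun_G}\circ \BD^{\on{Verdier}}(\CF)\in \Shv(\Bun_G)_{\on{co}}$, constructed there as a kernel together with explicit unit \eqref{e:unit BunG} and counit \eqref{e:counit BunG}. The converse implication (ii) $\Rightarrow$ (i) is the substantive input \thmref{t:char of Nilp adj}; I would quote it as a black box (its proof, via reduction to \thmref{t:char of Nilp codefined} and the subsequent application of $\Mir^{-1}_{\Bun_G}$, is where the real work has taken place).

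It remains to identify (iii) with the existence of a right adjoint. For this, I would invoke \thmref{t:right adj crit} in the truncatable-stack form noted in Remark \ref{r:right adj crit non-qc}. That criterion says that a kernel $\CF$ defines a functor $\sF$ admitting a right adjoint precisely when the natural transformation \eqref{e:left-to-right trans bis} from the $\on{ev}^l$-pairing to the $\on{ev}$-pairing, mediated by $\on{Id}^l_{\Bun_G}$, is an isomorphism; in the miraculous setting $\on{Id}^l_{\Bun_G}$ is computed by $\Mir^{-1}_{\Bun_G}$ (cf.\ \secref{sss:Mir and Id l}), so instantiating the criterion at our $\CF$ reproduces exactly the map \eqref{e:ident pair 2} that appears in (iii). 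This gives (ii) $\Leftrightarrow$ (iii) formally. Combined with the implications above, all three conditions are equivalent.

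The main obstacle is not in this corollary but upstream, in \thmref{t:char of Nilp codefined}, which supplies the nontrivial direction (ii) $\Rightarrow$ (i); once that and the kernel-level adjoint criterion are available, the present statement is essentially bookkeeping. The only minor care point is verifying that the natural transformation supplied by \thmref{t:right adj crit}, when transported through the identification $\on{Id}^l_{\Bun_G}\simeq \Mir^{-1}_{\Bun_G}$, literally coincides with the map written in (iii) rather than some variant; this is ensured by the compatibilities recorded in \secref{sss:two versions left-to-right trans} together with \propref{p:ident pair 1 2}.
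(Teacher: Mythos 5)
Your proposal matches the paper's proof exactly: the paper proves this corollary by simply invoking the same three results — \thmref{t:Nilp admits right adjoint BunG} for (i) $\Rightarrow$ (ii), \thmref{t:char of Nilp adj} for (ii) $\Rightarrow$ (i), and \thmref{t:right adj crit} together with Remark \ref{r:right adj crit non-qc} for (ii) $\Leftrightarrow$ (iii). Your remark about checking that the natural transformation from \thmref{t:right adj crit}, transported through $\on{Id}^l_{\Bun_G}\simeq \Mir^{-1}_{\Bun_G}$, literally agrees with the map in (iii) (via \secref{sss:two versions left-to-right trans} and \propref{p:ident pair 1 2}) is a sound observation that the paper leaves implicit.
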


\sssec{}

Let us momentarily place ourselves in the context of (non-necessarily holonomic) D-modules. I.e., $\Shv(-)$
will denote the category of ind-holonoimic D-modules, viewed as a subcategory of the category $\Dmod(-)$
of all D-modules. Note that the inclusion 
$$\Shv_\Nilp(\Bun_G)\subset \Dmod_\Nilp(\Bun_G)$$ 
is an equivalence, since $\Nilp\subset T^*(\Bun_G)$ is Lagrangian.

\medskip

In this case, the condition that the functor
$\sF$ admits a right adjoint as a functor defined by a kernel is equivalent to the condition that it admits a 
continuous right adjoint as a plain functor
$$\Dmod(\Bun_G)_{\on{co}}\to \Vect,$$
which is in turn equivalent to the condition that it sends compact objects to finite-dimensional vector spaces.

\sssec{}

We clam: 

\begin{thm} 
For $\CF\in \Shv(\Bun_G)^{\on{constr}}\subset \Dmod(\Bun_G)$ the following conditions are equivalent:

\smallskip

\noindent{\em(i)} $\CF\in \Shv_\Nilp(\Bun_G)$;

\smallskip

\noindent{\em(ii)} For any $\CF'\in \Dmod(\Bun_G)^c_{\on{co}}$, the object
$$\on{C}_\blacktriangle(\Bun_G,\CF\sotimes \CF')\in \Vect$$
is finite-dimensional. 

\smallskip

\noindent{\em(ii')} For any $\CF'\in \Dmod(\Bun_G)^c$, the object
$$\CHom_{\Dmod(\Bun_G)}(\CF',\CF)\in \Vect$$
is finite-dimensional. 

\smallskip

\noindent{\em(ii'')} For any $\CF'\in \Dmod(\Bun_G)^c_{\on{co}}$, the object
$$\CHom_{\Dmod(\Bun_G)}(\CF,\on{Id}^{\on{naive}}_{\Bun_G}(\CF'))\in \Vect$$
is finite-dimensional. 

\end{thm}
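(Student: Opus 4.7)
The plan is to reduce each of the three finiteness conditions to the single statement that the functor
$\sF := \on{C}^\cdot_\blacktriangle(\Bun_G, \CF \sotimes -) : \Dmod(\Bun_G)_{\on{co}} \to \Vect$
sends compact objects to finite-dimensional vector spaces, and then to invoke \corref{c:equiv BunG}. As recalled in the paragraph immediately preceding the theorem, in the D-module context preservation of compactness by $\sF$ is equivalent to $\sF$ admitting a continuous right adjoint as a plain functor, which in turn is equivalent to $\sF$ admitting a right adjoint as a functor defined by a kernel. Hence the equivalence (i) $\Leftrightarrow$ (ii) will be immediate from \corref{c:equiv BunG}.

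For (ii) $\Leftrightarrow$ (ii'), I would invoke the Verdier self-duality $\Dmod(\Bun_G)^\vee \simeq \Dmod(\Bun_G)_{\on{co}}$ reviewed in \secref{sss:Verdier on BunG}. At the level of compact objects this restricts to Verdier duality $\BD^{\on{Verdier}} : (\Dmod(\Bun_G)^c)^{\on{op}} \overset{\sim}\to \Dmod(\Bun_G)^c_{\on{co}}$, together with the identification $\CHom_{\Dmod(\Bun_G)}(\CF',\CF) \simeq \on{C}^\cdot_\blacktriangle(\Bun_G, \CF \sotimes \BD^{\on{Verdier}}(\CF'))$ for compact $\CF'$. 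Thus condition (ii') with $\CF'$ ranging over $\Dmod(\Bun_G)^c$ will translate directly into condition (ii) for $\BD^{\on{Verdier}}(\CF')$ ranging over the compact objects of $\Dmod(\Bun_G)_{\on{co}}$.

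For (ii) $\Leftrightarrow$ (ii''), I would exploit the presentation $\Dmod(\Bun_G)_{\on{co}} = \underset{\CU}{\on{colim}_*}\, \Dmod(\CU)$: every compact $\CF' \in \Dmod(\Bun_G)^c_{\on{co}}$ has the form $j_{*,\on{co}}(\CG)$ for some quasi-compact cotruncative open $j : \CU \hookrightarrow \Bun_G$ and some $\CG \in \Dmod(\CU)^c$. From $\on{u}^{\on{naive}}_{\Bun_G} = (\Delta_{\Bun_G})_*(\omega_{\Bun_G})$ one reads off $\on{Id}^{\on{naive}}_{\Bun_G}(j_{*,\on{co}}(\CG)) \simeq j_*(\CG)$, whence $\CHom_{\Dmod(\Bun_G)}(\CF, \on{Id}^{\on{naive}}(j_{*,\on{co}}(\CG))) \simeq \CHom_{\Dmod(\CU)}(j^*\CF, \CG)$. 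Verdier duality on the quasi-compact stack $\CU$ then identifies this with the $\sfe$-dual of $\on{C}^\cdot_\blacktriangle(\CU, j^*\CF \sotimes \BD^{\on{Verdier}}_\CU(\CG)) \simeq \on{C}^\cdot_\blacktriangle(\Bun_G, \CF \sotimes j_{*,\on{co}}(\BD^{\on{Verdier}}_\CU(\CG)))$, recovering (ii) after the reindexing $\CF' = j_{*,\on{co}}(\BD^{\on{Verdier}}_\CU(\CG))$.

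The main obstacle will be the bookkeeping in this last step: verifying that the identifications $\on{Id}^{\on{naive}} \circ j_{*,\on{co}} \simeq j_*$ and the comparison of pairings on $\CU$ versus on $\Bun_G$ are valid without any auxiliary hypothesis on the constructible object $\CF$, and checking that the reindexing via $\BD^{\on{Verdier}}_\CU$ indeed sweeps out all of $\Dmod(\Bun_G)^c_{\on{co}}$. Beyond this essentially formal verification, no fresh geometric input will be required: the theorem is a direct consequence of \corref{c:equiv BunG} together with the D-module interpretation in the paragraph preceding the theorem.
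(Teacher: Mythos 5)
Your treatment of (i) $\Leftrightarrow$ (ii) and (ii) $\Leftrightarrow$ (ii') matches the paper and is correct. The gap is in your argument for (ii) $\Leftrightarrow$ (ii''). After reducing via adjunction to $\CHom_{\Dmod(\CU)}(j^*\CF,\CG)$ with $\CG\in\Dmod(\CU)^c$, you assert that this is the $\sfe$-dual of $\on{C}^\cdot_\blacktriangle(\CU,j^*\CF\sotimes \BD^{\on{Verdier}}_\CU\CG)$. But $\on{C}^\cdot_\blacktriangle(\CU,j^*\CF\sotimes \BD^{\on{Verdier}}_\CU\CG)=\on{ev}_\CU(j^*\CF,\BD^{\on{Verdier}}_\CU\CG)=\CHom_\CU(\CG,j^*\CF)$, so your assertion amounts to $\CHom(j^*\CF,\CG)\simeq \CHom(\CG,j^*\CF)^\vee$, i.e.\ that the Serre functor of $\Shv(\CU)$ is the identity. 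This is false in general: already for $\CU$ a smooth proper scheme the Serre functor is a shift $[-2\dim\CU]$, and on a quasi-compact stack with automorphisms (such as any $\CU\subset\Bun_G$) it is a genuinely nontrivial functor — this is precisely the content of Sections~5.5--5.6 of the paper, where the Serre functor is identified with $\Mir_{\CU,\Nilp}^{-1}$.

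The correct computation is $\CHom_\CU(j^*\CF,\CG)\simeq \on{C}^\cdot_\blacktriangle(\CU,\BD^{\on{Verdier}}(j^*\CF)\sotimes \CG)=\on{C}^\cdot_\blacktriangle(\CU,j^*\BD^{\on{Verdier}}(\CF)\sotimes\CG)=\on{C}^\cdot_\blacktriangle(\Bun_G,\BD^{\on{Verdier}}(\CF)\sotimes j_{*,\on{co}}\CG)$, where $\BD^{\on{Verdier}}$ here is the involution on $\Shv(\Bun_G)^{\on{constr}}$. Reindexing with $\CF'=j_{*,\on{co}}\CG$, this shows that condition (ii'') for $\CF$ is condition (ii) for $\BD^{\on{Verdier}}(\CF)$ — not for $\CF$ itself. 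To close the loop you still need the extra ingredient that $\Shv_\Nilp(\Bun_G)$ is preserved by the Verdier involution on $\Shv(\Bun_G)^{\on{constr}}$, so that (i), and hence (ii) and (ii'), is insensitive to replacing $\CF$ by $\BD^{\on{Verdier}}(\CF)$. This is exactly how the paper argues: it proves (ii') $\Leftrightarrow$ (ii'') by observing that (ii'') for $\CF$ is (ii') for $\BD^{\on{Verdier}}(\CF)$ and then relying on the already-established equivalence (i) $\Leftrightarrow$ (ii') together with the $\BD^{\on{Verdier}}$-invariance of condition (i). Your direct route through the false Serre identity is what lets you avoid mentioning this invariance, but it is precisely the step that does not hold.
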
 

\begin{proof}

The equivalence of (i) and (ii) follows from the combination of Theorems \ref{t:Nilp admits right adjoint BunG} and \ref{t:char of Nilp adj}. 
The equivalence of (ii) and (ii') is formal: for $\CF'\in \Dmod(\Bun_G)^c$
$$\CHom_{\Dmod(\Bun_G)}(\CF',\CF) \simeq \on{C}_\blacktriangle(\Bun_G,\CF\sotimes \BD^{\on{Verdier}}(\CF')),$$
where $\BD^{\on{Verdier}}$ is understood as an equivalence
$$(\Dmod(\Bun_G)^c)^{\on{op}}\to \Dmod(\Bun_G)^c_{\on{co}}.$$

The equivalence of (ii') and (ii'') follows by viewing $\BD^{\on{Verdier}}$ as an equivalence 
$$(\Shv(\Bun_G)^{\on{constr}})^{\on{op}}\simeq \Shv(\Bun_G)^{\on{constr}}.$$

\end{proof} 

\ssec{A version for a universally $\Nilp$-cotruncative quasi-compact open substack}

\sssec{}

Let $\CU$ be a universally $\Nilp$-cotruncative quasi-compact open substack of $\Bun_G$. 
As a formal corollary of \thmref{t:char of Nilp codefined}, we obtain:

\begin{cor} \label{c:char of Nilp U codefined}
Let $\CF$ be an object of $\Shv(\CU)$ such that the corresponding functor
$$\sF:=\on{C}^\cdot_\blacktriangle(\CU,\CF\sotimes -), \quad \Shv(\CU)\to \Vect$$
is defined and codefined by a kernel. Then
$\CF\in \Shv_\Nilp(\CU)$.
\end{cor}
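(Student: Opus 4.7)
The plan is to reduce the statement to \thmref{t:char of Nilp codefined} by extending $\CF$ from $\CU$ to all of $\Bun_G$ via the functor
$$j_{*,\on{co}}: \Shv(\CU) \to \Shv(\Bun_G)_{\on{co}},$$
which is available because $\CU$ is cotruncative. Set $\wt\CF := j_{*,\on{co}}(\CF) \in \Shv(\Bun_G)_{\on{co}}$ and consider the associated functor
$$\wt\sF: \Shv(\Bun_G) \to \Vect, \quad \CG \mapsto \on{C}^\cdot_\blacktriangle(\Bun_G, \wt\CF \sotimes \CG).$$

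The first step is to establish a canonical identification $\wt\sF \simeq \sF \circ j^*$. This follows from the projection formula for the open embedding $j$ (using $j^! = j^*$) together with the fact that $\on{C}^\cdot_\blacktriangle(\Bun_G, j_{*,\on{co}}(-)) \simeq \on{C}^\cdot_\blacktriangle(\CU, -)$. The second step is to observe that $\wt\sF$ is defined and codefined by a kernel. Since $j$ is an open embedding, $j^*: \Shv(\Bun_G) \to \Shv(\CU)$ is defined and codefined by a kernel (it commutes with both $*$-pullbacks and $!$-pushforwards in additional variables). Composing with $\sF$, which is defined and codefined by a kernel by hypothesis, yields that $\wt\sF$ itself is defined and codefined by a kernel.

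At this point \thmref{t:char of Nilp codefined} applies and gives $\wt\CF \in \Shv_\Nilp(\Bun_G)_{\on{co}}$. The final step is to recover $\CF$ from $\wt\CF$ using the restriction $j^*_{\on{co}}: \Shv(\Bun_G)_{\on{co}} \to \Shv(\CU)$, which satisfies $j^*_{\on{co}} \circ j_{*,\on{co}} \simeq \on{Id}$. Since $\Shv_\Nilp(\Bun_G)_{\on{co}}$ is defined as $\underset{\CU'}{\on{colim}_*}\, \Shv_\Nilp(\CU')$ over universally $\Nilp$-cotruncative substacks, restricting a compatible system along the open embedding $j$ yields an object of $\Shv_\Nilp(\CU)$ (singular support being local along open embeddings). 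Therefore $\CF \simeq j^*_{\on{co}}(\wt\CF) \in \Shv_\Nilp(\CU)$.

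Since the proof is explicitly advertised as a formal corollary, there is no essential obstacle; the only point requiring minor care is verifying the identification $\wt\sF \simeq \sF \circ j^*$ at the level of functors defined \emph{and} codefined by kernels (rather than just as plain functors), which is needed for the composite to inherit both properties. This amounts to unwinding the definitions of $j_{*,\on{co}}$ and $\sotimes$ across the open embedding, and is entirely parallel to the manipulations already used throughout \secref{ss:proj and diag} and \secref{ss:BunG Mir}.
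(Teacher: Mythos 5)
Your argument is correct and is the natural reduction that the paper has in mind when it calls \corref{c:char of Nilp U codefined} a ``formal corollary'' of \thmref{t:char of Nilp codefined}: extend $\CF$ to $\wt\CF = j_{*,\on{co}}(\CF) \in \Shv(\Bun_G)_{\on{co}}$, note $\wt\sF \simeq \sF \circ j^*$ via the projection formula, check that composing two functors defined-and-codefined by kernels stays defined-and-codefined, apply the theorem, and restrict back. The only cosmetic point is notational: the restriction $\Shv(\Bun_G)_{\on{co}} \to \Shv(\CU)$ you use to recover $\CF$ from $\wt\CF$ is, in the paper's conventions, the functor $j^?$ (the right adjoint of $j_{*,\on{co}}$, and the projection in the limit presentation $\Shv(\Bun_G)_{\on{co}} \simeq \underset{\CU'}{\lim^?}\,\Shv(\CU')$) rather than a ``$j^*_{\on{co}}$''; it satisfies $j^? \circ j_{*,\on{co}} \simeq \on{Id}$ since $j_{*,\on{co}}$ is fully faithful, and it lands in $\Shv_\Nilp(\CU)$ because $\Shv_\Nilp(\Bun_G)_{\on{co}}$ is by definition the limit $\underset{\CU'}{\lim^?}\,\Shv_\Nilp(\CU')$, so no appeal to locality of singular support is needed.
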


From \thmref{t:char of Nilp adj}, we obtain:

\begin{cor} \label{c:char of Nilp U adj}
Let $\CF$ be an object of $\Shv(\CU)^{\on{constr}}$ such that the corresponding functor
$$\sF:=\on{C}^\cdot_\blacktriangle(\CU,\CF\sotimes -), \quad \Shv(\CU)\to \Vect$$
admits a right adjoint defined by a kernel. Then
$\CF\in \Shv_\Nilp(\CU)$.
\end{cor}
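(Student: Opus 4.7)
The plan is to reduce to \thmref{t:char of Nilp adj} via the open embedding $j:\CU\hookrightarrow \Bun_G$. Since $\CU$ is cotruncative, the functor $j_!=j_*$ is defined by a kernel and sends $\Shv(\CU)^{\on{constr}}$ into $\Shv(\Bun_G)^{\on{constr}}$. The idea is to set $\CF_{\Bun_G}:=j_!(\CF)$ and show that the functor
$$\sF_{\Bun_G}:=\on{C}^\cdot_\blacktriangle(\Bun_G,\CF_{\Bun_G}\sotimes -):\Shv(\Bun_G)_{\on{co}}\to \Vect$$
inherits a right adjoint defined by a kernel from the hypothesis on $\sF$; then \thmref{t:char of Nilp adj} will give $\CF_{\Bun_G}\in \Shv_\Nilp(\Bun_G)$, and applying $j^*$ will yield $\CF\simeq j^*(\CF_{\Bun_G})\in \Shv_\Nilp(\CU)$.

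First I would identify $\sF_{\Bun_G}$ with the composition $\sF\circ j^*$, where $j^*:\Shv(\Bun_G)_{\on{co}}\to \Shv(\CU)$ is the tautological restriction functor. For $\CF''\in \Shv(\Bun_G)_{\on{co}}$, the projection formula for the open embedding $j$ (using $j^!=j^*$) together with the identity $(p_\CU)_\blacktriangle\simeq (p_{\Bun_G})_\blacktriangle\circ j_{*,\on{co}}$ valid for cotruncative $\CU$ gives
$$\on{C}^\cdot_\blacktriangle(\Bun_G,j_!(\CF)\sotimes \CF'')\simeq \on{C}^\cdot_\blacktriangle(\CU,\CF\sotimes j^*(\CF'')).$$
This identification must be established as functors defined by kernels, not merely abstract functors, so that downstream adjunction assertions remain meaningful in the sense of \secref{s:ker}.

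Next I would combine adjoints. The functor $j^*:\Shv(\Bun_G)_{\on{co}}\to \Shv(\CU)$ admits the right adjoint $j_{*,\on{co}}:\Shv(\CU)\to \Shv(\Bun_G)_{\on{co}}$, which is defined by a kernel (this is built into the definition of $\Shv(\Bun_G)_{\on{co}}$, see \secref{ss:co BunG}). Composing the assumed right adjoint $\sF^R$ of $\sF$ with $j_{*,\on{co}}$ produces a right adjoint $j_{*,\on{co}}\circ \sF^R$ for $\sF_{\Bun_G}=\sF\circ j^*$, and this composite is defined by a kernel since both factors are. Hence the hypothesis of \thmref{t:char of Nilp adj} is satisfied by $\CF_{\Bun_G}$, and we conclude $\CF_{\Bun_G}\in \Shv_\Nilp(\Bun_G)$. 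Since $j^*$ preserves the singular support condition and $j^*\circ j_!\simeq \on{Id}_{\Shv(\CU)}$, we obtain $\CF\in \Shv_\Nilp(\CU)$.

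The main obstacle, and the only non-formal step, is the kernel-level identification $\sF_{\Bun_G}\simeq \sF\circ j^*$: one must check not only that these functors agree but that the natural transformation assembling them respects the formalism of Sections \ref{s:ker} and \ref{s:non qc}, so that adjunctions computed for $\sF$ transport correctly to adjunctions for $\sF_{\Bun_G}$. Once this compatibility is in place, the rest of the argument is a bookkeeping exercise on adjunctions and the fact that $j^*$ preserves nilpotent singular support.
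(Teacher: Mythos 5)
There is a genuine gap in your proposed reduction, and it is at the step you yourself flag as "the only non-formal step." Your claimed identity
$$\on{C}^\cdot_\blacktriangle(\Bun_G,j_!(\CF)\sotimes \CF'')\simeq \on{C}^\cdot_\blacktriangle(\CU,\CF\sotimes j^*(\CF''))$$
is false as stated. The projection formula for the open embedding $j$ gives $j_!(\CF)\sotimes\CG \simeq j_!(\CF\sotimes j^!(\CG))$, i.e.\ lands back in the image of $j_!$. After applying $\on{C}^\cdot_\blacktriangle(\Bun_G,-)$ you obtain $\on{C}^\cdot_\blacktriangle$ of a $!$-extension from $\CU$, which is \emph{not} $\on{C}^\cdot_\blacktriangle(\CU,-)$: already for $j:\BA^1\hookrightarrow \BP^1$ and $\CF=\ul\sfe$ the left-hand side computes compactly-supported cohomology $H^*_c(\BA^1)\simeq \sfe[-2]$ while the right-hand side computes $H^*(\BA^1)\simeq\sfe$. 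Your aside "$j_!=j_*$" is also simply not true for open embeddings and seems to be the source of the confusion. The identity you want holds for $j_*$, not $j_!$: writing $\CF''=(j')_{*,\on{co}}(\CF''_{\CU'})$ and using the $\sotimes$-projection formula on $\CU'$, one gets $j_*(\CF)\sotimes\CF''\simeq j_{*,\on{co}}(\CF\sotimes j^*_{\on{co}}(\CF''))$, where $j^*_{\on{co}}:\Shv(\Bun_G)_{\on{co}}\to\Shv(\CU)$ is the left adjoint of $j_{*,\on{co}}$, and then $\on{C}^\cdot_\blacktriangle(\Bun_G,-)\circ j_{*,\on{co}}=\on{C}^\cdot_\blacktriangle(\CU,-)$ by definition; so $\sF_{\Bun_G}\simeq \sF\circ j^*_{\on{co}}$. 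The rest of your adjunction bookkeeping is then correct ($j_{*,\on{co}}$ \emph{is} the right adjoint of $j^*_{\on{co}}$), but now you owe an extra check, namely that $j_*(\CF)\in\Shv(\Bun_G)^{\on{constr}}$ — which holds because $j$ is a schematic finite-type open immersion, so $j_*$ preserves constructibility.

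You should also note that the route the paper evidently has in mind (and which is shorter) is to transpose the proof of \thmref{t:char of Nilp adj} directly to $\CU$ rather than reducing to $\Bun_G$: set $\CG:=\Mir_\CU^{-1}(\CF)\in\Shv(\CU)$, observe that $\sG=\sF\circ \Mir_\CU^{-1}$ admits a right adjoint as a functor defined by a kernel (because $\CU$ is miraculous, \secref{sss:U Mir}), invoke \thmref{t:right adj main}(a) to conclude $\sG$ is defined and codefined, apply \corref{c:char of Nilp U codefined} to get $\CG\in\Shv_\Nilp(\CU)$, and then $\CF=\Mir_\CU(\CG)\in\Shv_\Nilp(\CU)$ by \propref{p:Mir Nilp U}. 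This avoids both the $j_!$/$j_*$ pitfall and the constructibility check for $j_*(\CF)$.
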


\sssec{}

Observe that by combining \corref{c:char of Nilp U adj} and Theorems \ref{t:Nilp admits right adjoint U} and \ref{t:right adj crit}, 
we obtain:

\begin{cor} \label{c:equiv U}
Let $\CU\subset \Bun_G$ be a universally $\Nilp$-cotruncative quasi-compact open substack.
Then for $\CF\in \Shv(\CU)^{\on{constr}}$ the following conditions are equivalent:

\smallskip

\noindent{\em(i)} $\CF\in \Shv_\Nilp(\CU)$;

\smallskip

\noindent{\em(ii)}
The functor $\sF$ admits \emph{a} right adjoint as a functor defined by a kernel.

\smallskip

\noindent{\em(iii)} The map 
$$\on{C}^\cdot_c(\CU,\CF\overset{*}\otimes \BD(\CF)) \to 
\on{C}^\cdot_\blacktriangle(\CU,\CF\sotimes (\Mir^{-1}_{\CU}\circ \BD(\CF)))$$ 
of \eqref{e:ident pair 2 gen} is an isomorphism. 

\end{cor}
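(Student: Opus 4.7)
The plan is to deduce \corref{c:equiv U} directly from results already established in the paper; it is essentially a packaging of three inputs, so the proof will be short.

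First, I would establish (i) $\Rightarrow$ (ii). This is precisely the content of \thmref{t:Nilp admits right adjoint U}: if $\CF \in \Shv_\Nilp(\CU)^{\on{constr}}$, then, viewed as a kernel defining a functor $\Shv(\CU) \to \Vect$, the object $\CF$ admits a right adjoint within the framework of functors defined by kernels. No additional argument is needed here.

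Next, (ii) $\Rightarrow$ (i) is \corref{c:char of Nilp U adj}, which itself was derived from \thmref{t:char of Nilp codefined} via the miraculous equivalence, so it can be invoked as a black box at this stage. The equivalence (i) $\Leftrightarrow$ (ii) is thus already complete.

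For the equivalence (ii) $\Leftrightarrow$ (iii), I would appeal to \thmref{t:right adj crit}. That theorem characterizes the existence of a right adjoint to a functor defined by a kernel in terms of an isomorphism of the type \eqref{e:left-to-right trans bis}, which under the identifications used in \secref{sss:two versions left-to-right trans} becomes precisely the map \eqref{e:ident pair 2 gen}. The relevant instance is: a constructible object $\CF \in \Shv(\CU)$ admits a right adjoint as a kernel iff the canonical map
\[
\on{ev}^l_\CU(\CF, \Mir_\CU(\CF'')) \to \on{ev}_\CU(\CF, \CF'')
\]
is an isomorphism for all $\CF''$; specializing to $\CF'' = \Mir_\CU^{-1}\circ \BD^{\on{Verdier}}(\CF)$ gives condition (iii). (In the converse direction, once (iii) holds, one uses that every object is a colimit of such test objects, combined with the compactness of $\CF$.)

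The only step with any subtlety is checking that \thmref{t:right adj crit} genuinely applies in the non-quasi-compact ambient setup --- but since $\CU$ itself is quasi-compact, this is not an issue; the target $\CY_2 = \on{pt}$ is trivially safe, exactly as in the proof of \thmref{t:Nilp admits right adjoint U}. Assembling (i) $\Rightarrow$ (ii) $\Rightarrow$ (i) and (ii) $\Leftrightarrow$ (iii) completes the proof. No step should present a real obstacle; the content of the corollary lies entirely in the three theorems being combined.
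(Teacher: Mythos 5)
Your proposal is correct and matches the paper's proof exactly: the paper states this corollary with no written proof beyond the single sentence that it follows ``by combining \corref{c:char of Nilp U adj} and Theorems \ref{t:Nilp admits right adjoint U} and \ref{t:right adj crit},'' and you have identified precisely these three inputs and assembled them in the natural way. One small imprecision worth flagging: you paraphrase \thmref{t:right adj crit} as giving a right adjoint iff the transformation $\on{ev}^l_\CU(\CF, \Mir_\CU(\CF'')) \to \on{ev}_\CU(\CF, \CF'')$ is an isomorphism \emph{for all} $\CF''$, and then say one ``specializes'' to $\CF'' = \Mir_\CU^{-1}\circ\BD^{\on{Verdier}}(\CF)$ and needs a further colimit/compactness argument for the converse. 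In fact condition (iii) of \thmref{t:right adj crit} is already a \emph{single} isomorphism --- the map \eqref{e:compare counits}, which with $\CY_2 = \on{pt}$, $\CQ = \CF$, and $'\!\CQ^R = \on{Id}^l_\CU(\BD^{\on{Verdier}}(\CF)) = \Mir^{-1}_\CU(\BD^{\on{Verdier}}(\CF))$ is precisely the map in condition (iii) of the corollary --- so the equivalence (ii) $\Leftrightarrow$ (iii) is an immediate instance of the theorem's (i) $\Leftrightarrow$ (iii), and the extra parenthetical remark about colimits of test objects is not needed.
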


\sssec{}

Let $\CY$ be an arbitrary quasi-compact algebraic stack. Let $\CF$ be an object of $\Shv(\CY)^{\on{constr}}$. 

\medskip

First, recall that according to \thmref{t:right adj crit}, conditions (ii) and (iii) in \corref{c:equiv U} are 
equivalent. 

\medskip

The statement of \corref{c:equiv U} suggests the following question: 

\begin{quest}  Under what conditions on $\CY$ does there exist a subset $\CN\subset T^*(\CY)$ so that 
conditions (ii) or/and (iii) as in \corref{c:equiv U} are equivalent to the condition that
$\CF\in \Shv_\CN(\CY)$?

\end{quest} 

\sssec{Example} \label{sss:ex smooth}

Let $\CY=Y$ be a proper smooth scheme. Then it is easy to see that the assertion of \corref{c:equiv U} 
holds for $\CN=\{0\}$.

\medskip

So, in some ways the subset $\Nilp\subset T^*(\Bun_G)$ plays the same role as the zero-section 
$\{0\}\subset T^*(Y)$ for a proper smooth scheme $Y$. 

\medskip

The other extreme case is when $\CY$ is an algebraic stack with finitely many isomorphism
classes of points. Then the assertion of \corref{c:equiv U} holds for $\CN=T^*(\CY)$. 

\medskip

Note that in both of the above examples, the pair $(\CY,\CN)$ is Serre; see Definition \ref{d:Serre}
for what this means.

\ssec{Proof of \thmref{t:char of Nilp codefined}} \label{ss:proof of char of Nilp}

\sssec{}

We will deduce \thmref{t:char of Nilp codefined} from the combination of the following two statements: 

\begin{prop} \label{p:smooth on curve 1}
Assume that $\CF$ is such that the functor \eqref{e:left functor new} is defined and codefined by a kernel. 
Then for any $\CF'\in \Shv(\Bun_G)$ and $V\in \Rep(\cG)$, 
\begin{equation} \label{e:pairing smooth}
(p_2)_\blacktriangle(\sH(V,\CF)_{\on{co}}\sotimes p_1^!(\CF'))\in \Shv_{\{0\}}(X)=\qLisse(X).
\end{equation}
\end{prop}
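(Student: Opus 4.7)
The plan is to rewrite the left-hand side in a form that exhibits the singular-support constraint on the $X$-factor, and then apply the fact that functors defined and codefined by kernels preserve such constraints. Concretely, I would construct a canonical isomorphism
$$(p_2)_\blacktriangle\bigl(\sH(V,\CF)_{\on{co}} \sotimes p_1^!(\CF')\bigr) \;\simeq\; (\on{Id}_X \boxtimes \sF)\bigl(\sH(V^\tau, \CF')\bigr),$$
where on the right-hand side $\sH(V^\tau,\CF') \in \Shv(\Bun_G \times X)$ is regarded as an object of $\Shv(X \times \Bun_G)$ via the obvious swap.

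To produce this rewrite, I would use the $X$-family version of the $\on{co}$-Hecke duality \lemref{l:dual of Hecke} (together with its underlying Chevalley--swap symmetry \propref{p:tau and sigma}), extending \corref{c:dual of Hecke}: with respect to the pairing $\on{ev}_{\Bun_G}$, the operator $\sH(V,-)_{\on{co}}$ on the $\on{co}$ side and $\sH(V^\tau,-)$ on the standard side are mutually dual. The upshot is that the Hecke modification can be transferred from $\CF \in \Shv(\Bun_G)_{\on{co}}$ onto $\CF' \in \Shv(\Bun_G)$, at the cost of replacing $V$ by $V^\tau$, and the resulting expression is exactly the application of $\sF$ (in its $X$-family form, which makes sense precisely because $\sF$ is both defined and codefined by a kernel) to a Hecke modification of $\CF'$.

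Having made this rewrite, the proof reduces to two further inputs. First, \propref{p:ULA} (applied with $\CZ = \on{pt}$, combined with a passage to filtered colimits in $V$ and $\CF'$ to handle the non-compact case, since the relevant singular-support subcategory is closed under colimits) shows that $\sH(V^\tau,\CF')$ is ULA over $X$, and hence lies in $\Shv_{\{0\} \times T^*(\Bun_G)}(X \times \Bun_G)$. Second, since $\sF$ is defined and codefined by a kernel, the singular-support-preservation result \corref{c:preserve sing supp} (the same tool used in the proof of \thmref{t:Kunneth fixed Nilp U}) implies that $\on{Id}_X \boxtimes \sF$ maps $\Shv_{\{0\} \times T^*(\Bun_G)}(X \times \Bun_G)$ into $\Shv_{\{0\}}(X) = \qLisse(X)$. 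Combined with the rewrite, this yields the claim.

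The main obstacle is the first step: pinning down the $X$-family duality identity at the level of kernels, and carefully tracking the roles of the Chevalley involution $\tau$ and the factor-swap $\sigma$ through the formalism of \secref{ss:co BunG} and \secref{sss:Hecke Ran co}. The two remaining ingredients --- the ULA property of the geometric Satake sheaves and the preservation of singular support by functors defined and codefined by kernels --- are already in place.
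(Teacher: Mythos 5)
Your proposal is essentially the paper's own argument: you perform the same rewrite moving the Hecke modification from $\CF$ onto $\CF'$ at the cost of applying the Chevalley involution $\tau$, then invoke ULA of $\sH(V^\tau,\CF')$ over $X$, and finish with \corref{c:preserve sing supp}. The only difference is cosmetic — you spell out the kernel-level justification of the first rewrite (via \lemref{l:dual of Hecke} and \propref{p:tau and sigma}) and the colimit reduction more explicitly than the paper does.
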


\begin{prop} \label{p:smooth on curve 2}
Let $\CY$ be an algebraic stack, and let $\CF_\CY\in \Shv(\CY\times X)$ be an object, such that for every
geometric point $\bi_y:\Spec(k')\to \CY$, the object
$$(\bi_y \times \on{id})^!(\CF_\CY)\in \Shv(X'), \quad X':=\Spec(k')\underset{\Spec(k)}\times X$$
belongs to $\qLisse(X')$. Then $\CF_\CY$ lies in the essential image of the functor 
$$\Shv(\CY)\otimes \qLisse(X) \hookrightarrow \Shv(\CY\times X).$$
\end{prop}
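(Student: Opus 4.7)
The plan is to reformulate the fiberwise hypothesis as a global singular support condition on $\CY\times X$, and then invoke \thmref{t:Kunneth} in the same fashion as in the proof of (ii) $\subset$ (iii) of \thmref{t:Hecke action Nilp 2}.

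More precisely, I would argue that the hypothesis --- that $(\bi_y \times \on{id})^!(\CF_\CY) \in \qLisse(X')$ for every geometric point $\bi_y:\Spec(k')\to\CY$ --- is equivalent to the global statement that the singular support of $\CF_\CY$ is contained in
$$T^*(\CY)\times\{0\} \;\subset\; T^*(\CY\times X) = T^*(\CY)\times T^*(X).$$
Once this reformulation is in hand, the conclusion follows immediately from \thmref{t:Kunneth} applied with one factor being $X$ equipped with the singular support subset $\{0\}\subset T^*(X)$: since $\Shv_{\{0\}}(X) = \qLisse(X)$, that theorem identifies the essential image of the K\"unneth functor $\Shv(\CY)\otimes\qLisse(X)\to\Shv(\CY\times X)$ with the full subcategory $\Shv_{T^*(\CY)\times\{0\}}(\CY\times X)$, which by the reformulation contains $\CF_\CY$.

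To verify the fiberwise characterization, I would choose a smooth atlas $Z\twoheadrightarrow\CY$ with $Z$ a scheme and pull $\CF_\CY$ back to $Z\times X$; since the formation of singular support is compatible with smooth base change, it suffices to establish the analogous statement for $Z\times X$. The claim is then the standard fact that, for a scheme $Z$, the $T^*(X)$-component of the singular support of an object on $Z\times X$ vanishes if and only if its $!$-restriction to $\{z\}\times X$ lies in $\qLisse$ for every geometric point $z$ of $Z$. The main obstacle is precisely this fiberwise characterization of the $X$-direction of singular support on a stack --- once the reduction through a smooth atlas is carefully set up, everything else is a direct invocation of the previously established K\"unneth formula.
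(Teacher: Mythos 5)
Your proposal is a genuinely different route from the paper's, and it has two gaps that need to be closed before it is a proof.

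First, the ``standard fact'' you invoke --- that $!$-restriction to every geometric fiber being lisse is equivalent to the singular support being contained in $T^*(\CY)\times\{0\}\subset T^*(\CY\times X)$ --- is not a formality, and you do not prove it. Fiberwise lissity is a pointwise condition, while the singular support condition is a global microlocal constraint; relating them is precisely the kind of constancy/ULA argument that one would normally have to spell out. Second, and more seriously, even granting that reformulation, \thmref{t:Kunneth}(a) identifies the essential image of $\qLisse(X)\otimes\Shv(\CY)\hookrightarrow\Shv(X\times\CY)$ with $\Shv_{\{0\}\times\frac{1}{2}\on{-dim}}(X\times\CY)$, \emph{not} with $\Shv_{\{0\}\times T^*(\CY)}(X\times\CY)$. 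As explained in \secref{sss:times 1/2-dim}, these two categories coincide when $\on{char}(k)=0$ (using that $\{0\}\subset T^*(X)$ is Lagrangian and singular support is Lagrangian), but may genuinely differ in positive characteristic, which is the setting this paper is careful to allow. Your singular-support containment does not by itself produce the required half-dimensionality in the $T^*(\CY)$-direction, so the invocation of \thmref{t:Kunneth} is not justified as stated.

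The paper's argument avoids both issues. It never invokes a singular-support characterization: instead, it uses that the embedding $\Shv(\CY)\otimes\qLisse(X)\hookrightarrow\Shv(\CY\times X)$ is fully faithful with an explicit continuous right adjoint (built from the self-duality $\on{ev}_X$ of $\qLisse(X)$, i.e., the duality-adaptedness of $(X,\{0\})$ from \secref{sss:duality adapted curves}). Given that, it reduces to showing that an $\CF_\CY$ satisfying the fiberwise hypothesis and orthogonal to the image must vanish; this is checked pointwise, using that base change $\qLisse(X)\to\qLisse(X')$ is an equivalence so that every test object $E_{X'}$ comes from some $E_X$, together with base-change compatibility of the right adjoint. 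This is characteristic-free and sidesteps any microlocal analysis.
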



%


\sssec{}

Let us assume these propositions temporarily, and prove \thmref{t:char of Nilp codefined}.  

\medskip

Let
$$\CG:=\Mir_{\Bun_G}(\CF)\in \Shv(\Bun_G).$$

By \thmref{t:Hecke action Nilp 2} for $\CZ=\on{pt}$ (which is \cite[Theorem 14.4.3]{AGKRRV}), 
it suffices to show that 
for every $V\in \Rep(\cG)$, the object $\sH(V,\CG)$ belongs to the essential image of
\begin{equation} \label{e:ext prod again}
\Shv(\Bun_G)\otimes \qLisse(X) \hookrightarrow \Shv(\Bun_G\times X).
\end{equation}

\medskip

By \propref{p:smooth on curve 2}, it suffices to show that for every
geometric point $\bi_y:\Spec(k')\to \CY$, the object
$$(\bi_y \times \on{id})^!(\sH(V,\CG))\in \Shv(X'), \quad X':=\Spec(k')\underset{\Spec(k)}\times X$$
belongs to $\qLisse(X')$.

\medskip

Changing base from $k$ and $k'$, we can assume that $y$ is a closed point. We can rewrite
$$(\bi_y \times \on{id})^!(\sH(V,\CG)) \simeq
(p_2)_\blacktriangle(\sH(V,\CG)\sotimes p_1^!((\bi_y)_*(\sfe))),$$
and further as
$$(p_2)_\blacktriangle\left((\Mir_{\Bun_G}\boxtimes \on{Id})(\sH(V,\CF)_{\on{co}})\sotimes p_1^!((\bi_y)_*(\sfe))\right)
\simeq 
(p_2)_\blacktriangle\left(\sH(V,\CF)_{\on{co}}\sotimes p_1^!\left(\Mir_{\Bun_G}((\bi_y)_*(\sfe))\right)\right).$$

The required assertion follows now from \propref{p:smooth on curve 1}, applied to
$$\CF':=\Mir_{\Bun_G}((\bi_y)_*(\sfe)).$$

\qed[\thmref{t:char of Nilp codefined}]

\sssec{Proof of \propref{p:smooth on curve 1}}

We rewrite
$$(p_2)_\blacktriangle(\sH(V,\CF)_{\on{co}}\sotimes p_1^!(\CF')) \simeq
(p_2)_\blacktriangle(p_1^!(\CF)\sotimes \sH(V^\tau,\CF')).$$

With no restriction of generality, we can assume that $V\in \Rep(\cG)^c$ and $\CF'\in \Shv(\Bun_G)^c$.
In particular, $\CF'$ is constructible, and so is $\sH(V^\tau,\CF')$. 

\medskip

Recall now that objects of the form 
$$\sH(V',\CF')\in \Shv(\Bun_G\times X), \quad V'\in \Rep(\cG)^c,\, \CF'\in \Shv(\Bun_G)^{\on{constr}}$$
are ULA with respect to the projection 
$$p_2:\Bun_G\times X\to X.$$

Applying \corref{c:preserve sing supp}, we obtain that 
$$(p_2)_\blacktriangle(p_1^!(\CF)\sotimes \sH(V^\tau,\CF'))\in \qLisse(X),$$ as required.

\qed[Proof of \propref{p:smooth on curve 1}]

\sssec{Proof of \propref{p:smooth on curve 2}}

The functor \eqref{e:ext prod again} is fully faithful, and admits a continuous right adjoint, explicitly
described as follows:

\medskip

Identify $\qLisse(X)$ with its own dual via pairing $\on{ev}_X$
(here we use the fact that the pair $(X,\{0\})$ is duality-adapted, see Sects. \ref{sss:duality adapted} and \ref{sss:duality adapted curves}). 
Then the resulting functor
$$\Shv(\CY\times X)\otimes \qLisse(X) \simeq \Shv(\CY\times X)\otimes \qLisse(X)^\vee \to \Shv(\CY)$$
is given by
$$\CF_\CY\otimes E_X\in \Shv(\CY\times X)\otimes \qLisse(X) \mapsto (p_1)_*(\CF_\CY\sotimes p_2^!(E_X))\in \Shv(\CY).$$

Note also that every object in the essential image of \eqref{e:ext prod again} satisfies the condition of
\propref{p:smooth on curve 2}. Hence, it suffices to show that if $\CF_\CY$ 
is such that it satisfies the condition of \propref{p:smooth on curve 2} and
$$(p_1)_*(\CF_\CY\sotimes p_2^!(E_X))=0,\, \, \forall E_X\in \qLisse(X),$$
then $\CF_\CY=0$.

\medskip

To check this, it suffices to show that 
$$(\bi_y \times \on{id})^!(\CF_\CY)=0$$
for every geometric point $\bi_y:\Spec(k')\to \CY$. By the assumption on $\CF_\CY$, it suffices to show that
$$\on{C}^\cdot(X',(\bi_y \times \on{id})^!(\CF_\CY)\sotimes E_{X'})=0$$
for every $E_{X'}\in \qLisse(X')$. 

\medskip

Note that the base change functor $\qLisse(X)\to \qLisse(X')$ is an equivalence. Hence, the object $E_{X'}\in \qLisse(X')$
in the above formula is the base change of some $E_X\in \qLisse(X)$, and hence
$$\on{C}^\cdot(X',(\bi_y \times \on{id})^!(\CF_\CY)\sotimes E_{X'}) \simeq
\bi_y^!\left((p_1)_*(\CF_\CY\sotimes p_2^!(E_X))\right),$$
while the latter vanishes, by assumption. 

\qed[Proof of \propref{p:smooth on curve 2}]

\section{Serre functor on $\Shv_\Nilp(\Bun_G)$} \label{s:Serre}

In this section we will recast the results of \secref{s:pairing} in a different light, by relating 
the miraculous functor on $\Shv_\Nilp(\Bun_G)$ to the \emph{Serre functor}. 

\medskip

We will show that the category  $\Shv_\Nilp(\Bun_G)$ is Serre (see \secref{sss:Serre defn} for what this means), \emph{up to} the
$\Shv_\Nilp(\Bun_G)\rightsquigarrow  \Shv_\Nilp(\Bun_G)_{\on{co}}$ replacement. 
 
\ssec{The Serre functor}

In this subsection we recall the basic definitions pertaining to the Serre functor. 

\sssec{}

Let $\bC$ be a dualizable category. Throughout this paper we denote by
$$\on{u}_\bC\in \bC\otimes \bC^\vee$$
the unit of the duality.

\medskip

We denote by
$$\on{ev}_\bC:\bC\otimes \bC^\vee\to \Vect$$
the canonical pairing. 

\sssec{}

From now on we will assume that $\bC$ is compactly generated. We will identify the dual $\bC^\vee$ of $\bC$ with
$\on{Ind}((\bC^c)^{\on{op}})$. We will denote by $\BD$ the resulting contravariant equivalence
$$(\bC^c)^{\on{op}}\to (\bC^\vee)^c.$$
Under this identification, for $\bc\in \bC^c$ and $\bc'\in \bC$,
$$\on{ev}_\bC(\bc',\BD(\bc))\simeq \CHom_\bC(\bc,\bc').$$

\sssec{}

Recall that $\bC$ is said to be proper if the functor $\on{ev}_\bC$
preserves compactness\footnote{The more familiar formulation of this is that $\Hom_\bC(-,-)$
between compact objects is compact}.
This is equivalent to the assumption that the functor
$$\Vect \overset{\on{u}_\bC}\to \bC\otimes \bC^\vee$$
admits a \emph{left} adjoint, to be denoted 
\begin{equation} \label{e:L u}
\on{u}_\bC^L:  \bC\otimes \bC^\vee\to \Vect.
\end{equation} 

\sssec{} \label{sss:Serre}

Assume that $\bC$ is proper. The Serre endofunctor of $\bC$, denoted $\Se_\bC$,
is defined by the formula
$$\CHom_\bC(\bc_1,\Se_\bC(\bc)):=\CHom_\bC(\bc,\bc_1)^\vee, \quad \bc,\bc_1\in \bC^c.$$

\medskip

The following results from the definitions:

\begin{lem} \label{l:Serre functor}
Let $\bC$ be proper.
Then the functor $\on{u}_\bC^L$ identifies canonically with
$$\bC\otimes \bC^\vee \overset{\Se_\bC\otimes \on{Id}}\longrightarrow \bC\otimes \bC^\vee 
\overset{\on{ev}_\bC}\to \Vect.$$
\end{lem}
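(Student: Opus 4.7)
The strategy is to verify the claimed identification pointwise on compact generators, using the defining adjunction for $\on{u}_\bC^L$ together with the description of $\bC \otimes \bC^\vee$ as the category of continuous endofunctors of $\bC$. Both functors in question preserve colimits, so it suffices to exhibit a natural isomorphism of their restrictions to the compact objects of $\bC \otimes \bC^\vee$; these are generated by objects of the form $\bc \otimes \BD(\bc_1)$ with $\bc, \bc_1 \in \bC^c$, in view of the identification $(\bC^\vee)^c \simeq (\bC^c)^{\on{op}}$.

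First I would compute the right-hand side on such an object. By the defining property of the Serre functor and the relation $\on{ev}_\bC(\bc' \otimes \BD(\bc_1)) \simeq \CHom_\bC(\bc_1, \bc')$, one obtains
$$\on{ev}_\bC\bigl(\Se_\bC(\bc) \otimes \BD(\bc_1)\bigr) \simeq \CHom_\bC\bigl(\bc_1, \Se_\bC(\bc)\bigr) \simeq \CHom_\bC(\bc, \bc_1)^\vee.$$

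Next I would compute the left-hand side via the $(\on{u}_\bC^L, \on{u}_\bC)$ adjunction: for any $V \in \Vect$,
$$\CHom_\Vect\bigl(\on{u}_\bC^L(\bc \otimes \BD(\bc_1)), V\bigr) \simeq \CHom_{\bC \otimes \bC^\vee}\bigl(\bc \otimes \BD(\bc_1), V \otimes \on{u}_\bC\bigr),$$
and by compactness of $\bc \otimes \BD(\bc_1)$ this equals $V \otimes \CHom_{\bC \otimes \bC^\vee}(\bc \otimes \BD(\bc_1), \on{u}_\bC)$. Under the canonical equivalence $\bC \otimes \bC^\vee \simeq \on{Fun}^L(\bC, \bC)$ afforded by dualizability, the object $\bc \otimes \BD(\bc_1)$ corresponds to the functor $\bc'' \mapsto \CHom_\bC(\bc_1, \bc'') \otimes \bc$ while $\on{u}_\bC$ corresponds to $\on{Id}_\bC$; evaluating natural transformations at $\bc'' = \bc_1$ via the Yoneda lemma yields
$$\CHom_{\bC \otimes \bC^\vee}\bigl(\bc \otimes \BD(\bc_1), \on{u}_\bC\bigr) \simeq \CHom_\bC(\bc, \bc_1).$$
Properness of $\bC$ ensures that $\CHom_\bC(\bc, \bc_1)$ is perfect over $\sfe$, hence reflexive; combining the preceding identifications gives $\on{u}_\bC^L(\bc \otimes \BD(\bc_1)) \simeq \CHom_\bC(\bc, \bc_1)^\vee$, matching the right-hand side.

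The one point requiring care is the naturality of the Yoneda identification in both $\bc$ and $\bc_1$, so that the pointwise isomorphisms assemble into a natural isomorphism of functors on the full subcategory of compact objects; ind-extension then propagates this to an isomorphism on all of $\bC \otimes \bC^\vee$. I do not expect a substantive obstacle here, since both functors are manifestly governed by the Hom-pairing on $\bC$ together with the $\sfe$-linear duality on perfect complexes, and properness is precisely what makes the two formulations of this data match.
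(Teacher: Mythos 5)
The paper gives no proof of this lemma, asserting only that it ``results from the definitions''; your argument is a correct and careful unwinding of exactly that, and is the natural (indeed the only) route: evaluate both sides on the compact generators $\bc\otimes\BD(\bc_1)$, use the $(\on{u}_\bC^L,\on{u}_\bC)$-adjunction together with the identification $\CHom_{\bC\otimes\bC^\vee}(\bc\otimes\BD(\bc_1),\on{u}_\bC)\simeq\CHom_\bC(\bc,\bc_1)$ for the left side, and the defining formula for $\Se_\bC$ for the right, with properness guaranteeing the reflexivity needed to compare $\CHom_\bC(\bc,\bc_1)^\vee$ on both sides. Your closing caveat about assembling the pointwise identifications into a natural isomorphism is legitimate bookkeeping rather than a real gap, since each step is an instance of a canonical adjunction or Yoneda identification, so the construction is functorial in $\bc$ and $\bc_1$ and ind-extends as you say.
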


\sssec{} \label{sss:Serre defn}

A DG category $\bC$ is said to be \emph{Serre} if the functor $\Se_\bC$ is a self-equivalence.

\medskip

From \lemref{l:Serre functor}, we obtain:

\begin{cor} \label{c:Serre}
The following conditions are equivalent:

\smallskip

\noindent{\em(i)} $\bC$ is Serre;

\smallskip

\noindent{\em(ii)} The functor $\on{u}^L_\bC$ is the counit of \emph{another} duality
between $\bC$ and $\bC^\vee$. 

\end{cor}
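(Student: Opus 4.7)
The plan is to use \lemref{l:Serre functor}, which identifies
$$\on{u}_\bC^L \simeq \on{ev}_\bC \circ (\Se_\bC \otimes \on{Id}_{\bC^\vee})$$
as functors $\bC \otimes \bC^\vee \to \Vect$. Both directions of the corollary then become formal manipulations with duality data.

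For (i) $\Rightarrow$ (ii), I would assume $\Se_\bC$ admits a quasi-inverse $\Se_\bC^{-1}$ and propose as a candidate duality datum the pair
$$\on{u}_\bC' := (\Se_\bC^{-1} \otimes \on{Id}_{\bC^\vee})(\on{u}_\bC), \qquad \on{ev}_\bC' := \on{u}_\bC^L.$$
Using the formula above, the two triangle identities for $(\on{u}_\bC', \on{ev}_\bC')$ reduce, after cancelling $\Se_\bC \circ \Se_\bC^{-1}$ and $\Se_\bC^{-1} \circ \Se_\bC$ respectively, to the corresponding triangle identities for the original duality datum $(\on{u}_\bC, \on{ev}_\bC)$.

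For (ii) $\Rightarrow$ (i), I would invoke the general principle that, in a symmetric monoidal $(\infty,1)$-category, any two counits witnessing dualities between the same pair of objects $\bC$ and $\bC^\vee$ differ by precomposition with a uniquely determined auto-equivalence $F$ of $\bC$. Explicitly, given a hypothetical second duality with counit $\on{ev}_\bC'$, one extracts $F$ by comparing the resulting equivalences $\bC \simeq (\bC^\vee)^\vee$, and one has $\on{ev}_\bC' \simeq \on{ev}_\bC \circ (F \otimes \on{Id}_{\bC^\vee})$. Applied to $\on{ev}_\bC' = \on{u}_\bC^L$ and combined with \lemref{l:Serre functor}, this forces $F \simeq \Se_\bC$, whence $\Se_\bC$ must itself be an equivalence.

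The main obstacle is making the torsor principle used in (ii) $\Rightarrow$ (i) precise: that the space of counits for dualities on a fixed dualizable object is a torsor under its auto-equivalences. This is standard but requires some care in the $(\infty,1)$-categorical setting. Modulo this fact, both directions are immediate consequences of \lemref{l:Serre functor} and the uniqueness of adjoints.
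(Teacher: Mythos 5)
Your approach is correct and is precisely the intended one: by Lemma~\ref{l:Serre functor} the functor $\on{u}^L_\bC$ is $\on{ev}_\bC$ precomposed with $\Se_\bC$ on the $\bC$-factor, and a pairing of this shape is a counit of a duality if and only if the endofunctor one tensors against is an equivalence.

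Two small remarks. First, the direction (i)\,$\Rightarrow$\,(ii) works exactly as you describe: taking $\on{u}'_\bC=(\Se_\bC^{-1}\otimes\on{Id})(\on{u}_\bC)$ and $\on{ev}'_\bC=\on{u}^L_\bC$, each triangle identity unwinds to the original one after cancelling the inverse pair. Second, the ``torsor'' fact you worry about in (ii)\,$\Rightarrow$\,(i) needs less fuss than you suggest: a continuous pairing $\phi:\bC\otimes\bC^\vee\to\Vect$ corresponds, under the fixed duality, to a continuous endofunctor $F$ of $\bC$ with $\phi\simeq\on{ev}_\bC\circ(F\otimes\on{Id})$, and $\phi$ is the counit of a duality if and only if $F$ is an equivalence --- this is an elementary statement about dualizable objects in any symmetric monoidal $(\infty,1)$-category, with no subtleties beyond the $1$-categorical case. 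Applied here, $F=\Se_\bC$ by Lemma~\ref{l:Serre functor}, giving the conclusion. So the argument goes through; it does not actually hinge on a delicate torsor statement, only on the uniqueness-of-$F$ part, which is immediate from the equivalence $\on{Funct}(\bC\otimes\bC^\vee,\Vect)\simeq\on{Funct}(\bC,\bC)$ furnished by the original duality.
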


\begin{cor}
Let $\bC$ be proper (and hence so is $\bC^\vee$). Then $\bC$ is Serre if and only if $\bC^\vee$ is Serre.
\end{cor}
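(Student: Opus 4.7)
The plan is to deduce this from Corollary \ref{c:Serre} via the tautological symmetry between the $(\bC,\bC^\vee)$- and $(\bC^\vee,\bC)$-dualities. First I would record that, under the swap equivalence $\sigma:\bC\otimes \bC^\vee \iso \bC^\vee\otimes \bC$ and the canonical identification $(\bC^\vee)^\vee\simeq \bC$, the unit $\on{u}_\bC$ is carried to $\on{u}_{\bC^\vee}$, and the pairing $\on{ev}_\bC$ is carried to $\on{ev}_{\bC^\vee}$. This is just the fact that a duality datum between two categories does not privilege either factor.

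Second, I would note that properness is symmetric (it can be phrased as $\on{ev}_\bC$, equivalently $\on{ev}_{\bC^\vee}$, preserving compactness), so that both left adjoints $\on{u}^L_\bC$ and $\on{u}^L_{\bC^\vee}$ are defined. Since left adjoints transport through equivalences, the swap intertwines them: $\on{u}^L_{\bC^\vee}\circ \sigma \simeq \on{u}^L_\bC$.

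Finally, by Corollary \ref{c:Serre}, the category $\bC$ is Serre iff $\on{u}^L_\bC$ is the counit of some duality between $\bC$ and $\bC^\vee$, while $\bC^\vee$ is Serre iff $\on{u}^L_{\bC^\vee}$ is the counit of some duality between $\bC^\vee$ and $\bC$. Being the counit of a perfect pairing is manifestly symmetric in the two tensor factors, so the identification from the previous paragraph shows that the two conditions coincide. There is no genuine obstacle: the argument is a formal unwinding once Corollary \ref{c:Serre} is in place. The only verification worth spelling out carefully is that the existence of a compatible ``new'' unit on the $(\bC,\bC^\vee)$-side transports to a compatible new unit on the $(\bC^\vee,\bC)$-side, but this too is automatic from the swap equivalence.
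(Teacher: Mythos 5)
Your argument is correct, and since the paper states this corollary without proof as an immediate consequence of \corref{c:Serre}, the tautological-symmetry reduction you give is exactly the intended one. The key facts you use — that the swap carries $\on{u}_\bC$ to $\on{u}_{\bC^\vee}$ and $\on{ev}_\bC$ to $\on{ev}_{\bC^\vee}$ under the canonical identification $(\bC^\vee)^\vee\simeq \bC$, that properness is manifestly symmetric, and that being a counit of a perfect pairing is symmetric — are all sound, and the passage to left adjoints along the swap equivalence is legitimate. One could alternatively argue more directly, bypassing \corref{c:Serre}: unwinding the definition of the Serre functor on $\bC^\vee$ (using $(\bC^\vee)^c\simeq (\bC^c)^{\on{op}}$) gives $\Se_{\bC^\vee}\simeq \BD\circ \Se_\bC^L\circ \BD^{-1}$ on compact objects, where $\Se_\bC^L$ is the left adjoint of $\Se_\bC$; hence $\Se_{\bC^\vee}$ is an equivalence iff $\Se_\bC$ is. Both routes are formal; yours matches the paper's placement of the corollary right after \corref{c:Serre}.
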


\sssec{} \label{sss:D new}

Let $\bC$ be Serre. Thus, we obtain a \emph{new} identification between
the dual of $\bC$ and $\bC^\vee$. In particular, we obtain a \emph{new} contravariant equivalence
$$(\bC^c)^{\on{op}}\to (\bC^\vee)^c,$$
which we will denote by $\BD^{\on{new}}$. 

\medskip

By definition,
\begin{equation} \label{e:D new Serre}
\BD^{\on{new}}\simeq \BD\circ \Se_\bC.
\end{equation}

\sssec{Example}

Let $\bC=\QCoh(Y)$, where $Y$ is a smooth and proper scheme. We identify 
$$\QCoh(Y)\simeq \QCoh(Y)^\vee$$
via the \emph{Serre pairing}. I.e., the unit is given by
$$(\Delta_Y)_*(\omega_Y)\in \QCoh(Y\times Y)\simeq \QCoh(Y)\otimes \QCoh(Y)$$
(here $\omega_Y$ is the dualizing sheaf) and the counit by
$$\QCoh(Y)\otimes \QCoh(Y)\simeq \QCoh(Y\times Y)\overset{\Delta_Y^!}\longrightarrow
\QCoh(Y)\overset{\Gamma(Y,-)}\to \Vect.$$

Then the functor
$$\BD:(\on{Perf}(Y))^{\on{op}}\to \on{Perf}(Y)$$
is 
$$\CF\mapsto \CF^\vee\otimes \omega_Y,$$
where 
$$\CF^\vee:=\ul\Hom(\CF,\CO_Y).$$

\medskip

The Serre functor on $\QCoh(Y)$ is given by 
$$\CF\mapsto \CF\otimes \omega_Y,$$
in particular $\QCoh(Y)$ is Serre. 

\medskip

The functor $\BD^{\on{new}}$ is given by
$$\CF\mapsto \CF^\vee,$$
and the \emph{new} self-duality on $\QCoh(Y)$ has as counit 
$$\QCoh(Y)\otimes \QCoh(Y)\simeq \QCoh(Y\times Y)\overset{\Delta_Y^*}\longrightarrow
\QCoh(Y)\overset{\Gamma(Y,-)}\to \Vect,$$
and as unit 
$$(\Delta_Y)_*(\CO_Y)\in \QCoh(Y\times Y)\simeq \QCoh(Y)\otimes \QCoh(Y).$$

\ssec{Serre vs pseudo-identity}

In this subsection we reproduce some of the results of \cite{GaYo}, which relate the Serre
functor to the \emph{pseudo-identity} functor. 

\sssec{}

Let $\bC$ be a compactly generated category. 

\medskip

Applying the right Kan extension to $\BD$ along $(\bC^c)^{\on{op}}\hookrightarrow \bC^{\on{op}}$, 
and extend $\BD$ to a \emph{discontinuous} functor
$$\bC^{\on{op}}\to \bC^\vee.$$

Explicitly, for $$\bc=\underset{\alpha}{\on{colim}}\, \bc_{\alpha_i},$$
we have
$$\BD(\bc)=\underset{\alpha}{\on{lim}}\, \BD(\bc_{\alpha_i}).$$

\sssec{}

We identify
$$(\bC\otimes \bC^\vee)^\vee\simeq \bC^\vee \otimes \bC,$$
and define the object
\begin{equation} \label{e:ps-id object}
\on{ps-u}_\bC\in \bC\otimes \bC^\vee
\end{equation} 
to be $\BD(\on{u}_\bC)$.

\sssec{} \label{sss:ps-id}

We identify
\begin{equation} \label{e:kernels}
\bC \otimes \bC^\vee \simeq \on{Funct}_{\on{cont}}(\bC,\bC).
\end{equation} 

Under the identification the object $\on{u}_\bC\in \bC \otimes \bC^\vee$ 
corresponds to $\on{Id}_\bC\in \on{Funct}_{\on{cont}}(\bC,\bC)$.

\medskip

We define the pseudo-identity endofunctor of $\bC$
$$\on{Ps-Id}_\bC\in \on{Funct}_{\on{cont}}(\bC,\bC)$$ 
as the object corresponding under \eqref{e:kernels} to
$\on{ps-u}_\bC\in \bC\otimes \bC^\vee$.

\medskip

Note that we have:
$$(\on{Ps-Id}_\bC)^\vee \simeq \on{Ps-Id}_{\bC^\vee}.$$

\sssec{}

Let $\bC$ be Serre. In particular, by \corref{c:Serre}, the functor
\begin{equation} \label{e:new duality}
\bC \otimes \bC^\vee \overset{\on{u}^L_\bC}\to \Vect
\end{equation}
is the counit of a duality. 

\medskip

We claim:

\begin{prop} \label{p:unit new duality}
The unit of the duality \eqref{e:new duality} is given by $\on{ps-u}_\bC$.
\end{prop}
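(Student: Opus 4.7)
The strategy has two stages: a formal reduction of the proposition to the equivalence of endofunctors $\on{Ps-Id}_\bC \simeq \Se_\bC^{-1}$ (the content of Yom Din's observation referenced in the introduction), followed by a direct matrix-coefficient calculation that establishes this equivalence.

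For the reduction, observe that any auto-equivalence $\Phi$ of $\bC$ produces, by transport of structure from $(\on{u}_\bC, \on{ev}_\bC)$, a new duality pair
$$\bigl((\Phi^{-1} \otimes \on{Id}_{\bC^\vee})(\on{u}_\bC),\; \on{ev}_\bC \circ (\Phi \otimes \on{Id}_{\bC^\vee})\bigr);$$
the triangle identities transport through $\Phi$ since $\Phi$ is an equivalence. Taking $\Phi = \Se_\bC$---an auto-equivalence precisely because $\bC$ is Serre---and invoking \lemref{l:Serre functor} to identify the twisted counit with $\on{u}_\bC^L$, the proposition reduces to an identification
$$\on{ps-u}_\bC \;\simeq\; (\Se_\bC^{-1} \otimes \on{Id}_{\bC^\vee})(\on{u}_\bC)\qquad\text{in } \bC \otimes \bC^\vee.$$
Under the standard equivalence $\bC \otimes \bC^\vee \simeq \End(\bC)$ (in which $\on{u}_\bC \leftrightarrow \on{Id}_\bC$ and $(\Phi \otimes \on{Id}_{\bC^\vee})(\on{u}_\bC) \leftrightarrow \Phi$), this translates further to the equivalence of endofunctors $\on{Ps-Id}_\bC \simeq \Se_\bC^{-1}$.

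To verify this equivalence, I would compute matrix coefficients on compact generators. For $\bc, \bc_1 \in \bC^c$, unwinding the kernel-to-endofunctor correspondence gives
$$\CHom_\bC(\bc_1, \on{Ps-Id}_\bC(\bc)) \;\simeq\; \CHom_{\bC \otimes \bC^\vee}\bigl(\bc_1 \boxtimes \BD(\bc),\, \BD(\on{u}_\bC)\bigr).$$
Properness of $\bC$ makes $\on{u}_\bC$ a compact object of $\bC \otimes \bC^\vee$, so the contravariant equivalence $\BD$ on compact objects, combined with the swap $\bC^\vee \otimes \bC \simeq \bC \otimes \bC^\vee$, identifies the above with $\CHom_{\bC \otimes \bC^\vee}(\on{u}_\bC,\, \bc \boxtimes \BD(\bc_1)) = \on{u}_\bC^R(\bc \boxtimes \BD(\bc_1))$. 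Since $\bC$ (and therefore $\bC \otimes \bC^\vee$) is Serre, Serre duality in the latter rewrites this as $\CHom_{\bC \otimes \bC^\vee}(\bc \boxtimes \BD(\bc_1),\, \Se_{\bC \otimes \bC^\vee}(\on{u}_\bC))^\vee$. Under $\bC \otimes \bC^\vee \simeq \End(\bC)$, one identifies $\Se_{\bC \otimes \bC^\vee}(\on{u}_\bC) \leftrightarrow \Se_\bC^2$, using the decomposition $\Se_{\bC \otimes \bC^\vee} = \Se_\bC \otimes \Se_{\bC^\vee}$ together with the identity $\Se_{\bC^\vee}(\BD(\bc)) \simeq \BD(\Se_\bC^{-1}(\bc))$ (so that $(\on{Id} \otimes \Se_{\bC^\vee})$ acts on kernels as pre-composition with $\Se_\bC$ at the level of endofunctors, while $(\Se_\bC \otimes \on{Id})$ acts as post-composition). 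A direct computation then gives $\CHom_\bC(\bc, \Se_\bC^2(\bc_1))^\vee$, which two applications of the defining Serre-duality relation match with $\CHom_\bC(\bc_1, \Se_\bC^{-1}(\bc))$. Yoneda delivers the equivalence on compacts, and continuity (valid since $\Se_\bC^{-1}$ is itself a continuous equivalence) extends the identification to all of $\bC$.

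The principal technical subtlety is the identification $\Se_{\bC \otimes \bC^\vee}(\on{u}_\bC) \leftrightarrow \Se_\bC^2$ under $\bC \otimes \bC^\vee \simeq \End(\bC)$: this requires analyzing how the Serre functor on the dual category $\bC^\vee$ interacts with the contravariant duality $\BD$, and it is precisely here that the Serre hypothesis on $\bC$ is used in full force (so that $\bC \otimes \bC^\vee$ itself is Serre, making $\Se_{\bC \otimes \bC^\vee}$ an equivalence). Beyond this, the argument amounts to careful bookkeeping of swap isomorphisms $\bC \otimes \bC^\vee \simeq \bC^\vee \otimes \bC$ (implicit in the definition $\on{ps-u}_\bC = \BD(\on{u}_\bC)$) and of the various Yoneda-type identifications, with no new geometric input required beyond \lemref{l:Serre functor}.
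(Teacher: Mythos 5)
Your strategy genuinely differs from the paper's: you reverse the logical order, first establishing the Yom Din relation $\on{Ps-Id}_\bC \simeq \Se_\bC^{-1}$ (which the paper obtains only afterwards, as \corref{c:Serre inverse to Ps}) and then deducing the proposition. The reduction in Stage~1 (twisting the duality datum by the auto-equivalence $\Se_\bC$, and reading $(\Se_\bC^{-1}\otimes\on{Id})(\on{u}_\bC)$ as $\Se_\bC^{-1}$ under the kernel--endofunctor equivalence) is correct and is essentially the content of the paper's diagram \eqref{e:old and new unit}. The identifications $\Se_{\bC\otimes\bC^\vee}\simeq\Se_\bC\otimes\Se_{\bC^\vee}$ and $\Se_{\bC^\vee}(\BD(\bc))\simeq\BD(\Se_\bC^{-1}(\bc))$, which you highlight as the delicate points, are also correct as stated.

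There is, however, a concrete error in Stage~2. You assert that ``properness of $\bC$ makes $\on{u}_\bC$ a compact object of $\bC\otimes\bC^\vee$.'' This is false. Properness is equivalent to $\on{u}_\bC:\Vect\to\bC\otimes\bC^\vee$ admitting a \emph{left} adjoint (equivalently, to finite-dimensionality of $\CHom_\bC(\bc_1,\bc_2)$ between compacts); compactness of $\on{u}_\bC$ is the distinct condition of \emph{smoothness}, which is not among the hypotheses and does not follow from them. (For example, $\QCoh(Y)$ with $Y$ proper Gorenstein but singular is proper and Serre, while $\on{u}_\bC=\Delta_*\omega_Y$ is not perfect on $Y\times Y$.) This false claim is invoked twice: once to justify the passage from $\CHom(\bc_1\otimes\BD(\bc),\BD(\on{u}_\bC))$ to $\CHom(\on{u}_\bC,\bc\otimes\BD(\bc_1))$, and once (implicitly) to apply the defining Serre-duality relation in $\bC\otimes\bC^\vee$ with $\on{u}_\bC$ as one argument --- that relation requires both arguments compact. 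Both steps can in fact be repaired (the first via the right-Kan-extension formula for the discontinuous $\BD$; the second by applying the ``compact-in-the-first-slot'' form $\CHom(\bd_1,\bd_2)^\vee\simeq\CHom(\bd_2,\Se(\bd_1))$ with $\bd_1=\Se_{\bC\otimes\bC^\vee}^{-1}(X)$ compact and $\bd_2=\on{u}_\bC$, then conjugating by the equivalence $\Se_{\bC\otimes\bC^\vee}$), but as written your justification is incorrect. The paper sidesteps the issue entirely: it interprets $\CHom_{\bC\otimes\bC^\vee}(\on{u}_\bC, X)$ directly as a space of natural transformations of endofunctors restricted to $\bC^c$, dualizes the finite-dimensional $\CHom$-spaces appearing there (using only properness), and concludes by Yoneda --- never touching $\on{u}_\bC$ as an object.
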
 

\begin{proof}

We have to show that there exists a canonical isomorphism
$$\CHom_{\bC\otimes \bC^\vee}(\bc_1\otimes \BD(\Se_\bC(\bc_2)),\on{ps-u}_\bC)
\simeq \CHom_\bC(\bc_1,\bc_2), \quad \bc_1,\bc_2\in \bC^c.$$

\medskip

The left-hand side identifies, by definition, with
$$\CHom_{\bC\otimes \bC^\vee}(\on{u}_\bC,\Se_\bC(\bc_2)\otimes \BD(\bc_1)),$$
which is the same as the space of natural transformations 
$$\bc \to \CHom_{\bC}(\bc_1,\bc)\otimes \Se_\bC(\bc_2), \quad \bc\in \bC^c$$
We rewrite the latter as the space of natural transformations
$$\CHom_{\bC}(\bc_1,\bc)^\vee\to \CHom_{\bC}(\bc,\Se_\bC(\bc_2)), \quad \bc\in \bC^c,$$
i.e.,
$$\CHom_{\bC}(\bc_1,\bc)^\vee\to \CHom_{\bC}(\bc_2,\bc)^\vee, \quad \bc\in \bC^c,$$
which is the same as
$$\CHom_{\bC}(\bc_2,\bc) \to \CHom_{\bC}(\bc_1,\bc), \quad \bc\in \bC^c.$$

By Yoneda, the latter is the same as $\CHom_\bC(\bc_1,\bc_2)$, as required. 

\end{proof} 

\sssec{}

To summarize, if $\bC$ is Serre, we have the commutative diagrams
\begin{equation} \label{e:old and new pairing}
\CD
\bC\otimes \bC^\vee @>{\on{ev}_\bC}>> \Vect \\
@A{\Se_\bC\otimes \on{Id}}AA @AA{\on{Id}}A \\
\bC\otimes \bC^\vee @>{\on{u}^L_\bC}>> \Vect 
\endCD
\end{equation}
and
\begin{equation} \label{e:old and new unit}
\CD
\Vect @>{\on{u}_\bC}>> \bC\otimes \bC^\vee \\
@A{\on{Id}}AA  @AA{\Se_\bC\otimes \on{Id}}A \\
\Vect @>{\on{ps-u}_\bC}>> \bC\otimes \bC^\vee,
\endCD
\end{equation}
where the horizontal arrows are the counits and units of the corresponding dualities, 
respectively. 

\medskip

In particular, we recover the following result of \cite[Proposition 1.5.2]{GaYo}:

\begin{cor} \label{c:Serre inverse to Ps}
Assume that $\bC$ is Serre. Then the functor $\on{Ps-Id}_\bC$ is the inverse of $\Se_\bC$. 
\end{cor}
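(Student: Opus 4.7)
The plan is to pass to the kernel picture in $\bC \otimes \bC^\vee$ via the identification \eqref{e:kernels} and then invoke the commutative diagram \eqref{e:old and new unit} essentially directly. First I would recall that, under the equivalence $\bC \otimes \bC^\vee \simeq \on{Funct}_{\on{cont}}(\bC,\bC)$, the object $\on{u}_\bC$ corresponds to $\on{Id}_\bC$ by construction and $\on{ps-u}_\bC$ corresponds to $\on{Ps-Id}_\bC$ by definition (see \secref{sss:ps-id}).

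Next I would observe that for any continuous endofunctor $F$ of $\bC$, the operation $F \otimes \on{Id}_{\bC^\vee}$ on kernels corresponds to post-composition with $F$: if a kernel $K \in \bC \otimes \bC^\vee$ represents the functor $G : \bC \to \bC$, then $(F \otimes \on{Id}_{\bC^\vee})(K)$ represents $F \circ G$. This is tautological from the fact that $(F \otimes \on{Id}_{\bC^\vee})(\on{u}_\bC)$ is the defining kernel of $F = F \circ \on{Id}_\bC$, together with the fact that composition of kernels is built by first tensoring with $\on{u}_\bC$.

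The main step is then to apply the commutative square \eqref{e:old and new unit}, which asserts
\[
(\Se_\bC \otimes \on{Id}_{\bC^\vee})(\on{ps-u}_\bC) \simeq \on{u}_\bC.
\]
Translating through the dictionary above, this reads as a canonical isomorphism $\Se_\bC \circ \on{Ps-Id}_\bC \simeq \on{Id}_\bC$. Finally, since $\bC$ is Serre by hypothesis, $\Se_\bC$ is a self-equivalence of $\bC$, and a one-sided inverse to an equivalence is automatically the two-sided inverse; hence $\on{Ps-Id}_\bC$ is the inverse of $\Se_\bC$.

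The only nontrivial input is the commutativity of \eqref{e:old and new unit}, which is already in place from \propref{p:unit new duality} (identifying $\on{ps-u}_\bC$ as the unit of the duality whose counit is $\on{u}_\bC^L$) combined with \lemref{l:Serre functor} (writing $\on{u}_\bC^L \simeq \on{ev}_\bC \circ (\Se_\bC \otimes \on{Id})$) and the general principle that dualities whose counits differ by a self-equivalence on one factor have units differing by the inverse self-equivalence on that same factor. I therefore expect no genuine obstacle: the corollary is a structural consequence of the results already assembled in this subsection, with the only bookkeeping point being the correct identification of how $(\Se_\bC \otimes \on{Id})$ acts on kernels in terms of functor composition.
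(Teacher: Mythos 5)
Your proof is correct and takes essentially the same route the paper intends: the corollary is presented as an immediate consequence of the commutative square \eqref{e:old and new unit}, and your proposal just makes the translation into functor language explicit. The key bookkeeping point you flag — that $(\Se_\bC\otimes\on{Id}_{\bC^\vee})$ acting on a kernel $K\in\bC\otimes\bC^\vee$ gives the kernel of $\Se_\bC$ post-composed with the functor defined by $K$ — is the right one, and combined with the fact that a one-sided inverse of an equivalence is automatically two-sided, this yields the corollary.
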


\sssec{} \label{sss:summary diag}

Thus, if $\bC$ is Serre, along with diagram \eqref{e:old and new pairing} 
we also obtain the commutative diagram
\begin{equation} \label{e:old and new pairing bis}
\CD
\bC\otimes \bC^\vee @>{\on{u}^L_\bC}>> \Vect  \\
@A{\on{Ps-Id}_\bC\otimes \on{Id}}AA @AA{\on{Id}}A \\
\bC\otimes \bC^\vee @>{\on{ev}_\bC}>> \Vect,
\endCD
\end{equation}

\sssec{}

Recall the functor
$$\BD^{\on{new}}:(\bC^c)^{\on{op}}\to (\bC^\vee)^c,$$
see \eqref{e:D new Serre}. We have
\begin{equation} \label{e:D new Mir}
\BD\simeq \BD^{\on{new}}\circ \on{Ps-Id}_\bC.
\end{equation}

\ssec{Duality-adapted pairs, complements}

In this subsection we collect some background material on the notion of 
\emph{duality-adapted pair}, see \secref{sss:duality adapted}.  

\sssec{}

Let $\CY$ be a quasi-compact algebraic stack (see \secref{sss:stacks} for our assumptions). Let $\CN$ be a 
closed conical subset in $T^*(\CY)$. 
Consider the corresponding full subcategory
$$\Shv_\CN(\CY) \overset{\iota_\CY}\hookrightarrow \Shv(\CY).$$

We will assume that the pair $(\CY,\CN)$ is \emph{duality-adapted}, see \secref{sss:duality adapted},
i.e., the functor
$$\Shv_\CN(\CY) \otimes \Shv_\CN(\CY)  \to \Shv(\CY) \otimes \Shv(\CY) 
\overset{\on{ev} _\CY}\longrightarrow \Vect$$
is the counit of a self-duality
\begin{equation} \label{e:usual self duality Y}
\Shv_\CN(\CY)^\vee \simeq \Shv_\CN(\CY)
\end{equation}

\sssec{} \label{sss:proj Y N}

Let $$\sP_{\CY,\CN}:\Shv(\CY)\to \Shv_\CN(\CY)$$
denote the functor dual to the embedding
$$\iota_\CY:\Shv_\CN(\CY) \hookrightarrow \Shv(\CY)$$
with respect the self-dualities
$$\Shv(\CY)^\vee \simeq \Shv(\CY) \text{ and } \Shv_\CN(\CY)^\vee \simeq \Shv_\CN(\CY)$$
of \eqref{e:Verdier} and \eqref{e:usual self duality Y}, respectively. 

\medskip

We will sometimes view $\sP_{\CY,\CN}$ as an endofunctor of $\Shv(\CY)$, by composing it
with the embedding $\iota_\CY$. 

\sssec{} \label{sss:u Y N}

Let 
$$\on{u}_{\CY,\CN}\in \Shv_\CN(\CY)\otimes \Shv_\CN(\CY)$$
be the unit of the self-duality \eqref{e:usual self duality Y} on $\Shv_\CN(\CY)$.

\medskip

We will sometimes view $\on{u}_{\CY,\CN}$ as an object of $\Shv(\CY\times \CY)$ via
the embedding
$$\Shv_\CN(\CY)\otimes \Shv_\CN(\CY) \overset{\iota_\CY\otimes \iota_\CY}\longrightarrow
\Shv(\CY)\otimes \Shv(\CY) \overset{\boxtimes}\longrightarrow \Shv(\CY\times \CY).$$

\begin{rem}

Note that when $\CN=T^*(\CY)$, we have
$$\on{u}_{\CY,\CN}=\on{u}_{\Shv(\CY)}.$$

\end{rem}

\sssec{} \label{sss:P Y N}

It follows from the definitions that $\sP_{\CY,\CN}$, viewed as an endofunctor of $\Shv(\CY)$,
identifies with
$$\CF \mapsto (p_2)_\blacktriangle(p_1^!(\CF)\sotimes \on{u}_{\CY,\CN}).$$

Thus, we extend $\sP_{\CY,\CN}$ to a functor defined by a kernel in the sense of
\secref{ss:ker} (with the kernel being $\on{u}_{\CY,\CN}$, viewed as an object of $\Shv(\CY\times \CY)$). 
In particular, we obtain the endofunctor
$$\on{Id}_\CZ\boxtimes \sP_{\CY,\CN}$$
of $\Shv(\CZ\times \CY)$ for any algebraic stack $\CZ$, so that for $\CZ=\on{pt}$ we recover
the original $\sP_{\CY,\CN}$. 

\medskip

\begin{prop}
The endofunctor $\on{Id}_\CZ\boxtimes \sP_{\CY,\CN}$
is the projector onto the full subcategory
$$\Shv(\CZ)\otimes \Shv_\CN(\CY) \hookrightarrow 
\Shv(\CZ)\otimes \Shv(\CY)\overset{\boxtimes}\hookrightarrow \Shv(\CZ\times \CY).$$
\end{prop}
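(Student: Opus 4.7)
The plan is to reduce the statement to two facts: first, that $\sP_{\CY,\CN}$ acts as the identity on $\Shv_\CN(\CY)$; second, that since the defining kernel $\on{u}_{\CY,\CN}$ already lies in the subcategory $\Shv_\CN(\CY)\otimes \Shv_\CN(\CY)\subset \Shv(\CY\times \CY)$, the resulting $\CZ$-relative functor $\on{Id}_\CZ\boxtimes \sP_{\CY,\CN}$ factors through $\Shv(\CZ)\otimes \Shv_\CN(\CY)\hookrightarrow \Shv(\CZ\times \CY)$.

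For the first fact, recall from \secref{sss:proj Y N} that $\sP_{\CY,\CN}$ is defined as the dual of $\iota_\CY$ with respect to Verdier self-duality on $\Shv(\CY)$ and the self-duality \eqref{e:usual self duality Y} on $\Shv_\CN(\CY)$. The duality-adapted hypothesis says precisely that $\on{ev}_\CY\circ(\iota_\CY\otimes \iota_\CY)$ is the counit of the self-duality on $\Shv_\CN(\CY)$. A short formal argument then yields $\sP_{\CY,\CN}\circ \iota_\CY\simeq \on{Id}_{\Shv_\CN(\CY)}$: indeed, for any $a,a'\in \Shv_\CN(\CY)$, the hypothesis gives
\begin{equation*}
\on{ev}_{\Shv_\CN(\CY)}(a,\sP_{\CY,\CN}\circ \iota_\CY(a')) \;\simeq\; \on{ev}_\CY(\iota_\CY(a),\iota_\CY(a')) \;\simeq\; \on{ev}_{\Shv_\CN(\CY)}(a,a'),
\end{equation*}
and non-degeneracy of $\on{ev}_{\Shv_\CN(\CY)}$ forces $\sP_{\CY,\CN}\circ \iota_\CY(a')\simeq a'$.

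For the second fact, $\sP_{\CY,\CN}$ admits a canonical factorization $\Shv(\CY)\to \Shv_\CN(\CY)\hookrightarrow \Shv(\CY)$, reflecting the fact that its defining kernel $\on{u}_{\CY,\CN}$ lies in $\Shv_\CN(\CY)\otimes \Shv_\CN(\CY)$. Applying $\on{Id}_\CZ\boxtimes -$ yields
\begin{equation*}
\on{Id}_\CZ\boxtimes \sP_{\CY,\CN}\colon \Shv(\CZ\times \CY)\;\longrightarrow\; \Shv(\CZ)\otimes \Shv_\CN(\CY)\;\hookrightarrow\; \Shv(\CZ\times \CY),
\end{equation*}
establishing that the essential image lies in $\Shv(\CZ)\otimes \Shv_\CN(\CY)$. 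For the action on this subcategory, the restriction of $\on{Id}_\CZ\boxtimes \sP_{\CY,\CN}$ to the external-product subcategory $\Shv(\CZ)\otimes \Shv(\CY)\hookrightarrow \Shv(\CZ\times \CY)$ coincides with the DG-categorical tensor-product functor $\on{Id}_{\Shv(\CZ)}\otimes \sP_{\CY,\CN}$; restricted further to $\Shv(\CZ)\otimes \Shv_\CN(\CY)$, this becomes the identity by the first fact.

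The main obstacle is the formal verification underlying the second fact: one must check carefully that a kernel lying in $\Shv_\CN(\CY)\otimes \Shv_\CN(\CY)\subset \Shv(\CY\times \CY)$ produces a $\CZ$-relative functor whose image lies in $\Shv(\CZ)\otimes \Shv_\CN(\CY)\subset \Shv(\CZ\times \CY)$, and one must match $\on{Id}_\CZ\boxtimes \sP_{\CY,\CN}$ with $\on{Id}_{\Shv(\CZ)}\otimes \sP_{\CY,\CN}$ on the external-product subcategory. Both are bookkeeping statements in the kernel formalism of \secref{ss:ker}, but care is required because the functor $\Shv(\CZ)\otimes \Shv(\CY)\to \Shv(\CZ\times \CY)$ is only fully faithful and not essentially surjective; once these compatibilities are established, both halves of the proposition follow at once.
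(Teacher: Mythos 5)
Your proposal is correct and follows essentially the same route as the paper's proof: observe that the kernel $\on{u}_{\CY,\CN}$ lies in $\Shv_\CN(\CY)\otimes\Shv_\CN(\CY)$ to bound the essential image, then use the compatibility \eqref{e:boxtimes vs otimes} to identify $\on{Id}_\CZ\boxtimes \sP_{\CY,\CN}$ with $\on{Id}_{\Shv(\CZ)}\otimes \sP_{\CY,\CN}$ on the external-product subcategory and conclude it acts as the identity there. The only difference is that you spell out the ``first fact'' ($\sP_{\CY,\CN}\circ\iota_\CY\simeq\on{Id}$) via the duality-adapted hypothesis, whereas the paper treats this as immediate from the definitions in Sects.~\ref{sss:proj Y N}--\ref{sss:u Y N}.
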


\begin{proof}

The fact that $\on{u}_{\CY,\CN}$ belongs to 
$$\Shv_\CN(\CY)\otimes \Shv_\CN(\CY) \subset \Shv(\CY)\otimes \Shv_\CN(\CY)$$
implies that the essential image of $\on{Id}_\CZ\boxtimes \sP_{\CY,\CN}$ belongs to
$$\Shv(\CZ)\otimes \Shv_\CN(\CY)\subset \Shv(\CZ\times \CY).$$

For any functor $\sQ$ defined by a kernel $\CQ\in \Shv(\CY_1\times \CY_2)$, 
the restriction of $\on{Id}_\CZ\boxtimes \sQ$ to 
$$\Shv(\CZ)\otimes \Shv(\CY_1)\subset \Shv(\CZ\times \CY_2)$$
identifies with
$$\on{Id}_{\Shv(\CZ)}\otimes \sQ,$$
viewed as a functor
$$\Shv(\CZ)\otimes \Shv(\CY_1)\to \Shv(\CZ)\otimes \Shv(\CY_2)\subset \Shv(\CZ\times \CY_2),$$
see \eqref{e:boxtimes vs otimes}.

\medskip

This implies that $\on{Id}_\CZ\boxtimes \sP_{\CY,\CN}$ acts as identity when restricted
to $\Shv(\CZ)\otimes \Shv_\CN(\CY)$.

\end{proof}

\begin{cor}
The endofunctor $\sP_{\CY,\CN}\boxtimes \sP_{\CY,\CN}$ of $\Shv(\CY\times \CY)$
is a projector onto
$$\Shv_\CN(\CY)\otimes \Shv_\CN(\CY)\subset \Shv(\CY\times \CY).$$
\end{cor}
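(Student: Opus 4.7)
The plan is to mimic, almost verbatim, the proof of \corref{c:double projector} given earlier in the paper, with the preceding Proposition (that $\on{Id}_\CZ\boxtimes \sP_{\CY,\CN}$ is the projector onto $\Shv(\CZ)\otimes \Shv_\CN(\CY)$) taking the place of \thmref{t:projector}. The two things to establish are that the endofunctor $\sP_{\CY,\CN}\boxtimes \sP_{\CY,\CN}$ has essential image contained in $\Shv_\CN(\CY)\otimes \Shv_\CN(\CY)$, and that it acts as the identity on this subcategory.

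For the identity property, I would note that $\sP_{\CY,\CN}$ acts as the identity on $\Shv_\CN(\CY)\subset \Shv(\CY)$ (this is immediate from the preceding Proposition with $\CZ=\on{pt}$, or directly from the fact that $\sP_{\CY,\CN}$ is dual to $\iota_\CY$ and $\iota_\CY$ is fully faithful). Then by the general compatibility \eqref{e:boxtimes vs otimes}, the functor $\sP_{\CY,\CN}\boxtimes \sP_{\CY,\CN}$, restricted to $\Shv(\CY)\otimes \Shv(\CY)\subset \Shv(\CY\times \CY)$, acts as $\sP_{\CY,\CN}\otimes \sP_{\CY,\CN}$, which tautologically acts as the identity on $\Shv_\CN(\CY)\otimes \Shv_\CN(\CY)$.

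For the containment of essential images, I would factor
$$\sP_{\CY,\CN}\boxtimes \sP_{\CY,\CN}=(\sP_{\CY,\CN}\boxtimes \on{Id}_\CY)\circ (\on{Id}_\CY\boxtimes \sP_{\CY,\CN}).$$
Applying the preceding Proposition with $\CZ=\CY$, the essential image of the second factor $\on{Id}_\CY\boxtimes \sP_{\CY,\CN}$ is contained in $\Shv(\CY)\otimes \Shv_\CN(\CY)\subset \Shv(\CY\times \CY)$. Then, again by \eqref{e:boxtimes vs otimes}, the functor $\sP_{\CY,\CN}\boxtimes \on{Id}_\CY$, restricted to $\Shv(\CY)\otimes \Shv(\CY)$, acts as $\sP_{\CY,\CN}\otimes \on{Id}$; its further restriction to $\Shv(\CY)\otimes \Shv_\CN(\CY)$ therefore lands in $\Shv_\CN(\CY)\otimes \Shv_\CN(\CY)$, as desired.

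There is no real obstacle here: the entire argument is a formal consequence of the preceding Proposition combined with \eqref{e:boxtimes vs otimes}, and is structurally identical to the proof of \corref{c:double projector}. The only minor point worth being careful about is that $\sP_{\CY,\CN}\boxtimes \on{Id}_\CY$ and $\on{Id}_\CY\boxtimes \sP_{\CY,\CN}$ are functors defined by kernels (obtained by viewing $\on{u}_{\CY,\CN}$ as an object of $\Shv(\CY\times \CY)$ via \secref{sss:u Y N}), and one uses symmetry of the target subcategory $\Shv(\CY\times \CY)$ to apply the Proposition with either of the two $\CY$-factors playing the role of $\CZ$.
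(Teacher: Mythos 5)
Your proof is correct and is exactly what the paper intends: the paper's proof of this corollary is the single line ``Same as that of \corref{c:double projector},'' and you have simply unwound that reference, substituting the preceding Proposition for \thmref{t:projector} and \thmref{t:Hecke action Nilp 2}.
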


\begin{proof}
Same as that of \corref{c:double projector}.
\end{proof}

\sssec{}

We now claim: 

\begin{lem} \label{l:diag Y N}
The object $\on{u}_{\CY,\CN}$ identifies with each of the following:
$$(\sP_{\CY,\CN}\boxtimes \on{Id}_\CY)(\on{u}_\CY),\,\, (\sP_{\CY,\CN}\boxtimes \sP_{\CY,\CN})(\on{u}_\CY),\,\, 
(\on{Id}_\CY \boxtimes \sP_{\CY,\CN})(\on{u}_\CY).$$
\end{lem}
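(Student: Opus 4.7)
The plan is to mirror the proof of \propref{p:u naive projectors}, adapted to the abstract setting of an arbitrary duality-adapted pair $(\CY,\CN)$, where no ambient stack is available from which to pull back. The key reminder from \secref{sss:P Y N} is that $\sP_{\CY,\CN}$, realized as a functor defined by a kernel, has kernel $\on{u}_{\CY,\CN}\in \Shv(\CY\times \CY)$; explicitly,
$$\sP_{\CY,\CN}(\CF) \simeq (p_2)_\blacktriangle(p_1^!(\CF) \sotimes \on{u}_{\CY,\CN}).$$
Moreover, $\on{u}_\CY = (\Delta_\CY)_*(\omega_\CY)$ is the kernel of the identity endofunctor of $\Shv(\CY)$.

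First I would establish $(\on{Id}_\CY\boxtimes \sP_{\CY,\CN})(\on{u}_\CY)\simeq \on{u}_{\CY,\CN}$. In the kernel formalism (\secref{ss:ker}), applying the endofunctor $\on{Id}_\CY\boxtimes \sP_{\CY,\CN}$ of $\Shv(\CY\times \CY)$ to a kernel $\CK$ produces the kernel of the composite functor. Since $\on{u}_\CY$ is the kernel of $\on{Id}_{\Shv(\CY)}$, the resulting kernel is that of $\sP_{\CY,\CN}\circ \on{Id}_{\Shv(\CY)}=\sP_{\CY,\CN}$, which by definition is $\on{u}_{\CY,\CN}$. The second identification, $(\sP_{\CY,\CN}\boxtimes \on{Id}_\CY)(\on{u}_\CY)\simeq \on{u}_{\CY,\CN}$, then follows by invoking the swap-equivariance of both $\on{u}_\CY$ (manifest since the diagonal is preserved by the transposition $\sigma$) and $\on{u}_{\CY,\CN}$ (which, being the unit of the self-duality inherited from the symmetric Verdier pairing, is likewise swap-invariant), and applying $\sigma^*$ to the first isomorphism.

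For the remaining identification, I would write
$$(\sP_{\CY,\CN}\boxtimes \sP_{\CY,\CN})(\on{u}_\CY) \simeq (\sP_{\CY,\CN}\boxtimes \on{Id}_\CY)\circ (\on{Id}_\CY\boxtimes \sP_{\CY,\CN})(\on{u}_\CY),$$
apply the first isomorphism to reduce to $(\sP_{\CY,\CN}\boxtimes \on{Id}_\CY)(\on{u}_{\CY,\CN})$, and conclude using that $\on{u}_{\CY,\CN}$ lies in $\Shv_\CN(\CY)\otimes \Shv_\CN(\CY)\subset \Shv(\CY\times \CY)$ together with the fact that $\sP_{\CY,\CN}$ acts as the identity on $\Shv_\CN(\CY)$ (via the formula \eqref{e:boxtimes vs otimes}, which identifies $\sP_{\CY,\CN}\boxtimes \on{Id}_\CY$ on the image of $\boxtimes$ with $\sP_{\CY,\CN}\otimes \on{Id}$).

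The principal obstacle I expect is in the first step: making the correspondence between kernels and composite functors rigorous, i.e.\ verifying that applying $\sP_{\CY,\CN}$ to the second factor of $\on{u}_\CY$ in the $\boxtimes$-sense really coincides with the composition-of-kernels interpretation. This is intuitively clear but ultimately reduces to a base-change calculation along the diagonal $\Delta_\CY$, using the explicit definition of $\sP_{\CY,\CN}$ together with the projection formula for $(\Delta_\CY)_*$.
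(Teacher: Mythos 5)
Your proof is correct and follows essentially the same route as the paper's: the first identification is the tautological kernel-of-composite observation (which the paper dismisses as "tautological"), the second is by symmetry of the Verdier pairing, and the third is via the same decomposition $\sP_{\CY,\CN}\boxtimes \sP_{\CY,\CN}=(\sP_{\CY,\CN}\boxtimes \on{Id}_\CY)\circ(\on{Id}_\CY\boxtimes \sP_{\CY,\CN})$ reduced to the action of $\sP_{\CY,\CN}\boxtimes \on{Id}_\CY$ on $\on{u}_{\CY,\CN}\in \Shv_\CN(\CY)\otimes \Shv(\CY)$. The "obstacle" you flag at the end is not a real gap — the kernel/composite correspondence is exactly formula \eqref{e:green formula} of Sect.~\ref{sss:comp corr}, which the paper's formalism already supplies.
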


\begin{proof}
The isomorphism
$$\on{u}_{\CY,\CN}\simeq (\on{Id}_\CY \boxtimes \sP_{\CY,\CN})(\on{u}_\CY)$$
is tautological, and the isomorphism with $(\sP_{\CY,\CN}\boxtimes \on{Id}_\CY)(\on{u}_\CY)$ follows by symmetry.

\medskip

We have
$$(\sP_{\CY,\CN}\boxtimes \sP_{\CY,\CN})(\on{u}_\CY)\simeq
(\sP_{\CY,\CN}\boxtimes \on{Id}_\CY)\circ (\on{Id}_\CY \boxtimes \sP_{\CY,\CN})(\on{u}_\CY)\simeq
(\sP_{\CY,\CN}\boxtimes \on{Id}_\CY)(\on{u}_{\CY,\CN}),$$
and the latter is isomorphic to $\on{u}_{\CY,\CN}$, since $\sP_{\CY,\CN}\boxtimes \on{Id}_\CY$ acts
as identity on 
$$\Shv_\CN(\CY)\otimes \Shv(\CY)\subset \Shv(\CY\times \CY).$$
\end{proof}

\ssec{Constraccessible pairs, complements}

In this subsection we collect some background material on the notion of 
\emph{constraccessible pair}, see \secref{sss:constraccess}.  

\medskip

In this subsection we assume that $\CY$ is quasi-compact. 

\sssec{}

Assume now that the pair $(\CY,\CN)$ is \emph{constraccessible}, see \secref{sss:constraccess}.
In particular, it is duality-adapted, and the equivalence
$$(\Shv_\CN(\CY)^c)^{\on{op}}\to \Shv_\CN(\CY)^c,$$
corresponding to \eqref{e:usual self duality Y}, is obtained by restriction from the Verdier duality functor
$$\BD^{\on{Verdier}}:(\Shv(\CY)^c)^{\on{op}}\to \Shv(\CY)^c.$$ 

\sssec{}

We claim:

\begin{lem} \label{l:right adj as dual of P}
For a quasi-compact algebraic stack $\CZ$, the functor $\on{Id}_\CZ\boxtimes \sP_{\CY,\CN}$, viewed as a functor
$$\Shv(\CZ\times \CY)\to \Shv(\CZ)\otimes \Shv_\CN(\CY),$$
is the right adjoint of 
$$\Shv(\CZ)\otimes \Shv_\CN(\CY) \overset{\on{Id}\otimes \iota_\CY}\longrightarrow 
\Shv(\CZ)\otimes \Shv(\CY) \hookrightarrow \Shv(\CZ\times \CY).$$
\end{lem}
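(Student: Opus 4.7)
The plan is to mimic the proof of \propref{p:P as adj}, with \cite[Conjecture 14.1.8]{AGKRRV} replaced by the constraccessibility of the pair $(\CY,\CN)$. By continuity of both sides in $\CF\in\Shv(\CZ\times\CY)$, it suffices to produce, for $\CF_\CZ\in\Shv(\CZ)^c$ and $\CF_\CN\in\Shv_\CN(\CY)$, a natural isomorphism
\begin{equation*}
\CHom_{\Shv(\CZ\times\CY)}(\CF_\CZ\boxtimes\CF_\CN,\CF)\;\simeq\;\CHom_{\Shv(\CZ)\otimes\Shv_\CN(\CY)}\!\bigl(\CF_\CZ\otimes\CF_\CN,(\on{Id}_\CZ\boxtimes\sP_{\CY,\CN})(\CF)\bigr).
\end{equation*}

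The key move is to integrate out the $\CZ$-variable. Setting
\begin{equation*}
\CF':=(p_\CY)_*\!\bigl(p_\CZ^!(\BD^{\on{Verdier}}(\CF_\CZ))\sotimes\CF\bigr)\in\Shv(\CY),
\end{equation*}
and defining $\CF''$ analogously with $(\on{Id}_\CZ\boxtimes\sP_{\CY,\CN})(\CF)$ in place of $\CF$, the compactness of $\CF_\CZ$ (and hence of $\BD^{\on{Verdier}}(\CF_\CZ)$) identifies the two sides above with $\CHom_{\Shv(\CY)}(\CF_\CN,\CF')$ and $\CHom_{\Shv(\CY)}(\CF_\CN,\CF'')$ respectively, and moreover allows $(p_\CY)_*$ to be replaced by $(p_\CY)_\blacktriangle$ throughout. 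The core step is then to verify $\CF''\simeq\sP_{\CY,\CN}(\CF')$: since $\sP_{\CY,\CN}$ is defined by the kernel $\on{u}_{\CY,\CN}\in\Shv(\CY\times\CY)$, its relative version $\on{Id}_\CZ\boxtimes\sP_{\CY,\CN}$ commutes with $!$-pullback along $\CZ$, with $!$-tensoring against an object pulled from $\CZ$, and with $\blacktriangle$-pushforward along $p_\CY$, by the kernel formalism of \secref{ss:ker}; composing these three commutations yields the desired identity.

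It remains to produce the $(\iota_\CY,\sP_{\CY,\CN})$-adjunction in the case $\CZ=\on{pt}$, i.e., $\CHom_{\Shv(\CY)}(\CF_\CN,\CF')\simeq\CHom_{\Shv(\CY)}(\CF_\CN,\sP_{\CY,\CN}(\CF'))$. This is the substantive use of constraccessibility and is the exact analogue of \propref{p:PU right adjoint}: by \lemref{l:right adjoint and proj} applied to the compactness-preserving embedding $\iota_\CY$, together with the fact that the self-duality on $\Shv_\CN(\CY)$ given by \eqref{e:usual self duality Y} is restricted at the level of compacts from Verdier duality on $\Shv(\CY)$, one identifies $\iota_\CY^{\on{fake-op},\vee}$ with $\iota_\CY$ itself, whence $\iota_\CY^R\simeq\iota_\CY^\vee\simeq\sP_{\CY,\CN}$ by the very definition of $\sP_{\CY,\CN}$ in \secref{sss:proj Y N}. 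The main obstacle is the careful bookkeeping of the kernel-commutation identities in the core step; the remainder is formal.
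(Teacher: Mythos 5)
Your proof is correct, but it takes a genuinely different route from the paper's. The paper's proof is a one-line application of \lemref{l:right adjoint and proj}: it is applied directly to the composite fully faithful, compactness-preserving embedding
$$\iota := \boxtimes\circ(\on{Id}\otimes\iota_\CY):\Shv(\CZ)\otimes\Shv_\CN(\CY)\hookrightarrow\Shv(\CZ\times\CY)$$
for the \emph{given} $\CZ$. Constraccessibility of $(\CY,\CN)$ implies that $\iota^{\on{fake-op}}\simeq\iota$ under the Verdier self-dualities of source and target, whence $\iota^R\simeq\iota^\vee$; the latter is $\on{Id}_\CZ\boxtimes\sP_{\CY,\CN}$ by the very construction of $\sP_{\CY,\CN}$ (\secref{sss:proj Y N}) and the compatibility of Verdier duality with the external product and with $\blacktriangle$-pushforward. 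You instead transplant the proof of \propref{p:P as adj} wholesale: integrate out the $\CZ$-variable, verify the resulting commutation $\CF''\simeq\sP_{\CY,\CN}(\CF')$ via the kernel formalism of \secref{ss:ker}, and then apply \lemref{l:right adjoint and proj} only in the base case $\CZ=\on{pt}$ (the analogue of \propref{p:PU right adjoint}). Both arguments are valid, and you have made the correct replacement of the conjecture hypothesis used in \propref{p:P as adj} by constraccessibility. The paper's proof is shorter but relies on identifying the abstract Verdier transpose of the product embedding with $\on{Id}_\CZ\boxtimes\sP_{\CY,\CN}$ at once; yours is longer but makes the use of the kernel-commutation identities and the point where constraccessibility enters entirely explicit. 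One small elaboration worth spelling out in your core step: the commutation with $p_\CZ^!(\BD^{\on{Verdier}}(\CF_\CZ))\sotimes(-)$ is obtained by factoring this operation as $\Delta_\CZ^!\circ(\BD^{\on{Verdier}}(\CF_\CZ)\boxtimes-)$ and then invoking the second and third compatibilities of \secref{sss:compat kernels new}; the three commutations you list are not literally what is stated there but are straightforward consequences.
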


\begin{proof}

Follows from \lemref{l:right adjoint and proj}, since the constraccessibility assumption
implies that
$$(\boxtimes \circ (\on{Id}\otimes \iota_\CY))^{\on{fake-op}}\simeq 
(\boxtimes \circ (\on{Id}\otimes \iota_\CY)).$$

\end{proof}

\begin{cor} \label{c:embed N sq}
The endofunctor $\sP_{\CY,\CN}\boxtimes \sP_{\CY,\CN}$ of $\Shv(\CY\times \CY)$,
viewed as a functor
$$\Shv(\CY\times \CY)\to \Shv_\CN(\CY)\otimes \Shv_\CN(\CY)$$
identifies with the right adjoint of the tautological embedding.
\end{cor}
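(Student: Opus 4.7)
The plan is to deduce the corollary by composing the adjunction from \lemref{l:right adj as dual of P} (applied with $\CZ = \CY$) with a simpler tensor-product adjunction, exploiting the constraccessibility hypothesis to identify $\sP_{\CY,\CN}$ with the right adjoint of $\iota_\CY$ at the level of plain functors.

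First, I would factor the tautological embedding as
\[
\Shv_\CN(\CY)\otimes \Shv_\CN(\CY) \overset{\iota_\CY \otimes \on{Id}}\hookrightarrow \Shv(\CY)\otimes \Shv_\CN(\CY) \hookrightarrow \Shv(\CY\times \CY),
\]
where the second arrow is the restriction of the K\"unneth functor. By \lemref{l:right adj as dual of P} applied with $\CZ = \CY$ (which is legitimate since $\CY$ is quasi-compact), the right adjoint of the second arrow is $\on{Id}_\CY\boxtimes \sP_{\CY,\CN}$, viewed as a functor with codomain $\Shv(\CY)\otimes \Shv_\CN(\CY)$. Under the constraccessibility assumption, \lemref{l:right adjoint and proj} (combined with the construction of $\sP_{\CY,\CN}$ in \secref{sss:proj Y N}) shows that $\sP_{\CY,\CN}$ is a plain right adjoint to $\iota_\CY$; hence $\sP_{\CY,\CN}\otimes \on{Id}$ is right adjoint to $\iota_\CY \otimes \on{Id}$.

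Composing these two right adjoints, the right adjoint of the tautological embedding is
\[
(\sP_{\CY,\CN}\otimes \on{Id}) \circ (\on{Id}_\CY \boxtimes \sP_{\CY,\CN}): \Shv(\CY\times \CY) \to \Shv_\CN(\CY)\otimes \Shv_\CN(\CY).
\]
The remaining task is to identify this composition with $\sP_{\CY,\CN}\boxtimes \sP_{\CY,\CN}$. For this, I would write, as functors defined by kernels,
\[
\sP_{\CY,\CN}\boxtimes \sP_{\CY,\CN} = (\sP_{\CY,\CN}\boxtimes \on{Id}_\CY) \circ (\on{Id}_\CY \boxtimes \sP_{\CY,\CN}).
\]
The essential image of $\on{Id}_\CY \boxtimes \sP_{\CY,\CN}$ lies in $\Shv(\CY)\otimes \Shv_\CN(\CY)$ by \secref{sss:P Y N}, and on this K\"unneth subcategory the functor $\sP_{\CY,\CN}\boxtimes \on{Id}_\CY$ restricts to $\sP_{\CY,\CN}\otimes \on{Id}$ by \eqref{e:boxtimes vs otimes}, yielding the required identification.

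There is no real obstacle here; the statement is a formal bookkeeping exercise once \lemref{l:right adj as dual of P} is available, since all the nontrivial content (namely, that the kernel-defined right adjoint $\on{Id}_\CY \boxtimes \sP_{\CY,\CN}$ really provides an adjunction with its target taken to be $\Shv(\CY)\otimes \Shv_\CN(\CY)$ and not just $\Shv(\CY\times \CY)$) is absorbed into the cited lemma.
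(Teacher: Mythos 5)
Your proof is correct and follows essentially the same route as the paper: both factor the tautological embedding through $\Shv(\CY)\otimes \Shv_\CN(\CY)$, apply \lemref{l:right adj as dual of P} (with $\CZ=\CY$) to identify the right adjoint of the K\"unneth piece, and use constraccessibility to identify $\sP_{\CY,\CN}$ as a plain right adjoint of $\iota_\CY$. The paper's proof is terser and omits the final step you spell out — matching the composition $(\sP_{\CY,\CN}\otimes \on{Id})\circ(\on{Id}_\CY\boxtimes \sP_{\CY,\CN})$ with $\sP_{\CY,\CN}\boxtimes \sP_{\CY,\CN}$ via \eqref{e:boxtimes vs otimes} — which is a useful piece of bookkeeping to make explicit.
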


\begin{proof}

Obtained by applying \lemref{l:right adj as dual of P} to
$$\on{Id}_\CY\boxtimes \sP_{\CY,\CN}: \Shv(\CY\times \CY)\to \Shv(\CY)\otimes \Shv_\CN(\CY)$$
and
$$\sP_{\CY,\CN}\otimes \on{Id}: \Shv(\CY)\otimes \Shv_\CN(\CY)\to \Shv_\CN(\CY)\otimes \Shv_\CN(\CY).$$

\end{proof}

\begin{cor} \label{c:u N}
The object $\on{u}_{\CY,\CN}$ identifies with the value on $\on{u}_\CY$ of the right adjoint to the embedding
\begin{equation} \label{e:embed N sq}
\Shv_\CN(\CY)\otimes \Shv_\CN(\CY)\hookrightarrow \Shv(\CY\times \CY).
\end{equation} 
\end{cor}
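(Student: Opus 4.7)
The proof is essentially an immediate combination of the two results that precede it in this subsection. The plan is as follows.

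First, I would apply Corollary~\ref{c:embed N sq}, which already identifies the right adjoint of the tautological embedding
$$\Shv_\CN(\CY)\otimes \Shv_\CN(\CY)\hookrightarrow \Shv(\CY\times \CY)$$
with the functor $\sP_{\CY,\CN}\boxtimes \sP_{\CY,\CN}$, regarded as a functor $\Shv(\CY\times \CY)\to \Shv_\CN(\CY)\otimes \Shv_\CN(\CY)$. Thus the value of this right adjoint on $\on{u}_\CY$ is, by definition, the object $(\sP_{\CY,\CN}\boxtimes \sP_{\CY,\CN})(\on{u}_\CY)$.

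Next, I would invoke Lemma~\ref{l:diag Y N}, which asserts the canonical isomorphism
$$\on{u}_{\CY,\CN}\simeq (\sP_{\CY,\CN}\boxtimes \sP_{\CY,\CN})(\on{u}_\CY).$$
Combining these two identifications yields the desired isomorphism between $\on{u}_{\CY,\CN}$ and the value on $\on{u}_\CY$ of the right adjoint to \eqref{e:embed N sq}.

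Since both ingredients are already available, there is no real obstacle: the proof is a one-line diagram chase. The only point worth verifying carefully is that the constraccessibility hypothesis has been assumed throughout the subsection, so that Corollary~\ref{c:embed N sq} (which relies on Lemma~\ref{l:right adj as dual of P} and thus on the identification $(\iota_\CY)^{\on{fake-op}}\simeq \iota_\CY$) is actually in force; this is the standing assumption on $(\CY,\CN)$ here.
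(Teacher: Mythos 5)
Your proposal is correct and is exactly the argument the paper intends (the paper leaves this corollary without an explicit proof, which signals precisely this one-line combination of \corref{c:embed N sq} and \lemref{l:diag Y N}). You are also right to flag the constraccessibility standing hypothesis: it is what guarantees, via \lemref{l:right adj as dual of P}, that the functor $\sP_{\CY,\CN}\boxtimes\sP_{\CY,\CN}$ defined by a kernel (which is the one appearing in \lemref{l:diag Y N}) agrees with the abstract right adjoint of the embedding, so the two ingredients are indeed speaking about the same functor.
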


\ssec{Serre pairs}  \label{ss:Serre stack} 

In this subsection we introduce the notion of what it means for a pair $(\CY,\CN)$ to be \emph{Serre}.  

\sssec{} \label{sss:ps u N} 

Let $\CY$ be an algebraic stack (for now we are \emph{not} assuming that $\CY$ is quasi-compact). 
Let $\CN$ be a subset in $T^*(\CY)$. 

\medskip

Recall the notation 
$$\on{ps-u}_\CY:=\Delta_!(\ul\sfe_\CY)\in \Shv(\CY\times \CY).$$

Let 
$$\on{ps-u}_{\CY,\CN}\in  \Shv_\CN(\CY)\otimes \Shv_\CN(\CY)$$
be the value on $\on{ps-u}_\CY$ of the right adjoint of the embedding
$$\Shv_\CN(\CY)\otimes \Shv_\CN(\CY)\hookrightarrow 
\Shv(\CY)\otimes \Shv(\CY)\overset{\boxtimes}\hookrightarrow\Shv(\CY\times \CY).$$

\medskip

The counit of the adjunction defines a map
\begin{equation} \label{e:ps-u N to ps-u}
\on{ps-u}_{\CY,\CN}\to \on{ps-u}_\CY.
\end{equation}


\sssec{}

Recall that 
$$\on{ev}^l_\CY:\Shv(\CY)\otimes \Shv(\CY)\to \Vect$$
denoted the pairing
$$\CF_1,\CF_2 \mapsto \on{C}^\cdot_c(\CY,\CF_1\overset{*}\otimes \CF_2).$$

By a slight abuse of notation, we will denote by the same symbol $\on{ev}^l_\CY$
its restriction along 
$$\Shv_\CN(\CY)\otimes \Shv_\CN(\CY)\to \Shv(\CY)\otimes \Shv(\CY).$$

\sssec{}

The map \eqref{e:ps-u N to ps-u} gives rise to a natural transformation from the functor
\begin{equation} \label{e:check axiom 2}
\Shv(\CY) \overset{\on{Id}\boxtimes \on{ps-u}_{\CY,\CN}}\longrightarrow 
\Shv(\CY\times \CY \times \CY) \overset{\ev_\CY^l\boxtimes \on{Id}_\CY}\longrightarrow \Shv(\CY)
\end{equation}
to the composition
$$\Shv(\CY) \overset{\on{Id}\boxtimes \on{ps-u}_{\CY}}\longrightarrow 
\Shv(\CY\times \CY \times \CY) \overset{\ev_\CY^l\boxtimes \on{Id}_\CY}\longrightarrow \Shv(\CY),$$
which is the identity functor. 

\medskip

Restricting to $\Shv_\CN(\CY)$, we obtain a natural transformation 
\begin{equation} \label{e:check axiom 3 new}
(\ev_\CY^l\otimes \on{Id})\circ (\on{Id}_\CY\boxtimes \on{ps-u}_{\CY,\CN})\to \on{Id}
\end{equation}

\medskip

\begin{defn} \label{d:Serre}
We shall say that the pair $(\CY,\CN)$ is \emph{Serre} if the natural transformation \eqref{e:check axiom 3 new}
becomes an isomorphism when restricted to $\Shv_\CN(\CY)$.
\end{defn}

\sssec{}

Thus, by definition, if $(\CY,\CN)$ is Serre, then the object
$$\on{ps-u}_{\CY,\CN}\in  \Shv_\CN(\CY)\otimes \Shv_\CN(\CY)$$
and the pairing
$$\on{ev}^l_\CY:\Shv_\CN(\CY)\otimes \Shv_\CN(\CY)\to \Vect$$
define the unit and the counit for a self-duality on $\Shv_\CN(\CY)$.

\sssec{Examples} \label{sss:ex smooth prel}

Let $\CY$ be a smooth proper scheme. Assume that the pair $(\CY,\{0\})$
is constraccessible. Then the pair $(\CY,\{0\})$ is Serre. 

\medskip

Let us now consider another extreme: let $\CY$ be an algebraic stack with finitely many isomorphism
classes of points (e.g., we can take $\CY$ to be $N\backslash G/B$ or an open substack $\Bun_G$ 
for a curve of genus $0$). We claim that in case the pair $(\CY,T^*(\CY))$ is Serre. Indeed, this follows
from the fact that for $\CY$ of the above form the functor
$$\Shv(\CY) \otimes \Shv(\CY)\to \Shv(\CY\times \CY)$$ 
is an equivalence, so 
$$\on{ps-u}_{\CY,\CN}\simeq \on{ps-u}_\CY.$$

\sssec{}

Recall that the category $\Shv(\CY)$ is compactly generated by objects of the form $g_!(\CF)$, for
$$g:S\to \CY,$$
where $S$ is an affine scheme, $g$ is smooth and $\CF\in \Shv(Z)^c=\Shv(Z)^{\on{constr}}$
(see \secref{sss:stacky}). It is easy to see that the $\CHom$  space between objects of the above form 
is compact. This implies that the category $\Shv(\CY)$ is proper. 

\medskip

Assume that the pair
$(\CY,\CN)$ is constraccessible, i.e., $\Shv_\CN(\CY)$ is generated by objects that are compact in $\Shv(\CY)$. 
We obtain that the category $\Shv_\CN(\CY)$ is also proper. Hence, we can consider the Serre functor
$\Se_{\Shv_\CN(\CY)}$.

\sssec{}

We claim:

\begin{thm} \label{t:Serre Serre}
Assume that $\CY$ is quasi-compact, and that $(\CY,\CN)$ is constraccessible and 
Serre. Then the category $\Shv_\CN(\CY)$ is Serre. Moreover
under the identification 
$\Shv_\CN(\CY)^\vee\simeq \Shv_\CN(\CY)$
of \eqref{e:usual self duality Y}, the object 
$$\on{ps-u}_{\CY,\CN}\in  \Shv_\CN(\CY)\otimes \Shv_\CN(\CY)$$
corresponds to the object 
$$\on{ps-u}_{\Shv_\CN(\CY)}\in \Shv_\CN(\CY)\otimes \Shv_\CN(\CY)^\vee$$
of \eqref{e:ps-id object}, and the pairing
$$\on{ev}^l_\CY:\Shv_\CN(\CY)\otimes \Shv_\CN(\CY)\to \Vect$$
corresponds to the pairing
$$\on{u}_{\Shv_\CN(\CY)}^L:\Shv_\CN(\CY)\otimes \Shv_\CN(\CY)^\vee\to \Vect$$
of \eqref{e:L u}. 
\end{thm}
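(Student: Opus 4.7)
The plan is to apply \corref{c:Serre} and \propref{p:unit new duality} to $\bC := \Shv_\CN(\CY)$, equipped with the self-duality \eqref{e:usual self duality Y} coming from (the restriction of) Verdier duality. Under this identification $\bC^\vee \simeq \bC$, the abstract unit $\on{u}_\bC$ corresponds to $\on{u}_{\CY,\CN}$. The key technical step will be to identify, under this same identification, the abstract left adjoint $\on{u}^L_\bC$ with the pairing $\on{ev}^l_\CY$ restricted to $\bC \otimes \bC$. Granted this, the Serre hypothesis on the pair $(\CY,\CN)$---which says precisely that $\on{ev}^l_\CY|_{\bC \otimes \bC}$, paired with $\on{ps-u}_{\CY,\CN}$, defines another self-duality on $\bC$---forces $\on{u}^L_\bC$ to be the counit of a duality, so $\bC$ is Serre by \corref{c:Serre}; the identification of units then follows from \propref{p:unit new duality}.

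First I would check that $\bC$ is proper, which is needed for $\on{u}^L_\bC$ to exist. By constraccessibility, every compact object of $\bC$ is already compact in $\Shv(\CY)$, and the paragraph preceding the theorem recalls that $\Shv(\CY)$ is proper for $\CY$ of the form $Z/H$; this transfers to $\bC$. Now fix $\CF_1, \CF_2 \in \bC^c$. By the defining adjunction,
$$\on{u}^L_\bC(\CF_1 \boxtimes \CF_2) \simeq \CHom_{\bC \otimes \bC}(\CF_1 \boxtimes \CF_2, \on{u}_{\CY,\CN})^\vee.$$
By \corref{c:u N}, $\on{u}_{\CY,\CN}$ is the value of the right adjoint to \eqref{e:embed N sq} applied to $\on{u}_\CY = (\Delta_\CY)_*(\omega_\CY)$, so the Hom on the right is computed (using $\CF_1 \boxtimes \CF_2$ already lying in $\bC \otimes \bC$) as
$$\CHom_{\Shv(\CY\times \CY)}\bigl(\CF_1 \boxtimes \CF_2,\, (\Delta_\CY)_*(\omega_\CY)\bigr) \simeq \CHom_{\Shv(\CY)}\bigl(\CF_1 \overset{*}\otimes \CF_2,\, \omega_\CY\bigr),$$
via the $(\Delta_\CY^*,(\Delta_\CY)_*)$-adjunction. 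Since $\CF_1 \overset{*}\otimes \CF_2$ is constructible, Verdier duality on $\CY$ identifies the latter with $\on{ev}^l_\CY(\CF_1, \CF_2)^\vee$. Dualizing gives $\on{u}^L_\bC(\CF_1 \boxtimes \CF_2) \simeq \on{ev}^l_\CY(\CF_1, \CF_2)$ on the compact generators, and since both sides are continuous in the $\CF_i$, the isomorphism extends to a canonical natural isomorphism $\on{u}^L_\bC \simeq \on{ev}^l_\CY|_{\bC \otimes \bC}$.

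Combining: the Serre pair hypothesis says that $\on{ev}^l_\CY|_{\bC \otimes \bC}$---hence, via the identification just established, $\on{u}^L_\bC$---is the counit of a self-duality on $\bC$ whose unit is $\on{ps-u}_{\CY,\CN}$. By \corref{c:Serre}, $\bC$ is Serre. Applying \propref{p:unit new duality}, the unit of the (unique) duality having counit $\on{u}^L_\bC$ is $\on{ps-u}_\bC$, so by uniqueness of the unit for a fixed counit we conclude $\on{ps-u}_{\CY,\CN} \simeq \on{ps-u}_\bC$ under the Verdier identification. The main obstacle is the key Hom computation: one must carefully track the identification of $\on{u}_{\CY,\CN}$ as a right-adjoint image (invoking constraccessibility through \corref{c:u N}) to reduce the computation to one on the full ambient category $\Shv(\CY\times \CY)$, after which standard $(\Delta^*,\Delta_*)$ adjunction and Verdier duality dispatch it; the remainder of the argument is essentially formal bookkeeping between two self-dualities with the same counit.
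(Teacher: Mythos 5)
Your proof is correct and follows exactly the route the paper takes: reduce via \corref{c:Serre} and \propref{p:unit new duality} to showing that $\on{ev}^l_\CY$ is the left adjoint of $\on{u}_{\CY,\CN}$, and then extract that fact from \corref{c:u N} by a $\CHom$ computation using the $(\Delta_\CY^*,(\Delta_\CY)_*)$-adjunction and Verdier duality. The paper compresses the second step into the phrase ``this is formal from \corref{c:u N}''; your proposal supplies the unwound calculation (including the properness check via constraccessibility), but the structure and the key lemmas invoked are identical.
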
 

\begin{proof}

By \corref{c:Serre} and \propref{p:unit new duality}, we only have to show that the functor $\on{ev}^l_\CY$ is the left adjoint of
$$\Vect \overset{\on{u}_{\CY,\CN}}\longrightarrow \Shv_\CN(\CY)\otimes \Shv_\CN(\CY).$$

However, this is formal from \corref{c:u N}. 

\end{proof}

\ssec{Relation to the miraculous functor}

Let $\CY$ be quasi-compact, and let $(\CY,\CN)$ be constraccessible. 

\medskip

In this subsection
we will relate the Serre functor on $\Shv_\CN(\CY)$ to the miraculous functor. 

\sssec{} \label{sss:ps u N as proj}

Consider the object 
$$\on{ps-u}_{\CY,\CN}\in  \Shv_\CN(\CY)\otimes \Shv_\CN(\CY)\subset \Shv(\CY\times \CY),$$
see \secref{sss:ps u N}. According to \corref{c:embed N sq}, we have
$$\on{ps-u}_{\CY,\CN} \simeq (\sP_{\CY,\CN}\otimes \sP_{\CY,\CN})(\on{ps-u}_\CY).$$

\sssec{}

Let 
$$\Mir_\CY:\Shv(\CY)\to \Shv(\CY)$$
be the \emph{miraculous} functor, i.e., the functor defined by the kernel
$$\on{ps-u}_\CY\in \Shv(\CY\times \CY),$$
see \secref{sss:Mir qc}. 

\sssec{}

From \secref{sss:ps u N as proj} we obtain:

\begin{lem} \label{l:Mir on N}
The endofunctor of $\Shv(\CY)$ defined by $\on{ps-u}_{\CY,\CN}$ as a kernel is the composite 
$$\Shv(\CY) \overset{\sP_{\CY,\CN}}\longrightarrow \Shv(\CY)\overset{\Mir_\CY}\longrightarrow \Shv(\CY)\overset{\sP_{\CY,\CN}}\to \Shv(\CY).$$
\end{lem}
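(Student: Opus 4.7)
The plan is to reduce the lemma to a general kernel-convolution identity. By \secref{sss:ps u N as proj}, we have $\on{ps-u}_{\CY,\CN}\simeq (\sP_{\CY,\CN}\boxtimes \sP_{\CY,\CN})(\on{ps-u}_\CY)$, and by construction the kernel $\on{ps-u}_\CY$ defines $\Mir_\CY$. It thus suffices to show: for any $\CK\in \Shv(\CY\times \CY)$ defining an endofunctor $\sK$ of $\Shv(\CY)$, the object $(\sP_{\CY,\CN}\boxtimes \sP_{\CY,\CN})(\CK)$ is the kernel defining the composite $\sP_{\CY,\CN}\circ \sK\circ \sP_{\CY,\CN}$.

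I would prove this by factoring $\sP_{\CY,\CN}\boxtimes \sP_{\CY,\CN}\simeq (\sP_{\CY,\CN}\boxtimes \on{Id}_\CY)\circ (\on{Id}_\CY\boxtimes \sP_{\CY,\CN})$, as in the proof of \corref{c:double projector}, and analyzing the two factors. For the target-side factor, the functor $\on{Id}_\CY\boxtimes \sP_{\CY,\CN}$ commutes with $p_1^!$ on the first variable and with $(p_2)_\blacktriangle$ along the second variable; this is precisely the content of $\sP_{\CY,\CN}$ being defined by a kernel. Hence
\[
(p_2)_\blacktriangle\bigl(p_1^!(\CF)\sotimes (\on{Id}_\CY\boxtimes \sP_{\CY,\CN})(\CK)\bigr)\simeq \sP_{\CY,\CN}\bigl((p_2)_\blacktriangle(p_1^!(\CF)\sotimes \CK)\bigr)=\sP_{\CY,\CN}(\sK(\CF)),
\]
showing that $(\on{Id}_\CY\boxtimes \sP_{\CY,\CN})(\CK)$ is a kernel for $\sP_{\CY,\CN}\circ \sK$. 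The analogous analysis of $\sP_{\CY,\CN}\boxtimes \on{Id}_\CY$ acting on the source factor produces a kernel for $\sK\circ \sP^{\mathsf t}_{\CY,\CN}$, where $\sP^{\mathsf t}_{\CY,\CN}$ denotes the endofunctor defined by the swapped kernel $\sigma^*(\on{u}_{\CY,\CN})$.

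Finally, I would observe that $\sP^{\mathsf t}_{\CY,\CN}\simeq \sP_{\CY,\CN}$. Indeed, by \lemref{l:diag Y N} we have the two presentations $\on{u}_{\CY,\CN}\simeq (\sP_{\CY,\CN}\boxtimes \on{Id}_\CY)(\on{u}_\CY)\simeq (\on{Id}_\CY\boxtimes \sP_{\CY,\CN})(\on{u}_\CY)$, and $\on{u}_\CY=(\Delta_\CY)_*(\omega_\CY)$ is tautologically swap-invariant. Applying $\sigma^*$ to the first presentation yields the second, so $\sigma^*(\on{u}_{\CY,\CN})\simeq \on{u}_{\CY,\CN}$, which gives $\sP^{\mathsf t}_{\CY,\CN}\simeq \sP_{\CY,\CN}$.

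I expect the main obstacle to be verifying the source-factor identity, namely that $(\sP_{\CY,\CN}\boxtimes \on{Id}_\CY)$ applied to a kernel $\CL$ defines pre-composition of $\sL$ by $\sP^{\mathsf t}_{\CY,\CN}$. Unlike the target-factor case---which is a direct commutation argument---the source-factor case mixes $p_1^!$ and $(p_2)_\blacktriangle$ on different variables, and the cleanest treatment is a convolution-of-kernels computation on $\CY\times \CY\times \CY$; the swap-invariance of $\on{u}_{\CY,\CN}$ then collapses the transpose to $\sP_{\CY,\CN}$ itself.
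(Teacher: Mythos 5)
Your proof is correct and follows the same route the paper intends: the lemma is asserted to follow from the identity $\on{ps-u}_{\CY,\CN}\simeq(\sP_{\CY,\CN}\boxtimes\sP_{\CY,\CN})(\on{ps-u}_\CY)$ of \secref{sss:ps u N as proj}, and you simply unfold the kernel-convolution argument behind that assertion. One small caution about your opening paragraph: the general claim that $(\sP\boxtimes\sP)(\CK)$ always defines $\sP\circ\sK\circ\sP$ is not quite right without a hypothesis on $\sP$ — as you yourself note two paragraphs later, the source-side factor $\sP_{\CY,\CN}\boxtimes\on{Id}_\CY$ a priori yields $\sK\circ\sP^{\mathsf t}_{\CY,\CN}$, and only the swap-invariance $\sigma^*(\on{u}_{\CY,\CN})\simeq\on{u}_{\CY,\CN}$ (which you correctly deduce from \lemref{l:diag Y N}) collapses $\sP^{\mathsf t}_{\CY,\CN}$ to $\sP_{\CY,\CN}$; so the statement should be phrased as a fact about self-transpose kernels rather than an unconditional identity.
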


\sssec{}

We give the following definition: 

\begin{defn}
We will say that $\CN\subset T^*(\CY)$ is \emph{miraculous-compatible} if the functor $\Mir_\CY$
preserves the subcategory $\Shv_\CN(\CY)\subset \Shv(\CY)$.
\end{defn}  

If $\CN$ is miraculous-compatible, we will denote by $\Mir_{\CY,\CN}$ the resulting endofunctor of $\Shv_\CN(\CY)$. 

\sssec{}

From \lemref{l:Mir on N}, we obtain: 

\begin{cor} \label{c:Mir on N}
Suppose that $\CN$ be miraculous-compatible. Then:

\smallskip

\noindent{\em(a)}
The endofunctor of $\Shv_\CN(\CY)$ defined by $\on{ps-u}_{\CY,\CN}$ 
identifies canonically with $\Mir_{\CY,\CN}$. 

\smallskip

\noindent{\em(b)} We have canonical isomorphisms:
$$(\sP_{\CY,\CN}\otimes \on{Id} _{\CY})(\on{ps-u}_\CY)\simeq \on{ps-u}_{\CY,\CN}\simeq 
(\on{Id} _{\CY} \otimes \sP_{\CY,\CN})(\on{ps-u}_\CY).$$

\smallskip

\noindent{\em(c)} We have canonical isomorphisms of endofunctors of $\Shv(\CY)$ 
$$\Mir_\CY\circ \sP_{\CY,\CN} \simeq  \sP_{\CY,\CN} \circ \Mir_\CY.$$

\end{cor}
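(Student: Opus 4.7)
The plan is to deduce (a), (b), and (c) from \lemref{l:Mir on N}, the miraculous-compatibility hypothesis, and the swap-symmetry of the two relevant kernels: $\on{ps-u}_\CY = (\Delta_\CY)_!(\ul\sfe_\CY)$ defining $\Mir_\CY$, and $\on{u}_{\CY,\CN}$ defining $\sP_{\CY,\CN}$ (symmetric as the unit of the symmetric Verdier self-duality on $\Shv_\CN(\CY)$).

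I would begin with (c). By miraculous compatibility, the composite $\Mir_\CY \circ \sP_{\CY,\CN}$ takes values in $\Shv_\CN(\CY)$, on which $\sP_{\CY,\CN}$ restricts to the identity; hence
$$\sP_{\CY,\CN}\circ \Mir_\CY \circ \sP_{\CY,\CN} \simeq \Mir_\CY \circ \sP_{\CY,\CN}.$$
Dualizing under Verdier self-duality of $\Shv(\CY)$, and using that $\Mir_\CY$ and $\sP_{\CY,\CN}$, viewed as endofunctors of $\Shv(\CY)$, are self-dual (since their kernels are swap-symmetric), one obtains
$$\sP_{\CY,\CN}\circ \Mir_\CY \circ \sP_{\CY,\CN} \simeq \sP_{\CY,\CN} \circ \Mir_\CY.$$
Combining the two identities yields $\Mir_\CY \circ \sP_{\CY,\CN} \simeq \sP_{\CY,\CN} \circ \Mir_\CY$, proving (c).

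Part (b) would follow by translating (c) to the level of kernels: the object $(\sP_{\CY,\CN} \otimes \on{Id}_\CY)(\on{ps-u}_\CY)$ is the kernel of $\Mir_\CY \circ \sP_{\CY,\CN}$ (pre-composition with $\sP_{\CY,\CN}$), the object $(\on{Id}_\CY \otimes \sP_{\CY,\CN})(\on{ps-u}_\CY)$ is the kernel of $\sP_{\CY,\CN} \circ \Mir_\CY$ (post-composition), and the factorization $(\sP \otimes \sP) = (\on{Id} \otimes \sP) \circ (\sP \otimes \on{Id})$ shows that applying both operations produces $\on{ps-u}_{\CY,\CN}$, the kernel of $\sP_{\CY,\CN} \circ \Mir_\CY \circ \sP_{\CY,\CN}$ by \lemref{l:Mir on N}. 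Part (c) then identifies all three functors, hence all three kernels, giving (b).

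For (a), the endofunctor of $\Shv_\CN(\CY)$ defined by $\on{ps-u}_{\CY,\CN} \in \Shv_\CN(\CY) \otimes \Shv_\CN(\CY)$ unfolds as the composite $\sP_{\CY,\CN} \circ \bigl(\sP_{\CY,\CN} \circ \Mir_\CY \circ \sP_{\CY,\CN}\bigr) \circ \iota_\CY$, where the middle factor is the endofunctor of $\Shv(\CY)$ from \lemref{l:Mir on N}. Using the idempotency of $\sP_{\CY,\CN}$, the identity $\sP_{\CY,\CN}\circ \iota_\CY \simeq \on{Id}$, and the miraculous-compatibility identity $\Mir_\CY \circ \iota_\CY \simeq \iota_\CY \circ \Mir_{\CY,\CN}$, this collapses to $\Mir_{\CY,\CN}$. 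The main care point throughout is the bookkeeping that connects kernel operations to functor compositions; \lemref{l:Mir on N} anchors the convention on $(\sP\otimes\sP)(\on{ps-u}_\CY)$, and the remaining identifications are forced by the factorization into one-sided projections.
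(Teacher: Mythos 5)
Your argument is correct but reverses the paper's order of deduction. The paper reads (a) immediately off \lemref{l:Mir on N}; it then proves (b) by pairing against $\CF\in\Shv_\CN(\CY)$, using \lemref{l:right adj as dual of P} to place $(\sP_{\CY,\CN}\boxtimes\on{Id}_\CY)(\on{ps-u}_\CY)$ inside $\Shv_\CN(\CY)\otimes\Shv(\CY)$ and then checking that both candidate objects, paired with $\CF$ under $\on{ev}_\CY\otimes\on{Id}$, give $\Mir_\CY(\CF)$; part (c) is then formal from (b). You instead prove (c) first from the swap-symmetry of the kernels $\on{ps-u}_\CY$ and $\on{u}_{\CY,\CN}$ (which makes both $\Mir_\CY$ and $\sP_{\CY,\CN}$ Verdier-self-dual) together with miraculous compatibility, and read off (b) as the kernel-level shadow; your (a) is independent and matches the paper's. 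This is a genuinely different route, emphasizing $\sigma$-equivariance of the two kernels rather than testing via the duality pairing.

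One step you should make explicit: your first identity $\sP_{\CY,\CN}\circ\Mir_\CY\circ\sP_{\CY,\CN}\simeq\Mir_\CY\circ\sP_{\CY,\CN}$ must hold as an isomorphism of functors \emph{defined by kernels} (equivalently, of the kernels themselves), since (b) is a kernel-level statement and ``isomorphism of plain endofunctors'' does not by itself imply ``isomorphism of kernels''. Concretely, you need the kernel of $\Mir_\CY\circ\sP_{\CY,\CN}$, which by \eqref{e:green formula} equals $(\on{Id}_\CY\boxtimes\Mir_\CY)(\on{u}_{\CY,\CN})$, to lie in $\Shv(\CY)\otimes\Shv_\CN(\CY)\subset\Shv(\CY\times\CY)$. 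This follows because $\on{u}_{\CY,\CN}\in\Shv_\CN(\CY)\otimes\Shv_\CN(\CY)$, and on the subcategory $\Shv(\CY)\otimes\Shv_\CN(\CY)$ the functor $\on{Id}_\CY\boxtimes\Mir_\CY$ agrees with $\on{Id}\otimes\Mir_{\CY,\CN}$ by \eqref{e:boxtimes vs otimes} and miraculous compatibility, hence preserves that subcategory. With that filled in, applying $\sigma$ (which fixes both $\on{ps-u}_\CY$ and $\on{u}_{\CY,\CN}$) transports this to the second identity at the level of kernels, and your derivation of (b) --- identifying all three kernels with $\on{ps-u}_{\CY,\CN}$ --- goes through.
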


\begin{proof}

Point (a) is immediate from \lemref{l:Mir on N}. For point (b), by symmetry, it suffices to 
prove the first isomorphism. Since
$$(\sP_{\CY,\CN}\otimes \on{Id} _{\CY})(\on{ps-u}_\CY)\in \Shv_\CN(\CY)\otimes \Shv(\CY)$$
(by \lemref{l:right adj as dual of P}), it suffices to show that
$$(\on{ev}_\CY \otimes \on{Id})(\CF\otimes (\sP_{\CY,\CN}\otimes \on{Id} _{\CY})(\on{ps-u}_\CY))\simeq 
(\on{ev}_\CY \otimes \on{Id})(\CF\otimes (\sP_{\CY,\CN}\otimes \sP_{\CY,\CN})(\on{ps-u}_\CY)), \quad \CF\in \Shv_\CN(\CY).$$

The left-hand side identifies with
$$(\on{ev}_\CY \otimes \on{Id})(\CF\otimes \on{ps-u}_\CY)\simeq \Mir_\CY(\CF),$$
while the right-hand side also identifies with $\Mir_\CY(\CF)$, by point (a). 

\medskip

Point (c) is formal from point (b). 

\end{proof}

\sssec{}

Finally, we claim:

\begin{cor} \label{c:Mir inverse to Serre}
Suppose that $\CN$ is miraculous-compatible and that $(\CY,\CN)$ is Serre. Then: 

\smallskip

\noindent{\em(a)} $\Mir_{\CY,\CN}\simeq \on{Ps-Id}_{\Shv_\CN(\CY)}$;

\smallskip

\noindent{\em(b)} $\Mir_{\CY,\CN}\simeq (\Se_{\Shv_\CN(\CY)})^{-1}$.

\smallskip

\noindent{\em(c)} The diagram
$$
\CD
\Shv_\CN(\CY)  \otimes \Shv_\CN(\CY)  @>{\on{ev}^l_\CY}>> \Vect \\
@A{\Mir_{\CY,\CN}\otimes \on{Id}}AA @AA{\on{Id}}A   \\
\Shv_\CN(\CY)  \otimes \Shv_\CN(\CY)  @>{\on{ev}_\CY}>> \Vect 
\endCD
$$
commutes.
\end{cor}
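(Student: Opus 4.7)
The plan is to deduce the three parts of the corollary as essentially formal consequences of \corref{c:Mir on N}, \thmref{t:Serre Serre}, the definition of the pseudo-identity functor in \secref{sss:ps-id}, and the Serre-category yoga summarized in \secref{sss:summary diag}. All the substantive work has already been done in those results; what remains is bookkeeping, so I do not expect any genuine obstacle.

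For part (a), I first invoke \corref{c:Mir on N}(a): under the miraculous-compatibility assumption on $\CN$, the endofunctor of $\Shv_\CN(\CY)$ defined (via the self-duality \eqref{e:usual self duality Y}) by the kernel $\on{ps-u}_{\CY,\CN}$ is $\Mir_{\CY,\CN}$. Next, under the Serre hypothesis, \thmref{t:Serre Serre} identifies the object $\on{ps-u}_{\CY,\CN}\in \Shv_\CN(\CY)\otimes \Shv_\CN(\CY)$ with $\on{ps-u}_{\Shv_\CN(\CY)}\in \Shv_\CN(\CY)\otimes \Shv_\CN(\CY)^\vee$ of \eqref{e:ps-id object}; by the definition in \secref{sss:ps-id}, this kernel defines the functor $\on{Ps-Id}_{\Shv_\CN(\CY)}$. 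Combining the two identifications gives (a).

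Part (b) is then immediate: since $\Shv_\CN(\CY)$ is itself Serre by \thmref{t:Serre Serre} (constraccessibility is a standing assumption of this subsection), \corref{c:Serre inverse to Ps} applied to $\bC=\Shv_\CN(\CY)$ gives $\on{Ps-Id}_{\Shv_\CN(\CY)}\simeq \Se_{\Shv_\CN(\CY)}^{-1}$, and together with (a) this yields (b).

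For part (c), I transport the commutative diagram \eqref{e:old and new pairing bis} of \secref{sss:summary diag} across the self-duality \eqref{e:usual self duality Y}. By \thmref{t:Serre Serre}, under this transport the pairing $\on{ev}_{\Shv_\CN(\CY)}$ corresponds to $\on{ev}_\CY$ and the pairing $\on{u}^L_{\Shv_\CN(\CY)}$ corresponds to $\on{ev}^l_\CY$; by part (a), the endofunctor $\on{Ps-Id}_{\Shv_\CN(\CY)}$ corresponds to $\Mir_{\CY,\CN}$. The translated diagram is exactly the one claimed in (c). The only delicate point worth checking is that these three identifications are compatible in the precise sense needed to transport \eqref{e:old and new pairing bis}, but this is a routine unwinding of the equivalence $\Shv_\CN(\CY)^\vee \simeq \Shv_\CN(\CY)$ of Theorem \ref{t:Serre Serre}, so no real difficulty arises.
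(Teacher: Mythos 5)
Your proof is correct and follows the same route as the paper: part (a) by combining \corref{c:Mir on N}(a) with \thmref{t:Serre Serre}, part (b) from \corref{c:Serre inverse to Ps}, and part (c) by transporting \eqref{e:old and new pairing bis} from \secref{sss:summary diag} across the self-duality \eqref{e:usual self duality Y}. You have merely spelled out the intermediate identifications (the kernel $\on{ps-u}_{\CY,\CN}$ corresponding to $\on{ps-u}_{\Shv_\CN(\CY)}$, the pairing $\on{u}^L_{\Shv_\CN(\CY)}$ corresponding to $\on{ev}^l_\CY$) that the paper leaves implicit.
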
 

\begin{proof}

Point (a) follows by combining \corref{c:Mir on N}(a) and \thmref{t:Serre Serre}. Point (b)
follows from \corref{c:Serre inverse to Ps}. Point (c) follows from \secref{sss:summary diag}. 

\end{proof}

\begin{cor}
Suppose that $\CN$ is miraculous-compatible and that $(\CY,\CN)$ is Serre. Then 
the endofunctor $\Mir_{\CY,\CN}$ of $\Shv_\CN(\CY)$ is an equivalence.
\end{cor}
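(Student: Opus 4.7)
The plan is to observe that this corollary is an immediate formal consequence of the preceding \corref{c:Mir inverse to Serre}. Indeed, part (b) of that corollary already asserts a canonical isomorphism $\Mir_{\CY,\CN} \simeq (\Se_{\Shv_\CN(\CY)})^{-1}$. Any endofunctor canonically isomorphic to the inverse of another endofunctor is tautologically an equivalence, with the latter functor as its two-sided inverse. Hence $\Mir_{\CY,\CN}$ is an equivalence with inverse given by $\Se_{\Shv_\CN(\CY)}$.

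Equivalently, one can invoke part (a) of \corref{c:Mir inverse to Serre}, which gives $\Mir_{\CY,\CN} \simeq \on{Ps-Id}_{\Shv_\CN(\CY)}$, and then apply \corref{c:Serre inverse to Ps} (the Yom Din observation) to conclude that $\on{Ps-Id}_{\Shv_\CN(\CY)}$ is inverse to $\Se_{\Shv_\CN(\CY)}$ as soon as the ambient DG category $\Shv_\CN(\CY)$ is Serre in the intrinsic sense of \secref{sss:Serre defn}. That $\Shv_\CN(\CY)$ is indeed a Serre DG category under the present assumptions is precisely the content of \thmref{t:Serre Serre}, which converts the ``Serre pair'' property of $(\CY,\CN)$ (expressed by the pairing $\on{ev}^l_\CY$ and unit $\on{ps-u}_{\CY,\CN}$) into the required categorical Serre structure.

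Since all substantive work has been absorbed into \thmref{t:Serre Serre} (whose real input is \corref{c:u N}, identifying the ``Verdier'' and ``Serre'' units via the right adjoint of the K\"unneth embedding) and into \corref{c:Mir inverse to Serre}, there is no real obstacle remaining for this final statement; its proof amounts to a one-line invocation of either \corref{c:Mir inverse to Serre}(a) or (b).
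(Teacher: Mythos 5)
Your proposal is correct and matches the paper's (implicit) argument: the corollary is stated without proof precisely because it is an immediate consequence of \corref{c:Mir inverse to Serre}(b), with the invertibility of the Serre functor supplied by \thmref{t:Serre Serre}. Your alternative route through part (a) and \corref{c:Serre inverse to Ps} is equally valid and is really the same argument unfolded one step further.
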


\begin{rem}
Suppose that in the setting of \corref{c:Mir inverse to Serre} above, the stack $\CY$
is miraculous (see \secref{sss:Mir stacks qc} for what this means). Recall the notation 
$$\BD^{\Mir}:(\Shv(\CY)^c)^{\on{op}}\to \Shv(\CY)^c,$$
see \secref{sss:D Mir}. Recall also the notation $\BD^{\on{new}}$, see \secref{sss:D new}. We obtain: 
$$\BD^{\Mir}\simeq \BD^{\on{new}}.$$

\end{rem}

\ssec{Relation to the miraculous functor, continued}

We retain the assumptions of the previous subsection, and additionally require that 
$\CN$ is miraculous-compatible and that $(\CY,\CN)$ is Serre. 

\sssec{}

Note that \corref{c:Mir inverse to Serre} implies that we have a canonical isomorphism
\begin{equation} \label{e:two pairings Y N}
\ev^l_\CY(\CF_1,\Mir_\CY(\CF_2)) \simeq \ev_\CY(\CF_1,\CF_2), \quad \CF_1,\CF_2\in \Shv_\CN(\CY).
\end{equation}

We claim:

\begin{prop} \label{p:two pairings Y N}
The isomorphism \eqref{e:two pairings Y N} identifies with the natural transformation 
\eqref{e:ident pair 2 gen}.
\end{prop}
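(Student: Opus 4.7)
The plan is to follow the same diagram-chase argument that was used to prove \propref{p:ident pair 1 2}, replacing the Hecke-theoretic projector $\sP=\sP_{\Bun_G,\Nilp}$ throughout by the abstract projector $\sP_{\CY,\CN}$ and the object $\on{ps-u}_{\Bun_G,\Nilp}$ by $\on{ps-u}_{\CY,\CN}$.

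First, I would unwind the construction of \eqref{e:ident pair 2 gen} as the instance
$$\on{ev}^l_\CY\circ(\on{Id}_{\CY\times\CY}\boxtimes \on{ev}_\CY)(\CF_1\boxtimes\on{ps-u}_\CY\boxtimes\CF_2)\to \on{ev}_\CY\circ(\on{ev}^l_\CY\boxtimes\on{Id}_{\CY\times\CY})(\CF_1\boxtimes\on{ps-u}_\CY\boxtimes\CF_2)$$
of the general left-to-right natural transformation \eqref{e:left to right}, viewing $\on{ev}^l_\CY$ and $\on{ev}_\CY$ as functors codefined and defined by kernels respectively, and $\Mir_\CY$ as the functor defined by the kernel $\on{ps-u}_\CY$.

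Next, I would construct a commutative square analogous to \eqref{e:ev com diag Nilp}: its top row is the instance of \eqref{e:left to right} just spelled out, its bottom row is the corresponding instance with $\on{ps-u}_\CY$ replaced by $\on{ps-u}_{\CY,\CN}$, and the vertical arrows are induced by the canonical map \eqref{e:ps-u N to ps-u}. By \corref{c:Mir on N}(b) we have
$$(\sP_{\CY,\CN}\boxtimes \on{Id})(\on{ps-u}_\CY)\simeq \on{ps-u}_{\CY,\CN}\simeq (\on{Id}\boxtimes \sP_{\CY,\CN})(\on{ps-u}_\CY),$$
and $\sP_{\CY,\CN}$ acts as the identity on $\Shv_\CN(\CY)$, so when $\CF_1,\CF_2\in\Shv_\CN(\CY)$ the vertical arrows of the square become isomorphisms. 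This reduces the proposition to identifying the \emph{bottom} row of this square with the isomorphism \eqref{e:two pairings Y N}. For that final identification I would invoke \corref{c:Mir on N}(a), which says that the endofunctor of $\Shv_\CN(\CY)$ defined by the kernel $\on{ps-u}_{\CY,\CN}$ is $\Mir_{\CY,\CN}$, together with \thmref{t:Serre Serre}, which identifies the restriction of $\on{ev}^l_\CY$ to $\Shv_\CN(\CY)\otimes\Shv_\CN(\CY)$ with the pairing $\on{u}^L_{\Shv_\CN(\CY)}$; by \corref{c:Serre inverse to Ps} the Serre functor of $\Shv_\CN(\CY)$ is then $\Mir_{\CY,\CN}^{-1}$, and the commutative diagram \eqref{e:old and new pairing} applied to $\bC=\Shv_\CN(\CY)$ precisely expresses the isomorphism \eqref{e:two pairings Y N}.

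The main obstacle will be this last matching step---verifying that the abstract natural transformation \eqref{e:left to right}, restricted via $\sP_{\CY,\CN}$ to $\Shv_\CN(\CY)$, yields the concrete Serre-pairing isomorphism of \corref{c:Mir inverse to Serre}(c) rather than some twisted variant. I expect this to be handled by a diagram chase strictly parallel to the concluding step in the proof of \propref{p:ident pair 1 2}, exploiting the alternate formulations of the left-to-right transformation recorded in \secref{sss:two versions left-to-right trans}.
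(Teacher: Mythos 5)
Your proposal follows essentially the same route as the paper's (one-line) proof: form the counterpart of the commutative square \eqref{e:ev com diag Nilp} with vertical arrows induced by the counit map $\on{ps-u}_{\CY,\CN}\to\on{ps-u}_\CY$, use \corref{c:Mir on N}(b) and the fact that $\sP_{\CY,\CN}$ is the identity on $\Shv_\CN(\CY)$ to see that these arrows are isomorphisms, and then match the bottom row with \eqref{e:two pairings Y N} by an unwinding parallel to the concluding step of \propref{p:ident pair 1 2}. The re-derivation of \eqref{e:two pairings Y N} via \thmref{t:Serre Serre}, \corref{c:Serre inverse to Ps} and \eqref{e:old and new pairing} is just reproducing the content of \corref{c:Mir inverse to Serre} and could be omitted, but the key step you flag at the end is indeed where the real work lives, and your expectation that it proceeds by the same diagram chase as in \propref{p:ident pair 1 2} via \secref{sss:two versions left-to-right trans} is what the paper intends.
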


\begin{proof}

Repeats the proof of \propref{p:ident pair 1 2}, with the difference that the vertical arrows in 
the counterpart of diagram \eqref{e:ev com diag Nilp} are furnished by that natural transformation
$$\sP_{\CY,\CN}\to \on{Id},$$
given by the counit of the adjunction.

\end{proof}

\sssec{}

As in \thmref{t:Nilp admits right adjoint U}, from \propref{p:two pairings Y N} we obtain:

\begin{cor} Let $\CF$ be an object in $\Shv_\CN(\CY)^{constr}$. Then the functor
$$\sF:\Shv(\CY)\to \Vect$$
that it defines admits a right adjoint, as a functor defined by a kernel.
\end{cor}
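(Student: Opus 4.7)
The plan is to mimic the proof of \thmref{t:Nilp admits right adjoint U}, with \propref{p:two pairings Y N} playing the role that \corref{c:ident pair 1 2} plays there, and \corref{c:Mir inverse to Serre} replacing the miraculousness of $\CU$. First I would apply \thmref{t:right adj crit} to $\CF$; its safety condition (\secref{sss:safety for adj}) is vacuous since $\CY_2=\on{pt}$. By the equivalence (i) $\Leftrightarrow$ (iii) in that theorem, it is enough to show that the natural transformation \eqref{e:left-to-right trans bis}
$$\on{ev}^l_\CY(\CF,\CF') \longrightarrow \on{ev}_\CY(\CF,\on{Id}^l_\CY(\CF'))$$
is an isomorphism for every $\CF'\in \Shv_\CN(\CY)$. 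Reducing from $\Shv(\CY)$ to $\Shv_\CN(\CY)$ is justified because $\CF$ has singular support in $\CN$, so both sides depend on $\CF'$ only through its image $\sP_{\CY,\CN}(\CF')$.

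Next I would invoke \corref{c:Mir inverse to Serre}: under our hypotheses (constraccessible, $\CN$ miraculous-compatible, Serre), the endofunctor $\Mir_{\CY,\CN}$ of $\Shv_\CN(\CY)$ is a self-equivalence, inverse to the Serre functor $\Se_{\Shv_\CN(\CY)}$. Hence we may write $\CF'=\Mir_{\CY,\CN}(\CF'')$ for a (unique) $\CF''\in \Shv_\CN(\CY)$ and factor the map as
$$\on{ev}^l_\CY(\CF,\Mir_{\CY,\CN}(\CF'')) \longrightarrow \on{ev}_\CY(\CF,\on{Id}^l_\CY\circ \Mir_\CY(\CF'')) \longrightarrow \on{ev}_\CY(\CF,\CF''),$$
the first arrow being \eqref{e:left-to-right trans bis} and the second coming from the canonical map $\on{Id}^l_\CY\circ \Mir_\CY\to \on{Id}$ via \eqref{e:back to Q}. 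By the discussion of \secref{sss:two versions left-to-right trans}, this composite identifies with \eqref{e:ident pair 2 gen}, which by \propref{p:two pairings Y N} is an isomorphism on $\Shv_\CN(\CY)\otimes \Shv_\CN(\CY)$.

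The main obstacle will be the last step: peeling the composite back to conclude that the first arrow alone is an isomorphism requires the map $\on{Id}^l_\CY\circ \Mir_\CY(\CF'')\to \CF''$ to be an isomorphism for $\CF''\in \Shv_\CN(\CY)$. In the proof of \thmref{t:Nilp admits right adjoint U} this uses the miraculousness of $\CU$ (via \secref{sss:Mir and Id l}), whereas here $\CY$ is not assumed miraculous. However, \corref{c:Mir on N}(b)--(c) show that after restriction to $\Shv_\CN(\CY)$, $\Mir_\CY$ and $\Mir_{\CY,\CN}$ agree, and the kernel $\on{ps-u}_{\CY,\CN}$ coincides with the $\sP_{\CY,\CN}$-projection of $\on{ps-u}_\CY$; since $\sP_{\CY,\CN}$ acts as identity on $\Shv_\CN(\CY)$, the required isomorphism follows.

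If this last point turns out to be awkward to pin down, a cleaner fallback is to bypass \thmref{t:right adj crit} entirely and construct the right adjoint by hand: define
$$\CF^R := \Mir_{\CY,\CN}^{-1}\circ \BD^{\on{Verdier}}(\CF)\in \Shv_\CN(\CY),$$
and build the unit and counit exactly as in \eqref{e:unit adj U}--\eqref{e:counit adj U}, using \corref{c:Mir inverse to Serre}(c) in place of \corref{c:two pairings U}(b). The verification of the triangle identities is then purely formal, given that $\Mir_{\CY,\CN}$ is an equivalence on $\Shv_\CN(\CY)$ and that $\BD^{\on{Verdier}}$ restricts to an anti-equivalence on $\Shv_\CN(\CY)^{\on{constr}}$ by constraccessibility.
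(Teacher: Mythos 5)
Your proposal takes the same route the paper does, which is to derive this corollary "as in \thmref{t:Nilp admits right adjoint U}" from \propref{p:two pairings Y N}, and you have correctly flagged the one genuine subtlety that the paper's "as in" glosses over. Namely, in the proof of \thmref{t:Nilp admits right adjoint U} the final step concludes that the map $\on{Id}^l_\CU \circ \Mir_\CU(\CF'')\to \CF''$ is an isomorphism by invoking the miraculousness of $\CU$ (via \secref{sss:Mir and Id l}); in the present setting only quasi-compactness, constraccessibility, miraculous-compatibility of $\CN$, and Serre-ness of $(\CY,\CN)$ are in force, and $\CY$ itself is \emph{not} assumed miraculous.

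Your proposed fix via \corref{c:Mir on N}(b)--(c) does not quite supply the missing piece. Those items tell you that $\Mir_\CY$ commutes with $\sP_{\CY,\CN}$ and that $\on{ps-u}_{\CY,\CN}$ is a one-sided projection of $\on{ps-u}_\CY$; but the missing ingredient concerns the endofunctor $\on{Id}^l_\CY$ \emph{codefined} by $\on{u}_\CY$, about which those corollaries say nothing directly. Your fallback has the same difficulty: to construct the unit map following \eqref{e:unit adj U} one applies $\on{Id}_\CY\boxtimes \on{Id}^l_\CY$ to the tautological map $\on{ps-u}_\CY\to \CF\boxtimes\BD^{\on{Verdier}}(\CF)$, and identifying the target with $\CF\boxtimes\Mir_{\CY,\CN}^{-1}\BD^{\on{Verdier}}(\CF)$ again requires knowing that $\on{Id}^l_\CY$ agrees with $\Mir_{\CY,\CN}^{-1}$ on $\Shv_\CN(\CY)$, which is precisely the point at issue. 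The gap is in fact closable: on constructible objects, \secref{sss:left to right Verdier} gives $\on{Id}^l_\CY(\CG)\simeq \BD^{\on{Verdier}}\bigl(\Mir_\CY(\BD^{\on{Verdier}}(\CG))\bigr)$, and the self-duality $(\Mir_{\CY,\CN})^\vee\simeq \Mir_{\CY,\CN}$ (a consequence of \secref{sss:D Mir sym} together with constraccessibility) forces $\Mir_{\CY,\CN}\circ\BD^{\on{Verdier}}\simeq \BD^{\on{Verdier}}\circ\Mir_{\CY,\CN}^{-1}$ on $\Shv_\CN(\CY)^c$; combining these yields $\on{Id}^l_\CY\circ\Mir_\CY\simeq \on{Id}$ on $\Shv_\CN(\CY)^{\on{constr}}$. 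This extra step is what is needed to make the "as in" argument go through, and it deserves to be spelled out.
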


Further, as in \corref{c:Nilp codefined U}, we obtain: 

\begin{cor} Let $\CF$ be an object in $\Shv_\CN(\CY)$. Then the functor
$$\sF:\Shv(\CY)\to \Vect$$
that it defines, is defined and codefined by a kernel.
\end{cor}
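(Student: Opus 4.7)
The plan is to follow verbatim the strategy of \corref{c:Nilp codefined U}, replacing the $\Nilp$-specific inputs by their analogues in the Serre setting just developed.

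First I would dispose of the constructible case. If $\CF\in\Shv_\CN(\CY)^{\on{constr}}$, the preceding corollary produces a right adjoint to $\sF$ as a functor defined by a kernel. Combining this with \thmref{t:right adj main}(a) upgrades the adjunction to the statement that $\sF$ is simultaneously defined and codefined by a kernel. Tracking through the explicit formula for the right adjoint (as in \thmref{t:Nilp admits right adjoint U} and Remark \ref{r:right adj when exists}), and using that $\CN$ is miraculous-compatible, the codefining kernel is identified with $\CG:=\Mir_\CY(\CF)\in\Shv_\CN(\CY)\subset\Shv(\CY)$; alternatively one can read this off from \propref{p:two pairings Y N} and point~(c) of \corref{c:Mir inverse to Serre}.

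Second, I would reduce the general case to the constructible one. Concretely, I need to show that for every algebraic stack $\CZ$ and every $\CF'\in\Shv(\CZ\times\CY)$ the canonical map
$$(\on{Id}_\CZ\boxtimes \sG^l)(\CF')\to (\on{Id}_\CZ\boxtimes \sF)(\CF')$$
is an isomorphism, where $\sG^l$ is the functor defined via the kernel $\CG$ and the $\on{ev}^l$-pairing. Both sides commute with colimits in $\CF$ and in $\CF'$, so one may first assume $\CF'$ is bounded above. Under that assumption, both sides map isomorphically to the $\NN$-indexed limits of the corresponding expressions obtained by replacing $\CF$ by $\tau^{\geq -n}\CF$, reducing us to the case where $\CF$ is bounded below. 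Finally, a bounded-below object of $\Shv_\CN(\CY)$ is a filtered colimit of constructible objects of $\Shv_\CN(\CY)$ by the constraccessibility assumption on the pair $(\CY,\CN)$, and continuity in $\CF$ of both sides reduces us to the already-treated constructible case.

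The main obstacle I expect is the boundedness step used in the limit/colimit reduction: one must verify that when $\CF'$ is bounded above, both the $\sF$-values and the $\sG^l$-values on $\CF$ are recovered as the limits of their values on the truncations $\tau^{\geq -n}\CF$. For the defining side this is the usual boundedness of $\sotimes$ on bounded pieces together with the fact that $\CY$ is quasi-compact (so that $(p_{\CZ})_\blacktriangle$ has bounded cohomological amplitude on bounded families). For the codefining side, $\sG^l$ is the composition of a $*$-pullback, a $*$-tensor with $\CG$, and a $!$-pushforward, each of bounded amplitude under our hypotheses; this is the one spot that requires care, and is precisely the step carried out implicitly in the proof of \corref{c:Nilp codefined U}.
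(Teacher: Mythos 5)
Your proposal is correct and follows the paper's own route: the paper proves this corollary simply by invoking ``as in \corref{c:Nilp codefined U},'' and your write-up is precisely the content of that proof with $\CU\rightsquigarrow\CY$, $\Nilp\rightsquigarrow\CN$, \thmref{t:Nilp admits right adjoint U} replaced by the preceding corollary, and the codefining object now identified as $\Mir_\CY(\CF)$. One very small inaccuracy: the step ``a bounded-below object of $\Shv_\CN(\CY)$ is a filtered colimit of constructible objects of $\Shv_\CN(\CY)$'' does not use constraccessibility (it holds by the definition of the singular-support subcategory via constructible perverse cohomologies); constraccessibility is already built into the standing hypotheses of the subsection and is what feeds the constructible case. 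This does not affect the validity of the argument.
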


As in \corref{c:pairing Y U}, we obtain: 

\begin{cor} Let $\CF$ be an object in $\Shv_\CN(\CY)$. Then for an algebraic stack $\CZ$, the natural transformation
$$(p_\CZ)_!(\CF' \overset{*}\otimes p_\CY^*(\Mir_\CY(\CF)))\to 
(p_\CZ)_\blacktriangle(\CF' \sotimes p_\CY^!(\CF)), \quad \CF'\in \Shv(\CZ\times \CY)$$
is an isomorphism. 
\end{cor}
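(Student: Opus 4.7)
The plan is to deduce this statement directly from the preceding corollary in this subsection, which asserts that for $\CF \in \Shv_\CN(\CY)$ the functor
$$\sF:\Shv(\CY)\to \Vect, \quad \CF'\mapsto \on{C}^\cdot_\blacktriangle(\CY,\CF\sotimes \CF')$$
is both defined and codefined by a kernel, with codefining object $\Mir_\CY(\CF)$. This preceding corollary is the Serre-pair analog of \corref{c:Nilp codefined U}, obtained by the very same reduction to constructible $\CF$ (via truncation and colimit arguments) combined with the right-adjoint criterion of \thmref{t:right adj crit}, where the key input identifying the right adjoint with $\Mir_\CY^{-1}\circ \BD^{\on{Verdier}}$ is supplied by \propref{p:two pairings Y N}.

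Given this, the strategy is to unwind what the natural transformation \eqref{e:left-to-right trans} actually computes in our setting: by construction, it is the canonical comparison map between the two expressions for the relative functor $\on{Id}_\CZ\boxtimes \sF$, one using the defining kernel $\CF$ (which yields the right-hand side $(p_\CZ)_\blacktriangle(\CF'\sotimes p_\CY^!(\CF))$) and the other using the codefining kernel $\Mir_\CY(\CF)$ (which yields the left-hand side $(p_\CZ)_!(\CF' \overset{*}\otimes p_\CY^*(\Mir_\CY(\CF)))$). The assertion that $\sF$ is \emph{both} defined \emph{and} codefined by kernels is, by \secref{sss:defined and codefined}, precisely the statement that these two kernel presentations agree after $\on{Id}_\CZ \boxtimes -$ for every algebraic stack $\CZ$. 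Therefore the comparison map is an isomorphism, as required.

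The argument runs strictly parallel to the proof of \corref{c:pairing Y U}, and the main obstacle is really the establishment of the preceding corollary (i.e., the Serre-pair analog of \corref{c:Nilp codefined U}), which in turn rests on \propref{p:two pairings Y N} to identify the natural transformation \eqref{e:left-to-right trans bis} with the Serre-functor-mediated isomorphism \eqref{e:two pairings Y N}. Once that identification is in hand, the present corollary is a formal consequence of the general formalism of functors defined and codefined by kernels developed in \secref{ss:ker}, with no further input needed.
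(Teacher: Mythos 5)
Your proof is correct and follows exactly the paper's intended (one-line) approach: the statement is the specialization of \eqref{e:left-to-right trans} to $\CY_1 = \CY$, $\CY_2 = \on{pt}$, $\CQ = \CF$, and it is an isomorphism precisely because the preceding corollary guarantees that $\sF$ is defined and codefined by a kernel, with \propref{p:left via right} supplying both the ``iff'' criterion and the identification of the codefining kernel as $\Mir_\CY(\CF)$. The only tiny imprecision is that the equivalence ``defined and codefined $\Leftrightarrow$ \eqref{e:left-to-right trans} is an isomorphism for all $\CZ$'' is the content of \propref{p:left via right}(a) rather than \secref{sss:defined and codefined} itself, but this does not affect the argument.
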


Finally, as in \corref{c:pairing Y U bis}, we obtain:

\begin{cor} Let $\CY$ be miraculous, and let  $\CF$ be an object in $\Shv_\CN(\CY)$. 
Then for an algebraic stack $\CZ$, we have a canonical isomorphism 
the natural transformation
$$(p_\CZ)_!((\on{Id}_\CZ \boxtimes \Mir_\CY)(\CF') \overset{*}\otimes p_\CY^*(\CF))\simeq 
(p_\CZ)_\blacktriangle(\CF' \sotimes p_\CY^!(\CF)), \quad \CF'\in \Shv(\CZ\times \CY).$$
\end{cor}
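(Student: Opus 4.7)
The plan is to copy almost verbatim the deduction of \corref{c:pairing Y U bis} from \corref{c:pairing Y U}, with $\CU\subset \Bun_G$ replaced by the miraculous stack $\CY$ and $(\Bun_G,\Nilp)$ replaced by the Serre, miraculous-compatible pair $(\CY,\CN)$. All the needed ingredients are already in hand: the functor $\Mir_\CY$ is invertible as a functor defined by a kernel because $\CY$ is miraculous; its inverse preserves $\Shv_\CN(\CY)$ because \corref{c:Mir inverse to Serre} identifies $\Mir_{\CY,\CN}$ with $\Se_{\Shv_\CN(\CY)}^{-1}$, so $\Mir_{\CY,\CN}$ is a self-equivalence; and the pseudo-diagonal $\on{ps-u}_\CY$ is swap-equivariant by construction.

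Concretely, I would rewrite $p_\CY^*(\CF) \simeq p_\CY^*(\Mir_\CY(\Mir_\CY^{-1}(\CF)))$ and invoke the preceding corollary with the object $\Mir_\CY^{-1}(\CF) \in \Shv_\CN(\CY)$ and the test sheaf $(\on{Id}_\CZ \boxtimes \Mir_\CY)(\CF') \in \Shv(\CZ\times\CY)$, producing
$$
(p_\CZ)_!\bigl((\on{Id}_\CZ \boxtimes \Mir_\CY)(\CF') \overset{*}\otimes p_\CY^*(\CF)\bigr) \simeq (p_\CZ)_\blacktriangle\bigl((\on{Id}_\CZ \boxtimes \Mir_\CY)(\CF') \sotimes p_\CY^!(\Mir_\CY^{-1}(\CF))\bigr),
$$
whose left-hand side is already the desired one. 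The right-hand side is then rewritten, using the swap-equivariance of $\on{ps-u}_\CY$ to transfer $\Mir_\CY$ from the $\CF'$-variable to the $\CF$-variable of the $!$-pairing, as
$$
(p_\CZ)_\blacktriangle\bigl(\CF' \sotimes p_\CY^!(\Mir_\CY(\Mir_\CY^{-1}(\CF)))\bigr) \simeq (p_\CZ)_\blacktriangle(\CF' \sotimes p_\CY^!(\CF)),
$$
which is what we want.

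The only point that requires any thought is the very last step: in \corref{c:pairing Y U bis} the swap-equivariance was invoked over a base, so no genuinely new geometric input is needed here, but one must check that the rewriting is functorial in the auxiliary parameter $\CZ$, i.e., is compatible with the replacement of $\CF'$ by $(\on{Id}_\CZ \boxtimes \Mir_\CY)(\CF')$. This should be formal, since $\on{Id}_\CZ \boxtimes \Mir_\CY$ is a functor defined by a kernel and the $!$-pairing in the $\CY$-direction interacts with $(p_\CZ)_\blacktriangle$ through the standard projection formula; at worst it is a matter of bookkeeping kernels across the three-fold product $\CZ \times \CY \times \CY$, which is precisely what the formalism of \secref{s:ker} is designed to handle. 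I do not expect any substantive obstacle beyond this.
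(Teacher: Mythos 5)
Your proof is correct and follows precisely the route the paper has in mind: the paper introduces the corollary with the phrase ``Finally, as in \corref{c:pairing Y U bis}, we obtain,'' and your argument is the faithful transcription of that deduction with $(\CU,\Nilp)$ replaced by $(\CY,\CN)$, using $\Mir_\CY^{-1}$ (available since $\CY$ is assumed miraculous), the fact that $\Mir_{\CY,\CN}$ is a self-equivalence of $\Shv_\CN(\CY)$ (from \corref{c:Mir inverse to Serre}), and the swap-equivariance of $\on{ps-u}_\CY$ to move $\Mir_\CY$ across the pairing. No gaps; the closing remark about bookkeeping over $\CZ\times\CY\times\CY$ is a fair description of the (formal) content of the last step.
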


\ssec{The (non)-Serre property of $\Bun_G$}

In this subsection we will study the Serre property of the category $\Shv_\Nilp(\Bun_G)$, and also of $\Shv_\Nilp(\CU)$,
for universally Nilp-cotruncative quasi-compact open substacks $\CU\subset \Bun_G$. 

\sssec{}

Let $\CU\subset \Bun_G$ be a universally Nilp-cotruncative quasi-compact open substack. Note that 
it follows from \propref{p:usual duality U} that the object that was denoted $\on{u}_{\CU,\Nilp}$ in
\secref{sss:u U Nilp} is the same as the object that appears in \secref{sss:u Y N}
(for $\CY=\CU$ and $\CN=\Nilp$). 

\medskip

Furthermore, it follows from \propref{p:u naive projectors} that the endofunctor of $\Shv(\CU)$
denoted $\sP_{\CU,\Nilp}$ in \secref{sss:proj U} identifies, \emph{as an endofunctor defined by a kernel}, 
with the endofunctor that appears in \secref{sss:P Y N} (for $\CY=\CU$ and $\CN=\Nilp$).  

\medskip

Hence, the notations $\on{u}_{\CU,\Nilp}$ and $\sP_{\CU,\Nilp}$ are unambiguous. 

\sssec{}

From now on, for the duration of this subsection, we will assume \cite[Conjecture 14.1.8]{AGKRRV}. 
I.e., we will assume that $\Shv_{\Nilp}(\Bun_G)$ is generated by objects that are compact in 
$\Shv(\Bun_G)$. By \cite[Lemma F.8.10]{AGKRRV}, this implies, that for every $\CU$ as above, the pair 
$(\CU,\Nilp)$ is constraccessible. 

\medskip

Applying \corref{c:j and P}, we obtain that the object $\on{ps-u}_{\CU,\Nilp}$ introduced in \secref{sss:ps-u U Nilp} 
identifies with each of the following objects
$$(\sP_{\CU,\Nilp}\otimes \on{Id}_\CU)(\on{ps-u}_\CU),\,\, 
(\sP_{\CU,\Nilp}\otimes \sP_{\CU,\Nilp})(\on{ps-u}_\CU),\,\, (\on{Id}_\CU\otimes \sP_{\CU,\Nilp})(\on{ps-u}_\CU).$$

\medskip

Combining with \secref{sss:ps u N as proj}, we obtain that $\on{ps-u}_{\CU,\Nilp}$
of \secref{sss:ps-u U Nilp} is the same as the object
that appears in \secref{sss:ps u N} (for $\CY=\CU$ and $\CN=\Nilp$).  

\medskip

Hence, the notation $\on{ps-u}_{\CU,\Nilp}$ is also unambiguous. 

\sssec{}

We now claim:

\begin{cor} \label{c:U is Serre}
The pair $(\CU,\Nilp)$ is Serre.
\end{cor}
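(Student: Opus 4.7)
The plan is to deduce \corref{c:U is Serre} directly from \corref{c:non-standard duality U}. By Definition~\ref{d:Serre}, the task is to show that the natural transformation
$$(\ev_\CU^l\otimes \on{Id})\circ (\on{Id}_\CU\boxtimes \on{ps-u}_{\CU,\Nilp}) \to \on{Id},$$
induced from the canonical map \eqref{e:ps-u N to ps-u}, is an isomorphism when restricted to $\Shv_\Nilp(\CU)$. As established in the discussion immediately preceding the corollary, the object $\on{ps-u}_{\CU,\Nilp}$ of \secref{sss:ps-u U Nilp} coincides with the object of \secref{sss:ps u N}; moreover, via \propref{p:Nilp !-diag} together with \corref{c:j and P}, it identifies with $(\sP_{\CU,\Nilp}\otimes \on{Id}_\CU)(\on{ps-u}_\CU)$ in such a way that the counit map $\on{ps-u}_{\CU,\Nilp}\to\on{ps-u}_\CU$ corresponds to the idempotent map $\sP_{\CU,\Nilp}\to\on{Id}_{\Shv(\CU)}$.

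Under this identification, the natural transformation in question, evaluated on $\CF\in\Shv_\Nilp(\CU)$, factors as
$$(\ev_\CU^l\otimes \on{Id})\circ (\on{Id}_\CU\boxtimes \on{ps-u}_{\CU,\Nilp})(\CF) \xrightarrow{\sim} \sP_{\CU,\Nilp}(\CF) \xrightarrow{\sim} \CF,$$
using that $\sP_{\CU,\Nilp}$ is codefined by a kernel (\propref{p:Hecke codefined}), the tautology $(\ev_\CU^l\otimes \on{Id})\circ(\on{Id}_\CU\boxtimes \on{ps-u}_\CU)\simeq \on{Id}$, and \corref{c:projector U}. This is precisely the duality isomorphism established in \corref{c:non-standard duality U} (which in turn is obtained by restriction from \thmref{t:non-standard duality} along $j^{*}\otimes j^{*}$), so it is an isomorphism.

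The main (mild) obstacle is verifying the compatibility claimed in the previous paragraph: namely, that the natural transformation produced from naturality in the counit map \eqref{e:ps-u N to ps-u} and the duality isomorphism constructed in \corref{c:non-standard duality U} genuinely coincide. Since both are built by applying the same canonical morphism $\sP_{\CU,\Nilp}\to \on{Id}$ to the same three-factor diagram on $\CU\times\CU\times\CU$, and since on $\Shv_\Nilp(\CU)\otimes\Shv_\Nilp(\CU)$ the double projector $\sP_{\CU,\Nilp}\boxtimes \sP_{\CU,\Nilp}$ acts as the identity (\corref{c:double projector U}), the check is a routine diagram chase requiring no further input beyond \propref{p:Hecke codefined}. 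Thus no essentially new content enters beyond what was developed in Sections \ref{s:Verdier}--\ref{s:pairing}; the truly nontrivial ingredient---that $\on{ps-u}_{\CU,\Nilp}$ lies in $\Shv_\Nilp(\CU)\otimes \Shv_\Nilp(\CU)$---has already been harvested from the main results of \cite{AGKRRV}.
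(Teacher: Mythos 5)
Your proof is correct and follows essentially the same route as the paper's. The paper's own proof is extremely terse—it invokes \corref{c:non-standard duality U} and then states that the resulting duality isomorphism ``unwinding the definitions, agrees with the natural transformation in Definition~\ref{d:Serre}.'' Your proposal supplies precisely this unwinding: you track the counit map \eqref{e:ps-u N to ps-u} through the identification $\on{ps-u}_{\CU,\Nilp}\simeq(\sP_{\CU,\Nilp}\boxtimes\on{Id}_\CU)(\on{ps-u}_\CU)$ (set up in the preceding discussion via \corref{c:j and P}, relying on the standing assumption of \cite[Conjecture~14.1.8]{AGKRRV}), move $\sP_{\CU,\Nilp}$ out past $\on{ev}^l_\CU$ using codefinedness (\propref{p:Hecke codefined}), and land on $\sP_{\CU,\Nilp}(\CF)\simeq\CF$ via \corref{c:projector U}. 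You correctly flag that the residual compatibility—that the map so produced coincides with the duality isomorphism of \corref{c:non-standard duality U}—is a formal diagram chase; this is exactly the content the paper compresses into its last sentence.
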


\begin{proof}

By \corref{c:non-standard duality U}, we know that 
$$\on{ev}^l_{\CU}:\Shv_{\Nilp}({\CU})\otimes \Shv_{\Nilp}(\CU)\to \Vect, \quad
\CF_1,\CF_2\mapsto \on{C}^\cdot_c({\CU},\CF_1\overset{*}\otimes \CF_2)$$
and the object $\on{ps-u}_{\CU,\Nilp}$ define a duality datum. 

\medskip

In particular, we obtain a canonical isomorphism
$$(\ev_\CU^*\otimes \on{Id})\circ (\on{Id}\otimes \on{ps-u}_{\CU,\Nilp})\simeq \on{Id}.$$

However, unwinding the definitions, one shows that the above isomorphism agrees
with the natural transformation in Definition \ref{d:Serre}. 

\end{proof}

\sssec{}

We now consider the non quasi-compact stack $\Bun_G$. From \corref{c:P P as adj} we obtain: 

\begin{cor}
The object $\on{ps-u}_{\Bun_G,\Nilp}$ of \secref{sss:ps u Bun} identifies canonically with the
value of the right adjoint to 
$$\Shv_{\Nilp}(\Bun_G) \otimes \Shv_{\Nilp}(\Bun_G)\hookrightarrow \Shv(\Bun_G\times \Bun_G)$$
on $\on{ps-u}_{\Bun_G}$.
\end{cor}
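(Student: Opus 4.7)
The plan is to observe that the statement is an essentially tautological consequence of the results just established, with no new input required. By definition in \secref{sss:ps u Bun}, the object $\on{ps-u}_{\Bun_G,\Nilp}$ is given by
$$\on{ps-u}_{\Bun_G,\Nilp} := (\sP_{\Bun_G,\Nilp}\boxtimes \sP_{\Bun_G,\Nilp})(\on{ps-u}_{\Bun_G}),$$
and \corref{c:P P as adj} (which holds under \cite[Conjecture 14.1.8]{AGKRRV}) asserts exactly that the functor $\sP_{\Bun_G,\Nilp}\boxtimes \sP_{\Bun_G,\Nilp}$, viewed as a functor
$$\Shv(\Bun_G\times \Bun_G)\to \Shv_\Nilp(\Bun_G)\otimes \Shv_\Nilp(\Bun_G),$$
is the right adjoint to the tautological embedding. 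Applying this right adjoint to $\on{ps-u}_{\Bun_G}$ therefore recovers $\on{ps-u}_{\Bun_G,\Nilp}$.

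The only point that needs a brief verification is that the idempotent ambiguity does not cause trouble: \corref{c:double projector} guarantees that $\sP_{\Bun_G,\Nilp}\boxtimes \sP_{\Bun_G,\Nilp}$ lands in, and is the idempotent onto, $\Shv_\Nilp(\Bun_G)\otimes \Shv_\Nilp(\Bun_G)$, so its factorization through the tautological embedding is the genuine right adjoint (rather than a corestriction), and the counit of the adjunction is computed by applying the projector. Consequently, by the definition of ``value of the right adjoint'', the stated identification is immediate.

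Since the argument is a direct invocation of \corref{c:P P as adj}, there is no substantive obstacle to overcome; the conceptual content of the corollary is entirely in the assertion that $\on{ps-u}_{\Bun_G,\Nilp}$ defined via the projector agrees with the adjoint-theoretic description. If one wished to avoid \cite[Conjecture 14.1.8]{AGKRRV}, the main difficulty would be establishing \corref{c:P P as adj} unconditionally, which would require an independent proof that the embedding $\Shv_\Nilp(\Bun_G)\otimes \Shv_\Nilp(\Bun_G)\hookrightarrow \Shv(\Bun_G\times \Bun_G)$ admits a continuous right adjoint realized by $\sP_{\Bun_G,\Nilp}\boxtimes \sP_{\Bun_G,\Nilp}$; but under the standing conjectural assumption of that subsection, this is already available.
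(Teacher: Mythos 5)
Your proposal is correct and matches the paper's own argument exactly: the corollary is a direct consequence of \corref{c:P P as adj} together with the definition of $\on{ps-u}_{\Bun_G,\Nilp}$ in \secref{sss:ps u Bun}, under the standing assumption of \cite[Conjecture 14.1.8]{AGKRRV} in force for that subsection. The paper records it with precisely the one-line justification ``From \corref{c:P P as adj} we obtain,'' so your reasoning tracks the intended proof.
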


\medskip

Hence, as in \corref{c:U is Serre}, from \thmref{t:non-standard duality}, we obtain that 
the pair $(\Bun_G,\Nilp)$ is Serre.

\sssec{}

The assumption that \cite[Conjecture 14.1.8]{AGKRRV} holds implies, in particular, that 
the category $\Shv_\Nilp(\Bun_G)$ is proper. Hence, we can consider the Serre functor
$$\Se_{\Shv_\Nilp(\Bun_G)}:\Shv_\Nilp(\Bun_G)\to \Shv_\Nilp(\Bun_G).$$

We claim:

\begin{thm} \label{t:Serre on BunG}
The functor $\Se_{\Shv_\Nilp(\Bun_G)}$ is canonically isomorphic to the composition
$$\Shv_\Nilp(\Bun_G) \overset{\Mir^{-1}_{\Bun_G}}\longrightarrow 
\Shv_\Nilp(\Bun_G)_{\on{co}} \overset{\on{Id}^{\on{naive}}_{\Bun_G}}\longrightarrow  \Shv_\Nilp(\Bun_G).$$
\end{thm}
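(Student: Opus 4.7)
The plan is to verify the defining universal property of the Serre functor. Since $\Shv_\Nilp(\Bun_G)$ is proper (by the conjecture on $\Shv_\Nilp$-compactness assumed in this subsection) and compactly generated, the Serre endofunctor is uniquely determined up to canonical isomorphism by the natural identifications
\begin{equation*}
\CHom(\CF_1,\Se(\CF_2))\simeq \CHom(\CF_2,\CF_1)^\vee,\qquad \CF_1,\CF_2\in \Shv_\Nilp(\Bun_G)^c.
\end{equation*}
It thus suffices to produce such an isomorphism with $\Se(\CF_2)$ replaced by $F(\CF_2):=\on{Id}^{\on{naive}}_{\Bun_G}(\Mir^{-1}_{\Bun_G}(\CF_2))$; this composite is a well-defined endofunctor of $\Shv_\Nilp(\Bun_G)$ by \propref{p:Mir inv Nilp} and \lemref{l:naive preserves Nilp}.

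I would translate both sides into the language of the two pairings available on $\Shv_\Nilp(\Bun_G)$: the ``Verdier'' pairing $\ev_{\Bun_G}$ of \corref{c:duality with co BunG Nilp}, landing on $\Shv_\Nilp(\Bun_G)\otimes \Shv_\Nilp(\Bun_G)_{\on{co}}$, and the non-standard pairing $\ev^l_{\Bun_G}$ of \thmref{t:non-standard duality}. For the right-hand side, the Verdier self-duality gives $\CHom(\CF_2,\CF_1)\simeq \ev_{\Bun_G}(\CF_1,\BD^{\on{Verdier}}(\CF_2))$; dualizing this expression (it is finite-dimensional by properness) and invoking the identification of $\ev^l_{\Bun_G}$ with $\ev_{\Bun_G}\circ (\on{Id}\otimes \Mir^{-1}_{\Bun_G})$ furnished by \corref{c:two pairings BunG} rewrites the right-hand side as a manifestly symmetric $\ev^l$-expression. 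For the left-hand side, the kernel description of $\on{Id}^{\on{naive}}_{\Bun_G}$ as the functor attached to $(\Delta_{\Bun_G})_*\omega_{\Bun_G}$, together with the Verdier pairing formula, yields $\CHom(\CF_1,F(\CF_2))\simeq \ev_{\Bun_G}(\BD^{\on{Verdier}}(\CF_1),\Mir^{-1}_{\Bun_G}(\CF_2))$, which by \corref{c:two pairings BunG} becomes an $\ev^l$-expression matching the right-hand side; the identification is then completed by invoking \corref{c:non-st vs mir}.

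The main obstacle is implementing the heuristic $\CHom(-,\on{Id}^{\on{naive}}_{\Bun_G}(-))\simeq \ev_{\Bun_G}(\BD^{\on{Verdier}}(-),-)$ rigorously on the non-quasi-compact stack $\Bun_G$: the objects involved live simultaneously in $\Shv(\Bun_G\times \Bun_G)$ and in its $\on{co}_i$-variants, and the interaction of $\on{Id}^{\on{naive}}_{\Bun_G}$ with $\Mir^{-1}_{\Bun_G}$ must be tracked through the kernel formalism of \secref{s:ker}. An alternative that bypasses most of this bookkeeping is to first establish the result for each universally $\Nilp$-cotruncative $\CU\subset \Bun_G$, where \corref{c:Mir inverse to Serre}(b) directly identifies $\Se_{\Shv_\Nilp(\CU)}$ with $\Mir^{-1}_\CU$, and then pass to the filtered colimit $\Bun_G=\bigcup_\CU \CU$, with $\on{Id}^{\on{naive}}_{\Bun_G}$ playing the role of the patching functor that glues the local Serre functors $\Mir^{-1}_\CU$ into a global endofunctor of $\Shv_\Nilp(\Bun_G)$.
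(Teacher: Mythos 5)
Your ``alternative'' at the end is exactly the paper's proof: one first identifies $\Se_{\Shv_\Nilp(\CU)}\simeq \Mir^{-1}_{\CU,\Nilp}$ for each universally $\Nilp$-cotruncative $\CU$ via \corref{c:Mir inverse to Serre}(b), then uses the two compatibilities $\Se_{\Shv_\Nilp(\Bun_G)}\circ j_!\simeq j_*\circ \Se_{\Shv_\Nilp(\CU)}$ (immediate from the $(j_!,j^*)$-adjunction together with the defining property of the Serre functor) and $\on{Id}^{\on{naive}}_{\Bun_G}\circ\Mir^{-1}_{\Bun_G}\circ j_!\simeq j_*\circ\Mir^{-1}_\CU$ (from \eqref{e:Mir and U}), and checks that the system of identifications is compatible as $\CU$ ranges over the filtered poset of cotruncative opens. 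You correctly observe that $\on{Id}^{\on{naive}}_{\Bun_G}$ is precisely the patching mechanism that glues the local $\Mir^{-1}_\CU$ into a global endofunctor of $\Shv_\Nilp(\Bun_G)$.

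Your primary approach -- verifying the Serre universal property directly on $\Bun_G$ via the pairings of Corollaries \ref{c:duality with co BunG Nilp} and \ref{c:two pairings BunG} -- is morally the same calculation, but the obstacle you flag is real and not cosmetic. Since $\Bun_G$ is not quasi-compact, Verdier duality is a pairing between $\Shv(\Bun_G)$ and $\Shv(\Bun_G)_{\on{co}}$, so in any formula of the shape $\CHom(\CF_1,\on{Id}^{\on{naive}}_{\Bun_G}(\CG))\simeq \ev_{\Bun_G}(\ldots)$ one has to track which arguments live in $\Shv(\Bun_G)$, which in $\Shv(\Bun_G)_{\on{co}}$, and which in the mixed categories of \secref{sss:mixed categories}; one must also confirm that these identifications are functorial in the $\CZ$-variable so that they take place as morphisms of kernels, not merely of plain functors. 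Carrying this out essentially reproduces the cotruncative gluing in pairing language, which is why the paper -- and your alternative -- opts for the gluing argument directly.
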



\begin{proof}

Let $\CU\overset{j}\hookrightarrow \Bun_G$ be a universally Nilp-cotruncative quasi-compact open substack.
Unwinding the definitions, we obtain a canonical isomorphism
\begin{equation} \label{e:Se and j}
\Se_{\Shv_\Nilp(\Bun_G)} \circ j_! \simeq j_* \circ \Se_{\Shv_\Nilp(\CU)}.
\end{equation}

Furthermore, for an inclusion $\CU_1\overset{j_{1,2}}\hookrightarrow \CU_2$, we have
\begin{equation} \label{e:Se and j12}
\Se_{\Shv_\Nilp(\CU_2)} \circ (j_{1,2})_! \simeq (j_{1,2})_*\circ \Se_{\Shv_\Nilp(\CU_1)},
\end{equation}
compatible with \eqref{e:Se and j}.

\medskip

By \eqref{e:Mir and U}, we also have a system of isomorphisms
$$\on{Id}^{\on{naive}}_{\Bun_G}\circ \Mir^{-1}_{\Bun_G} \circ j_! \simeq j_* \circ \Mir^{-1}_{\CU},$$ 
compatible with
$$\Mir^{-1}_{\CU_2} \circ (j_{1,2})_! \simeq (j_{1,2})_* \circ \Mir^{-1}_{\CU_1}.$$ 

\medskip

Hence, it suffices to show that the system of isomorphisms
$$\Se_{\Shv_\Nilp(\CU)}\simeq \Mir^{-1}_{\CU,\Nilp}$$
of \corref{c:Mir inverse to Serre}(b) makes the system of diagrams 
$$
\CD
\Se_{\Shv_\Nilp(\CU_2)} \circ (j_{1,2})_!  @>{\sim}>> (j_{1,2})_*\circ \Se_{\Shv_\Nilp(\CU_1)} \\
@V{\sim}VV @V{\sim}VV \\
\Mir^{-1}_{\CU_2} \circ (j_{1,2})_! @>{\sim}>> (j_{1,2})_* \circ \Mir^{-1}_{\CU_1}.
\endCD
$$

However, this follows by unwinding the definitions. 

\end{proof}

\sssec{}

It follows from \thmref{t:Serre on BunG} that the functor $\Se_{\Shv_\Nilp(\Bun_G)}$ sends compact objects
of $\Shv_\Nilp(\Bun_G)$ to objects that lie in the essential image of the \emph{fully faithful} functor
$$(\Shv_\Nilp(\Bun_G)_{\on{co}})^c\hookrightarrow \Shv_\Nilp(\Bun_G)_{\on{co}} 
\overset{\on{Id}^{\on{naive}}_{\Bun_G}}\longrightarrow  \Shv_\Nilp(\Bun_G).$$

Denote by
$$\Se_{\Shv_\Nilp(\Bun_G),\on{co}}$$
the ind-extension of the resulting functor
$$\Shv_\Nilp(\Bun_G)\to \Shv_\Nilp(\Bun_G)_{\on{co}}.$$

From \thmref{t:Serre on BunG}, we obtain:

\begin{cor}
The functor $\Se_{\Shv_\Nilp(\Bun_G),\on{co}}$ identifies canonically with $(\Mir^{-1}_{\Bun_G})|_{\Shv_\Nilp(\Bun_G)}$.
\end{cor}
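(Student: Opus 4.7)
The claim is essentially a formal repackaging of \thmref{t:Serre on BunG}. The plan is to observe that \thmref{t:Serre on BunG} already provides a canonical factorization
$$\Se_{\Shv_\Nilp(\Bun_G)} \simeq \on{Id}^{\on{naive}}_{\Bun_G}\circ \Mir^{-1}_{\Bun_G}\big|_{\Shv_\Nilp(\Bun_G)},$$
where $\Mir^{-1}_{\Bun_G}|_{\Shv_\Nilp(\Bun_G)}$ lands in $\Shv_\Nilp(\Bun_G)_{\on{co}}$ by \corref{c:Mir Nilp a priori}. Since $\Mir_{\Bun_G}$ is an equivalence (as a functor defined by a kernel), its inverse is a continuous functor, so the composite above is a continuous functor valued in $\Shv_\Nilp(\Bun_G)$.

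First I would verify that $\Mir^{-1}_{\Bun_G}|_{\Shv_\Nilp(\Bun_G)}$ sends compact objects to compact objects of $\Shv_\Nilp(\Bun_G)_{\on{co}}$. For this, recall from the discussion preceding the corollary that for $\CF\in\Shv_\Nilp(\Bun_G)^c$, the object $\Se_{\Shv_\Nilp(\Bun_G)}(\CF)$ lies in the essential image of the fully faithful functor
$$(\Shv_\Nilp(\Bun_G)_{\on{co}})^c\hookrightarrow \Shv_\Nilp(\Bun_G)_{\on{co}}
\overset{\on{Id}^{\on{naive}}_{\Bun_G}}\longrightarrow  \Shv_\Nilp(\Bun_G).$$
Combined with the factorization above and the full faithfulness of this embedding, it follows that $\Mir^{-1}_{\Bun_G}(\CF)$ must itself be compact in $\Shv_\Nilp(\Bun_G)_{\on{co}}$, and the resulting functor on compacts agrees with the one used to define $\Se_{\Shv_\Nilp(\Bun_G),\on{co}}$.

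Finally, by construction $\Se_{\Shv_\Nilp(\Bun_G),\on{co}}$ is the unique continuous extension of this functor on compact objects. Since $\Mir^{-1}_{\Bun_G}|_{\Shv_\Nilp(\Bun_G)}$ is also continuous and agrees with it on compacts, the universal property of ind-extension gives the desired canonical isomorphism. The only substantive point, beyond unwinding definitions, is the verification of preservation of compactness, which as explained is immediate from the combination of \thmref{t:Serre on BunG} and the full faithfulness of $\on{Id}^{\on{naive}}_{\Bun_G}$ on compacts---so I expect no real obstacle.
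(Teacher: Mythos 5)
Your overall strategy is the same as the paper's: $\Se_{\Shv_\Nilp(\Bun_G),\on{co}}$ is by definition the ind-extension of the lift of $\Se_{\Shv_\Nilp(\Bun_G)}|_{\text{compacts}}$ through the fully faithful embedding of $(\Shv_\Nilp(\Bun_G)_{\on{co}})^c$, and \thmref{t:Serre on BunG} exhibits $\Mir^{-1}_{\Bun_G}|_{\Shv_\Nilp(\Bun_G)}$ as a continuous functor providing exactly such a lift, so the two must agree by the universal property of ind-extension. That is the whole content, and your last paragraph captures it correctly.

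The one step that does not hold up is your compactness argument. You try to deduce that $\Mir^{-1}_{\Bun_G}(\CF)$ is compact in $\Shv_\Nilp(\Bun_G)_{\on{co}}$ from the fact that $\Se_{\Shv_\Nilp(\Bun_G)}(\CF)$ lies in the essential image of the fully faithful functor $(\Shv_\Nilp(\Bun_G)_{\on{co}})^c\hookrightarrow\Shv_\Nilp(\Bun_G)$. This does not follow: knowing $\on{Id}^{\on{naive}}_{\Bun_G}(\Mir^{-1}_{\Bun_G}(\CF))\simeq \on{Id}^{\on{naive}}_{\Bun_G}(Y)$ for some compact $Y$ does not let you conclude $\Mir^{-1}_{\Bun_G}(\CF)\simeq Y$, because $\on{Id}^{\on{naive}}_{\Bun_G}$ is \emph{not} conservative on all of $\Shv_\Nilp(\Bun_G)_{\on{co}}$ (it is only fully faithful after restricting the source to compacts, which is not yet known about $\Mir^{-1}_{\Bun_G}(\CF)$). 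Moreover, the argument is circular relative to the paper's logic: the assertion in the discussion preceding the corollary, that $\Se_{\Shv_\Nilp(\Bun_G)}$ lands in that essential image, is itself a \emph{consequence} of compactness-preservation by $\Mir^{-1}_{\Bun_G}$, not a premise for it. The fix is immediate and you even cite the right statement at the start of your proof: by \corref{c:Mir Nilp a priori}, $\Mir_{\Bun_G}$ restricts to an equivalence $\Shv_\Nilp(\Bun_G)_{\on{co}}\simeq\Shv_\Nilp(\Bun_G)$, so $\Mir^{-1}_{\Bun_G}|_{\Shv_\Nilp(\Bun_G)}$ is an equivalence and hence preserves compacts outright, with no detour through $\on{Id}^{\on{naive}}_{\Bun_G}$.
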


\appendix

\section{Sheaves on stacks} \label{s:sheaves}

In this section we review the theory of $\Shv(-)$ on stacks, especially the aspects that
have to do with Verdier duality, used in the bulk of the paper. 

\ssec{The basics}

In this subsection we name the main players in $\Shv(-)$ on stacks. 

\sssec{} \label{sss:sheaves}

In this paper we work with a constructible sheaf theory, denoted $\Shv$, which we view as a functor
$$(\affSch_{\on{ft}/k})^{\on{op}}\to \DGCat,$$
see \cite[Sects. 1.1]{AGKRRV}, where for $f:S_1\to S_2$, the functor
$\Shv(S_2)\to \Shv(S_1)$ is $f^!$. 

\medskip

For a given affine scheme $S$, the category $\Shv(S)$ is defined as the ind-completion of the category
$\Shv(S)^{\on{constr}}$ of constructible sheaves, so that $\Shv(S)^c$ recovers $\Shv(S)^{\on{constr}}$. 

\sssec{}

We extend $\Shv(-)$ from affine schemes to algebraic stacks by the procedure of right Kan extension.
Explicitly, for an algebraic stack $\CY$,
\begin{equation} \label{e:sheaves on stacks}
\Shv(\CY):= \underset{S\to \CY}{\on{lim}^!}\, \Shv(S),
\end{equation}
where the index category is any of the following:

\begin{itemize}

\item All affine schemes $S$ of finite type over $\CY$;

\item Affine schemes $S$ that map smoothly to $\CY$, and all maps $f:S_1\to S_2$;

\item Affine schemes $S$ that map smoothly to $\CY$, and smooth maps $f:S_1\to S_2$. 

\end{itemize}

In the formation of the limit, the transition functors $\Shv(S_2)\to \Shv(S_1)$ are given by $f^!$. 

\sssec{}

One can also rewrite $\Shv(\CY)$ as 
\begin{equation} \label{e:sheaves on stacks *}
\underset{S\to \CY}{\on{lim}^*}\, \Shv(S),
\end{equation}
over the same choice of index categories, but with transition functors $\Shv(S_2)\to \Shv(S_1)$ given by $f^*$. 

\medskip

The two limits are equivalent via the following procedure: use the third index category in both cases. Applying the cohomological
shift by $[2\dim(S/\CY)]$ on each $\Shv(S)$, we can isomorph the limit \eqref{e:sheaves on stacks *} to
\begin{equation} \label{e:sheaves on stacks * shift}
\underset{S\to \CY}{\on{lim}}\!^{*,\on{shift}}\, \Shv(S),
\end{equation}
where the transition functors are now $f^*[2\dim(S_2/S_1)]$. 

\medskip

Now, the limit \eqref{e:sheaves on stacks * shift} is equivalent to \eqref{e:sheaves on stacks} term-wise. 

\sssec{}

For a stack $\CY$, we let $\Shv(\CY)^{\on{constr}}$ be the full subcategory of $\Shv(\CY)$ equal to
$\underset{S\to \CY}{\on{lim}}\, \Shv(S)^{\on{constr}}$ (with respect to any of the above three index
categories and either $-^!$ or $-^*$ version). 

\medskip

For an algebraic stack $\CY$, we let 
$$\ul\sfe_\CY\in \Shv(\CY)^{\on{constr}}$$
denote the constant sheaf. 

\sssec{}

Built into the definition of $\Shv(-)$ are the functors
$$f^!,f^*:\Shv(\CY_2)\to \Shv(\CY_1)$$
for a morphism $f:\CY_2\to \CY_1$ between algebraic stacks.

\medskip

In addition, the $(-_!,-^*)$ (resp.,  $(-_*,-^!)$ base change isomorphisms for maps between
schemes give rise to functors
$$f_!,f_*:\Shv(\CY_1)\to \Shv(\CY_2)$$
when $f$ is schematic. It follows from the construction that $f_!$ (resp., $f_*$) is the left (resp., right)
adjoint of $f^!$ (resp., $f^*$). 

\medskip

We will explain how extend this definition for maps that are not necessarily schematic
in \secref{sss:dir images 1}-\secref{sss:dir images 2} below. 

\sssec{} \label{sss:stacky}

As is explained in \cite[Sect. F.1.1]{AGKRRV}, for any algebraic stack $\CY$, the category 
$\Shv(\CY)$ is compactly generated. 
Namely, we can take as generators objects of the form $g_!(\CF)$ for 
$$g:S\to \CY, \quad \CF\in \Shv(S)^c, \quad S\in \affSch_{\on{ft}/k}.$$

\medskip

We have an inclusion
$$\Shv(\CY)^c\subset \Shv(\CY)^{\on{constr}},$$
but this inclusion is in general \emph{not} an equality. For example,
the object $\ul\sfe_\CY$ is not compact for $\CY=B\BG_m$. 

\medskip

It easy to see, however, that the subcategory $\Shv(\CY)^c$ is preserved by the
$\overset{*}\otimes$ operation with objects in $\Shv(\CY)^{\on{constr}}$
(see \eqref{e:Verdier duality ten} below). 

\sssec{} \label{sss:dir images 1}

Let $f:\CY_1\to \CY_2$ be a morphism between algebraic stacks. We define the functor
$$f_!:\Shv(\CY_1)\to \Shv(\CY_2)$$
as the left adjoint of $f^!$. 

\medskip

To show that $f_!$ exists, it is enough to show that it is defined on the generators of 
$\Shv(\CY_1)$. Taking the generators described in \secref{sss:stacky}, we have
$$f_!(g_!(\CF))\simeq (f\circ g)_!(\CF),$$
where the right-hand side is well-defined because the morphism $f\circ g$ is schematic.

\medskip

The same argument shows that the $-_!$ pushforward 
satisfies base change against the $-^*$ pullback. 

\sssec{} \label{sss:dir images 2}

For $f$ as above, we define the functor 
$$f_*:\Shv(\CY_1)\to \Shv(\CY_2)$$
as the right adjoint of $f^*$. 

\medskip

This functor exists for general reasons\footnote{I.e., Lurie's Adjoint Functor Theorem, which in particular
says that any colimit preserving functor between presentable DG categories admits a right adjoint.},
however it may be discontinuous (because the functor $f^*$ does not necessarily preserve compactness). 

\medskip

However,  $-_*$ pushforward does satisfy base change against $-^!$ pullback. This follows
by passing to right adjoints from the $(-_!,-^*)$ base change. 

\sssec{}

For any algebraic stack $\CY$, Verdier duality defines a contravariant self-equivalence
\begin{equation} \label{e:Verdier inv}
\BD^{\on{Verdier}}:(\Shv(\CY)^{\on{constr}})^{\on{op}}\to \Shv(\CY)^{\on{constr}}.
\end{equation} 

Its basic property is that for $\CF\in \Shv(\CY)^{\on{constr}}$
\begin{equation} \label{e:Verdier duality ten}
\CHom_{\Shv(\CY)}(\CF\overset{*}\otimes \CF_1, \CF_2)
\simeq \CHom_{\Shv(\CY)}(\CF_1, \BD^{\on{Verdier}}(\CF)\sotimes \CF_2), \quad \CF_1,\CF_2\in  \Shv(\CY).
\end{equation} 

We will denote by $\omega_\CY\in \Shv(\CY)$ the dualizing sheaf on $\CY$:
$$\BD^{\on{Verdier}}(\ul\sfe_\CY).$$

\ssec{Verdier-compatible stacks}

In this subsection we will assume that algebraic stacks are quasi-compact. 

\medskip

We review the notion of what it means for an algebraic stack to be Verdier compatible.
On the one hand, this property confers some particularly favorable properties on to $\Shv(\CY)$.
On the other hand, it is quite ubiquitous. 

\sssec{} \label{sss:Verdier compatible}

It is not clear whether for a general algebraic stack $\CY$, the Verdier involution \eqref{e:Verdier inv}
always sends $\Shv(\CY)^c$ to $\Shv(\CY)^c$. 

\medskip

If this is the case, following \cite[Sect. F.2.6]{AGKRRV}, we shall say that $\CY$ is \emph{Verdier compatible}. 
This property always holds under the assumption on algebraic stacks imposed in \secref{sss:stacks}, see 
\cite[Theorem F.2.8]{AGKRRV}.

\medskip

Thus, we will assume that all stacks involved are Verdier compatible. 

\sssec{} \label{sss:Verdier compatible bis}

Since Verdier duality swaps $-_*$ and $-_!$, we obtain that objects of the form $g_*(\CF)$, where 
$$g:S\to \CY, \quad \CF\in \Shv(S)^c, \quad S\in \affSch_{\on{ft}/k}$$
are compact in and generate $\Shv(\CF)$. 

\medskip

This implies that for a morphism $f:\CY_1\to \CY_2$, the functor $f_*$ sends 
$\Shv(\CY_1)^c$ to $\Shv(\CY_2)^c$. 

\medskip

This also implies that the subcategory $\Shv(\CY)^c$ is preserved by the
$\sotimes$ operation with objects in $\Shv(\CY)^{\on{constr}}$. 

\sssec{} \label{sss:renorm dir im}

For a map between stacks $f:\CY_1\to \CY_2$, the usual functor of
direct image $f_*: \Shv(\CY_1)\to \Shv(\CY_2)$ is not in general colimit-preserving. 
We define the \emph{renormalized} functor of direct image, denoted 
$$f_\blacktriangle:\Shv(\CY_1)\to \Shv(\CY_2),$$
to be the unique colimit-preserving functor such that restricts to $f_*$ on $\Shv(\CY_1)^c$. 
We have a tautologically defined natural transformation
$$f_\blacktriangle\to f_*,$$
which is an isomorphism if $f$ is schematic. 

\medskip

If follows from \secref{sss:Verdier compatible bis} that the functor $f_\blacktriangle$
preserves compactness and satisfies the projection formula. 

\medskip

This in turn implies that for a pair of composable
morphisms
$$\CY_1 \overset{f_{1,2}}\to \CY_2  \overset{f_{2,3}}\to \CY_3,$$
the natural transformation
$$(f_{2,3})_\blacktriangle\circ (f_{1,2})_\blacktriangle \to 
(f_{2,3}\circ f_{1,2})_\blacktriangle$$
is an isomorphism. 

\sssec{} \label{sss:renorm cochains}

For $\CY_1=\CY$ and $\CY_2=\on{pt}$, we obtain the renormalized functor 
of sheaf cochains, denoted
$$\on{C}^\cdot_\blacktriangle(\CY,-):\Shv(\CY)\to \Vect.$$

\medskip

By \secref{sss:renorm dir im}, for a map $f:\CY_1\to \CY_2$, we have
$$\on{C}^\cdot_\blacktriangle(\CY_1,-)\simeq \on{C}^\cdot_\blacktriangle(\CY_2,-)\circ f_\blacktriangle.$$

\sssec{}

For future reference, we claim:

\begin{lem} \label{l:blacktriangle bdd}
The functor $f_\blacktriangle:\Shv(\CY_1)\to \Shv(\CY_2)$ has a cohomological 
amplitude bounded on the right.
\end{lem}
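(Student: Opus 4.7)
The plan is to deduce the bound from two standard facts: first, the ordinary direct image functor $f_*$ has bounded right cohomological amplitude on compact (equivalently, constructible-type) objects when $f$ is a morphism of finite type between quasi-compact algebraic stacks; second, the functor $f_\blacktriangle$ agrees with $f_*$ on compact objects, and since $\Shv(\CY_1)$ has compact generators in every cohomological degree, an amplitude bound on compacts propagates to all of $\Shv(\CY_1)$ via continuity.

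First I would reduce to the case of a morphism of schemes. Since $\CY_1$ and $\CY_2$ are quasi-compact of the form $Z/H$ with $Z$ a scheme of finite type and $H$ a linear algebraic group by our running conventions in \secref{sss:stacks}, we may choose smooth atlases $q:S\to\CY_1$ and $p:T\to\CY_2$ from affine schemes of finite type. The pullback $p^!$ is conservative and $t$-exact up to a uniform shift by $2\dim(p)$, so right amplitude of $f_*$ can be tested after applying $p^!$. By $-_*$ vs. $-^!$ base change, this replaces $f$ by its base change $f': T\times_{\CY_2}\CY_1\to T$, reducing the problem to target $T$ an affine scheme.

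Second, with $\CY_2$ now an affine scheme, I use descent along $q:S\to\CY_1$. Compact objects in $\Shv(\CY_1)$ are built, in bounded degrees, out of $-_*$-pushforwards from the Čech nerve $S^{\bullet/\CY_1}$, whose terms are schemes of finite type over $\CY_2=T$. Each such pushforward has right amplitude bounded in terms of the relative dimensions of the composites $S^{[n]}\to T$, which is a classical fact for constructible sheaves on schemes of finite type. The only contribution not coming from a morphism of finite-type schemes is the Čech cohomological contribution of descent along $q$, and this is uniformly bounded because $\CY_1=Z/H$ with $H$ a linear algebraic group, so the nerve of $q$ has finite cohomological dimension for our constructible sheaf theory (see, e.g., \cite[Appendix F]{AGKRRV}). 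This yields a constant $N$ so that $f_*$ (equivalently $f_\blacktriangle$) carries $\Shv(\CY_1)^c\cap\Shv(\CY_1)^{\leq 0}$ into $\Shv(\CY_2)^{\leq N}$.

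Finally, I would extend the bound from compacts to all objects: the compact generators of $\Shv(\CY_1)$ described in \secref{sss:stacky} (of the form $g_!(\CF)$ with $\CF\in\Shv(S)^{\on{constr}}$) can be chosen in any prescribed cohomological degree, so $\Shv(\CY_1)^{\leq 0}$ is the closure of $\Shv(\CY_1)^c\cap\Shv(\CY_1)^{\leq 0}$ under filtered colimits. Since $f_\blacktriangle$ preserves filtered colimits and filtered colimits in $\Shv(\CY_2)$ are $t$-exact, the amplitude bound on compacts propagates to the whole of $\Shv(\CY_1)^{\leq 0}$, finishing the proof.

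The main obstacle is the uniform control of the Čech cohomological contribution of descent along the smooth atlas $q:S\to\CY_1$: one needs to ensure that the contribution of the "stacky directions" to the right amplitude is uniformly bounded across all compact objects, which relies essentially on the structural hypothesis that $\CY_1$ is of the form $Z/H$ with $H$ a linear algebraic group—this is where the assumption of \secref{sss:stacks} enters the proof in an essential way.
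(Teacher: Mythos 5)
Your high-level plan (reduce the target to a scheme by smooth base change, then exploit the structural assumption $\CY_1 = Z/H$, then argue on compacts and pass to colimits) matches the paper's opening moves and overall shape. However, the central step is asserted rather than proved, and the assertion as stated conceals exactly the nontrivial phenomenon the lemma addresses.

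The gap is in your claim that ``the nerve of $q$ has finite cohomological dimension for our constructible sheaf theory,'' cited vaguely to \cite[Appendix F]{AGKRRV}. For the \emph{ordinary} pushforward $f_*$, the totalization over the \v{C}ech nerve is precisely the continuous group cohomology of $H$, which is unbounded above: for example $\on{C}^\cdot(B\BG_m,\ul\sfe)\simeq \sfe[x]$ with $\deg x=2$. So there is no ``uniform bound on the \v{C}ech cohomological contribution'' in the sense you invoke, and a descent spectral sequence for $f_*$ simply does not converge in bounded degrees. The whole point of $f_\blacktriangle$ is that it is the ind-extension of $f_*|_{\Shv(\CY_1)^c}$ and \emph{differs} from $f_*$ on non-compact objects; proving that it has bounded right amplitude requires showing that the renormalized cochains $\on{C}^\cdot_\blacktriangle(BH,\ul\sfe_{BH})$ are bounded above. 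Your argument never isolates this fact, and the citation to AGKRRV Appendix F does not supply it.

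The paper takes a different and cleaner route: after reducing to $\CY_2=Y_2$ a scheme, it factors $f$ as $Y_1/H\to Y_2\times BH\to Y_2$. The first arrow is schematic, so $-_\blacktriangle=-_*$ and boundedness is classical. For the projection $Y_2\times BH\to Y_2$ it uses a finite filtration of any $\CF\in\Shv(Y_2\times BH)$ with subquotients pulled back from $Y_2$, reducing everything to the single computation that $\on{C}^\cdot_\blacktriangle(BH,\ul\sfe_{BH})$ is bounded above — which is \cite[Example~9.1.6]{DrGa0}. This factorization is what makes the ``stacky'' contribution visible as a concrete object to compute, rather than an infinite totalization whose bounded-amplitude property must be asserted. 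If you want to repair your proof, you would need to replace the \v{C}ech-descent claim with the factorization through $Y_2\times BH$ (or an equivalent device) and explicitly cite the boundedness of the renormalized cochains of $BH$.
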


\begin{proof}

Covering $\CY_2$ by a scheme (and using base change, see \secref{sss:base change triangle} below), 
we can assume that $\CY_2$ is a scheme $Y_2$. 

\medskip

By the assumption in \secref{sss:stacks}, we can assume that
$\CY_1$ is of the form $Y_1/H$, where $Y_1$ is a (quasi-compact) scheme and $H$
is an algebraic group. We can factor the map $f$ as
$$Y_1/H\to Y_2/H=Y_2\times BH\to Y_2.$$

The arrow $Y_1/H\to Y_2/H$ is schematic, so the $-_\blacktriangle$ functor is the same as  
$-_*$, and so has a finite cohomological amplitude. Hence, it remains to consider the morphism
of the form 
$$p_Y:Y\times BH\to Y.$$

Any object $\CF\in \Shv(Y\times BH)$ admits a finite canonical filtration with subquotients of the form
$$p_Y^*\circ q_Y^*(\CF)[n],$$ where $q_Y$ is the map $Y\to Y\times BH$. Hence, it remains to show
that 
$$\on{C}^\cdot_\blacktriangle(BH,\ul\sfe_{BH})\in \Vect$$
is bounded above.  

\medskip

However, this is a computation performed in \cite[Example 9.1.6]{DrGa0}.

\end{proof}

\sssec{} \label{sss:constr almost preserved} 

in general, the functors $f_*$ and $f_\blacktriangle$ do not preserve constructibility. However,
the argument in \lemref{l:blacktriangle bdd} shows that both these functors send constructible
objects to objects with constructible perverse cohomologies. 

\medskip

Furthermore, we have the following assertion:

\begin{prop} \label{p:triang and *}
For a constructible object $\CF\in \Shv(\CY_1)$, the following conditions are equivalent:

\smallskip

\noindent{\em(i)} The map $f_\blacktriangle(\CF)\to f_*(\CF)$
is an isomorphism;

\smallskip

\noindent{\em(ii)} The object $f_\blacktriangle(\CF)\in \Shv(\CY_2)$ is cohomologically bounded on the left;

\smallskip

\noindent{\em(ii)} The object $f_*(\CF)\in \Shv(\CY_2)$ is cohomologically bounded on the right.

\end{prop}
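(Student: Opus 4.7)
The plan is to establish the three-way equivalence by exploiting the complementary cohomological amplitude bounds of $f_\blacktriangle$ and $f_*$, together with a compact approximation argument.

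For the forward implications $(i)\Rightarrow(ii)$ and $(i)\Rightarrow(iii)$: by Lemma \ref{l:blacktriangle bdd}, the object $f_\blacktriangle(\CF)$ is automatically bounded on the right, while for constructible $\CF$ the object $f_*(\CF)$ is automatically bounded on the left. The latter follows by reduction to a smooth affine atlas, where $f^*$ is cohomologically bounded above and hence its right adjoint $f_*$ is bounded below on bounded-below input. An isomorphism as in (i) then forces the common object to be bounded on both sides, immediately yielding both (ii) and (iii).

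For the reverse implications, the essential tool I would establish is a compact approximation lemma: for every constructible $\CF\in\Shv(\CY_1)$ and every integer $N$, there exists a compact $\CF^c\in\Shv(\CY_1)^c$ together with a morphism $\CF^c\to\CF$ whose fiber lies in $\Shv(\CY_1)^{\geq N}$. The proof combines the trivial case on affine schemes (where $\Shv(S)^c=\Shv(S)^{\mathrm{constr}}$) with the $!$-limit description of $\Shv(\CY_1)$ along a smooth atlas, absorbing the stacky corrections into arbitrarily highly coconnective error terms.

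Granted this approximation, the implication $(ii)\Rightarrow(i)$ goes as follows. Suppose $f_\blacktriangle(\CF)$ lies in $[-M,L]$ cohomologically (left bound by (ii), right bound by Lemma \ref{l:blacktriangle bdd}). Pick $N$ much larger than $L+M$ plus the amplitude constants. For a compact approximation $\CF^c\to\CF$ with fiber in $\Shv(\CY_1)^{\geq N}$, applying $f_\blacktriangle$ and $f_*$ yields fibers $f_\blacktriangle(\mathrm{fib})$ and $f_*(\mathrm{fib})$ that both lie in $\Shv(\CY_2)^{\geq N'}$ for some $N'>L$; the bound for $f_*$ uses its left-amplitude on bounded-below input, while the bound for $f_\blacktriangle$ uses the identification $f_\blacktriangle=f_*$ on compacts together with the continuity of $f_\blacktriangle$ along a suitable presentation of the fiber. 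Since $f_\blacktriangle(\CF^c)=f_*(\CF^c)$ by compactness, both maps $f_\blacktriangle(\CF^c)\to f_\blacktriangle(\CF)$ and $f_*(\CF^c)\to f_*(\CF)$ are isomorphisms on cohomology in degrees $\leq N'-1$. As $f_\blacktriangle(\CF)$ vanishes above $L<N'$, this identifies $f_\blacktriangle(\CF)$ with $\tau^{\leq L}f_*(\CF)$, and a small diagram chase upgrades this to the full isomorphism (i), simultaneously showing that $f_*(\CF)$ is bounded above by $L$. The implication $(iii)\Rightarrow(i)$ is completely symmetric, interchanging the roles of the two functors.

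The principal obstacle will be the compact approximation lemma. Its content is geometric: constructible sheaves on a stack need not be compact, as for $\ul\sfe_{B\BG_m}$, with the failure measured by cohomology of the stacky directions $BH$. The unboundedness of this cohomology is precisely what permits arbitrarily highly coconnective approximation fibers; nevertheless, producing the approximation in a uniform way requires careful handling of the smooth descent and of how compact objects on an affine atlas glue to produce compact objects on $\CY_1$.
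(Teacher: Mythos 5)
Your route differs genuinely from the paper's, which reduces the assertion to $\CY_1 = \on{pt}/H$, $\CY_2 = \on{pt}$ and then quotes \cite[Proposition~10.4.7]{DrGa0}: since $f_\blacktriangle(\CF)$ and $f_*(\CF)$ have constructible perverse cohomologies, the three conditions can be tested after $!$-restriction to each $k$-point of $\CY_2$; both $f_\blacktriangle$ and $f_*$ base-change against $!$-pullbacks, so one may take $\CY_2=\on{pt}$; and factoring $\CY_1=Z/H \to \on{pt}/H \to \on{pt}$ through the schematic first arrow (on which $(-)_\blacktriangle = (-)_*$) reduces to $\CY_1 = \on{pt}/H$. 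You instead attempt a direct argument via a compact approximation lemma, and there the proof has a genuine gap.

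Even setting aside whether the approximation ($\CF^c\to\CF$ compact with $\on{fib}(\CF^c\to\CF)\in\Shv(\CY_1)^{\geq N}$) can be produced in that direction — the approximation a free resolution over a smooth atlas actually yields has \emph{connective}, not coconnective, error, as one sees already in $\Shv(B\BG_m)\simeq A\mod$ with $A\simeq\sfe\oplus\sfe[1]$, where the free resolution marches to $-\infty$ — the decisive step is unjustified. You need $f_\blacktriangle(\on{fib}(\CF^c\to\CF))\in\Shv(\CY_2)^{\geq N'}$, but Lemma~\ref{l:blacktriangle bdd} supplies only a \emph{right} amplitude bound for $f_\blacktriangle$, and on non-compact coconnective input there is no a priori left bound. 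Indeed for $f:B\BG_m\to\on{pt}$ one has $f_*(\ul\sfe)=\sfe[t]$ ($|t|=2$) unbounded above, hence $f_\blacktriangle(\ul\sfe)$ unbounded below, so $\ul\sfe[-N]\in\Shv(B\BG_m)^{\geq N}$ is constructible with $f_\blacktriangle$-image reaching arbitrarily negative degrees. The appeal to ``$f_\blacktriangle=f_*$ on compacts plus continuity along a suitable presentation of the fiber'' does not help: it would require writing the coconnective fiber as a filtered colimit of \emph{coconnective} compacts, which is the same unestablished assertion in disguise (compacts on a stack are not stable under truncation). The forward implications (i)$\Rightarrow$(ii),(iii) are fine, but the reverse ones, which carry the content of the proposition, are not established by the argument as written.
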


\begin{proof}

Since both objects $f_\blacktriangle(\CF)$ and $f_*(\CF)$ have constructible cohomologies, it is enough to 
show that for every $k$-point 
$$\on{pt} \overset{\bi_{y_2}}\longrightarrow \CY_2,$$
the following conditions are equivalent: 

\medskip

\noindent{(i)} The map $\bi_{y_2}^!\circ f_\blacktriangle(\CF)\to \bi_{y_2}^!\circ f_*(\CF)$
is an isomorphism;

\smallskip

\noindent{(ii)} The object $\bi_{y_2}^!\circ f_\blacktriangle(\CF)\in \Vect$ is cohomologically bounded on the left;

\smallskip

\noindent{(ii)} The object $\bi_{y_2}^!\circ f_*(\CF)\in \Vect$ is cohomologically bounded on the right.

\medskip

Note that both functors $f_\blacktriangle$ and $f_*$ satisfy base change against !-pullbacks: 
this is obvious for $f_*$, e.g., by passing to left adjoints, and for $f_\blacktriangle$
this is established in \secref{sss:base change triangle} below.

\medskip

This reduces the proposition to the case when $\CY_2=\on{pt}$. Write $\CY_1=:\CY=Z/H$, where $Z$ is
a scheme, and let is factor the projection $\CY\to \on{pt}$ as
$$\CY\overset{g}\to \on{pt}/H \to \on{pt}.$$

The map $g$ schematic, and hence $g_\blacktriangle\to g_*$ is an isomorphism. This allows us to replace
$\CY$ by $\on{pt}/H$. However, in the latter case, the required assertion was established in \cite[Proposition 10.4.7]{DrGa0}.

\end{proof}

\ssec{Base change maps}

Let
$$
\CD
\CY'_1 @>{f'}>> \CY'_2 \\
@V{g_1}VV @VV{g_2}V \\
\CY_1 @>{f}>> \CY_2
\endCD
$$
be a Cartesian diagram of quasi-compact algebraic stacks. 

\medskip

In this subsection we will construct certain natural transformations that have to
do with applying various direct/inverse image functors along the arrows in the
above diagram. 

\sssec{}  \label{sss:base change triangle}

First, we claim that there is a canonical isomorphism
\begin{equation} \label{e:base change triangle}
g_2^! \circ f_\blacktriangle \simeq f'_\blacktriangle \circ g_1^!.
\end{equation}

First, we consider the case when $g_2$ is schematic (and hence so is $g_1$).

\medskip

Note that for a schematic map $g:\CY'\to \CY$ the functor $g^*$ preserves compactness. Indeed,
its right adjoint $g_*$ is continuous. Hence, $g^!$ also preserves compactness, by Verdier-compatibility. 

\medskip

Hence, \eqref{e:base change triangle} is obtained by ind-extending the restriction to compact objects 
the usual base change isomorphism
$$g_2^! \circ f_*\simeq f'_*\circ g_1^!.$$

\sssec{} \label{sss:base change triangle aux}

For a general $g_2$ we proceed as follows. 

\medskip

It suffices to construct a compatible system of isomorphisms 
\begin{equation} \label{e:base change triangle aux}
h_2^! \circ g_2^! \circ f_\blacktriangle \simeq h_2^! \circ  f'_\blacktriangle \circ g_1^!
\end{equation}
in the Cartesian diagrams 
\begin{equation} \label{e:com diag aux}
\CD
\CY''_1 @>{f''}>> \CY''_2 \\
@V{h_1}VV @VV{h_2}V \\
\CY'_1 @>{f'}>> \CY'_2 \\
@V{g_1}VV @VV{g_2}V \\
\CY_1 @>{f}>> \CY_2,
\endCD
\end{equation}
where $h_2:\CY''_2\to \CY'_2$ is a smooth map from an affine scheme. 

\medskip

We have
$$
h_2^! \circ g_2^! \circ f_\blacktriangle \simeq
(g_2\circ h_2)^!  \circ f_\blacktriangle  \overset{g_2\circ h_2\text{ is schematic}}\simeq 
f''_\blacktriangle \circ (g_1\circ h_1)^! \simeq 
f''_\blacktriangle \circ h_1^! \circ g_1^! \overset{h_2\text{ is schematic}}\simeq 
h_2^! \circ  f'_\blacktriangle \circ g_1^!.$$

\sssec{} \label{sss:strange base change 1}

Next, we claim that there is natural transformation
$$(g_2)_! \circ f'_\blacktriangle \to f_\blacktriangle \circ  (g_1)_!.$$

Indeed, it is obtained by ind-extending the restriction to compact objects 
the natural transformation
$$(g_2)_! \circ f'_* \to f_* \circ  (g_1)_!$$
arising by adjunction from the base change isomorphism $f'_*\circ g_1^!\simeq g_2^!\circ f_*$.

\sssec{} \label{sss:strange base change 2}

Finally, we claim that there is a natural transformation
\begin{equation} \label{e:strange base change}
g_2^* \circ f_\blacktriangle \to f'_\blacktriangle \circ (g_1)^*.
\end{equation}

First, the if $g_2$ is schematic (and hence so is $g_1$, and so the functors $g_2^*$ and $g_1^*$ preserve compactness),
the natural transformation \eqref{e:strange base change} is obtained by ind-extending the restriction to compact objects 
the natural transformation
$$g_2^* \circ f_* \to f'_* \circ g_1^*$$
arising by adjunction from the isomorphism $f_*\circ (g_1)_*\simeq (g_2)_*\circ f'_*$.

\medskip

Note that \eqref{e:strange base change} is an isomorphism if $g_2$ is smooth. 

\sssec{}
 
For a general $g_2$ we proceed as follows:

\medskip

It suffices to construct a compatible system of natural transformations
\begin{equation} \label{e:strange base change aux}
h_2^*\circ g_2^* \circ f_\blacktriangle \to h_2^*\circ f'_\blacktriangle \circ g_1^*
\end{equation}
for $h_2$ as in \secref{e:com diag aux}. 

\medskip

We rewrite the left-hand side in \eqref{e:strange base change aux} as
$(g_2\circ h_2)^* \circ f_\blacktriangle$, and it admits a natural transformation to
$f''_\blacktriangle \circ (g_1\circ h_1)^*$, since the map $g_2\circ h_2$ is schematic.

\medskip

Finally, we rewrite $f''_\blacktriangle \circ (g_1\circ h_1)^*$ as
$$f''_\blacktriangle \circ h_1^* \circ g_1^* \overset{h_2\text{ is smooth}}\simeq 
h_2^* \circ f'_\blacktriangle \circ g_1^*,$$
which is the right-hand side in \eqref{e:strange base change aux}.

\ssec{Verdier self-duality} \label{ss:Verdier}

In this subsection we continue to assume that our substacks are quasi-compact 
(and Verdier-compatible).

\medskip

We will show how the Verdier (anti)-involution $\Shv(\CY)^c$
gives rise to a self-duality of $\Shv(\CY)$ as a DG category. 

\sssec{} \label{sss:Verdier}

By assumption, the functor \eqref{e:Verdier inv} induces an equivalence 
\begin{equation} \label{e:Verdier functor}
\BD^{\on{Verdier}}:(\Shv(\CY)^c)^{\on{op}}\to \Shv(\CY)^c.
\end{equation}

Hence, we obtain an identification
\begin{equation} \label{e:Verdier}
\Shv(\CY)^\vee\simeq \Shv(\CY),
\end{equation}
see \secref{sss:comp}. 

\sssec{}

We claim that the counit of the duality \eqref{e:Verdier} is given by the functor $\on{ev} _\CY$
\begin{equation} \label{e:pairing sheaves Y}
\Shv(\CY)\otimes \Shv(\CY) \overset{\sotimes}\to \Shv(\CY) \overset{\on{C}^\cdot_\blacktriangle(\CY,-)}\longrightarrow \Vect.
\end{equation}

Indeed, this follows from \eqref{e:Verdier duality ten}. 

\sssec{} \label{sss:unit Verdier}

The unit of the duality \eqref{e:Verdier} is an object that we denote
$$\on{u}_{\Shv(\CY)}\in \Shv(\CY)\otimes \Shv(\CY).$$

This object should not be confused with the object
\begin{equation} \label{u:notation}
\on{u}_\CY=(\Delta_\CY)_*(\omega_\CY)\in \Shv(\CY\times \CY),
\end{equation}
see \secref{sss:u shv}. 

\sssec{}

It follows from the projection formula that for a map $f:\CY_1\to \CY_2$ between algebraic stacks, we have
$$(f^!)^\vee\simeq f_\blacktriangle,$$
where we identify
$$\Shv(\CY_1)^\vee\simeq \Shv(\CY_1) \text{ and } \Shv(\CY_2)^\vee\simeq \Shv(\CY_2)$$
by means of \eqref{e:Verdier}.  

\ssec{Specifying singular support} 

In this subsection we will see how the notions reviewed in the previous subsection interact with
the condition imposed by singular support. 

\sssec{} \label{sss:shv N}

Let $\CN$ be a conical Zariski-closed subset of $T^*(\CY)$ (see \cite[Sects. F.6.1-F.6.2]{AGKRRV}). 
In this case, following \cite[Sect. F.6.3]{AGKRRV},
we define a full subcategory
$$\Shv_\CN(\CY)\subset \Shv(\CY).$$

\medskip

For $\CN$ being the zero-section $\{0\}$ and $\CY$ smooth, we will use the notation
$$\qLisse(\CY):=\Shv_{\{0\}}(\CY).$$

\sssec{} \label{sss:constraccess}

Following \cite[Sect. F.7]{AGKRRV}, we shall say that the pair $(\CY,\CN)$ is 
\emph{constraccessible} if $\Shv_\CN(\CY)$ is generated by objects that are compact in $\Shv(\CY)$.

\medskip

In general, it may happen that $\Shv_\CN(\CY)$ is compactly generated as an abstract DG category,
but its compact generators are \emph{not} compact as objects of $\Shv(\CY)$. For example, this happens for $\CY=\BP^1$,
$\CN=\{0\}$, see \cite[Sect. E.2.6]{AGKRRV}.

\begin{rem}
The above definition of constraccessibility is slightly more restrictive than the one given in 
\cite[Definition F.7.5]{AGKRRV}. However, the two definitions agree under our assumptions
on algebraic stacks (see \secref{sss:stacks}) by \cite[Proposition F.7.9]{AGKRRV}. 

\medskip

If $\CY$ is non-quasi-compact, but is $\CN$-truncatable (see \secref{sss:N-trunc} below), 
then the two definitions agree by \cite[Corollary F.8.11]{AGKRRV}.

\end{rem}

\sssec{} \label{sss:duality adapted}

Assume for a moment that $\CY$ is quasi-compact. 

\medskip

We shall say that the pair $(\CY,\CN)$ is \emph{duality-adapted} if the restriction of \eqref{e:pairing sheaves Y} along
$$\Shv_\CN(\CY)\otimes \Shv_\CN(\CY)\to \Shv(\CY)\otimes \Shv(\CY)$$
defines the counit of a self-duality. 

\medskip

This is always the case when $(\CY,\CN)$ is constraccessible: indeed, 
the resulting contravariant self-equivalence
$$(\Shv_\CN(\CY)^c)^{\on{op}}\to \Shv_\CN(\CY)^c$$
is induced by restricting the Verdier duality functor $\BD^{\on{Verdier}}$ to
$$\Shv_\CN(\CY)^c\simeq \Shv_\CN(\CY)\cap \Shv(\CY)^c\subset \Shv(\CY)^{\on{constr}}.$$

\medskip

However, the pair $(\CY,\CN)$ may be duality-adapted \emph{without being constraccessible}, see again 
\cite[Sect. E.2.6 and Corollary E.4.7]{AGKRRV}.

\sssec{} \label{sss:duality adapted curves}

According to \cite[Corollary E.4.7]{AGKRRV}, if $X$ is a smooth algebraic curve, then the pair $(X,\{0\})$ is
duality-adapted.

\ssec{Categorical K\"unneth formulas}

Let $\CY_1$ and $\CY_2$ be a pair of algebraic stacks. External tensor product defines a functor
\begin{equation} \label{e:Kunneth 0}
\Shv(\CY_1)\otimes  \Shv(\CY_2)\overset{\boxtimes}\to \Shv(\CY_1\times \CY_2),
\end{equation}
which is easily seen to be fully faithful. However, \eqref{e:Kunneth 0} is rarely an equivalence.

\medskip

Categorical K\"unneth formulas are assertions to the effect that \eqref{e:Kunneth 0} becomes
an isomorphism after adjusting both sides. One such assertion is formulated here as 
\thmref{t:Kunneth}. 

\sssec{}

Let $\CN_i\subset T^*(\CY_i)$ be conical Zariski-closed subsets. Then the functor \eqref{e:Kunneth 0}
gives rise to a functor
\begin{equation} \label{e:Kunneth 1}
\Shv_{\CN_1}(\CY_1)\otimes  \Shv_{\CN_2}(\CY_2)\to \Shv_{\CN_1\times \CN_2}(\CY_1\times \CY_2).
\end{equation}

If one of the categories $\Shv_{\CN_1}(\CY_1)$ or $\Shv_{\CN_2}(\CY_2)$ is dualizable, then
\eqref{e:Kunneth 1} is also fully faithful.

\sssec{} \label{sss:times 1/2-dim}

Throughout the paper, we use the following notation. Let $\CN_1\subset T^*(\CY_1)$ be a closed conical subset.

\medskip

Let 
$$\Shv_{\CN_1\times \frac{1}{2}\on{-dim}}(\CY_1\times \CY_2)\subset \Shv(\CY_1\times \CY_2)$$
be the full subcategory consisting of objects $\CF$ with the property that for every $m$ and every constructible sub-object
$\CF'$ of $H^m(\CF')$, the singular support of $\CF'$ is contained in a subset of the form
$$\CN_1\times \CN_2\subset T^*(\CY_1\times \CY_2),$$ where $\CN_2\subset T^*(\CY_2)$ is \emph{half-dimensional}. 

\sssec{}

We have the obvious inclusion
$$\Shv_{\CN_1\times \frac{1}{2}\on{-dim}}(\CY_1\times \CY_2)\subset 
\Shv_{\CN_1\times T^*(\CY_2)}(\CY_1\times \CY_2).$$

If $\CN_1$ is Lagrangian \emph{and} $\on{char}(k)=0$, then the above inclusion is an equality: this follows from the Lagrangian
property of the singular support. But if $\on{char}(k)\neq 0$, this may be a proper inclusion.

\medskip

Note that the image of the (fully faithful) functor
$$\Shv_{\CN_1}(\CY_1)\otimes  \Shv(\CY_2)\to \Shv(\CY_1\times \CY_2)$$
is contained in $\Shv_{\CN_1\times \frac{1}{2}\on{-dim}}(\CY_1\times \CY_2)$. 

\sssec{}

Assume now that $\CY_1=Y_1$ is a smooth and proper scheme. In this case, we have the following assertion
(see \cite[Theorems E.9.5 and F.9.7]{AGKRRV}):

\begin{thm} \hfill \label{t:Kunneth}

\smallskip

\noindent{\em(a)}
Assume that $\qLisse(Y_1)$ is dualizable as a DG category. Then the functor 
$$\qLisse(Y_1)\otimes \Shv(\CY_2)\to \Shv_{\{0\}\times \frac{1}{2}\on{-dim}}(Y_1\times \CY_2)$$
is an equivalence. 

\smallskip

\noindent{\em(b)}
Assume that the pair $(Y_1,\{0\})$ is duality-adapted. Then for a half-dimensional closed conical 
subset $\CN_2\subset T^*(\CY_2)$, the functor
$$\qLisse(Y_1)\otimes \Shv_{\CN_2}(\CY_2)\to \Shv_{\{0\}\times \CN_2}(Y_1\times \CY_2)$$
is an equivalence. 

\end{thm}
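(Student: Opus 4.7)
The plan is to prove (a) first and then deduce (b) from it. For (a), I would factor the functor of interest as
$$\qLisse(Y_1) \otimes \Shv(\CY_2) \hookrightarrow \Shv(Y_1) \otimes \Shv(\CY_2) \overset{\boxtimes}{\longrightarrow} \Shv(Y_1 \times \CY_2),$$
and verify full faithfulness and essential surjectivity separately. Full faithfulness of the first arrow follows from the fact that $\qLisse(Y_1) \hookrightarrow \Shv(Y_1)$ is fully faithful and, under the dualizability hypothesis on $\qLisse(Y_1)$, tensoring with $\on{Id}_{\Shv(\CY_2)}$ preserves full faithfulness. For the second arrow, I would use that $Y_1$ is smooth and proper, so $\Shv(Y_1)$ is Verdier-compatible and dualizable; the external product functor is then fully faithful by a standard K\"unneth argument for proper smooth schemes, and its image lies in $\Shv_{\{0\}\times \frac{1}{2}\on{-dim}}(Y_1 \times \CY_2)$ automatically.

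For essential surjectivity, the first step is a reduction to the case where $\CY_2 = Y_2$ is an affine scheme of finite type, via smooth $!$-descent: both the left-hand side (as a functor of $\CY_2$) and the right-hand subcategory satisfy smooth descent -- the former because $\Shv(-)$ does and tensoring with the dualizable $\qLisse(Y_1)$ preserves limits, the latter because the singular support condition in the $\CY_2$ direction is local in the smooth topology. For $Y_2$ an affine scheme, the key point is that an object $\CF \in \Shv(Y_1 \times Y_2)$ has singular support in $\{0\} \times T^*(Y_2)$ precisely when, at every geometric point $y_2 \in Y_2$, the $!$-restriction along $Y_1 \times \{y_2\} \hookrightarrow Y_1 \times Y_2$ lies in $\qLisse(Y_1)$. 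Using properness of $Y_1$ (so that the projection $p_{Y_2}: Y_1 \times Y_2 \to Y_2$ is proper, hence $(p_{Y_2})_*$ satisfies proper base change), one constructs an explicit candidate left inverse to the external product functor, and checks by point-by-point analysis at geometric points of $Y_2$ that the unit of adjunction is an isomorphism on the target subcategory.

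The main obstacle will be this essential surjectivity step, specifically the passage from the pointwise lisse condition to a global decomposition as an external product. The essential ingredient is a local K\"unneth statement for sheaves with partially prescribed singular support, combined with properness of $Y_1$ to glue compatibly over $Y_2$. Without properness of $Y_1$, one could not expect such a decomposition, which is why the hypothesis on $Y_1$ is used crucially.

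Part (b) then follows from (a) by restriction to subcategories. Under the equivalence of (a), the full subcategory $\qLisse(Y_1) \otimes \Shv_{\CN_2}(\CY_2)$ corresponds to $\Shv_{\{0\} \times \CN_2}(Y_1 \times \CY_2)$. One containment is immediate: for $L \in \qLisse(Y_1)$ and $\CG \in \Shv_{\CN_2}(\CY_2)$, the external product $L \boxtimes \CG$ has singular support in $\{0\} \times \CN_2$, and this property passes to colimits in each factor. The reverse containment uses the duality-adapted hypothesis on $(Y_1, \{0\})$: this hypothesis ensures $\qLisse(Y_1)$ is self-dual via the Verdier pairing, so the right adjoint to the inclusion $\qLisse(Y_1) \otimes \Shv_{\CN_2}(\CY_2) \hookrightarrow \qLisse(Y_1) \otimes \Shv(\CY_2)$ is explicitly computable at the level of compact generators, and one checks it carries objects with singular support in $\{0\} \times \CN_2$ back into the subcategory, thereby confirming essential surjectivity in (b).
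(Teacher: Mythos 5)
The paper does not actually prove this theorem; it is quoted directly from \cite[Theorems E.9.5 and F.9.7]{AGKRRV}, so there is no in-paper proof to compare against. With that caveat, your sketch has the right skeleton but leaves a genuine gap at the crux.

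The main problem is the essential surjectivity step for (a). You assert as "the key point" that $\CF\in\Shv(Y_1\times Y_2)$ has singular support in $\{0\}\times T^*(Y_2)$ precisely when every $!$-fiber $\CF|_{Y_1\times\{y_2\}}$ lies in $\qLisse(Y_1)$, and then hand off to an unspecified "explicit candidate left inverse" checked "point-by-point." This assertion is not a formality: passing from a fiberwise lisse condition to the global singular-support bound is itself a nontrivial theorem (essentially of the same depth as the statement to be proved, and related to the relative perversity / ULA circle of ideas), and you give no argument for it. Worse, even granting it, it characterizes the \emph{wrong} target: the theorem's right-hand side is $\Shv_{\{0\}\times\frac{1}{2}\on{-dim}}(Y_1\times\CY_2)$, not $\Shv_{\{0\}\times T^*(\CY_2)}(Y_1\times\CY_2)$, and the paper explicitly warns (Sect.~\ref{sss:times 1/2-dim}) that these can differ when $\on{char}(k)\neq 0$. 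An argument that produced essential surjectivity onto the larger category would force the two categories to coincide, which is not asserted and plausibly false in positive characteristic. So the reduction you propose either needs a much more careful statement of the pointwise characterization adapted to the half-dimensionality condition, or a different strategy entirely. Also, a small precision issue: full faithfulness of $\qLisse(Y_1)\otimes\Shv(\CY_2)\hookrightarrow\Shv(Y_1)\otimes\Shv(\CY_2)$ uses dualizability of $\Shv(\CY_2)$ (which is automatic, being compactly generated), not dualizability of $\qLisse(Y_1)$; the latter is what makes the smooth-descent reduction in $\CY_2$ legitimate.

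For (b) your strategy is the right one in outline (it parallels how the paper proves the analogous \thmref{t:Kunneth fixed Nilp U}: use the self-duality of $\qLisse(Y_1)$ coming from the duality-adapted hypothesis to express membership in $\qLisse(Y_1)\otimes\Shv_{\CN_2}(\CY_2)$ as a condition on "matrix coefficients" $L\mapsto (p_2)_\blacktriangle(\CF\sotimes p_1^!L)$, and show these land in $\Shv_{\CN_2}(\CY_2)$ by a ULA/properness argument as in \corref{c:preserve sing supp}). But as written, "the right adjoint is explicitly computable and one checks it carries objects with singular support in $\{0\}\times\CN_2$ back into the subcategory" is precisely the substance of the theorem, and you supply no argument for the key singular-support estimate.
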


\section{Functors defined by kernels and miraculous duality} \label{s:ker}

Some of the key methods employed in the bulk of this paper have to do
with the formalism of \emph{functors defined by kernels}. In this subsection
we review the relevant theory.

\medskip

We should also mentioned that for the main results of this paper, all we need is
the contents of \secref{ss:ker}. Other subsections of this section are needed for
the material in \secref{s:adj}. 

\ssec{Functors defined by kernels} \label{ss:ker}

In this subsection we introduce what we mean by functors defined by kernels. 

\sssec{} \label{sss:ker}

Let $\CY_1$ and $\CY_2$ be quasi-compact algebraic stacks. The 
\emph{category of functors $\Shv(\CY_1)\to \Shv(\CY_2)$ defined by kernels}
is by definition
$$\Shv(\CY_1\times \CY_2).$$

\medskip

An object $\CQ\in \Shv(\CY_1\times \CY_2)$ defines an actual functor
\begin{equation} \label{e:functor defined by kernel}
\sQ:\Shv(\CY_1)\to \Shv(\CY_2), \quad \CF\mapsto (p_2)_\blacktriangle(p_1^!(\CF)\sotimes \CQ).
\end{equation}

\medskip

We shall say that a given functor $\Shv(\CY_1)\to \Shv(\CY_2)$ is defined by a kernel if it comes
from an object in $\Shv(\CY_1\times \CY_2)$ by the above procedure. 

\sssec{} \label{sss:sigma}

For an object $\CQ\in \Shv(\CY_1\times \CY_2)$ we will denote by $\CQ^\sigma$ the object of 
$\Shv(\CY_2\times \CY_1)$ obtained by swapping the two factors.

\medskip

The corresponding functor $\sQ^\sigma$ is the dual functor of $\sQ$ with respect to the Verdier 
self-duality of $\Shv(\CY_i)$:
$$\on{ev} _{\CY_1}(\CF_1\otimes \sQ^\sigma(\CF_2))\simeq \on{ev} _{\CY_2}(\sQ(\CF_1)\otimes \CF_2)$$
(see \eqref{e:pairing sheaves Y} for the notation $\on{ev} _\CY$). 

\sssec{} \label{sss:ker Z}

More generally, for any algebraic stack $\CZ$ we have a well-defined 
functor
\begin{equation} \label{e:functors ten Id} 
\on{Id}_\CZ\boxtimes \sQ:\Shv(\CZ\times \CY_1)\to \Shv(\CZ\times \CY_2), \quad 
\CF\mapsto (p_{\CZ,\CY_2})_\blacktriangle(p_{\CZ,\CY_1}^!(\CF)\sotimes p_{\CY_1,\CY_2}^!(\CQ)),
\end{equation} 
where $p_{\CZ,\CY_1}$, $p_{\CZ,\CY_2}$ and $p_{\CY_1,\CY_2}$ are the maps
$$\CZ\times \CY_1\times \CY_2\to \CY_1\times \CY_2,\,\, 
\CZ\times \CY_1\times \CY_2\to \CZ\times \CY_1 \text{ and } \CZ\times \CY_1\times \CY_2\to \CZ\times \CY_2,$$
respectively.  

\medskip

Note that we have a commutative diagram
\begin{equation} \label{e:boxtimes vs otimes}
\CD
\Shv(\CZ)\otimes \Shv(\CY_1) @>>> \Shv(\CZ\times \CY_1) \\
@V{\on{Id}_{\Shv(\CZ)}\otimes \sQ}VV @VV{\on{Id}_\CZ\boxtimes \sQ}V \\
\Shv(\CZ)\otimes \Shv(\CY_2) @>>> \Shv(\CZ\times \CY_2).
\endCD
\end{equation} 

\sssec{}

It follows from the description of compact generators in \secref{sss:Verdier compatible bis} that if $\CQ$ is compact, then 
the functors $\on{Id}_\CZ\boxtimes \sQ$ send compact objects to compact objects. 

\sssec{} \label{sss:compat kernels new}

Using \secref{sss:base change triangle}, we obtain that 
the functors $\on{Id}_\CZ\boxtimes \sQ$ commute with the following operations:

\begin{itemize} 

\item For a map $f:\CZ'\to \CZ$, the diagram
$$
\CD
\Shv(\CZ'\times \CY_1)  @>{\on{Id}_{\CZ'}\boxtimes \sQ}>> \Shv(\CZ'\times \CY_2) \\
@V{(f\times \on{id})_\blacktriangle}VV @VV{(f\times \on{id})_\blacktriangle}V \\
\Shv(\CZ\times \CY_1)  @>{\on{Id}_\CZ\boxtimes \sQ}>> \Shv(\CZ\times \CY_2) 
\endCD
$$
commutes.

\smallskip

\item For a map $f:\CZ'\to \CZ$, the diagram
$$
\CD
\Shv(\CZ'\times \CY_1)  @>{\on{Id}_{\CZ'}\boxtimes \sQ}>> \Shv(\CZ'\times \CY_2) \\
@A{(f\times \on{id})^!}AA @AA{(f\times \on{id})^!}A \\
\Shv(\CZ\times \CY_1)  @>{\on{Id}_\CZ\boxtimes \sQ}>> \Shv(\CZ\times \CY_2) 
\endCD
$$
commutes.

\smallskip

\item For $\CF\in \Shv(\CZ)$, the diagram
$$
\CD
\Shv(\CZ\times \CZ'\times \CY_1)  @>{\on{Id}_{\CZ\times \CZ'}\boxtimes \sQ}>> \Shv(\CZ\times \CZ'\times \CY_2) \\
@A{\CF\boxtimes-}AA @AA{\CF\boxtimes-}A \\
\Shv(\CZ'\times \CY_1)  @>{\on{Id}_{\CZ'}\boxtimes \sQ}>> \Shv(\CZ\times \CY_2) 
\endCD
$$
commutes.

\end{itemize} 

Vice versa, a system of functors $\on{Id}_\CZ\boxtimes \sQ$ as in \eqref{e:functors ten Id} that satisfies the above three properties,
satisfying an appropriate system of compatibilities for diagrams of morphisms between the $\CZ$'s, 
comes from a uniquely defined object $\CQ\in \Shv(\CY_1\times \CY_2)$. Namely, $\CQ$ is recovered as
$$(\on{Id}_{\CY_1}\boxtimes \sQ)(\on{u}_{\CY_1}),$$
where $\on{u}_{\CY_1}$ is as in \eqref{u:notation}.

\sssec{}

Let $\CQ\in \Shv(\CY_1\times \CY_2)$ and $\CQ'\in \Shv(\CY'_1\times \CY'_2)$ be a pair of objects.
We can consider
$$(\CQ\boxtimes \CQ')^{\sigma_{2,3}}\in \Shv((\CY_1\times \CY'_1)\times (\CY_2\times \CY'_2)).$$

We will denote the corresponding functor $\Shv(\CY_1\times \CY'_1)\to \Shv(\CY_2\times \CY'_2)$ by $\sQ\boxtimes \sQ'$.
We have
\begin{equation} \label{e:order does not matter}
(\sQ\boxtimes \on{Id}_{\CY'_2})\circ (\on{Id}_{\CY_1}\boxtimes \sQ') \simeq \sQ\boxtimes \sQ'\simeq 
(\on{Id}_{\CY_2}\boxtimes \sQ') \circ (\sQ\boxtimes \on{Id}_{\CY'_1}).
\end{equation}

\sssec{} \label{sss:comp corr}

For $\CQ_{1,2}\in \Shv(\CY_1\times \CY_2)$ and $\CQ_{2,3}\in \Shv(\CY_2\times \CY_3)$ we have a naturally defined composition
$$\CQ_{1,3}:=\CQ_{2,3}\star \CQ_{1,2}:=(p_{1,3})_\blacktriangle (p_{2,3}^!(\CQ_{2,3})\sotimes p_{1,2}^!(\CQ_{1,2}))\in \Shv(\CY_1\times \CY_3).$$

From \secref{sss:base change triangle}, we obtain that for any $\CZ$, we have
$$\on{Id}_\CZ\boxtimes \sQ_{1,3}\simeq (\on{Id}_\CZ\boxtimes \sQ_{2,3})\circ (\on{Id}_\CZ\boxtimes \sQ_{1,2}).$$

Note that we have
\begin{equation} \label{e:green formula}
(\on{Id}_{\CY_1}\boxtimes \sQ_{2,3})(\CQ_{1,2})
\simeq \CQ_{1,3} \simeq (\sQ_{1,2} \boxtimes \on{Id}_{\CY_3})(\CQ_{2,3}).
\end{equation}

\sssec{} \label{sss:2-categ}

We can form an $(\infty,2)$-category whose objects are algebraic stacks, and whose categories of 1-morphisms
are $\Shv(\CY_1\times \CY_2)$, with composition functors defined as above.  (For the purposes of this paper,
we only need the \emph{homotopy category} of this $(\infty,2)$-category, i.e., we can replace the \emph{spaces}
of 2-morphisms by their sets of connected components.) 

\medskip

For a given $\CY$, the unit 1-morphism in the above 2-category is given by the object
\begin{equation} \label{u:notation again}
\on{u}_\CY=(\Delta_\CY)_*(\omega_\CY)\in \Shv(\CY\times \CY)
\end{equation}
of \eqref{u:notation}. 

\ssec{A discontinuous version}

\sssec{} \label{sss:ker disc}

Note that in addition to the functor \eqref{e:functor defined by kernel}, one can consider the functor
\begin{equation} \label{e:functor defined by kernel disc}
\sQ_{\on{disc}}:\Shv(\CY_1)\to \Shv(\CY_2), \quad \CF\mapsto (p_2)_*(p_1^!(\CF)\sotimes \CQ).
\end{equation}

We have a tautologically defined map
\begin{equation} \label{e:ker to disc}
\sQ \to \sQ_{\on{disc}}. 
\end{equation}

It follows from the description of compact generators of $\Shv(-)$ in \secref{sss:Verdier compatible bis} and
the projection formula that 
the natural transformation \eqref{e:ker to disc} is an isomorphism when evaluated
on compact objects. 

\medskip

In particular, both sides of \eqref{e:ker to disc} send compact objects to constructible ones.  

\medskip

Similarly, \eqref{e:ker to disc} is an isomorphism if $\CQ$ is compact as an object of $\Shv(\CY_1\times \CY_2)$. 

\medskip

If $\CQ$ is just constructible, in general, neither side in \eqref{e:ker to disc} preserves constructibility. However, the functor $\sQ$
has a cohomological amplitude bounded on the right, and the functor $\sQ_{\on{disc}}$
has a cohomological amplitude bounded on the left, and both functors send constructible objects to
objects with constructible perverse cohomologies, see \secref{sss:constr almost preserved}. 

\sssec{} \label{sss:boxtimes disc}

We can similarly consider the (a priori, discontinuous) functors
$$\on{Id}_\CZ\boxtimes \sQ_{\on{disc}}:\Shv(\CZ\times \CY_1)\to \Shv(\CZ\times \CY_2),$$
namely
$$(\on{Id}_\CZ\boxtimes \sQ_{\on{disc}})(\CF):=
(p_{\CZ,\CY_2})_*(p_{\CZ,\CY_1}^!(\CF)\sotimes p_{\CY_1,\CY_2}^!(\CQ)).$$

The functors $\on{Id}_\CZ\boxtimes \sQ_{\on{disc}}$ make the diagrams similar to ones in \secref{sss:compat kernels new} commute, where 
in the first diagram we use $-_*$ instead of $-_\blacktriangle$. 

\medskip

We have the natural transformations
\begin{equation} \label{e:disc}
\on{Id}_\CZ\boxtimes \sQ\to \on{Id}_\CZ\boxtimes \sQ_{\on{disc}}.
\end{equation}

\sssec{} \label{sss:safe}

We shall say that $\sQ$ is \emph{safe} (as a functor defined by a kernel) if the natural transformations
\eqref{e:disc} are isomorphisms. 

\medskip

By definition, $\sQ$ is safe if and only if the functors $\on{Id}_\CZ\boxtimes \sQ_{\on{disc}}$ are 
actually continuous. 

\medskip

For example, if $\CQ$ is compact as an object of $\Shv(\CY_1\times \CY_2)$, then $\sQ$ is safe. 

\medskip

In addition, $\sQ$ is automatically safe if $\CY_1$ is a scheme. 

\sssec{} \label{sss:boxtimes disc bis}

It follows from \propref{p:triang and *} that if $\CQ$ and $\CF$ are both constructible, 
the following conditions are equivalent:

\smallskip

\noindent{(i)} The natural transformation \eqref{e:disc} becomes an isomorphism when evaluated on $\CF$;

\smallskip

\noindent{(ii)} The object $(\on{Id}_\CZ\boxtimes \sQ)(\CF)$ is cohomologically bounded on the left;

\smallskip

\noindent{(iii)} The object $(\on{Id}_\CZ\boxtimes \sQ_{\on{disc}})(\CF)$ is cohomologically bounded on the right.

\medskip

In particular, we obtain that if $\CQ$ is constructible and $\sQ$ safe, then the functors in \eqref{e:disc} preserve constructibility
and are of finite cohomological dimension. 

\sssec{} \label{sss:conv disc}

Given two objects $\CQ_{1,2}\in \Shv(\CY_1\times \CY_2)$ and $\CQ_{2,3}\in \Shv(\CY_2\times \CY_3)$ we can form
$$\CQ_{1,3}:=\CQ_{2,3}\star_{\on{disc}} \CQ_{1,2}:=(p_{1,3})_* (p_{2,3}^!(\CQ_{2,3})\sotimes p_{1,2}^!(\CQ_{1,2}))]\in \Shv(\CY_1\times \CY_3).$$

We have 
$$\on{Id}_\CZ\boxtimes (\sQ_{1,3})_{\on{disc}}\simeq (\on{Id}_\CZ\boxtimes (\sQ_{2,3})_{\on{disc}})\circ (\on{Id}_\CZ\boxtimes (\sQ_{1,2})_{\on{disc}}).$$

The natural transformation \eqref{e:disc} gives rise to a map
$$\CQ_{2,3}\star \CQ_{1,2}\to \CQ_{2,3}\star_{\on{disc}} \CQ_{1,2},$$
which is an isomorphism if either $\CQ_{2,3}$ or $\CQ_{1,2}^\sigma$ is safe. 

\ssec{Functors \emph{co}-defined by kernels}

In this subsection we explore the Verdier-dual notion: that of functors \emph{codefined} by kernels. 

\medskip

A special family of interest is formed by functors that are both \emph{defined and codefined} by kernels. 

\sssec{} \label{sss:codefined}

Starting from an object $\CP\in \Shv(\CY_1\times \CY_2)$ we can produce another family of functors, denoted
\begin{equation} \label{e:functors ten Id co} 
\on{Id}_\CZ\boxtimes \sP^l:\Shv(\CZ\times \CY_1)\to \Shv(\CZ\times \CY_2), \quad 
\CF\mapsto (p_{\CZ,\CY_2})_!(p_{\CZ,\CY_1}^*(\CF)\overset{*}\otimes p_{\CY_1,\CY_2}^*(\CP)).
\end{equation} 

We will refer to this family as functors \emph{codefined} by $\CP$. They satisfy compatibility
conditions parallel to ones in \secref{sss:compat kernels new} with $-^!$ replaced by $-^*$ and $-_\blacktriangle$ 
replaced by $-_!$. 

\sssec{} \label{sss:left convolution}

Let $\CP_{1,2}\in \Shv(\CY_1\times \CY_2)$ and $\CP_{2,3}\in \Shv(\CY_2\times \CY_3)$ be a pair of objects.
Define
$$\CP_{1,3}:=\CP_{2,3}\overset{l}\star \CP_{1,2}:=(p_{1,3})_! (p_{2,3}^*(\CQ_{2,3})\overset{*}\otimes p_{1,2}^*(\CQ_{1,2}))\in \Shv(\CY_1\times \CY_3).$$

We have
$$\on{Id}_\CZ\boxtimes \sP^l_{1,3}\simeq (\on{Id}_\CZ\boxtimes \sP^l_{2,3})\circ (\on{Id}_\CZ\boxtimes \sP^l_{1,2}).$$

Furthermore,
\begin{equation} \label{e:green formula left}
(\on{Id}_{\CY_1}\boxtimes \sP^l_{2,3})(\CP_{1,2})
\simeq \CP_{1,3} \simeq (\sP^l_{1,2} \boxtimes \on{Id}_{\CY_3})(\CP_{2,3}).
\end{equation}

\sssec{} \label{sss:left to right Verdier}

Let $\CQ\in \Shv(\CY_1\times \CY_2)$ be constructible, and set $\CP:=\BD^{\on{Verdier}}(\CQ)$. Let $\CF\in \Shv(\CZ\times \CY_1)^{\on{constr}}$
be an object satisfying the equivalent conditions of \secref{sss:boxtimes disc bis}. 

\medskip

We obtain that in this case
$$(\on{Id}_\CZ\boxtimes \sP^l)(\BD^{\on{Verdier}}(\CF))\simeq
\BD^{\on{Verdier}}\left((\on{Id}_\CZ\boxtimes \sQ_{\on{disc}})(\CF)\right) \simeq 
\BD^{\on{Verdier}}\left((\on{Id}_\CZ\boxtimes \sQ)(\CF)\right).$$

\sssec{} \label{sss:left to right}

Let us be given two objects
$$\CQ\in \Shv(\CY_1\times \CY_2) \text{ and } \wt\CP\in \Shv(\wt\CY_1\times \wt\CY_2).$$

From Sects. \ref{sss:strange base change 1}-\ref{sss:strange base change 2}, we obtain a natural transformation 
\begin{equation} \label{e:left to right}
(\on{Id}_{\CY_2}\boxtimes \wt\sP^l)\circ (\sQ\boxtimes \on{Id}_{\wt\CY_1}) \to 
(\sQ\boxtimes \on{Id}_{\wt\CY_2}) \circ (\on{Id}_{\CY_1}\boxtimes \wt\sP^l),
\quad \Shv(\CY_1\times \wt\CY_1)\rightrightarrows \Shv(\CY_2\times \wt\CY_2),
\end{equation} 
i.e., a 2-morphism in the diagram
\begin{equation} \label{e:left to right xy}
\xy
(0,0)*+{\Shv(\CY_1\times \wt\CY_1)}="A";
(50,0)*+{\Shv(\CY_2\times \wt\CY_1)}="B";
(0,-30)*+{\Shv(\CY_1\times \wt\CY_2)}="C";
(50,-30)*+{\Shv(\CY_2\times \wt\CY_2)}="D";
{\ar@{->}^{\sQ\boxtimes \on{Id}_{\wt\CY_1}} "A";"B"};
{\ar@{->}_{\sQ\boxtimes \on{Id}_{\wt\CY_2}} "C";"D"};
{\ar@{->}_{\on{Id}_{\CY_1}\boxtimes \wt\sP^l} "A";"C"};
{\ar@{->}^{\on{Id}_{\CY_2}\boxtimes \wt\sP^l} "B";"D"};
{\ar@{=>} "B";"C"};
\endxy
\end{equation} 

\sssec{}

Similarly (but more tautologically), we obtain a natural transformation 
\begin{equation} \label{e:left to right disc}
(\on{Id}_{\CY_2}\boxtimes \wt\sP^l)\circ (\sQ_{\on{disc}}\boxtimes \on{Id}_{\wt\CY_1}) \to 
(\sQ_{\on{disc}}\boxtimes \on{Id}_{\wt\CY_2}) \circ (\on{Id}_{\CY_1}\boxtimes \wt\sP^l).
\end{equation} 

\sssec{} \label{sss:defined and codefined}

In particular, given a functor $\sQ$ defined by a kernel, the diagrams in \secref{sss:compat kernels new} 
with $-^!$ replaced by $-^*$ and $-_\blacktriangle$ replaced by $-_!$, \emph{commute up to natural transformations}.

\medskip

We shall say that $\sQ$ is \emph{defined and codefined by kernels} if these natural transformations
are isomorphisms. In particular, a functor defined and codefined by a kernel gives rise to a functor 
codefined by a kernel, i.e., it corresponds to an object $\CP\in \Shv(\CY_1\times \CY_2)$, so that
\begin{equation} \label{e:left and right}
\on{Id}_\CZ\boxtimes \sQ \simeq \on{Id}_\CZ\boxtimes \sP^l
\end{equation} 
for all $\CZ$, in a way compatible with $!-$ and $*-$ pullbacks and $!-$ and $\blacktriangle-$ 
pushforwards. 

\medskip

Note that in this case, the natural transformations \eqref{e:left to right} are the \emph{isomorphisms}
$$(\on{Id}_{\CY_2}\boxtimes \wt\sP^l)\circ (\sP^l\boxtimes \on{Id}_{\wt\CY_1}) \simeq \sP^l\boxtimes \wt\sP^l\simeq 
(\sP^l\boxtimes \on{Id}_{\wt\CY_2}) \circ (\on{Id}_{\CY_1}\boxtimes \wt\sP^l).$$

\medskip

In what follows we will give an explicit description of $\CP$ in terms of $\CQ$, see \propref{p:left via right}. 

\sssec{}

Similarly, given a functor $\sP^l$ codefined by a kernel $\CP\in \Shv(\CY_1\times \CY_2)$, it may have the property that
it is \emph{defined and codefined by kernels}, and thus corresponds to an object $\CQ\in \Shv(\CY_1\times \CY_2)$. 

\ssec{The miraculous functor} \label{ss:Mir}

In this subsection we recall the construction of the \emph{miraculous} endofunctor 
of $\Shv(\CY)$, defined for any algebraic stack. 

\medskip

Apart from its other applications in the bulk of the paper, the miraculous functor allows
us to relate functors defined and codefined by kernels. 

\sssec{} \label{sss:Mir qc}

Let $\CY$ be a quasi-compact algebraic stack. The miraculous functor on $\CY$, denoted $\Mir_\CY$,
is the functor defined by the kernel
\begin{equation} \label{e:ps u first}
\on{ps-u}_\CY:=(\Delta_\CY)_!(\sfe_\CY).
\end{equation}

\sssec{} \label{sss:left-to-right trans}

For $\CQ\in \Shv(\CY_1\times \CY_2)$, set 
$$\CP\simeq (\Mir_{\CY_1}\boxtimes \on{Id}_{\CY_2})(\CQ)\in \Shv(\CY_1\times \CY_2).$$

We claim that there is a canonically defined natural transformation
\begin{equation} \label{e:left-to-right trans}
\on{Id}_\CZ \boxtimes \sP^l  \to \on{Id}_\CZ \boxtimes \sQ .
\end{equation}

Indeed, the value of \eqref{e:left-to-right trans} on a given $\CF\in \Shv(\CZ\times \CY_1)$ is the value on
$\on{ps-u}_{\CY_1}$ of 
the natural transformation \eqref{e:left to right xy} in the diagram
$$
\xy
(0,0)*+{\Shv(\CY_1\times \CY_1)}="A";
(50,0)*+{\Shv(\CY_1\times \CY_2)}="B";
(0,-30)*+{\Shv(\CZ \times \CY_1)}="C";
(50,-30)*+{\Shv(\CZ \times \CY_2)}="D";
{\ar@{->}^{\on{Id}_{\CY_1}\boxtimes \sQ} "A";"B"};
{\ar@{->}_{\on{Id}_{\CZ}\boxtimes \sQ} "C";"D"};
{\ar@{->}_{(\sF^\sigma)^l\boxtimes \on{Id}_{\CY_1}} "A";"C"};
{\ar@{->}^{(\sF^\sigma)^l\boxtimes \on{Id}_{\CY_2}} "B";"D"};
{\ar@{=>} "B";"C"};
\endxy
$$
where $(\sF^\sigma)^l$ is the functor codefined by $\CF^\sigma\in \Shv(\CY_1\times \CZ)$. 

\sssec{}

The following is immediate from the above diagram:

\begin{prop} \label{p:left via right} \hfill

\smallskip

\noindent{\em(a)} A functor defined by $\CQ$ is defined and codefined by a kernel if and only if
the natural transformations \eqref{e:left-to-right trans} are isomorphisms (for all $\CZ$). 

\smallskip

\noindent{\em(b)} In the situation of point \em{(a)}, the co-defining object is given by 
$$\CP\simeq (\Mir_{\CY_1}\boxtimes \on{Id}_{\CY_2})(\CQ).$$
\end{prop}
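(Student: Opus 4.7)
The plan is to extract both parts of the proposition directly from the diagram constructed in \secref{sss:left-to-right trans}, where the natural transformation \eqref{e:left-to-right trans} is realized by evaluating \eqref{e:left to right xy} at $\on{ps-u}_{\CY_1} \in \Shv(\CY_1 \times \CY_1)$ with the specific choice $\wt\CP = \CF^\sigma$. This specialization is the only tool needed.

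For the ``only if'' direction of (a), I would invoke the definition in \secref{sss:defined and codefined}: assuming $\sQ$ is codefined by some kernel, the natural transformations \eqref{e:left to right} are isomorphisms for all $\wt\CP$, so in particular they remain isomorphisms after the specialization of \secref{sss:left-to-right trans}, yielding that \eqref{e:left-to-right trans} is an isomorphism. For the ``if'' direction I would set $\CP := (\Mir_{\CY_1} \boxtimes \on{Id}_{\CY_2})(\CQ)$; the hypothesis then supplies a family of isomorphisms $\on{Id}_\CZ \boxtimes \sP^l \simeq \on{Id}_\CZ \boxtimes \sQ$ indexed by $\CZ$, and I would check that this family is compatible with the structural operations of \secref{sss:codefined} (pullbacks along morphisms of $\CZ$, and external products), which follows from the naturality of \eqref{e:left-to-right trans} in both $\CF$ and in maps between the $\CZ$'s. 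This exhibits $\sQ$ as codefined by the kernel $\CP$.

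For (b), the codefining kernel is uniquely determined: any functor of the form $\sP^l$ recovers $\CP$ as $(\on{Id}_{\CY_1} \boxtimes \sP^l)(\on{ps-u}_{\CY_1})$, since a short projection-formula calculation using $\on{ps-u}_{\CY_1} = (\Delta_{\CY_1})_!(\sfe_{\CY_1})$ shows that $\on{ps-u}_{\CY_1}$ codefines $\on{Id}_{\Shv(\CY_1)}$. Substituting the identification $\sP^l \simeq \sQ$ supplied by (a), the kernel becomes $(\on{Id}_{\CY_1} \boxtimes \sQ)(\on{ps-u}_{\CY_1})$, and the convolution identity \eqref{e:green formula}---applied to the kernels $\on{ps-u}_{\CY_1}$ (which defines $\Mir_{\CY_1}$) and $\CQ$---rewrites this as $(\Mir_{\CY_1} \boxtimes \on{Id}_{\CY_2})(\CQ)$, as claimed. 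The only step calling for any attention is the compatibility check in part (a), and this is pure bookkeeping once the diagram of \secref{sss:left-to-right trans} has been unpacked.
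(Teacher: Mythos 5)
Your proposal is correct and tracks the route the paper intends: the paper offers no written proof beyond ``immediate from the above diagram,'' and your argument is a faithful unpacking of that diagram. The two key ingredients you identify are exactly the right ones: (i) specializing \eqref{e:left to right xy} at $\on{ps-u}_{\CY_1}$ with $\wt\CP = \CF^\sigma$, so that the ``only if'' half of (a) follows directly from the observation in \secref{sss:defined and codefined} that for a defined-and-codefined functor the natural transformations \eqref{e:left to right} are isomorphisms; and (ii) the recovery formula $\CP = (\on{Id}_{\CY_1}\boxtimes \sP^l)(\on{ps-u}_{\CY_1})$ combined with \eqref{e:green formula} (noting $\on{ps-u}_{\CY_1}$ is the kernel of $\Mir_{\CY_1}$), which pins down the codefining object in (b). The only place you gesture rather than compute is the compatibility of the isomorphisms $\on{Id}_\CZ\boxtimes\sP^l\simeq \on{Id}_\CZ\boxtimes\sQ$ with the $*$-pullback and $!$-pushforward structural maps in the ``if'' half of (a); since \eqref{e:left-to-right trans} is assembled entirely from the 2-categorical coherence data, this compatibility does follow, and your level of detail matches what the paper itself asks the reader to accept.
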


%
%

%
%
%

\sssec{}  \label{sss:Mir and ev}

Note that in the particular case of 
$$\CY_1=\CY, \quad \CY_2=\CZ=\on{pt}, \quad \CQ=\CF\in \Shv(\CY),$$
the natural transformation \eqref{e:left-to-right trans} gives rise to a map
\begin{equation} \label{e:Mir and ev}
\on{ev}_\CY^l(\CF',\Mir_\CY(\CF))\to \on{ev}_\CY(\CF',\CF), \quad \CF'\in \Shv(\CY),
\end{equation}
where we recall that $\on{ev}^l_\CY(-,-)$ and $\on{ev}_\CY(-,-)$ denote the pairings
$$\on{C}^\cdot_c(\CY,-\overset{*}\otimes -) \text{ and } \on{C}^\cdot_\blacktriangle(\CY,-\sotimes -),$$
respectively. 

\medskip

Let us write down the map \eqref{e:Mir and ev} explicitly. Namely, it is obtained by applying the natural
transformation
$$((\on{C}^\cdot_c(\CY,-)\circ \Delta_\CY^*)\boxtimes \on{Id}_{\on{pt}})\circ (\on{Id}_{\CY\times \CY}\boxtimes (\on{C}^\cdot_\blacktriangle(\CY,-)\circ \Delta_\CY^!))\to
(\on{Id}_{\on{pt}}\boxtimes (\on{C}^\cdot_\blacktriangle(\CY,-)\circ \Delta_\CY^!))\circ ((\on{C}^\cdot_c(\CY,-)\circ \Delta_\CY^*)\boxtimes \on{Id}_{\CY\times \CY})$$
of \eqref{e:left to right} to the object
$$\CF\boxtimes \on{ps-u}_\CY \boxtimes \CF'\in \Shv(\CY\times \CY\times \CY\times \CY).$$

\sssec{}  \label{sss:left vs right conv}

Note also that in the situation of Sects. \ref{sss:comp corr} and \ref{sss:left convolution}, for
$$\CQ_{1,2}\in \Shv(\CY_1\times \CY_2) \text{ and } \CQ_{2,3}\in \Shv(\CY_2\times \CY_3)$$
we have a map
\begin{equation} \label{e:left vs right conv}
\CQ_{2,3}\lstar ((\on{Id}_{\CY_1}\boxtimes \Mir_{\CY_2})(\CQ_{1,2})) \to \CQ_{2,3}\star \CQ_{1,2}.
\end{equation}

It follows from \propref{p:left via right} that if $\sQ_{1,2}$ is defined and codefined by a kernel,
the map \eqref{e:left vs right conv} is an isomorphism. 

\sssec{} \label{sss:Id l}

Consider the endofunctor \emph{codefined} by $\on{u}_\CY$; denote it by $\on{Id}^l_\CY$. 

\medskip

Let $\CP$ be an object of $\Shv(\CY_1\times \CY_2)$ and set 
$$\CQ:=(\on{Id}^l_{\CY_1}\boxtimes \on{Id}_{\CY_2})(\CP).$$

As in \secref{sss:left-to-right trans} we produce a natural transformation
\begin{equation} \label{e:left-to-right trans bis}
\on{Id}_\CZ\boxtimes \sP^l\to \on{Id}_\CZ\boxtimes \sQ.
\end{equation} 

As in \propref{p:left via right} we have:

\begin{prop} \label{p:right via left} \hfill

\smallskip

\noindent{\em(a)} A functor codefined by $\CP$ is defined and codefined by a kernel if and only if
the natural transformations \eqref{e:left-to-right trans bis} are isomorphisms (for all $\CZ$). 

\smallskip

\noindent{\em(b)} In the situation of point \em{(a)}, the defining object is given by 
$$\CQ\simeq (\on{Id}^l\boxtimes \on{Id}_{\CY_2})(\CP).$$
\end{prop}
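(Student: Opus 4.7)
The plan is to prove this proposition by running the argument of \propref{p:left via right} in parallel, interchanging the roles of $\Mir_\CY$ and $\on{Id}^l_\CY$, and correspondingly those of $\on{ps-u}_\CY$ and $\on{u}_\CY$. The underlying symmetry is that $\on{ps-u}_\CY$ is the unit for the $\lstar$-convolution (and defines $\Mir_\CY$ via the $\star$-interpretation), while $\on{u}_\CY$ is the unit for the $\star$-convolution (and codefines $\on{Id}^l_\CY$ via the $\lstar$-interpretation).

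The first step is to construct the natural transformation \eqref{e:left-to-right trans bis} concretely. Paralleling \secref{sss:left-to-right trans}, its value on $\CF \in \Shv(\CZ \times \CY_1)$ is obtained by evaluating the general natural transformation \eqref{e:left to right} on the object $\on{u}_{\CY_1}$, applied to the square
\[
\xy
(0,0)*+{\Shv(\CY_1\times \CY_1)}="A";
(45,0)*+{\Shv(\CY_1\times \CY_2)}="B";
(0,-25)*+{\Shv(\CZ \times \CY_1)}="C";
(45,-25)*+{\Shv(\CZ \times \CY_2),}="D";
{\ar@{->}^{\on{Id}_{\CY_1}\boxtimes \sP^l} "A";"B"};
{\ar@{->}_{\on{Id}_{\CZ}\boxtimes \sP^l} "C";"D"};
{\ar@{->}_{\sF^\sigma\boxtimes \on{Id}_{\CY_1}} "A";"C"};
{\ar@{->}^{\sF^\sigma\boxtimes \on{Id}_{\CY_2}} "B";"D"};
{\ar@{=>} "B";"C"};
\endxy
\]
where $\sF^\sigma$ denotes the functor \emph{defined by the kernel} $\CF^\sigma \in \Shv(\CY_1 \times \CZ)$. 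The top edge sends $\on{u}_{\CY_1}$ to $\CP$ (since the reconstruction formula of \secref{sss:compat kernels new} gives $(\on{Id}_{\CY_1}\boxtimes \sP^l)(\on{u}_{\CY_1}) \simeq \CP$), and tracing the two circuits around the square should yield $(\on{Id}_\CZ \boxtimes \sP^l)(\CF)$ for the top-then-right composition and $(\on{Id}_\CZ \boxtimes \sQ)(\CF)$ for the left-then-bottom composition --- the latter identification using the hypothesis $\CQ = (\on{Id}^l_{\CY_1} \boxtimes \on{Id}_{\CY_2})(\CP)$ together with the associativity isomorphisms \eqref{e:order does not matter} and \eqref{e:green formula}--\eqref{e:green formula left} interweaving the $\star$- and $\lstar$-convolutions.

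Point (a) then reduces to formal manipulations. The forward direction is essentially tautological: if $\sP^l$ is defined and codefined by a kernel, then by \secref{sss:defined and codefined} the defining and codefining structures are related by canonical natural transformations that force \eqref{e:left-to-right trans bis} to be an isomorphism. Conversely, the commutativity of the compatibility diagrams of \secref{sss:compat kernels new} for the family $\on{Id}_\CZ \boxtimes \sQ$ (automatic for a functor defined by a kernel) transports via the hypothesised isomorphisms \eqref{e:left-to-right trans bis} into the corresponding compatibilities for the family $\on{Id}_\CZ \boxtimes \sP^l$, exhibiting $\sP^l$ as defined by the kernel $\CQ$. Point (b) then follows from the reconstruction formula $\CQ \simeq (\on{Id}_{\CY_1} \boxtimes \sP^l)(\on{u}_{\CY_1})$ combined with the tautological identity $(\on{Id}_{\CY_1} \boxtimes \sP^l)(\on{u}_{\CY_1}) \simeq (\on{Id}^l_{\CY_1} \boxtimes \on{Id}_{\CY_2})(\CP)$, the latter being immediate from unfolding the definitions of $\on{Id}^l$ and $\sP^l$.

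The main technical delicacy I anticipate is verifying the identification of the two circuits around the square above with $(\on{Id}_\CZ \boxtimes \sP^l)(\CF)$ and $(\on{Id}_\CZ \boxtimes \sQ)(\CF)$; this is the same bookkeeping involving interaction of the $\star$- and $\lstar$-convolution structures that appears in the proof of \propref{p:left via right}, and should not introduce any genuinely new difficulty beyond what is already handled there.
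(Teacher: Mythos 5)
Your overall plan --- swap $\Mir_{\CY_1}$ with $\on{Id}^l_{\CY_1}$, swap $\on{ps\text{-}u}_{\CY_1}$ with $\on{u}_{\CY_1}$, and run the same square diagram with the roles of defined and codefined functors interchanged --- is exactly the parallelism the paper invokes (it gives no written proof, just ``As in \propref{p:left via right} we have:''). However, the bookkeeping in your tracing of the diagram is wrong, and in a way that makes the argument self-contradictory as written.

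The central error is the claim that ``$(\on{Id}_{\CY_1}\boxtimes \sP^l)(\on{u}_{\CY_1}) \simeq \CP$'' by the reconstruction formula. The reconstruction formula of \secref{sss:compat kernels new} says $\CQ=(\on{Id}_{\CY_1}\boxtimes\sQ)(\on{u}_{\CY_1})$ for a \emph{defined} functor, where $\on{u}_{\CY_1}$ is the $\star$-unit. The dual reconstruction for a \emph{codefined} functor uses the $\lstar$-unit $\on{ps\text{-}u}_{\CY_1}$: one has $\CP=(\on{Id}_{\CY_1}\boxtimes\sP^l)(\on{ps\text{-}u}_{\CY_1})$. Applying $\sP^l$ to $\on{u}_{\CY_1}$ instead gives, by \eqref{e:green formula left} with $\CP_{1,2}=\on{u}_{\CY_1}$ and $\CP_{2,3}=\CP$,
$$(\on{Id}_{\CY_1}\boxtimes\sP^l)(\on{u}_{\CY_1})\simeq (\on{Id}^l_{\CY_1}\boxtimes\on{Id}_{\CY_2})(\CP)=\CQ,$$
not $\CP$. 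You in fact state exactly this correct identity at the end, when arguing point (b) --- directly contradicting the earlier claim.

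With the top edge producing $\CQ$ rather than $\CP$, the labeling of the two circuits is also swapped: top-then-right yields $(\on{Id}_\CZ\boxtimes\sQ)(\CF)$ (apply $\sF^\sigma\boxtimes\on{Id}_{\CY_2}$ to $\CQ$), while left-then-bottom yields $(\on{Id}_\CZ\boxtimes\sP^l)(\CF)$ (the left edge sends $\on{u}_{\CY_1}\mapsto\CF$, then the bottom applies $\sP^l$). Relatedly, the $2$-morphism in your xy-square should go from $C$ to $B$, not $B$ to $C$: the natural transformation \eqref{e:left to right} always points from ``defined-then-codefined'' to ``codefined-then-defined,'' and in your square the defined functor $\sF^\sigma$ sits vertically while the codefined $\sP^l$ sits horizontally, the opposite of the configuration in \secref{sss:left-to-right trans}. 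With all three corrections made, the two circuits and the arrow direction assemble into \eqref{e:left-to-right trans bis} exactly as desired; the construction is sound, but the proof as written cannot be compiled into a coherent argument without fixing these compensating sign-level mistakes.
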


\sssec{} \label{sss:two versions left-to-right trans}

The natural transformations \eqref{e:left-to-right trans} and \eqref{e:left-to-right trans bis} are compatible as follows. 
Start with an object $\CQ\in \Shv(\CY_1\times \CY_2)$, and set
$$\CP:=(\Mir_{\CY_1}\boxtimes \on{Id}_{\CY_2})(\CQ) \text{ and }
\CQ':=(\on{Id}^l_{\CY_1}\boxtimes \on{Id}_{\CY_2})(\CP).$$

Using adjunction (see \secref{sss:codef left} below) and the natural transformation \eqref{e:disc}, we obtain a map
\begin{equation} \label{e:back to Q}
\CQ'\to \CQ.
\end{equation}

It is straightforward to check that the composition
$$\on{Id}_\CZ\boxtimes \sP^l \overset{\text{\eqref{e:left-to-right trans bis}}}\longrightarrow 
\on{Id}_\CZ\boxtimes \sQ' \overset{\text{\eqref{e:back to Q}}}\longrightarrow \on{Id}_\CZ\boxtimes \sQ$$
identifies with the map \eqref{e:left-to-right trans}. 

\medskip

In particular, if $\sQ$ is defined and codefined by a kernel, all of the above maps are isomorphisms,
including the map \eqref{e:back to Q}. 

\sssec{} \label{sss:right vs left conv}

As in \secref{sss:left vs right conv}, for
$$\CP_{1,2}\in \Shv(\CY_1\times \CY_2) \text{ and } \CP_{2,3}\in \Shv(\CY_2\times \CY_3),$$
we have a map
\begin{equation} \label{e:right vs left conv}
\CP_{2,3}\lstar \CP_{1,2}\to \CP_{2,3}\star ((\on{Id}_{\CY_1}\boxtimes \on{Id}^l_{\CY_2})(\CP_{1,2})).
\end{equation} 

If $\sP^l_{1,2}$ is defined and codefined by a kernel,
the map \eqref{e:right vs left conv} is an isomorphism.


%
%
%
%
%

\ssec{Adjunctions for functors defined by kernels}

In this subsection we focus on the phenomenon of adjunction 
of functors defined by kernels. 

\sssec{}

Consider the notion of adjunction of 1-morphisms in the 2-category
introduced in \secref{sss:2-categ}. 

\medskip

Explicitly, given a pair of objects
$$\CQ_{1,2}\in \Shv(\CY_1\times \CY_2) \text{ and } \CQ_{2,1}\in \Shv(\CY_2\times \CY_1),$$
a datum that makes $(\CQ_{1,2},\CQ_{2,1})$ into an adjoint pair is a pair of morphisms
$$\on{u}_{\CY_1}\to \CQ_{2,1}\star \CQ_{1,2} \text{ and } \CQ_{1,2}\star \CQ_{2,1} \to \on{u}_{\CY_2}$$
that satisfy the usual axioms.

\sssec{} \label{sss:adj naive}

Note that in this case, for any $\CZ$, the corresponding functors
\begin{equation} \label{e:actual functors}
(\on{Id}_\CZ\boxtimes \sQ_{1,2},\on{Id}_\CZ\boxtimes \sQ_{2,1})
\end{equation}
form an adjoint pair.

\medskip

Vice versa, the datum of adjunction $(\CQ_{1,2},\CQ_{2,1})$ is equivalent to the datum of adjunction
of the functors \eqref{e:actual functors} compatible with the isomorphisms from \secref{sss:compat kernels new}.

\begin{rem}
In what follows we will sometimes abuse the terminology as follows: for a functor 
$\sQ:\Shv(\CY_1)\to \Shv(\CY_2)$ defined by a kernel we will say that it \emph{admits an adjoint
as a functor defined by the kernel} if the corresponding object $\CQ\in \Shv(\CY_1\times \CY_2)$
has this property.
\end{rem}

\sssec{} \label{sss:create left adjoint}

Note that if $(\CQ_{1,2},\CQ_{2,1})$
is an adjoint pair, then so is 
$$(\CQ^\sigma_{2,1},\CQ^\sigma_{1,2}).$$

\sssec{}

We claim:

\begin{prop} \label{p:adj constr}
Suppose that $\CQ\in \Shv(\CY_1\times \CY_2)$ admits a right adjoint, to be denoted $\CQ^R\in \Shv(\CY_2\times \CY_1)$.
Then both $\CQ$ and $\CQ^R$ are constructible.
\end{prop}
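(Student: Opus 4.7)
The strategy is to exploit the adjunction at the level of kernels in two stages: first to force both $\sQ$ and $\sQ^R$ to preserve compact objects, and then to leverage this compactness-preservation, together with base change for kernels, to deduce constructibility of the kernels themselves.

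First I would use \secref{sss:adj naive}: the adjunction datum $(\CQ,\CQ^R)$ is equivalent to a compatible system of adjunctions $(\on{Id}_\CZ\boxtimes \sQ)\dashv (\on{Id}_\CZ\boxtimes \sQ^R)$ for every algebraic stack $\CZ$. Since $\sQ^R$ is continuous (being defined by a kernel), its left adjoint $\sQ$ preserves compact objects. Applying the swap of \secref{sss:create left adjoint} to obtain the adjoint pair $(\CQ^{R,\sigma},\CQ^\sigma)$ and repeating the argument, $\sQ^R$ also preserves compact objects. By symmetry, it now suffices to prove that $\CQ$ is constructible.

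Second, I would show constructibility of $\CQ$ by the criterion that $g^!(\CQ)\in \Shv(S)$ be constructible for every smooth affine $g\colon S\to \CY_1\times \CY_2$. Using the standard cover-by-product construction, one can reduce to the case $g=g_1\times g_2$ with $g_i\colon S_i\to\CY_i$ smooth affine. Then base-change \eqref{e:base change triangle} together with the compatibilities of \secref{sss:compat kernels new} lets me commute $(g_1\times\on{id})^!$ (resp.\ $(\on{id}\times g_2)^!$) past the kernel formula $\CQ\simeq (\on{Id}_{\CY_1}\boxtimes \sQ)(\on{u}_{\CY_1})$, rewriting $g^!(\CQ)$ in terms of a composite of $g_2^!$, $\sQ$, and $g_{1,*}$ applied to $\omega_{S_1}$. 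Since $\sQ$ preserves compacts and $g_{1,*}(\omega_{S_1})$ is constructible (it is the pushforward of a constructible sheaf along a schematic map), the resulting object will be constructible in bounded degrees. Applying this symmetrically via $\sQ^R$ and the dual base change against pullbacks from $\CY_2$ controls the singular behavior in both factors simultaneously.

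Third, cohomological boundedness of $\CQ$ is deduced from the triangle identities together with \lemref{l:blacktriangle bdd}. The identity morphism $\on{id}_\CQ$ factors through $\CQ\star \CQ^R\star \CQ$ via unit/counit, and each convolution has a cohomological amplitude bounded from one side by \lemref{l:blacktriangle bdd}; combining this with the fact that $\sQ$ and $\sQ^R$ send compact (hence cohomologically bounded) objects to compact ones pins the amplitude of $\CQ$ into a bounded interval.

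The main obstacle will be the third step: extracting a \emph{two-sided} cohomological amplitude bound from the adjunction data. The functors $(p_2)_\blacktriangle$ and $(p_2)_*$ naturally bound amplitudes on one side only (see \secref{sss:constr almost preserved}–\propref{p:triang and *}), so one has to play the counit (controlling amplitude from above via $\sQ\star \sQ^R\to \on{u}_{\CY_2}$) against the unit (controlling from below via $\on{u}_{\CY_1}\to \sQ^R\star \sQ$), and argue that the retract of $\CQ$ inside $(\CQ\star \CQ^R)\star \CQ$ cannot acquire unbounded cohomology in either direction. The step two base-change reduction is technical but essentially routine once the base-change formalism of Sects.~\ref{sss:base change triangle}–\ref{sss:strange base change 2} is in hand.
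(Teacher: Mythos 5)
Your Step 1 (reducing to showing $\CQ$ constructible via \secref{sss:create left adjoint}) agrees with the paper. Your Step 2 points in the right direction, but the execution is muddled and unnecessarily complicated, and the paper's actual argument is both simpler and avoids your Step 3 entirely. Here is the gap.

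The paper chooses a smooth covering $f\colon S_1\to\CY_1$ of the \emph{first} factor only, and aims to show that $(f\times\on{id})^!(\CQ)\in\Shv(S_1\times\CY_2)$ is \emph{compact} (not merely constructible in bounded degrees). The decisive observation is the identity
$$(f\times\on{id})^!(\CQ)\simeq (\on{Id}_{S_1}\boxtimes\sQ)\bigl((\Gamma_f)_*(\omega_{S_1})\bigr),$$
where $\Gamma_f\colon S_1\to S_1\times\CY_1$ is the graph of $f$; this comes from $\CQ\simeq (\on{Id}_{\CY_1}\boxtimes\sQ)(\on{u}_{\CY_1})$, the compatibilities of \secref{sss:compat kernels new}, and base change. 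Since $\Gamma_f$ is schematic and $S_1$ is affine, $(\Gamma_f)_*(\omega_{S_1})$ is compact; since $\on{Id}_{S_1}\boxtimes\sQ$ has a continuous right adjoint, it preserves compactness. Compact objects are constructible, so $(f\times\on{id})^!(\CQ)$ is constructible, and constructibility of $\CQ$ follows by smooth descent. Your proposal works with $g_{1,*}(\omega_{S_1})$ and a cover of $\CY_1\times\CY_2$ by a product, which is not the same object and is more than needed.

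Your Step 3 — extracting a two-sided cohomological amplitude bound via triangle identities and \lemref{l:blacktriangle bdd} — is precisely the detour the paper's route avoids. Once one knows the pullback is compact, boundedness comes for free (compact $\Rightarrow$ constructible already encodes it). You flagged Step 3 as "the main obstacle," but it is in fact a phantom obstacle created by aiming at constructibility directly rather than passing through compactness. The cleaner aim (compactness after a smooth pullback from the source factor only) is the key idea missing from your proposal.
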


\begin{proof}

First, observe that the statement for $\CQ$ implies that for $\CQ^R$ by \secref{sss:create left adjoint}. 

\medskip

Choose a smooth covering by an affine scheme $f:S_1\to \CY_1$. In order to show that $\CQ$ is constructible, it suffices to show
that $(f\times \on{id})^!(\CQ)$ is compact.

\medskip

Since the diagonal morphism
$$\Gamma_f:S_1\to S_1\times \CY_1$$
is schematic, the object
$$(\Gamma_f)_*(\omega_{S_1})\in \Shv(S_1\times \CY_1)$$
is compact. Note that 
$$(f\times \on{id})^!(\CQ)\simeq (\on{Id}_{S_1}\boxtimes \sQ)((\Gamma_f)_*(\omega_{S_1})).$$

Now, since the functor $\on{Id}_{S_1}\boxtimes \sQ$ admits a continuous right adjoint, we obtain that 
$$(\on{Id}_{S_1}\boxtimes \sQ)((\Gamma_f)_*(\omega_{S_1}))\in  \Shv(S_1\times \CY_2)$$
is compact, as required.

\end{proof}

\ssec{Adjunctions and Verdier duality}

In this subsection we relate the phenomenon of adjunction of functors to
Verdier duality of kernels that define them. 

\sssec{} \label{sss:codef left}

Let $\CQ\in \Shv(\CY_1\times \CY_2)$ be constructible; in particular $\BD^{\on{Verdier}}(\CQ)$
is well-defined. 
Denote $$\CP:=\BD^{\on{Verdier}}(\CQ)^\sigma\in \Shv(\CY_2\times \CY_1).$$

\medskip

Note that for an individual $\CZ$, the functor 
$$\on{Id}_\CZ\boxtimes \sP^l:\Shv(\CZ\times \CY_2)\to \Shv(\CZ\times \CY_1)$$
is the left adjoint of the functor
$$\on{Id}_\CZ\boxtimes \sQ_{\on{disc}}:\Shv(\CZ\times \CY_1)\to \Shv(\CZ\times \CY_2),$$
see \secref{sss:boxtimes disc} for the notation.

%

\sssec{}

We claim:

\begin{thm} \label{t:right adj main}
Suppose that $\sQ$ admits a right adjoint $\sQ^R$ as a functor defined by a kernel with the 
corresponding object of $\Shv(\CY_2\times \CY_1)$ denoted by $\CQ^R$. Denote
$$\CP:=\BD^{\on{Verdier}}(\CQ^R)^\sigma.$$
Then:

\smallskip

\noindent{\em(a)} The functor $\sQ$ is defined and codefined by a kernel, with codefining object $\CP$. 

\smallskip

\noindent{\em(b)} The functor $\sQ^R$ is safe.\footnote{See \secref{sss:safe} for what this means.}

\end{thm}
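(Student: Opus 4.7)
The plan is to prove parts (a) and (b) together, after first observing they are logically equivalent. Applying \secref{sss:codef left} to $\CQ^R \in \Shv(\CY_2 \times \CY_1)$ shows that $\on{Id}_\CZ \boxtimes \sP^l$ is the pointwise left adjoint of $\on{Id}_\CZ \boxtimes (\sQ^R)_{\on{disc}}$, while by the hypothesis together with \secref{sss:adj naive}, $\on{Id}_\CZ \boxtimes \sQ$ is the pointwise left adjoint of $\on{Id}_\CZ \boxtimes \sQ^R$. The natural transformation \eqref{e:disc} for $\sQ^R$ induces, by passage to left adjoints, a natural transformation $\on{Id}_\CZ \boxtimes \sP^l \to \on{Id}_\CZ \boxtimes \sQ$, which by uniqueness of adjoints is an isomorphism if and only if \eqref{e:disc} for $\sQ^R$ is an isomorphism, i.e., if $\sQ^R$ is safe. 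Thus (a)$\Leftrightarrow$(b). Moreover, by \propref{p:adj constr}, both $\CQ$ and $\CQ^R$ are constructible, so $\CP = \BD^{\on{Verdier}}(\CQ^R)^\sigma$ is defined.

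The key leverage is a continuity observation: the functor $\on{Id}_\CZ \boxtimes \sQ^R$, as defined by the kernel formula via $(p_{\CZ,\CY_1})_\blacktriangle$, $p_{\CZ,\CY_2}^!$, and $\sotimes$, is automatically continuous because each of these operations preserves colimits. Being a continuous pointwise right adjoint of $\on{Id}_\CZ \boxtimes \sQ$, it forces $\on{Id}_\CZ \boxtimes \sQ$ to preserve compact objects. I would then verify (a) by establishing the isomorphism $\on{Id}_\CZ \boxtimes \sP^l \simeq \on{Id}_\CZ \boxtimes \sQ$ on compact generators: both functors are continuous, so a compact-object identification propagates to all objects by ind-extension.

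For a compact $\CF \in \Shv(\CZ \times \CY_1)$, the object $(\on{Id}_\CZ \boxtimes \sQ)(\CF)$ is compact and therefore cohomologically bounded on both sides; by the criterion of \secref{sss:boxtimes disc bis}, this already yields $(\on{Id}_\CZ \boxtimes \sQ)(\CF) \simeq (\on{Id}_\CZ \boxtimes \sQ_{\on{disc}})(\CF)$. Verdier-dualizing and applying \secref{sss:left to right Verdier} expresses this value as $\BD^{\on{Verdier}}$ applied to a functor codefined by the kernel $\BD^{\on{Verdier}}(\CQ)$.

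The main obstacle, which is the crux of the proof, is to match this Verdier-dualized value with $(\on{Id}_\CZ \boxtimes \sP^l)(\CF)$ for the prescribed $\CP = \BD^{\on{Verdier}}(\CQ^R)^\sigma$ rather than $\BD^{\on{Verdier}}(\CQ)$. The bridge is the adjunction data itself. Specifically, the counit $\epsilon: \CQ \star \CQ^R \to \on{u}_{\CY_2}$, after applying Verdier duality (which sends $\on{u}_{\CY_2}$ to $\on{ps-u}_{\CY_2}$) and exchanging $-_\blacktriangle$ with $-_!$ according to \secref{sss:conv disc} and \secref{sss:left vs right conv}, corresponds to a pairing that exhibits $\BD^{\on{Verdier}}(\CQ^R)^\sigma \in \Shv(\CY_1 \times \CY_2)$ as the kernel codefining the same functor as $\CQ$. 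The coherent compatibility across varying $\CZ$ then follows automatically from the package of properties recorded in \secref{sss:compat kernels new}, yielding both (a) and, via the equivalence above, (b).
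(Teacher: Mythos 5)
There is a genuine gap in your proposal, at exactly the step you yourself flag as "the crux."

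Your first two paragraphs are fine: the equivalence of (a) and (b) via the mate of the map $\sQ^R\to \sQ^R_{\on{disc}}$ is correct, the constructibility of $\CQ$ and $\CQ^R$ is \propref{p:adj constr}, and your continuity observation (that $\on{Id}_\CZ\boxtimes\sQ^R$ is continuous because it is a functor defined by a kernel, so $\on{Id}_\CZ\boxtimes\sQ$ preserves compacts) is correct. But notice this last fact is not what you need: it gives that the right adjoint of $\sQ$ --- which is the $\blacktriangle$-version $\sQ^R$ --- is continuous, and that is automatic. What you need, to prove (b), is continuity of the $*$-version $\sQ^R_{\on{disc}}$, or equivalently compactness-preservation of $\sP^l$. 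That is a different functor, and your observation does not touch it.

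When you then try to verify (a) on compact generators, you run into a circularity. The natural transformation $\sP^l\to\sQ$ restricted to compacts is an isomorphism if and only if $\sQ^R\to\sQ^R_{\on{disc}}$ is an isomorphism on all objects (unwind the two adjunctions: $\CHom(\CF,\sQ^R(\CG))\to\CHom(\CF,\sQ^R_{\on{disc}}(\CG))$ for $\CF$ compact). So "checking on compacts" \emph{is} safety of $\sQ^R$, not a reduction of it. Your proposed computation of $\sQ(\CF)$ via \secref{sss:left to right Verdier} produces a codefined functor with kernel $\BD^{\on{Verdier}}(\CQ)$, whereas the prescribed $\CP=\BD^{\on{Verdier}}(\CQ^R)^\sigma$ is a genuinely different object: they differ by the miraculous functor $\Mir_{\CY_1}$, and this relationship (\corref{c:correction}, \corref{c:correction bis}) is \emph{derived from} the theorem you are trying to prove. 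Your third paragraph then gestures at using the counit of the adjunction and "exchanging $-_\blacktriangle$ with $-_!$" to bridge the two kernels, but Verdier duality does not act on the counit in a way that produces the claimed identification; this is not a proof but a restatement of the conclusion.

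The missing idea is the reduction to $\CY_2$ an affine scheme via a smooth cover $f:\wt\CY_2\to\CY_2$. Since $f$ is smooth, $(\on{id}\times f)^*$ is both defined and codefined by a kernel, and both $\wt\sQ := f^*\circ\sQ$ and $\wt\sP^l := f^*\circ\sP^l$ are obtained by composing with it, so it suffices to prove the isomorphism after pullback, i.e.\ with $\CY_2$ replaced by $\wt\CY_2$. Once $\CY_2$ is an affine scheme, the projection $\CZ\times\CY_1\times\CY_2\to\CZ\times\CY_1$ is \emph{schematic}, so its $*$-pullback preserves compactness, and therefore $\on{Id}_\CZ\boxtimes\sP^l$ preserves compactness because $\CP$ is constructible. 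This forces continuity of its right adjoint $\on{Id}_\CZ\boxtimes\sQ^R_{\on{disc}}$, i.e.\ safety of $\sQ^R$, and uniqueness of adjoints then gives $\sP^l\simeq\sQ$. Without this reduction, there is no general reason for $p_1^*$ along a non-schematic projection to preserve compacts, and I do not see how your direct argument can close.
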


Let us emphasize that point (a) of the theorem says that
$$\on{Id}_\CZ \boxtimes\sQ\simeq \on{Id}_\CZ \boxtimes \sP^l$$ for $\sP$ codefined by the object $\CP$ above.

\begin{proof}

It suffices to prove point (a):  indeed, point (b) would follow from Sects. \ref{sss:adj naive} and \ref{sss:codef left}, 
since the functors $\on{Id}_\CZ \boxtimes \sQ^R$ and
$\on{Id}_\CZ \boxtimes \sQ^R_{\on{disc}}$ are both right adjoint to $\on{Id}_\CZ \boxtimes\sQ\simeq \on{Id}_\CZ\boxtimes \sP^l$.

\medskip

Let $f:\wt\CY_2\to \CY_2$ be a smooth cover by an affine scheme. In order to prove point (a), it suffices to construct isomorphisms
\begin{equation} \label{e:left right to check}
(\on{id}\times f)^*\circ (\on{Id}_\CZ \boxtimes\sQ) \simeq (\on{id}\times f)^*\circ (\on{Id}_\CZ \boxtimes \sP^l)
\end{equation}
as functors
$$\Shv(\CZ\times \CY_1)\to \Shv(\CZ\times \wt\CY_2).$$

\medskip

Note that since $f$ is smooth, the functor $f^*$ is both defined and codefined by a kernel so that 
$$(\on{id}\times f)^* \simeq (\on{Id}_\CZ \boxtimes f^*).$$ 

Denote $\wt\CQ=(\on{id}\times f)^*(\CQ)$,
so that
$$\on{Id}_\CZ \boxtimes \wt\sQ\simeq 
(\on{Id}_\CZ \boxtimes f^*) \circ  (\on{Id}_\CZ \boxtimes \sQ).$$

Since the $f^*$ admits a right adjoint as a functor defined by a kernel (namely, $f_*)$, so does the functor
$\wt\sQ$. Note that the corresponding functor $\wt\sP^l$ is also given by $f^*\circ \sP^l$, as a functor
\emph{codefined} by a kernel. 

\medskip

Hence, the sought-for isomorphism \eqref{e:left right to check} becomes
$$\on{Id}_\CZ \boxtimes \wt\sQ \simeq \on{Id}_\CZ \boxtimes \wt\sP^l.$$

In other words, we have reduced the verification of point (a) to the case when $\CY_2$ is an affine scheme,
which we will from now on assume. 

\medskip

By \secref{sss:codef left}, the functors $(\on{Id}_\CZ\boxtimes \sP^l,\on{Id}_\CZ \boxtimes \sQ^R_{\on{disc}})$
form an adjoint pair. Note, however, that since $\CY_2$ is an affine scheme, the functor 
$\on{Id}_\CZ\boxtimes \sP^l$ preserves compactness (this is true for any object codefined by a constructible 
kernel with target a scheme, see \secref{sss:stacky}). Hence, its right adjoint is continuous. This implies
that $\on{Id}_\CZ \boxtimes \sQ^R_{\on{disc}}$ is continuous, so $\sQ^R$ is safe. 
(Note that this proves point (b) in the case when $\CY_2$ is an affine scheme.) 

\medskip

In particular, we obtain that $(\on{Id}_\CZ\boxtimes \sP^l,\on{Id}_\CZ \boxtimes \sQ^R)$
form an adjoint pair. However, by \secref{sss:adj naive}, the functors $(\on{Id}_\CZ\boxtimes \sQ,\on{Id}_\CZ \boxtimes \sQ^R)$
also form an adjoint pair. This implies 
$$\on{Id}_\CZ \boxtimes\sQ\simeq \on{Id}_\CZ \boxtimes \sP^l,$$
i.e., the isomorphism of point (a).

\end{proof}

\sssec{}

We also have the following partial converse of \thmref{t:right adj main}:

\begin{prop} \label{p:conv righ adj main}
Let $\CQ$ be constructible, and suppose that the functor $\sQ$ is defined and 
codefined by a kernel with codefining object $\CP$, i.e.,
$\on{Id}_\CZ \boxtimes\sQ\simeq \on{Id}_\CZ \boxtimes \sP^l$. Suppose also that the functors
$\on{Id}_\CZ \boxtimes\sQ$ preserve compactness. Then:

\smallskip

\noindent{\em(a)} The object $\CP$ is constructible. 

\smallskip

\noindent{\em(b)} The functor $\sQ$ admits a right adjoint as a functor defined by a kernel with the 
corresponding object of $\Shv(\CY_2\times \CY_1)$ being $\CQ^R:=\BD^{\on{Verdier}}(\CP)^\sigma$.

\smallskip

\noindent{\em(c)} The functor $\sQ^R$ is safe. 
\end{prop}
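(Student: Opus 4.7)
The plan is to establish (a) first, by a pullback-to-an-atlas argument, and then deduce (b) and (c) as formal consequences of \secref{sss:codef left} combined with the compactness-preservation hypothesis.

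For (a), the starting point is the reconstruction formula $\CP \simeq (\on{Id}_{\CY_1}\boxtimes \sP^l)(\on{ps-u}_{\CY_1})$, where $\on{ps-u}_{\CY_1} = (\Delta_{\CY_1})_!(\ul\sfe_{\CY_1})$; this is a short base-change plus projection-formula calculation showing that the relevant diagonal insertion and middle-factor pushforward cancel. Using the hypothesized equivalence $\on{Id}_\CZ\boxtimes \sP^l \simeq \on{Id}_\CZ\boxtimes \sQ$, I replace $\sP^l$ with $\sQ$. Next, I choose a smooth cover $f\colon S_1\to\CY_1$ by an affine scheme, and apply the $-^*$-pullback compatibility of codefining kernels (the analog of \secref{sss:compat kernels new} stated in \secref{sss:codefined}) together with the base-change identity $(f\times\on{id})^*\circ(\Delta_{\CY_1})_! \simeq (\Gamma_f)_!\circ f^*$ to obtain
$$(f\times \on{id}_{\CY_2})^*(\CP) \;\simeq\; (\on{Id}_{S_1}\boxtimes \sQ)\bigl((\Gamma_f)_!(\ul\sfe_{S_1})\bigr).$$
The object $(\Gamma_f)_!(\ul\sfe_{S_1})$ is compact, since $\Gamma_f$ is schematic and $\ul\sfe_{S_1}$ is compact. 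By the compactness-preservation hypothesis applied with $\CZ = S_1$, the right-hand side is compact in $\Shv(S_1\times \CY_2)$, hence constructible. Constructibility then descends along the smooth cover $f\times\on{id}_{\CY_2}$ to give $\CP \in \Shv(\CY_1\times \CY_2)^{\on{constr}}$.

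For parts (b) and (c), set $\CQ^R := \BD^{\on{Verdier}}(\CP)^\sigma$, which is well-defined by (a). Applying \secref{sss:codef left} to the constructible kernel $\CQ^R$ (and noting that $\BD^{\on{Verdier}}(\CQ^R)^\sigma = \CP$, so the associated codefined functor is exactly $\sP^l$), one concludes that for every algebraic stack $\CZ$ the pair $(\on{Id}_\CZ\boxtimes \sP^l,\,\on{Id}_\CZ\boxtimes \sQ^R_{\on{disc}})$ forms an adjoint pair. Substituting $\sP^l \simeq \sQ$, the left adjoint $\on{Id}_\CZ\boxtimes \sQ$ preserves compactness by hypothesis, so its right adjoint $\on{Id}_\CZ\boxtimes \sQ^R_{\on{disc}}$ is continuous. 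The canonical natural transformation $\on{Id}_\CZ\boxtimes \sQ^R \to \on{Id}_\CZ\boxtimes \sQ^R_{\on{disc}}$ of \eqref{e:disc} is an isomorphism on compact objects by \secref{sss:ker disc}, and since both sides are now continuous, it is an isomorphism; this is precisely the safeness assertion (c). Consequently $(\on{Id}_\CZ\boxtimes \sQ,\, \on{Id}_\CZ\boxtimes \sQ^R)$ is a $\CZ$-compatible family of adjoint pairs, which by \secref{sss:adj naive} upgrades to an adjunction $(\CQ,\CQ^R)$ in the 2-category of kernels, proving (b).

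The one substantive step is the reconstruction-and-pullback argument in (a); the rest is formal manipulation from previously established constructions, so no serious obstacle is anticipated.
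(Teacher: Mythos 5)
Your proof is correct and follows the same route as the paper. For part (a) you faithfully carry out the argument that the paper invokes by the phrase ``follows as in Proposition \ref{p:adj constr}'', making explicit the appropriate translation to the codefined picture (using $\on{ps-u}_{\CY_1}$ and $-^*$-pullback in place of $\on{u}_{\CY_1}$ and $-^!$-pullback); for parts (b) and (c) your argument via \secref{sss:codef left}, continuity of $\on{Id}_\CZ\boxtimes\sQ^R_{\on{disc}}$, and \secref{sss:adj naive} is exactly the paper's.
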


\begin{proof}

The fact that $\CP$ is constructible follows as in \propref{p:adj constr}. 

\medskip

By \secref{sss:codef left}, the functor $\on{Id}_\CZ \boxtimes \sQ^R_{\on{disc}}$ is the right adjoint
of $\on{Id}_\CZ \boxtimes \sP^l$, and since the latter preserves compactness, we obtain that 
$\on{Id}_\CZ \boxtimes \sQ^R_{\on{disc}}$ is continuous. Hence, $\sQ^R$ is safe; this proves point (c). 

\medskip

Thus, the  
natural transformation $\on{Id}_\CZ \boxtimes \sQ^R\to \on{Id}_\CZ \boxtimes \sQ^R_{\on{disc}}$
is an isomorphism, we obtain that $\on{Id}_\CZ \boxtimes \sQ^R$ is the right adjoint of
$\on{Id}_\CZ \boxtimes\sQ$. This proves point (b).

\end{proof}

\begin{rem}
Note that the assertion of \propref{p:conv righ adj main} would be false without the assumption that
the functors $\on{Id}_\CZ \boxtimes\sQ$ preserve compactness. 

\medskip

For example, take $\CY_1=\on{pt}$, $\CY_2=B\BG_m$ and let $\CQ$ be the constant sheaf.

\end{rem}

\sssec{}

Combining \thmref{t:right adj main} and \propref{p:left via right}, we obtain: 

\begin{cor} \label{c:correction}
Assume that a functor $\sQ$ defined by the kernel $\CQ\in \Shv(\CY_1\times \CY_2)$ admits a right adjoint
as a functor defined by a kernel. Denote the corresponding 
object by $\CQ^R\in \Shv(\CY_2\times \CY_1)$. Then 
$$\BD^{\on{Verdier}}(\CQ^R)\simeq \left((\Mir_{\CY_1}\boxtimes \on{Id}_{\CY_2})(\CQ)\right)^\sigma$$
as objects of $\Shv(\CY_1\times \CY_2)$. In particular, the object
$$\CP:=(\Mir_{\CY_1}\boxtimes \on{Id}_{\CY_2})(\CQ)$$
is constructible. 
\end{cor}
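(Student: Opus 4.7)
The plan is to read off the statement as a direct juxtaposition of the two cited results, so there is essentially no new computation to perform; what remains is simply to verify that the two descriptions of the codefining kernel of $\sQ$ can be compared.

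First I would invoke \thmref{t:right adj main}(a): the existence of $\CQ^R$ as a right adjoint of $\sQ$ in the 2-category of kernels implies that $\sQ$ is both defined and codefined by a kernel, with codefining object
\[
\CP_1 := \BD^{\on{Verdier}}(\CQ^R)^\sigma \in \Shv(\CY_1\times \CY_2).
\]
Note that the hypothesis of \thmref{t:right adj main} implies that $\CQ^R$ is constructible by \propref{p:adj constr}, so $\BD^{\on{Verdier}}(\CQ^R)$ makes sense and is itself constructible, and therefore $\CP_1$ is constructible.

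Next I would apply \propref{p:left via right}(b) to the same functor $\sQ$: since $\sQ$ is defined and codefined by a kernel (as just established), the codefining object must be
\[
\CP_2 := (\Mir_{\CY_1}\boxtimes \on{Id}_{\CY_2})(\CQ).
\]
Because the codefining object is determined by $\sQ$ (via its value on $\on{u}_{\CY_1}$, cf.\ the recipe reversing \secref{sss:compat kernels new}), we have a canonical isomorphism $\CP_1\simeq \CP_2$, i.e.
\[
\BD^{\on{Verdier}}(\CQ^R)^\sigma \simeq (\Mir_{\CY_1}\boxtimes \on{Id}_{\CY_2})(\CQ),
\]
which is equivalent to the claimed isomorphism after applying $(-)^\sigma$ (and using that $\sigma^2=\on{id}$). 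The constructibility statement for $\CP$ is then immediate: $\CP\simeq \BD^{\on{Verdier}}(\CQ^R)^\sigma$, and both Verdier duality and the swap $(-)^\sigma$ preserve constructibility.

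Since each step is an invocation of an already established result, I do not expect any serious obstacle; the only mildly delicate point is to check that the ``codefining object'' in \thmref{t:right adj main}(a) and in \propref{p:left via right}(b) refer to the same piece of data (both are recovered as $(\on{Id}_{\CY_1}\boxtimes\sP^l)$ applied to $\on{u}_{\CY_1}$, or equivalently from the system of functors $\on{Id}_\CZ\boxtimes\sP^l$ satisfying the compatibilities of \secref{sss:compat kernels new} on the $*$/$!$ side), so that the uniqueness clause legitimately identifies $\CP_1$ with $\CP_2$.
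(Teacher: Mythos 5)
Your proposal is correct and is exactly the paper's argument: the paper itself presents the corollary with the one-line justification ``Combining \thmref{t:right adj main} and \propref{p:left via right}'', which is precisely the two citations you use, with the uniqueness of the codefining object supplying the identification. Your added remarks (constructibility of $\CQ^R$ via \propref{p:adj constr}, and the observation that both cited results refer to the same codefining datum recovered from $\on{u}_{\CY_1}$) make explicit the small checks the paper leaves implicit, but the route is the same.
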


The next assertion reproduces \cite[Theorem 6.3.2]{Ga2}:

\begin{cor} \label{c:correction bis}
In the situation of \corref{c:correction}, we have
$$\BD^{\on{Verdier}}(\CQ)\simeq (\Mir_{\CY_1}\boxtimes \on{Id}_{\CY_2})((\CQ^R)^\sigma).$$
\end{cor}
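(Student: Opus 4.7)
My plan is to derive \corref{c:correction bis} from \corref{c:correction} by a symmetry argument using the swap $\sigma$. The key observation is that adjunctions in the 2-category of \secref{sss:2-categ} are invariant under swapping factors: by \secref{sss:create left adjoint}, if $(\CQ_{1,2},\CQ_{2,1})$ forms an adjoint pair, then so does $(\CQ^\sigma_{2,1},\CQ^\sigma_{1,2})$. Applied to the adjoint pair $(\CQ,\CQ^R)$ with $\CQ\in \Shv(\CY_1\times \CY_2)$ and $\CQ^R\in \Shv(\CY_2\times \CY_1)$, this produces an adjoint pair
\[
((\CQ^R)^\sigma,\CQ^\sigma) \text{ with } (\CQ^R)^\sigma\in \Shv(\CY_1\times \CY_2),\, \CQ^\sigma\in \Shv(\CY_2\times \CY_1),
\]
so the object $\CQ':=(\CQ^R)^\sigma$ admits a right adjoint in the 2-category, with corresponding object $(\CQ')^R\simeq \CQ^\sigma$.

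Next, I would apply \corref{c:correction} to $\CQ'$ (with the roles of $\CY_1$ and $\CY_2$ unchanged, since $\CQ'\in \Shv(\CY_1\times \CY_2)$). This directly gives
\[
\BD^{\on{Verdier}}(\CQ^\sigma) \;\simeq\; \left((\Mir_{\CY_1}\boxtimes \on{Id}_{\CY_2})((\CQ^R)^\sigma)\right)^\sigma.
\]

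To finish, I would use that Verdier duality commutes with the swap $\sigma$: since $\sigma:\CY_2\times \CY_1\to \CY_1\times \CY_2$ is an isomorphism of stacks, one has $\sigma^!\simeq \sigma^*$ and $\BD^{\on{Verdier}}\circ \sigma^*\simeq \sigma^*\circ \BD^{\on{Verdier}}$ on constructible sheaves (recall from \corref{c:correction} that $\CQ$ is known to be constructible, so $\BD^{\on{Verdier}}(\CQ)$ is well-defined). Hence $\BD^{\on{Verdier}}(\CQ^\sigma)\simeq (\BD^{\on{Verdier}}(\CQ))^\sigma$, and the above isomorphism rewrites as
\[
(\BD^{\on{Verdier}}(\CQ))^\sigma \;\simeq\; \left((\Mir_{\CY_1}\boxtimes \on{Id}_{\CY_2})((\CQ^R)^\sigma)\right)^\sigma.
\]
Applying $\sigma$ to both sides (using $\sigma^2=\on{id}$) yields the desired identification.

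The argument is essentially formal once the swap-invariance in \secref{sss:create left adjoint} is in place, so I do not anticipate a serious obstacle. The one point that deserves checking is that \corref{c:correction} may legitimately be applied to $\CQ'=(\CQ^R)^\sigma$; this requires exactly the hypothesis that $\CQ'$ admits a right adjoint as a functor defined by a kernel, which is precisely what the swap construction produces from the given adjoint pair $(\CQ,\CQ^R)$. All the naturality of Verdier duality with respect to the isomorphism $\sigma$ is routine, so no new compactness or constructibility input is needed beyond what is already built into the statement of \corref{c:correction}.
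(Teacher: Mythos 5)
Your argument is correct and is exactly the approach the paper indicates: the paper's one-line proof says the statement ``follows by combining \secref{sss:create left adjoint} and \corref{c:correction},'' and you have spelled out precisely that combination (swap the adjoint pair, apply \corref{c:correction} to $(\CQ^R)^\sigma$, then use that $\BD^{\on{Verdier}}$ intertwines with $\sigma$). Your check that $(\CQ^R)^\sigma$ satisfies the hypotheses of \corref{c:correction} (existence of a right adjoint as a kernel, hence constructibility via \propref{p:adj constr}) is the right thing to verify and is indeed automatic.
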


\begin{proof}

Follows by combining \secref{sss:create left adjoint} and \corref{c:correction}.

\end{proof}

\begin{rem} \label{r:right adj main non qc}

At one point on the main body of the paper (namely, \thmref{t:char of Nilp adj}), 
we use an extension of \thmref{t:right adj main} to the case when the stack $\CY_1$ is not 
necessarily quasi-compact, but is truncatable (see \secref{sss:truncatable} for what this means).

\medskip

In this case, $\CQ$ is an object of the category $\Shv(\CY_1\times \CY_2)_{\on{co}}$
(see \secref{sss:co} for the notation), such that for any cotruncative
quasi-compact open substack $\CU_1\overset{j}\hookrightarrow \CY_1$, the object
$$(j^?\boxtimes \on{id})(\CQ)\in \Shv(\CU_1\times \CY_2)$$
is constructible.

\medskip

The right adjoint $\CQ^R$ is characterized by the property that
$(j\times \on{id})^*(\CQ_R)$ is the right adjoint of $(j^?\boxtimes \on{id})(\CQ)$.
The codefining object 
$$\CP\in \Shv(\CY_1\times \CY_2)$$
is related to $\CQ^R$ by the same formula as in \thmref{t:right adj main}.

\medskip

The statement of \corref{c:correction} remains unchanged, where we now understand 
$\Mir_{\CY_1}\boxtimes \on{Id}_{\CY_2}$ as a functor
$$\Shv(\CY_1\times \CY_2)_{\on{co}}\to \Shv(\CY_1\times \CY_2).$$

\end{rem}

\begin{rem} \label{r:right adj main non qc bis}

Let us continue to be in the setting of Remark \ref{r:right adj main non qc}. Let 
$\CQ$ be a constructible object of $\Shv(\CY_1\times \CY_2)$, and let 
us view $\sQ$ is a functor $\Shv(\CY_1)_{\on{co}}\to \Shv(\CY_2)$, defined by a kernel
(see \secref{ss:functors by ker non qc}). 

\medskip

Suppose that $\CQ$ admits a right adjoint\footnote{In the sense of 2-category of \secref{sss:2-categ}.}, which is now an object 
$$\CQ^R\in \Shv(\CY_2\times \CY_1)_{\on{co}}.$$

\medskip

We have 
$$(j^?\boxtimes \on{id})(\CQ^R)\simeq \left((j\times \on{id})^*(\CQ)\right)^R,$$
for $\CU_1\overset{j}\hookrightarrow \CY_1$ as in Remark \ref{r:right adj main non qc}.  

\medskip

In particular, \corref{c:correction bis} is applicable. Explicitly, we have
$$\CQ^R\simeq (\on{Id}^l_{\CY_1}\boxtimes \on{Id}_{\CY_2})(\BD^{\on{Verdier}}(\CQ))^\sigma,$$
where we view $\on{Id}^l_{\CY_1}$ as a functor
$\Shv(\CY_1)\to \Shv(\CY_1)_{\on{co}}$, codefined by a kernel, see \secref{sss:codef co}. 

\medskip

Note, however, that in this case, there is no sense in which we can talk about $\sQ$ being
codefined by a kernel. 

\end{rem}

\ssec{Miraculous stacks} \label{ss:Mir stacks qc}

In this subsection we review what it means for a quasi-compact algebraic stack to 
be miraculous. 

\sssec{} \label{sss:Mir stacks qc}

Let $\CY$ be a quasi-compact stack. We shall say that $\CY$ is \emph{miraculous} 
if the functor $\Mir_\CY$ is \emph{invertible}, as a functor defined by a kernel.

\medskip

Using \secref{sss:compat kernels new}, this is equivalent to the requirement that 
$$\on{Id}_\CZ\boxtimes \Mir_\CY: \Shv(\CZ\times \CY)\to \Shv(\CZ\times \CY)$$
is an equivalence for every $\CZ$. 

\sssec{}  \label{sss:D Mir}

Assume that $\CY$ is miraculous. In particular, $\Mir_\CY$, viewed as a plain endofunctor 
of $\Shv(\CY)$ is a self-equivalence.

\medskip

In this case, we can define a \emph{new} self-duality on $\Shv(\CY)$ with counit given by
$$\Shv(\CY)\otimes \Shv(\CY) \overset{\Mir_\CY^{-1}\otimes \on{Id}}\longrightarrow
\Shv(\CY)\otimes \Shv(\CY) \overset{\on{ev} _\CY}\longrightarrow \Vect;$$
we denote it by $\on{ev}^{\Mir}_\CY$.

\medskip

We will refer to it as the \emph{miraculous self-duality} of $\Shv(\CY)$. The corresponding
contravariant self-equivalence, denoted 
$$\BD^{\Mir}:(\Shv(\CY)^c)^{\on{op}}\to \Shv(\CY)^c,$$
is given by 
$$\BD^{\Mir}\simeq \BD^{\on{Verdier}}\circ \Mir_\CY^{-1}.$$

\sssec{}  \label{sss:D Mir sym}

Since the object $\on{ps-u}_\CY$ is swap-equivariant, we have
$$(\Mir_\CY)^\vee\simeq \Mir_\CY,$$
and hence
$$(\Mir_\CY^{-1})^\vee\simeq \Mir_\CY^{-1}.$$

From here we obtain that the pairing $\on{ev}^{\Mir}_\CY$ is swap-equivariant. Hence, the
functor $\BD^{\Mir}$ is involutive.

\sssec{} \label{sss:Mir and Id l}

Let $\CY$ be miraculous. In particular, we obtain that the functor $\Mir_\CY$ 
admits a left adjoint as a functor defined by a kernel.

\medskip

Applying \thmref{t:right adj main}(b) we obtain that the functor $\Mir_\CY$ is safe. In particular,
it preserves constructibility.

\medskip

Furthermore, from \thmref{t:right adj main}(a) we obtain that the inverse of $\Mir_\CY$, viewed
as a functor codefined by a kernel is given by $\on{Id}^l_\CY$, see \secref{sss:Id l} for the notation. 

\ssec{A criterion for admitting an adjoint} \label{ss:unit adj}

%
%
%
%
%
%
%
%
%
%


\sssec{} 

Let $\CQ\in \Shv(\CY_1\times \CY_2)$ be a constructible object. Denote
\begin{equation} \label{e:right adj cand bis}
'\!\CQ^R:= (\on{Id}_{\CY_2} \boxtimes \on{Id}^l_{\CY_1})(\BD^{\on{Verdier}}(\CQ)^\sigma).
\end{equation}

\begin{rem} \label{r:right adj when exists}
Assume for a moment that $\CQ$ admits a right adjoint\footnote{In the sense of 2-category of \secref{sss:2-categ}.}, 
so that
$$\BD^{\on{Verdier}}(\CQ^R)\simeq \left((\Mir_{\CY_1}\boxtimes \on{Id}_{\CY_2})(\CQ)\right)^\sigma,$$
by \corref{c:correction}. 

\medskip

Since $\BD^{\on{Verdier}}(\CQ^R)$ is constructible, it follows from \secref{sss:left to right Verdier} that in this case we have
$$'\!\CQ^R \simeq \CQ^R.$$

\end{rem}

%

\sssec{} \label{sss:safety for adj}

Assume that 
$'\!\sQ^R$ is safe. 

\medskip

Note that this assumption holds if $\CQ$ admits a right adjoint. 
Indeed, this follows from \thmref{t:right adj main} and Remark
\ref{r:right adj when exists} above. 

\medskip

Note also that the safety condition is automatic whenever
$'\!\CQ^R$ is compact or $\CY_2$ is a scheme, see \secref{sss:safe}. 

\sssec{} \label{sss:unit adj}

We claim that there exists a canonically defined map 
\begin{equation} \label{e:ker adj unit}
\on{u}_{\CY_1}\to {}'\!\CQ^R\star \CQ.
\end{equation} 

We start with the map
\begin{equation} \label{e:ker adj unit prel prel}
\on{ps-u}_{\CY_1}\to \BD^{\on{Verdier}}(\CQ)^\sigma \star_{\on{disc}} \CQ,
\end{equation} 
arising by adjunction, see \secref{sss:conv disc} for the notation $\star_{\on{disc}}$.

\medskip

We apply to both sides the functor
$$\on{Id}_{\CY_1}\boxtimes \on{Id}^l_{\CY_1},$$
and we obtain a map
\begin{equation} \label{e:ker adj unit prel prel bis}
\on{u}_{\CY_1}\to (\on{Id}_{\CY_1}\boxtimes \on{Id}^l_{\CY_1})(\BD^{\on{Verdier}}(\CQ)^\sigma \star_{\on{disc}} \CQ),
\end{equation} 
and we follow it by the map
$$(\on{Id}_{\CY_1}\boxtimes \on{Id}^l_{\CY_1})(\BD^{\on{Verdier}}(\CQ)^\sigma \star_{\on{disc}} \CQ) \overset{\text{\eqref{e:left to right disc}}}\longrightarrow
((\on{Id}_{\CY_2}\boxtimes \on{Id}^l_{\CY_1})(\BD^{\on{Verdier}}(\CQ)^\sigma))\star_{\on{disc}} \CQ ={}'\!\CQ^R\star_{\on{disc}} \CQ.$$

Finally, using the assumption that $'\!\CQ^R$ is safe, we obtain that the map
$$'\!\CQ^R\star \CQ\to '\!\CQ^R\star_{\on{disc}} \CQ$$
is an isomorphism. 


%
%

\sssec{}

Note also that by adjunction, we have a canonically defined map
\begin{equation} \label{e:ker adj counit prel}
\CQ \lstar \BD^{\on{Verdier}}(\CQ)^\sigma\to \on{u}_{\CY_2}.
\end{equation}


\sssec{}

Recall now that we have a canonically defined map
\begin{equation} \label{e:compare counits}
\CQ \lstar \BD^{\on{Verdier}}(\CQ)^\sigma\to \CQ \star {}'\!\CQ^R,
\end{equation}
see \eqref{e:right vs left conv}. 

\sssec{}

We claim:

\begin{thm} \label{t:right adj crit} For a constructible object $\CQ\in \Shv(\CY_1\times \CY_2)$, 
the following conditions are equivalent\footnote{In the statements below we refer to adjunctions in the 2-category of \secref{sss:2-categ}.}:

\smallskip

\noindent{\em(i)} The object $\CQ$ admits a right adjoint;

\smallskip

\noindent{\em(ii)} The map \eqref{e:ker adj unit} is the unit of an adjunction;

\smallskip

\noindent{\em(iii)} The map \eqref{e:compare counits} is an isomorphism;

\smallskip

\noindent{\em(iv)} The map \eqref{e:compare counits} is an isomorphism, and the map 
\eqref{e:ker adj counit prel} precomposed with the inverse of the isomorphism \eqref{e:compare counits}
is the counit of an adjunction;

\smallskip

\noindent{\em(v)} The map \eqref{e:compare counits} is an isomorphism, and the map 
\eqref{e:ker adj counit prel} precomposed with the inverse of the isomorphism \eqref{e:compare counits}
is the counit of an adjunction, with the unit being \eqref{e:ker adj unit}. 

\end{thm}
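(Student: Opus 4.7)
The implications $(v)\Rightarrow(iv)\Rightarrow(iii)$ and $(v)\Rightarrow(ii)\Rightarrow(i)$ are formal, each simply weakening the available data. Hence it suffices to establish $(i)\Rightarrow(iii)$ and then $(iii)\Rightarrow(v)$.

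For $(i)\Rightarrow(iii)$, suppose $\CQ$ admits a right adjoint $\CQ^R$. By Remark~\ref{r:right adj when exists} there is a canonical isomorphism $'\!\CQ^R\simeq\CQ^R$, and by \thmref{t:right adj main}(a) the functor $\sQ$ is defined and codefined by a kernel, with codefining object $\BD^{\on{Verdier}}(\CQ^R)^\sigma$. The map \eqref{e:compare counits} is the instance of \eqref{e:right vs left conv} with $\CP_{1,2}=\BD^{\on{Verdier}}(\CQ)^\sigma$ and $\CP_{2,3}=\CQ$ (viewed through its codefining kernel); by the observation at the end of \secref{sss:right vs left conv} this map is an isomorphism provided $(\BD^{\on{Verdier}}(\CQ)^\sigma)^l$ is defined and codefined by a kernel. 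Under (i) this follows by transposing the fact that $\sQ$ is defined and codefined by a kernel, via the Verdier duality procedure of \secref{sss:left to right Verdier} together with \corref{c:correction bis}.

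The substance of the theorem lies in $(iii)\Rightarrow(v)$. Under (iii) we take $'\!\CQ^R$ as a candidate right adjoint of $\CQ$, with unit $\eta$ equal to the map \eqref{e:ker adj unit} of \secref{sss:unit adj} and counit $\epsilon\colon\CQ\star{}'\!\CQ^R\to\on{u}_{\CY_2}$ obtained as the composition of the inverse of \eqref{e:compare counits} with \eqref{e:ker adj counit prel}. To exhibit $(\eta,\epsilon)$ as an adjunction datum in the 2-category of \secref{sss:2-categ}, we verify the two zig-zag identities
\[
(\epsilon\star\on{id}_\CQ)\circ(\on{id}_\CQ\star\eta)\;=\;\on{id}_\CQ,\qquad
(\on{id}_{'\!\CQ^R}\star\epsilon)\circ(\eta\star\on{id}_{'\!\CQ^R})\;=\;\on{id}_{'\!\CQ^R}.
\]
Unwinding the three-step construction of $\eta$ from \secref{sss:unit adj}, namely the adjunction unit \eqref{e:ker adj unit prel prel} for $(\BD^{\on{Verdier}}(\CQ)^\sigma,\sQ_{\on{disc}})$ of \secref{sss:codef left}, application of $\on{Id}_{\CY_1}\boxtimes\on{Id}^l_{\CY_1}$, the natural transformation \eqref{e:left to right disc}, and the safety isomorphism for $'\!\CQ^R$, each identity becomes a diagram in which the $\lstar$-composition produced by $\eta$ must be converted into a $\star$-composition via the isomorphism (iii). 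Appealing to the compatibility between \eqref{e:left-to-right trans} and \eqref{e:left-to-right trans bis} recorded in \secref{sss:two versions left-to-right trans}, both zig-zags then reduce to the tautological adjunction axioms for $(\BD^{\on{Verdier}}(\CQ)^\sigma)^l\dashv\sQ_{\on{disc}}$. The main obstacle is precisely this bookkeeping: three different convolutions ($\star$, $\lstar$, $\star_{\on{disc}}$) enter at distinct stages of the diagram, and transporting the classical zig-zags across the $\on{Id}^l$-insertion and the safety isomorphism requires careful use of \propref{p:right via left} and \secref{sss:right vs left conv} to certify that within the relevant compositions these convolutions agree.
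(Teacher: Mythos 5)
Your overall decomposition of the proof (the tautological implications, then $(i)\Rightarrow(iii)$, then $(iii)\Rightarrow(v)$) matches the paper's, and your argument for $(i)\Rightarrow(iii)$ is essentially the intended one: under (i), one shows that $(\BD^{\on{Verdier}}(\CQ)^\sigma)^l$ is defined and codefined (the cleanest route being to apply \thmref{t:right adj main} to $(\CQ^R)^\sigma$, which admits the right adjoint $\CQ^\sigma$, and then swap factors), whence \eqref{e:compare counits} is an isomorphism by \secref{sss:right vs left conv}.

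However, there is a genuine gap in your treatment of $(iii)\Rightarrow(v)$. You correctly flag the difficulty -- that $\star$, $\lstar$ and $\star_{\on{disc}}$ all appear at different stages of the zig-zag diagrams and must be reconciled -- but you do not actually resolve it. The point at which the diagram chase gets stuck is the map $\on{ps-u}_{\CY_1}\to\BD^{\on{Verdier}}(\CQ)^\sigma\star_{\on{disc}}\CQ$ of \eqref{e:ker adj unit prel prel}: it lands in the $\star_{\on{disc}}$-convolution, and neither condition (iii) nor the standing safety hypothesis of \secref{sss:safety for adj} (which concerns $'\!\sQ^R$, not $\CQ^\sigma$) forces the comparison map $\BD^{\on{Verdier}}(\CQ)^\sigma\star\CQ\to\BD^{\on{Verdier}}(\CQ)^\sigma\star_{\on{disc}}\CQ$ to be an isomorphism; by \secref{sss:conv disc} that would require $\CQ^\sigma$ (or $\BD^{\on{Verdier}}(\CQ)^\sigma$) to be safe, which fails in general when $\CY_2$ is a nontrivial stack.

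The paper's resolution is a non-trivial \emph{reduction to the case when $\CY_2$ is a scheme}, which your proposal omits. One chooses a smooth cover $f\colon\wt\CY_2\to\CY_2$ by a scheme, sets $\wt\CQ:=(\on{id}\times f)^*(\CQ)$, observes that $'\!\wt\sQ^R\simeq{}'\!\sQ^R\circ f_*$ and that (iii) passes to $\wt\CQ$, and then recovers (i) for $\CQ$ from (v) for $\wt\CQ$ by expressing every object of $\Shv(\CZ\times\CY_2)$ as a totalization of objects in the essential image of $(\on{id}\times f)_*$, using that $\on{Id}_\CZ\boxtimes{}'\!\sQ^R_{\on{disc}}$ commutes with limits. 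Only after this reduction, when $\CY_2$ is a scheme, does $\CQ^\sigma$ become automatically safe, so that $\star_{\on{disc}}$ collapses to $\star$ and the commutative-diagram verification of \eqref{e:compos axiom} closes. Without this step the ``careful bookkeeping'' you invoke cannot be completed.
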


\begin{rem} \label{r:right adj crit non-qc}

At one place in the main body of the paper (namely, \corref{c:equiv BunG}), we use the extension of \thmref{t:right adj crit} 
to the case when $\CY_1$ is not necessarily quasi-compact, but truncatable. 

\medskip 

Here $\CQ$ is an object of $\Shv(\CY_1\times \CY_2)$, and we view $\sQ$
as a functor $\Shv(\CY_1)_{\on{co}}\to \Shv(\CY_2)$, defined by a kernel (see \secref{ss:functors by ker non qc}).

\medskip

We view $\on{Id}^l_{\CY_1}$ as a functor $\Shv(\CY_1)\to \Shv(\CY_1)_{\on{co}}$,
codefined by a kernel, so $'\!\CQ^R$ is an object of 
$\Shv(\CY_1\times \CY_2)_{\on{co}}$, see \secref{sss:codef co}. 

\medskip

The proof of \thmref{t:right adj crit} extends to this case in a straightforward way
(instead of \thmref{t:right adj main} we use its variant described in 
Remark \ref{r:right adj main non qc bis}). 

\end{rem}

\ssec{Proof of \thmref{t:right adj crit}}

\sssec{}

We have the tautological implications (v) $\Rightarrow$ (ii) $\Rightarrow$ (i) and (v) $\Rightarrow$ (iv) $\Rightarrow$ (iii). 

\sssec{}

Assume that $\CQ$ admits a right adjoint. Then the map \eqref{e:compare counits} is an isomorphism
by \thmref{t:right adj main} and \secref{sss:right vs left conv} (i.e., we have (i) $\Rightarrow$ (iii)). 

\medskip

Furthermore, by Remark \ref{r:right adj when exists}, in this case $'\!\CQ^R\simeq \CQ^R$. 
Unwinding the identification of \thmref{t:right adj main}(a), 
we obtain that the unit of the $(\CQ,\CQ^R)$-adjunction is given by \eqref{e:ker adj unit} and the counit is given by 
the map \eqref{e:ker adj counit prel} precomposed with the isomorphism \eqref{e:compare counits}. 

\medskip

Thus, we obtain that (i) $\Rightarrow$ (v). 

\sssec{}

It remains to show that (iii) $\Rightarrow$ (i) or equivalently  (iii) $\Rightarrow$ (v). 
As a first step, we will reduce the assertion to the case when 
$\CY_2$ is a scheme.

\medskip

Choose a smooth cover $f:\wt\CY_2\to \CY_2$, where $\wt\CY_2$ is a scheme. Denote
$$\wt\CQ:=(\on{id}\times f)^*(\CQ)\in \Shv(\CY_1\times \wt\CY_2).$$

Note that we have a canonical identification 
$$'\!\wt\sQ^R\simeq {}'\!\sQ^R\circ f_*,$$ as functors defined by kernels. 

\medskip
 
Furthermore, it is easy to see that condition (iii) for $\CQ$ implies the corresponding condition for $\wt\CQ$. 

\medskip

Assume that the implication (iii) $\Rightarrow$ (v) holds for $\wt\CQ$. Let us show that this implies
condition (i) for the original $\CQ$. 

\medskip

By assumption, the map
$$\CHom_{\Shv(\CZ\times \wt\CY_2)}((\on{Id}_\CZ\boxtimes \wt\sQ)(\CF),\wt\CF') \to 
\CHom_{\Shv(\CZ\times \CY_1)}(\CF,(\on{Id}_\CZ\boxtimes {}'\!\wt\sQ^R)(\wt\CF')), \quad \wt\CF'\in   \Shv(\CZ\times \wt\CY_2),$$
induced by \eqref{e:ker adj unit}, is an isomorphism. 

\medskip

We have a commutative diagram
$$
\CD
\CHom_{\Shv(\CZ\times \wt\CY_2)}((\on{Id}_\CZ\boxtimes \wt\sQ)(\CF),\wt\CF') @>>> 
\CHom_{\Shv(\CZ\times \CY_1)}(\CF,(\on{Id}_\CZ\boxtimes {}'\!\wt\sQ^R)(\wt\CF')) \\
@V{\sim}VV @VV{\sim}V \\
\CHom_{\Shv(\CZ\times \CY_2)}\left((\on{Id}_\CZ\boxtimes \sQ)(\CF),(\on{id}\times f)_*(\wt\CF')\right) @>>>
\CHom_{\Shv(\CZ\times \CY_1)}\left(\CF,(\on{Id}_\CZ\boxtimes {}'\!\sQ^R)((\on{id}\times f)_*(\wt\CF'))\right).
\endCD
$$

Hence, we obtain that the map   
\begin{equation} \label{e:adj to check}
\CHom_{\Shv(\CZ\times \CY_2)}((\on{Id}_\CZ\boxtimes \sQ)(\CF),\CF') \to 
\CHom_{\Shv(\CZ\times \CY_1)}(\CF,(\on{Id}_\CZ\boxtimes {}'\!\sQ^R)(\CF')),
\end{equation}
induced by \eqref{e:ker adj unit}, is an isomorphism, whenever $\CF'\in \Shv(\CZ\times \CY_2)$ lies in the essential
image of the functor 
$$(\on{id}\times f)_*:\Shv(\CZ\times \wt\CY_2)\to \Shv(\CZ\times \CY_2).$$

\medskip

We claim that this implies that \eqref{e:adj to check} is an isomorphism for all $\CF'\in \Shv(\CZ\times \CY_2)$,
which would mean that (i) holds. 

\medskip

Indeed, by the assumption in \secref{sss:safety for adj}, the functor
$\on{Id}_\CZ\boxtimes {}'\!\sQ^R$ maps isomorphically to $\on{Id}_\CZ\boxtimes {}'\!\sQ^R_{\on{disc}}$,
and the latter functor commutes with \emph{limits}. Now, every object in $\CF'\in \Shv(\CZ\times \CY_2)$ can
be written as a totalization of a cosimplicial object, whose terms belong to the essential
image of $(\on{id}\times f)_*$. 

\sssec{}

Thus, we can assume that $\CY_2$ is a scheme. (Note that in this case, the assumption in
\secref{sss:safety for adj} holds automatically.)

\medskip

We will show that (iii) implies (v). Namely, we will show that if (iii) holds, then the morphisms 
\eqref{e:ker adj unit} and \eqref{e:ker adj counit prel} satisfy the adjunction axioms.

\medskip

We will show that the composition
\begin{equation} \label{e:compos axiom}
\CQ \simeq \CQ \star \on{u}_{\CY_1}\overset{\text{\eqref{e:ker adj unit}}}\longrightarrow \CQ \star ({}'\!\CQ^R\star \CQ) \simeq
(\CQ \star {}'\!\CQ^R) \star \CQ \overset{\text{\eqref{e:compare counits}}}\simeq 
(\CQ \lstar \BD^{\on{Verdier}}(\CQ)^\sigma) \star \CQ \overset{\text{\eqref{e:ker adj counit prel}}}\longrightarrow 
\on{u}_{\CY_2}\star \CQ\simeq \CQ
\end{equation}
is the identity map. The other adjunction axiom is checked similarly.

\sssec{}

Since $\CY_2$ is a scheme, the map
$$\BD^{\on{Verdier}}(\CQ)^\sigma \star\CQ\to \BD^{\on{Verdier}}(\CQ)^\sigma \star_{\on{disc}} \CQ$$
is an isomorphism. 

\medskip

Hence, we can view the map \eqref{e:ker adj unit prel prel} as a map
\begin{equation} \label{e:ker adj unit prel}
\on{ps-u}_{\CY_1}\to \BD^{\on{Verdier}}(\CQ)^\sigma \star \CQ. 
\end{equation} 

Similarly, we can view the map \eqref{e:ker adj unit prel prel bis} as a map
\begin{equation} \label{e:ker adj unit prel bis}
\on{u}_{\CY_1}\to (\on{Id}_{\CY_1}\boxtimes \on{Id}^l_{\CY_1})(\BD^{\on{Verdier}}(\CQ)^\sigma \star\CQ),
\end{equation} 

\sssec{}

For any $\CQ\in \Shv(\CY_1\times \CY_2)^{\on{constr}}$, we have a commutative diagram
$$
\CD
\CQ @>{\on{id}}>> \CQ \\
@V{\sim}VV @VV{\sim}V \\
\CQ \lstar \on{ps-u}_{\CY_1} @>{\text{\eqref{e:right vs left conv}}}>> \CQ \star \on{u}_{\CY_1} \\
@V{\text{\eqref{e:ker adj unit prel}}}VV @VV{\text{\eqref{e:ker adj unit prel bis}}}V \\
\CQ \lstar (\BD^{\on{Verdier}}(\CQ)^\sigma \star \CQ)  
@>{\text{\eqref{e:right vs left conv}}}>> \CQ \star ((\on{Id}_{\CY_1}\boxtimes \on{Id}^l_{\CY_1})(\BD^{\on{Verdier}}(\CQ)^\sigma \star \CQ)) \\
@V{\text{\eqref{e:left to right}}}VV @VV{\text{\eqref{e:left to right}}}V \\
(\CQ \lstar \BD^{\on{Verdier}}(\CQ)^\sigma) \star \CQ  & & \CQ \star ({}'\!\CQ^R\star \CQ) \\
@V{\on{id}}VV @VV{\sim}V \\
(\CQ \lstar \BD^{\on{Verdier}}(\CQ)^\sigma) \star \CQ @>{\text{\eqref{e:right vs left conv}}=\text{\eqref{e:compare counits}}}>>  
(\CQ \star {}'\!\CQ^R)\star \CQ \\
@V{\text{\eqref{e:ker adj counit prel}}}VV \\
\on{u}_{\CY_2}\star \CQ \\
@V{\sim}VV \\
\CQ.
\endCD
$$

If (iii) holds, then the bottom horizontal arrow in the above diagram is an isomorphism. In this 
case, by construction,
the composition \eqref{e:compos axiom} identifies with the composite left vertical
arrow in this diagram. 

\medskip

However, it is a straightforward verification that this composite
map is the identity.

\ssec{An example: the ULA property} \label{ss:ULA}

In this subsection we will recast the property of a sheaf to be ULA (with respect to a given map)
in terms of adjunction of functors given by kernels. 

\sssec{}  \label{sss:ULA}

Let $f:\CY\to S$ be a map between algebraic stacks, where $S$ is a separated scheme.
Let $\CF$ be an object of $\Shv(\CY)$. 

\begin{defn}
We shall say that $\CF$ is ULA with respect to $f$ if  the functor 
$$\sF:\Shv(S)\to \Shv(\CY), \quad \CG\mapsto \CF\sotimes f^!(\CG),$$
viewed as a functor defined by the kernel 
$$(\on{Graph}_f)_*(\CF)\in \Shv(S\times \CY),$$
is defined and codefined by a kernel.
\end{defn}

\begin{rem} \label{r:ULA non-constr new}
Usually, one defines the notion of ULA for objects that are constructible. The above definition is
justified by the following fact (see \cite{Ga3}):

\medskip  

An object $\CF\in \Shv(\CY)$ is ULA with respect to $f:\CY\to S$ if and only if for every $k$, 
every constructible sub-object $\CF'\subset H^k(\CF)$ has this property.

\medskip

For constructible objects, the above notion of ULA coincides with the classical one, 
see \cite[Appendix B]{BG}. 

\end{rem}

\begin{rem} \label{r:ULA comp}

Assume for a moment that $\CF$ is compact. From \propref{p:conv righ adj main}, we obtain that $\CF$ is ULA
if and only if the above functor $\sF$ admits a right adjoint, as a functor defined by a kernel.

\end{rem}

\begin{rem}

We claim that in order to check that a given object $\CF\in \Shv(\CY)^{\on{constr}}$ is ULA with respect to $f$, it is enough to
check that one particular morphism taking place in $\Shv(\CY\underset{S}\times \CY)$ is an isomorphism
(cf. \cite[Proposition 3.3]{HS}).

\medskip

This map in question is
$$'\!\Delta_S^*(\CF\boxtimes \BD^{\on{Verdier}}(\CF))\to 
{}'\!\Delta_S^!(\CF\boxtimes (f^*(\ul\sfe_S)\overset{*}\otimes \BD^{\on{Verdier}}(\CF))),$$
where $'\!\Delta_S$ is the map 
$$\CY\underset{S}\times \CY\to \CY\times \CY.$$

\medskip

Indeed, since the ULA condition is local on $\CY$, we can assume that $\CY$ is a scheme, and hence
$\CF$ is compact. Then our assertion follows from Remark \ref{r:ULA comp} and 
the equivalence (i) $\Leftrightarrow$ (iii) in \thmref{t:right adj crit}.

\end{rem}

\sssec{} \label{sss:preserve ULA}

We will now prove a statement that the property of being ULA is preserved by certain 
functors. 

\medskip

Let $S$ be as in \secref{sss:ULA}, and let $f_i:\CY_i\to S$, $i=1,2$ be a pair of algebraic
stacks over $S$. Let $\CQ$ be an object of $\Shv(\CY_1\times \CY_2)$, such that the corresponding
functor $\sQ$ is defined and codefined by a kernel. Assume
that $\CQ$ is supported on 
$$\CY_1\underset{S}\times \CY_2\subset \CY_1\times \CY_2.$$

\medskip

We claim:

\begin{cor} \label{c:preserve ULA}
Under the above circumstances, the functor $\sQ:\Shv(\CY_1)\to \Shv(\CY_2)$
maps objects that are ULA with respect to $f_1$ to objects that are ULA with respect to $f_2$.
\end{cor}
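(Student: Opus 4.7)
The plan is to exhibit the functor associated to $\CF':=\sQ(\CF)$ as a composition of two functors that are already known to be defined and codefined by kernels. Let $\sF':\Shv(S)\to \Shv(\CY_2)$, $\CG\mapsto \CF'\sotimes f_2^!(\CG)$ be the functor whose ``defined and codefined'' status we must verify. The key claim is a factorization
$$\sF'\simeq \sQ\circ \sF,$$
where $\sF$ is the kernel functor associated to $\CF$ by the ULA hypothesis.

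To establish this factorization, I would unwind $\sQ(\CF)\sotimes f_2^!(\CG)$ as
$$(p_2)_\blacktriangle(p_1^!(\CF)\sotimes \CQ)\sotimes f_2^!(\CG),$$
and apply the projection formula (\secref{sss:renorm dir im}) to rewrite it as
$$(p_2)_\blacktriangle\bigl(p_1^!(\CF)\sotimes \CQ \sotimes (f_2\circ p_2)^!(\CG)\bigr).$$
Since $\CQ$ is supported on $\CY_1\underset{S}\times \CY_2$, one has $j_*j^!(\CQ)\simeq \CQ$ for the closed embedding $j:\CY_1\underset{S}\times\CY_2\hookrightarrow \CY_1\times\CY_2$, and the two pullbacks $(f_2\circ p_2)^!(\CG)$ and $(f_1\circ p_1)^!(\CG)$ both restrict under $j^!$ to $q^!(\CG)$, where $q$ is the structural map of the fiber product. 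Thus $\CQ\sotimes (f_2\circ p_2)^!(\CG)\simeq \CQ\sotimes (f_1\circ p_1)^!(\CG)$, and the expression above becomes $(p_2)_\blacktriangle\bigl(p_1^!(\CF\sotimes f_1^!(\CG))\sotimes \CQ\bigr)=\sQ\circ \sF(\CG)$, as desired.

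Once the factorization is in hand, the conclusion is formal: by the ULA hypothesis $\sF$ is defined and codefined by a kernel, $\sQ$ is so by assumption, and the class of functors defined and codefined by kernels is closed under composition (combining the definition in \secref{sss:defined and codefined} with the compositional descriptions of $\star$ and $\lstar$ in Sects.~\ref{sss:comp corr} and \ref{sss:left convolution}). Hence $\sF'$ is defined and codefined by a kernel, which by definition means that $\CF'=\sQ(\CF)$ is ULA with respect to $f_2$.

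The main subtlety is the support argument for $\CQ$: one must carefully verify that the replacement of $(f_2\circ p_2)^!(\CG)$ by $(f_1\circ p_1)^!(\CG)$ inside $\sotimes\,\CQ$ is canonical, not merely an ad hoc identification on the fiber product, and that this identification is compatible with the extended functors $\on{Id}_\CZ\boxtimes(-)$ used to define what it means to be codefined by a kernel. This compatibility follows from the fact that $j_*$ and $j^!$ are themselves defined by kernels (and mutually adjoint), so the manipulation lifts uniformly over the auxiliary parameter $\CZ$.
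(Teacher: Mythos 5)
Your argument follows the same route as the paper: show that $\sQ\circ\sF$ is defined and codefined by a kernel (being a composition of two such), and then identify its kernel with $(\on{Graph}_{f_2})_*(\sQ(\CF))$ using the hypothesis that $\CQ$ is supported on $\CY_1\underset{S}\times\CY_2$. The only difference is presentational: the paper simply asserts the kernel isomorphism $(\on{Id}_S\boxtimes\sQ)((\on{Graph}_{f_1})_*(\CF))\simeq(\on{Graph}_{f_2})_*(\sQ(\CF))$ and leaves the support argument implicit, whereas you verify it by an explicit projection-formula manipulation — correctly noting at the end that this manipulation must (and does) lift across the auxiliary $\CZ$ parameter, since ``defined and codefined'' is a property of the kernel, not of the bare functor.
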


\begin{proof}

Let $\CF$ be an object of $\Shv(\CY_1)$ that is ULA over $S$. Consider the object
$$(\on{Graph}_{f_1})_*(\CF)\in \Shv(S\times \CY_1)$$
and the corresponding functor $\sF:\Shv(S)\to \Shv(\CY_1)$. 

\medskip

By assumption, the functor $\sF$ is defined and codefined by a kernel. By assumption, the same is true
for the composition $\sQ\circ \sF$. 

\medskip

The functor $\sQ\circ \sF$ is defined by the kernel 
$$(\on{Id}_S\boxtimes \sQ)((\on{Graph}_{f_1})_*(\CF))\in \Shv(S\times \CY_2).$$ 

Now, the fact that $\CQ$ is supported on $\CY_1\underset{S}\times \CY_2$ implies that
$$(\on{Id}_S\boxtimes \sQ)((\on{Graph}_{f_1})_*(\CF))\simeq
(\on{Graph}_{f_2})_*(\sQ(\CF)).$$

Hence, $\sQ(\CF)$ is ULA with respect to
$f_2$, as desired.

\end{proof} 

\sssec{} \label{sss:preserve ULA prod}

An example of the situation described in \secref{sss:preserve ULA} is when
$$\CY_i\simeq S\times \CY'_i,$$
and $\CQ$ comes from an object $\CQ'\in \Shv(\CY'_1\times \CY'_2)$ such that the
corresponding functor $\sQ'$ is defined and codefined by a kernel.

\sssec{}

In the main body of the paper, we use the following particular case of \corref{c:preserve ULA},
in the setting of \secref{sss:preserve ULA prod}.

\medskip

Let $\CZ$ be an algebraic stack, and let $\CN$ be a conical Zariski-closed subset
of $T^*(\CZ)$. Let $\CY$ be another algebraic stack, and let $\CF\in \Shv(\CY)$
be an object such that the corresponding functor 
$$\sF:\Shv(\CY)\to \Vect$$
is defined and codefined by a kernel.

\medskip

We claim:

\begin{cor} \label{c:preserve sing supp}
Under the above circumstances, the functor
$$\on{Id}_\CZ\boxtimes \sF:\Shv(\CZ\times \CY)\to \Shv(\CZ)$$
sends the subcategory 
$$\Shv_{\CN\times T^*(\CY)}(\CZ\times \CY)\subset \Shv(\CZ\times \CY)$$
to 
$$\Shv_\CN(\CZ) \subset \Shv(\CZ).$$
\end{cor}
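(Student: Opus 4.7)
The plan is to deduce Corollary \ref{c:preserve sing supp} from Corollary \ref{c:preserve ULA} in the product setting of \secref{sss:preserve ULA prod}, combined with the microlocal characterization of singular support as a family of ULA conditions against non-characteristic test maps. First I note that since $\sF$ is defined and codefined by a kernel, so is the product functor $\on{Id}_\CZ \boxtimes \sF$, with defining kernel $\CQ \in \Shv(\CZ\times \CY\times \CZ)$ supported on the diagonal locus $\{(z,y,z)\} \subset \CZ\times \CY\times \CZ$ in the two $\CZ$-factors. The condition that $(\on{Id}_\CZ \boxtimes \sF)(\CG)$ has singular support in $\CN$ is checked smooth-locally on $\CZ$, and the formation of $\on{Id}_\CZ\boxtimes \sF$ commutes with smooth pullback in the $\CZ$-variable (by the compatibility diagrams of \secref{sss:compat kernels new}). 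Hence I reduce to the case where $\CZ = Z$ is a (smooth) scheme.

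Next, I use the following microlocal characterization: for a smooth scheme $Z$, an object $\CH \in \Shv(Z)^{\on{constr}}$ lies in $\Shv_\CN(Z)$ if and only if, for every smooth map $\psi: Z \to T$ to a smooth scheme $T$ that is non-characteristic for $\CN$ (meaning $(d\psi)^*(T^*(T)\setminus 0) \cap \CN \subseteq $ zero section), the object $\CH$ is ULA with respect to $\psi$. Given such a $\psi$, I apply Corollary \ref{c:preserve ULA} with $S = T$, with $\CY_1 = Z \times \CY$ mapped to $T$ by $f_1 = \psi \circ p_Z$, and with $\CY_2 = Z$ mapped to $T$ by $f_2 = \psi$. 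The support condition on $\CQ$ noted above is precisely that $\CQ$ is supported on
$$
(Z \times \CY) \underset{T}\times Z \;\cong\; Z \times \CY,
$$
the fiber product over $T$, since the diagonal inclusion of $Z$ is the fiber product of $f_1$ and $f_2$ over $T$. Together with the fact that $\on{Id}_Z \boxtimes \sF$ is defined and codefined by a kernel, the hypotheses of Corollary \ref{c:preserve ULA} are satisfied.

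It remains to observe that if $\CG \in \Shv_{\CN \times T^*(\CY)}(Z \times \CY)$ and $\psi$ is non-characteristic for $\CN$, then $f_1 = \psi \circ p_Z$ is non-characteristic for $\CN \times T^*(\CY)$: the differential of $\psi \circ p_Z$ factors through the $T^*(Z)$-summand of $T^*(Z \times \CY) = T^*(Z) \times T^*(\CY)$, so its image avoids $\CN \times T^*(\CY)$ outside the zero section precisely when it avoids $\CN \times \{0\}$. By the same microlocal characterization applied on the product $Z \times \CY$, this implies that $\CG$ is ULA over $T$ via $f_1$. Then Corollary \ref{c:preserve ULA} yields that $(\on{Id}_Z \boxtimes \sF)(\CG)$ is ULA over $T$ via $\psi$, and letting $\psi$ range over all non-characteristic test maps, I conclude that the output lies in $\Shv_\CN(Z)$.

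The main obstacle is justifying the microlocal characterization of singular support via ULA against non-characteristic test maps in the sheaf-theoretic context of the paper, and verifying that non-characteristicness behaves well for products of the form $\CN \times T^*(\CY)$ (especially when $\CY$ is stacky). Once these standard microlocal facts are taken for granted, the rest is a direct application of Corollary \ref{c:preserve ULA}, and no further non-trivial work is needed.
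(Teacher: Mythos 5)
Your proof is correct and follows essentially the same route as the paper's: reduce to the scheme case, recast the singular-support condition on the output as a family of ULA conditions against non-characteristic test maps (the paper cites Beilinson's definition directly, you phrase it as ULA against non-characteristic smooth $\psi: Z \to T$), note that the hypothesis on $\CG$ guarantees ULA over $T$ via $\psi\circ p_Z$, and invoke \corref{c:preserve ULA} with the support-on-the-fiber-product observation you make. The only cosmetic difference is how the microlocal characterization is packaged — the paper simply points at \cite{Bei} (together with Remark \ref{r:ULA non-constr new} to extend beyond constructible objects) rather than spelling out a test-map formulation — but both arguments defer to the same underlying fact and both then finish with a direct application of \corref{c:preserve ULA}.
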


\begin{proof}
%
%

%

By the definition of singular support in \cite{Bei}\footnote{This definition is applicable for objects of $\Shv(-)$ rather
than $\Shv(-)^{\on{constr}}$ using Remark \ref{r:ULA non-constr new}.}
, and up to base changing $\CZ$ to a scheme, we have to show that for a map of schemes
$f:S\to S'$, if an object $\CG\in \Shv_{\CN\times T^*(S)}(S\times \CY)^{\on{constr}}$
is ULA with respect to the composition 
$$S\times \CY\to S\overset{f}\to S',$$
then $(\on{Id}_S\boxtimes \sF)(\CG)\in \Shv(S)$ is ULA with respect to $f$. 

\medskip

However, this follows from \corref{c:preserve ULA}.

\end{proof}
%
%
%
%
%
%

\section{Sheaves on non quasi-compact algebraic stacks} \label{s:non qc}

The theory reviewed in Sects. \ref{s:sheaves}-\ref{s:ker} mostly pertains to quasi-compact algebraic stacks. 
In this section we explore the adjustments once needs to make in order to extend the theory to the non
quasi-compact case. 

\ssec{Cotruncative substacks} 

In this subsection, we review, mostly following \cite{DrGa1}, the notion of \emph{cotruncativeness}
of an open embedding, as well associated notions in the presence of a singular support condition. 

\sssec{} \label{sss:truncative}

Let 
$$j:\CU_1\hookrightarrow \CU_2$$
be an open embedding of (quasi-compact) algebraic stacks.

\medskip

We shall say that $\CU_1$ is a \emph{cotruncative} substack of $\CU_2$ if the functor $j_*$, \emph{viewed as a functor defined by a kernel},
admits a right adjoint, to be denoted $j^?$. Let $\CQ\in \Shv(\CU_2\times \CU_1)$ be the kernel of this right adjoint. 

\medskip

It follows from \secref{sss:create left adjoint} that the object $\CQ^\sigma\in \Shv(\CU_1\times \CU_2)$
defines the \emph{left} adjoint to $j^*$, as a functor defined by a kernel. In particular, the system of functors
$$(\on{id}\times j)_!:\Shv(\CZ\times \CU_1)\to \Shv(\CZ\times \CU_2)$$
satisfies the compatibilities of \secref{sss:compat kernels new}. 

\medskip

As in \cite[Lemma 3.7.1]{DrGa1}, one shows:

\begin{lem} \label{l:union cotruncative}
If $\CU'_1\subset \CU_2$ and $\CU''_1\subset \CU_2$
are cotruncative, then so is their union.
\end{lem}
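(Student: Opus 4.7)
The plan is to deduce cotruncativeness of $j : \CU_1 := \CU'_1 \cup \CU''_1 \hookrightarrow \CU_2$ by a Mayer--Vietoris argument that expresses $j_*$, as a functor defined by a kernel, as a finite limit of pushforward functors each admitting a kernel right adjoint.

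First, I would set up notation: write $j': \CU'_1 \hookrightarrow \CU_2$, $j'' : \CU''_1 \hookrightarrow \CU_2$, $j''' : \CU'_1 \cap \CU''_1 \hookrightarrow \CU_2$, and denote by $j'_1, j''_1, j'''_1$ the corresponding inclusions into $\CU_1$, so that $j \circ j'_1 = j'$, etc. For the open cover $\CU_1 = \CU'_1 \cup \CU''_1$ and any algebraic stack $\CZ$, the standard Mayer--Vietoris fiber sequence gives, for every $\CF \in \Shv(\CZ\times \CU_1)$, a canonical equivalence
\begin{equation*}
\CF \simeq \on{fib}\!\left((\on{id}\times j'_1)_* (\on{id}\times j'_1)^*\CF \oplus (\on{id}\times j''_1)_*(\on{id}\times j''_1)^*\CF \to (\on{id}\times j'''_1)_*(\on{id}\times j'''_1)^*\CF\right).
\end{equation*}
Applying $(\on{id}\times j)_*$ yields a canonical presentation of $(\on{id}\times j)_*(\CF)$ as a finite limit of $(\on{id}\times j')_*$, $(\on{id}\times j'')_*$, $(\on{id}\times j''')_*$ composed with restriction functors, and by naturality in $\CZ$ this exhibits $j_*$ as a finite limit in the 2-category of kernel-defined functors (\secref{sss:2-categ}).

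Second, I would verify that $j'''$ is cotruncative. Write $j''' = j' \circ k'$ where $k' : \CU'_1 \cap \CU''_1 \hookrightarrow \CU'_1$ is the pullback of $j''$ along $j'$. Since composition of cotruncative embeddings is cotruncative (the right-adjoint kernels convolve via \secref{sss:comp corr}), it suffices to show that $k'$ is cotruncative. Here the claim is that cotruncativeness is preserved under pullback along an open immersion: if $\CQ'' \in \Shv(\CU_2\times \CU''_1)$ is the kernel defining $(j'')^?$, then the $!$-pullback of $\CQ''$ along $j' \times k'' : \CU'_1 \times (\CU'_1\cap \CU''_1) \to \CU_2 \times \CU''_1$ provides the kernel representing the right adjoint to $k'_*$ in the sense of \secref{sss:compat kernels new}. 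One deduces the adjunction datum formally from the defining adjunction for $\CQ''$ and base change for open immersions along the Cartesian square relating $j', j'', k', k''$.

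Finally, I would conclude by combining the two steps. Each of $j'_*, j''_*, j'''_*$ admits a right adjoint as a functor defined by a kernel, as does each restriction $(\on{id}\times j'_1)^*$ (its right adjoint being the $*$-pushforward $(\on{id}\times j'_1)_*$ since $j'_1$ is an open immersion into the quasi-compact $\CU_1$; similarly for $j''_1, j'''_1$). Hence each of the three terms appearing in the Mayer--Vietoris presentation of $j_*$ admits a right adjoint in the 2-category of kernel-defined functors. A finite limit of such functors again admits a right adjoint there: the adjoint is the corresponding finite colimit of right adjoints, realized at the level of kernels by the finite colimit in $\Shv(\CU_2\times \CU_1)$. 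Applied to our diagram, this produces the desired kernel-defined right adjoint to $j_*$, which is $j^?$.

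The main obstacle is the base-change step in the second paragraph: verifying that cotruncativeness of $j''$ persists under pullback along $j'$, i.e., that the pulled-back kernel genuinely represents the right adjoint to $k'_*$ compatibly with all auxiliary stacks $\CZ$ (not merely at the level of a single pair of categories). All remaining steps are essentially formal consequences of the 2-categorical kernel formalism and standard Mayer--Vietoris.
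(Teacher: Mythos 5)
Your argument is correct, and its shape (Mayer--Vietoris to reduce to the three constituent open substacks, plus a base-change lemma showing that cotruncativeness restricts along open immersions) is the natural one; the paper itself simply refers to \cite[Lemma 3.7.1]{DrGa1} for the proof.

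The ``main obstacle'' you flag does go through. From the Cartesian square relating $j',j'',k',k''$, base change and full faithfulness of $k''_*$ give
$$
(k')^?\;\simeq\;(k'')^*\circ (j'')^?\circ (j')_*,
$$
a composite of three kernel-defined functors: $(j')_*$ and $(k'')^*$ are kernel-defined because they are $*$-pushforward and $*$-pullback along open embeddings of quasi-compact stacks, and $(j'')^?$ is kernel-defined by the cotruncativeness hypothesis on $j''$. Unwinding the composite kernel via the projection formula and base change for open embeddings shows it is exactly $(j'\times k'')^!\CQ''$, as you proposed. Equivalently, one can argue that $k'_! \simeq (j')^*\circ j''_!\circ k''_*$ exhibits $k'_!$ as a composite of kernel-defined functors, and then pass to $\sigma$-transposes.

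One stylistic simplification: it is slightly cleaner to dualize through \secref{sss:create left adjoint} and work with $j_!$ rather than $j_*$. Cotruncativeness is equivalent to $j_!$ being defined by a kernel, and $!$-Mayer--Vietoris exhibits $j_!$ as a finite \emph{colimit}
$$
j_!\;\simeq\;\mathrm{cofib}\Bigl((j''')_!(j'''_1)^*\to (j')_!(j'_1)^*\oplus (j'')_!(j''_1)^*\Bigr).
$$
Since $\Shv(\CU_1\times\CU_2)$ is stable, a finite colimit of kernels is a kernel, and one only needs each term to be kernel-defined (rather than right-adjointable). This sidesteps the general ``right adjoint of a finite limit of right-adjointable $1$-morphisms'' step in your final paragraph, which does hold in a $2$-category with stable mapping categories but takes a line or two to justify. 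Both routes are valid.
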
  

\sssec{} 

Let 
$$j:\CU\hookrightarrow \CY$$
be an open embedding of not necessarily quasi-compact algebraic stacks. We say that $\CU$ is 
a \emph{cotruncative} substack of $\CY$ if for every quasi-compact open $\CU'\subset \CY$,
the intersection $\CU\cap \CU'$ is cotruncative as an open substack of $\CU'$.

\sssec{} \label{sss:truncatable}

We shall say that a (not necessarily quasi-compact) algebraic stack $\CY$ is \emph{truncatable}
if it can be written as a union of quasi-compact cotruncative open substacks. 

\medskip

If $\CY$ is truncatable, by \lemref{l:union cotruncative} the poset of its quasi-compact cotruncative 
open substacks is filtered and cofinal in the poset of all quasi-compact open substacks.

\sssec{} \label{sss:N-cotrunc}

Let $j:\CU\hookrightarrow \CY$ be as above. Let $\CN$ be a conical Zariski closed subset 
of $T^*(\CY)$. By a slight abuse of notation we will denote by the same symbol $\CN$
its restriction to $\CU$. 

\medskip

We shall say that $\CU$ is $\CN$-\emph{cotruncative} if it is cotruncative \emph{and} the extension 
functor $j_!$ (equivalently, $j_*$) sends 
$$\Shv_{\CN}(\CU)\to \Shv_{\CN}(\CY).$$

\medskip

We shall say that $\CU$ is \emph{universally} $\CN$-\emph{cotruncative} if it is cotruncative \emph{and} 
for any algebraic stack $\CZ$ and a closed conical subset $\CN_\CZ \subset T^*(\CZ)$, the extension 
functor $(\on{id}\times j)_!$ (equivalently, $(\on{id}\times j)_*$)
sends 
$$\Shv_{\CN_\CZ\times \CN}(\CZ\times \CU)\to \Shv_{\CN_\CZ \times \CN}(\CZ\times \CY).$$

\sssec{} \label{sss:N-trunc}

We shall say that $\CY$ is $\CN$-truncatable (resp., universally $\CN$-truncatable) 
if it can be written as a \emph{filtered} union of its $\CN$-cotruncative (resp., universally $\CN$-cotruncative) 
quasi-compact open substacks. 

\ssec{The non-quasi-compact case: the ``co"-category} \label{ss:co}

If $\CY$ is a non quasi-compact algebraic stack, Verdier duality is no longer a self-duality on $\Shv(\CY)$,
but rather a duality between $\Shv(\CY)$ and another category, denoted $\Shv(\CY)_{\on{co}}$.

\medskip

In this subsection we introduce the category $\Shv(\CY)_{\on{co}}$ and study its basic properties. 

\sssec{} \label{sss:Shv on non qc}

Let $\CY$ be a (not necessarily quasi-compact) algebraic stack. By definition, $\Shv(\CY)$ is
$$\underset{\CU}{\on{lim}}^*\, \Shv(\CU),$$
where the index category is the poset of quasi-compact open substacks $\CU\subset \CY$,
and for $\CU_1\overset{j}\hookrightarrow \CU_2$ the transition functor
$\Shv(\CU_2)\to \Shv(\CU_1)$ is $j^*$.  

\medskip

By \cite[Proposition 1.7.5]{DrGa1}, we can rewrite the above limit as a colimit
$$\underset{\CU}{\on{colim}_!}\, \Shv(\CU),$$
where the transition functors are 
$\Shv(\CU_1)\to \Shv(\CU_2)$ are $j_!$.  

\sssec{} \label{sss:co}

We define the category $\Shv(\CY)_{\on{co}}$ as
$$\underset{\CU}{\on{colim}_*}\, \Shv(\CU),$$
where the transition functors are 
$\Shv(\CU_1)\to \Shv(\CU_2)$ are $j_*$.  

\medskip

For a given quasi-compact
$$U\overset{j}\hookrightarrow \CY,$$
we will denote by $j_{*,\on{co}}$ the tautologically defined functor
$$\Shv(\CU)\to \Shv(\CY)_{\on{co}}.$$

\medskip

Note that when $\CY$ is truncatable, we can replace the poset of all $\CU$ by the cotruncative
ones. In this case, we can write $\Shv(\CY)_{\on{co}}$ also as 
$$\underset{\CU}{\on{lim}^?}\, \Shv(\CU),$$
where the transition functors are 
$\Shv(\CU_2)\to \Shv(\CU_1)$ are $j^?$. 

\sssec{} \label{sss:Id naive}

We have a tautologically defined functor
$$\on{Id}^{\on{naive}}_\CY:\Shv(\CY)_{\on{co}}\to \Shv(\CY).$$
It corresponds to the compatible family of functors
$$(\CU\overset{j}\hookrightarrow \CY) \rightsquigarrow (j_*:\Shv(\CU)\to \Shv(\CY)).$$

\medskip

For a quasi-compact $U\overset{j}\hookrightarrow \CY$ we have 
$$\on{Id}^{\on{naive}}_\CY\circ j_{*,\on{co}}\simeq j_*.$$

\medskip

In addition, we have a naturally defined monoidal action of $\Shv(\CY)$ on $\Shv(\CY)_{\on{co}}$
given by $\sotimes$.

\sssec{} \label{sss:co N}

Let $\CN$ be a closed conical subset in $T^*(\CY)$, and assume that $\CY$ is $\CN$-truncatable. 

\medskip

In this case we define the category
$$\Shv_\CN(\CY)_{\on{co}}$$ 
as 
$$\underset{\CU}{\on{colim}_*}\, \Shv_\CN(\CU),$$
where the colimit is taken over the poset of $\CN$-cotruncative quasi-compact open substacks of
$\Bun_G$. 

\medskip

The fact that the above poset is filtered implies that the tautologically defined functor
$$\Shv_\CN(\CY)_{\on{co}}\to \Shv(\CY)_{\on{co}}$$
is fully faithful. 

\medskip

Furthermore, the functor $\on{Id}^{\on{naive}}_\CY$ sends 
$$\Shv_\CN(\CY)_{\on{co}}\to \Shv_\CN(\CY).$$

\ssec{Verdier duality in the non-quasi-compact case} \label{ss:Verdier non qc}

In this subsection we establish a duality between $\Shv(\CY)$ and $\Shv(\CY)_{\on{co}}$. 

\sssec{} \label{sss:dual of colimit}

Recall the following paradigm:

\medskip

Let $$i\mapsto \bC_i, \quad i\in I$$
be a diagram of DG categories. Denote
$$\bC:=\underset{i\in I}{\on{colim}}\, \bC_i.$$

Suppose that for each arrow $i_1\to i_2$, the transition functor
$$\phi_{i_1,i_2}:\bC_{i_1}\to \bC_{i_2}$$
admits a continuous right adjoint. Suppose also that each $\bC_i$ is dualizable.

\medskip

We can form a new family
\begin{equation} \label{e:dual family}
i\mapsto \bC^\vee_i, \quad i\in I,
\end{equation}
where the transition functor $\bC^\vee_{i_1}\to \bC^\vee_{i_2}$
is $(\phi_{i_1,i_2}^R)^\vee$. 

\medskip

Denote 
$$\wt\bC:=\underset{i\in I}{\on{colim}}\, \bC^\vee_i.$$

Supposed for simplicity that the index category $I$ is sifted, and that the transition functors 
$\phi_{i_1,i_2}$ are fully faithful.
 
 \medskip
 
Then we have a naturally defined pairing
\begin{equation} \label{e:colimit dual}
\bC\otimes \wt\bC\to \Vect.
\end{equation}

Namely, using the siftedness assumption on $I$, we write 
$$\bC\otimes \wt\bC \simeq \underset{i\in I}{\on{colim}}\, \bC_i\otimes \bC^\vee_i,$$
and the sought-for pairing is given by the tautological pairings $\bC_i\otimes \bC^\vee_i\to \Vect$. 

\medskip

The following is a particular case of \cite[Proposition 1.8.3]{DrGa1}:

\begin{prop} \label{p:colimit dual}
The functor \eqref{e:colimit dual} identifies $\wt\bC$ with the dual of $\bC$.
\end{prop}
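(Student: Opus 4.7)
The plan is to reduce this to the well-known principle that a colimit in $\DGCat$ along left adjoints is computed as a limit along the corresponding right adjoints, and that duality swaps limits and colimits for dualizable diagrams.

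First, since each transition functor $\phi_{i_1,i_2}:\bC_{i_1}\to\bC_{i_2}$ admits a continuous right adjoint $\phi_{i_1,i_2}^R$, the standard identity in $\DGCat$ (see e.g.\ the cofinality/adjointability results of \cite{AGKRRV}) gives
\[
\bC \;=\; \underset{i\in I}{\on{colim}}\,\bC_i \;\simeq\; \underset{i\in I^{\on{op}}}{\on{lim}}\,\bC_i,
\]
where the limit is taken along the functors $\phi_{i_1,i_2}^R$. Since each $\bC_i$ is dualizable, the duality functor $(-)^\vee:\DGCat\to\DGCat^{\on{op}}$ can be applied termwise; because dualization sends limits of dualizable objects to colimits of their duals (and vice versa), we obtain
\[
\bC^\vee \;\simeq\; \bigl(\underset{i\in I^{\on{op}}}{\on{lim}}\,\bC_i\bigr)^{\!\vee} \;\simeq\; \underset{i\in I^{\on{op}}}{\on{colim}}\,\bC_i^\vee \;\simeq\; \underset{i\in I}{\on{lim}}\,\bC_i^\vee,
\]
where the transition functors in the middle colimit are the duals $(\phi_{i_1,i_2}^R)^\vee$, matching the definition of the family \eqref{e:dual family}. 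This gives an abstract equivalence $\bC^\vee \simeq \wt\bC$, and moreover, since $I$ is sifted, we can rewrite this colimit over $I$ itself (rather than having to invert arrows), which matches the definition of $\wt\bC$ in the statement.

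Second, I would identify the counit of the abstract duality with the pairing \eqref{e:colimit dual} constructed in the excerpt. For each $i$, the insertion functors $\bC_i\hookrightarrow\bC$ and $\bC_i^\vee\hookrightarrow\wt\bC$ are, by construction, compatible with the tautological pairings $\bC_i\otimes\bC_i^\vee\to\Vect$: the abstract counit $\bC\otimes\bC^\vee\to\Vect$, when restricted to $\bC_i\otimes\bC_i^\vee$, factors through the unit/counit data for $\bC_i$'s own self-duality because the identifications arise from dualizing the limit presentation termwise. Using the siftedness of $I$ to write
\[
\bC\otimes\wt\bC \;\simeq\; \underset{i\in I}{\on{colim}}\,(\bC_i\otimes\bC_i^\vee),
\]
the abstract and the explicit pairings are both determined by their restrictions to each $\bC_i\otimes\bC_i^\vee$, and these agree.

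Finally, to conclude that \eqref{e:colimit dual} is the counit of a perfect pairing, I would exhibit the corresponding unit, constructed analogously from the units $\Vect\to\bC_i\otimes\bC_i^\vee$ via the sifted colimit presentation, and verify the two triangle identities. These identities reduce (by continuity and the sifted-colimit presentation) to the triangle identities for each individual dualizable $\bC_i$, which hold by hypothesis. The main obstacle, and the step requiring the most care, is the compatibility check in this last paragraph: one must verify that the insertion functors intertwine the units and counits at finite levels with the colimit-level ones, which is where the continuity of each $\phi_{i_1,i_2}^R$ (and hence the identification of the colimit with a limit) is essentially used.
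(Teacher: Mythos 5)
The paper records this proposition as a particular case of \cite[Proposition 1.8.3]{DrGa1} and gives no proof of its own, so the right comparison is with that reference; your sketch does reproduce its underlying strategy (rewrite the colimit as a limit along the right adjoints, dualize termwise, match the pairings).

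Two points should, however, be corrected. The indexing in your displayed chain is off: dualization is a contravariant equivalence on dualizable DG categories, so dualizing a diagram indexed by $I^{\on{op}}$ lands you directly in a diagram indexed by $(I^{\on{op}})^{\on{op}}=I$; the middle term should be $\on{colim}_{i\in I}\bC_i^\vee$, which \emph{is} $\wt\bC$ on the nose. The subsequent step ``$\simeq \on{lim}_{i\in I}\bC_i^\vee$'' and the appeal to siftedness to ``rewrite this colimit over $I$ itself'' are therefore spurious. Siftedness plays no role in identifying $\bC^\vee$ with $\wt\bC$; its only job is the K\"unneth identification $\bC\otimes\wt\bC\simeq\on{colim}_i(\bC_i\otimes\bC_i^\vee)$ in your second paragraph. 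More substantively, the assertion that ``dualization sends limits of dualizable objects to colimits of their duals'' is not an unconditional fact about $\DGCat$; the correct statement, which is exactly the content of \cite[Proposition 1.8.3]{DrGa1}, is that a colimit of dualizable categories along functors admitting \emph{continuous} right adjoints is dualizable, with dual the colimit of the duals along the duals of those right adjoints. The continuity of the $\phi_{i_1,i_2}^R$ is not an auxiliary technicality to invoke only in the final compatibility check: it is precisely the hypothesis that makes the termwise dualization and the passage from the limit presentation of $\bC$ to the colimit presentation of $\bC^\vee$ legitimate, and it should be cited at that step.
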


\sssec{} \label{sss:Verdier non-qc}

We apply the paradigm of \secref{sss:dual of colimit} as follows. Let $\CY$ be a (not necessarily quasi-compact) algebraic stack.
Consider the index set consisting of its quasi-compact open substacks, and consider the functor
$$\CU \rightsquigarrow \Shv(\CU), \,\, (\CU_1\overset{j_{1,2}}\hookrightarrow \CU_2)\rightsquigarrow (j_{1,2})_!.$$

By \secref{sss:Shv on non qc},
$$\underset{\CU}{\on{colim}}\, \Shv(\CU)=:\underset{\CU}{\on{colim}}{}_!\, \Shv(\CU)\simeq \Shv(\CY).$$

Recall (see \secref{sss:Verdier}) that for a quasi-compact stack the category of sheaves is naturally
self-dual via Verdier duality. 

\medskip

The corresponding dual family \eqref{e:dual family} identifies with 
$$\CU \rightsquigarrow \Shv(\CU), \,\, (\CU_1\overset{j_{1,2}}\hookrightarrow \CU_2)\rightsquigarrow (j_{1,2})_*,$$
and its colimit
$$\underset{\CU}{\on{colim}}{}_*\, \Shv(\CU)$$
is by definition $\Shv(\CY)_{\on{co}}$.

\medskip

Applying \propref{p:colimit dual}, we obtain a canonical identification 
\begin{equation} \label{e:Verdier duality non qc}
\Shv(\CY)^\vee \simeq \Shv(\CY)_{\on{co}}.
\end{equation}

\sssec{} \label{sss:pairing non qc}

Unwinding the definitions, we obtain that the counit of the duality \eqref{e:Verdier duality non qc} is given by the functor $\on{ev} _\CY$, i.e., 
$$\Shv(\CY)\otimes \Shv(\CY)_{\on{co}}\overset{\sotimes}\to 
\Shv(\CY)_{\on{co}} \overset{\on{C}^\cdot_\blacktriangle(\CY,-)}\longrightarrow \Vect,$$
where 
$$\on{C}^\cdot_\blacktriangle(\CY,-):\Shv(\CY)_{\on{co}}\to \Vect$$
is the functor given by the compatible collection of functors
$$\on{C}^\cdot_\blacktriangle(\CU,-):\Shv(\CU)\to \Vect, \quad \CU \text{ is a quasi-compact open in } \CY.$$

\medskip

The corresponding contravariant self-equivalence
$$\BD^{\on{Verdier}}:(\Shv(\CY)^c)^{\on{op}}\to (\Shv(\CY)_{\on{co}})^c$$
sends
$$j_!(\CF_\CU)  \mapsto j_{*,\on{co}}(\BD^{\on{Verdier}}(\CF_\CU)), \quad \CF_\CU\in \Shv(\CU)^c$$
for a quasi-compact open $\CU\overset{j}\hookrightarrow \CY$. 

\sssec{} \label{sss:Verdier non-qc N}

Let now $\CN\subset T^*(\CY)$ be a conical Zariski-closed subset. Assume that $\CY$ is (universally) $\Nilp$-truncatable.

\medskip

Let us suppose now that for every (universally) $\CN$-cotruncative quasi-compact open substack $\CU\subset \CY$,
the pair $(\CU,\CN)$ is duality-adapted. 

\medskip

It then follows from \propref{p:colimit dual} that the restriction of $\on{ev} _\CY$ along
$$\Shv_\CN(\CY)\otimes \Shv_\CN(\CY)_{\on{co}} \to \Shv(\CY)\otimes \Shv(\CY)_{\on{co}}$$
defines an identification 
$$\Shv_\CN(\CY)^\vee \simeq \Shv_\CN(\CY)_{\on{co}}.$$

\ssec{Functors defined by kernels in the non-quasi-compact case} \label{ss:functors by ker non qc}

In this subsection we explain modifications to the formalism of functors defined by kernels, required in order
to consider the non quasi-compact case. 

\sssec{}

We first consider
the formalism of functors \emph{codefined} be kernels. This theory requires no modifications
from the quasi-compact case. Namely, an object
$$\CP\in \Shv(\CY_1\times \CY_2),$$
defines a family of functors
$$\on{Id}_\CZ\boxtimes \sP^l:\Shv(\CZ\times \CY_1)\to \Shv(\CZ\times \CY_2),$$
by the same formula
$$(\on{Id}_\CZ\boxtimes \sP^l)(\CF)=(p_2)_!(p_1^*(\CF)\overset{*}\otimes \CP).$$

The observation in \secref{sss:compat kernels new} applies with $-_\blacktriangle$ and $-^!$ replaced by
$-_!$ and $-^*$, respectively. 


\sssec{} \label{sss:mixed categories}

We now consider functors \emph{defined} by kernels. 

\medskip

Let $\CY$ and $\CZ$ be a pair of (not necessarily quasi-compact) algebraic stacks. 
In this case, in addition to
$$\Shv(\CZ\times \CY) \text{ and } \Shv(\CZ\times \CY)_{\on{co}},$$
one can define the mixed categories
$$\Shv(\CZ\times \CY)_{\on{co}_\CZ} \text{ and } \Shv(\CZ\times \CY)_{\on{co}_\CY}.$$

Namely,
$$\Shv(\CZ\times \CY)_{\on{co}_\CZ}:=\underset{\CU_\CY\subset \CY}{\on{lim}^*}\, \Shv(\CZ\times \CU_\CY)_{\on{co}}\simeq 
\underset{\CU_\CY\subset \CY}{\on{lim}^*}\, \underset{\CU_\CZ\subset \CZ}{\on{colim}_*}\, 
\Shv(\CU_\CZ\times \CU_\CY)$$
and
$$\Shv(\CZ\times \CY)_{\on{co}_\CY}:=\underset{\CU_\CZ\subset \CZ}{\on{lim}^*}\,  \Shv(\CU_\CZ\times \CY)_{\on{co}} \simeq 
\underset{\CU_\CZ\subset \CZ}{\on{lim}^*}\, \underset{\CU_\CY\subset \CY}{\on{colim}_*}\, 
\Shv(\CU_\CZ\times \CU_\CY),$$
where $\CU_\CY$ and $\CU_\CZ$ are quasi-compact open substacks of $\CY$ and $\CZ$, respectively. 

\medskip

We have naturally defined functors
$$\on{Id}_\CZ\boxtimes \on{Id}^{\on{naive}}_\CY:\Shv(\CZ\times \CY)_{\on{co}}\to \Shv(\CZ\times \CY)_{\on{co}_\CZ}$$
and
$$\on{Id}^{\on{naive}}_\CZ\boxtimes \on{Id}_\CY: \Shv(\CZ\times \CY)_{\on{co}_\CZ}\to \Shv(\CZ\times \CY),$$
whose composition is the functor
$$\on{Id}^{\on{naive}}_{\CZ\times \CY}:\Shv(\CZ\times \CY)_{\on{co}}\to \Shv(\CZ\times \CY).$$

\sssec{}

Assume now that $\CY$ is truncatable. Then we claim that we can rewrite $\Shv(\CZ\times \CY)_{\on{co}_\CZ}$
also we as 
$$\underset{\CU_\CZ\subset \CZ}{\on{colim}_*}\,  \underset{\CU_\CY\subset \CY}{\on{lim}^*}\, 
\Shv(\CU_\CZ\times \CU_\CY)\simeq \underset{\CU_\CZ\subset \CZ}{\on{colim}_*}\, \Shv(\CU_\CZ\times \CY).$$

Indeed, we note that for a cotruncative 
$$\CU_{1,\CY}\overset{j_\CY}\hookrightarrow \CU_{2,\CY} $$
and any $\CU_{1,\CZ}\overset{j_\CZ}\hookrightarrow \CU_{2,\CZ}$, the diagram
$$
\CD
\Shv(\CU_{1,\CU}\times \CU_{1,\CZ}) @>{(j_\CY\times \on{id})_!}>> \Shv(\CU_{2,\CU}\times \CU_{1,\CZ}) \\
@V{(\on{id}\times j_\CZ)_*}VV @VV{(\on{id}\times j_\CZ)_*}V \\
\Shv(\CU_{1,\CU}\times \CU_{2,\CZ}) @>{(j_\CY\times \on{id})_!}>> \Shv(\CU_{2,\CU}\times \CU_{2,\CZ})  
\endCD
$$
commutes.

\medskip

Hence, we can rewrite 
\begin{multline*} 
\underset{\CU_\CY\subset \CY}{\on{lim}^*}\, \underset{\CU_\CZ\subset \CZ}{\on{colim}_*}\, 
\Shv(\CU_\CZ\times \CU_\CY)\simeq 
\underset{\CU_\CY\subset \CY}{\on{colim}_!}\, \underset{\CU_\CZ\subset \CZ}{\on{colim}_*}\, \Shv(\CU_\CZ\times \CU_\CY)\simeq  \\
\simeq \underset{\CU_\CZ\subset \CZ}{\on{colim}_*}\, \underset{\CU_\CY\subset \CY}{\on{colim}_!}\, \Shv(\CU_\CZ\times \CU_\CY)
\simeq \underset{\CU_\CZ\subset \CZ}{\on{colim}_*}\,  \underset{\CU_\CY\subset \CY}{\on{lim}^*}\, 
\Shv(\CU_\CZ\times \CU_\CY).
\end{multline*} 

\sssec{} \label{sss:kernels non qc}

The material in Sect. \ref{ss:ker} goes through with the following modifications:

\medskip

Let $\CY_1$ and $\CY_2$ be a pair of truncatable stacks, and let $\CZ$ be another algebraic stack.  

\begin{itemize}

\item An object $\CQ\in \Shv(\CY_1\times \CY_2)$ gives rise to functors
$$\on{Id}_\CZ\boxtimes \sQ:\Shv(\CZ\times \CY_1)_{\on{co}}\to \Shv(\CZ\times \CY_2)_{\on{co}_\CZ} \text{ and } 
\on{Id}_\CZ\boxtimes \sQ:\Shv(\CZ\times \CY_1)_{\on{co}_{\CY_1}}\to \Shv(\CZ\times \CY_2).$$

\item An object $\CQ\in \Shv(\CY_1\times \CY_2)_{\on{co}}$ gives rise to functors
$$\on{Id}_\CZ\boxtimes \sQ:\Shv(\CZ\times \CY_1)\to \Shv(\CZ\times \CY_2)_{\on{co}_{\CY_2}} \text{ and }
\on{Id}_\CZ\boxtimes \sQ:\Shv(\CZ\times \CY_1)_{\on{co}_\CZ}\to \Shv(\CZ\times \CY_2)_{\on{co}}.$$

\item An object $\CQ\in \Shv(\CY_1\times \CY_2)_{\on{co}_{\CY_1}}$ gives rise to functors
$$\on{Id}_\CZ\boxtimes \sQ:\Shv(\CZ\times \CY_1)\to \Shv(\CZ\times \CY_2) \text{ and }
\on{Id}_\CZ\boxtimes \sQ:\Shv(\CZ\times \CY_1)_{\on{co}_\CZ}\to \Shv(\CZ\times \CY_2)_{\on{co}_\CZ}.$$ 

\item An object $\CQ\in \Shv(\CY_1\times \CY_2)_{\on{co}_{\CY_2}}$ gives rise to functors
$$\on{Id}\otimes \sQ:\Shv(\CZ\times \CY_1)_{\on{co}_{\CY_1}} \to \Shv(\CZ\times \CY_2)_{\on{co}_{\CY_2}} \text{ and }
\on{Id}\otimes \sQ:\Shv(\CZ\times \CY_1)_{\on{co}}\to \Shv(\CZ\times \CY_2)_{\on{co}}.$$ 

\end{itemize}

\sssec{} \label{sss:co1}

When $\CZ=\CY$, we will use the notations
$$\Shv(\CY\times \CY)_{\on{co}_1} \text{ and } \Shv(\CY\times \CY)_{\on{co}_2}$$
for the corresponding mixed categories.

\medskip

Let $\CF$ be an object of $\Shv(\CY)$. We have
the usual diagonal object
$$(\Delta_\CY)_*(\CF)\in \Shv(\CY\times \CY).$$

In addition, we can consider the objects
$$(\Delta_\CY)_*(\CF)_{\on{co}_1}\in \Shv(\CY\times \CY)_{\on{co}_1} \text{ and }
(\Delta_\CY)_*(\CF)_{\on{co}_2}\in \Shv(\CY\times \CY)_{\on{co}_2},$$
defined as follows:

\medskip

The restriction of $(\Delta_\CY)_*(\CF)_{\on{co}_1}$ to $\CY\times \CU$ for a quasi-compact open
$\CU\overset{j}\hookrightarrow \CY$ is
$$(j\times \on{id})_{*,\on{co}}\circ (\Delta_\CU)_*\circ j^*(\CF)\in 
\Shv(\CY\times \CU)_{\on{co}},$$
and similarly for $(\Delta_\CY)_*(\CF)_{\on{co}_2}$. 

\medskip

By \secref{sss:kernels non qc}, the object $(\Delta_\CY)_*(\CF)_{\on{co}_1}$ defines a functor
$$\sF:\Shv(\CY)\to \Shv(\CY),$$
and the object $(\Delta_\CY)_*(\CF)_{\on{co}_2}$ defines a functor
$$\sF:\Shv(\CY)_{\on{co}}\to \Shv(\CY)_{\on{co}}.$$

In both contexts, the functor $\sF$ is given by $\CF\sotimes -$. 

\medskip

Let now $\CF$ be an object of $\Shv(\CY)_{\on{co}}$. Then $(\Delta_\CY)_*(\CF)$ is naturally 
an object of $\Shv(\CY\times \CY)_{\on{co}}$. The resulting functor
$$\sF:\Shv(\CY)\to \Shv(\CY)_{\on{co}}$$
is given by $-\sotimes \CF$.

\sssec{} \label{sss:u co 1 2}

When $\CF=\omega_\CY\in \Shv(\CY)$, we will use the notation
$$\on{u}_\CY^{\on{naive}}:=(\Delta_\CY)_*(\omega_\CY),\,\,
\on{u}_{\CY,\on{co}_1}:=(\Delta_\CY)_*(\omega_\CY)_{\on{co}_1},\,\, 
\on{u}_{\CY,\on{co}_2}:=(\Delta_\CY)_*(\omega_\CY)_{\on{co}_2}.$$

\medskip

In terms of \secref{sss:kernels non qc}, the object $\on{u}_\CY^{\on{naive}}$ defines the functors
$$\on{Id}_\CZ\boxtimes \on{Id}^{\on{naive}}_\CY: \Shv(\CZ\times \CY)_{\on{co}}\to \Shv(\CZ\times \CY)_{\on{co}_\CZ}$$
and
$$\on{Id}_\CZ\boxtimes \on{Id}^{\on{naive}}_\CY: \Shv(\CZ\times \CY)_{\on{co}_\CY}\to \Shv(\CZ\times \CY).$$

The object $\on{u}_{\CY,\on{co}_1}$ defines \emph{the identity} functors 
$$\Shv(\CZ \times \CY)\to \Shv(\CZ\times \CY) \text{ and } \Shv(\CZ \times \CY)_{\on{co}_\CZ}\to \Shv(\CZ\times \CY)_{\on{co}_\CZ}$$
and the 
object $\on{u}_{\CY,\on{co}_2}$ defines \emph{the identity} functors 
$$\Shv(\CZ \times \CY)_{\on{co}}\to \Shv(\CZ \times \CY)_{\on{co}} \text{ and }
\Shv(\CZ \times \CY)_{\on{co}_\CY}\to \Shv(\CZ \times \CY)_{\on{co}_\CY}.$$

\sssec{} \label{sss:Mir non qc}

We now consider the object
$$\on{ps-u}_\CY:=(\Delta_\CY)_!(\sfe_\CY)\in \Shv(\CY\times \CY).$$

By \secref{sss:kernels non qc}, it defines a functor
$$\Shv(\CY)_{\on{co}}\to \Shv(\CY).$$

This is the \emph{miraculous functor}, to be denoted $\Mir_\CY$. 

\sssec{} \label{sss:Mir and j}

Let $\CY'\overset{j}\hookrightarrow \CY$ be a cotruncative open embedding. 
It follows formally that we have
\begin{equation} \label{e:Mir and j}
j_!\circ \Mir_{\CY'} \simeq \Mir_\CY \circ j_{*,\on{co}},
\end{equation}
as functors given by kernels. 

\sssec{} \label{sss:codef co}

We will now consider one more variant of the construction of functors attached to 
sheaves on the product of two stacks.

\medskip

Let $\CY_1$ and $\CY_2$ be truncatable, and let $\CP$ be an object of $\Shv(\CY_1\times \CY_2)_{\on{co}_{\CY_2}}$.
Then for any $\CZ$, we have a well-defined functor
$$\on{Id}_\CZ\boxtimes \sP^l:\Shv(\CZ\times \CY_1)\to \Shv(\CZ\times \CY_2)_{\on{co}_{\CY_2}}.$$

\ssec{Miraculous stacks--the non quasi compact case} \label{ss:Mir stacks non-qc}

In this subsection we define what it means for a not necessarily quasi-compact algebraic stack to 
be miraculous. 

\sssec{} \label{sss:Mir stacks non qc}

Let now $\CY$ by a not necessarily quasi-compact truncatable stack. 

\medskip

We shall say that $\CY$ is \emph{miraculous}
if $\Mir_\CY$ is \emph{invertible, as a functor defined by a kernel}. I.e., if there exists an object
$$\CQ\in \Shv(\CY\times \CY)_{\on{co}}$$
such that
$$\CQ\star \on{ps-u}_\CY\simeq \on{u}_{\CU,\on{co}_2}$$
as objects of $\Shv(\CY\times \CY)_{\on{co}_2}$ 
and
$$\on{ps-u}_\CY\star \CQ \simeq \on{u}_{\CU,\on{co}_1}$$
as objects of $\Shv(\CY\times \CY)_{\on{co}_1}$.  

\sssec{} \label{sss:Mir inherited}

It is easy to see that $\CY$ is miraculous if and only if every \emph{cotruncative} quasi-compact open substack $\CU\subset \CY$
is miraculous: 

\medskip

Indeed, the ``if" direction follows from \eqref{e:Mir and j}. For the ``only if" direction, if $\CY$ is miraculous and $\CU\subset \CY$
is cotruncative, formula \eqref{e:Mir and j} implies that $\on{Id}_\CZ \boxtimes \Mir_\CU$ is fully faithful. The essential surjectivity follows from the formula
obtained from \eqref{e:Mir and j} by duality: 
$$\Mir_\CU \circ j^? \simeq j^* \circ \Mir_\CY,$$
where $j^?$ is the right adjoint of $j_{*,\on{co}}$. 

\sssec{}

Still equivalently, $\CY$ is miraculous if and only if the functor
$$(\on{Id}_\CZ \boxtimes \Mir_\CY):\Shv(\CZ\times \CY)_{\on{co}}\to \Shv(\CZ\times \CY)$$
is an equivalence for every quasi-compact $\CZ$.

\medskip

If $\CY$ is miraculous, then the functor
$$(\on{Id}_\CZ\boxtimes \Mir_\CY):\Shv(\CZ\times \CY)_{\on{co}_\CY}\to \Shv(\CZ\times \CY)$$
and 
$$(\on{Id}_\CZ\boxtimes \Mir_\CY):\Shv(\CZ\times \CY)_{\on{co}}\to \Shv(\CZ\times \CY)_{\on{co}_\CZ}$$
are equivalences for an arbitrary $\CZ$.

\sssec{} \label{sss:Mir stacks non qc bis}

Let $\CY$ be miraculous. In particular, $\Mir_\CY$, viewed just as a functor
$$\Shv(\CY)_{\on{co}}\to \Shv(\CY),$$
is an equivalence. 

\medskip 

In this case we define the \emph{miraculous self-duality} of $\Shv(\CY)$ to have as counit the functor
$$\Shv(\CY)\otimes \Shv(\CY) \overset{\Mir_\CY^{-1}\otimes \on{Id}}\longrightarrow
\Shv(\CY)_{\on{co}}\otimes \Shv(\CY) \overset{\on{ev} _\CY}\longrightarrow \Vect,$$
which we denote $\on{ev}^{\Mir}_\CY$.

\medskip

The corresponding contravariant self-equivalence, denoted 
$$\BD^{\Mir}:(\Shv(\CY)^c)^{\on{op}}\to \Shv(\CY)^c,$$
is given by 
$$\BD^{\Mir}\simeq \BD^{\on{Verdier}}\circ \Mir_\CY^{-1}.$$

Thus, explicitly, for a cotruncative quasi-compact $\CU\overset{j}\hookrightarrow \Bun_G$ and $\CF_\CU\in \Shv(\CU)^c$, 
using \eqref{e:Mir and j}, we obtain
$$\BD^{\Mir}(j_!(\CF_\CU))\simeq j_!\left(\BD^{\on{Verdier}} \circ \Mir_\CU^{-1}(\CF_\CU)\right).$$

As in \secref{sss:D Mir sym}, one shows that the pairing $\on{ev}^{\Mir}_\CY$ is swap-equivariant, and hence the
functor $\BD^{\Mir}$ is involutive. 

\begin{rem}
Note that unlike the quasi-compact case, Verdier duality is \emph{not} a self-duality of 
$\Shv(\CY)$, but rather a duality between $\Shv(\CY)$ and $\Shv(\CY)_{\on{co}}$.
So, a priori, miraculous duality is our only option for a self-duality of $\Shv(\CY)$. 
\end{rem}

\end{document}